\newcommand{\Id}{\mathop{\rm Id}\nolimits}
\newcommand{\ovset}[2]{\overset{#1}{#2}}
\newcommand{\unset}[2]{\underset{#1}{#2}}
\def\Ab{\mathop{\rm \mathfrak{Ab}}\nolimits}
\def\Scheme{\mathop{\rm \mathfrak{Sch}}\nolimits}
\def\Module{\mathop{\rm \mathfrak{Mod}}\nolimits}
\def\Div{\mathop{\rm \mathfrak{Div}}\nolimits}
\def\BTE{\mathop{\rm \mathfrak{BT}}\nolimits}
\def\Alt{\mathop{\rm Alt}\nolimits}
\def\Anti{\mathop{\rm Antisym}\nolimits}
\def\Sym{\mathop{\rm Sym}\nolimits}
\def\Mult{\mathop{\rm Mult}\nolimits}
\def\Hom{\mathop{\rm Hom}\nolimits}
\def\Mor{\mathop{\rm Mor}\nolimits}
\def\Ens{\mathop{\rm Ens}\nolimits}
\def\Tr{\mathop{\rm Tr}\nolimits}
\def\Gal{\mathop{\rm Gal}\nolimits}
\def\End{\mathop{\rm End}\nolimits}
\def\Spec{\mathop{\bf Spec}\nolimits}
\def\Spf{\mathop{\bf Spf}\nolimits}
\def\kernel{\mathop{\rm Ker}\nolimits}
\def\cokernel{\mathop{\rm Coker}\nolimits}
\def\image{\mathop{\rm Im}\nolimits}
\def\rank{\mathop{\rm rank}\nolimits}
\def\Tr{{\rm Tr}}
\def\w{{\rm w}}
\def\id{{\rm id}}
\def\Def{{\rm \emph{Def}}}
\def\sgn{\mathop{\rm sgn}\nolimits}
\def\res{\mathop{\rm Res}\nolimits}
\def\innHom{\underline{\Hom}}
\def\innMult{\underline{\Mult}}
\def\innSym{\underline{\Sym}}
\def\innAlt{\underline{\Alt}}
\let\phi\varphi
\let\epsilon\varepsilon
\let\setminus\smallsetminus
\newtheorem{thm}{Theorem}[section]
\newtheorem{cor}[thm]{Corollary}
\newtheorem{lem}[thm]{Lemma}
\newtheorem{prop}[thm]{Proposition}
\newtheorem{Ques}[thm]{Question}
\newenvironment{myequation}
                {\addtocounter{thm}{1}\begin{equation}}
                {\end{equation}}
\theoremstyle{definition}
\newtheorem{dfn}[thm]{Definition}                
\newtheorem{ex}[thm]{Example}              
\newtheorem{rem}[thm]{Remark}
\newtheorem{cons}[thm]{Construction}
\newtheorem{notation}[thm]{Notations}
\newcommand{\BA}{{\mathbb{A}}}
\newcommand{\BD}{{\mathbb{D}}}
\newcommand{\BE}{{\mathbb{E}}}
\newcommand{\BF}{{\mathbb{F}}}
\newcommand{\BG}{{\mathbb{G}}}
\newcommand{\BH}{{\mathbb{H}}}
\newcommand{\BN}{{\mathbb{N}}}
\newcommand{\BQ}{{\mathbb{Q}}}
\newcommand{\BW}{{\mathbb{W}}}
\newcommand{\BZ}{{\mathbb{Z}}}
\newcommand{\Fa}{{\mathfrak{a}}}
\newcommand{\Fm}{{\mathfrak{m}}}
\newcommand{\Fp}{{\mathfrak{p}}}
\newcommand{\Fq}{{\mathfrak{q}}}
\newcommand{\FF}{{\mathfrak{F}}}
\newcommand{\FG}{{\mathfrak{G}}}
\newcommand{\FH}{{\mathfrak{H}}}
\newcommand{\FL}{{\mathfrak{L}}}
\newcommand{\FR}{{\mathfrak{R}}}
\newcommand{\FX}{{\mathfrak{X}}}
\newcommand{\CA}{{\cal A}}
\newcommand{\CB}{{\cal B}}
\newcommand{\CC}{{\cal C}}
\newcommand{\CD}{{\cal D}}
\newcommand{\CE}{{\cal E}}
\newcommand{\CF}{{\cal F}}
\newcommand{\CG}{{\cal G}}
\newcommand{\CL}{{\cal L}}
\newcommand{\CM}{{\cal M}}
\newcommand{\CN}{{\cal N}}
\newcommand{\CO}{{\cal O}}
\newcommand{\CP}{{\cal P}}
\newcommand{\CS}{{\cal S}}
\newcommand{\CT}{{\cal T}}
\newcommand{\CY}{{\cal Y}}
\newcommand{\lbb}{\llbracket}
\newcommand{\rbb}{\rrbracket}
\newcommand{\epR}{\underset{R}{ \bigwedge}}
\newcommand{\ep}{{ \bigwedge}}
\newcommand{\epO}{\underset{\mathcal{O}}{ \bigwedge}}
\newcommand{\eps}{{\epsilon}}
\newcommand{\ol}{\overline}
\newcommand{\ul}{\underline}
\newcommand{\udel}{\updelta}
\newcommand{\sbar}{{\bar{s}}}
\newcommand{\kbar}{{\bar{k}}}
\newcommand{\ebar}{{\bar{E}}}
\def\longto{\longrightarrow}
\def\into{\hookrightarrow}
\let\onto\twoheadrightarrow
\def\isoto{\arrover{\cong}}
\def\longinto{\lhook\joinrel\longrightarrow}
\newbox\mybox
\def\arrover#1{\mathrel{
       \setbox\mybox=\hbox spread 1.4em
              {\hfil$\scriptstyle#1\vphantom{g}$\hfil}
       \vbox{\offinterlineskip\copy\mybox
             \hbox to\wd\mybox{\rightarrowfill}}}}             
\def\invlim{\mathop{\vtop{\hbox{\rm lim}\vskip-8pt
        \hbox{\hskip1pt$\scriptstyle\longleftarrow$}\vskip-1pt}}}
\def\dirlim{\mathop{\vtop{\hbox{\rm lim}\vskip-8pt
        \hbox{\hskip1pt$\scriptstyle\longrightarrow$}\vskip-1pt}}}
\def\ontoover#1{\mathrel{
       \setbox\mybox=\hbox spread 1.4em{\hfil$\scriptstyle#1$\hfil}
       \vbox{\offinterlineskip\copy\mybox
             \hbox to\wd\mybox{\rightarrowfill\hskip-2.8mm
                               $\rightarrow$}}}}
\def\to{\rightarrow}
\def\uset{\underset}
\author{\textsf{S. Mohammad Hadi Hedayatzadeh}}
\title{\textsc{Exterior Powers of Barsotti-Tate Groups}}
\begin{document}
\makeatletter
\@addtoreset{thm}{chapter}
\makeatother

\begin{center}
\Large{\textsc{Exterior Powers of Barsotti-Tate Groups}}\\[45pt]
\normalsize
Mohammad Hadi Hedayatzadeh\footnote{Dept. of Mathematics, ETH Z\"urich, 8092 Z\"urich, Switzerland, \href{mailto:hedayatzadeh@math.ethz.ch}{hedayatzadeh@math.ethz.ch}} \\[65pt]
\date{\today}
\end{center}

\textbf{Abstract.} Let $ \CO $ be the ring of integers of a non-Archimedean local field and $ \pi $ a fixed uniformizer of $ \CO $. We establish three main results. The first one states that the exterior powers of a $ \pi $-divisible $ \CO $-module scheme of dimension at most 1 over a field exist and commute with algebraic field extensions. The second one states that the exterior powers of a $p$-divisible group of dimension at most 1 over arbitrary base exist and commute with arbitrary base change. The third one states that when $ \CO $ has characteristic zero, then the exterior powers of $ \pi $-divisible groups with scalar $ \CO $-action and dimension at most 1 over a locally Noetherian base scheme exist and commute with arbitrary base change. We also calculate the height and dimension of the exterior powers in terms of the height of the given $p$-divisible group or $ \pi $-divisible $ \CO $-module scheme.\\

\textbf{R\'esum\'e.} Soient $ \CO $ l'anneau des entiers d'un corps local non-archim\'edien et $ \pi $ une uniformisante de $ \CO $. On d\'emontre trois r\'esultats principaux. Le premier affirme que les puissances ext\'erieures d'un sch\'ema en $ \CO $-modules $\pi$-divisible de dimension au plus 1 sur un corps existent et commutent avec extensions alg\'ebriques de corps. Ensuite on \'etablit que les puissances ext\'erieures d'un groupe $p$-divisible de dimension au plus 1 sur une base quelconque existent et qu'elles commutent avec changements de base arbitraires. Finalement on d\'emontre que si $ \CO $ est de charact\'eristique z\'ero, alors les puissances ext\'erieures d'un sch\'ema en $ \CO $-modules $ \pi $-divisible avec une $ \CO $-action scalaire et de dimension au plus 1 sur un sch\'ema de base localement noetherian existent et commutent avec changements de base arbitraires. De m\^eme, on calcule la hauteur et la dimension des puissances ext\'erieures en termes de la hauteur du groupe $p$-divisible ou du sch\'ema en $ \CO $-modules $ \pi $-divisible donn\'e.

\newpage

\tableofcontents

\newpage

\addtocounter{chapter}{-1}

%SECTION -1
%\chapter{Introduction}
\chapter{Introduction}

As its title suggest, in this thesis, we are dealing with the ``exterior powers'' of $p$-divisible groups or more generally ``$\pi$-divisible groups''. So, let us explain what we mean by exterior power. Let $\CC$ be a category containing all finite products, and let $G_0,G_1,\dots, G_r$ be Abelian group objects, or more generally, $R$-module objects (with $R$ a commutative ring with 1). The latter is an object $X$ of $\CC$ such that the contravariant functor $$h_X:=\Mor_{\CC}(\_,X):\CC\to \Ens$$ factors through the forgetful functor $R$-$\mathfrak{Mod}\to \Ens$.\\

By an \emph{$R$-multilinear} morphism $\phi:G_1\times\dots\times G_r\to G_0$, we mean a morphism such that for every object $T$ of $\CC$, the induced map \[\phi(T):G_1(T)\times \dots G_r(T)\to G_0(T)\] is an $R$-multilinear morphism, where $G_i(T)$ stands for the $R$-module $h_{G_i}(T)$. Denote by $\Mult_{\CC}^R(G_1\times\dots\times G_r,G_0)$ the $R$-module of all $R$-multilinear morphisms from $G_1\times\dots\times G_r$ to $G_0$. If $r=1$, we use $\Hom_{\CC}^R$ instead of $\Mult_{\CC}^R$. In the same fashion, we define \emph{alternating} $R$-multilinear (or simply alternating) morphisms and denote by $\Alt_{\CC}^R(G^r,H)$ the $R$-module of all such morphisms from $G^r$  to $H$.\\

An obvious example is when $\CC$ is the category of sets, and we obtain the accustomed notion of $R$-multilinear morphisms and alternating morphisms of $R$-modules. A more interesting example is when $\CC$ is the category of schemes over a base scheme $S$ and $R$ is the ring of rational integers $\BZ$. The $R$-module objects of $\CC$ are then commutative group schemes over $S$. Multilinear morphisms are the natural generalization of group scheme homomorphisms, and they appear quite naturally. Examples of such morphisms are the Weil pairing of the torsion points of an Abelian variety, or the Cartier duality of a finite flat commutative group scheme. Drinfeld modules, or more generally Anderson modules provide non-trivial examples of $R$-module schemes, where $R$ is a ring of functions on some affine curve defined over a finite field.\\

Given an $R$-module object $G$, we call an object $\epR^rG$, the $r^{\text{th}}$-exterior power of $G$ in a subcategory $\CD$ of $\CC$, consisting of $R$-module objects of $\CC$, if there exists an alternating $R$-multilinear morphism $\lambda:G^r\to \epR^rG$, such that the following universal property is satisfied:\\

For every object $H$ of $\CD$ and every alternating $R$-multilinear morphism $\phi:G^r\to H$, there exists a unique $R$-linear morphism $\bar{\phi}:\epR^rG\to H$ such that $\bar{\phi}\circ \lambda=\phi$. In other words, the homomorphism \[\Hom_{\CC}^R(\epR^rG,H)\to \Alt_{\CC}^R(G^r,H)\] induced by $\lambda$ is an isomorphism.\\

If we drop the adjective ``alternating'' in the above description, we get the notion of \emph{tensor product}. Having tensor products and exterior powers enables us to translate multilinear and alternating morphisms into the language of the category of $R$-module objects we are working with, i.e., we will have ($R$-linear) morphisms instead of multilinear ones.\\

In the category of $R$-modules, the exterior powers are the usual exterior powers, and we know that they always exist. However, the question of existence of such objects in a given category is a subtle one and the main challenge is to construct them in the given category.\\

In this thesis we consider only the case, where $\CC$ is a category of schemes in a wider sense, e.g., the category of $p$-divisible or $\pi$-divisible groups, where the objects are built from schemes. Since we will ultimately be interested in alternating morphisms and exterior powers, let us concentrate on them.\\

Let $\CC$ be the category of finite flat commutative group schemes over a base scheme $S$, and let $G$ be an object of $\CC$. From now on, group schemes are assumed to be commutative. We can always define the contravariant functor:\[\innAlt_{S}(G^r,\BG_m):\Scheme_S\to \Ab,\] \[T\mapsto \Alt_T(G_T^r,\BG_{m,T}).\] One can show, using Weil restriction, that this functor is representable by a  group scheme of finite type and affine over $S$, which abusing the notation, will be denoted by $\innAlt_S(G^r,\BG_m)$. Assume for a moment that $\innAlt_S(G^r,\BG_m)$ is finite and flat over $S$. Then its Cartier dual, denoted by $\Lambda^r$, is a finite flat group scheme over $S$ and we have a canonical isomorphism \[\alpha:\innHom_S(\Lambda^r,\BG_m)\to \innAlt_S(G^r,\BG_m).\] One will deduce from this isomorphism that $\Lambda^r$ is the $r^{\text{th}}$-exterior power of $G$ in $\CC$. This observation shows that $\ep^rG$ exists in $\CC$, whenever $\innAlt_S(G^r,\BG_m)$ is finite and flat over $S$. Of course, we may relax the finiteness condition and ask whether $\ep^rG$ exists in a larger category, e.g., in the category of  group schemes over $S$. In fact, if $S$ is the spectrum of a field, using the fact that $\innAlt_S(G^r,\BG_m)$ is an affine group scheme of finite type over $S$ and a devissage argument, it is shown in \cite{P}, that the Cartier dual of $\innAlt_S(G^r,\BG_m)$ (with an adequate definition) is the $r^{\text{th}}$-exterior power of $G$ in the category of  group schemes over $S$. It is always a profinite group scheme, but not necessary a finite one. Because of its existence and the fact that its construction commutes with arbitrary base change, the group scheme $\innAlt_S(G^r,\BG_m)$ will play a significant role in this thesis, especially when it comes to the question of the existence of $\ep^rG$ and its base change properties.\\

Let us now define the other main ingredient of this writing, namely $\pi$-divisible groups, or more precisely $\pi$-divisible $\CO$-module schemes. These are the generalization of $p$-divisible groups. Let $\CO$ be the ring of integers of a non-Archimedean local field, i.e., a complete discrete valuation ring with finite residue field (say $\BF_q$, of characteristic $p$). Fix a uniformizer $\pi$ of $\CO$. A \emph{$\pi$-divisible $\CO$-module scheme} over a base scheme $S$ is a formal scheme $\CM$ over $S$ with an action of the ring $\CO$, such that 1) the multiplication by $\pi$ is an isogeny (``divisibility'' by $\pi$) , i.e., an epimorphism with finite and flat kernel, 2) $\CM$ is $\pi$-power torsion, i.e., $\CM$ is the union of the $\CM_i$ ($i\geq 1$), where $\CM_i$ is the kernel of multiplication by $\pi^i$ on $\CM$. It will be shown that a $\pi$-divisible $\CO$-module scheme is a smooth formal scheme and that there exists a natural number $h$, called the height of $\CM$, such that for all $i\geq 1$, the group schemes $\CM_i$ are finite of order $q^{ih}$. As we said earlier, these are generalizations of $p$-divisible groups. If $\CO=\BZ_p$, the ring of $p$-adic integers, then $p$-divisible $\CO$-module schemes are the same as $p$-divisible groups. In fact if $\CO$ is any mixed characteristic complete discrete valuation ring with finite residue field, it can be easily shown that a $\pi$-divisible $\CO$-module scheme is a $p$-divisible group. Other important examples are Lubin-Tate groups or formal completion of a Drinfeld or Anderson module.\\

As for $p$-divisible groups, a morphism $f:\CM\to\CN$ of $\pi$-divisible $\CO$-module schemes is defined as a system of homomorphisms on finite levels, compatible with projections $\pi:\CM_{i+1}\to \CM_i$ and $\pi:\CN_{i+1}\to \CN_i$. In other words, it is an element of the $\CO$-module $\invlim\,\Hom_S(\CM_i,\CN_i)$, where the transition morphisms are induced by the above projections. By definition, an alternating $\CO$-multilinear morphism from $\CM^r$ to $\CN$, is an element of the inverse limit $\invlim\,\Alt_S(\CM_i^r,\CN_i)$ with the transition morphisms again induced by the above projections. This defines the notion of the exterior power of a $\pi$-divisible $\CO$-module scheme.  Note that if $r>1$ and instead of projections, we require compatibility with inclusions $\CM_i\into \CM_{i+1}$ and $\CN_i\into \CN_{i+1}$, then any alternating morphism would be the zero one.\\

The main question in this thesis is the following:

%QUESTION 1
\begin{Ques}
\label{question 1}
Let $\CM$ be a $\pi$-divisible $\mathcal{O}$-module scheme of height $h$ and dimension at most one. Do the exterior powers of $\CM$ exist? If they do, what are their height and dimension? How do the exterior powers behave with respect to base change?
\end{Ques}

Before a discussion of motivations behind this problem, and our approach to it, let us first make a few remarks. First of all, for technical reasons, we have to assume that the prime number $p$ is different from $2$. Secondly, the condition on the dimension is essential for our proof of the existence of exterior powers. The case of $p$-divisible groups over perfect fields of characteristic $p$ illustrates this point. One way of seeing it, is to look at the slopes of the $F$-crystal associated to $p$-divisible groups. Let $G$ a $p$-divisible group and let $M$ be its $F$-crystal. It is natural to expect that the $F$-crystal associated to $\ep^rG$, if it exists, is isomorphic to $\ep^rM$. However, as soon as $r>1$ and $M$ has a slope larger than $\frac{1}{2}$, $\ep^rM$ will have a slope larger than 1 and thus cannot be the $F$-crystal associated to a $p$-divisible group, as the slopes of such $F$-crystals are always between 0 and 1. This phenomenon is also reflected in the impossibility of defining a Frobenius $F$ on $\ep^rM$ in such a way that $ FV=VF=p$.\\

The logarithm of the order of finite (flat) group schemes behaves much like the rank of finitely generated projective or free modules over rings. So, as for the finite free modules over rings, the expectation is that the height of $ \epR^r\CM $ is equal to $ \binom{h}{r} $, where $ h $ is the height of $ \CM $. In fact, we will prove this statement and will show that the dimension of $ \epR^r\CM$ is equal to $ \binom{h-1}{r-1} $.\\

Let us now see how an affirmative answer to the above question might be interesting. Let $ \CM $ be a Lubin-Tate group of dimension 1 and height $ h $, over some base scheme. Then, the highest exterior power of $ \CM $, i.e., $ \epR^h\CM $ is a Lubin-Tate group of dimension and height 1. In this way, we obtain a ``determinant map" from the moduli space of Lubin-Tate groups of height h (and dimension 1) to the moduli space of Lubin-Tate groups of height 1 (and dimension 1).\\

Let $ \CM $ be a $ \pi $-divisible $ \CO $-module scheme of dimension 1, and let $ \rho $ be the Galois representation attached to it, or more precisely, the Galois representation associated to its Tate module (defined exactly as for $ p $-divisible groups). For  $ p $-divisible groups over $ p $-adic rings (i.e., rings of integers of characteristic zero non-Archimedean local fields) this Galois representation is a crystalline representation of Hodge-Tate weight 0 or 1. For Drinfeld modules, this is the Galois representation one wishes to study. A positive answer to the above question provides a conceptual and precise reason why the determinant of these Galois representations is the Galois representation of an object of the same kind.\\

Another motivation is the following. One would like to have tensor constructions for Abelian varieties, or locally (at prime $ p $), for their $ p $-divisible groups. Again, due to the slope constraint, this is not possible in general, if one needs to stay in the framework of Abelian varieties or $ p $-divisible groups. However, one can capture the data encoded in the part of the exterior powers that has slopes between 0 and 1. For elliptic curves, this problem does not occur, since the dimension of the corresponding $ p $-divisible group is at most 1.\\

Apart from the mathematical delight this enterprise has engendered, the above discussions were motivating enough to pursue  an answer to the question. We hope the reader would feel alike.\\

In this thesis, we will prove that the exterior powers of $ p $-divisible groups of dimension at most 1 over any base scheme exist and that their construction commutes with arbitrary base change. We will also show that the height and dimension of the $ r^{\rm th} $-exterior power of a $ p $-divisible group of height $h$ and dimension one are respectively equal to $ \binom{h}{r} $ and $ \binom{h-1}{r-1} $. If the ring $ \CO $ is a $ p $-adic ring, we will explain how to generalize and adapt the proofs so as to have the results for $ \pi $-divisible $ \CO $-module schemes. We proceed in this way, because the generalization rests upon a result from the so far unpublished Ph.D. thesis of Tobias Ahsendorf (cf. \cite{TA}). If $ \CO $ is of characteristic $ p $ and the base scheme is the spectrum of a field, then we will show that the exterior powers of $ \pi $-divisible $ \CO $-module exist and we will obtain the similar result on their height and dimension.\\

Let us explain a brief sketch of our proof for the above statements. We hope that this will justify the organization of the chapters and the order in which we undertake the proofs. In this sketch, we focus on $ p $-divisible groups.\\

Let $ G $ be a $ p $-divisible group of dimension 1 over a base scheme $ S $, and denote by $ G_n $ the truncated Barsotti-Tate group of level $n$.  One main idea is that, although not necessary from the definition, we construct the exterior powers of individual $ G_n $, and show that this construction commutes with base change. When $ S $ is the spectrum of a perfect field of characteristic $ p $, we use Pink's multilinear theory of finite commutative group schemes (cf. \cite{P}) to compute the covariant Dieudonn\'e module of $ \ep^rG_n $. We show that this Dieudonn\'e module is isomorphic, as expected,  to $ \ep^rD_*(G_n) $, where $ D_*(G_n) $ is the Dieudonn\'e module of $ G_n $. This allows us to compute the order of $ \ep^rG_n $ and consequently, to obtain short exact sequences \[ 0\to \ep^rG_n\to \ep^rG_{n+m}\to \ep^rG_m\to 0 \] induced from the exact sequences \[ 0\to G_n\to G_{n+m}\to G_m\to 0 .\]  Hence, the system $ \{\ep^rG_n\}_{n\geq 1} $ is a Barsotti-Tate group, and its direct limit, $ \ep^rG $, is a $p$-divisible group. The above calculation will also imply that the Dieudonn\'e module of $ \ep^rG $ is isomorphic to $ \ep^rD_*(G) $, and from this isomorphism, we can compute the height and dimension of $ \ep^rG $.\\

Next, let $ S $ be the spectrum of a local Artin ring $R$, with residue characteristic $ p $. An important ingredient in this case, is the theory of displays over $R$. This theory is a generalization of Dieudonn\'e theory, and in particular is equivalent to Dieudonn\'e theory when $R$ is a perfect field. Since displays are linear algebraic objects, it makes sense to talk about their exterior powers. With little effort and under the dimension 1 condition, we prove that the exterior powers of a display are again displays, and that the construction of the exterior powers commutes with base change. After defining multilinear morphisms of displays, we construct a homomorphism \[ \beta:\Mult(\CP_1\times\dots\times\CP_r,\CP_0)\to \Mult(BT_{\CP_1}\times\dots\times BT_{\CP_r},BT_{\CP_0}),\] where $ \CP_i $ are displays over $R$ and $ BT_{\CP_i} $ are their associated $p$-divisible groups. This map preserves alternating morphisms and commutes with base change.\\

Let $ \CP $  be the display of $G$, and denote by $ \Lambda^r $ the $p$-divisible group of $ \ep^r\CP $, i.e., the $p$-divisible group $BT_{\ep^r\CP} $. The universal alternating morphism $ \CP^ r\to \ep^r\CP$ gives rise, via $ \beta $, to an alternating morphism $ \lambda:G^r\to \Lambda^r $. This alternating morphism is our candidate for the exterior power of $ G $. This morphism induces (by definition) an alternating morphism $ \lambda_n:G_n^r\to\Lambda^r_n $ for every $n$. For every group scheme $X$ over $R$, the morphism  $ \lambda_n $ induces a homomorphism \[ \ul{\lambda}_n^*(X):\innHom(\Lambda^r_n,X)\to \innAlt(G_n^r,X). \]  One of the main results of this thesis is that $ \beta $ is an isomorphism when $ R $ is a perfect field. Together with what we know over fields, this implies that the homomorphism $ \ul{\lambda}_n^*(X) $ is an isomorphism over  $ L $-rational points, for every perfect field $L$ over $R$. We will then explain that it follows that $ \ul{\lambda}_n^*(\BG_m) $ is an isomorphism and finally, that these results are sufficient to prove that $ \Lambda^r_n $ is the $ r^{\text{th}} $ exterior power of $ G_n $. Then the base change property will be proved, using the isomorphism $\ul{\lambda}_n^*(\BG_m)$.\\

When $ R $ is a complete local Noetherian ring with residue characteristic $p$, an approximation argument combined with the universal property of exterior powers will provide the exterior powers of $ G $ over $R$.  In particular we have the exterior powers of the universal deformation of a fixed connected $p$-divisible group of dimension 1 (defined over $ \BF_p $) over the universal deformation ring $ \BZ_p\lbb x_1\dots,x_{h-1}\rbb $. The base change property follows from this property over truncations of $R$.\\

A result of Lau states the following. Let $ G_0 $ over $ \BF_p $ be a connected $p$-divisible group of dimension 1 and height $h$, and $ \CG $ over $ R:=\BZ_p\lbb x_1,\dots,x_{h-1}\rbb $ be the universal deformation of $G_0$. Let $H$ be a truncated Barsotti-Tate group of level $n\geq 1$ and of height $h$ over a $ \BZ_{(p)} $-scheme $X$. We assume that the fibers of $H$ over the points of $X$ of characteristic $p$  have dimension one. Then there exist morphisms \[ X\ovset{\phi}{\longleftarrow} Y \ovset{\psi}{\longrightarrow} \Spec R\] with $ \phi $ faithfully flat and affine, such that $ \phi^*H\cong \psi^*\CG_n.$\\

We prove faithfully flat descent results (descent of objects and morphisms) and in conjunction with Lau's result, we construct the exterior powers of truncated Barsotti-Tate groups $ G_n $, when $ S $ is defined over $ \BZ_{(p)} $. We then show that these exterior powers sit in exact sequences, making them a Barsotti-Tate group, or their direct limit a $p$-divisible group. That this construction commutes with base change follows from faithfully flat descent lemmas we prove, together with this property for $ \CG $ over $ \BZ_p\lbb x_1,\dots,x_{h-1}\rbb  $.\\

For any base scheme $S$, using the \'etale ``dictionary", we translate the question of existence of exterior powers of \'etale $p$-divisible groups to the same question for continuous representations of \'etale fundamental group of $S$. Since these objects form a tensor category, we can solve our problem rather easily.\\

Finally, let $S$ be any scheme. We have a faithfully flat covering $$ S':=S[\frac{1}{p}]\coprod S_{(p)}\to S  ,$$ where $S[\frac{1}{p}]$ and $ S_{(p)} $ are respectively the pullbacks of $S\to\Spec(\BZ)$ via the morphism  $ \Spec(\BZ[\frac{1}{p}])\to \Spec(\BZ) $ and $ \Spec(\BZ_{(p)})\to \Spec(\BZ) $. By base change, we then obtain $p$-divisible groups $ G[\frac{1}{p}] $ over $S[\frac{1}{p}]$ and $ G_{(p)} $ over $ S_{(p)} $. Since $p$ is invertible on $S[\frac{1}{p}]$, the $p$-divisible group $G[\frac{1}{p}]$ is \'etale, and thus $\ep^rG[\frac{1}{p}]$ exists. Over $S_{(p)}$, we also have constructed the exterior power $\ep^rG_{(p)}$. These $p$-divisible groups glue together to produce the $ r^{\text{th}} $ exterior power of $G'$ (the pullback of $G$) over $S'$. Again, using the faithfully flat descent, we get the $p$-divisible group $\ep^rG$ over $S$. The fact that it commutes with arbitrary base change is again a consequence of the lemmas we prove on faithfully flat descent, and the fact that $\ep^rG'$ commutes with base change.\\

When the base scheme $S$ is locally Noetherian, there is an elementary way to avoid Lau's result and construct the exterior powers of $G$, by proving that $ \innAlt_S(G_n^r,\BG_m) $ is finite and flat over $S$. We prove this statement, by reducing to the case of a local Artin base, where we know it is true ($\ul{\lambda}_n^*(\BG_m)$ is an isomorphism).\\

We now give a quick overview of the chapters. In chapter 1, we define notions and prove results from algebraic geometry that will be used later. In chapter 2, we introduce the category of $R$-module schemes over a base scheme and define their multilinear (symmetric and alternating) morphisms and also the refined notion of pseudo-$R$-multilinear (symmetric and alternating) morphisms (cf. Definitions \ref{def 5} and \ref{def 18}). We also define the presheaf of multilinear morphisms of $R$-module schemes (cf. Definition \ref{def 4}). We then prove an adjunction result, that will help us later, mainly for induction arguments (cf. Proposition \ref{prop 3}). In chapter 3, we review results from \cite{P} on the relations between multilinear morphisms of group schemes and morphisms between their Dieudonn\'e modules, and explain how to generalize them to $R$-module schemes. For any $r >0$ and any finite $R$-module schemes $M_1, \dots, M_r$ and $M$ of $p$-power torsion, we prove, giving an explicit morphism, that the $R$-module $ \Mult^R(M_1\times\dots\times M_r,M) $ is isomorphic to the $R$-module $L^R(D_1\times\dots\times D_r,D) $ consisting of $R$-multilinear morphisms satisfying certain conditions involving the actions of Frobenius and Verschiebung, called the $F$ and $V$-conditions (cf. Definition \ref{def07}). Pink's explicit constructions incorporate both covariant and contravariant Dieudonn\'e theory. We translate these constructions into (purely) covariant theory, something that in some situations makes calculations easier.\\

In the first section of chapter 4, we define tensor products, symmetric powers and exterior powers of $R$-module schemes, by universal properties (cf. Definition \ref{def 7}). Again, we generalize Pink's argument to show that these objects exist, when the base is a field and the $R$-modules are finite over the base (cf. Theorem \ref{thm41}). In the second section, we examine the base change properties of these constructions. We prove that these constructions commute with base change, in either of the following two cases: 1) the base is a perfect field and we change it to an algebraic field extension (cf. Proposition \ref{prop 41}), or 2) the base is a field and we change it to a finite field extension (cf. Proposition \ref{lem8}). For the first case we use Galois descent and for the second one we use Weil restriction. In the third section, we explore functorial and ``categorical" properties of exterior powers. In particular, we explain how the construction of exterior powers behaves with respect to short exact sequences. Main results of this section are Lemma \ref{lem 4}, Theorem \ref{thm 1} and Proposition \ref{prop 7}. Finally, in the fourth section, generalizing Pink's results on Dieudonn\'e modules of finite $p$-group schemes over perfect fields and using these results, we find sufficient conditions under which, taking the Dieudonn\'e module and exterior power of a finite $R$-module scheme of $p$-power order commute (cf. Lemma \ref{lem 11}).\\

The main result of chapter 5 is Theorem \ref{thmoveranyfield}, which will serve as a ``basis" for the Main Theorem of this thesis, in the sense that the general case will be reduced to this situation. We begin the chapter with the definition of $ \pi $-divisible $ \CO $-module schemes (cf. Definition \ref{piBT}) and prove some expected properties, those that they share with $p$-divisible groups, e.g., that they are formally smooth. We also define the height of a $ \pi $-divisible $ \CO $-module and show that it is an integer. Then, we define multilinear and pseudo-multilinear morphisms of $ \pi $-divisible modules and also symmetric and alternating morphisms (cf. Definition \ref{def41}). Using these definitions, we define the notion of exterior powers of $ \pi $-divisible modules (cf. Definition \ref{def42}). Next, we prove that over any base scheme, exterior powers of \'etale $ \pi $-divisible modules and finite \'etale $ \CO $-module schemes exist and their construction commutes with arbitrary base change (cf. Proposition \ref{prop025}). When the base scheme is the spectrum of a perfect field of characteristic $p$, we prove that the Dieudonn\'e module of a $ \pi $-divisible $ \CO $-module of height $h$ is a free $ W(k)\widehat{\otimes}_{\BZ_p}\CO $-module of rank $ h $. When a $ \pi $-divisible module is connected and has dimension one, we exhibit a basis of its Dieudonn\'e module over $ W(k)\widehat{\otimes}_{\BZ_p}\CO $. Using this basis, we construct morphisms $ \Phi $ and $ \Upsilon $ on the exterior powers of the Dieudonn\'e module, such that $ \Phi\circ\Upsilon=p=\Upsilon\circ \Phi $ and prove that the exterior powers of the Dieudonn\'e module are Dieudonn\'e modules with their Frobenius and Verschiebung being $ \Phi $ and $ \Upsilon $ respectively. Let $ \CM $ be a connected $ \pi $-divisible $ \CO $-module scheme of dimension 1 and height $h$ over a perfect field $k$. Denote by $ D_n $ the Dieudonn\'e module of $ \CM_n$. Using $ \Phi $ and $ \Upsilon $, we define morphisms $ \phi $ and $ \upsilon $ on $ \ep^rD_n $ and show that $ \ep^rD_n $ is canonically isomorphic to the Dieudonn\'e module of $ \epO^r\CM_n $ with Frobenius and Verschiebung acting through $ \phi $ and respectively $ \upsilon $ (cf. Corollary \ref{cor41}). This implies that the order of $ \epO^r\CM_n $ is equal to $ q^{n\binom{h}{r}} $. When $k$ is not necessarily perfect, using the base change properties of exterior powers, proved in chapter 4, we explain how we can use these quantitative results and the exact sequences from chapter 4 (stated above) to show that $ \epO^r\CM_n $ form an inductive system, which is a $ \pi $-divisible $ \CO $-module scheme of height $ \binom{h}{r} $. Finally, we show that the inductive system above, seen as a $ \pi $-divisible module, is the $ r^{\rm th} $-exterior power of $ \CM $, its dimension is $ \binom{h-1}{r-1} $ (cf. Proposition \ref{prop 11} and Theorem \ref{thm 4}) and when $k$ is perfect, its Dieudonn\'e module is canonically isomorphic to $ \ep^rD(\CM) $. Lastly, we combine the results in the \'etale case and the case over fields of characteristic $p$, to show that over any ground field, the exterior powers of $ \pi $-divisible modules of dimension at most 1 exist (cf. Theorem \ref{thmoveranyfield}). A by-product of this chapter is the Corollary \ref{cordieudonnpidiv}, which will be useful later.\\

In the first section of chapter 6, we recall elements from Zink's theory of Displays, that are needed for this thesis. In the second section, we are dealing with multilinear theory of Displays. For $ i=0,\dots,r $, let $ \CP_i=(P_i,Q_i,F,V^{-1}) $ be $3n$-displays over a ring $R$. We define the group of all multilinear morphisms from $ \CP_1\times\dots\times \CP_r $ to $\CP_0 $, and denote it by $ \Mult(\CP_1\times\dots\times \CP_r,\CP_0) $. These are morphisms preserving the action of $ V^{-1} $ (cf. Definition \ref{def05}). Similarly, we define symmetric (multilinear) morphisms and alternating (multilinear) morphisms. We then show natural base change properties of multilinear morphisms (cf. Lemma \ref{lem0 18}). We then construct the homomorphism \[ \beta:\Mult(\CP_1\times\dots\times \CP_r,\CP_0)\to \Mult(BT_{\CP_1}\times\dots\times BT_{\CP_r},BT_{\CP_0})\] (cf. Construction \ref{cons05}). We prove that $ \beta $ preserves symmetric and alternating morphisms and that it commutes with base change (cf. Proposition \ref{prop0 16} and Corollary \ref{cor beta}). In the third section, we construct exterior powers of a $3n$-display of rank one using a normal decomposition (cf. Construction \ref{cons03}) and prove that this construction is independent from the choice of the normal decomposition and it commutes with base change, that these exterior powers are $3n$-displays and that they are nilpotent, when the given $3n$-display is nilpotent (cf. Lemma \ref{lem0 21}). Finally, we prove that the exterior powers of a $3n$-display satisfy the universal property of exterior powers (cf. Proposition \ref{prop ext. disp.}).\\

In chapter 7, we give explicit isomorphisms between the Cartier module, the Dieudonn\'e module and the display of a connected $p$-divisible group over a perfect field of characteristic $p$. That these linear algebraic gadgets are isomorphic is known to experts, but the author was not able to find, in the literature, the isomorphism between the Cartier module and the Dieudonn\'e module. According to \cite{B}, the isomorphism between the Cartier module and the Dieudonn\'e module of a connected $p$-divisible group over a perfect field of characteristic $p$ is due to W. Messing. The construction of the morphism from the Dieudonn\'e module to the Cartier module (Construction \ref{cons06}) was inspired by that given in \cite{B}.\\

In chapter 8, we prove the Main Theorem of the thesis (for $p$-divisible groups), i.e. Theorem \ref{thm07}. In the first section, we prove some technical statements using the isomorphisms constructed in chapter 7. These will allow us to make explicit calculations and prove Theorem \ref{thm04} and Corollary \ref{cortechresult}, which are the key results for the proof of the Main Theorem. In the second section, we construct the morphism \[  \ul{\lambda}_n^*(X):\innHom(\Lambda^r_n,X)\to \innAlt(G_n^r,X)\] mentioned above (cf. Construction \ref{cons07}). When $p$ is nilpotent in $R$ and $ \CP $ is nilpotent, we show that $ \ul{\lambda}_n^*(\BG_m) $ is an isomorphism (cf. Proposition \ref{prop013}) and conclude from it that $ \ul{\lambda}_n^*(X) $ is an isomorphism when $X$ is finite and flat over $R$ (cf. Propositions \ref{prop014}). Using this result, we prove that if $R$ is a complete local Noetherian ring with residue characteristic $p$ and the special fiber of $G$ is connected, then the exterior powers of $G$ exist (cf Proposition \ref{prop026}) and they commute with arbitrary base change (cf. Corollary \ref{cor04}). In the last section, we use the results from previous sections to construct the exterior powers of $p$-divisible groups over arbitrary base. We begin by stating Lau's result mentioned above (and giving his proof). We then prove Lemma \ref{lem027} on faithfully flat descent for truncated Barsotti-Tate groups and $p$-divisible groups. We then show that if $S$ is a scheme over $ \BZ_{(p)} $ and $G$ over $S$ is a $p$-divisible group whose fibers at points of characteristic $p$ have dimension 1, then the exterior powers of $G$ exist and they commute with arbitrary base change (cf. Lemma \ref{lem029}). We prove this lemma by proving the similar result for truncated Barsotti-Tate groups (cf. Lemma \ref{lem028}). Finally, as we explained above, we glue these results to prove the Main Theorem \ref{thm07}.\\

In chapter 9, we prove the Main Theorem of the thesis for $ \pi $-divisible $ \CO $-module schemes, where $ \CO $ is a $p$-adic ring and the action of $ \CO $ on their Lie algebra is by scalar multiplication. In the first section, we briefly define ramified Witt vectors and state (without proof) their main properties. Then, we define ramified $3n$-displays over $ \CO $-algebras. These are natural generalizations of Zink's $3n$-displays, with $ (q,\pi) $ replacing $ (p,p) $. This generalization is the work of T. Ahsendorf (cf. \cite{TA}). We follow the constructions of \cite{TA} and state the results of \cite{TA} that we will use. Then, we explain how our constructions from chapter 6 can be generalized to this new setting of ramified displays. In the second section, we construct and define the ramified Dieudonn\'e module of a $ \pi $-divisible module over a perfect field $k$ (cf. Construction \ref{consramdieudonne1}) and endow it with a ramified $3n$-display structure. Next, we construct an equivalence of categories, $ \BH $, between the category of Dieudonn\'e modules over $k$ with a ``scalar" $ \CO $-action and the category of ramified Dieudonn\'e modules over $k$ with ``scalar" $ \CO $-action. A scalar action, is an action which on the tangent space is given via the scalar multiplication (cf. Lemma \ref{equivalence}). We then prove that this equivalence preserves  multilinear morphisms satisfying the $ V $-condition (cf. Lemma \ref{isomult}). We show that if $ \CP $ is a display over $k$ with scalar $ \CO $-action, and $ \BH(\CP) $ is the corresponding ramified display, then the associated $p$-divisible group to $ \CP $ and $ \pi $-divisible module to $ \BH(\CP) $ are isomorphic as formal $ \CO $-modules (cf. Proposition \ref{isoBTgroups}). A key technical result in this part is Proposition \ref{betadiagramramdisplay}. This proposition together with what we proved in chapter 8 imply Corollary \ref{corbetaisoramdis}, which states that the homomorphism $ \beta $ is an isomorphism, also in the framework of ramified displays. Having this crucial result, we can proceed as in chapter 8, and construct the exterior powers of an infinitesimal $ \pi $-divisible module of dimension 1 over local Artin $ \CO $-algebra. Let $ \CM $ be a $ \pi $-divisible $ \CO $-module  over a base scheme $S$ and of dimension at most 1. In this chapter, instead of generalizing Lau's result and when $S$ is locally Noetherian, we prove that $ \widetilde{\innAlt}_{S}^{\CO}(\CM_n^r,\BG_m) $ is a finite flat $ \CO $-module scheme over $S$, where the symbol $ \widetilde{\Alt} $ means pseudo-$ \CO $-multilinear alternating. We prove this statement, by showing some elementary lemmas and reducing to the case of a local Artin $ \CO $-algebra. We then explain that this statement implies the existence of the exterior power $ \epO^r\CM_n $ (cf. Proposition \ref{propextlNstBT}). As in chapter 8, we prove that the system $ \{\epO^r\CM_n\}_n $ is a $ \pi $-divisible $ \CO $-module scheme over $S$ (cf. Proposition \ref{propextlNspiBT}). Finally, we show that this system is the $ r^{\rm th} $-exterior power of $ \CM $ and that for every $S$-scheme $T$, we have a canonical isomorphism $ (\epO^r\CM)_T\cong \epO^r(\CM_T) $ (cf. Theorem \ref{thm07ram}). We have similar statement about the height and dimension of the exterior powers.\\

In chapter 10, we exhibit some examples.\\

%\begin{comment}
\textbf{\Large{Acknowledgements.}}\\

First and foremost, I thank Allah (subhaana wa ta'aalaa) for His uncountable benefactions. I am especially thankful for His help in completing this work, though it is just one example from a multitude. \\%endowing me with health, patience, and knowledge to

Many people contributed to this thesis, directly or indirectly, mathematically or non-mathematically. I'm unfortunately not able to remember all of them. First of all, I should thank my Ph.D. advisor Prof. Dr. Richard Pink for suggesting the subject of this thesis and for his helpfulness, support and advice. It has been a great pleasure to work with such a distinguished mathematician. I wish I could have learned more from Richard, and I'm truly happy that he agreed to supervise me.\\

I evince my gratefulness to my treasured grandmother. She has taught me, from my early childhood to the present time, to think rigorously and logically and every time I see her, I learn something new. I thank my beloved parents, though it is not possible to put into words my gratitude toward them. There is no way to measure their love, help and support, which is expanded in countless dimensions. I owe them all good things which I have. My mother is the best mother I can imagine. My father has always shown me, by his great achievements, and taught me, that one can overcome all difficulties and problems, and I finally learned from him that nothing is impossible. Also, I would like to thank my adored spouse, who was always there for me during the happy and also difficult times of my Ph.D. Her encouragement kept me working despite the frustrations. Not only did she believe in me, but in fact she knew I could succeed. I believe that without her I couldn't have finished this work. I express my thanks to my dearest sisters for the happy and joyful moments they create for me.\\

I offer my most sincere gratitude and profound acknowledgement to my mathematics teacher, Mr. Mohammad Hossein Nazari. He showed me the beauty and the charm of mathematics. I'm sure that I would have been a surgeon, or something of the kind, if I had not encountered him. It is also my pleasure to express my indebtedness and recognition to Prof. Dr. Claudio Rea. I was still a schoolboy when he accepted, with enthusiasm, to teach me mathematics at his home or at his office in the University of Rome ``Tor Vergata". I never forget his skill of making complicated and hard mathematics easy to understand, and I definitely cannot stop thinking about the marvelous time I spent with him as a disciple.\\

I spent the period April-June 2010 at the University of Pennsylvania and worked with Prof. Dr. Ching-Li Chai. My time at UPenn was very productive and working with Prof. Chai was a fantastic and unique experience. He was so patient with my questions and he gave generously of his time and expertise, not only while I was in Philadelphia, but also when I returned to Zurich. He also accepted the uninteresting job of being a co-referee of this dissertation. I send him my sincere thanks. The staff at the Mathematics Department of UPenn was very kind and helpful to me, in particular Monica Dalin Pallanti, who did all the administrative job of inviting me and made my stay in Philadelphia a pleasant one. \\

I also spent a week at the University of Bielefeld visiting Prof. Dr. Thomas Zink. Though it was a short time, it happened to be highly productive. Prof. Zink has also accepted to read my thesis and to be present at my thesis defense. I'm very thankful to him for his invitation, for the constructive conversations we had, and for being my co-referee. In Bielefeld, I met Dr. Eike Lau and Tobias Ahsendorf. Apart from the interesting conversations that I had with Dr. Lau, he told me about an unpublished result of his and very kindly allowed me to include this result and its proof in my thesis. Tobias Ahsendorf generously provided me with his, not yet defended, Ph.D. thesis, which turned out to be very useful for Chapter 9 of this dissertation.\\

I was invited by Prof. Dr. H\'el\`ene Esnault to spend a week in Essen. I also worked with Dr. Kay R\"ulling while I was at the University of Duisburg-Essen. I thank both of them very much.\\

I had wonderful professors at Sharif University of Technology, \'Ecole Polythechnique F\'ed\'erale de Lausanne and Eidgen\"ossische Technische Hochschule Z\"urich.  I acknowledge, with deep gratitude and appreciation, the inspiration, encouragement, valuable time and guidance they gave me. At SUT, Prof. Dr. Saeed Akbari was the first to teach me how to do rigorous mathematics. Prof. Dr. Arash Rastegar introduced to me the wonderland of Algebraic Geometry.  Prof. Dr. Yahya Tabesh was very kind and helpful and I learned excellent mathematics from Prof. Dr. Siavash Shahshahani, primarily in his office at IPM.\\

At EPFL, Prof. Dr. Eva Bayer Fluckiger made it possible for me to pursue Arithmetic Geometry further and recommended me to Prof. Pink for a Masters and then a Ph.D. thesis. Without her help, I couldn't have done this thesis. Prof. Dr. Anthony C. Davison made my transition to EPFL easy and smooth and during my studies and while he was the dean of the Mathematics Department, helped me solve all kinds of administrative problems. Prof. Dr.  Kathryn Hess Bellwald, warmly welcomed me in her group, taught me beautiful mathematics and supervised two projects of mine. Prof. Dr. Michel Matthey, was a good friend and I enjoyed a lot the conversations we had and the enthusiastic courses he gave; I can't think of $K$-Theory, without thinking about him. Prof. Dr. Manuel Ojanguren was an excellent professor. I could write pages about him and how deeply he influenced me. He is more than a professor; he is a very good friend. His office door was always open to me, and I remember the hours I spent talking to him, almost everyday and about almost every thing. He helps me when I need help. I cannot express enough my appreciation for his friendship. I'm especially grateful to Prof. Dr. Farhad Rachidi, a professor at the Electrical Engineering Department, who helped me continue my studies at EPFL and his outstanding scientific character and wonderful personality made him an example for me to follow. I enjoyed the interesting courses and conversations I had with Prof. Dr. Donna Testerman, particularly, I enjoyed the project I did under her supervision, which was extremely fruitful; I learned what a group scheme is. I'm very happy to have such a good professor and friend.\\

In Zurich, I had the chance to have courses with Prof. Dr. Andrew Kresch from the University of Zurich, and to organize a reading course on \'etale cohomology under his supervision. He was open to all my math-related problems. Apart from taking interesting classes with Dr. Joseph Ayoub at the University of Zurich, I spent a lot of time in his office asking him questions and receiving clear answers and lucid explanations.\\

During my studies as a math student, I had the opportunity to make many many good friends, most of them mathematicians, in particular, Borna Barkhordar, Dr. Salman Abolfath Beigi, Dr. habil. Cristiana Bertolin, Prasenjit Bhowmik, C\'edric Bujard, Dr. Maurice Cochand, Dr. Eaman Eftekhary, Dr. Jean Fasel, Dr. Giordano Favi, Christian Graf, Amir Hossein Hamdavi, Dr. Keyvan Mallahi, Aleksander Momot, Simon Schieder and Dr. Olivier Siegenthaler. I thank all of them for the happy and agreeable times we shared.  I offer special thanks to my dear friend Dr. Mohsen Sharifitabar, who helps me any time I need help, with mathematical and chiefly non-mathematical problems. This is also a good opportunity for me to say thank you to my childhood companion and savvy cousin Roohollah Honarvar, for teaching me skills I can't find in books.\\

Last but not least, I express my deep gratitude to Prof. Dr. Brian Conrad and Prof. William Messing. Whenever I e-mail Prof. Conrad a question, he promptly replies to my e-mail and with a comprehensive answer. I have learned a lot from my correspondence with him. I had valuable conversations and e-mail correspondence with Prof. Messing.\\
%\end{comment}

%SECTION 0
%\section*{Conventions and Notations}

%CONVENTIONS
\textbf{\Large{Conventions.}}\\

Throughout the article, unless otherwise specified, rings are commutative with $1$. A geometric point of a scheme is a morphism from the spectrum of an algebraic closed field to the scheme. The group schemes are assumed to be commutative. By dimension of a finite group scheme over a field, we mean the vector space dimension of its Lie algebra. An exact sequence of group schemes is an exact sequence of sheaves on the fppf site over the base. In the theory we are dealing with, there are subtle existence problems; so, if we talk about an object, we are always implicitly assuming that it exists, without every time expressing it.\\

Individual chapters, sections or parts of them may require specific conventions, and we tried to introduce these conventions in the preamble of the corresponding chapters or sections. We therefore ask the reader to refer to the beginning of each chapter and section, in order to find the adequate conventions for that part.\\

%NOTATIONS
\textbf{\Large{Notations.}}\\

\begin{itemize}
 \item $ \BN=\{0,1,2,\dots,\} $
 \item $ \BN_{+}=\{1,2,3,\dots,\}  $
 \item For any $ n\in\BN_+ $, we denote by $ \omega(n) $ the number of distinct prime factors of $n$.
 \item $ \mu:\BN_{+}\to \{-1,0,1\} $ is the M\"obius function defined as follows: \[ \mu(n):=\begin{cases}(-1)^{\omega(n)} & \text{if}\,\, n\,\, \text{is square-free},\\ 0 & \text{otherwise.}\end{cases}\]
 \item $ \BQ_{\geq 0} $ is the set of non-negative rational numbers.
 \item $p$ is a prime number.
 \item $ q $ is a power of $p$ and $ \BF_q $ is the finite field with $ q $ elements.
 \item If $R$ is a ring and $r$ is an element of $R$, we denote by $ R/r $ the quotient ring $ R/rR$.
 \item $ \BZ_{(p)} $ is the localization of $ \BZ $ at the prime ideal $ (p) $.
 \item $ \BG_{m} $ is the multiplicative group scheme over any given base scheme.
 \item $ \BG_a $ is the additive group scheme over any given base scheme.
 \item For natural numbers $m$ and $n$, the binomial symbol $\binom{n}{m}$ is defined to be zero when $m>n$ and if $ n\geq m $ it is defined as usual.
 \item Let $ \ul{x}=(x_1,\dots,x_r) $ be an element of $ \BZ^r $. We denote by $ \max\ul{x} $ respectively $ \min\ul{x} $ the integer $ \max \{x_1,\dots, x_r\} $ respectively $\min \{x_1,\dots, x_r\}$.
 \item $ \BZ_0^r:=\{\underline{d}=(d_1,\dots,d_r)\in \BZ^r \vert \min \ul{d}=0\}. $ We denote by $ \BZ^r_{0,<M} $ the subset of $ \BZ^r_0 $ consisting of vectors $ \ul{d} $ with $ \max \ul{d}<M $.
 \item For integers $ a\leq b $, we set $ \lbb a,b\rbb :=[a,b]\cap \BZ.$
 \item Let $r$ be a natural number and $ S_1,\dots, S_r $ non-empty sets. Choose an element $z_i\in S_i$ for some $i$  and let $ a:\lbb1,r\rbb\setminus\{i\}\to \coprod_{j=1}^rS_j$ be a map such that $ a(j)\in S_j $ for every $ j\neq i$. We denote by $ (a(1),\dots,\uset{\uparrow}{z_i},\dots,a(r)) $ the element $ (a(1),\dots,a(i-1),z_i,a(i+1),\dots,a(r))\in S_1\times\dots\times S_r $. (This strange notation is used in the rare occasions, where the notations are very heavy and carrying all the indices reduces the readability).
 \item If $R$ is a ring and $M$ is an $R$-module, we denote by $ \ell_R(M) $ the length of $M$ over $R$.
 \item We denote the kernel of a homomorphism of group functors $\phi:\CF\to \CG$, by $\CF[\phi]$.
 \item Let $ X $ be a group scheme over a base scheme $S$. We will identify the sheaf $ h_X:=\Hom_S(\_,X) $ on the fppf site of $S$ with the scheme $X$. So, if $T$ is an $S$-scheme, $ X(T) $ will denote the set $ \Hom_S(T,X) $.
 \item Let $X$ be a scheme over a base scheme $S$ and $ f:T\to S $ a morphism. We denote by $ X_T $ the fiber product $ X\times_S T $. If $\CF$ is a sheaf on a Grothendieck site over $S$ (e.g. the fppf site), we denote by $ f^*\CF $ the pullback of $ \CF $ along $ f $. So, if $X$ is a scheme over $S$, the pullback of $X$, $ f^*X $, regarded as an fppf sheaf and the scheme $ X_T $ are identified.
 \item Let $ \CF $ and $ \CG $ be sheaves on a Grothendieck site. We denote by $ \innHom(\CF,\CG) $ the ``sheaf hom" from $ \CF $ to $ \CG $, in other words, this is the sheaf that sends an object $ U $ of the site to the set $ \Hom_U(\CF_{|_U},\CG_{|_U}) $.
 \item Let $G$ be a finite flat group scheme over a base scheme $S$. The Cartier dual of $G$, i.e., the finite flat group scheme $ \innHom_S(G,\BG_{m,S}) $ is denoted by $ G^* $.
 \item Let $R$ be a ring. We denote by $W(R)$ and $ CW(R) $ (respectively $ CW^u(R) $), the ring of Witt vectors and the ring of Witt covectors (respectively unipotent covectors) with respect to $p$ and with coefficients in $R$. Sometimes, when the risk of confusion is minor, we will write $W$ (respectively $ CW $) instead of $W(R)$ (respectively $ CW(R) $).  We write a vector of $ W(R) $ in the form $ \ul{x}=(x_0,x_1,\dots) $ and a covector of $ CW(R) $ in the form $ \ul{x}=(\dots,x_1,x_0) $.
 \item $ \BZ_q $ is the unramified extension $ W(\BF_q) $ of $ \BZ_p $ with residue field $ \BF_q $.
 \item For any scheme $S$, we denote by $W_S$ and $CW_S$ (respectively $ CW^u_S $) the ring scheme of Witt vectors and  Witt covectors (respectively unipotent covectors) over $S$. The Frobenius and Verschiebung will be denoted by $F$ and $V$.
 \item Denote by $ W_m $ the cokernel of the morphism $ V^m:W\to W $, i.e., the ring scheme of Witt vectors of length $m$ and denote by $ W_{m,n}$ the group scheme $ W_m[F^n] $, i.e., the kernel of $ F^n:W_m\to W_m $.
 \item Let $k$ be a perfect field of characteristic $p$. Denote by $ \widetilde{W} $ the direct system $ W_1\arrover{v}W_2\arrover{v}\dots $, viewed as an ind-object of the category of commutative group schemes over $k$. Thus, for any commutative group scheme $G$ over $k$, we have by definition $ \Hom(G,\widetilde{W})=\uset{n}{\dirlim}\,\Hom(G,W_n).$ Note that $  \widetilde{W} $ is canonically isomorphic to $ CW^u $ (cf. \cite{F}, Chapitre II, \S1).
 \item For all $m$ and $n$ consider the morphism (of schemes) \[ \tau_{m,n}:W_{m,n}\into W,\quad (x_0,\dots,x_{m-1})\mapsto (x_0,\dots,x_{m-1},0,0,\dots). \] Denote by $ \widehat{W} $ the formal group scheme $ \bigcup_{m,n}\tau_{m,n}(W_{m,n}) .$ To avoid heavy notations, when confusion is unlikely, we write $\tau$ instead of $\tau_{m,n}$. Note that $ \widehat{W} $ is sub-ind-object of $ \widetilde{W} $.
 \item Denote by $ \BW $ the inverse limit, $ \invlim W_{m,n} $ with transition morphisms the projections $ r:W_{m+1,n}\onto W_{m,n} $ (the truncation) and $ f:W_{m,n+1}\onto W_{m,n} $ (the Frobenius). For every $n$, denote by $ \pi_n $ the projection $ \BW\onto \invlim W_{m,n}=W[F^n] $.
 \item Let $R$ be a ring with a distinguished element $\pi$, and $G$ an $R$-module functor (i.e. a functor from a category to the category of $R$-modules). We denote by $G_n$ the functor $ G[\pi^n] $, i.e., the kernel of $ \pi^n:G\to G $. For instance, if $G$ is a $p$-divisible group and $R:=\BZ$ with $ \pi:=p $, then $G_n$ denotes the kernel of the multiplication by $p^n$. If there are more than one $R$-module functors $ G_0, G_1, G_2, \dots $, indexed by natural numbers, then for every $i$, the kernel of multiplication by $ \pi^n $ on $ G_i $ will be denoted by $ G_{i,n} $.
 \item Let $k$ be a perfect field of characteristic $p$. We denote by $\BE_k$ the Dieudonn\'e ring over $k$, i.e., the non-commutative polynomial ring $$\dfrac{W(k)[F,V]}{(FV-VF, VF-p)}$$ with $F\xi=\xi^{\sigma}F$ and $V\xi^{\sigma}=\xi V$ for all $\xi\in W(k)$, where $^{\sigma}:W(k)\to W(k)$ is the Frobenius morphism of $W(k)$.
 \item Let $k$ be a perfect field of characteristic $p$. We denote by $ \widehat{\BE}_k $ the $ (F,V) $-adic completion of $ \BE_k $, i.e., we have \[ \widehat{\BE}_k= \dfrac{W(k)\lbb F,V\rbb}{(FV-VF, VF-p)}.\]
 \item Let $k$ be a perfect field of characteristic $p$ and $G$ a finite group scheme over $k$ of $p$-power order. The contravariant Dieudonn\'e module of $G$, denoted by $ D^*(G) $, is the $ \BE_k $-module $ \Hom(G,CW^u)\cong\Hom(G,\widetilde{W}) $. The covariant Dieudonn\'e module of $G$, denoted by $ D_*(G) $, is the $ \BE_k $-module $ D^*(G^*) $. If $G$ is local-local, this module is canonically isomorphic to $ \Hom(\BW,G) $. For details refer to \cite{F} or \cite{P}.
 \item For any ring $R$, denote by $ \ul{\Lambda}_R $ the affine group scheme over $R$, which associates to every $R$-algebra $A$, the multiplicative group $1+t\cdot A\lbb t\rbb$ of formal power series in $A$ with constant term $1$. Seen as a functor from schemes over $R$ to Abelian groups, $ \ul{\Lambda}_R $ is isomorphic to the product $ \prod_{\BN_+}\BA^1_R $ (cf. \cite{D}).
 \item Set $ F(t):=\prod_{p\nmid n}(1-t)^{\frac{\mu(n)}{n}}\in 1+t\cdot \BZ_{(p)}\lbb t\rbb $ (cf. \cite{D}).
 \item The Artin-Hasse exponential is the following morphism \[ E:W_{\BZ_{(p)}}\to \ul{\Lambda}_{\BZ_{(p)}},\qquad \ul{x}\mapsto E(\ul{x},t):=\prod_{n\in\BN}F(x_n\cdot t^{p^n})\] (cf. \cite{D}).
 \end{itemize}

%SECTION 1
%\section{Algebraic Geometry Results}
\chapter{Algebraic Geometry Results}

In this chapter we define some general notions and prove some auxiliary results from algebraic geometry, that will be used later. These results may hold under weaker conditions. We did not intend to prove the most general statements. Rather, we tried to devise a compromise between simplicity of proofs and generality of statements.

\begin{lem}
\label{lemhensel}
Let $ X=\Spec(A) $ with $A$ a complete local Noetherian ring and let $ f:Y\to X $ be a separated morphism with the following property: for every local Artin ring $R$ and every morphism $ \Spec(R)\to X $, the base change of $f$ to $R$, $ f_R:Y_R\to \Spec(R) $, is a finite and flat morphism. Then $f$ is a finite and flat morphism.
\end{lem}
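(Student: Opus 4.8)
The plan is to reduce everything to the infinitesimal neighbourhoods of the closed point of $X$ and then algebraise. Write $\mathfrak{m}$ for the maximal ideal of $A$ and $k=A/\mathfrak{m}$. For each $n\geq 1$ the quotient $A/\mathfrak{m}^n$ is a local Artin ring, so by hypothesis $f_n\colon Y_n:=Y\times_X\Spec(A/\mathfrak{m}^n)\to\Spec(A/\mathfrak{m}^n)$ is finite and flat; hence $B_n:=\Gamma(Y_n,\mathcal{O}_{Y_n})$ is a free $A/\mathfrak{m}^n$-module, of some rank $d_n$. Since $Y_n\cong Y_{n+1}\times_{\Spec(A/\mathfrak{m}^{n+1})}\Spec(A/\mathfrak{m}^n)$ one has $B_n\cong B_{n+1}/\mathfrak{m}^nB_{n+1}$, and reducing modulo $\mathfrak{m}$ shows $d_n=d_{n+1}$; denote the common value by $d$.

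Next I would assemble the $B_n$ into a finite flat $A$-algebra. Put $\widehat{B}:=\invlim_{n}B_n$, an $\mathfrak{m}$-adically complete $A$-algebra with $\widehat{B}/\mathfrak{m}^n\widehat{B}\cong B_n$ for all $n$. As $A$ is complete local Noetherian and every quotient $\widehat{B}/\mathfrak{m}^n\widehat{B}$ is $A/\mathfrak{m}^n$-free of the same rank $d$, lifting a basis of $B_1$ successively up the tower $\cdots\to B_{n+1}\to B_n\to\cdots$ produces a basis of $\widehat{B}$ over $A$; thus $\widehat{B}$ is free of rank $d$, so $Z:=\Spec\widehat{B}\to X$ is finite and flat and satisfies $Z\times_X\Spec(A/\mathfrak{m}^n)\cong Y_n$ compatibly in $n$. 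In other words $Y$ and $Z$ have the same formal completion along the closed fibre, and what remains is to promote this to an isomorphism $Y\cong Z$ over $X$.

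To do that I would invoke Zariski's Main Theorem. Applying the hypothesis to the residue fields $\kappa(x)$ (which are local Artin rings) shows every fibre of $f$ is finite, so $f$ is quasi-finite; being separated of finite type it therefore factors as $Y\hookrightarrow\bar{Y}\xrightarrow{g}X$ with the first map an open immersion and $g$ finite, and one may take $\bar{Y}=\Spec C$ with $C$ a finite $A$-algebra and $Y$ schematically dense. Pulling back to $A/\mathfrak{m}^n$ realises $Y_n=\Spec B_n$ as an open subscheme of the Artinian scheme $\Spec(C/\mathfrak{m}^nC)$, hence as a direct factor; the resulting idempotents of $C/\mathfrak{m}^nC$ are compatible in $n$, and since $C=\invlim_{n}C/\mathfrak{m}^nC$ (because $C$ is finite over the complete ring $A$) they lift to an idempotent of $C$, giving $C\cong\widehat{B}\times\widehat{B}'$, i.e.\ $\bar{Y}\cong Z\sqcup\Spec\widehat{B}'$. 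I would then argue that $\widehat{B}'=0$: its spectrum is disjoint from $Y$ (the closed fibre of $\Spec\widehat{B}'$ is $\bar{Y}_1\setminus Y_1$), while, $A$ being Henselian, $\widehat{B}'$ is a product of complete local rings each of whose closed point lies over $\mathfrak{m}$; together with the finiteness of $f$ over $X$ this forces $\Spec\widehat{B}'=\varnothing$. Hence $Y=\bar{Y}=Z$, which is finite and flat over $X$.

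The step that is the real content is this last identification: upgrading the equality of formal completions to an honest isomorphism $Y\cong Z$, i.e.\ ruling out any part of $Y$ supported away from the closed point of $X$. Over the closed fibre everything is pinned down by the Artinian hypothesis and the completeness of $A$; the difficulty is to spread that control over all of $Y$, and this is precisely where the separatedness and finiteness of $f$, Zariski's Main Theorem, and the Henselianness of $A$ enter. Flatness of $f$, once $f$ is known to be finite, then follows from the local criterion for flatness applied to the family $\{Y_n\}$.
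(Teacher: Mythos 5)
Your reduction to the infinitesimal neighbourhoods, the construction of $\widehat{B}=\invlim_n B_n$ as a finite free $A$-algebra, and the passage through Zariski's Main Theorem are all sound (modulo the standing assumption, left implicit both by you and by the paper, that $f$ is of finite type, which is needed for quasi-finiteness and for ZMT). The paper itself takes a shorter route: it notes that $f$ is quasi-finite and separated over the Henselian local ring $A$, invokes Milne's structure theorem for such morphisms to conclude that $f$ is finite, and then gets flatness from the local flatness criterion applied to the tower $\{B_n\}$. Your argument is essentially an unwinding of that structure theorem, and it runs into the same obstruction at exactly the step you yourself single out as ``the real content''.

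The gap is the assertion that $\Spec\widehat{B}'$ is disjoint from $Y$. What the compatible idempotents actually give is only that the \emph{closed fibre} of $\Spec\widehat{B}'$ is disjoint from $Y$; the open subscheme $Y\cap\Spec\widehat{B}'$ of $\Spec\widehat{B}'$ can perfectly well be nonempty while avoiding the closed fibre entirely. This is precisely the component ``$Y_0$'' lying over the punctured spectrum in Milne's decomposition, and nothing in the stated hypotheses rules it out. Concretely, take $A=k\lbb t\rbb$ and $Y=\Spec A[1/t]=\Spec k((t))$: for any local Artin ring $R$ and any $A\to R$, the image of $t$ is either a unit (so $Y_R=\Spec R$) or nilpotent (so $Y_R=\emptyset$), and in both cases $Y_R\to\Spec R$ is finite and flat; yet $Y\to X$ is not finite. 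So the step cannot be repaired from the hypotheses as literally stated --- some extra input is required. In the paper's applications that input is available: the rank of $Y_R$ over $R$ is one and the same constant $d$ for \emph{every} $R$, in particular for every residue field $\kappa(\mathfrak p)$ of $A$. Granting this, $\Spec\widehat{B}\times_X\Spec\kappa(\mathfrak p)$ is an open and closed subscheme of $Y\times_X\Spec\kappa(\mathfrak p)$ and both have length $d$, so they coincide; hence $Y\cap\Spec\widehat{B}'$ has empty fibre over every point of $X$ and is therefore empty, and your argument closes. Without such a constancy (or density) hypothesis, neither your proof nor the paper's one-line appeal to the Henselian structure theorem goes through.
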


\begin{proof}
The hypothesis on $f$ implies that it is a quasi-finite morphism. Since $A$ is a local Henselian ring, by Theorem 4.2, p. 32 of \cite{JM}, $f$ is a finite morphism. Thus, in particular, $f$ is affine and we can write $ Y=\Spec(B) $ with $B$ a finite $A$-algebra. Let us denote by $ \Fm $ the maximal ideal of $A$, by $ A_n $ the local Artin ring $ A/\Fm^{n+1} $ and by $ B_n $, the finite $ A_n $-algebra $ B\otimes_AA_n $. Then, by assumption, for every natural number $n$, $ B_n $ is a finite flat $ A_n $-algebra. It follows from the local flatness criterion (cf. Theorem 22.3, p. 174 of \cite{Matsumura}) that $B$ is flat over $A$.
\end{proof}

%LEMMA 01
\begin{lem}
\label{lem01}
Let $\phi:X\to Y$ be a surjective morphism of schemes over a base scheme $S$. Denote by $ f:X\to S $ and  $ g:Y\to S $ the structural morphism of $X$, respectively of $Y$. If $f$ is finite and $g$ is separated and of finite type, then $g$ is finite.
\end{lem}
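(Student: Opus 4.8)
The plan is to prove that $g$ is finite by showing it is proper and quasi-finite, or more directly, by showing it is affine (using the surjection from the affine morphism $f$) and integral (hence finite, since $g$ is of finite type). I would work Zariski-locally on $S$, so assume $S = \Spec(R)$ is affine; then $X = \Spec(A)$ with $A$ a finite $R$-algebra, since $f$ is finite. The morphism $\phi: X \to Y$ is surjective, and $X$ is affine.

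First I would observe that $g$ is quasi-compact: since $X$ is quasi-compact and $\phi$ is surjective, $Y = \phi(X)$ is quasi-compact as a topological space. Combined with $g$ separated and of finite type, $g$ is of finite presentation in the topological sense we need. Next, the key point is that $\phi$ is a closed map: indeed $f = g \circ \phi$ is finite hence proper, and $g$ is separated, so by the cancellation property for proper morphisms, $\phi$ is proper; in particular $\phi$ is closed and surjective, so $Y$ carries the quotient topology and is itself the image of a proper scheme. To get that $g$ is proper, I would use that $f = g \circ \phi$ is proper, $\phi$ is surjective, and $g$ is separated and finite type: this is exactly the standard descent statement that if $g \circ \phi$ is proper, $\phi$ is surjective and $g$ is separated of finite type, then $g$ is proper (one checks the valuative criterion for $g$ by lifting along the surjection, or cites EGA II, 5.4.3, or Stacks project Tag 03GN). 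Therefore $g$ is proper and quasi-finite — quasi-finite because $f$ is finite hence quasi-finite and $\phi$ is surjective, so the fibers of $g$ are images of finite fibers of $f$, hence finite — and a proper quasi-finite morphism is finite (EGA IV, 8.11.1, or by Zariski's main theorem).

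The main obstacle, and the step I would be most careful about, is the descent of properness along the surjection $\phi$: one must verify that surjectivity of $\phi$ together with properness of $g\circ\phi$ really forces the valuative criterion for $g$, which requires being able to lift a valuation-ring-valued point of $Y$ to one of $X$ after a field extension (this uses that $\phi$ is surjective on points, not just dominant, and a standard argument extending the valuation ring). Alternatively, one can bypass the valuative criterion entirely: since $f$ is finite, $A$ is integral over $R$; since $\phi$ is surjective and affine source, for any affine open $\Spec(C) \subseteq Y$ the ring map $C \to A_{\phi^{-1}(\Spec C)}$ is injective after reducing (surjectivity of $\phi$ gives that the kernel consists of nilpotents on that open, or more precisely $C \to \prod_{\mathfrak{p}} \kappa(\mathfrak{p})$ factors through $A$), so $C$ embeds into a ring integral over $R$ up to nilpotents, forcing $C$ integral over $R$ modulo its nilradical; combined with $C$ of finite type over $R$ this gives $C$ finite over $R$, hence $g$ affine and finite. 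I would present whichever of these two routes is cleanest given the conventions already fixed in the paper, most likely citing the proper-descent statement directly and then invoking "proper + quasi-finite $\Rightarrow$ finite."
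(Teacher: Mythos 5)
Your proof is correct and follows essentially the same route as the paper's: quasi-finiteness because the finite fibers of $f$ surject onto the fibers of $g$, properness of $g$ descended along the surjection $\phi$, and then ``proper $+$ quasi-finite $\Rightarrow$ finite''. The only difference is that the paper proves the descent of universal closedness directly rather than citing it --- since properness of $f$ and surjectivity of $\phi$ are stable under base change, it suffices to check that $g$ is a closed map, and for closed $F\subseteq Y$ one has $g(F)=g(\phi(\phi^{-1}(F)))=f(\phi^{-1}(F))$, which is closed --- so the valuative-criterion lifting that you single out as the main obstacle is not actually needed.
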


%PROOF OF LEMMA 01
\begin{proof}
As $\phi$ is surjective, the fibers of $f$ surject onto the fibers of $g$ and since $f$ is finite and thus quasi-finite, the fibers of $g$ are finite too (as sets) and therefore, $g$ is a quasi-finite morphism. If we show that $g$ is proper, then it will be finite (``proper"$+$``quasi-finite"$ \Rightarrow $ ``finite"). Since $g$ is already by assumption separated, we only need to show the universal closedness. Note that since $f$ is finite, it is proper as well. As the properties of being proper and surjective are preserved under base change, in order to show that $g$ is a universally closed morphism, it is sufficient to show that it is a closed map of topological spaces. Let $F\subseteq Y$ be a closed subset. Since $\phi$ is surjective, we have $ F=\phi(\phi^{-1}(F)) $ and thus \[ g(F)=g\circ \phi (\phi^{-1}(F))= f(\phi^{-1}(F)),\] which is a closed subset of $S$, because $f$ is proper and therefore a closed map and $ \phi^{-1}(F) $ is a closed subset of $X$.
\end{proof}

%DEFINITION 011
\begin{dfn}
\label{def011}
Let $X$ be a separated scheme over a base scheme $S$ and let $s\in S(L)$ be an $L$-valued point, with $L$ a field. Denote by $X_s$ the base extension of $X$ with respect to the morphism $ s:\Spec(L)\to S $. By the \emph{order} of $X_s$ over $s$, we mean the dimension over $L$ of the $L$-vector space $ \Gamma(X_s,\CO_{X_s}) $. In particular, if $s\in S$ is a point, the order of the fiber $X_s$ over $s$, is the dimension of the $\kappa(s)$-vector space $ \Gamma(X_s,\CO_{X_s}) $.
\end{dfn}

%LEMMA 023
\begin{lem}
\label{lem023}
Let $X,Y$ be affine schemes over $S=\Spec(R)$, where $R$ is a local ring. Assume furthermore that $X$ is finite and flat and $Y$ is of finite type over $S$, that the fibers of $X$ and $Y$ have the same order over every point of $S$, and that we have a morphism $\phi:X\to Y$ over $S$ which is an isomorphism on the special fiber.  Then $\phi$ is an isomorphism.
\end{lem}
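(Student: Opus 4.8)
The plan is to prove that $\phi\colon X\to Y$ is an isomorphism by exploiting the finiteness and flatness of $X$ together with Nakayama-type arguments over the local ring $R$. Write $X=\Spec(B)$ and $Y=\Spec(C)$, where $B$ is a finite flat $R$-algebra and $C$ is a finitely generated $R$-algebra, and let $\phi^\#\colon C\to B$ be the corresponding ring homomorphism. Denote by $\Fm$ the maximal ideal of $R$ and by $\kappa$ its residue field. The hypothesis that $\phi$ is an isomorphism on the special fiber says precisely that $\phi^\#\otimes_R\kappa\colon C\otimes_R\kappa\to B\otimes_R\kappa$ is an isomorphism of $\kappa$-algebras.

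First I would prove surjectivity of $\phi^\#$. Since $B$ is finite over $R$, it is in particular a finitely generated $C$-module via $\phi^\#$ (one must check $B$ is module-finite over $C$; this follows because $B$ is module-finite over $R$ and $R\to C\to B$, so $B$ is already finite as an $R$-module hence as a $C$-module). The cokernel $M:=B/\phi^\#(C)$ is then a finitely generated $R$-module (being a quotient of the finite $R$-module $B$). By the special-fiber hypothesis, $M\otimes_R\kappa=\cokernel(\phi^\#\otimes_R\kappa)=0$, so by Nakayama's lemma $M=0$, i.e. $\phi^\#$ is surjective. Thus $B\cong C/I$ where $I=\kernel(\phi^\#)$.

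Next I would use the equality of fiber orders to force $I=0$. For any point $s\in S$ with residue field $\kappa(s)$, the order of $X_s$ is $\dim_{\kappa(s)}(B\otimes_R\kappa(s))$, and by flatness of $B$ over $R$ this equals $\rank_R B$ for every $s$; call this common value $d$. By hypothesis the order of $Y_s$ is also $d$ for every $s$, so $\dim_{\kappa(s)}(C\otimes_R\kappa(s))=d$ for all $s$. Now apply $-\otimes_R\kappa$ to the exact sequence $0\to I\to C\to B\to 0$: since $B$ is $R$-flat, $\Tor_1^R(B,\kappa)=0$, so we get a short exact sequence $0\to I\otimes_R\kappa\to C\otimes_R\kappa\to B\otimes_R\kappa\to 0$ of $\kappa$-vector spaces. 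Comparing dimensions, $\dim_\kappa(I\otimes_R\kappa)=d-d=0$. But $I$ is a finitely generated $R$-module (a submodule of the Noetherian... — actually one should be slightly careful: $I$ is finitely generated because $I=\kernel(C\to B)$ and $B$ is $R$-finite; more robustly, $I$ is finitely generated as an ideal of $C$ since $C\cong B$ modulo $I$ shows $C$ is module-finite over $R$ too, hence Noetherian as an $R$-module, hence $I$ is a finitely generated $R$-module). Then $I\otimes_R\kappa=I/\Fm I=0$, so by Nakayama $I=0$, whence $\phi^\#$ is an isomorphism and so is $\phi$.

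The main obstacle, and the point requiring the most care, is the finite-generation bookkeeping needed to apply Nakayama's lemma at each stage — specifically ensuring that $B$ is module-finite over $C$ (so the cokernel is a finite $R$-module) and that $I$ is a finite $R$-module (so that $I\otimes_R\kappa=0$ implies $I=0$). The key structural input that makes this work is the $R$-flatness of $B$, which both trivializes $\Tor_1^R(B,\kappa)$ in the dimension count and guarantees that the fiber order of $X$ is the constant $\rank_R B$; without flatness the dimension comparison across fibers would fail. No appeal to Noetherianity of $R$ is actually needed once one observes $B$ is $R$-finite and $C$ becomes $R$-finite after surjectivity is established.
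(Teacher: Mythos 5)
Your surjectivity step is exactly the paper's: the cokernel of $\phi^\#\colon C\to B$ is a quotient of the finite $R$-module $B$, vanishes modulo $\Fm$, and dies by Nakayama. The problem is the injectivity step. You correctly obtain $I\otimes_R\kappa=0$ from the exact sequence $0\to I\otimes_R\kappa\to C\otimes_R\kappa\to B\otimes_R\kappa\to 0$ (flatness of $B$ killing $\Tor_1^R(B,\kappa)$, or equivalently the splitting $C\cong I\oplus B$ as $R$-modules that the paper uses). But to conclude $I=0$ from $I=\Fm I$ you need $I$ to be a finitely generated module over a ring in which $\Fm$ lies in the Jacobson radical, and your justification for this --- that ``$C\cong B$ modulo $I$ shows $C$ is module-finite over $R$'' --- is false: a quotient of $C$ being module-finite over $R$ says nothing about $C$ itself (take $C=R[x]$, $I=(x)$). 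Likewise, the kernel of a surjection onto a finite module need not be finitely generated. This is not a cosmetic issue: with $R=\BZ_p$, $B=\BZ_p$, $C=\BZ_p\times\BQ_p$ and $I=0\times\BQ_p$, the map $C\to B$ is surjective, is an isomorphism on the special fiber, and satisfies $I\otimes_R\kappa=\BQ_p/p\BQ_p=0$, yet $I\neq 0$. Your argument as written never rules this out, because your dimension count uses the equal-fiber-order hypothesis only at the closed point, where it is already implied by the special-fiber isomorphism.

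What is missing is precisely the content of the second half of the paper's proof: one must first show that $Y$ is finite over $S$, i.e.\ that $C$ is a finite $R$-module, and this is where the equal-order hypothesis at \emph{every} point of $S$ (not just the closed one) enters. The paper's route: since $\phi$ is a closed immersion, each fiber map $\phi_s\colon X_s\to Y_s$ is a closed immersion of schemes whose coordinate rings are $\kappa(s)$-vector spaces of the same finite dimension, hence an isomorphism; therefore $\phi$ is surjective on underlying topological spaces, and Lemma \ref{lem01} (a separated finite-type scheme admitting a surjection from a finite scheme is finite) yields that $C$ is a finite $R$-module. Only then is $I$, as a direct summand (equivalently a quotient) of the finite $R$-module $C$, finitely generated, and Nakayama applies. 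In the example above, the generic fiber of $Y$ has order $2$ while that of $X$ has order $1$, which is exactly the hypothesis your proof fails to exploit. You should insert this finiteness argument before invoking Nakayama on $I$; with it, the rest of your write-up goes through.
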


%PROOF OF LEMMA 023
\begin{proof}
We show at first that $ \phi $ is a closed embedding. Set $ A:=\Gamma(X,\CO_X) $ and $ B:=\Gamma(Y,\CO_Y) $. By assumption, we have $ X=\Spec(A) $ and $ Y=\Spec(B) $, with  $A$ a flat and finite $R$-algebra (i.e., finite as $R$-module). The morphism $\phi:X\to Y$ corresponds to a ring homomorphism $ f:B\to A $. We want to show that $f$ is surjective. Write $ C $ for the cokernel of $f$ and denote by $ k $ the residue field of $R$. Tensoring the exact sequence of $R$-modules $ B\arrover{f} A\to C\to 0 $ with $ k $ over $R$, we obtain the exact sequence \[ B\otimes_Rk\arrover{f\otimes_R \Id_k} A\otimes_Rk\to C\otimes_Rk\to 0.\] By hypothesis, $ B\otimes_Rk\arrover{f\otimes_R \Id_k} A\otimes_Rk $ is an isomorphism, and therefore, $ C\otimes_Rk $ is the zero $k$-vector space. As $A$ is a finitely generated $R$-module and $C$ is a quotient, we can apply the Nakayama's lemma to $C$ and deduce that $ C=0 $. This shows that $f$ is surjective. Write $K$ for the kernel of $f$, i.e., we have a short exact sequence $ 0\to K\to B\arrover{f}A\to 0 $. As $A$ is flat and finitely generated and $R$ is local, it is free. This implies that the above short exact sequence is split (as $R$-modules) and we can write $ B\cong K\oplus A $, and so $ B\otimes_Rk\cong (K\otimes_Rk)\oplus (A\otimes_Rk) $. Again, since by assumption $ f $ is an isomorphism after tensoring with $ k $ over $R$, we have $ K\otimes_Rk =0$. Assume for the moment that $B$ is a finitely generated $R$-module. Then, $K$ being a quotient of $B$, is also finitely generated and we can apply once again Nakayama's lemma and conclude that $K=0$, which achieves the proof of the proposition. So, we have to show that $B$ is a finitely generated $R$-module or equivalently, that $ Y $ is a finite $S$-scheme. Fix a point $ s\in S $. As $ \phi:X\to Y $ is a closed embedding, the induced morphism $ \phi_s:X_s\to Y_s $ is a closed embedding as well. By assumption, $X_s$ is finite over $s$ and the fibers $X_s$ and $Y_s$ have the same order over $s$, which should be then finite. This shows that the embedding $\phi_s $ is in fact an isomorphism (a surjective map of vector spaces of the same finite dimension is an isomorphism). Consequently, the morphism $ \phi $ is surjective as a map between topological spaces. We can now apply Lemma \ref{lem01} and conclude that $ Y $ is a finite scheme over $S$.
\end{proof}

%PROPOSITION 018
\begin{prop}
\label{prop018}
Let $S$ be a base scheme and $ \phi:X\to Y $ a morphism of $S$-schemes with $X$ finite and flat and $Y$ of finite type and separated over $S$. Assume that for every geometric point $ s $ of $S$, $ X_s $ and $ Y_s $ have the same order over $s$ and that $ \phi $ is an isomorphism over every closed point of $ S $. Then $\phi$ is an isomorphism. 
\end{prop}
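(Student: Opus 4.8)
The plan is to reduce everything to the local statement Lemma~\ref{lem023}. Two preliminary remarks first. (i) For a scheme $Z$ over a field $L$ with algebraic closure $\bar L$ one has $\Gamma(Z_{\bar L},\CO_{Z_{\bar L}})=\Gamma(Z,\CO_Z)\otimes_L\bar L$, so the order of $Z$ over $L$ equals the order of $Z_{\bar L}$ over $\bar L$; hence the hypothesis on geometric points is equivalent to saying that $X_s$ and $Y_s$ have the same order over \emph{every} point $s\in S$. (ii) Being an isomorphism is local on $S$, so I may assume $S=\Spec R$ is affine, hence quasi-compact; then every non-empty closed subscheme of $S$ contains a closed point of $S$.

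First I would show that $\phi$ is finite. Since $Y$ is separated over $S$, the graph morphism $(\Id_X,\phi)\colon X\to X\times_SY$, being a base change of $\Delta_{Y/S}$, is a closed immersion, while $X\times_SY\to Y$ is finite, being a base change of $X\to S$. Composing these, $\phi$ is finite, in particular affine and proper, so $\phi(X)$ is closed in $Y$.

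Next I would reduce to the case where $Y$ is finite, hence affine, over $S$. Since $X_s$ and $Y_s$ have the same order at every $s$, one has $X_s=\emptyset$ if and only if $Y_s=\emptyset$, so $X$ and $Y$ have the same (clopen) image in $S$; replacing $S$ by this image, I may assume $X\to S$ and $Y\to S$ are surjective. Then $\phi(X)$ is a closed subset of $Y$ containing $Y_s$ for every closed point $s$ of $S$ (because there $\phi_s$ is an isomorphism), and a quasi-compactness argument forces $\phi(X)=Y$; thus $\phi$ is surjective, and Lemma~\ref{lem01} then shows that $Y\to S$ is finite, so $Y$ is affine.

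Finally, for an arbitrary point $s\in S$ I would choose a closed point $s_0$ of $S$ lying in $\overline{\{s\}}$ (possible by quasi-compactness) and set $R_0:=\CO_{S,s_0}$. Then $X\times_S\Spec R_0$ is finite and flat, $Y\times_S\Spec R_0$ is affine and of finite type over the local ring $R_0$, the fibres of $X$ and of $Y$ over the points of $\Spec R_0$ (which are points of $S$) have the same order, and $\phi$ restricts to an isomorphism over the special fibre $\Spec\kappa(s_0)$ by hypothesis; hence Lemma~\ref{lem023} applies and gives that $\phi\times_S\Spec R_0$ is an isomorphism. Since $s$ is a generization of $s_0$, the ring $\CO_{S,s}$ is a localization of $R_0$, so $\phi\times_S\Spec\CO_{S,s}$ is an isomorphism too; as $s$ was arbitrary, $\phi$ is an isomorphism. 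The main obstacle, I expect, is the reduction to $Y$ affine — that is, upgrading surjectivity of $\phi$ from the closed points of $S$ to all of $Y$ — which is precisely where quasi-compactness of $S$ and the ``closed-point-to-generic-point'' spreading built into Lemma~\ref{lem023} are indispensable.
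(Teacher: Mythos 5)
Your overall strategy---make $Y$ finite over $S$ and then feed everything into Lemma~\ref{lem023} after localizing at closed points---is close in spirit to the paper's proof, but the step in which you establish surjectivity of $\phi$ has a genuine gap. You assert that since $\phi(X)$ is a closed subset of $Y$ containing $Y_s$ for every closed point $s$ of $S$, ``a quasi-compactness argument'' forces $\phi(X)=Y$. No such argument exists: a closed subset of $Y$ can contain every closed fibre without being all of $Y$. For instance, if $R$ is a discrete valuation ring with fraction field $K$ and $Y=\Spec R\coprod \Spec K$ (which is separated and of finite type over $S=\Spec R$), the closed subset $\Spec R$ contains the entire fibre over the unique closed point of $S$ but is a proper subset of $Y$. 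Quasi-compactness of $S$ or of $Y$ does not help, because the points of $Y\setminus\phi(X)$ lie over non-closed points of $S$, and closed points of $Y$ (or of the open set $Y\setminus\phi(X)$) need not map to closed points of $S$ unless $S$ is Jacobson, which is not assumed and fails in the intended applications (e.g.\ $S=\Spec \BZ_p\lbb x\rbb$). To exclude this behaviour you must use the equal-order hypothesis on the fibres over \emph{non-closed} points, and the only mechanism for that is the one inside Lemma~\ref{lem023}: first show $\phi$ is a closed immersion by Nakayama over a \emph{local} base, then deduce from the equality of the finite orders that $\phi_s$ is surjective for every $s$, and only then invoke Lemma~\ref{lem01}. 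Since Lemma~\ref{lem023} presupposes $Y$ affine, and you need surjectivity precisely in order to make $Y$ affine, your ordering of the steps is circular at this point. (Your observation that $\phi$ is finite via the graph argument is correct and useful---it shows $\phi_*\CO_X$ is a finite quasi-coherent $\CO_Y$-module, so the Nakayama step can be formulated without assuming $Y$ affine---but Nakayama at closed points of $S$ only shows that the support of $\cokernel(\CO_Y\to\phi_*\CO_X)$ avoids the closed fibres, which is exactly the same difficulty.)

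A secondary point: once you replace $S$ by an affine open, a closed point of the new base need no longer be closed in the original $S$, so the hypothesis ``$\phi$ is an isomorphism over every closed point of $S$'' does not automatically restrict. Your final step only requires, for each $s$, a closed point $s_0$ of the original $S$ with $s_0\in\overline{\{s\}}$, so this can be repaired by choosing $s_0$ before localizing; but as written the reduction discards the hypothesis you need. That last localization step itself (applying Lemma~\ref{lem023} over $\CO_{S,s_0}$ and then passing to $\CO_{S,s}$) is fine once $Y$ is known to be affine over $S$; the missing ingredient is a correct proof that it is.
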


%PROOF OF PROPOSITION 018
\begin{proof}
Denote by $ f:X\to S $, respectively $ g:Y\to S $ the structural morphisms of $X$, respectively of $Y$. Assume that we have shown the proposition for $S$, $X$ and $Y$ affine. Let $ S=\bigcup_{\alpha\in \Lambda}S_{\alpha} $, and $ Y=\bigcup_{\alpha\in \Lambda}Y_{\alpha}$ be open affine coverings, such that $ g(Y_{\alpha})\subseteq S_{\alpha} $. Set $ X_{\alpha}:=\phi^{-1}(Y_{\alpha}) $, and therefore we have also $ f(X_{\alpha})\subseteq S_{\alpha} $. Since by hypothesis, $f$ is finite and thus affine and $g$ is separated, by \cite{EGAII} I.6.2 (v), we know that $ \phi $ is an affine morphism and therefore $ \bigcup_{\alpha\in \Lambda}X_{\alpha} $ is an open affine covering of $X$. Denote by $ \phi_{\alpha}:X_{\alpha}\to Y_{\alpha} $ the restriction of $ \phi $. We know that for all $ \alpha\in \Lambda $, the morphism $ \phi_{\alpha} $ is an isomorphism, and so, it follows that $ \phi $ is as isomorphism. So, it is enough to show the statement in the affine case. Set $ A:=\Gamma(X,\CO_X) $, $ B:=\Gamma(Y,\CO_Y) $ and $ R:=\Gamma(S,\CO_S) $ and denote by $ h:B\to A $ the ring homomorphism corresponding to $ \phi:X\to Y $. We want to show that for every maximal ideal $ \Fm $ of $ R $, the localization $ h_{\Fm}:B_{\Fm}\to A_{\Fm} $ of $h$ is an isomorphism. It follows then that $ h $ is an isomorphism. We can therefore assume further that $ R $ is a local ring. Let $s$ be a point of $S$, and $ \bar{\kappa}$ an algebraic closure of $\kappa(s)$. Since by assumption the $\bar{\kappa}$-vector spaces $ B\otimes_R\kappa(s)\otimes_{\kappa(s)}\bar{\kappa}$ and $ A\otimes_R\kappa(s)\otimes_{\kappa(s)}\bar{\kappa} $ have the same finite dimension, the $ \kappa(s) $-vector spaces $B\otimes_R\kappa(s)$ and $A\otimes_R\kappa(s)$ have the same dimension too. This shows that the fibers of $ X $ and $ Y $ have the same order. We also know by assumption that $ \phi $ is an isomorphism over the special fiber. We can now apply the previous lemma, and conclude that $ \phi $ is an isomorphism.
\end{proof}

%REMARK 017
\begin{rem}
\label{rem0 17}
Assume that $X$, $Y$ and $S$ are like in the previous proposition and $\phi:X\to Y$ is an isomorphism over every geometric point of $S$, then the hypotheses of the previous proposition are satisfied and we can draw the same conclusion.
\end{rem}

%PROPOSITION 02
\begin{prop}
\label{prop02}
Let  $\psi:G\to H$ be a homomorphism of affine group schemes of finite type over $\Spec(k)$, where $k$ is a field. Assume that for every finite group scheme $I$ over $k$, the induced homomorphism of groups $$\psi_*(I):\Hom(I,G)\to\Hom(I,H)$$ is an isomorphism and also the induced homomorphism on $\bar{k}$-valued points, $\psi(\overline{k}):G(\overline{k})\to H(\overline{k})$, is an isomorphism. Then $\psi$ is an isomorphism.
\end{prop}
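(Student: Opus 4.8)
The plan is to show that $\psi$ is a closed immersion and then that the resulting closed subgroup scheme of $H$ exhausts $H$. For the first part, set $N:=\ker\psi$, a closed subgroup scheme of $G$, affine and of finite type over $k$. Since $N(\bar k)=\ker\bigl(\psi(\bar k)\bigr)$ and $\psi(\bar k)$ is injective, $N(\bar k)$ is trivial; as $(N_{\bar k})_{\red}$ is then a reduced scheme of finite type over $\bar k$ with a single closed point, hence $\Spec\bar k$, the group scheme $N$ has dimension zero over $k$ and is therefore finite. If $N$ were nontrivial, the canonical inclusion $N\hookrightarrow G$ would be a nonzero element of $\Hom(N,G)$ which $\psi_*(N)$ sends to the zero morphism, just as it sends $0$ to the zero morphism; this contradicts the injectivity of $\psi_*(N)$. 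Hence $N$ is trivial, $\psi$ is a monomorphism, and therefore (being a monomorphism of affine group schemes of finite type over a field) a closed immersion. From here on I regard $G$ as a closed subgroup scheme of $H$ with ideal sheaf $\mathcal I$.

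Next I would check that $\psi$ is surjective on underlying spaces. Since $\psi(\bar k)$ is surjective, the closed subscheme $G_{\bar k}$ of $H_{\bar k}$ contains every closed point of $H_{\bar k}$ — these being exactly the $\bar k$-points, because $H_{\bar k}$ is Jacobson and of finite type over $\bar k$ — and closed points are dense, so $G_{\bar k}=H_{\bar k}$ and hence $G=H$ as topological spaces; equivalently, $\mathcal I$ lies in the nilradical of $\mathcal O_H$. If $k$ has characteristic $0$, then $H$ is reduced (group schemes over a field of characteristic zero are smooth), so $\mathcal I=0$ and $\psi$ is an isomorphism. This leaves the case where $k$ has characteristic $p>0$ and $H$ is non-reduced, which is the substantive one.

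In that case the idea is to use the Frobenius kernels, which see the infinitesimal structure of $H$ at the identity. For $n\ge 1$ the $n$-th Frobenius kernel $H[F^n]=\ker\bigl(F^n_{H/k}\colon H\to H^{(p^n)}\bigr)$ is a finite subgroup scheme of $H$, so by surjectivity of $\psi_*\bigl(H[F^n]\bigr)$ the inclusion $H[F^n]\hookrightarrow H$ equals $\psi\circ g$ for some homomorphism $g\colon H[F^n]\to G$; since $\psi\circ g$ is a monomorphism, $g$ is one too, hence a closed immersion, and so $H[F^n]$ is a closed subgroup scheme of $G$ inside $H$. Functoriality of the relative Frobenius then gives $G[F^n]=G\cap H[F^n]=H[F^n]$ for every $n$. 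Writing $R:=\mathcal O_{H,e}$ with maximal ideal $\mathfrak m$, $J:=\mathcal I_e$ (so $\mathcal O_{G,e}=R/J$), and $I_n:=(f^{p^n}:f\in\mathfrak m)R$, one has $\mathcal O_{H[F^n],e}=R/I_n$ and $\mathcal O_{G[F^n],e}=R/(J+I_n)$, so the equality $G[F^n]=H[F^n]$ forces $J\subseteq I_n$ for all $n$; since $I_n\subseteq\mathfrak m^{p^n}\subseteq\mathfrak m^n$, Krull's intersection theorem yields $J=\mathcal I_e=0$. Finally, left translation by the points of $G(\bar k)=H(\bar k)$ preserves $G_{\bar k}$ in $H_{\bar k}$ and carries the vanishing of $\mathcal I$ from the identity to every closed point of $H_{\bar k}$, whence $\mathcal I_{\bar k}=0$ and $\mathcal I=0$ by faithfully flat descent; so $\psi$ is an isomorphism. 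The genuine obstacle is exactly this non-reduced case: the hypotheses about $\psi$ on $\bar k$-points are blind to nilpotents, and it is the condition on all finite group schemes — specialized to the Frobenius kernels $H[F^n]$ — that forces $G$ and $H$ to have the same infinitesimal neighbourhood of the identity.
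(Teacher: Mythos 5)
Your proof is correct and follows essentially the same strategy as the paper: show the kernel is finite and hence trivial using $\psi_*(\ker\psi)$, then in characteristic $p$ apply the hypothesis to the Frobenius kernels $H[F^n]$ to force the ideal defining $G$ in $H$ into arbitrarily high powers of the maximal ideal at the identity, and invoke Cartier's theorem in characteristic zero. The only cosmetic difference is that you globalize from the identity by translating along $G(\overline{k})=H(\overline{k})$, whereas the paper first treats the connected component $H^0$ and then disposes of the finite \'etale quotient $H/G$ via its $\overline{k}$-points.
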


%PROOF OF PROPOSITION 02
\begin{proof}
We show at first that $ \psi $ is a monomorphism. Denote by $K$ the kernel of $ \psi $. Since $G$ is of finite type over $k$, its closed subgroup $K$ is also of finite type over $k$. The sequence \[ 0\longrightarrow K(\kbar)\longrightarrow G(\kbar)\arrover{\psi(\kbar)}H(\kbar) \] is exact, but by assumption, $ \psi(\kbar) $ is injective and therefore, $ K(\kbar)=0 $. It follows that $K$ is a finite group scheme over $k$. By assumption the homomorphism $$ \psi_*(K):\Hom(K,G)\to \Hom(K,H) $$ is injective. It implies that the inclusion $ K\into G $ is the zero homomorphism and thus $K=0$.\\

In order to show that $\psi$ is an epimorphism, we consider the problem over fields of positive characteristic and characteristic zero separately. First, the case when $k$ has positive characteristic $p$. Assume at first that $H$ is connected. Let $ H[F^n] $ denote the kernel of the homomorphism $ F_H^n:H\to H^{(p^n)} $. As $H$ is a scheme of finite type over $k$, the subgroup schemes $ H[F^n] $ are finite over $k$, for every $n$. It follows from the assumption that the inclusion $ H[F^n]\into H $ factors through the inclusion $ G\into H$. Denote by $ I_H $ the augmentation ideal of $H$ and by $ J $ the ideal in $ \CO(H) $ (the coordinate ring of $H$) defining $G$, i.e., we have $ \CO(G)\cong\CO(H)/J $. Since $G$ contains the kernel of all powers of the Frobenius morphism of $H$, we have $ J\subseteq \bigcap_{n=1}^{\infty}I_H^n $. But this intersection is trivial, because $H$ is connected. Hence $ \CO(G)\cong\CO(H) $ and $ G\cong H $. In the general case, denote by $ H^0 $ the connected component of $H$, containing the zero section, and by $ G_0 $ the intersection $ G\cap H^0 $. The hypotheses of the proposition hold for the induced homomorphism $ \psi_{|_{G_0}}:G_0\to H^0 $ and since $H^0$ is connected, by the above arguments, we have $ G_0=H^0 $. This shows that $ G $ contains $ H^0 $. As $H$ is of finite type over $k$, it has finitely many connected components. Thus, the quotient $ H/H^0 $ is a finite \'etale group scheme. This finite quotient surjects onto the quotient $ H/G $, which implies that $ H/G $ is a finite \'etale group scheme over $k$. Consider the following short exact sequence: \[0\to G\arrover{\psi} H\longrightarrow H/G\to 0.\] Taking the $\bar{k}$-valued points, we obtain the following short exact sequence:\[0\to G(\bar{k})\arrover{\psi(\bar{k})} H(\bar{k})\longrightarrow H/G(\bar{k})\to 0.\] Since by assumption $ \psi(\bar{k}) $ is an isomorphism, we have that $ (H/G)(\bar{k})$ is trivial. As $ H/G $ is \'etale and is trivial on $\bar{k}$-valued points, the group scheme $ H/G $ is trivial as well. Hence $ \psi$ is an isomorphism.\\

Now assume that $k$ is a field of characteristic zero. Since $ \psi(\kbar) $ is surjective, $ \psi $ is a dominant morphism. It follows that the kernel of the ring homomorphism $ \psi^{\sharp}:\CO(H)\to \CO(G) $ between the Hopf algebras of $H$ and $G$ is nilpotent. Since $k$ is of characteristic zero, $H$ is reduced (Cartier's theorem) and therefore the kernel of $ \psi^{\sharp} $ is zero, which means that $ \psi:G\to H $ is an epimorphism.
\end{proof}

%REMARK 111
\begin{rem}
\label{rem111}
$  $
\begin{itemize}
\item[1)] If characteristic of $k$ is positive, the homomorphism $ \psi $ is a monomorphism even without the assumption on the $ \kbar $-valued points. Indeed denote by $K$ the kernel of $ \psi $. Since $G$ is a scheme of finite type over $k$, the closed subscheme $K$ is also of finite type and therefore $ K[F] $, the kernel of the Frobenius morphism of $K$, is a finite group scheme over $k$. As $K[F]$ lies inside the kernel of $\psi$, the composition $ K[F]\into G\arrover{\psi}H $ is the trivial homomorphism. It follows from the assumption that $ K[F]\into G $ is the zero homomorphism. Hence $ K[F]=0 $. This implies that $K$ is an \'etale group scheme of finite type over $k$ and therefore is finite. Again, the composition $ K\into G\arrover{\psi} H $ is trivial, which implies that $K=0$.\\

When the characteristic of $k$ is zero, however, the assumption on the $\kbar$-valued points is necessary, for there are affine group schemes of finite type which don't have any non-trivial finite subgroups (e.g. $ \BG_a^{\oplus n} $), and therefore, any group homomorphism between them satisfies the first assumption.

\item[2) ]The author believes that when $k$ is of positive characteristic, the first assumption alone is enough to guarantee that $ \psi $ is an isomorphism. However, since we will not use the more general statement in the sequel, we content ourselves with the weaker statement.
\end{itemize}
\end{rem}

Now, we give the definition of a truncated Barsotti-Tate group over a base scheme, given in \cite{I}. For more details on (truncated) Barsotti-Tate groups and their properties we refer to l.c. and \cite{M}.

%DEFINITION TRUNCATED BT GROUPS
\begin{dfn}
\label{TBT}
Let $S$ be a scheme, $n$ a positive natural number and $G$ an fppf sheaf of Abelian groups over $S$. We call $G$ a \emph{truncated Barsotti-Tate group of level $n$ over $S$} if $G$ fulfills the following conditions (i) and (ii), and the extra condition (iii) when $n=1$:
\begin{itemize}
\item[(i)] $G$ is annihilated by $p^n$ and is a flat sheaf of $ \BZ/p^n\BZ $-modules.
\item[(ii)] the kernel of $p:G\to G$ is representable by a finite locally free group scheme over $S$. 
\item[(iii)] ($n=1$) If $S_0:=V(p)\subseteq S$ (the closed subscheme of $S$ where $p$ is zero) and $G_0:=G\times_SS_0$, the sequence \[ G_0\arrover{F} G_0\arrover{V} G_0 \] is exact.
\end{itemize}
The rank of $ G[p]=\kernel (p.:G\to G) $ is of the form $ p^h $, where $h:S\to \BN$ is a locally constant function, called \emph{the height} of $G$.
\end{dfn}

%DEFINITION BT GROUPS
\begin{dfn}
\label{BT}
Let $S$ be a scheme and $G$ an fppf sheaf of Abelian groups over $S$. We call $G$ a \emph{Barsotti-Tate group} or \emph{$p$-divisible group over $S$} if the following conditions are satisfied:
\begin{itemize}
\item[(i)] $G$ is $p$-divisible, i.e., the homomorphism $p:G\to G$ is an epimorphism.
\item[(ii)] $G$ is $p$-torsion, i.e., the canonical homomorphism $ \uset{n}{\dirlim}\, G[p^n]\to G $ is an isomorphism.
\item[(iii)] $ G[p] $ is representable by a finite locally free group scheme over $S$.
\end{itemize}
The rank of $ G[p]$ is of the form $ p^h $, where $h:S\to \BN$ is a locally constant function, called \emph{the height} of $G$.
\end{dfn}

%DEFINITION 09
\begin{dfn}
\label{def09}
A $p$-divisible group $G$ over a scheme $S$ is said to be \emph{infinitesimal}, if for every $s\in S $, the fiber $ G_s $  is a connected (or equivalently formal) $p$-divisible group over the residue field $ \kappa(s) $ at $s$.
\end{dfn}

%DEFINITION 013
\begin{dfn}
\label{def013}
Let $G$ be a $p$-divisible group over a base scheme $S$. The \emph{dimension} of $G$ is the set-theoretic map $ \dim (G):S\to\BN $, which sends a point $s\in S$ to the dimension of the $p$-divisible group $ G_s $ over the residue field $ \kappa(s) $ at $s$.
\end{dfn}

%DEFINITION 014
\begin{dfn}
\label{def014}
Let $(A,\Fm)$ be a complete local Noetherian ring and denote by $ \FX $ and $ \FX_n $ the formal scheme $ \Spf(A) $ respectively the affine scheme $ \Spec(A/\Fm^n) $. We also set $X:=\Spec(A)$. 
\begin{itemize}
\item[(i)] A \emph{truncated Barsotti-Tate group of level $i$} over  $ \FX $ is a system $ \FG=(G(n))_{n\geq 1} $ of truncated Barsotti-Tate groups of level $i$ over $ \FX_n $ endowed with isomorphisms $ G(n+1)|_{\FX_{n}}\cong G(n) $, where $ G(n+1)|_{\FX_{n}} $ is the base change of $G(n+1)$ to $ \FX_n $. A \emph{homomorphism} $ \phi:\FG\to\FH $ between two truncated Barsotti-Tate groups of level $i$ over $ \FX $ is a system $ (\phi(n))_{n\geq 1} $ of homomorphisms $ \phi(n):G_n\to H_n $ over $ \FX_n $, such that for all $n$, $ \phi(n+1)|_{\FX_{n}}=\phi(n) $. We denote by $\BTE_i/\FX $ (respectively by $\BTE_i/X $) the category of truncated Barsotti-Tate groups of level $i$ over $\FX$ (respectively over $ X$). Multilinear, symmetric and alternating morphisms of truncated Barsotti-Tate groups of level $i$ over $ \FX $ are defined similarly.
\item[(ii)] A \emph{$p$-divisible group} over $ \FX $ is a system $ \FG=(G(n))_{n\geq 1} $ of $p$-divisible groups $ G(n) $ over $ \FX_n $ endowed with isomorphisms $ G(n+1)|_{\FX_{n}}\cong G(n) $, where $ G(n+1)|_{\FX_{n}} $ is the base change of $G(n+1)$ to $ \FX_n $. A \emph{homomorphism} $ \phi:\FG\to\FH $ between two $p$-divisible groups over $ \FX $ is a system $ (\phi(n))_{n\geq 1} $ of homomorphisms $ \phi(n):G_n\to H_n $ over $ \FX_n $, such that for all $n$, $ \phi(n+1)|_{\FX_{n}}=\phi(n) $. We denote by $ p$-$\Div/\FX $ (respectively by $ p$-$\Div/X $) the category of $p$-divisible groups over $\FX$ (respectively over $ X$). Multilinear, symmetric and alternating morphisms of $p$-divisible groups over $ \FX $ are defined similarly.
\end{itemize}
\end{dfn}

Let $ G $ be an object of $p$-$ \Div/X $ (respectively of $ \BTE_i/X  $) and denote by $ G(n) $ the pullback of $ G $ to $ \FX_n $. We have canonical isomorphisms $ G(n+1)|_{\FX_n}\cong G(n) $ and therefore, the system $ (G(n))_{n\geq 1} $ defines a $p$-divisible group (respectively a truncated Barsotti-Tate groups of level $i$) over $ \FX $ that we denote by $ \FF(G) $.

For the proof of the following proposition, we refer to \cite{M}, Ch. II, lemma 4.16, p. 75, or \cite{dJ}, lemma 2.4.4, p. 17.

%PROPOSITION 023
\begin{prop}
\label{prop023}
The functors \[ \FF:\BTE_i/X \longrightarrow \BTE_i/\FX \] and \[ \FF:p\text{-}\Div/X \longrightarrow p\text{-}\Div/\FX \] are an equivalences of categories.
\end{prop}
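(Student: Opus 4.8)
The plan is to deduce both equivalences from Grothendieck's formal existence theorem in its elementary affine/finite form. A truncated Barsotti--Tate group of level $i$ over any base is in particular a finite locally free commutative group scheme, a $p$-divisible group is an inductive system of such, and a morphism of either is by definition a compatible system of morphisms on the finite levels; so it suffices to (a) show that restriction to the $\FX_n$ is an equivalence from finite locally free commutative group schemes over $X$ to compatible systems of such over the $\FX_n$, and (b) check that the truncation/divisibility axioms pass in both directions. The basic input for (a) is: for $A$ complete local Noetherian, $M \mapsto (M/\Fm^n M)_n$ is an equivalence from finite $A$-modules to compatible systems $(M_n)_n$ of finite $A/\Fm^n$-modules, with quasi-inverse $(M_n)_n \mapsto \invlim M_n$, compatible with $\otimes_A$ and carrying finite free modules to systems of finite free modules on both sides. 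Faithfulness, and the isomorphism $M \to \invlim M/\Fm^n M$, follow from the fact that finite $A$-modules are $\Fm$-adically complete and separated (Krull). For essential surjectivity of this auxiliary functor, given $(M_n)_n$ with each $M_n$ free of rank $d$, lift a $k$-basis of $M_1$ to $M := \invlim M_n$ to get $A^d \to M$ whose reduction modulo $\Fm^n$ is surjective by Nakayama over $A/\Fm^n$, hence bijective (a surjective endomorphism of a finite module over a commutative ring is an isomorphism); passing to the limit yields $A^d \xrightarrow{\sim} M$.

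Granting this auxiliary equivalence, faithfulness and fullness of $\FF$ are immediate. A homomorphism $G = \Spec B \to H = \Spec C$ of finite locally free group schemes over $X$ is a Hopf-compatible $A$-algebra map $C \to B$ of finite free $A$-modules; it is therefore determined by and can be reassembled from its reductions modulo the $\Fm^n$, the Hopf-algebra identities for the assembled map being automatic because $B$, $B \otimes_A B,\dots$ are $\Fm$-adically separated and the identities hold modulo every $\Fm^n$. For $p$-divisible groups and truncated Barsotti--Tate groups one then argues level by level.

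For essential surjectivity of $\FF$ one first algebraizes a single finite locally free group scheme: from a compatible system of finite free Hopf $A/\Fm^n$-algebras $B_n$, form $B := \invlim B_n$, which by the auxiliary equivalence is finite free over $A$ with $B/\Fm^n B \cong B_n$; the multiplication, unit, comultiplication, counit and antipode, and all the Hopf axioms, transport to $B$ by passing to the limit. For a system $\FG = (G(n))_n$ of $p$-divisible groups over $\FX$, apply this to the finite levels $(G(n)[p^m])_n$ to get finite locally free $H_m$ over $X$, use fullness to algebraize the closed immersions $G(\bullet)[p^m] \into G(\bullet)[p^{m+1}]$ (a morphism of finite locally free group schemes over $X$ that is a closed immersion modulo every $\Fm^n$ is a closed immersion, by Nakayama applied to the cokernel of the corresponding map of finite $A$-algebras), and set $G := \dirlim H_m$. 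Restriction commutes with this filtered colimit, so $\FF(G) \cong \FG$; and $H_m = G[p^m]$ for all $m$, since $H_m$ and $H_{m'}[p^m]$ (for $m' \geq m$) are two closed subgroup schemes of $H_{m'}$ agreeing modulo every $\Fm^n$, and submodules of finite $A$-modules are closed in the $\Fm$-adic topology (Krull/Artin--Rees).

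The genuinely delicate point, and the one I expect to be the main obstacle, is verifying that the algebraized $G$ over $X$ really satisfies the structural axioms --- that it is killed by $p^i$ and is a flat sheaf of $\BZ/p^i$-modules (resp.\ that $p\colon G \to G$ is an epimorphism with $G[p]$ finite locally free), and the Frobenius--Verschiebung exactness over $V(p)$ when $i = 1$. Being killed by $p^i$, and finite local freeness of kernels such as $G[p]$ or $\ker(p^j\colon G \to G)$, follow as before from $\Fm$-adic separatedness and the auxiliary equivalence, since these kernels are fibre products and hence commute with the base changes $X \to \FX_n$, and their algebras have finite free reductions. The sheaf-flatness and divisibility conditions reduce to identities such as $p^{i-j}G = G[p^j]$ and $p\,G[p^{m+1}] = G[p^m]$ among fppf images; here one invokes that kernels and quotients of morphisms of finite locally free group schemes by finite locally free subgroup schemes are again finite locally free and commute with arbitrary base change, so that these images commute with $X \to \FX_n$, whence the identities follow from the known ones at each finite level by closedness of submodules (the residual faithful-flatness statements can alternatively be gotten from the local criterion of flatness, Matsumura, Theorem~22.3, as in Lemma~\ref{lemhensel}, after decomposing the finite $A$-algebras into products of complete local rings). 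Finally, the $F$--$V$ condition for $i = 1$ is obtained by running the whole algebraization over the complete local Noetherian ring $A/pA$. Assembling all these verifications --- where the completeness and Noetherianity of $A$ enter essentially --- is exactly the content carried out in \cite{M} and \cite{dJ}.
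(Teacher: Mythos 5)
Your argument is correct and is essentially the proof the paper relies on: the paper itself gives no proof but refers to Messing (Ch.~II, Lemma 4.16) and de Jong (Lemma 2.4.4), and what you write --- algebraization via the equivalence between finite modules over the complete local Noetherian ring $A$ and compatible systems over the $A/\Fm^n$, followed by the ideal-theoretic ($\Fm$-adic closedness) and base-change verifications of the Barsotti--Tate axioms --- is exactly the content of those cited lemmas. Nothing further is needed.
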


%SECTION 2
%\section{$R$-module schemes and $R$-multilinear morphisms}
%\chapter{$R$-Module Schemes and $R$-Multilinear Morphisms}
\chapter{$R$-Multilinear Theory of $R$-Module Schemes}
\label{section R-modules}

%\subsection{$R$-module schemes}
\section{$R$-module schemes}

%DEFINITION 1
\begin{dfn}
\label{def 1}
Let $M$ be a commutative group scheme over  $S$. If there is a ring homomorphism $\alpha_M:R\to \End_S(M)$, then $ M $ together with $ \alpha_M $  is called an \emph{$ R $-modules scheme over $S$}, and $\alpha_M$ is called the \emph{module structure} of $M$. Equivalently, an $R$-module structure on $M$ is a factorization of the representable functor $ h_M=\Hom_S(\_,M):\Scheme/S\to\Ab $ through the forgetful functor $ R $-$ \Module \to\Ab$ from the category of $R$-modules to the category of Abelian groups. By abuse of terminology, we call $M$ an $R$-module scheme (over $S$). For simplicity, we write $r\cdot:M\to M$ or simply $r:M\to M$ for $\alpha(r).$
\end{dfn}

%DEFINITION 2
\begin{dfn}
\label{def 2}
Let $M$ and $N$ be $R$-module schemes over $S$. An \emph{$R$-linear homomorphism} over $S$ or an \emph{$R$-module homomorphism }$\phi$ over $S$ from $M$ to $N$ is a group scheme homomorphism $\phi:M\to N$ over $S$, such that $\alpha_N(r)\circ \phi=\phi\circ \alpha_M(r)$ for every $r\in R$. We denote by $\Hom^R(M,N)$ the group of all $R$-linear homomorphisms from $M$ to $N$, which is in fact an $R$-module using the action of $R$ on $M$ or $N$. If the ring $R$ is understood from the context and there is little risk of confusion with the group of all homomorphisms (not necessarily $R$-linear) from $M$ to $N$, we denote this module by $\Hom(M,N)$. 
\end{dfn}

%REMARK 1
\begin{rem}
\label{rem 1}
$  $
\begin{itemize}
\item[1)] A sequence \[ 0\to M'\longrightarrow M\longrightarrow M''\to 0 \] of $R$-module schemes is exact, if it is exact as a sequence of commutative group schemes.
\item[2)] If $T$ is an $S$-scheme, then the $R$-module structure of $M$ gives an $R$-module structure of the base extension $M_T$.
\item[3)]
If $M_1,\dots,M_r$ are $R$-module schemes over $S$, then the product $M_1\times_SM_2\times_S\dots\times_SM_r$ is again an $R$-module scheme over $S$.
\item[4)] A commutative group scheme over $S$ is a $\BZ$-module scheme over $S$. So, we can think of the theory of $R$-module schemes as a generalization of the theory of commutative group schemes.
\item[5)] Let $M$ be an $R$-module scheme over $S$ and $G$ a group scheme over $S$. Then the group $ \Hom_S(M,G) $ has a natural structure of $R$-module through the action of $R$ on $M$.
\item[6)] Let $M$ be a finite flat $R$-module scheme over $S$. The Cartier dual of $M$, i.e., the group scheme $\innHom(M,\BG_{m,S})$ has a natural $R$-module scheme structure given by the action of $R$ on $M$.
\item[7)] Let $M$ be a finite flat $R$-module scheme over $\Spec(A)$, where $A$ is a Henselian local ring. We have the connected-\'etale sequence of $M$ as a group scheme over $\Spec(A)$ \[0\to M^0\to M\to M^{\text{\'et}}\to 0.\] The functoriality of this sequence implies that the action of $R$ on $M$ induces actions on connected and \'etale factors, i.e., for every $r\in R$, we have the following commutative diagram:
\[\xymatrix{
0\ar[r] & M^0\ar[d]^{r\cdot}\ar[r] & M\ar[d]^{r\cdot}\ar[r] & M^{\text{\'et}}\ar[d]^{r\cdot}\ar[r] & 0\\
0\ar[r] & M^0\ar[r] & M\ar[r] & M^{\text{\'et}}\ar[r] & 0.
}\]  Therefore, the connected and \'etale factors of $M$ have natural structures of $R$-module schemes and the connected-\'etale sequence of $M$ is an exact sequence of $R$-module schemes over $\Spec(A)$.
\end{itemize}
\end{rem}

%DEFINITION 3
\begin{dfn}
\label{def 3}
Let $M$ and $N$ be $R$-module schemes over $S$. Define a contravariant functor $\innHom^R(M,N)$ from
the category of schemes over $S$ to the category of $R$-modules as
follows:
$$T\mapsto \innHom^R(M,N)(T):=\Hom^R_T(M_T,N_T).$$
If this functor is representable by a group scheme over $S$, that
group scheme, which is in fact an $R$-module scheme is also denoted by $\innHom^R(M,N)$ and is called the
\emph{inner $\Hom$ from $M$ to $N$.}
\end{dfn}

%REMARK 2
\begin{rem}
\label{rem 2}
Note that the condition of a homomorphism to be $R$-linear, is a closed condition, therefore, if $\innHom(M,N)$ exists as a group scheme, then $\innHom^R(M,N)$ exists and is a closed subscheme of $\innHom(M,N)$. So, we can apply the existence results that we have for group schemes, to the new setting of $R$-module schemes. For instance, if $M$ is finite flat over $S$, and $N$ is affine, then $\innHom^R(M,N)$ is representable by an affine $R$-module scheme (cf. theorem 1.3.5 in \cite{P} \footnote{Note that since the paper \cite{P} is not yet published and is in preparation, its numbering is subject to change. In this treatise, we will use the numbering of the last version available so far.}). If in addition, $N$ is of finite type over $S$, then $\innHom^R(M,N)$ is of finite type too.
\end{rem}

It is known that exact sequences of group schemes are stable under base change, and therefore, the same holds for exact sequences of $R$-module schemes. However, we give a proof of this fact in the following special case:

%BASE CHANGE PROPERTY OF EXACT SEQUENCES
\begin{lem}
\label{ESBC}
Suppose that $0\to K\arrover{i}N\arrover{p}Q\to 0$ is a short
exact sequence of affine $R$-module schemes over a field $k$ and let $T=\Spec C$ be a $k$-scheme. Then the sequence \[ 0\to K_T\arrover{i_T}N_T\arrover{p_T}Q_T\to 0 \] obtained by base change is exact.
\end{lem}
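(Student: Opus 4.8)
The plan is to reduce everything to commutative algebra by passing to coordinate rings and to exploit the fact that, over a field, the base-change functor $-\otimes_k C$ is exact. First I would recall the meaning of exactness here: since $K$, $N$, $Q$ are affine over $k$, write $A:=\CO(K)$, $B:=\CO(N)$, $D:=\CO(Q)$ for their coordinate Hopf algebras and $i^*:B\to A$, $p^*:D\to B$ for the $k$-algebra homomorphisms induced by $i$ and $p$. Exactness of $0\to K\arrover{i}N\arrover{p}Q\to 0$ as a sequence of fppf sheaves of groups over the field $k$ is equivalent to: \textbf{(a)} $i^*$ is surjective, i.e. $i$ is a closed immersion (in particular a monomorphism); \textbf{(b)} $p^*$ makes $B$ a faithfully flat $D$-algebra, i.e. $p$ is faithfully flat, hence an fppf epimorphism with $Q\cong N/K$; and \textbf{(c)} $A\cong B\otimes_D k$, the tensor product being along the counit $\varepsilon_D:D\to k$ — equivalently $K=\kernel(p)$.

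Then I would base-change along $T=\Spec(C)\to\Spec(k)$, which replaces $A,B,D$ by $A_C:=A\otimes_k C$, $B_C:=B\otimes_k C$, $D_C:=D\otimes_k C$, replaces $i^*,p^*$ by $i^*\otimes\id_C$, $p^*\otimes\id_C$, and replaces the counit of $D$ by $\varepsilon_D\otimes\id_C$, and I would check that (a), (b), (c) persist. For (a): since $k$ is a field, $-\otimes_k C$ is exact, so $i^*\otimes\id_C:B_C\to A_C$ is surjective. For (b): $B_C=B\otimes_k C\cong B\otimes_D(D\otimes_k C)=B\otimes_D D_C$ is the base change along $D\to D_C$ of the faithfully flat $D$-algebra $B$, hence is faithfully flat over $D_C$. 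For (c), using the same identification,
\[
B_C\otimes_{D_C}C\;\cong\;(B\otimes_D D_C)\otimes_{D_C}C\;\cong\;B\otimes_D C\;\cong\;(B\otimes_D k)\otimes_k C\;\cong\;A\otimes_k C\;=\;A_C,
\]
compatibly with the structure maps (here $D\to C$ is the composite $D\xrightarrow{\varepsilon_D}k\to C$); equivalently, $\kernel(p)=N\times_Q\{e\}$ is a fibre product over the unit section and fibre products are stable under base change, so $\kernel(p_T)=(\kernel p)_T=K_T$. Hence $0\to K_T\arrover{i_T}N_T\arrover{p_T}Q_T\to 0$ again satisfies (a)--(c) and is exact; since by Remark \ref{rem 1} exactness of a sequence of $R$-module schemes means exactness of the underlying sequence of group schemes, this proves the lemma.

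The argument has no serious obstacle. The only step requiring genuine care is (c): one must make sure that the kernel identification $K=\kernel(p)$ is the one carried along by base change and not just some abstract isomorphism. I would settle this via the fibre-product remark, which sidesteps the tensor-product bookkeeping entirely, rather than through the displayed chain of isomorphisms.
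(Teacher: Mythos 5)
Your proof is correct and follows essentially the same route as the paper's: both pass to the coordinate Hopf algebras and use flatness of $C$ over $k$, identifying the kernel after base change either via the augmentation-ideal presentation $\CO(K)\cong\CO(N)/(I_{\CO(Q)}\cdot\CO(N))$ (the paper) or, equivalently, via stability of the fibre product $K=N\times_Q\Spec(k)$ under base change (you). The only cosmetic difference is that you carry the epimorphism condition as faithful flatness, which is stable under arbitrary base change, while the paper carries it as injectivity of the comorphism, preserved because $C$ is flat over $k$; over a field these formulations are equivalent.
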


\begin{proof}
Denote by $A, B$ the Hopf algebras representing $N, Q$ and by $I_B$ the
augmentation ideal of $B$. Then the Hopf algebra representing $K$ is
$A/(I_B\cdot A)$. Since $C$ is flat over
$k$, we have an injection $B\otimes_kC\into A\otimes_kC$ and
therefore $N_T\arrover{p_T}Q_T$ is a quotient morphism. We also have $(I_B\cdot A)\otimes_kC=(I_B\otimes_kC)\cdot
(A\otimes_kC)$ and so by flatness we have $$(A/(I_B\cdot
A))\otimes_kC\cong A\otimes_kC/((I_B\cdot
A)\otimes_kC)=A\otimes_kC/(I_B\otimes_kC)(A\otimes_kC).$$ It implies
that $K_T$ is the kernel of $N_T\arrover{\pi_T}Q_T$. Consequently
the short sequence $0\to K_T\arrover{i_T}N_T\arrover{p_T}Q_T\to 0$
is exact.
\end{proof}

%PROPOSITION 1
\begin{prop}
\label{prop 1}
Let $M$ be an affine $R$-module scheme over a field $k$. Then the functors $\innHom^R(-,M)$ and $\innHom^R(M,-)$ from the category of affine $R$-module schemes over $k$ to the category of presheaves of $R$-modules are left exact.
\end{prop}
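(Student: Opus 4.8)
The plan is to unwind the definitions and reduce everything to the left exactness of $\Hom$ in an abelian category. A sequence of presheaves of $R$-modules on $\Scheme/k$ is exact if and only if it becomes an exact sequence of $R$-modules after evaluation at every $k$-scheme $T$. So, writing a short exact sequence of affine $R$-module schemes over $k$ as $0\to K\to N\to Q\to 0$ with $i\colon K\to N$ and $p\colon N\to Q$, I must show that for every $k$-scheme $T$ the two sequences
\[ 0\longrightarrow \Hom^R_T(Q_T,M_T)\longrightarrow \Hom^R_T(N_T,M_T)\longrightarrow \Hom^R_T(K_T,M_T) \]
(with maps given by precomposition with $p_T$ and with $i_T$) and
\[ 0\longrightarrow \Hom^R_T(M_T,K_T)\longrightarrow \Hom^R_T(M_T,N_T)\longrightarrow \Hom^R_T(M_T,Q_T) \]
(with maps given by postcomposition with $i_T$ and with $p_T$) are exact sequences of $R$-modules.

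First I would carry out the base change step. By Lemma \ref{ESBC} the sequence $0\to K_T\to N_T\to Q_T\to 0$ is again exact when $T$ is affine; for a general $k$-scheme $T$ this follows by covering $T$ by affine opens, since being a kernel or a cokernel of fppf sheaves of $R$-modules is a Zariski-local condition on the base, or alternatively from the stability of exact sequences of commutative group schemes under arbitrary base change. Hence $i_T$ is a monomorphism, $p_T$ is an epimorphism, and in the abelian category of fppf sheaves of $R$-modules over $T$ one has $K_T=\kernel(p_T)$ and $Q_T=\cokernel(i_T)$. Moreover, affine $R$-module schemes over $T$ form a full subcategory of that abelian category via $h_{(-)}$, compatibly with the notion of exact sequence (Remark \ref{rem 1}) and with the $R$-module $\Hom^R_T(-,-)$.

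It then suffices to invoke the formal left exactness of $\Hom$. For the contravariant functor: precomposition with $p_T$ is injective because $p_T$ is an epimorphism; and if $g\colon N_T\to M_T$ is $R$-linear with $g\circ i_T=0$, then $g$ vanishes on $\image(i_T)$, hence factors uniquely through $\cokernel(i_T)=Q_T$, giving exactness in the middle. The covariant case is dual, using that $i_T$ is a monomorphism and $K_T=\kernel(p_T)$. The argument has no serious obstacle; the only point that needs a little care is the reduction from arbitrary $k$-schemes $T$ to affine ones (to which Lemma \ref{ESBC} directly applies), handled by the Zariski-local nature of kernels and cokernels of fppf sheaves noted above.
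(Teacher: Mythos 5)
Your proposal is correct and follows essentially the same route as the paper: base-change the short exact sequence via Lemma \ref{ESBC}, then conclude by the universal properties of kernel and cokernel (formal left exactness of $\Hom$). The only difference is that you additionally treat non-affine test schemes $T$ by Zariski-localness, whereas the paper checks exactness of the presheaves only on $k$-algebras; this is a harmless refinement, not a change of method.
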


% PROOP PROPOSITION 1
\begin{proof}[\textsc{Proof}. ]
Let $0\to K\arrover{i}N\arrover{p}Q\to 0$ be a short exact sequence of $R$-module schemes over $k$. We have to show that the sequence
$$0\to\innHom^R(Q,M)\arrover{p^*}\innHom^R(N,M)\arrover{i^*}\innHom^R(K,M)$$
is exact. It is equivalent to the exactness of the sequence
$$0\to\innHom^R(Q,M)(C)\arrover{p^*}\innHom^R(N,M)(C)\arrover{i^*}\innHom^R(K,M)(C)$$
for every $k$-algebra $C$, i.e., the exactness of the sequence
$$0\to\Hom^R_C(Q_C,M_C)\arrover{p^*}\Hom^R_C(N_C,M_C)\arrover{i^*}\Hom^R_C(K_C,M_C).$$

By previous lemma, the $R$-morphism $N_C\arrover{p_C}Q_C$ is the cokernel of the injection $K_C\into N_C$ in the category of affine $R$-module schemes. So, for any $R$-homomorphism $\phi:N_C\to M_C$ such that $\phi\circ i_C=0$, there exists a unique $R$-homomorphism $\psi:Q_C\to M_C$ with $\phi=\psi\circ p_C$, i.e., the following diagram is commutative:
$$\xymatrix{
0\ar[r]&K_C\ar[r]^{i_C}\ar[dr]^0 & N_C\ar[r]^{p_C}\ar[d]^{\phi} & Q_C\ar[r]\ar[dl]^{\exists
!\ \psi}&0\\
&& M_C.&&}$$
The exactness now follows; indeed, pick an $R$-morphism $f:Q_C\to M_C$ with $f\circ p_C=0$, then putting $\phi:=0$ the zero morphism, there are two $R$-morphisms $Q_C\to M_C$, namely $f$ and the zero morphism, whose composition with $p_C$ are $\phi$ and from the above observation they should be equal. This shows the injectivity of $$\Hom^R_C(Q_C,M_C)\arrover{p^*_C}\Hom^R_C(N_C,M_C).$$ Clearly we have $\image p^*_C\subset \kernel i^*_C$. Let $g:N_C\to M_C$ be an element of $\kernel i^*_C$, i.e., $g\circ i^*_C=0$, then according to what we said above, there is a $\psi:Q_C\to M_C$ with $p_C\circ\psi=g$, or in other words $g=p^*_C(\psi)$ and thus $\kernel i^*_C\subset\image p^*_C$.\\

Similarly, the fact that $K_C\arrover{i_C}N_C$ is the kernel of the quotient morphism $N_C\arrover{p_C}Q_C$ implies that given any $R$-homomorphism $\phi:M_C\to N_C$ with trivial composition $p_C\circ\phi$ there is a unique $R$-homomorphism $\psi:M_C\to K_C$ such that the following diagram is commutative
$$\xymatrix{
0\ar[r]&K_C\ar[r]^{i_C} & N_C\ar[r]^{p_C}&Q_C\ar[r]&0\\
&&M_C\ar[ul]^{\exists!\ \psi}\ar[u]^{\phi}\ar[ur]^0.&&}$$
And this implies, as above, the exactness of the following short sequence
$$0\to \Hom^R_C(M_C, K_C)\arrover{i^*_C}\Hom^R_C(M_C,N_C)\arrover{p^*_C}\Hom^R_C(M_C,Q_C)$$
for every $k$-algebra $C$, and consequently the following sequence of $R$-module schemes is exact
$$0\to\innHom^R(M,K)\arrover{i^*}\innHom^R(M,N)\arrover{p^*}\innHom^R(M,Q).$$
\end{proof}

%\subsection{$R$-multilinear morphisms}
\section{$R$-multilinear morphisms}

Let $M$ be a presheaf on the fppf site of a base scheme $S$. For any positive integer $r$ we denote by $M^r$ the product of $r$ copies of $M$, and for any $1\leq i<j\leq r$ we let $$\Delta^r_{ij}: M^{r-1}\to M^r$$  denote the generalized diagonal embedding equating the $i^{\text{th}}$ and $j^{\text{th}}$ components.

%DEFINITION 5
\begin{dfn}
\label{def 5}
Let $M_1,\dots,M_r, M$ and $N$ be presheaves of $R$-modules on the fppf site of the scheme $S$.
\begin{itemize}
\item[(i)] An \emph{$R$-multilinear} or simply \emph{multilinear} morphism from $M_1\times\dots\times M_r$ to $N$ is a presheaves (of sets) morphism, which is $R$-linear in each factor or equivalently, if for every $S$-scheme $T$, the induced morphism $M_1(T)\times\dots\times M_r(T)\to N(T)$ is $R$-multilinear. The $R$-module of all $R$-multilinear morphisms from $M_1\times\dots\times M_r$ to $N$ is denoted by $\Mult^R(M_1\times\dots\times M_r,N)$.

\item[(ii)] An $R$-multilinear morphism $M^r\to N$ is called \emph{symmetric} if it is invariant under permutation of the factors. Equivalently, a multilinear morphism is symmetric if for every $S$-scheme $T$, the induced morphism $M(T)^r\to N(T)$ is symmetric. The $R$-module of all such symmetric multilinear morphisms is denoted by $\Sym^R(M^r,N)$.

\item[(iii)] An $R$-multilinear morphism $M^r\to N$ is called \emph{alternating} if its composition with $ \Delta^r_{ij} $ is trivial for all $1\leq i<j\leq r$. Equivalently, a multilinear morphism is alternating if for every $S$- scheme $T$, the induced morphism $M(T)^r\to N(T)$ is alternating. The $R$-module of all such alternating multilinear morphisms is denoted by $\Alt^R(M^r,N)$.
\end{itemize}
\end{dfn}

There is a weaker notion of multilinearity which will be useful when we want to map to group schemes rather than $R$-module schemes.

%DEFINITION 18
\begin{dfn}
\label{def 18}
Let $M_1\dots,M_r$ and $M$ be presheaves of $R$-modules  and $G$ a presheaf of Abelian groups.
\begin{itemize}
\item[(i)] We denote by $\widetilde{\Mult}^R(M_1\times\dotsb\times M_r,G)$ the group of morphisms $\phi:M_1\times\dotsb\times M_r \to G$ which are multilinear, when $ M_i $ are regarded as presheaves of Abelian groups and has the following weaker property than $R$-linearity:\\
for every $S$-scheme $T$, every tuple $(m_1,\dotsb, m_r)\in  M_1(T)\times\dotsb\times M_r(T)$, every $a\in R$ and every $i\in \{2,3,\dotsb,r\}$, we have
$$\phi(a\cdot m_1,m_2,\dotsb, m_r)=\phi(m_1,\dotsb, m_{i-1},a\cdot m_i,m_{i+1},\dotsb,m_r).$$ The elements of $\widetilde{\Mult}^R(M_1\times\dotsb\times M_r,G)$ are called \emph{pseudo-$R$-multilinear}.

\item[(ii)] We denote by $\widetilde{\Sym}^R(M^r,G)$ the subgroup of $\widetilde{\Mult}^R(M^r,G)$ consisting of symmetric morphisms.

\item[(iii)] We denote by $\widetilde{\Alt}^R(M^r,G)$ the subgroup of $\widetilde{\Mult}^R(M^r,G)$ consisting of alternating morphisms.
\end{itemize}
\end{dfn}

%REMARK 200
\begin{rem}
\label{rem200}
Note that the group $ \widetilde{\Mult}^r(M_1\times\dots\times M_r,G) $ has a natural structure of $R$-module through the action of $R$ on one of the factors $ M_1, M_2\dots M_{r-1} $ or $ M_r $, and this is independent of the factor we choose. Similarly, there is a natural $R$-module structure on the groups $  \widetilde{\Sym}^R(M^r,G) $ and $ \widetilde{\Alt}^R(M^r,G) $.
\end{rem}

%DEFINITION 4
\begin{dfn}
\label{def 4}
Let $M_1,\dots,M_r, M$ and $N$ be presheaves of $R$-modules and $G$ a presheaf of Abelian groups. Define contravariant functors from the category of $R$-module schemes over $S$ to the category of $R$-modules as follows:
\begin{itemize}
\item[(i)] $$T \mapsto\innMult^R(M_1\times\dotsb\times M_r, N)(T):=\Mult_T^R(M_{1,T}\times\dotsb\times M_{r,T}, N_T)$$ and respectively $$T \mapsto\widetilde{\innMult}^R(M_1\times\dotsb\times M_r, G)(T):=\widetilde{\Mult}_T^R(M_{1,T}\times\dotsb\times M_{r,T}, G_T).$$ If these functors are representable by group schemes over $S$, we will also denote those group schemes, which are $R$-module schemes, by $\innMult^R(M_1\times\dotsb\times M_r, N)$ and respectively $\widetilde{\innMult}^R(M_1\times\dotsb\times M_r, G)$.
\item[(ii)] $$T \mapsto\innSym^R(M^r,N)(T):=\Sym_T^R(M_{T}^r, N_T)$$ and respectively $$T \mapsto\widetilde{\innSym}^R(M^r, G)(T):=\widetilde{\Sym}_T^R(M_{T}^r, G_T).$$ If these functors are representable by group schemes over $S$, we will also denote those group schemes, which are $R$-module schemes, by \\$\innSym^R(M^r, N)$ and respectively $\widetilde{\innSym}^R(M^r, G)$.
\item[(iii)] $$T \mapsto\innAlt^R(M^r,N)(T):=\Sym_T^R(M_{T}^r, N_T)$$ and respectively $$T \mapsto\widetilde{\innAlt}^R(M^r, G)(T):=\widetilde{\Alt}_T^R(M_{T}^r, G_T)).$$ If these functors are representable by group schemes over $S$, we will also denote those group schemes, which are $R$-module schemes, by $\innAlt^R(M^r, N)$ and respectively $\widetilde{\innAlt}^R(M^r, G)$.
\end{itemize}
\end{dfn}

%DEFINITION 6
%\begin{dfn}
%\label{def 6}
%\begin{itemize}
%\item[(i)]  Define a contravariant functor from the category of $R$-module schemes over $S$ to the category of $R$-modules as follows: $$T \mapsto\innMult^R(M_1\times\dotsb\times M_r, N)(T):=\Mult_T^R(M_{1,T}\times\dotsb\times M_{r,T}, N_T).$$ If this functor is representable by a group scheme over $S$, we will also denote that group scheme, which is an $R$-module scheme, by $\innMult^R(M_1\times\dotsb\times M_r, N)$.

%Let $M, N$ be $R$-module over $S$. Then denote by $\innSym^R(M^r,N)$ and $\innAlt^R(M^r,N)$ respectively the contravariant functors $$T \mapsto \innSym^R(M^r,N)(T):=\Sym_T^R(M_T^r,N_T)$$ and $$T\mapsto \innAlt^R(M^r,N)(T):=\Alt_T^R(M_T^r,N_T)$$ respectively. If $\innSym^R(M^r,N)$ resp. $\innAlt^R(M^r,N)$, is representable by an $R$-module scheme, we will also denote it by $\innSym^R(M^r,N)$ resp. $\innAlt^R(M^r,N)$.
%\end{itemize}
%\end{dfn}

%REMARK 3
\begin{rem}
\label{rem 3}
$  $
\begin{itemize}
\item[1)] $\Mult^R(M_1\times\dots\times M_r,N)$ and respectively $\widetilde{\Mult}^R(M_1\times\dots\times M_r,G)$ are subgroups of $\Mult(M_1\times\dots\times M_r,N)$ and respectively $\Mult(M_1\times\dots\times M_r,G)$ (the group of multilinear morphisms). The conditions of being $R$-multilinear or pseudo-$R$-multilinear are closed conditions, and thus the functors $\innMult^R(M_1\times\dots\times M_r,N)$ and $\widetilde{\innMult}^R(M_1\times\dots\times M_r,G)$ are closed subschemes of $\innMult(M_1\times\dots\times M_r,N)$ and respectively $\innMult(M_1\times\dots\times M_r,G)$ if they are representable (cf. Remark \ref{rem 2}).
\item[2)] $\Sym^R(M^r,N)$ and respectively $\widetilde{\Sym}^R(M^r,G)$ are subgroups of \\$\Sym(M^r,N)$ and respectively $\Sym(M^r,G)$ (the group of symmetric multilinear morphisms). Thus, $\innSym^R(M^r,N)$ and $\widetilde{\innSym}^R(M^r,G)$ are closed subschemes of $\innSym(M^r,N)$ and respectively $\innSym(M^r,G)$ if they are representable.
\item[3)] $\Alt^R(M^r,N)$ and respectively $\widetilde{\Alt}^R(M^r,G)$ are subgroups of $\Alt(M^r,N)$ and respectively $\Alt(M^r,G)$ (the group of symmetric multilinear morphisms). Therefore, $\innAlt^R(M^r,N)$ and $\widetilde{\innAlt}^R(M^r,G)$ are closed subschemes of $\innAlt(M^r,N)$ and respectively $\innAlt(M^r,G)$ if they are representable.
\item[4)] We have a natural action of the symmetric group $S_r$ on $M^r$. This action induces an action on the $R$-module $\Mult^R(M^r,N)$ (and respectively $\widetilde{\Mult}^R(M^r,G)$). Its submodule $\Sym^R(M^r,N)$ (respectively $\widetilde{\Sym}^R(M^r,G)$) is precisely the submodule of fixed points, i.e. $$\Sym^R(M^r,N)=\Mult^R(M^r,N)^{S_r}$$ (respectively $\widetilde{\Sym}^R(M^r,G)=\widetilde{\Mult}^R(M^r,G)^{S_r}$).
\end{itemize}
\end{rem}

We are now going to prove a general proposition on multilinear morphisms which will be used throughout the paper, but we first establish two lemmas:

%LEMMA 19
\begin{lem}
\label{lem 19}
Let $M_1,\dots, M_r, M$ and $N$ be $R$-module schemes over $S$ and $G$ a group scheme over $S$. There are natural isomorphisms of $R$-modules
$$\Mult^R(M_1\times\dotsb\times M_r,\innHom^R(M,N))\cong\Mult^R(M_1\times\dotsb\times M_r\times M,N)$$ and
\[ \Mult^R(M_1\times\dotsb\times M_r,\innHom(M,G))\cong\widetilde{\Mult}^R(M_1\times\dotsb\times M_r\times M,G) \]
functorial in all arguments.
\end{lem}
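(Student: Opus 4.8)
The statement is an instance of the currying (exponential) adjunction for presheaves on the big fppf site of $S$, refined so as to keep track of the $R$-linearity and pseudo-$R$-linearity conditions, and I would prove it by the standard argument adapted to the present bookkeeping (noting that $\innHom^R(M,N)$ and $\innHom(M,G)$ are to be read as presheaves here, whether or not they happen to be representable). First I would set up the bijection on underlying presheaf morphisms. Given any morphism $\phi\colon M_1\times\dots\times M_r\to\innHom^R(M,N)$ of presheaves of sets, the value $\phi_T(m_1,\dots,m_r)$ at an $S$-scheme $T$ and a tuple $(m_1,\dots,m_r)\in M_1(T)\times\dots\times M_r(T)$ is itself a morphism $M_T\to N_T$ of presheaves over $T$; I set $\widetilde\phi_T(m_1,\dots,m_r,m)$ to be the value of this morphism at $m\in M(T)=M_T(T)$, i.e. I evaluate $\phi$ and then evaluate the resulting morphism at the identity $T$-scheme. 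Naturality of $\widetilde\phi$ in $T$ follows from naturality of $\phi$ and of each $\phi_T(m_1,\dots,m_r)$. Conversely, a morphism $\psi\colon M_1\times\dots\times M_r\times M\to N$ is sent to $\widehat\psi$, where $\widehat\psi_T(m_1,\dots,m_r)$ is the morphism $M_T\to N_T$ whose component at $g\colon T'\to T$ is $m\mapsto\psi_{T'}(g^*m_1,\dots,g^*m_r,m)$. I would then check — a purely formal diagram chase — that $\phi\mapsto\widetilde\phi$ and $\psi\mapsto\widehat\psi$ are mutually inverse and natural in $M_1,\dots,M_r,M,N$ (and likewise in the second case, with $\innHom(M,G)$ and $G$ in place of $\innHom^R(M,N)$ and $N$).

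Next I would match the side conditions. For the first isomorphism, the $R$-module structure on $\innHom^R(M,N)(T)=\Hom_T^R(M_T,N_T)$ is computed pointwise through $N$, so $\phi$ is $R$-linear in its $i$-th factor for $1\le i\le r$ iff $\widetilde\phi$ is; and the requirement that $\phi$ actually take values in $\innHom^R(M,N)$, rather than merely in the presheaf $T\mapsto\Hom_T(M_T,N_T)$ — that is, that every $\phi_T(m_1,\dots,m_r)$ be $R$-linear — translates exactly into $R$-linearity of $\widetilde\phi$ in its last factor $M$. Hence $\phi\in\Mult^R(M_1\times\dots\times M_r,\innHom^R(M,N))$ iff $\widetilde\phi\in\Mult^R(M_1\times\dots\times M_r\times M,N)$. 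For the second isomorphism I would use that $\innHom(M,G)$ carries the $R$-module structure $(a\cdot\psi)(x)=\psi(a\cdot x)$ coming from the action of $R$ on $M$ (Remark \ref{rem 1}, part 5); $R$-linearity of $\phi$ in the $i$-th factor then reads $\phi_T(\dots,a\cdot m_i,\dots)=a\cdot\phi_T(m_1,\dots,m_r)$, which after uncurrying becomes $\widetilde\phi_T(\dots,a\cdot m_i,\dots,m)=\widetilde\phi_T(m_1,\dots,m_r,a\cdot m)$ for every $1\le i\le r$. Combined with additivity of $\widetilde\phi$ in each of the $r+1$ arguments — automatic, since $\phi$ is additive in $M_1,\dots,M_r$ and each $\phi_T(m_1,\dots,m_r)$ is a homomorphism of group schemes, hence additive in $M$ — this is precisely Definition \ref{def 18}(i) for the product $M_1\times\dots\times M_r\times M$ with $M_1$ as the distinguished factor. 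So $\phi$ is $R$-multilinear into $\innHom(M,G)$ iff $\widetilde\phi\in\widetilde\Mult^R(M_1\times\dots\times M_r\times M,G)$.

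To finish I would observe that both bijections are $R$-linear for the evident $R$-module structures (pointwise via $N$ on one side; via any one of the $r+1$ factors on the $\widetilde\Mult$-side, these all agreeing under the correspondence) and natural in all arguments, which is immediate from the explicit formulas. I do not expect a genuine difficulty here: the content is entirely the Cartesian-closedness of the presheaf category, and the only point needing care is to spell out the evaluation-at-the-identity description of currying cleanly enough that the clause in Definition \ref{def 18}(i) — that $R$ may be transferred from the first factor to any other factor — is seen to correspond verbatim, and not merely morally, to $R$-linearity of $\phi$ as a morphism into $\innHom(M,G)$; once that is written out, the rest is formal.
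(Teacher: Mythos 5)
Your proof is correct and follows essentially the same route as the paper's: the currying adjunction for $\innHom$ (a morphism into $\innHom^R(M,N)$, resp.\ $\innHom(M,G)$, is the same as a morphism out of the product that is $R$-linear, resp.\ a homomorphism, in the $M$-factor), followed by matching the $R$-linearity in the remaining factors against the $R$-module structure on the internal Hom. You merely write out in full the evaluation/un-currying formulas and the translation of Definition \ref{def 18}(i) that the paper compresses into ``one sees easily''; in particular your observation that $R$-linearity of $\phi$ into $\innHom(M,G)$ yields exactly pseudo-$R$-multilinearity of $\widetilde\phi$ (and not full $R$-multilinearity) is the one point the paper itself flags as needing care.
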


%PROOF OF LEMMA 19
\begin{proof}[\textsc{Proof}.]
We show the first isomorphism, and the second one is proved similarly. However, one has to note that in the second isomorphism, on the left hand side, we have the $R$-module of $R$-multilinear morphisms and not merely pseudo-$R$-multilinear ones.\\

By the definition of $\innHom^R(M,N)$, giving a morphism of schemes $$\phi:M_1\times\dotsb\times M_r\to \innHom^R(M,N)$$ is equivalent to giving a morphism of schemes $$\widetilde{\phi}:M_1\times\dotsb\times M_r\times M\to N$$ which is $R$-linear in $M$. Since the $R$-module structure of $\innHom^R(M,N)$ is induced by that of $M$, one sees easily that $\phi$ is $R$-linear in $M_i$ if and only if $\widetilde{\phi}$ is $R$-linear in $M_i$. This completes the proof.
\end{proof}

Now, we give an ``underlined" version of this lemma in order to show our general result of this type:

%LEMMA 20
\begin{lem}
\label{lem 20} Let us use the notations of the previous lemma. We have natural isomorphisms
$$\innMult^R(M_1\times\dotsb\times M_r, \innHom^R(M,N))\cong\innMult^R(M_1\times\dotsb\times M_r\times M, N)$$ and $$\innMult^R(M_1\times\dotsb\times M_r, \innHom(M,G))\cong\widetilde{\innMult}^R(M_1\times\dotsb\times M_r\times M, G)$$
functorial in all arguments.
\end{lem}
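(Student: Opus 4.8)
The plan is to obtain Lemma~\ref{lem 20} from Lemma~\ref{lem 19} by applying the latter not over the fixed base $S$ but over every $S$-scheme $T$, and then checking that the resulting bijections are compatible with the restriction maps that define the inner ("underlined") functors.

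First I would record the following base-change compatibility of the inner Hom, which is essentially tautological: for every $S$-scheme $T$ there are canonical identifications of functors
\[
\innHom^R(M,N)_T \;\cong\; \innHom^R_T(M_T,N_T),
\qquad
\innHom(M,G)_T \;\cong\; \innHom_T(M_T,G_T).
\]
Indeed, by Definition~\ref{def 3} the functor $\innHom^R(M,N)$ sends an $S$-scheme $T'$ to $\Hom^R_{T'}(M_{T'},N_{T'})$; restricting this functor to $T$-schemes gives on the one hand $\innHom^R(M,N)_T$ and on the other hand, by the same definition applied over the base $T$, the functor $\innHom^R_T(M_T,N_T)$. When these functors are representable (e.g.\ by Remark~\ref{rem 2}), the identification is one of $R$-module schemes. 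The same argument handles $\innHom(M,G)$.

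Next, for a fixed $S$-scheme $T$, I would apply Lemma~\ref{lem 19} with $S$ replaced by $T$ and with the $R$-module schemes $M_{1,T},\dotsc,M_{r,T},M_T,N_T$ (resp.\ the group scheme $G_T$). This yields natural isomorphisms of $R$-modules
\[
\Mult_T^R\bigl(M_{1,T}\times\dotsb\times M_{r,T},\,\innHom^R_T(M_T,N_T)\bigr)
\;\cong\;
\Mult_T^R\bigl(M_{1,T}\times\dotsb\times M_{r,T}\times M_T,\,N_T\bigr)
\]
and
\[
\Mult_T^R\bigl(M_{1,T}\times\dotsb\times M_{r,T},\,\innHom_T(M_T,G_T)\bigr)
\;\cong\;
\widetilde{\Mult}_T^R\bigl(M_{1,T}\times\dotsb\times M_{r,T}\times M_T,\,G_T\bigr),
\]
where, as stressed in the proof of Lemma~\ref{lem 19}, the left-hand side of the second isomorphism involves genuine $R$-multilinear, not merely pseudo-$R$-multilinear, morphisms. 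Combining with the base-change identifications above, the left-hand sides are by definition $\innMult^R(M_1\times\dotsb\times M_r,\innHom^R(M,N))(T)$ and $\innMult^R(M_1\times\dotsb\times M_r,\innHom(M,G))(T)$, while the right-hand sides are $\innMult^R(M_1\times\dotsb\times M_r\times M,N)(T)$ and $\widetilde{\innMult}^R(M_1\times\dotsb\times M_r\times M,G)(T)$.

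Finally I would check that these bijections, as $T$ varies, commute with the restriction morphisms along $S$-morphisms $T'\to T$, so that they glue into isomorphisms of functors (hence of ($R$-module, resp.\ group) schemes where representable). This is immediate: the isomorphism of Lemma~\ref{lem 19} is given by the explicit adjunction $\phi\mapsto\widetilde\phi$ that "curries" the last variable $M$, and both this operation and the base-change identification of the inner Homs are manifestly compatible with pullback along $T'\to T$. Functoriality in all the arguments $M_1,\dotsc,M_r,M,N,G$ is inherited from the corresponding functoriality asserted in Lemma~\ref{lem 19}. There is no real obstacle here; the only point that needs a word of justification is the base-change compatibility of $\innHom^R$ and $\innHom$, and, as explained, that is a formal consequence of their definition as functors on the big fppf site.
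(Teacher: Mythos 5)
Your proposal is correct and matches the paper's own proof essentially step for step: the paper likewise first establishes the base-change identification $\innHom^R(M_T,N_T)\cong\innHom^R(M,N)_T$ by evaluating on $T$-schemes, then evaluates both inner-Mult functors at an arbitrary $S$-scheme $T$ and applies Lemma~\ref{lem 19} over $T$, deducing representability from the resulting isomorphism of functors. The only cosmetic difference is that you explicitly spell out the compatibility with restriction along $T'\to T$, which the paper leaves implicit in the word ``natural.''
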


%PROOF OF LEMMA 20
\begin{proof}[\textsc{Proof}.]
If we establish the isomorphism (as functors), the representability will follow directly from it, because if two functors are naturally isomorphic and one is representable, the other is representable too. The second isomorphism can be proved similarly to the first one, and so we only show the first isomorphism. We show at first that for any $R$-module schemes $M$ and $N$ over $S$ and any $S$-scheme $T$, we have $\innHom^R(M_T,N_T)\cong\innHom^R(M, N)_T$. Indeed, if $X$ is any $T$-scheme, then $$\innHom^R(M_T, N_T)(X)=\Hom^R_X((M_T)_X,(N_T)_X)\cong\Hom^R_X(M_X,N_X)$$ $$=\innHom^R(M, N)(X)=\innHom^R(M,N)_T(X).$$
Now, we have $$\innMult^R(M_1\times M_2\times\dotsb\times M_r,N)(T)=\Mult^R(M_{1,T}\times M_{2,T}\times\dotsb\times M_{r,T},N_T)$$ and by Lemma \ref{lem 19} this is isomorphic to $$\Mult^R(M_{1,T}\times M_{2,T}\times\dotsb\times M_{r-1,T},\innHom^R(M_{r,T},N_T)).$$ By the above discussion, it is isomorphic to
$$\Mult^R(M_{1,T}\times M_{2,T}\times\dotsb\times M_{r-1,T},\innHom^R(M_r,N)_T)=$$
$$\innMult^R(M_1\times M_2\times\dotsb\times M_{r-1},\innHom^R(M_r, N))(T).$$
This achieves the proof.
\end{proof}

Here is the desired result:

%PROPOSITION 2
\begin{prop}
\label{prop 2}
Let $M_1, \dots, M_r, N_1, \dots, N_s$ and $P$ be $R$-module schemes over $S$ and $G$ a group scheme over $S$. We have natural isomorphisms
$$\Mult^R(M_1\times\dotsb\times M_r,\innMult^R(N_1\times\dotsb\times N_s,P))\cong$$
$$\Mult^R(M_1\times\dotsb\times M_r\times N_1\times\dotsb\times N_s,P)$$ and $$\Mult^R(M_1\times\dotsb\times M_r,\widetilde{\innMult}^R(N_1\times\dotsb\times N_s,G))\cong$$
$$\widetilde{\Mult}^R(M_1\times\dotsb\times M_r\times N_1\times\dotsb\times N_s,G)$$
functorial in all arguments.
\end{prop}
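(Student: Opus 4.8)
The plan is to establish the first (genuinely $R$-multilinear) isomorphism by induction on $s$, and then to deduce the pseudo-$R$-multilinear one by the same induction, feeding the already-proved honest version into the inductive step. The only inputs will be Lemmas \ref{lem 19} and \ref{lem 20}, together with the fact that $\innHom^R(M,N)$ and $\innHom(M,G)$ are again $R$-module schemes when representable (Remarks \ref{rem 2} and \ref{rem 1}(5)), which under the standing convention on existence we may assume throughout.

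For $s=1$ the first isomorphism \emph{is} Lemma \ref{lem 19}, since $\innMult^R(N_1,P)=\innHom^R(N_1,P)$. For the inductive step ($s\geq 2$) I would first peel the last factor off the inner $\Mult$: reading Lemma \ref{lem 20} from right to left, applied to the module schemes $N_1,\dots,N_{s-1}$ with $M:=N_s$ and $N:=P$, there is a functorial isomorphism
\[
\innMult^R(N_1\times\dotsb\times N_s,P)\;\cong\;\innMult^R\bigl(N_1\times\dotsb\times N_{s-1},\innHom^R(N_s,P)\bigr)
\]
of $R$-module schemes. Substituting this into the target of the outer $\Mult^R$ and applying the induction hypothesis --- the Proposition for the $s-1$ factors $N_1,\dots,N_{s-1}$ with the $R$-module scheme $\innHom^R(N_s,P)$ in the role of $P$ --- yields a functorial isomorphism with $\Mult^R\bigl(M_1\times\dotsb\times M_r\times N_1\times\dotsb\times N_{s-1},\innHom^R(N_s,P)\bigr)$; one final application of Lemma \ref{lem 19}, now to the $r+s-1$ module schemes $M_1,\dots,M_r,N_1,\dots,N_{s-1}$ with $M:=N_s$, rewrites this as $\Mult^R(M_1\times\dotsb\times M_r\times N_1\times\dotsb\times N_s,P)$. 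Every isomorphism used is functorial in all arguments, so the composite is as well, which completes the induction. Note that no reordering of factors is needed: $N_s$ is always peeled from, and then reinserted at, the last position.

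For the pseudo-$R$-multilinear isomorphism the scheme of the argument is identical, again by induction on $s$. The base case $s=1$ is the second isomorphism of Lemma \ref{lem 19}, once one observes that $\widetilde{\innMult}^R(N_1,G)=\innHom(N_1,G)$, the pseudo-$R$-linearity condition of Definition \ref{def 18} being vacuous on a single factor. For the inductive step, the second isomorphism of Lemma \ref{lem 20} gives $\widetilde{\innMult}^R(N_1\times\dotsb\times N_s,G)\cong\innMult^R\bigl(N_1\times\dotsb\times N_{s-1},\innHom(N_s,G)\bigr)$, where $\innHom(N_s,G)$ is an $R$-module scheme by Remark \ref{rem 1}(5); plugging this in, applying the \emph{already-proved} honest isomorphism with target $\innHom(N_s,G)$, and finishing with the second isomorphism of Lemma \ref{lem 19} produces $\widetilde{\Mult}^R(M_1\times\dotsb\times M_r\times N_1\times\dotsb\times N_s,G)$. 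The one point that needs care --- and it is exactly the subtlety flagged in the proof of Lemma \ref{lem 19} --- is the asymmetry between $\Mult^R$ and $\widetilde{\Mult}^R$: the inner $\widetilde{\innMult}$ must be unwound in precisely this order, so that at each intermediate stage the object one maps into is an honest $R$-module scheme and the outer functor stays $\Mult^R$, the tilde being introduced only by the single concluding application of the pseudo-version of Lemma \ref{lem 19}. Apart from this bookkeeping and the implicit representability hypotheses, I do not expect any real obstacle.
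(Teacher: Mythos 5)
Your proof is correct and is essentially the paper's own argument: the same induction on $s$ whose inductive step composes Lemma \ref{lem 19}, the induction hypothesis, and Lemma \ref{lem 20}, merely read in the opposite direction (you unwind the nested $\innMult^R$ while the paper builds it up from the flat module). Your explicit treatment of the pseudo-$R$-multilinear case, including the observation that the tilde must enter only at the final application of Lemma \ref{lem 19}, correctly fills in what the paper dismisses as "proved similarly."
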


%PROOF OF PROPOSITION 2
\begin{proof}[\textsc{Proof}.]
As before, we only prove the first isomorphism. We prove this proposition by induction on $s$. If $s=1$, then it is exactly the Lemma \ref{lem 19}. So assume that $s>1$ and that the proposition is true for $s-1$. We have a series of isomorphisms:
$$\Mult^R(M_1\times\dotsb\times M_r\times N_1\times\dotsb\times N_s,P)\overset{\ref{lem 19}}{\cong}$$
$$\Mult^R(M_1\times\dotsb\times M_r\times N_1\times\dotsb\times N_{s-1},\innHom^R(N_s,P))\overset{\text{ind.}}{\underset{\text{hyp.}}{\cong}}$$
$$\Mult^R(M_1\times\dotsb\times M_r,\innMult^R(N_1\times\dotsb\times N_{s-1},\innHom^R(N_s,P)))\overset{\ref{lem 20}}{\cong}$$
$$\Mult^R(M_1\times\dotsb\times M_r,\innMult^R(N_1\times\dotsb\times N_s,P)).$$
\end{proof}

%REMARK 25
\begin{rem}
\label{rem 25}
Let $M_1\dotsc,M_r,N_1\dotsc,N_s,M,N$ and $P$ be $R$-module schemes over a base scheme $S$. There is a natural action of the symmetric group $S_n$ on $N^n$ that induces an action on the $R$-module scheme $\innMult^R(N^n,P)$ which itself induces an action on the $R$-module
$$\Mult^R(M_1\times\dotsb\times M_r,\innMult^R(N^n,P)).$$
We also have a natural action of this group on the $R$-module
$$\Mult^R(M_1\times\dotsb\times M_1\times N^n,N).$$
One checks that the isomorphism in the proposition is invariant under the action of $S_n$. Similarly, we have an action of the symmetric group $S_m$ on
$$\Mult^R(M^m,\innMult(N_1\times\dotsb\times N_s,P))\quad \text{and}\quad \Mult^R(M^m\times N_1\times\dotsb\times N_s,P)$$
induced by its action on $M^m$. Again, one can easily verify that the isomorphism in the proposition is invariant under this action of $S_m$.\\

We have the same remark for the pseudo-$R$-multilinear morphisms.
\end{rem}

In the same way that Lemma \ref{lem 20} follows from Lemma \ref{lem 19}, the following proposition can be deduced from Proposition \ref{prop 2}; we will thus omit its proof:

%PROPOSITION 3
\begin{prop}
\label{prop 3} 
Let $M_1, \dots, M_r, N_1, \dots, N_s$ and $P$ be $R$-module schemes over $S$ and $G$ a group scheme over $S$. We have natural isomorphisms $$\innMult^R(M_1\times\dotsb\times M_r,\innMult^R(N_1\times\dotsb\times N_s,P))\cong$$
$$\innMult^R(M_1\times\dotsb\times M_r\times N_1\times\dotsb\times N_s,P)$$ and $$\innMult^R(M_1\times\dotsb\times M_r,\widetilde{\innMult}^R(N_1\times\dotsb\times N_s,G))\cong$$
$$\widetilde{\innMult}^R(M_1\times\dotsb\times M_r\times N_1\times\dotsb\times N_s,G)$$
functorial in all arguments.\qed
\end{prop}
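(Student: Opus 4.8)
The plan is to follow verbatim the pattern by which Lemma~\ref{lem 20} was deduced from Lemma~\ref{lem 19}: first establish the asserted isomorphisms as isomorphisms of contravariant functors on $\Scheme/S$, and then invoke the elementary fact that a functor isomorphic to a representable one is itself representable, so that the representing $R$-module schemes (resp.\ group schemes), when they exist, are canonically isomorphic. Thus it suffices to produce, for every $S$-scheme $T$, a natural isomorphism
\[
\innMult^R(M_1\times\dotsb\times M_r,\innMult^R(N_1\times\dotsb\times N_s,P))(T)\;\cong\;\innMult^R(M_1\times\dotsb\times M_r\times N_1\times\dotsb\times N_s,P)(T),
\]
compatible with pullback along morphisms $T'\to T$ of $S$-schemes, and likewise in the pseudo-$R$-multilinear case with $P$ replaced by a group scheme $G$.

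The one ingredient beyond Proposition~\ref{prop 2} that is needed is that the formation of $\innMult^R$ (and of $\widetilde{\innMult}^R$) commutes with base change: for any $S$-scheme $T$ there is a canonical isomorphism of functors on $\Scheme/T$
\[
\innMult^R(N_1\times\dotsb\times N_s,P)_T\;\cong\;\innMult_T^R(N_{1,T}\times\dotsb\times N_{s,T},P_T),
\]
and similarly for $\widetilde{\innMult}^R$. This is proved exactly as the corresponding statement for $\innHom^R$ inside the proof of Lemma~\ref{lem 20}: for a $T$-scheme $X$ one evaluates both sides on $X$, uses the identifications $(N_{i,T})_X\cong N_{i,X}$ and $(P_T)_X\cong P_X$, and observes that the defining condition of ($R$- or pseudo-$R$-)multilinearity is the same whether read over $T$ or over $S$. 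Alternatively one can bootstrap from the base-change compatibility of $\innHom^R$ already established in Lemma~\ref{lem 20} together with the reduction of $\innMult^R$ to iterated $\innHom^R$ furnished by that same lemma.

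Granting this, the computation is immediate. For an $S$-scheme $T$ one has
\[
\innMult^R(M_1\times\dotsb\times M_r,\innMult^R(N_1\times\dotsb\times N_s,P))(T)=\Mult_T^R\bigl(M_{1,T}\times\dotsb\times M_{r,T},\,\innMult^R(N_1\times\dotsb\times N_s,P)_T\bigr),
\]
which by the base-change isomorphism equals $\Mult_T^R\bigl(M_{1,T}\times\dotsb\times M_{r,T},\,\innMult_T^R(N_{1,T}\times\dotsb\times N_{s,T},P_T)\bigr)$. Applying Proposition~\ref{prop 2} over the base $T$, to the $R$-module schemes $M_{i,T}$, $N_{j,T}$, $P_T$, identifies this with $\Mult_T^R\bigl(M_{1,T}\times\dotsb\times M_{r,T}\times N_{1,T}\times\dotsb\times N_{s,T},\,P_T\bigr)=\innMult^R(M_1\times\dotsb\times M_r\times N_1\times\dotsb\times N_s,P)(T)$. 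Every isomorphism used is natural in $T$ and in the $M_i$, $N_j$, $P$ — the base-change isomorphism by construction, and Proposition~\ref{prop 2} because it is functorial in all arguments — so the composite is an isomorphism of functors, functorial in all arguments. The pseudo-$R$-multilinear statement follows by the identical argument, using the second isomorphism of Proposition~\ref{prop 2} and the base-change compatibility of $\widetilde{\innMult}^R$, with $G_T$ in place of $P_T$.

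I expect no real obstacle here: the only point needing care is the base-change compatibility of $\innMult^R$ and $\widetilde{\innMult}^R$, which is routine and was in effect already carried out for $\innHom^R$ in the proof of Lemma~\ref{lem 20}. One may additionally record, as in Remark~\ref{rem 25}, that when some of the $M_i$ or $N_j$ coincide the isomorphism is equivariant for the relevant symmetric group actions, and hence restricts to the analogous statements for $\innSym^R$, $\widetilde{\innSym}^R$, $\innAlt^R$ and $\widetilde{\innAlt}^R$; this again follows formally from the equivariance already noted in Remark~\ref{rem 25} for Proposition~\ref{prop 2}.
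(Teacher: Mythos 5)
Your proof is correct and follows exactly the route the paper intends: the paper omits the proof of Proposition \ref{prop 3} precisely because it is deduced from Proposition \ref{prop 2} in the same way that Lemma \ref{lem 20} is deduced from Lemma \ref{lem 19}, namely by checking the base-change compatibility of the inner $\innMult^R$ (resp.\ $\widetilde{\innMult}^R$) and then applying the non-underlined statement over each $S$-scheme $T$. Your identification of that base-change compatibility as the one genuinely new ingredient, and your handling of it, match the paper's argument for $\innHom^R$ in Lemma \ref{lem 20}.
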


%REMARK 26
\begin{rem}
\label{rem 26}
$  $
\begin{itemize}
\item[1)] Assume that $M_1,\dotsc,M_r$ are finite and flat and $N$ (respectively $G$) is affine over $S$. We can show by induction on $r$ that $\innMult^R(M_1\times\dotsb\times M_r,N)$ (respectively $\widetilde{\innMult}^R(M_1\times\dotsb\times M_r,G)$) is representable by an affine $R$-module scheme, and this scheme is of finite type, if moreover, $N$ (respectively $G$) is of finite type. We explain the $R$-multilinear case and drop the similar case of pseudo-$R$-multilinear morphisms. Indeed, if $r=1$ then this is what we explained in Remark \ref{rem 2}. So let $r>1$ and suppose that the statement is true for $r-1$. By the induction hypothesis, $\innMult^R(M_1\times\dotsb\times M_{r-1},\innHom^R(M_r,N))$ is representable and is affine. From Lemma \ref{lem 20}, it follows that $$\innMult^R(M_1\times\dotsb\times M_{r-1},\innHom^R(M,N))\cong \innMult^R(M_1\times\dotsb\times M_r,N).$$
Hence, the right hand side is representable and affine. A similar argument implies the property of being of finite type.
\item[2)] Let $M$ be finite and flat and $N$ (respectively $G$) affine over $S$. By Definition \ref{def 4}, it is clear that the functors $\innSym^R(M^r,N)$ and $\innAlt^R(M^r,N)$ (respectively $\widetilde{\innSym}^R(M^r,G)$ and $\widetilde{\innAlt}^R(M^r,G)$) are subfunctors of the representable functor $\innMult^R(M^r,N)$ (respectively $\widetilde{\innMult}^R(M^r,G)$). Since the conditions defining these subfunctors are closed conditions (given by equations), they are represented by closed subgroup schemes, and therefore are affine and if $N$ (respectively $G$) is of finite type, they are also of finite type.
\end{itemize}
\end{rem}

%LEMMA 21
\begin{lem}
\label{lem 21} Let $M,N$ be $R$-module schemes over a base scheme $S$ and let $\Gamma$ be a finite group acting on $M$. Then we have a natural isomorphism
$$\Hom^R(N,M)^{\Gamma}\cong\Hom^R(N,M^{\Gamma}),$$
where $M^{\Gamma}$ is the submodule scheme of fixed points, in other words, $M^{\Gamma}(T)=M(T)^{\Gamma}$ for any $S$-scheme $T$, where $M(T)^{\Gamma}$ is the submodule of fixed points of the  $R[\Gamma]$-module $M(T)$ ($R[\Gamma]$ being the group ring) and the action of $\Gamma$ on the $R$-module $\Hom^R(N,M)$ is induced by its action on $M$. More precisely, the image of the inclusion $\Hom^R(N,M^{\Gamma})\into \Hom^R(N,M)$ is the module of fixed points $\Hom^R(N,M)^{\Gamma}$.
\end{lem}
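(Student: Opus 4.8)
The plan is to work entirely with functors of points and then quote the Yoneda lemma, so that both the bijection and its naturality are obtained at once. First I would make the two $\Gamma$-actions explicit. Since $\Gamma$ acts on $M$ by automorphisms of $M$ as an $R$-module scheme, each $\gamma\in\Gamma$ induces an $R$-linear automorphism $\gamma_M\colon M\to M$, and for $\phi\in\Hom^R(N,M)$ we set $\gamma\cdot\phi:=\gamma_M\circ\phi$, which is again $R$-linear; this is the action on $\Hom^R(N,M)$ referred to in the statement. Dually, for every $S$-scheme $T$ the group $\Gamma$ acts $R$-linearly on $M(T)$, making it an $R[\Gamma]$-module, so $M^\Gamma(T)=M(T)^\Gamma$ is an $R$-submodule of $M(T)$; hence the monomorphism $\iota\colon M^\Gamma\hookrightarrow M$ is $R$-linear.

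Next I would unwind the fixed-point condition on a morphism. For $\phi\in\Hom^R(N,M)$ one has $\gamma\cdot\phi=\phi$ for all $\gamma$ if and only if for every $S$-scheme $T$ and every $x\in N(T)$ the element $\phi_T(x)\in M(T)$ is $\Gamma$-fixed, i.e. lies in $M^\Gamma(T)$; equivalently, the natural transformation $h_\phi\colon h_N\to h_M$ factors through the subfunctor $h_{M^\Gamma}\subseteq h_M$. Because $\iota$ is a monomorphism this factorization is represented by a unique morphism $\psi\colon N\to M^\Gamma$ with $\iota\circ\psi=\phi$, and since on each $T$ the map $N(T)\to M^\Gamma(T)$ is the corestriction of the $R$-linear map $\phi_T$ to the $R$-submodule $M^\Gamma(T)$, the morphism $\psi$ is $R$-linear. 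Conversely, for any $\psi\in\Hom^R(N,M^\Gamma)$ the composite $\iota\circ\psi$ lies in $\Hom^R(N,M)$ and is $\Gamma$-fixed, because $\gamma_M\circ\iota=\iota$ for every $\gamma$.

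The assignments $\phi\mapsto\psi$ and $\psi\mapsto\iota\circ\psi$ are mutually inverse, so they give the asserted bijection $\Hom^R(N,M)^\Gamma\cong\Hom^R(N,M^\Gamma)$; moreover this bijection is visibly the corestriction onto its image of the injection $\iota_*\colon\Hom^R(N,M^\Gamma)\to\Hom^R(N,M)$, and that image is precisely the submodule of $\Gamma$-fixed points, which matches the final sentence of the statement. Functoriality of the isomorphism in $N$, and in $M$ together with its $\Gamma$-action, is immediate, since every step above is phrased through composition of morphisms and the universal property of $M^\Gamma$.

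I do not expect a genuine obstacle here; the one point deserving a word of care is that the morphism $\psi$ extracted from a $\Gamma$-fixed $\phi$ is $R$-linear and not merely a morphism of group schemes, which is handled by the observation that $M^\Gamma(T)$ is an $R$-submodule of $M(T)$ for every $T$. If one also wishes to see that $M^\Gamma$ is representable rather than assume it, one writes $M^\Gamma=\bigcap_{\gamma\in\Gamma}\kernel(\gamma_M-\id_M)$, a finite intersection of kernels of $R$-linear endomorphisms of $M$, which is a closed $R$-submodule scheme of $M$ whenever $M$ is separated over $S$.
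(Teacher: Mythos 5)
Your proof is correct and follows essentially the same route as the paper's: both directions are established by the same two observations, namely that $\gamma\cdot\iota=\iota$ makes every composite $\iota\circ\psi$ a fixed point, and that a $\Gamma$-fixed morphism must factor through $M^{\Gamma}$. Your version merely spells out, via functors of points, the factorization and the $R$-linearity of the corestriction that the paper leaves implicit.
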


%PROOF OF LEMMA 21
\begin{proof}[\textsc{Proof}.]
Let $\phi:N\to M^{\Gamma}$ and $\gamma\in\Gamma$ be given. The image of $\phi$ under the inclusion in the lemma is the composition $N\arrover{\phi}M^{\Gamma}\into M$ and under the action of $\gamma$ on $\Hom^R(N,M)$ it maps to the morphism $N\arrover{\phi} M^{\Gamma}\into M\arrover{\gamma\cdot} M$. But by definition of $M^{\Gamma}$, we have that the composition $M^{\Gamma}\into M\arrover{\gamma\cdot} M$ is the same as the inclusion $M^{\Gamma}\into M$ and hence the composition $N\arrover{\phi}M^{\Gamma}\into M$ is an element of $\Hom^R(N,M)^{\Gamma}$. We have thus an inclusion $\Hom^R(N,M^{\Gamma})\subset \Hom^R(N,M)^{\Gamma}$, where we have identified $\Hom^R(N,M^{\Gamma})$ with its image.

Now, assume that we have a morphism $\psi:N\to M$ which lies inside the module of fixed points. This means that the composition $\gamma\circ\psi$ for any $\gamma\,\cdot:M\to M$ is equal to $\psi$ and therefore $\psi$ must factor through $M^{\Gamma}$. This gives the inclusion $\Hom^R(N,M)^{\Gamma}\subset \Hom^R(N,M^{\Gamma})$ and the lemma is proved.
\end{proof}

We are now going to apply this lemma to the particular case, where the acting group is the symmetric group $S_n$ which acts on $\innMult^R(M^n,P)$,  where $M$ and $P$ are two $R$-module schemes.

%PROPOSITION 12
\begin{prop}
\label{prop 12}
Let $M, N$ and $P$ (respectively $G$) be $R$-module schemes (group scheme) over a base scheme $S$, then for every natural number $n$ we have natural isomorphisms
$$\Hom^R(N,\innSym^R(M^n,P))\cong\Mult^R(N\times M^n,P)^{S_n}$$ (respectively $$\Hom^R(N,\widetilde{\innSym}^R(M^n,G))\cong\widetilde{\Mult}^R(N\times M^n,G)^{S_n})$$
functorial in all arguments.
\end{prop}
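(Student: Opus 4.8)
The plan is to obtain the isomorphism by chaining three earlier results: the adjunction of Proposition \ref{prop 2}, the identification of symmetric multilinear morphisms with the $S_n$-invariants among all multilinear morphisms (Remark \ref{rem 3}, item 4), and Lemma \ref{lem 21}, which says that $\Hom^R(N,-)$ commutes with passing to fixed points under a finite group. I would prove only the $R$-multilinear statement in detail; the pseudo-$R$-multilinear one follows by the same argument, replacing $\innMult^R,\innSym^R$ and $P$ by $\widetilde{\innMult}^R,\widetilde{\innSym}^R$ and $G$, and invoking the second isomorphism in each of the cited results.

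First I would apply Proposition \ref{prop 2} with $r=1$, $M_1=N$, $s=n$ and $N_1=\dots=N_n=M$. Since a multilinear morphism out of a single factor is just an $R$-linear homomorphism, this gives a natural isomorphism $\Hom^R(N,\innMult^R(M^n,P))\cong\Mult^R(N\times M^n,P)$, functorial in all arguments. Next I would record that $\innSym^R(M^n,P)$ is the submodule scheme of $S_n$-fixed points of $\innMult^R(M^n,P)$: evaluating at an arbitrary $S$-scheme $T$ and using Definition \ref{def 4} together with Remark \ref{rem 3}, item 4, one has $\innSym^R(M^n,P)(T)=\Sym^R_T(M_T^n,P_T)=\Mult^R_T(M_T^n,P_T)^{S_n}=\bigl(\innMult^R(M^n,P)(T)\bigr)^{S_n}$, where $S_n$ acts by permuting the $n$ copies of $M$; doing this functorially in $T$ identifies $\innSym^R(M^n,P)$ with the fixed-point subscheme of $\innMult^R(M^n,P)$.

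Then I would apply Lemma \ref{lem 21} with $\Gamma=S_n$ acting on the $R$-module scheme $\innMult^R(M^n,P)$, obtaining $\Hom^R\bigl(N,\innSym^R(M^n,P)\bigr)\cong\Hom^R\bigl(N,\innMult^R(M^n,P)\bigr)^{S_n}$. Finally, Remark \ref{rem 25} says that the isomorphism of Proposition \ref{prop 2} is equivariant for the $S_n$-action permuting the $M$-factors on either side, so it restricts to an isomorphism $\Hom^R\bigl(N,\innMult^R(M^n,P)\bigr)^{S_n}\cong\Mult^R(N\times M^n,P)^{S_n}$; composing the three isomorphisms yields the claim, and functoriality is inherited from that of each step.

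The argument is essentially bookkeeping, so there is no deep obstacle; the one point requiring genuine care is the compatibility of the two $S_n$-actions — the one on $\Hom^R(N,\innMult^R(M^n,P))$ coming from the action on the target $R$-module scheme $\innMult^R(M^n,P)$, and the one on $\Mult^R(N\times M^n,P)$ coming from permuting the $M$-factors of the source. Both are covered by Remark \ref{rem 25}, but it is worth spelling out explicitly that under the adjunction isomorphism these two actions correspond, since the whole reduction to fixed points rests on it. A secondary, purely formal point is that Remark \ref{rem 3}, item 4 and Lemma \ref{lem 21} are phrased at the level of module schemes (respectively presheaves), so one must apply the fixed-point identification functorially in $T$, rather than for one fixed $T$, in order to conclude that $\innSym^R(M^n,P)$ is genuinely the fixed-point subscheme.
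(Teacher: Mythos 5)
Your proposal is correct and follows essentially the same route as the paper's own proof: identify $\innSym^R(M^n,P)$ with the $S_n$-fixed points of $\innMult^R(M^n,P)$, apply Lemma \ref{lem 21} to pull the fixed points out of $\Hom^R(N,-)$, and then use the $S_n$-equivariance of the adjunction from Proposition \ref{prop 2} (Remark \ref{rem 25}) to pass to invariants on both sides. The extra care you flag about matching the two $S_n$-actions is exactly the point the paper also delegates to Remark \ref{rem 25}.
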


%PROOF OF PROPOSITION 12
\begin{proof}[\textsc{Proof}.]
We prove the statement for the $R$-multilinear morphisms and drop the similar proof for the pseudo-$R$-multilinear morphisms. Lemma \ref{lem 21} states that we have an isomorphism
$$\Hom^R(N,\innMult^R(M^n,P)^{S_n})\cong \Hom^R(M,\innMult^R(M^n,P))^{S_n}.$$

By Definitions \ref{def 5} and \ref{def 4} and Remark \ref{rem 3}, $\innSym^R(M^n,P)$ is exactly the module of fixed points $\innMult^R(M^n,P)^{S_n}$, and therefore we can rewrite the last isomorphism as
\begin{myequation}
\label{sym}
\Hom^R(N,\innSym^R(M^n, P))\cong \Hom^R(N,\innMult^R(M^n,P))^{S_n}.
\end{myequation}

We now apply Proposition \ref{prop 2} and Remark \ref{rem 25}: taking the fixed points of both sides of the isomorphism in Proposition \ref{prop 2}, we will again get an isomorphism. We can thus apply it to our situation, and obtain the isomorphism: $$\Hom^R(N,\innMult^R(M^n,P))^{S_n}\cong\Mult^R(N\times M^n,P)^{S_n}.$$
Combining this with (\ref{sym}), we obtain the desired isomorphism.
\end{proof}

%REMARK 27
\begin{rem}
\label{rem 27}
$  $
\begin{itemize}
\item[1)] We recall that the action of $S_n$ on the right hand side consists of permuting the factors of $M^n$ and consequently, the group $\Mult^R(N\times M^n,P)^{S_n}$ (respectively $\widetilde{\Mult}^R(N\times M^n,G)^{S_n}$) consists of  $R$-multilinear (respectively pseudo-$R$-multilinear) morphisms  from $N\times M^n$ to $P$ (respectively $G$) that are symmetric in $M^n$.
\item[2)]Note that the functoriality of this isomorphism in $N$ implies that the group scheme $\innSym^R(M^n,P)$ (respectively $\widetilde{\innSym}^R(M^n,G)$) represents the functor $\Mult^R(-\times M^n,P)^{S_n}$ (respectively  $\widetilde{\Mult}^R(-\times M^n,G	)^{S_n}$) from the category of $R$-module schemes to the category of $R$-modules.
\item[3)]It is clear that if we change $N\times M^n$ to $M^n\times N$ the proposition remains valid; we have thus another natural and functorial isomorphism
$$\Hom^R(N,\innSym^R(M^n,P))\cong\Mult^R(M^n\times N,P)^{S_n}$$ (respectively $$\Hom^R(N,\widetilde{\innSym}^R(M^n,G))\cong\widetilde{\Mult}^R(M^n\times N,G)^{S_n}).$$
\end{itemize}
\end{rem}

Similar arguments prove the following proposition:

%PROPOSITION 13
\begin{prop}
\label{prop 13} Let $M, N_1, \dots, N_s$ and $P$ be $R$-module schemes over $S$ and $G$ a group scheme over $S$. We have natural isomorphisms
$$\Mult^R(N_1\times\dots\times N_r\times M^n,P)^{S_n}\cong\Mult^R(N_1\times\dots\times N_r,\innSym^R(M^n,P))$$ and $$\widetilde{\Mult}^R(N_1\times\dots\times N_r\times M^n,G)^{S_n}\cong\Mult^R(N_1\times\dots\times N_r,\widetilde{\innSym}^R(M^n,P)).$$\qed
\end{prop}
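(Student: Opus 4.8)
The plan is to run the proof of Proposition \ref{prop 12} with the single module $N$ there replaced by the product $N_1\times\dots\times N_r$; everything takes place at the level of presheaves of $R$-modules, so no representability hypotheses are needed. First I would apply Proposition \ref{prop 2}, taking $s=n$ and all of its modules in the second group of factors equal to $M$, to obtain a natural isomorphism
$$\Mult^R(N_1\times\dots\times N_r,\innMult^R(M^n,P))\;\cong\;\Mult^R(N_1\times\dots\times N_r\times M^n,P),$$
functorial in every argument, and likewise with $G$, $\widetilde{\innMult}^R$ and $\widetilde{\Mult}^R$ in the pseudo-$R$-multilinear case.

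Next I would let the symmetric group $S_n$ act on both sides: on the right by permuting the last $n$ factors $M^n$, and on the left through the induced action on the target $\innMult^R(M^n,P)$ (resp. $\widetilde{\innMult}^R(M^n,G)$). By Remark \ref{rem 25} the displayed isomorphism is $S_n$-equivariant, so passing to fixed points yields
$$\Mult^R(N_1\times\dots\times N_r,\innMult^R(M^n,P))^{S_n}\;\cong\;\Mult^R(N_1\times\dots\times N_r\times M^n,P)^{S_n}.$$
As in Remark \ref{rem 27}(1), the right-hand module is precisely the one appearing in the statement, namely the $R$-multilinear morphisms $N_1\times\dots\times N_r\times M^n\to P$ that are symmetric in the $M^n$-variables.

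It then remains to identify the left-hand side with $\Mult^R(N_1\times\dots\times N_r,\innSym^R(M^n,P))$. Here the $S_n$-action on $\Mult^R(N_1\times\dots\times N_r,\innMult^R(M^n,P))$ is induced solely by the $S_n$-action on the target, and by Definitions \ref{def 5}, \ref{def 4} and Remark \ref{rem 3}(4) the subpresheaf of $S_n$-fixed points of $\innMult^R(M^n,P)$ is exactly $\innSym^R(M^n,P)$. The argument of Lemma \ref{lem 21} goes through with ``$\Hom^R(N,-)$'' replaced by ``$\Mult^R(N_1\times\dots\times N_r,-)$'': a multilinear morphism into $\innMult^R(M^n,P)$ is $S_n$-fixed iff it factors through the inclusion $\innSym^R(M^n,P)\into\innMult^R(M^n,P)$, and since this inclusion is $R$-linear and a monomorphism, the factored morphism is automatically $R$-multilinear. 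Hence
$$\Mult^R(N_1\times\dots\times N_r,\innMult^R(M^n,P))^{S_n}\;\cong\;\Mult^R(N_1\times\dots\times N_r,\innSym^R(M^n,P)),$$
and composing the three isomorphisms gives the first assertion; the second follows verbatim by replacing $(P,\innMult^R,\innSym^R,\Mult^R)$ with $(G,\widetilde{\innMult}^R,\widetilde{\innSym}^R,\widetilde{\Mult}^R)$ throughout.

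The only step that requires genuine attention — and the one I would flag as the main (if modest) obstacle — is the $S_n$-equivariance of the isomorphism of Proposition \ref{prop 2} in the presence of the extra variables $N_1,\dots,N_r$; but this compatibility is exactly what is recorded in Remark \ref{rem 25}, so in the write-up it suffices to cite it. Everything else is a formal concatenation of natural isomorphisms, and functoriality in all arguments is inherited at each stage.
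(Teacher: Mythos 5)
Your proposal is correct and is essentially the proof the paper intends: Proposition \ref{prop 13} is stated with only the remark that ``similar arguments'' to Proposition \ref{prop 12} apply, and your argument is exactly that adaptation — Proposition \ref{prop 2} plus the $S_n$-equivariance of Remark \ref{rem 25}, followed by the fixed-point identification of Lemma \ref{lem 21} with $\Hom^R(N,-)$ replaced by $\Mult^R(N_1\times\dots\times N_r,-)$. Nothing further is needed.
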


We can show, with slight modifications of arguments, similar results concerning the group of alternating multilinear morphisms and in particular the following proposition:

%PROPOSITION 4
\begin{prop}
\label{prop 4} Let  $M_1,\,\dotsc,\,M_r,\,N, P$ and respectively $G$ be $R$-module schemes and respectively a group scheme over $S$. We have natural isomorphisms
$$\Alt^R(M_1\times\dots\times M_r\times N^n,P)\cong\Mult^R(M_1\times\dots\times M_r,\innAlt^R(N^n,P))$$ and respectively $$\widetilde{\Alt}^R(M_1\times\dots\times M_r\times N^n,G)\cong\Mult^R(M_1\times\dots\times M_r,\widetilde{\innAlt}^R(N^n,G))$$
where the modules on the left hand side are the modules of $R$-multilinear and respectively pseudo-$R$-multilinear morphisms that are alternating in $N^n$.\qed
\end{prop}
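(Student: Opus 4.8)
The plan is to imitate the proof of Proposition \ref{prop 12} (and of its companion Proposition \ref{prop 13}), with the passage to $S_n$-fixed points replaced by the passage to the common kernel of the diagonal maps. Abbreviate $\underline{M}:=M_1\times\dots\times M_r$. The starting point is the natural isomorphism of Proposition \ref{prop 2},
\[
\Mult^R(\underline{M}\times N^n,P)\;\cong\;\Mult^R(\underline{M},\innMult^R(N^n,P)),
\]
together with its pseudo-$R$-multilinear analogue for a group scheme $G$. It therefore suffices to show that, under this isomorphism, the submodule of the left-hand side consisting of morphisms that are alternating in the $N^n$-factors corresponds exactly to $\Mult^R(\underline{M},\innAlt^R(N^n,P))$.

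Recall from Definition \ref{def 4} and Remark \ref{rem 3} that $\innAlt^R(N^n,P)$ is the subfunctor of $\innMult^R(N^n,P)$ cut out by the vanishing of the morphisms $(\Delta^n_{ij})^*\colon\innMult^R(N^n,P)\to\innMult^R(N^{n-1},P)$ induced by the generalized diagonals $\Delta^n_{ij}$, for $1\le i<j\le n$. Consequently an element $\psi\in\Mult^R(\underline{M},\innMult^R(N^n,P))$ factors through the subfunctor $\innAlt^R(N^n,P)$ if and only if each composite $(\Delta^n_{ij})^*\circ\psi$ is zero in $\Mult^R(\underline{M},\innMult^R(N^{n-1},P))$.

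Next I would trace this condition across the isomorphism of Proposition \ref{prop 2}. That isomorphism is built, via Lemmas \ref{lem 19} and \ref{lem 20}, by ``uncurrying'' the $n$ factors $N$ one at a time, and, by the naturality of those lemmas in all arguments, precomposition with $\id_{\underline{M}}\times\Delta^n_{ij}$ on $\Mult^R(\underline{M}\times N^n,P)$ corresponds to postcomposition with $(\Delta^n_{ij})^*$ on $\Mult^R(\underline{M},\innMult^R(N^n,P))$ --- this is the same type of compatibility already noted in Remark \ref{rem 25}, applied to transpositions rather than to all of $S_n$. Hence, writing $\widetilde{\psi}\colon\underline{M}\times N^n\to P$ for the morphism corresponding to $\psi$, the composite $(\Delta^n_{ij})^*\circ\psi$ corresponds to $\widetilde{\psi}\circ(\id_{\underline{M}}\times\Delta^n_{ij})$. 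Thus all these composites vanish precisely when $\widetilde{\psi}$ is alternating in the $N^n$-factors, which is the defining condition of the left-hand module of the Proposition. Restricting the isomorphism of Proposition \ref{prop 2} to these submodules gives the first asserted isomorphism; running the same argument with $\widetilde{\Mult}^R$, $\widetilde{\innMult}^R$, $\widetilde{\innAlt}^R$ and a group scheme $G$ in place of $P$ gives the second.

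The only point that needs care --- and the main obstacle --- is the compatibility invoked above: that the chain of adjunction isomorphisms of Lemmas \ref{lem 19}--\ref{lem 20} intertwines ``precompose with $\id_{\underline{M}}\times\Delta^n_{ij}$'' with ``postcompose with $(\Delta^n_{ij})^*$''. This is a purely formal diagram chase once one unwinds that $(\Delta^n_{ij})^*$ is itself induced by precomposition with $\Delta^n_{ij}$ on the inner-$\Mult$ functor; one checks it for a single diagonal pair and transports it along the remaining uncurrying steps, exactly as the $S_n$-equivariance of Proposition \ref{prop 2} was handled. Being ``slight modifications of arguments'' already in place, this is what one must verify to complete the proof.
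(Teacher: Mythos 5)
Your proof is correct and follows essentially the route the paper intends: the paper omits the argument, asserting it is a ``slight modification'' of the symmetric-power case (Lemma \ref{lem 21}, Propositions \ref{prop 12} and \ref{prop 13}), and your replacement of the passage to $S_n$-fixed points by the passage to the common kernel of the diagonal restrictions $(\Delta^n_{ij})^*$, combined with Proposition \ref{prop 2} and the left-exactness of $\Hom^R(\underline{M},-)$, is exactly that modification. The compatibility you single out at the end is indeed the only point needing verification, and it is checked the same way as the $S_n$-equivariance in Remark \ref{rem 25}.
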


%SECTION 3
%\section{Multilinear Covariant Dieudonn\'e Theory}
\chapter{$R$-Multilinear Covariant Dieudonn\'e Theory}

In addition to the notations at the beginning, we use the following notations in this chapter.

%NOTATIONS 01
\begin{notation}
\label{notations01}
$ $
 \begin{itemize}
 \item $R$ is a fixed ring.
 \item Unless otherwise specified, all schemes are defined over $k$, where $k$ is a perfect field of characteristic $p$.
 \item Let $G$ be a local-local $p$-divisible group over $k$. Then for every positive natural number $n$, the finite group $G_n$ is local-local, and there exists a natural number $m_G(n)$ such that for all $m\geq m_G(n)$ we have $ F^{m}G_n=0=V^mG_n $ (cf. \cite{D}).  
 \end{itemize}
\end{notation}

%REMARK 31
\begin{rem}
\label{rem31}
Let $M$ be a finite $p$-torsion $R$-module scheme over $k$. By functoriality of the Dieudonn\'e functor (covariant or contravariant), the $R$-module structure on $M$ induces an $R$-module structure on $D(M)$, where $D(M)$ is the Dieudonn\'e module of $M$: \[R\to \End(M)\cong\End(D(M)).\] It follows that $ D(M) $ has a natural action of $ \BE_k\otimes_{\BZ}R $.
\end{rem}

%REMARK 010
\begin{rem}
\label{rem0 10}
Let $G$ be a finite local group scheme over $k$, then the inclusion $ \Hom(G,\widehat{W})\into \Hom(G,\widetilde{W}) $ induced by the inclusion $ \widehat{W}\into \widetilde{W} $ is an isomorphism. Indeed, for every $n$ we have $ \Hom(G,W_n)\cong\uset{m}{\dirlim}\,\Hom(G,W_{n,m}) $, because $ G $ is annihilated by a power of Frobenius, and therefore\[ \Hom(G,\widetilde{W})=\uset{n}{\dirlim}\,\Hom(G,W_{n})\cong\uset{n,m}{\dirlim}\,\Hom(G,W_{n,m})=\Hom(G,\widehat{W}).\]
\end{rem}

%CONSTRUCTION 01
\begin{cons}
\label{cons01}
Fix a natural number $r$ and an element $\underline{d}\in \BZ^r_0$. For every natural number $n$, the composition \[ \BW^r\arrover{\pi_{n+d_1}\times\dots\times\pi_{n+d_r}} W[F^{n+d_1}]\times \dots\times W[F^{n+d_r}]\into W\times\dots\times W\arrover{\text{mult}} W\] has image inside the subgroup scheme $ W[F^n] $, because $ \min \underline{d}=0 $ and Frobenius is a ring homomorphism. Therefore, for every $n$, we have a multilinear morphism \[\zeta_{\ul{d},n}:= \BW^r\to W[F^n] \] and these morphisms (for all $n$) are compatible with respect to the projections $F: W[F^{n+1}]\onto W[F^n]$, and thus, they induce a multilinear morphism \[ \zeta_{\ul{d}}:=\BW^r\to \BW \] with the property that for all $n$, $ \pi_n\circ\zeta_{\ul{d}}=\zeta_{\ul{d},n} . $
\end{cons}

We cite the Proposition-Definition 4.4.2, p. 40 of \cite{P} in the following definition:

%DEFINITION 01
\begin{dfn}
\label{def01}
For any $ r\geq 2 $ and $ \ul{d}\in\BZ^r_0 $ there exists a unique multilinear morphism $ \Phi_{\ul{d}}:\BW^r\times \widetilde{W}\to \BG_m $ such that for all $ n\geq 0 $, all $ x_i\in\BW $, and $ y\in W_n $, we have \[ \Phi_{\ul{d}}(x_1,\dots, x_r,\epsilon(y))=E(\pi_{n+d_1}(x_1)\dots\pi_{n+d_r}(x_r)\cdot \tau(y);1).\]
\end{dfn}

%PROPOSITION 019
\begin{prop}
\label{prop0 19}
Consider the following composition: \[ \Mult(\BW^r\times \widetilde{W},\BG_m)\arrover{f}\Mult(\BW^r\times\widehat{W},\BG_m)\uset{\cong}{\arrover{g}}\] \[\Mult(\BW^r,\innHom(\widehat{W},\BG_m)) \uset{\cong}{\arrover{h}} \Mult(\BW^r,\BW), \] where $f$ is induced by the inclusion $ \widehat{W}\into \widetilde{W} $ and the isomorphism $ h $ is induced by the duality between $ \widehat{W} $ and $ \BW$, given by the Artin-Hasse exponential. Under this composition and for all $ \ul{d}\in\BZ^r_0 $, the element $ \Phi_{\ul{d}} $ maps to $ \zeta_{\ul{d}} $.
\end{prop}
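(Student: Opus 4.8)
The plan is to trace the image of $\Phi_{\ul{d}}$ through the three arrows $f$, $g$, $h$ and to recognise it as $\zeta_{\ul{d}}$. Here $g$ is the currying isomorphism of Lemma~\ref{lem 19} (applied with $\widehat{W}$ as the last factor), and $h$ is induced by the Artin--Hasse duality isomorphism $\innHom(\widehat{W},\BG_m)\cong\BW$; writing $\langle\ ,\ \rangle:\widehat{W}\times\BW\to\BG_m$ for the Artin--Hasse pairing, this isomorphism sends $w\in\BW$ to the homomorphism $z\mapsto\langle z,w\rangle$. Consequently $h\circ g$ has a transparent description: for any multilinear morphism $\chi:\BW^r\times\widehat{W}\to\BG_m$, the morphism $h(g(\chi))\in\Mult(\BW^r,\BW)$ is the unique one satisfying
\[
\big\langle z,\ h(g(\chi))(x_1,\dots,x_r)\big\rangle \;=\; \chi(x_1,\dots,x_r,z)
\]
for all functorial points $x_i$ of $\BW$ and $z$ of $\widehat{W}$. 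I would then apply this to $\chi=f(\Phi_{\ul{d}})=\Phi_{\ul{d}}|_{\BW^r\times\widehat{W}}$, which reduces the assertion $h(g(f(\Phi_{\ul{d}})))=\zeta_{\ul{d}}$ to the single identity
\[
\big\langle z,\ \zeta_{\ul{d}}(x_1,\dots,x_r)\big\rangle \;=\; \Phi_{\ul{d}}(x_1,\dots,x_r,\iota z),
\]
where $\iota:\widehat{W}\hookrightarrow\widetilde{W}$ is the inclusion, to be checked for all such points.

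Next I would verify this identity by unwinding the definitions. A point $z$ of $\widehat{W}$ lies, fppf-locally, in $\tau_{m,n}(W_{m,n})$ for suitable $m,n$; writing $z=\tau_{m,n}(\bar z)$ and viewing $\bar z$ inside $W_m$, its image $\iota z$ in $\widetilde{W}$ is $\epsilon(\bar z)$ in the notation of Definition~\ref{def01} (with the role of ``$n$'' there played by $m$), so that definition yields
\[
\Phi_{\ul{d}}(x_1,\dots,x_r,\iota z)\;=\;E\big(\pi_{m+d_1}(x_1)\cdots\pi_{m+d_r}(x_r)\cdot\tau(\bar z);\,1\big).
\]
On the other hand, the Artin--Hasse pairing of \cite{P} has, for $\bar z\in W_m$ and $w\in\BW$, the shape $\langle\epsilon(\bar z),w\rangle=E\big(\pi_m(w)\cdot\tau(\bar z);1\big)$. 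Substituting $w=\zeta_{\ul{d}}(x_1,\dots,x_r)$ and invoking Construction~\ref{cons01}, which says precisely that $\pi_m(\zeta_{\ul{d}}(x_1,\dots,x_r))=\pi_{m+d_1}(x_1)\cdots\pi_{m+d_r}(x_r)$ --- the product landing in $W[F^m]$ because $\min\ul{d}=0$ and Frobenius is a ring homomorphism --- the two expressions coincide, which proves the identity and hence the proposition.

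I expect the main obstacle to be purely the bookkeeping that makes the two computations in the second step line up: identifying the inclusion $\widehat{W}\hookrightarrow\widetilde{W}$ with Pink's section $\epsilon$ on each stratum $W_{m,n}$, matching the depth index appearing in the Artin--Hasse pairing with the index in Definition~\ref{def01}, and confirming that the pairing is normalised as written --- all of which is part of the set-up of \cite{P}. Once this dictionary is fixed there is nothing left to compute, since both sides of the key identity are literally the same value of the Artin--Hasse exponential; and the reduction carried out in the first step is legitimate simply because $g$ and $h$ are isomorphisms.
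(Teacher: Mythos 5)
Your proposal is correct and follows essentially the same route as the paper: both reduce the claim to the single identity $\langle z,\zeta_{\ul{d}}(x_1,\dots,x_r)\rangle=\Phi_{\ul{d}}(x_1,\dots,x_r,\iota z)$ via the explicit description of the Artin--Hasse duality $\BW\cong\innHom(\widehat{W},\BG_m)$ (the paper's map $a=\invlim a_m$, with $a(\xi)(y)=E(\pi_m(\xi)\cdot\tau(y);1)$), and then verify it by matching Definition \ref{def01} against Construction \ref{cons01}. The bookkeeping you flag (identifying $\iota$ with $\epsilon$ on each stratum and aligning the depth index $m$) is exactly what the paper's proof carries out, so there is nothing missing.
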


%PROOF OF PROPOSITION 019
\begin{proof}
For every $ m\geq 1 $, let us denote by $ a_m $ the isomorphism \[W[F^m]\cong \uset{n}{\invlim}\,W_{n,m}\cong \uset{n}{\invlim}\,W_{m,n}^*=\uset{n}{\invlim}\,\innHom(W_{m,n},\BG_m)\cong\innHom(\uset{n}{\dirlim}\,W_{m,n},\BG_m).\] Then, the isomorphism $ a:\BW\arrover{\cong}\innHom(\widehat{W},\BG_m) $, given by the Artin-Hasse exponential, is the inverse limit over $m$ of $ a_m $. It means that if $ \xi=(\xi_m)$ is an element of $\BG $ with $ \xi_m\in W[F^m] $, then for all $ y\in \uset{n}{\dirlim}\,W_{m,n} $, we have
\begin{myequation}
\label{Artin}
a(\xi)(y)=E(\xi_m\cdot \tau(y);1),
\end{myequation}
where $ E(\_\,;1) $ denotes the Artin-Hasse exponential. Now take an element $ \vec{x}\in\BW^r $ and set $ \xi=(\xi_m):=h\circ g\circ f (\Phi_{\ul{d}})(\vec{x})\in\BW $. We have $ a(\xi)=g\circ f(\Phi_{\ul{d}})(\vec{x}) $ and so for all $ y\in\uset{n}{\dirlim}\,W_{m,n}  $ we have \[ a(\xi)(y)=  \Phi_{\ul{d}}(\vec{x},\epsilon(y))=E(\pi_{m+d_1}(x_1)\dots\pi_{m+d_r}(x_r)\cdot \tau(y);1)=E(\zeta_{\ul{d},m}\cdot \tau(y);1).\] The latter is equal to $a(\zeta_{\ul{d}})$ by (\ref{Artin}). Thus, for all $m$ and all $ y\in \uset{n}{\dirlim}\,W_{m,n}$ we have $ a(\xi)(y)=a(\zeta_{\ul{d}})(y) $. It follows that $ a(\xi)=a(\zeta_{\ul{d}})$ and since $ a $ is a bijection, this implies that $ \xi=\zeta_{\ul{d}} $, finishing the proof.
\end{proof}

%PROPOSITION 03
\begin{prop}
\label{prop03}
Let $H$ be a finite group scheme. Then for every element $v\in \Hom(\BW,H)$, the following diagram is commutative: \[ \xymatrix{\BW^r\times H^*\ar[d]_{\Id\times v^*}\ar[rr]^{\zeta_{\ul{d}}\times\Id}&&\BW\times H^*\ar[d]^{v\times\Id}\\ \BW^r\times \widehat{W}\ar[d]_{\Id\times\iota }&&H\times H^*\ar[d]^{\text{pairing}}\\ \BW^r\times\widetilde{W}\ar[rr]_{\Phi_{\ul{d}}}&&\BG_m} \] where $ v^*:H^*\to \widehat{W} $ is the dual morphism to $ v:\BW\to H $ (using the Artin-Hasse exponential, the group functors $ \BW $ and $ \widehat{W} $ are in duality), $ \iota:\widehat{W}\into \widetilde{W} $ is the inclusion, and $ H\times H^*\to \BG_m $ is the perfect pairing putting $ H, H^* $ Cartier dual one of the other.
\end{prop}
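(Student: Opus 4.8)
The plan is to unwind the two composites $\BW^r\times H^*\to\BG_m$ obtained by traversing the diagram along the right side and along the left side, and to identify their equality with the content of Proposition \ref{prop0 19}. Since every arrow in the diagram is a morphism of fppf sheaves, it suffices to check the equality on $T$-valued points for an arbitrary $k$-scheme $T$. Going along the top edge and then down the right edge sends a point $(\vec{x},\phi)$ to $\langle v(\zeta_{\ul{d}}(\vec{x})),\phi\rangle_H$, where $\langle\ ,\ \rangle_H\colon H\times H^*\to\BG_m$ is the Cartier pairing; going down the left edge and then along the bottom edge sends $(\vec{x},\phi)$ to $\Phi_{\ul{d}}(\vec{x},\iota(v^*(\phi)))$.

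First I would invoke the very definition of the dual morphism $v^*$. By construction $v^*\colon H^*\to\widehat{W}$ is the transpose of $v\colon\BW\to H$ with respect to the Cartier pairing $\langle\ ,\ \rangle_H$ on $H\times H^*$ and the Artin--Hasse pairing $\langle\ ,\ \rangle\colon\BW\times\widehat{W}\to\BG_m$ realising the duality between $\BW$ and $\widehat{W}$ (on points, $(x,y)\mapsto a(x)(y)$ with $a\colon\BW\xrightarrow{\cong}\innHom(\widehat{W},\BG_m)$ the Artin--Hasse isomorphism used in the proof of Proposition \ref{prop0 19}). Thus $\langle v(x),\phi\rangle_H=\langle x,v^*(\phi)\rangle$ for all $x\in\BW$ and $\phi\in H^*$; taking $x=\zeta_{\ul{d}}(\vec{x})$ rewrites the right-hand composite as $\langle\zeta_{\ul{d}}(\vec{x}),v^*(\phi)\rangle$. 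Writing $y:=v^*(\phi)\in\widehat{W}$, the commutativity of the diagram then follows from the (a priori stronger) identity $\langle\zeta_{\ul{d}}(\vec{x}),y\rangle=\Phi_{\ul{d}}(\vec{x},\iota(y))$, valid for all $\vec{x}\in\BW^r$ and all $y\in\widehat{W}$.

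This identity is nothing but a reformulation of Proposition \ref{prop0 19}. Indeed, unwinding the three maps there, $f(\Phi_{\ul{d}})=\Phi_{\ul{d}}\circ(\Id\times\iota)$, then $g$ passes to the adjoint $\vec{x}\mapsto\bigl[\,y\mapsto\Phi_{\ul{d}}(\vec{x},\iota(y))\,\bigr]\in\innHom(\widehat{W},\BG_m)$, and $h$ identifies $\innHom(\widehat{W},\BG_m)$ with $\BW$ via $a^{-1}$; so $h\circ g\circ f(\Phi_{\ul{d}})=\zeta_{\ul{d}}$ says exactly that $\Phi_{\ul{d}}(\vec{x},\iota(y))=a(\zeta_{\ul{d}}(\vec{x}))(y)=\langle\zeta_{\ul{d}}(\vec{x}),y\rangle$ for all $y$. (One could equally verify this directly on points: if $\zeta_{\ul{d}}(\vec{x})=(\xi_m)_m$ with $\xi_m\in W[F^m]$ and $y\in\dirlim_n W_{m,n}$, then by formula (\ref{Artin}) the left side equals $E(\xi_m\cdot\tau(y);1)$, which by the definition of $\zeta_{\ul{d}}$ is $E(\pi_{m+d_1}(x_1)\cdots\pi_{m+d_r}(x_r)\cdot\tau(y);1)=\Phi_{\ul{d}}(\vec{x},\epsilon(y))$ by Definition \ref{def01}.)

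The main obstacle here is not conceptual --- Proposition \ref{prop0 19} carries the real content --- but bookkeeping: one must make sure that the pairing implicit in the phrase ``the dual morphism $v^*$'' is literally the Artin--Hasse pairing $\langle\ ,\ \rangle=a(\cdot)(\cdot)$ appearing in Proposition \ref{prop0 19}, and keep careful track of which argument slot of the multilinear maps $\zeta_{\ul{d}}$, $\Phi_{\ul{d}}$ is filled by $\vec{x}$ and which by $y$; the remaining manipulations with the ind/pro-structures on $\widehat{W}$ and $\BW$ are routine.
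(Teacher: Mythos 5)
Your proof is correct and follows essentially the same route as the paper's: both reduce the commutativity to the adjunction identity $\langle v(x),\phi\rangle_H=\langle x,v^*(\phi)\rangle$ coming from the definition of $v^*$ via the Artin--Hasse duality, and then to Proposition \ref{prop0 19} identifying $\Phi_{\ul{d}}$ with $\zeta_{\ul{d}}$. The only cosmetic difference is that you verify the equality pointwise on $T$-valued points, whereas the paper packages the same two steps as a commutative diagram of $\Mult$-groups obtained by applying $\Mult(\BW^r,\_)$ to the duality square.
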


%PROOF OF PROPOSITION 03
\begin{proof}
By the definition of $ v^* $, the following diagram commutes:\[ \xymatrix{\BW\ar[d]_{v}\ar[rr]^{\cong\quad}&&\innHom(\widehat{W},\BG_m)\ar[d]^{(\_)\circ v^*}\\ H\ar[rr]^{\cong\quad}&&\innHom(H^*,\BG_m).}\] Applying the functor $ \Mult(\BW^r,\_\,) $ on this diagram, we obtain the following commutative diagram 
\begin{myequation}
\label{richard-hadi}
\xymatrix{\Mult(\BW^r,\BW)\ar[r]^{\cong \text{\qquad\quad  }}\ar[d]&\Mult(\BW^r,\innHom(\widehat{W},\BG_m))\ar[d]\ar[r]^{\cong}&\Mult(\BW^r\times\widehat{W},\BG_m)\ar[d]\\\Mult(\BW^r,H)\ar[r]^{\cong \text{\qquad\quad }}&\Mult(\BW^r,\innHom(H^*,\BG_m))\ar[r]^{\cong}&\Mult(\BW^r\times H^*,\BG_m).}
\end{myequation}\\

Now, consider the two compositions \[ \alpha:\Mult(\BW^r\times\widetilde{W},\BG_m)\to\Mult(\BW^r\times \widehat{W},\BG_m)\to \Mult(\BW^r\times H^*,\BG_m) \] induced by $ v^* $ and \[ \beta:\Mult(\BW^r,\BW)\to \Mult(\BW^r,H)\arrover{\cong}\Mult(\BW^r\times H^*,\BG_m)\] induced by $v$. The commutativity of the diagram in the statement of the proposition is equivalent to the equality $ \alpha(\Phi_{\ul{d}}) =\beta(\zeta_{\ul{d}}).$ This equality follows from the last proposition and the commutativity of diagram (\ref{richard-hadi}).
\end{proof}

%REMARK 014
\begin{rem}
\label{rem0 14}
It follows from the previous proposition, that for every finite group scheme $H$ and every $ \ul{d}\in\BZ^r_0 $, the following diagram commutes: \[ \xymatrix{\Hom(\BW,H)\ar[rr]^{(\_)\circ\zeta_{\ul{d}}}\ar[d]_{\cong}&&\Mult(\BW^r,H)\ar[d]^{\cong}\\\Hom(H^*,\widehat{W})\ar[rr]_{\Phi_{\ul{d}}^*\quad}&&\Mult(\BW^r\times H^*,\BG_m),} \] where by $ \Phi_{\ul{d}}^* $ we mean the map that sends an element $ u\in \Hom(H^*,\widehat{W}) $ to the element $ \Phi_{\ul{d}}\circ (\Id\times\dots\times\Id\times u) $. In fact, the statement of the proposition is equivalent to the commutativity of this diagram.
\end{rem}

The following theorem, is a direct consequence of theorem 4.4.5, p. 41 of \cite{P}, when taking into account the presence of $R$.

%THEOREM 01
\begin{thm}
\label{thm01}
For any unipotent $R$-module scheme $M$ over $k$ and any $r >1$, respectively for any profinite local-local $R$-module scheme $M$ over $k$ and any $r > 0$, the following morphism is an isomorphism: \[ \theta_{M}:\bigoplus_{\ul{d}\in\BZ^r_0}\Hom(M,\widetilde{W})\longrightarrow \Mult(\BW^r\times M,\BG_m), \] \[ (u_{\ul{d}})_{\ul{d}}\longmapsto \prod_{\ul{d}\in\BZ^r_0}\Phi_{\ul{d}}\circ (\Id\times\dots\times\Id\times u_{\ul{d}}). \]
\end{thm}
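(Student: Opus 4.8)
The plan is to obtain Theorem~\ref{thm01} as a direct consequence of Theorem 4.4.5, p.~41 of \cite{P}, which is precisely the assertion that $\theta_M$ is a bijection of abelian groups, in both of the stated ranges of $r$, when $M$ is merely a group scheme. First I would check that the morphism written above is literally the one of \cite{P}: the pairings $\Phi_{\ul d}$ are those of Definition~\ref{def01}; each $u_{\ul d}\colon M\to\widetilde{W}$ is a group scheme homomorphism, so $\Id\times\dots\times\Id\times u_{\ul d}$ carries multilinear morphisms to multilinear morphisms, hence $\Phi_{\ul d}\circ(\Id\times\dots\times\Id\times u_{\ul d})$ is multilinear; and because the source is the \emph{direct sum} $\bigoplus_{\ul d\in\BZ^r_0}\Hom(M,\widetilde{W})$, only finitely many $u_{\ul d}$ are nonzero, so the product defining $\theta_M$ is finite and lands in $\Mult(\BW^r\times M,\BG_m)$. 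Since the underlying group scheme of a unipotent (resp. profinite local-local) $R$-module scheme is by definition unipotent (resp. profinite local-local), Pink's hypotheses are met, and the bounds $r>1$ in the unipotent case and $r>0$ in the local-local case are exactly those under which his theorem holds, so they are simply carried over. This already gives that $\theta_M$ is bijective.

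Next I would record the compatibility with the $R$-actions, which is all that ``taking into account the presence of $R$'' amounts to. Given the module structure $\alpha_M\colon R\to\End(M)$, the source carries the $R$-module structure of Remark~\ref{rem 1}(5), namely $a\cdot u=u\circ\alpha_M(a)$, and the target carries the $R$-module structure induced by the action of $R$ on the last factor $M$ of $\BW^r\times M$ (as in Remark~\ref{rem200}), namely $a\cdot\phi=\phi\circ(\Id\times\dots\times\Id\times\alpha_M(a))$. Then for $a\in R$ and $(u_{\ul d})_{\ul d}$ one computes
\[
\theta_M\big((a\cdot u_{\ul d})_{\ul d}\big)=\prod_{\ul d}\Phi_{\ul d}\circ\big(\Id\times\dots\times\Id\times(u_{\ul d}\circ\alpha_M(a))\big)=\Big(\prod_{\ul d}\Phi_{\ul d}\circ(\Id\times\dots\times\Id\times u_{\ul d})\Big)\circ\big(\Id\times\dots\times\Id\times\alpha_M(a)\big),
\]
using only associativity of composition and the fact that $\alpha_M(a)$ acts on the last coordinate; the right-hand side is $a\cdot\theta_M\big((u_{\ul d})_{\ul d}\big)$. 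Hence $\theta_M$ is $R$-linear, and being bijective it is an isomorphism of $R$-modules.

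I do not expect a genuine obstacle here: the substance is entirely in \cite{P}, Theorem 4.4.5, and the two remaining points — that the forgetful image of $M$ lies in the class to which that theorem applies, and that $\theta_M$ intertwines the two induced $R$-structures — are formal. If one preferred a more self-contained handle on $\theta_M$ instead of quoting \cite{P}, one could reformulate the pairings via Proposition~\ref{prop0 19}, Proposition~\ref{prop03} and Remark~\ref{rem0 14}, which rewrite $\Phi_{\ul d}$ through the morphisms $\zeta_{\ul d}$ of Construction~\ref{cons01} and through Cartier duality, and then reduce bijectivity to the structure of the (covariant, resp. contravariant) Dieudonné module of $M$; but since the statement is used only as quoted, invoking Pink's theorem is the efficient route.
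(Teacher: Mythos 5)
Your proposal is correct and follows exactly the route the paper takes: the paper derives Theorem \ref{thm01} as a direct consequence of Theorem 4.4.5 of \cite{P} applied to the underlying group scheme of $M$, and Remark \ref{rem0 13}(1) records that the only additional point is that $\theta_M$ preserves the $R$-scalar multiplication, which is the formal computation you give.
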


%REMARK 013
\begin{rem}
\label{rem0 13}
$  $
\begin{itemize}
\item[1)] Once we have the theorem for group schemes (i.e., $R=\BZ$, and which is the result in \cite{P}), then this theorem follows from the fact that the homomorphism $ \Theta_M $ preserves the scalar multiplication of $R$.
\item[2)] Using Remark \ref{rem0 10}, in the previous theorem, we can replace $ \Hom(M,\widetilde{W}) $ by $ \Hom(M,\widehat{W}) $.
\end{itemize}
\end{rem}

%PROPOSITION 04
\begin{prop}
\label{prop04}
For any finite local $R$-module scheme $M$ and any $r > 1$, the following morphism is an isomorphism: \[ \Delta_{M}:\bigoplus_{\ul{d}\in\BZ^r_0}\Hom(\BW,M)\longrightarrow \Mult(\BW^r,M), \] \[ (f_{\ul{d}})_{\ul{d}}\longmapsto \sum_{\ul{d}\in\BZ^r_0}f_{\ul{d}}\circ\zeta_{\ul{d}}.\]
\end{prop}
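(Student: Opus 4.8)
The plan is to reduce the proposition to its ``contravariant'' counterpart, Theorem \ref{thm01}, via Cartier duality, with Remark \ref{rem0 14} furnishing the precise dictionary between the two sides. The first point is that, since $M$ is finite and local, the Frobenius is nilpotent on $M$, hence the Verschiebung is nilpotent on the Cartier dual $M^*$, so $M^*$ is unipotent. Therefore, for $r>1$, Theorem \ref{thm01} applies to $M^*$ and, after replacing $\widetilde{W}$ by $\widehat{W}$ as permitted by Remark \ref{rem0 13}(2), gives that the homomorphism
\[ \theta^{\widehat{W}}_{M^*}\colon\ \bigoplus_{\ul{d}\in\BZ^r_0}\Hom(M^*,\widehat{W})\ \longrightarrow\ \Mult(\BW^r\times M^*,\BG_m),\qquad (u_{\ul{d}})_{\ul{d}}\ \longmapsto\ \prod_{\ul{d}\in\BZ^r_0}\Phi_{\ul{d}}\circ(\Id\times\dotsb\times\Id\times u_{\ul{d}}), \]
is an isomorphism.

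Next I would assemble the comparison square. Applying Remark \ref{rem0 14} with $H=M$ produces, for each $\ul{d}\in\BZ^r_0$, a commutative square whose horizontal maps are precomposition with $\zeta_{\ul{d}}$ and, respectively, the map $u\mapsto\Phi_{\ul{d}}\circ(\Id\times\dotsb\times\Id\times u)$, and whose vertical maps are the two canonical Cartier-duality isomorphisms $\Hom(\BW,M)\xrightarrow{\cong}\Hom(M^*,\widehat{W})$ and $\Mult(\BW^r,M)\xrightarrow{\cong}\Mult(\BW^r\times M^*,\BG_m)$; crucially, these verticals are independent of $\ul{d}$. Taking the direct sum over $\ul{d}\in\BZ^r_0$ and then summing the top horizontal maps (respectively multiplying the bottom ones) I obtain a commutative square
\[ \xymatrix{
\bigoplus_{\ul{d}}\Hom(\BW,M)\ar[rr]^{\Delta_M}\ar[d]_{\cong} && \Mult(\BW^r,M)\ar[d]^{\cong}\\
\bigoplus_{\ul{d}}\Hom(M^*,\widehat{W})\ar[rr]_{\theta^{\widehat{W}}_{M^*}} && \Mult(\BW^r\times M^*,\BG_m),
} \]
in which the top map is, by the very definition of $\Delta_M$, the assignment $(f_{\ul{d}})_{\ul{d}}\mapsto\sum_{\ul{d}}f_{\ul{d}}\circ\zeta_{\ul{d}}$, and the bottom map is $\theta^{\widehat{W}}_{M^*}$. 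Since both vertical maps and the bottom map are isomorphisms, $\Delta_M$ is an isomorphism, which is the assertion.

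The steps that genuinely require care are two. First, one must check that the vertical maps in Remark \ref{rem0 14} are honest isomorphisms for the finite scheme $M$; this is Cartier biduality together with the Artin--Hasse duality $\innHom(\BW,\BG_m)\cong\widehat{W}$ and the adjunction of Lemma \ref{lem 19}, all of which is already available. Second --- and this is where the real content sits --- one needs Theorem \ref{thm01}, and the passage from $\widetilde{W}$ to $\widehat{W}$ of Remark \ref{rem0 13}(2), to be legitimately applicable to $M^*$; this is exactly the place where the hypotheses on $M$ (finite and local) are used, via the observation that nilpotence of $F$ on $M$ forces $M^*$ to be unipotent. Everything else is the bookkeeping --- already done in Proposition \ref{prop03} and recorded in Remark \ref{rem0 14} --- that identifies the map $\Delta_M$, built from the $\zeta_{\ul{d}}$, with the Cartier-dual incarnation of the isomorphism $\theta_{M^*}$ built from the $\Phi_{\ul{d}}$.
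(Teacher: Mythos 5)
Your argument is correct and follows the same route as the paper: the paper's proof likewise assembles the commutative square from the diagrams of Remark \ref{rem0 14}, observes that $M^*$ is unipotent so that Theorem \ref{thm01} (with Remark \ref{rem0 13} to pass from $\widetilde{W}$ to $\widehat{W}$) makes the bottom map $\sum\Phi_{\ul{d}}^*$ an isomorphism, and concludes that $\Delta_M$ is one as well. Your write-up just spells out more explicitly the two points the paper treats as already established.
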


%PROOF OF PROPOSITION 04
\begin{proof}
It is clear that the morphism $ \Delta_M $ preserves the $R$-module structure. It is therefore sufficient to prove that it is a bijection. The diagrams (for all $ \ul{d} $ in $ \BZ_0^r $) in the last remark give rise to the following commutative diagram:  \[ \xymatrix{\bigoplus_{\ul{d}\in\BZ^r_0}\Hom(\BW,M)\ar[rr]^{\Delta_{M}=\sum(\_)\circ\zeta_{\ul{d}}}\ar[d]_{\cong}&&\Mult(\BW^r,M)\ar[d]^{\cong}\\\bigoplus_{\ul{d}\in\BZ^r_0}\Hom(M^*,\widehat{W})\ar[rr]_{\sum\Phi_{\ul{d}}^*\quad}&&\Mult(\BW^r\times M^*,\BG_m).} \] Now using Theorem \ref{thm01} (note that $ M^*$ is unipotent) and Remark \ref{rem0 13}, we know that the homomorphism $ \sum\Phi_{\ul{d}}^* $ is an isomorphism and since the vertical homomorphisms are also isomorphisms, we conclude that the homomorphism $ \Delta_{H} $ is an isomorphism as well.
\end{proof}

%DEFINITION 07
\begin{dfn}
\label{def07}
Let $M_1, . . . , M_r, M, N$ be left $\BE_k\otimes_{\BZ}R$-modules.
\begin{itemize}
\item[1)] We let $L^R(M_1\times\dots\times M_r, N)$ denote the group of $W(k)\otimes_{\BZ}R$-multilinear maps $\ell:M_1\times\dots\times M_r\to N$ which satisfy the following conditions for all $m_i \in M_i$: \[ \ell(Vm_1,\dots,Vm_r)=V\ell(m_1,\dots,m_r),\] \[ \ell(Fm_1,m_2,\dots, m_r)=F\ell(m_1,Vm_2,\dots,Vm_r), \] \[ \vdots \] \[ \ell(m_1,\dots,m_{r-1}, Fm_r)=F\ell(Vm_1,\dots,Vm_{r-1},m_r). \]
\item[2)] Let $ L^R_{\text{sym}}(M^r,N) $ denote the submodule of $ L^r(M^r,N) $ consisting of symmetric morphisms.
\item[3)] Let $ L^R_{\text{alt}}(M^r,N) $ denote the submodule of $ L^r(M^r,N) $ consisting of alternating morphisms.
\end{itemize}
\end{dfn}

%REMARK 03
\begin{rem}
\label{rem03}
For any $r>0$ and any sheaves of $R$-modules $M, N$ over $k$, the group $ \Mult(\BW^r\times M, N) $ has a multilinear left action of $ \widehat{\BE}_k^r\otimes_{\BZ}R $ by \[(e_1,\dots,e_r)\otimes r \cdot \phi:=\phi\circ (e_1^*\times\dots\times e_r^*\times r.),\] where $ (\_)^*$ is the natural anti-automorphism of $ \widehat{\BE}_k $, being identity on $ W(k) $ and interchanging $F$ and $V$.
\end{rem}

The following proposition is a direct generalization of proposition 4.5.3, p. 48 of \cite{P} and its proof is the same as the proof of proposition 4.5.3, p. 48 of \cite{P} with the slight and easy modifications due to $R$-linearity and $R$-multilinearity, and therefore we omit the proof of the proposition. The following proposition is:

%PROPOSITION 05
\begin{prop}
\label{prop05}
For any $r >1$, any finite local-local $R$-module schemes $M_1,\dots, M_r$ and any unipotent $R$-module scheme $M$ the following map is a well-defined isomorphism, where $D_1,\dots, D_r$ are respectively the covariant Dieudonn\'e modules of $M_1,\dots,M_r$: $$L^R(D_1\times\dots\times D_r,D^*(M))\arrover{\theta} \Mult_{\widehat{\BE}_k^r\otimes_{\BZ}R}(D_1\times\dots\times D_r,\Mult(\BW^r\times M,\BG_m)),$$ \[ \ell\mapsto \theta({\ell}):(u_1,\dots,u_r)\mapsto \theta_M((\ell(V^{d_1}u_1,\dots,V^{d_r}u_r))_{\ul{d}})=\] \[\prod_{\ul{d}\in\BZ^r_0}\Phi_{\ul{d}}\circ (\Id\times\dots\times\Id\times\ell(V^{d_1}u_1,\dots,V^{d_r}u_r)). \]\\
\end{prop}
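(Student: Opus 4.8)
The plan is to deduce this proposition from the two previously established isomorphisms by comparing them via a commutative diagram. First I would set up the two candidate isomorphisms. On one hand, iterating Theorem~\ref{thm01} (or rather its multilinear refinement, as in Remark~\ref{rem0 13}), we know that $\theta_M:\bigoplus_{\ul{d}\in\BZ^r_0}\Hom(M,\widetilde{W})\to \Mult(\BW^r\times M,\BG_m)$ is an isomorphism for unipotent $M$ and $r>1$. On the other hand, by $R$-linearity each $\Phi_{\ul d}$ and each $\zeta_{\ul d}$ are natural for the $\widehat\BE_k$-action recalled in Remark~\ref{rem03}; in fact the content of Remark~\ref{rem0 14}, suitably iterated, shows that $\theta_M$ identifies the source, equipped with the componentwise $\widehat\BE_k^r\otimes_\BZ R$-action coming from $V$-shifts, with the target equipped with the action of Remark~\ref{rem03}. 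So $\theta_M$ is an isomorphism of $\widehat\BE_k^r\otimes_\BZ R$-modules.

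Next I would rewrite both sides of the asserted isomorphism using this. Applying the (exact) functor $\Mult_{\widehat\BE_k^r\otimes_\BZ R}(D_1\times\dots\times D_r,-)$ to the isomorphism $\theta_{D^*(M)}$ (note $D^*(M)$ is the contravariant Dieudonn\'e module and is unipotent whenever $M$ is) gives
\[
\Mult_{\widehat\BE_k^r\otimes_\BZ R}\!\Bigl(D_1\times\dots\times D_r,\ \bigoplus_{\ul d\in\BZ^r_0}\Hom(D^*(M),\widetilde W)\Bigr)\ \cong\ \Mult_{\widehat\BE_k^r\otimes_\BZ R}\bigl(D_1\times\dots\times D_r,\Mult(\BW^r\times M,\BG_m)\bigr).
\]
The right-hand side is the target of $\theta$ in the statement. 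For the left-hand side I would observe that, since $M$ is finite, $\Hom(D^*(M),\widetilde W)$ is, as an $\widehat\BE_k$-module (via the $V$-shift), canonically isomorphic to $D(M)$ itself — this is simply the reflexivity/duality built into the Dieudonn\'e correspondence recorded in the notation section and in Remark~\ref{rem0 10} — and the direct sum over $\ul d\in\BZ^r_0$ is exactly the decomposition of the $\widehat\BE_k^r$-module $\widehat\BE_k^r\otimes_{\widehat\BE_k}D(M)$ into its homogeneous pieces indexed by $\BZ^r_0$ (each $\ul d$ records a tuple of $V$-powers with $\min\ul d=0$). Matching the definition of $L^R(D_1\times\dots\times D_r,D^*(M))$ — whose $F$- and $V$-conditions are precisely the statement that a $W(k)\otimes_\BZ R$-multilinear map becomes $\widehat\BE_k^r\otimes_\BZ R$-multilinear after this homogeneous reindexing — one gets a natural identification
\[
L^R(D_1\times\dots\times D_r,D^*(M))\ \cong\ \Mult_{\widehat\BE_k^r\otimes_\BZ R}\!\Bigl(D_1\times\dots\times D_r,\ \bigoplus_{\ul d\in\BZ^r_0}\Hom(D^*(M),\widetilde W)\Bigr).
\]
Composing the last two displayed isomorphisms yields the asserted bijection, and unwinding the definitions shows the composite is exactly the map $\ell\mapsto\theta(\ell)$ written in the statement. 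Finally, $R$-linearity of $\theta$ is clear since every arrow in sight is $R$-linear, so $\theta$ is an isomorphism of $R$-modules.

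The main obstacle, and the step I would spend the most care on, is the bookkeeping that translates the $F$- and $V$-conditions of Definition~\ref{def07} into $\widehat\BE_k^r$-multilinearity after passing to the $\BZ^r_0$-graded object — i.e. checking that the combinatorial identity $\theta(\ell)(u_1,\dots,u_r)=\prod_{\ul d}\Phi_{\ul d}\circ(\Id\times\dots\times\ell(V^{d_1}u_1,\dots,V^{d_r}u_r))$ really does define a map that is both well-defined (independent of the representative choices forced by $\min\ul d=0$, using the relations $FV=VF=p$) and natural. Since this proposition is stated to be ``a direct generalization of proposition 4.5.3 of \cite{P}'' with only the evident modifications for $R$-linearity, I would in fact handle this by citing the argument of \cite{P}, Proposition~4.5.3, verbatim, inserting the scalar $R$ into each step and noting that $\theta_M$, $\Phi_{\ul d}$, $\zeta_{\ul d}$ are all $R$-linear so that no new phenomenon occurs; hence the proof can be omitted, exactly as the surrounding text indicates.
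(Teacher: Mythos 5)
Your proposal ends exactly where the paper does: the paper gives no argument for this proposition at all, stating only that it is a direct generalization of Proposition 4.5.3 of \cite{P} whose proof carries over with the evident modifications for $R$-linearity, which is precisely your concluding paragraph. Your preparatory sketch is a plausible reconstruction of what that omitted argument must do (transport the $\widehat{\BE}_k^r\otimes_{\BZ}R$-module structure across $\theta_M$ from Theorem \ref{thm01} and then match $L^R$ with the resulting multilinear maps), though if you ever write it out you should fix two slips: the source of $\theta_M$ is $\bigoplus_{\ul{d}\in\BZ^r_0}\Hom(M,\widetilde{W})=\bigoplus_{\ul{d}\in\BZ^r_0}D^*(M)$, not $\bigoplus_{\ul{d}}\Hom(D^*(M),\widetilde{W})$, and $M$ is only assumed unipotent, not finite.
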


Let $H$ be a finite group scheme. Take an element $ (w_1,\dots, w_r) \in\BW^r$, a homomorphism $ v:H^*\to \widetilde{W} $ (i.e., an element of $ D^*(H^*) $) and an element $ \ul{d}\in \BZ^r_0 $. The homomorphism $ \Phi_{\ul{d}}(w_1,\dots,w_r,v(\_)):H^*\to \BG_m $ can be seen as a section of $ H$ under the identification $ \innHom(H^*,\BG_m)\cong H $. We have thus for any $ v\in D^*(H^*)$ and any $ \ul{d}\in \BZ^r_0 $ a multilinear morphism $$ \Phi_{\ul{d}}\circ(\Id\times\dots\times\Id\times v(\_)):\BW^r\to H $$ which corresponds to the multilinear morphism $ \Phi_{\ul{d}}\circ (\Id\times\dots\times\Id\times v)\in \Mult(\BW^r\times H^*,\BG_m) $ under the canonical isomorphism $ \Mult(\BW^r,H) \cong \Mult(\BW^r\times H^*,\BG_m) $.

%PROPOSITION 022
\begin{prop}
\label{prop022}
For any $r >1$, any finite local-local $R$-module schemes $M_1,\dots, M_r$ and any finite local $R$-module scheme $M$ the following morphism is a well-defined isomorphism, where $D_1,\dots, D_r$ and $D$ are respectively the covariant Dieudonn\'e modules of $M_1,\dots,M_r$ and $M$: $$L^R(D_1\times\dots\times D_r,D)\arrover{\Delta_{(M_1,\dots,M_r;M)}} \Mult_{\widehat{\BE}_k^r\otimes_{\BZ}R}(D_1\times\dots\times D_r,\Mult(\BW^r,M)),$$ \[ \ell\mapsto \Delta_{(M_1,\dots,M_r;M)}({\ell}):(u_1,\dots,u_r)\mapsto  \] \[\sum_{\ul{d}\in \BZ^0}\Phi_{\ul{d}}\circ (\Id\times\dots\times \Id\times \ell(V^{d_1}u_1,\dots,V^{d_r}u_r)(\_)). \]
\end{prop}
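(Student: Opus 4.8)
The plan is to deduce the statement from Proposition~\ref{prop05} by Cartier duality, in complete parallel with the way Proposition~\ref{prop04} was obtained from Theorem~\ref{thm01} via Remark~\ref{rem0 14}. Since $M$ is a finite \emph{local} $R$-module scheme over the perfect field $k$, its Cartier dual $M^*$ is a finite \emph{unipotent} $R$-module scheme (over a perfect field a finite commutative group scheme is unipotent exactly when its Cartier dual is local), and by the very definition of the covariant Dieudonn\'e module one has $D=D_*(M)=D^*(M^*)$. Hence Proposition~\ref{prop05}, applied with its ``unipotent module'' taken to be $M^*$, already yields a well-defined isomorphism
\[
\theta\colon L^R(D_1\times\dots\times D_r,D)\ \longrightarrow\ \Mult_{\widehat{\BE}_k^r\otimes_{\BZ}R}\bigl(D_1\times\dots\times D_r,\ \Mult(\BW^r\times M^*,\BG_m)\bigr),
\]
sending $\ell$ to the assignment $(u_1,\dots,u_r)\mapsto\prod_{\ul{d}\in\BZ^r_0}\Phi_{\ul{d}}\circ(\Id\times\dots\times\Id\times \ell(V^{d_1}u_1,\dots,V^{d_r}u_r))$, where $\ell(V^{d_1}u_1,\dots,V^{d_r}u_r)$ is read as an element of $D^*(M^*)=\Hom(M^*,\widetilde{W})$.

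Next I would use the canonical adjunction isomorphism $\Mult(\BW^r,H)\cong\Mult(\BW^r\times H^*,\BG_m)$, valid for any finite group scheme $H$ (it combines the tensor--hom adjunction with Cartier biduality $\innHom(H^*,\BG_m)\cong H$), applied to $H=M$. This isomorphism is $\widehat{\BE}_k^r\otimes_{\BZ}R$-linear: the $\widehat{\BE}_k^r$-action on both sides runs through the $\BW^r$-factors only, which the isomorphism does not touch, while $R$-equivariance comes from the functoriality of Cartier duality. Hence post-composition with it induces an isomorphism $\Psi_*$ of the corresponding $\Mult_{\widehat{\BE}_k^r\otimes_{\BZ}R}(D_1\times\dots\times D_r,-)$ groups. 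Finally, the discussion immediately preceding the statement (a reformulation of Proposition~\ref{prop03}/Remark~\ref{rem0 14}) says precisely that, for $v\in D^*(M^*)$ and $\ul{d}\in\BZ^r_0$, the element $\Phi_{\ul{d}}\circ(\Id\times\dots\times\Id\times v)$ of $\Mult(\BW^r\times M^*,\BG_m)$ corresponds under this isomorphism to the multilinear morphism $\Phi_{\ul{d}}\circ(\Id\times\dots\times\Id\times v(\_))\colon\BW^r\to M$. Taking $v=\ell(V^{d_1}u_1,\dots,V^{d_r}u_r)$ and summing over $\ul{d}\in\BZ^r_0$ then gives $\Psi_*\circ\theta=\Delta_{(M_1,\dots,M_r;M)}$.

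Consequently $\Delta_{(M_1,\dots,M_r;M)}$ is the composite of the two isomorphisms $\theta$ and $\Psi_*$, so it is a well-defined isomorphism, as claimed. The only step that needs real care is checking that the Frobenius/Verschiebung structures (equivalently the $\zeta_{\ul{d}}$'s versus the $\Phi_{\ul{d}}$'s) match under the duality isomorphism, i.e.\ the identity $\Psi_*\circ\theta=\Delta$; but this is exactly the content of Proposition~\ref{prop03}, so once that result is in hand the present proof is pure bookkeeping, entirely analogous to the proof of Proposition~\ref{prop04}.
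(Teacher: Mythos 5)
Your proposal is correct and follows essentially the same route as the paper: the paper's proof likewise deduces the statement directly from Proposition~\ref{prop05} applied to the unipotent dual $M^*$, using that $D_*(M)\cong D^*(M^*)$ and that $\Mult(\BW^r,M)\cong\Mult(\BW^r\times M^*,\BG_m)$. Your additional verification via Proposition~\ref{prop03} that the explicit formulas match under this identification is exactly the content the paper sets up in the discussion immediately preceding the statement.
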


%PROOF OF PROPOSITION 022
\begin{proof}
This is a direct consequence of the previous proposition, in virtue of the following facts: $ M^* $ is a unipotent $R$-module scheme, the covariant Dieudonn\'e module of $M$ is canonically isomorphic to the contravariant Dieudonn\'e module of $M^*$, and the two groups $ \Mult(\BW^r,M) $ and $ \Mult(\BW^r\times M^*,\BG_m) $ are isomorphic.
\end{proof}

In the case, when $M$ is also local-local, we can give a more direct isomorphism of the two groups $L^R(D_1\times\dots\times D_r,D)$ and $ \Mult_{\widehat{\BE}_k^r\otimes_{\BZ}R}(D_1\times\dots\times D_r,\Mult(\BW^r,M))$, without a detour to the Cartier duality. We have:

%PROPOSITION 06
\begin{prop}
\label{prop06}
For any $r >1$ and any finite local-local $R$-module schemes $M_1,\dots, M_r$ and $M$ the following morphism is a well-defined isomorphism, where $D_1,\dots, D_r$ and $D$ are respectively the covariant Dieudonn\'e modules of $M_1,\dots\\,M_r$ and $M$: $$L^R(D_1\times\dots\times D_r,D)\arrover{\Delta_{(M_1,\dots,M_r;M)}} \Mult_{\widehat{\BE}_k^r\otimes_{\BZ}R}(D_1\times\dots\times D_r,\Mult(\BW^r,M)),$$ \[ \ell\mapsto \Delta_{(M_1,\dots,M_r;M)}({\ell}):(u_1,\dots,u_r)\mapsto \Delta_M((\ell(V^{d_1}u_1,\dots,V^{d_r}u_r))_{\ul{d}})=\] \[\sum_{\ul{d}\in\BZ^r_0}\ell(V^{d_1}u_1,\dots,V^{d_r}u_r)\circ\zeta_{\ul{d}}. \]
\end{prop}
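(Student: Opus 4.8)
The plan is to reduce this to Proposition~\ref{prop022}. The first thing I would note is that when $M$ is local--local its covariant Dieudonné module satisfies $D=D_*(M)\cong\Hom(\BW,M)$ canonically, and likewise $D_*(M)=D^*(M^*)$; consequently the source $L^R(D_1\times\dots\times D_r,D)$ and the target $\Mult_{\widehat{\BE}_k^r\otimes_{\BZ}R}(D_1\times\dots\times D_r,\Mult(\BW^r,M))$ of the map in the present proposition are \emph{literally} the source and target of the map in Proposition~\ref{prop022}; only the defining formula differs ($\sum_{\ul{d}}\ell(V^{d_1}u_1,\dots,V^{d_r}u_r)\circ\zeta_{\ul{d}}$ here versus $\sum_{\ul{d}}\Phi_{\ul{d}}\circ(\Id\times\dots\times\Id\times\ell(V^{d_1}u_1,\dots,V^{d_r}u_r)(\_))$ there). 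So it will suffice to show that these two formulas define one and the same homomorphism; its being an isomorphism is then exactly Proposition~\ref{prop022}.

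Before comparing I would dispatch the well-definedness point: the sum $\sum_{\ul{d}\in\BZ^r_0}\ell(V^{d_1}u_1,\dots,V^{d_r}u_r)\circ\zeta_{\ul{d}}$ is finite, so that $\Delta_M$ of Proposition~\ref{prop04} (which applies since $M$ is in particular finite and local) may indeed be applied to the family $\bigl(\ell(V^{d_1}u_1,\dots,V^{d_r}u_r)\bigr)_{\ul{d}}$. Indeed, since each $M_i$ is local--local, $V$ acts nilpotently on $D_i$, so $\ell(V^{d_1}u_1,\dots,V^{d_r}u_r)=0$ as soon as some $d_i$ exceeds the nilpotence order of $V$ on $D_i$, and there are only finitely many $\ul{d}\in\BZ^r_0$ all of whose coordinates stay below a fixed bound. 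In particular $\Delta_{(M_1,\dots,M_r;M)}(\ell)$ is a well-defined $R$-multilinear morphism $D_1\times\dots\times D_r\to\Mult(\BW^r,M)$. (That it is even $\widehat{\BE}_k^r\otimes_{\BZ}R$-multilinear, hence lands in the stated target, will follow from the comparison below; alternatively it can be checked directly from the $F$- and $V$-conditions of Definition~\ref{def07} together with the $F,V$-equivariance of the morphisms $\zeta_{\ul{d}}$ built in Construction~\ref{cons01} and the anti-automorphism $(\_)^*$ of Remark~\ref{rem03}, exactly as was done for $\Phi_{\ul{d}}$ in Propositions~\ref{prop05} and \ref{prop022}.)

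For the comparison I would fix $\ell\in L^R(D_1\times\dots\times D_r,D)$, a tuple $(u_1,\dots,u_r)$ and $\ul{d}\in\BZ^r_0$, and put $f:=\ell(V^{d_1}u_1,\dots,V^{d_r}u_r)\in D=D_*(M)=\Hom(\BW,M)$. Applying Remark~\ref{rem0 14} with the finite group scheme $H:=M$ to the element $f$ shows that, under the canonical identifications $D_*(M)=D^*(M^*)$ and $\Mult(\BW^r,M)\cong\Mult(\BW^r\times M^*,\BG_m)$, the morphism $f\circ\zeta_{\ul{d}}\in\Mult(\BW^r,M)$ agrees with $\Phi_{\ul{d}}\circ(\Id\times\dots\times\Id\times f(\_))$, i.e. with precisely the term occurring in the formula of Proposition~\ref{prop022}. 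Summing over $\ul{d}\in\BZ^r_0$ then gives
\[
\Delta_{(M_1,\dots,M_r;M)}(\ell)(u_1,\dots,u_r)=\sum_{\ul{d}\in\BZ^r_0}f\circ\zeta_{\ul{d}}=\sum_{\ul{d}\in\BZ^r_0}\Phi_{\ul{d}}\circ(\Id\times\dots\times\Id\times f(\_)),
\]
which is the value of the Proposition~\ref{prop022} map on the same data. As $\ell$ and the tuple were arbitrary, the two homomorphisms coincide, and by Proposition~\ref{prop022} this common homomorphism is an isomorphism.

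The only genuinely delicate step I anticipate is the bookkeeping in the comparison: one must keep straight the three canonical identifications in play ($D_*(M)=\Hom(\BW,M)$, $D_*(M)=D^*(M^*)$, and $\Mult(\BW^r,M)\cong\Mult(\BW^r\times M^*,\BG_m)$) and verify that Remark~\ref{rem0 14} is being invoked with respect to exactly these. Once that is pinned down, everything is formal: finiteness of the sum on one side, and an appeal to Proposition~\ref{prop022} on the other.
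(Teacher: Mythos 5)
Your proposal is correct and follows essentially the same route as the paper, whose proof is precisely ``the proposition follows from Proposition \ref{prop05} and Remark \ref{rem0 14}'': you pass through Proposition \ref{prop022} (itself the Cartier-dual form of Proposition \ref{prop05}) and invoke Remark \ref{rem0 14} to identify the $\zeta_{\ul{d}}$-formula with the $\Phi_{\ul{d}}$-formula term by term. Your additional check that the sum is finite (nilpotence of $V$ on the $D_i$ since the $M_i$ are local-local) is a detail the paper leaves implicit, but it does not change the argument.
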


%PROOF OF PROPOSITION 06
\begin{proof}
The proposition follows at once from Proposition \ref{prop05} and Remark \ref{rem0 14}.
\end{proof}

Again, the following proposition is the ``$R$-generalization" of the proposition 4.5.2, p.47 of \cite{P} and we omit its proof:
%PROPOSITION 07
\begin{prop}
\label{prop07}
For any $r >0$, any finite local-local $R$-module schemes $M_1,\dots, M_r$ and any sheaves of $R$-modules $M, N$, the following morphism is an isomorphism, where $D_1,\dots, D_r$ are respectively the covariant Dieudonn\'e modules of $M_1,\dots,M_r$: \[ \Mult^R(M_1\times\dots\times M_r\times M, N)\to \Mult_{\widehat{\BE}_k^r\otimes_{\BZ}R}(D_1\times\dots\times D_r,\Mult(\BW^r\times M, N)) \] \[ \phi\mapsto ((u_i)\mapsto\phi\circ(u_1\times\dots\times u_r\times \Id)). \]
\end{prop}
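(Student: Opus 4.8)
The plan is to deduce the statement from its special case $R=\BZ$, which is Proposition~4.5.2 of \cite{P}, by keeping track of the $R$-module structures throughout. Write $\Psi$ for the map in question, $\phi\mapsto\bigl((u_1,\dots,u_r)\mapsto\phi\circ(u_1\times\dots\times u_r\times\Id_M)\bigr)$, where each $u_i$ is viewed as a homomorphism $\BW\to M_i$ under the identification $D_i=\Hom(\BW,M_i)$ (valid since the $M_i$ are finite local-local). Recall that the ring map $R\to\End(M_i)$ endows $D_i$ with an $R$-action, post-composition with $r\cdot_{M_i}$, which commutes with the $\widehat{\BE}_k$-action by Remark~\ref{rem31}, so that $D_i$ is a left $\widehat{\BE}_k\otimes_{\BZ}R$-module; and that $\Mult(\BW^r\times M,N)$ carries the $\widehat{\BE}_k^r\otimes_{\BZ}R$-action recalled in Remark~\ref{rem03}, whose $R$-part is pre-composition of $f\colon\BW^r\times M\to N$ with $\Id_{\BW^r}\times(r\cdot_M)$.

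First I would record the formal part. For $\phi\in\Mult^R(M_1\times\dots\times M_r\times M,N)$ the composite $\phi\circ(u_1\times\dots\times u_r\times\Id_M)$ is a multilinear morphism $\BW^r\times M\to N$, and $(u_i)\mapsto\phi\circ(u_1\times\dots\times u_r\times\Id_M)$ is additive and $W(k)$- and $F/V$-compatible in each $u_i$; this is exactly Pink's verification and uses only the definition of the Dieudonn\'e-module structure of $D_i$ together with the action in Remark~\ref{rem03}. The genuinely new point is that, for $r\in R$, $\Psi(\phi)(u_1,\dots,r\cdot u_i,\dots,u_r)=\phi\circ(\Id\times\dots\times(r\cdot_{M_i})\times\dots\times\Id_M)\circ(u_1\times\dots\times u_r\times\Id_M)$, while the $R$-action of Remark~\ref{rem03} applied to $\Psi(\phi)(u_1,\dots,u_r)$ yields $\phi\circ(\Id\times\dots\times\Id_{M_r}\times(r\cdot_M))\circ(u_1\times\dots\times u_r\times\Id_M)$; these agree because $\phi$ is $R$-multilinear (the scalar $r\cdot_{M_i}$ in any slot may be moved to the $M$-slot, both being equal to $r\cdot_N\circ\phi$). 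Hence $\Psi$ maps $\Mult^R(M_1\times\dots\times M_r\times M,N)$ into $\Mult_{\widehat{\BE}_k^r\otimes_{\BZ}R}(D_1\times\dots\times D_r,\Mult(\BW^r\times M,N))$.

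Next, forgetting $R$ and applying Proposition~4.5.2 of \cite{P} to the finite local-local group schemes $M_i$ and the sheaves of abelian groups $M,N$, the underlying map $\bar\Psi\colon\Mult(M_1\times\dots\times M_r\times M,N)\to\Mult_{\widehat{\BE}_k^r}(D_1\times\dots\times D_r,\Mult(\BW^r\times M,N))$ is an isomorphism, and $\Psi$ is its restriction to the $R$-submodules above; in particular $\Psi$ is injective. For surjectivity, let $\psi$ be $\widehat{\BE}_k^r\otimes_{\BZ}R$-multilinear and set $\phi:=\bar\Psi^{-1}(\psi)$. Applying $\bar\Psi$ to $\phi\circ(\Id\times\dots\times(r\cdot_{M_i})\times\dots\times\Id_M)$ and to $\phi\circ(\Id\times\dots\times\Id_{M_r}\times(r\cdot_M))$ and using that $\psi$ is $R$-multilinear (the scalar $r$ moves freely among the $D_i$ and onto the $\Mult(\BW^r\times M,N)$-factor), one finds these morphisms have equal image under the injective $\bar\Psi$, hence are equal; doing this for all pairs of slots shows $\phi$ is pseudo-$R$-multilinear. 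The remaining step is to upgrade ``pseudo-$R$-multilinear'' to ``$R$-multilinear into the $R$-module sheaf $N$'': this is where the $F$- and $V$-conditions packaged in $\widehat{\BE}_k$-multilinearity must be used (a Frobenius twist in the $R$-scalars, which is precisely the obstruction to pseudo-linearity being genuine linearity, is incompatible with those identities), and it is the step I expect to be the main obstacle. Once it is settled, $\Psi$ is bijective.

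Alternatively — and this is presumably what is meant by the remark that the proof is ``the same as Pink's with slight modifications due to $R$-linearity'' — one reruns Pink's proof of 4.5.2 verbatim: d\'evissage on the $M_i$ through short exact sequences of $R$-module schemes, which are exact as sequences of group schemes (Remark~\ref{rem 1}), together with the left-exactness of $\innHom^R$ (Proposition~\ref{prop 1}), reduces to the base case $M_i=W_{m_i,n_i}$, where the explicit computation is unchanged because $R$ acts by endomorphisms commuting with $F$, $V$ and $W(k)$; in that approach the only thing to check is that the $R$-(multi)linearity is correctly transported at each d\'evissage step, which is again the crux.
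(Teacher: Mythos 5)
The paper does not in fact prove this proposition: it only asserts that Pink's proof of his Proposition 4.5.2 carries over ``with the slight and easy modifications due to $R$-linearity'', so your second route (rerunning the d\'evissage) is the one the author intends, and your first route (restricting Pink's isomorphism $\bar\Psi$ for $R=\BZ$ to the $R$-submodules on both sides) is a legitimate and arguably cleaner alternative. Your verification that $\Psi$ is well defined, and the observation that injectivity is inherited from $\bar\Psi$, are both correct.

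The surjectivity step, however, is a genuine gap, and you have flagged it without closing it. Reading the action of Remark \ref{rem03} literally ($a\in R$ acts on $\Mult(\BW^r\times M,N)$ by precomposition with $a\cdot$ on the factor $M$), the $R$-part of the $\widehat{\BE}_k^r\otimes_{\BZ}R$-multilinearity of $\psi$ only tells you, after applying the injective $\bar\Psi$, that $\phi:=\bar\Psi^{-1}(\psi)$ satisfies $\phi\circ(\Id\times\dots\times(a\cdot)_{M_i}\times\dots\times\Id_M)=\phi\circ(\Id\times\dots\times\Id\times(a\cdot)_M)$ for each $i$, i.e.\ that $\phi$ is pseudo-$R$-multilinear in the sense of Definition \ref{def 18}; it says nothing about the action of $R$ on $N$. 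Pseudo-$R$-multilinearity is strictly weaker: if $R=\BZ[t]$ acts through $t=0$ on all of $M_1,\dots,M_r,M$ and through $t=\Id$ on $N$, then every multilinear morphism is pseudo-$R$-multilinear while only the zero morphism is $R$-multilinear, and the Frobenius--Verschiebung identities encoded in the $\widehat{\BE}_k$-action cannot repair this, since they never interact with the $R$-module structure of $N$. So the upgrade you hope for is not available from what you have written down; the missing ingredient is to pin down the definition of $\Mult_{\widehat{\BE}_k^r\otimes_{\BZ}R}(D_1\times\dots\times D_r,\Mult(\BW^r\times M,N))$ so that the $R$-condition on $\psi$ reads $\psi(u_1,\dots,au_i,\dots,u_r)=(a\cdot)_N\circ\psi(u_1,\dots,u_r)=\psi(u_1,\dots,u_r)\circ(\Id_{\BW^r}\times (a\cdot)_M)$, i.e.\ so that the natural $R$-module structure on $\Mult(\BW^r\times M,N)$ induced by $N$ is part of the action. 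With that reading your reduction to Pink closes: the first equality gives genuine $R$-linearity of $\phi$ in the slots $M_1,\dots,M_r$, and the second gives it in the $M$-slot (using that each $M_i$, being finite local-local, is covered by morphisms $u_i\colon\BW\to M_i$). The same point is exactly the ``crux'' you would have to check at each step of the d\'evissage in your alternative route, so in either form the proposal is not yet a complete proof.
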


%PROPOSITION 08
\begin{prop}
\label{prop08}
For any $r >0$ and any finite local-local $R$-module schemes $M_1,\dots, M_r$ and any sheaf of $R$-modules $M$, the following morphism is an isomorphism, where $D_1,\dots, D_r$ are respectively the covariant Dieudonn\'e modules of $M_1,\dots,M_r$: \[\Mult^R(M_1\times\dots\times M_r,M)\arrover{\nabla_{(M_1,\dots,M_r;M)}} \Mult_{\widehat{\BE}_k^r\otimes_{\BZ} R}(D_1\times\dots\times D_r,\Mult(\BW^r,M)) \] \[ \phi\mapsto ((f_i)\mapsto\phi\circ(f_1\times\dots\times f_r)). \]
\end{prop}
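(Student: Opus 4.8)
The plan is to derive this from Proposition \ref{prop07} by peeling off the local-local factors one at a time; indeed $\nabla_{(M_1,\dots,M_r;M)}$ is exactly the map of Proposition \ref{prop07} with the auxiliary factor of that proposition removed. Two preliminaries cost nothing. First, $\nabla_{(M_1,\dots,M_r;M)}$ is $R$-linear and functorial in all arguments, being defined by composition of morphisms. Second, for an $R$-multilinear $\phi:M_1\times\dots\times M_r\to M$ the rule $(f_1,\dots,f_r)\mapsto\phi\circ(f_1\times\dots\times f_r)$ does land in $\Mult_{\widehat{\BE}_k^r\otimes_{\BZ}R}(D_1\times\dots\times D_r,\Mult(\BW^r,M))$, that is, is $\widehat{\BE}_k$-linear in each $D_i$; this uses only the standard description of the $\widehat{\BE}_k$-module structure on $D_i=D_*(M_i)=\Hom(\BW,M_i)$ by precomposition with the operators $F,V$ of $\BW$ (through the anti-automorphism $(\_)^*$ of $\widehat{\BE}_k$), together with the $\widehat{\BE}_k^r\otimes_{\BZ}R$-action on $\Mult(\BW^r,M)$ of Remark \ref{rem03}, and is the same computation as in \cite{P}. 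So the content is that $\nabla_{(M_1,\dots,M_r;M)}$ is bijective, which I would prove by induction on $r$.

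For $r=1$ the assertion is that post-composition identifies $\Hom^R(M_1,M)$ with $\Hom_{\widehat{\BE}_k\otimes_{\BZ}R}(D_1,\Hom(\BW,M))$; this is (the extension to an arbitrary target sheaf of $R$-modules of) the full faithfulness of the covariant Dieudonné functor $N\mapsto\Hom(\BW,N)$ on finite local-local group schemes, which is part of the theory recalled in this chapter and in \cite{P}, and for $M$ finite is the $r=1$ instance of the identification behind Proposition \ref{prop06}. For the inductive step, apply Proposition \ref{prop07} with local-local factors $M_1,\dots,M_{r-1}$, auxiliary factor $M_r$ and target $M$, obtaining a functorial isomorphism
\[ \Mult^R(M_1\times\dots\times M_r,M)\ \cong\ \Mult_{\widehat{\BE}_k^{r-1}\otimes_{\BZ}R}\bigl(D_1\times\dots\times D_{r-1},\ \Mult(\BW^{r-1}\times M_r,M)\bigr). \]
Now rewrite the inner term: the adjunction relating multilinear maps and inner multi-homs (cf. Proposition \ref{prop 2}, in its presheaf-level form) gives $\Mult(\BW^{r-1}\times M_r,M)\cong\Mult^R\bigl(M_r,\innMult(\BW^{r-1},M)\bigr)$, and the case $r=1$, applied to the single local-local factor $M_r$ with target $\innMult(\BW^{r-1},M)$, yields an isomorphism of $\widehat{\BE}_k^{r-1}\otimes_{\BZ}R$-modules
\[ \Mult^R\bigl(M_r,\innMult(\BW^{r-1},M)\bigr)\ \cong\ \Hom_{\widehat{\BE}_k\otimes_{\BZ}R}\bigl(D_r,\ \Mult(\BW\times\BW^{r-1},M)\bigr)\ =\ \Hom_{\widehat{\BE}_k\otimes_{\BZ}R}\bigl(D_r,\ \Mult(\BW^r,M)\bigr). \]
Substituting, and using the elementary identity $\Mult_{A^{r-1}}\bigl(D_1\times\dots\times D_{r-1},\Hom_A(D_r,X)\bigr)\cong\Mult_{A^r}\bigl(D_1\times\dots\times D_r,X\bigr)$ for $A=\widehat{\BE}_k\otimes_{\BZ}R$, produces the right-hand side of the proposition; tracing a given $\phi$ through the chain confirms that the resulting isomorphism is $\nabla_{(M_1,\dots,M_r;M)}$.

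The one delicate point is keeping track of the several $\widehat{\BE}_k$-module structures through these isomorphisms and checking that the composite really is the explicit map $\nabla_{(M_1,\dots,M_r;M)}$; the former is precisely what the clauses ``functorial in all arguments'' in Propositions \ref{prop07} and \ref{prop 2} are there to provide, and the latter is a mechanical unwinding of definitions. Alternatively — and this is probably the shorter route to write up — one may simply repeat the proof of Proposition \ref{prop07} (that is, of Proposition 4.5.2 of \cite{P}) verbatim with the auxiliary factor suppressed throughout, since that factor enters the argument only passively.
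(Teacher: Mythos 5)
Your argument is correct in substance, but it takes a noticeably longer route than the paper, which disposes of Proposition \ref{prop08} in one line: it specializes Proposition \ref{prop07} by taking the auxiliary sheaf $M$ there to be trivial (equivalently, by running the proof of Proposition \ref{prop07} with that passive factor suppressed throughout --- which is precisely the alternative you offer in your last sentence, and is really the cleaner reading, since a literal product with a zero factor would kill all multilinear morphisms on both sides). Your main route instead re-derives the statement by promoting $M_r$ into the auxiliary slot of Proposition \ref{prop07}, then converting that slot back to Dieudonn\'e-module form via the $r=1$ case (full faithfulness for an arbitrary target sheaf, i.e.\ biduality $\BW\otimes_{\widehat{\BE}_k}D_*(M_r)\cong M_r$) and a currying identity over $\widehat{\BE}_k\otimes_{\BZ}R$. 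This works, and it has the virtue of exhibiting explicitly how the $r=1$ statement, the adjunction of Proposition \ref{prop 2}, and the module-theoretic currying assemble into the general case; the price is the bookkeeping you yourself flag --- in particular $\innMult(\BW^{r-1},M)$ must be handled purely as a presheaf (since $\BW$ is a pro-object, none of the representability results of chapter 2 apply), and the intermediate $\Hom$ out of $M_r$ is a priori only additive rather than $R$-linear, a distinction you should not elide when invoking the $r=1$ case. None of these points is a genuine gap, but if you want the shortest correct write-up, the suppression-of-the-auxiliary-factor argument is the one to give.
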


%PROOF OF PROPOSITION 08
\begin{proof}
This proposition follows from the previous proposition by replacing in that proposition, $ M $ with the trivial sheaf of $R$-modules and $ H $ by the given $R$-module scheme (in the statement of this proposition) seen as a sheaf of $R$-modules.
\end{proof}

%COROLLARY 03
\begin{cor}
\label{cor03}
For any $r >0$ and any finite $R$-module schemes $M_1, \dots, M_r$ and $M$, of $p$-power torsion, there exists a natural isomorphism:
\[ L^R(D_1\times\dots\times D_r,D)\longrightarrow  \Mult^R(M_1\times\dots\times M_r,M),\]  where $D_1,\dots, D_r$ and $D$ are respectively the covariant Dieudonn\'e modules of $M_1,\dots,M_r$ and $M$. This isomorphism is functorial in all arguments. When $M$ is local and $ M_i $ are local-local, this isomorphism is given by  \[ \nabla_{(M_1,\dots,M_r;M)}^{-1}\circ \Delta_{(M_1,\dots,M_r;M)}. \]
\end{cor}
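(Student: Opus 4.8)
The plan is to build the isomorphism by d\'evissage out of the explicit isomorphisms already obtained, the core case being that of local-local sources and a local target. Concretely, suppose first that each $M_i$ is finite and local-local and that $M$ is finite and local; then $M^{*}$ is unipotent, so Proposition \ref{prop022} gives an isomorphism $\Delta_{(M_1,\dots,M_r;M)}$ of $L^R(D_1\times\dots\times D_r,D)$ onto $\Mult_{\widehat{\BE}_k^r\otimes_{\BZ}R}(D_1\times\dots\times D_r,\Mult(\BW^r,M))$, while Proposition \ref{prop08} gives an isomorphism $\nabla_{(M_1,\dots,M_r;M)}$ of $\Mult^R(M_1\times\dots\times M_r,M)$ onto the same group. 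I would take $\nabla_{(M_1,\dots,M_r;M)}^{-1}\circ\Delta_{(M_1,\dots,M_r;M)}$ as the isomorphism in this case; it is functorial because $\nabla$ and $\Delta$ are, and when $M$ is moreover local-local it agrees with the slicker description coming from Proposition \ref{prop06}.

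For the general case I would use the canonical decomposition of a finite $R$-module scheme $N$ of $p$-power torsion over the perfect field $k$ into its multiplicative, local-local and \'etale parts; this decomposition is $R$-linear by Remark \ref{rem 1}(7) (applied to $N$, and, after passing to Cartier duals, to the connected component of $N^{*}$). Since $\Mult^R(-\times\dots\times-,-)$, $D_{*}$ and $L^R(-\times\dots\times-,-)$ are all additive in each of their arguments, it is enough to treat the case in which $M_1,\dots,M_r$ and $M$ are each isotypic. If every $M_i$ is local-local, I would split $M$ into a local and an \'etale piece: the \'etale-target summand vanishes on both sides (on the scheme side because there is no nonzero homomorphism from a connected group scheme to an \'etale one, on the module side because the $F$-relations of Definition \ref{def07} force it), and the local-target summand is the case already done. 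If some $M_i$ is multiplicative or \'etale, I would reorder it into the last slot and absorb it via the adjunction of Lemma \ref{lem 19}, replacing $\Mult^R(M_1\times\dots\times M_r,M)$ by $\Mult^R(M_1\times\dots\times M_{r-1},\innHom^R(M_r,M))$; here $\innHom^R(M_r,M)$ is \'etale when $M_r$ is multiplicative, and is again a finite $p$-torsion $R$-module scheme with one fewer non-local-local source factor when $M_r$ is \'etale. An induction on the number of non-local-local source factors --- using the vanishing just noted whenever a remaining connected source meets an \'etale target --- reduces everything to the core case together with the purely \'etale case, and the latter is treated by base change to an algebraic closure, where \'etale group schemes become constant and one is dealing with ordinary $\Gal(\bar k/k)$-equivariant multilinear morphisms of finite abelian $p$-groups, matching the analogous description of $L^R$ for Dieudonn\'e modules on which $F$ is invertible; this is, up to the presence of $R$, the corresponding statement in \cite{P}. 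All of these reductions are to be carried out simultaneously on the $L^R$-side, where Lemma \ref{lem 19} corresponds to identifying $L^R(D_1\times\dots\times D_r,D)$ with $L^R$ of $D_1\times\dots\times D_{r-1}$ into the Dieudonn\'e module of $\innHom^R(M_r,M)$.

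I expect the real work to be this last bookkeeping: establishing that the $\innHom^R/\Mult^R$ adjunction used to absorb multiplicative and \'etale source factors has an exact counterpart for $\BE_k\otimes_{\BZ}R$-modules equipped with the $F$- and $V$-conditions, and then verifying that, once the isotypic cases are glued along the canonical decompositions, the resulting isomorphism does not depend on the order of the reductions, is functorial in all arguments, and restricts to $\nabla_{(M_1,\dots,M_r;M)}^{-1}\circ\Delta_{(M_1,\dots,M_r;M)}$ in the core case. The purely \'etale case, by contrast, can be taken over essentially verbatim from \cite{P}.
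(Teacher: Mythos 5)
Your proposal matches the paper's proof: the core case ($M_i$ local-local, $M$ local, $r>1$) is exactly $\nabla_{(M_1,\dots,M_r;M)}^{-1}\circ\Delta_{(M_1,\dots,M_r;M)}$ from Propositions \ref{prop022} and \ref{prop08}, and the remaining cases are handled by the same d\'evissage that the paper simply delegates to Proposition 4.5.9 of \cite{P}, which you have spelled out in more detail. The one point to add is the case $r=1$, where Proposition \ref{prop022} does not apply and the paper instead invokes classical Dieudonn\'e theory directly.
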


%PROOF OF COROLLARY 03
\begin{proof}
If $r=1$, then this is the classical Dieudonn\'e theory. If $r>1$, $M$ is local and $ M_i $ are local-local, then the corollary is a direct consequence of Propositions \ref{prop08} and \ref{prop022}. Otherwise, the same arguments as in the proof of Proposition 4.5.9, p. 52 in \cite{P} imply the required result.
\end{proof}

%REMARK 33
\begin{rem}
\label{rem33}
$  $
\begin{itemize}
\item[1)] In later chapters, we only need the explicit isomorphism of the Corollary in the case, when $ M_i $ are local-local and $ M $ is local. This is why we didn't reproduce the proof in all cases.
\item[2)] Let $M$ and $N$ be $R$-module schemes of $p$-power torsion. According to Theorem 5.4.2, p. 66 in \cite{P}, the submodule $ L^R_{\text{sym}}(D_*(M)^r,D_*(N)) $ of $ L^R(D_*(M),D_*(N)) $ is mapped, under the above isomorphism, bijectively onto the submodule $ \Sym^R(M^r,N) $ of $ \Mult^R(M^r,N) $. Similarly, when $p>2$, the submodule $ L^R_{\text{alt}}(D_*(M)^r,D_*(N)) $ of $ L^R(D_*(M),D_*(N)) $ is mapped, under the above isomorphism, bijectively onto the submodule $ \Alt^R(M^r,N) $ of $ \Mult^R(M^r,N) $.
\end{itemize}
\end{rem}

%SECTION 4
%\section{Tensor product and related constructions}
\chapter{Tensor Product and Related Constructions}

%\subsection{Basic constructions}
\section{Basic constructions}

%DEFINITION 7
\begin{dfn}
\label{def 7}
Let $M_1\cdots,M_r,M$ be $R$-module schemes and $M'$ a group scheme over
$S$.
\begin{itemize}
\item[(i)] A pseudo-$R$-multilinear morphism
$\tau:M_1\times\dotsb\times M_r\to M'$, or by abuse of terminology,
the group scheme $M'$, is called a \emph{tensor product of}
$M_i, (i=1,\dotsb, r)$ 
%1, M_2, \dotsb, M_r$
if, for all group schemes $N$ over $S$,
the induced morphism
$$\tau^*:\Hom(M',N)\to\widetilde{\Mult}^R(M_1\times\dotsb\times M_r,N),\quad
\psi\mapsto\psi\circ\tau,$$ is an isomorphism. If such $M'$ and
$\tau$ exist, we write $M_1\otimes_R\dotsb\otimes_R M_r$ for $M'$ and call $\tau$ \emph{the universal multilinear morphism} defining $ M_1\otimes_R\dotsb\otimes_R M_r $.

\item[(ii)] A symmetric pseudo-$R$-multilinear morphism $\sigma:M^r\to M'$,
or by abuse of terminology, the group scheme $M'$, is called an
\emph{$r^{\text{th}}$ symmetric power} of $M$ over $R$, if for all
group schemes $N$ over $S$, the induced morphism
$$\sigma^*:\Hom(M',N)\to \widetilde{\Sym}^R(M^r,N),\quad \psi\mapsto \psi\circ\sigma,$$ is an
isomorphism. If such $M'$ and $\sigma$ exist, we write $\underset{R}{S}^rM$ for
$M'$ and call $ \sigma $ \emph{the universal symmetric morphism} defining $\underset{R}{S}^rM$.

\item[(iii)] An alternating pseudo-$R$-multilinear morphism $\lambda:M^r\to M'$,
or by abuse of terminology, the group scheme $M'$, is called an
\emph{$r^{\text{th}}$ exterior power} of $M$ over $R$, if for all
group schemes $N$ over $S$, the induced morphism
$$\lambda^*:\Hom(M',N)\to \widetilde{\Alt}^R(M^r,N),\quad \psi\mapsto \psi\circ\lambda,$$ is an
isomorphism. If such $M'$ and $\lambda$ exist, we write $\underset{R}{\bigwedge}^rM$
for $M'$ and call $ \lambda $ \emph{the universal alternating morphism} defining $\underset{R}{\bigwedge}^rM$.
\end{itemize}
\end{dfn}

%REMARK 4
\begin{rem}
\label{rem 4}
If $M_1\otimes_R\dotsb\otimes_R M_r$ respectively $\underset{R}{S}^rM$ respectively $\underset{R}{\bigwedge}^rM$ exists, it with the pseudo-$R$-multilinear morphism $\tau:M_1\times\dotsb\times M_r\to M_1\otimes_R\dotsb\otimes_R M_r$ respectively $\sigma:M^r\to \underset{R}{S}^rM$, respectively $\lambda:M^r\to \underset{R}{\bigwedge}^rM$, is
unique up to unique isomorphism and so, in the sequel, we will say ``the tensor product",``the symmetric power" and ``the exterior power".
\end{rem}

%DEFINITION 41
\begin{dfn}
\label{def4111}
Let $k$ be a field and $M$ an $R$-module scheme over $k$. We set $ M^*:=\invlim_{M'}M'^* $, where $M'$ runs through all finite subgroup schemes of $M$.
\end{dfn}

%THEOREM 41
\begin{thm}
\label{thm41}
Let $k$ be a field and $ M_1,\dots,M_r  $ and $M$ profinite $R$-module schemes over $k$. 
\begin{itemize}
\item[1)] $ M_1\otimes_R\dots\otimes_RM_r $ exists and is profinite. If $M_i$ are finite, then \[M_1\otimes_R\dots\otimes_RM_r\cong \widetilde{\innMult}^R(M_1\times\dots\times M_r,\BG_m)^*.\]
\item[2)] $ \epR^jM $ exists and is profinite. If $M$ is finite, then \[ \epR^jM\cong\widetilde{\innAlt}^R(M^j,\BG_m)^*. \]
\item[3)] $ \uset{R}{S}^jM $ exists and is profinite. If $M$ is finite, then \[ \uset{R}{S}^jM\cong\widetilde{\innSym}^R(M^j,\BG_m)^*. \]
\end{itemize}
\end{thm}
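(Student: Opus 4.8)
The plan is to reduce everything to the finite case and then apply a Cartier-duality argument, using the already-established representability of $\widetilde{\innMult}^R$, $\widetilde{\innAlt}^R$, $\widetilde{\innSym}^R$ by affine group schemes of finite type (Remark 2.26), together with Pink's devissage for Cartier duals of affine group schemes of finite type over a field. First I would treat the finite case. Suppose $M_1,\dots,M_r$ are finite $R$-module schemes over $k$. By Remark 2.26 the functor $\widetilde{\innMult}^R(M_1\times\dots\times M_r,\BG_m)$ is representable by an affine group scheme $A$ of finite type over $k$, and it carries an $R$-module scheme structure. Following \cite{P}, the Cartier dual $A^*$ (defined, in the possibly non-finite case, as the profinite group scheme $\invlim_{A'}A'^*$ over all finite quotients $A'$ of $A$, equivalently via Definition 4.1.?) satisfies the reconstruction isomorphism $\innHom(A^*,\BG_m)\cong A$ functorially. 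Set $M':=A^*$, an $R$-module scheme (the $R$-action being dual to that on $A$). Then for any group scheme $N$ over $k$ we compute, using that any homomorphism from the profinite $M'$ to $N$ factors through a finite quotient and using the duality between finite group schemes and their Cartier duals,
\[
\Hom(M',N)\cong \Hom(N^*,A)\cong \widetilde{\Mult}^R(M_1\times\dots\times M_r,N),
\]
where the last step is the defining property of $A$ as representing object together with the canonical pairing $N^*\times N\to\BG_m$; one checks the composite is induced by a canonical pseudo-$R$-multilinear morphism $\tau\colon M_1\times\dots\times M_r\to M'$, namely the one corresponding under $A\cong\innHom(M',\BG_m)$ to the identity. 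This verifies the universal property of Definition 4.1(i), so $M_1\otimes_R\dots\otimes_R M_r$ exists and equals $\widetilde{\innMult}^R(M_1\times\dots\times M_r,\BG_m)^*$, which is profinite since it is the Cartier dual of an affine group scheme of finite type.

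Next I would pass from the finite case to the profinite case by a limit argument. Write each $M_i=\invlim_\alpha M_i^{(\alpha)}$ as a filtered inverse limit of its finite quotient $R$-module schemes (this is the meaning of ``profinite''; the transition maps are epimorphisms). One shows the canonical map is an isomorphism
\[
\widetilde{\Mult}^R\Bigl(\prod_i M_i,N\Bigr)\;\cong\;\dirlim_{\alpha}\ \widetilde{\Mult}^R\Bigl(\prod_i M_i^{(\alpha)},N\Bigr)
\]
for $N$ a group scheme of finite type, using that a pseudo-$R$-multilinear morphism out of $\prod_i M_i$ into such an $N$, being a morphism of finite type schemes, factors through some $\prod_i M_i^{(\alpha)}$ (the kernels form a cofinal system of finite-index subgroup schemes and the morphism kills one of them). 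Dualizing the tower, $\bigl(M_1^{(\alpha)}\otimes_R\dots\otimes_R M_r^{(\alpha)}\bigr)_\alpha$ is an inverse system of finite group schemes; let $M_1\otimes_R\dots\otimes_R M_r:=\invlim_\alpha$. Then for general $N$ one reduces to finite-type $N$ by writing $N$ as a filtered union/limit appropriately, and the universal property follows by interchanging the two limits. The same argument, verbatim, handles parts (2) and (3): replace $\widetilde{\innMult}^R$ by $\widetilde{\innAlt}^R$ resp. $\widetilde{\innSym}^R$, which are closed subgroup schemes of $\widetilde{\innMult}^R$ hence again affine of finite type (Remark 2.26(2)), and replace $\widetilde{\Mult}^R$ by $\widetilde{\Alt}^R$ resp. $\widetilde{\Sym}^R$ throughout; the Cartier-dual reconstruction and the defining universal properties in Definition 4.1(ii),(iii) go through unchanged, giving $\epR^jM\cong\widetilde{\innAlt}^R(M^j,\BG_m)^*$ and $\uset{R}{S}^jM\cong\widetilde{\innSym}^R(M^j,\BG_m)^*$ in the finite case and profiniteness in general.

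The main obstacle I anticipate is making the Cartier duality for possibly non-finite affine group schemes of finite type over $k$ precise enough to run the reconstruction isomorphism $\innHom(A^*,\BG_m)\cong A$ and to commute Cartier duality with the inverse limits above; this is exactly where one must invoke Pink's devissage (the ``adequate definition'' of the Cartier dual referred to in the introduction) rather than the naive finite-flat duality, and one must check that the $R$-module scheme structure is respected at every stage, that the pseudo-$R$-multilinearity (as opposed to genuine $R$-multilinearity) is the correct notion matching the Cartier dual — since the target is an arbitrary group scheme $N$, not an $R$-module scheme — and that alternating resp. symmetric conditions are preserved under the dualization, which uses that these are closed conditions cut out compatibly with the duality. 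Granting the machinery of \cite{P}, these are bookkeeping points rather than genuine difficulties, but they are the place where care is needed.
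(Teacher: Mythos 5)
Your proposal is correct in outline, but it follows a genuinely different route from the one the paper writes down. What you reconstruct is the duality argument underlying Pink's Theorems 2.1.6 and 2.3.3 in \cite{P}: represent $\widetilde{\innMult}^R(M_1\times\dots\times M_r,\BG_m)$ by an affine group scheme of finite type (Remark \ref{rem 26}), take its profinite Cartier dual, verify the universal property via the adjunction $\widetilde{\Mult}^R(M_1\times\dots\times M_r\times N^*,\BG_m)\cong\Hom(N^*,\widetilde{\innMult}^R(M_1\times\dots\times M_r,\BG_m))$, and then pass to profinite $M_i$ by a limit argument. The paper's proof of Theorem \ref{thm41} does not reproduce that argument: it cites Pink's theorems for $R=\BZ$ and then gives only a reduction, namely that the $R$-linear tensor objects are the quotients of the already-existing $\BZ$-linear ones by the subgroup scheme $N'=\sum_{i,j,a}N_{i,j}(a)$ generated by the images of $a_i-a_j$, i.e.\ by the scalar-multiplication relations $am_1\otimes\dots\otimes m_r=\dots=m_1\otimes\dots\otimes am_r$. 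The quotient route is shorter, makes profiniteness immediate, and isolates exactly what the presence of $R$ adds; your route is self-contained modulo Pink's devissage and produces the identification with $\widetilde{\innMult}^R(\cdots,\BG_m)^*$ directly rather than a posteriori. One correction: the Cartier dual of a possibly non-finite group scheme (Definition \ref{def4111}) is $\invlim_{A'}A'^*$ where $A'$ runs over the finite \emph{subgroup schemes} of $A$, not over its finite quotients; with finite quotients you would obtain a direct system of monomorphisms rather than an inverse system of epimorphisms, and the reconstruction isomorphism $\innHom(A^*,\BG_m)\cong A$ you invoke would not even typecheck. This slip does not derail your argument, but it sits exactly at the point (together with the claim that maps out of the profinite dual into an arbitrary group scheme factor through finite stages) where Pink's devissage does the real work, as you yourself flag.
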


%PROOF OF THEOREM 41
\begin{proof}
Theorems 2.1.6 (p. 21) and 2.3.3 (p. 24) in \cite{P} are the same as this theorem when $R=\BZ$, i.e., this result for group schemes over $k$. Little modification of their proof, due to the presence of $R$, will give a proof of this theorem and therefore, we will not prove the theorem. However, one can also prove this theorem using those two theorems in \cite{P}. Indeed, these tensor objects ``over $R$" (tensor product, exterior and symmetric powers) are quotients of the same tensor objects ``over $\BZ$" by the subgroups generated by the obvious relations due to scalar multiplication of $R$, e.g., for the tensor product, we should mod out the relations \[am_1\otimes\dots\otimes m_r=m_1\otimes am_2\otimes\dots\otimes m_r=\dots=m_1\otimes\dots\otimes m_{r-1}\otimes am_r\] for every $a\in R$ and every $ m_i\in M_i $. To be more precise, for every $a\in R$, let $a_i:M_1\otimes\dots\otimes M_r\to M_1\otimes\dots\otimes M_r$ denote the morphism \[ m_1\otimes\dots\otimes m_r\mapsto m_1\otimes\dots\otimes m_{i-1}\otimes am_i\otimes m_{i+1}\otimes\dots\otimes m_r\] and denote by $ N_{i,j}(a) $ the image of the morphism $ a_i-a_j $ and finally set \[N':=\uset{i,j\leq r, a\in R}{\sum}N_{i,j}(a).\] Then $ M_1\otimes_R\dots\otimes_R M_r=M_1\otimes\dots\otimes M_r/N' $.
\end{proof}

%REMARK 7
\begin{rem}
\label{rem 7}
Let $ \phi_i:M_i\to N_i$ for $ i=1,\cdots,r $ and $\phi:M\to N $ be $R$-linear morphisms between $R$-module schemes over $S$.
\begin{itemize}
\item[1)] The morphism $ \phi_1\times\cdots\times\phi_r:M_1\times\cdots\times M_r\to N_1\times\cdots\times N_r$ composed with the tensor product $\tau':N_1\times\cdots\times N_r\to N_1\otimes_R\dotsb\otimes_R N_r$ is pseudo-$R$-multilinear and therefore, by the universal property of the tensor product we have a unique homomorphism $\phi_1\otimes_R\cdots\otimes_R\phi_r:M_1\otimes_R\dotsb\otimes_R M_r\to N_1\otimes_R\dotsb\otimes_R N_r$ such that the following diagram commutes:
\[ \xymatrix{
M_1\times\cdots\times M_r\ar[rr]^{\phi_1\times\cdots\times\phi_r}\ar[d]_{\tau}&&N_1\times\cdots\times N_r\ar[d]^{\tau'}\\
M_1\otimes_R\dotsb\otimes_R M_r\ar@{-->}[rr]_{\phi_1\otimes_R\cdots\otimes_R\phi_r}&&N_1\otimes_R\dotsb\otimes_R N_r.
 }\]
\item[2)] For all positive natural numbers $n$, we have a morphism $\phi^n:M^n\to N^n$ whose composition with the $n^{\text{th}}$ symmetric power $\sigma':N^n\to \underset{R}{S}^nN$ is a symmetric pseudo-$R$-multilinear morphism, and therefore, from the universal property of the symmetric power, we obtain a unique homomorphism $ \underset{R}{S}^n\phi:\underset{R}{S}^nM\to \underset{R}{S}^nN $ such that the following diagram is commutative:
\[\xymatrix{
M^n\ar[r]^{\phi^n}\ar[d]_{\sigma}&N^n\ar[d]^{\sigma'}\\
\underset{R}{S}^nM\ar@{-->}[r]_{\underset{R}{S}^n\phi}&\underset{R}{S}^nN.
}\]
\item[3)] Likewise to the symmetric power, we obtain for every positive natural number $n$, a homomorphism $\underset{R}{\bigwedge}^n\phi:\underset{R}{\bigwedge}^nM\to \underset{R}{\bigwedge}^nN$ such that the following diagram commutes:
\[\xymatrix{
M^n\ar[r]^{\phi^n}\ar[d]_{\lambda}&N^n\ar[d]^{\lambda'}\\
\underset{R}{\bigwedge}^nM\ar@{-->}[r]_{\underset{R}{\bigwedge}^n\phi}&\underset{R}{\bigwedge}^nN.
}\]
\end{itemize}
\end{rem}

%REMARK 8
\begin{rem}
\label{rem 8}
$  $
\begin{itemize}
\item[1)] From the definition of the tensor product, it follows that if the tensor product of $R$-module schemes $M_1,\cdots, M_n$ exists, then it possesses an $R$-module structure, which is given by the morphism \[\Id_{M_1}\otimes_R\cdots\otimes_R\Id_{M_{i-1}}\otimes_R (r.) \otimes_R\Id_{M_{i+1}}\otimes_R\cdots\otimes_R\Id_{M_n}\] where $r$ is an element of $R$ and $i$ is any index between 1 and $n$ (changing $i$, doesn't change the morphism). More precisely, the action of the element $r$ on the tensor product is equal to this morphism.
\item[2)] Similarly, let $M$ be an $R$-module scheme and $r\in R$. Then the following diagram is commutative:
\[\xymatrix{
M\times\cdots\times M\ar[rrr]^{\Id_M\times\cdots\times(r.)\times\cdots\times\Id_M}\ar[d]_{\sigma}&&&M\times\cdots\times M\ar[d]^{\sigma}\\
\underset{R}{S}^nM\ar[rrr]_{r.}&&&\underset{R}{S}^nM}\]where in the first row, the morphism $(r.)$ is at the $i^{\text{th}}$ place, with $i$ any index between 1 and $n$.
\item[3)] Again, with the above notations, we have a commutative diagram:
\[\xymatrix{
M\times\cdots\times M\ar[rrr]^{\Id_M\times\cdots\times(r.)\times\cdots\times\Id_M}\ar[d]_{\lambda}&&&M\times\cdots\times M\ar[d]^{\lambda}\\
\underset{R}{\bigwedge}^nM\ar[rrr]_{r.}&&&\underset{R}{\bigwedge}^nM}\]where in the first row, the morphism $(r.)$ is at the $i^{\text{th}}$ place, with $i$ any index between 1 and $n$.
\end{itemize}
\end{rem}

%REMARK 28
\begin{rem}
\label{rem 28}
It follows from Remark \ref{rem 8}, that if the tensor product $M_1\otimes_R\dotsb\otimes_R M_r$ exists, then it satisfies the following universal property as well:\\
let $N$ be an $R$-module scheme, then the morphism \[\Hom^R(M_1\otimes_R\cdots\otimes_RM_r,N)\to \Mult^R(M_1\times\dotsb\times M_r,N), \phi\mapsto \phi\circ\tau\]
is an isomorphism. In fact we have the following commutative diagram:
\[\xymatrix{
\Hom^R(M_1\otimes_R\cdots\otimes_RM_r,N)\ar@{^{ (}->}[d]\ar[rr]^{\cong}&&\Mult^R(M_1\times\dotsb\times M_r,N)\ar@{^{ (}->}[d]\\
\Hom(M_1\otimes_R\cdots\otimes_RM_r,N)\ar[rr]_{\cong}&&\widetilde{\Mult}^R(M_1\times\dotsb\times M_r,N).}\]
But, it is \textit{a priori} possible that the tensor product which satisfies the bottom isomorphism exists, whereas the tensor product as in the Definition \ref{def 7} does not.
\end{rem}

%PROPOSITION 8
\begin{prop}
\label{prop 8}
Let $R\onto R'$ be a surjective ring homomorphism, and let $M, M_1,\cdots,M_r$ be $R'$-module schemes over $S$. Then we have canonical $R$-linear isomorphisms:
\begin{itemize}
\item[a)] $M_1\otimes_R\cdots\otimes_RM_r\cong M_1\otimes_{R'}\cdots\otimes_{R'}M_r$.
\item[b)] $\underset{R}{S}^nM\cong\underset{R'}{S}^nM$.
\item[c)] $\epR^nM\cong\underset{R'}{\bigwedge}^nM$.
\end{itemize}
\end{prop}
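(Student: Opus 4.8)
The plan is to show that the universal properties defining the $R$-constructions and the $R'$-constructions literally coincide, the point being that for morphisms out of a product of $R'$-module schemes the notions ``pseudo-$R$-multilinear'' and ``pseudo-$R'$-multilinear'' agree once $R\onto R'$ is surjective.

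First I would unwind Definition \ref{def 18}. Let $M_1,\dots,M_r$ be $R'$-module schemes, regarded as $R$-module schemes via $R\onto R'$. For any $S$-scheme $T$, any group scheme $G$ over $S$, and any morphism $\phi\colon M_{1,T}\times\dots\times M_{r,T}\to G_T$ that is multilinear in the group-scheme sense, the pseudo-$R$-multilinearity condition
\[ \phi(a\cdot m_1,m_2,\dots,m_r)=\phi(m_1,\dots,a\cdot m_i,\dots,m_r)\qquad (a\in R,\ 2\le i\le r) \]
refers to $a$ only through the endomorphisms $a\cdot$ of the various $M_{j,T}$. Since $R\to R'$ is surjective, the set of endomorphisms $\{a\cdot : a\in R\}$ of each $M_{j,T}$ equals $\{a'\cdot : a'\in R'\}$, so this condition is equivalent to the corresponding condition with $a'\in R'$. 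Hence
\[ \widetilde{\Mult}^R(M_1\times\dots\times M_r,G)=\widetilde{\Mult}^{R'}(M_1\times\dots\times M_r,G) \]
as subgroups of $\Mult(M_1\times\dots\times M_r,G)$, and likewise $\widetilde{\Sym}^R(M^n,G)=\widetilde{\Sym}^{R'}(M^n,G)$ and $\widetilde{\Alt}^R(M^n,G)=\widetilde{\Alt}^{R'}(M^n,G)$.

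Next I would feed this equality into Definition \ref{def 7}. If $M_1\otimes_R\dots\otimes_R M_r$ exists, with universal pseudo-$R$-multilinear morphism $\tau$, then by the previous paragraph $\tau$ is also pseudo-$R'$-multilinear, and for every group scheme $N$ over $S$ the isomorphism $\tau^*\colon\Hom(M_1\otimes_R\dots\otimes_R M_r,N)\to\widetilde{\Mult}^R(M_1\times\dots\times M_r,N)$ is the very same map as $\tau^*$ into $\widetilde{\Mult}^{R'}(M_1\times\dots\times M_r,N)$; thus $(M_1\otimes_R\dots\otimes_R M_r,\tau)$ satisfies the universal property of $M_1\otimes_{R'}\dots\otimes_{R'}M_r$. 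By the uniqueness in Remark \ref{rem 4}, this produces the canonical isomorphism of part a); the argument is symmetric in $R$ and $R'$, so one side exists precisely when the other does, consistently with the paper's standing existence convention. Parts b) and c) follow in the same manner from Definition \ref{def 7}(ii),(iii) using the equalities of the symmetric, respectively alternating, subgroups. One may also observe that when $S=\Spec k$ for a field $k$ and the $M_i$ are profinite, Theorem \ref{thm41} produces both sides directly and identically, as $\widetilde{\innMult}^R(M_1\times\dots\times M_r,\BG_m)^*=\widetilde{\innMult}^{R'}(M_1\times\dots\times M_r,\BG_m)^*$ and similarly for $\innSym$ and $\innAlt$.

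It remains to check that these canonical isomorphisms are $R$-linear, for which I would invoke Remark \ref{rem 8}: the action of $r\in R$ on $M_1\otimes_R\dots\otimes_R M_r$ is $\Id\otimes_R\dots\otimes_R(r\cdot)\otimes_R\dots\otimes_R\Id$, while the action of its image $r'\in R'$ on $M_1\otimes_{R'}\dots\otimes_{R'}M_r$ is the analogous morphism; since $r\cdot=r'\cdot$ on each $M_i$, the canonical isomorphism intertwines the two actions. Equivalently, the $R$-module structure on the left is just restriction of scalars along $R\to R'$ of the $R'$-module structure on the right. The commutative diagrams in Remark \ref{rem 8}(2),(3) give the same conclusion for the symmetric and exterior powers. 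There is no genuine obstacle in this argument; the only things requiring a little care are the bookkeeping around the existence convention and the observation that the comparison is, more precisely, an equality of subfunctors of $\widetilde{\innMult}$, $\widetilde{\innSym}$, $\widetilde{\innAlt}$ rather than merely an abstract isomorphism of functors.
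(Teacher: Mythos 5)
Your proof is correct, but it takes a different route from the paper's. You work directly with the defining universal property of Definition \ref{def 7} (pseudo-multilinear morphisms into arbitrary group schemes) and observe that, because the $M_i$ are killed by $I=\kernel(R\onto R')$ and $R\onto R'$ is surjective, the scalar conditions defining $\widetilde{\Mult}^R$ and $\widetilde{\Mult}^{R'}$ are quantified over the same set of endomorphisms, so the two subfunctors of $\Mult(M_1\times\dots\times M_r,-)$ are literally equal and the proposition becomes a tautology plus the uniqueness of representing objects. The paper instead verifies the universal property of Remark \ref{rem 28}: it takes an arbitrary $R$-module scheme $N$ and an $R$-multilinear morphism $\phi$ into it, and since such an $N$ need not carry an $R'$-structure, it cuts out the sub-$R'$-module scheme $K:=\bigcap_{a\in I}\kernel(a\cdot:N\to N)$, shows $\phi$ factors through $K\into N$ because $I\cdot M_i=0$, and then applies the $R'$-universal property to $K$. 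Your version is shorter and stays at the level of the official definition; the paper's version is the one needed when the target is an $R$-module scheme that is not an $R'$-module scheme, with the factorization through $K$ playing the role that your equality of subfunctors plays for group-scheme targets. Both arguments yield the same canonical map, and your appeal to Remark \ref{rem 8} for $R$-linearity matches what the paper leaves implicit.
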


%PROOF OF PROPOSITION 8
\begin{proof}[\textsc{Proof}. ]
We show the first isomorphism; the other two can be shown similarly.\\
We prove that the tensor product $$\tau':M_1\times\cdots\times M_r\to M_1\otimes_{R'}\cdots\otimes_{R'}M_r$$ has the universal property of the tensor product of $M_i$ over $R$. So, take an $R$-module scheme $N$ and an $R$-multilinear morphism $\phi:M_1\times\cdots\times M_r\to N$. Let $I$ be the kernel of the homomorphism $R\to R'$ and define the following submodule scheme of $N$:\[K:=\bigcap_{r\in I}\kernel(r.:N\to N).\]
It is clear from the definition of $K$ that it is an $R'$-module scheme.
Since the $R$-module structure of $M_i$ is given by the "restriction of scalars" $R\to R'$, and so $I\cdot M_i=0$, the $R$-multilinear morphism $\phi$ factors (in a unique way) through $\iota:K\into N$:
\[\xymatrix{
& M_1\times\cdots\times M_r\ar@{-->}[dl]_{\overline{\phi}}\ar[dr]^{\phi}&\\
K\ar@{^{ (}->}[rr]_{\iota}&& N.
}\]
Now, the morphism $\overline{\phi}$ is $R'$-multilinear and so there exists a unique $R'$-linear morphism $\phi':M_1\otimes_{R'}\cdots\otimes_{R'}M_r\to K$ such that $\phi'\circ \tau'=\overline{\phi}$. Putting this with the last diagram, we obtain:
\[\xymatrix{
& M_1\times\cdots\times M_r\ar[dl]_{\tau'} \ar[d]^{\overline{\phi}}\ar[dr]^{\phi}&\\
M_1\otimes_{R'}\cdots\otimes_{R'} M_r \ar[r]_{\phi'}& K \ar@{^{ (}->}[r]_{\iota}& N.
}\]
So, the composition $\iota\circ \phi'$ is $R$-linear and its composition with $\tau'$ is equal to the given multilinear morphism $\phi$. This shows the existence part of the universal property of the tensor product. The uniqueness part follows from the uniqueness of $\phi'$ and $\overline{\phi}$ and the fact that the homomorphism $\iota:K\to N$ is a monomorphism (note that since $M_1\otimes_{R'}\cdots\otimes_{R'} M_r$ is an $R'$-module scheme, every $R$-homomorphism from $M_1\otimes_{R'}\cdots\otimes_{R'} M_r$ to $N$ factors in a unique way through $\iota:K\into N$). 
\end{proof}

%\subsection{Base change and Weil restriction}
\section{Base change and Weil restriction}

%DEFINITION 15
\begin{dfn}
\label{def 15}
Let $ M $ and $ M_1, \cdots, M_r $ be $ R $-module schemes over a base scheme $ S $ and let $ T $ be an $ S $-scheme and $ r $ a positive natural number. Then we have a natural isomorphism $ M_{1_T}\times_T\cdots\times_TM_{r_T}\cong (M_1\times_S\cdots\times_SM_r)_T $.
\begin{itemize}
\item[(i)] The two universal multilinear morphisms defining $M_1\otimes_R\dotsb\otimes_R M_r$ and $M_{1_T}\otimes_R\dotsb\otimes_R M_{r_T}$ give rise to the following diagram
\[\xymatrix{
M_{1_T}\times_T\cdots\times_TM_{r_T}\ar[rr]^{\cong}\ar[d]_{\tau} && (M_1\times_S\cdots\times_SM_r)_T\ar[d]^{\tau_T} \\
M_{1_T}\otimes_R\dotsb\otimes_R M_{r_T} && (M_1\otimes_R\dotsb\otimes_R M_r)_T.
}\]
Now, the universal property of the tensor product $ M_{1_T}\otimes_R\dotsb\otimes_R M_{r_T} $ completes the diagram by a unique morphism 
\[\tau_{T/S}:M_{1_T}\otimes_R\dotsb\otimes_R M_{r_T} \to (M_1\otimes_R\dotsb\otimes_R M_r)_T\]
which we call \emph{the base change homomorphism of tensor product}.

\item[(ii)] The two universal symmetric multilinear morphisms defining $S^n_RM$ and $S^n_R(M_T)$ give rise to the following diagram
\[\xymatrix{
M_{T}^n\ar[rr]^{\cong}\ar[d]_{\sigma} && (M^n)_T\ar[d]^{\sigma_T} \\
S^n_R(M_T) && (S^n_RM)_T.
}\]
Now, the universal property of the symmetric power $ S^n_R(M_T) $ completes the diagram by a unique morphism 
\[\sigma_{T/S}:S^n_R(M_T) \to (S^n_RM)_T\]
which we call \emph{the base change homomorphism of symmetric power}.

\item[(ii)] The two universal alternating multilinear morphisms defining $\epR^nM$ and $\epR^n(M_T)$ give rise to the following diagram
\[\xymatrix{
M_{T}^n\ar[rr]^{\cong}\ar[d]_{\lambda} && (M^n)_T\ar[d]^{\lambda_T} \\
\epR^n(M_T) && (\epR^nM)_T.
}\]
Now, the universal property of the exterior power $ \epR^n(M_T) $ completes the diagram by a unique morphism 
\[\lambda_{T/S}:\epR^n(M_T) \to (\epR^nM)_T\]
which we call \emph{the base change homomorphism of exterior power}.
\end{itemize}
\end{dfn}

%REMARK 20
\begin{rem}
\label{rem 20}
The base change homomorphisms in the last definition need not be isomorphisms. However, as we will show (cf. Propositions \ref{prop 41} and \ref{lem8}), if $ S=\Spec E$ and $T=\Spec L$, where $L/E$ is either a separable or a finite field extension, then the three base change homomorphisms are isomorphisms.
\end{rem}

%RPOPOSITION 41
\begin{prop}
\label{prop 41}
Let $E$ be a field and $L/E$ a separable field extension. Then the three base change homomorphisms \[M_{1,L}\otimes_R\dots\otimes_R M_{r,L}\to (M_1\otimes_R\dots\otimes_R M_r)_L, \]  \[ \epR^r(M_L)\to (\epR^rM)_L \] and \[ \uset{R}{S}^r(M_L)\to (\uset{R}{S}^rM)_L \] are isomorphisms.
\end{prop}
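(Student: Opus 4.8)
The plan is to reduce the statement over a general separable extension $L/E$ to the case of a \emph{finite} separable (hence finite Galois, after enlarging) extension, where Galois descent applies, and then pass to the colimit. First I would observe that every separable extension $L/E$ is a filtered union of finite separable subextensions $E\subseteq E_i\subseteq L$, and that the three constructions in question (tensor product, exterior power, symmetric power of profinite $R$-module schemes over a field) commute with filtered inverse limits of the finite quotients involved and with this kind of base extension in the limit; concretely, $(\epR^r M)_L$ and $\epR^r(M_L)$ are each determined by their restrictions to each $E_i$, so it suffices to prove the base change homomorphism is an isomorphism for each finite separable $E_i/E$. This is the standard ``spreading out'' step and I expect it to be routine given Theorem \ref{thm41}, which realizes these objects (in the finite case) as Cartier duals of the affine finite-type group schemes $\widetilde{\innMult}^R(-,\BG_m)$, $\widetilde{\innAlt}^R(-,\BG_m)$, $\widetilde{\innSym}^R(-,\BG_m)$, whose formation manifestly commutes with arbitrary base change.

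Next, for a finite separable extension $E'/E$ I would enlarge to a finite Galois extension $E'\subseteq F$ with $\Gamma:=\Gal(F/E)$, and prove the claim first for $F/E$ and then deduce it for $E'/E$. Over $F/E$ the argument is Galois descent: the group $\Gamma$ acts semilinearly on $M_F$, on $\epR^r(M_F)$ (via Remark \ref{rem 7} applied to the Galois automorphisms), and on $(\epR^r M)_F$, and the two base change morphisms $\lambda_{F/E}$ (and its analogues for $\otimes_R$, $\uset{R}{S}^r$) are $\Gamma$-equivariant. Since $M$ is defined over $E$, i.e. $M = (M_F)^{\Gamma}$ under faithfully flat descent, and since both $\epR^r$ and $(-)_F$ commute with the relevant descent data, I would check that $\epR^r(M_F)$ with its $\Gamma$-action descends to an $R$-module scheme over $E$ satisfying the universal property of $\epR^r M$; here the universal property is tested against group schemes $N$ over $E$, and one uses that an alternating pseudo-$R$-multilinear morphism $M_F^r\to N_F$ commuting with $\Gamma$ is the same as one $M^r\to N$ over $E$, together with Lemma \ref{ESBC}/Proposition \ref{prop 1}-style flatness of $F$ over $E$ to move $\Hom$ and $\widetilde{\Alt}^R$ past the extension. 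Uniqueness of the universal object (Remark \ref{rem 4}) then forces $\lambda_{F/E}$ to be an isomorphism.

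Finally, to pass from $F/E$ to the intermediate $E'/E$: I have the tower $E\subseteq E'\subseteq F$, both $F/E$ and $F/E'$ are finite Galois (the latter with group $\Gal(F/E')$), so $\lambda_{F/E}$ and $\lambda_{F/E'}$ are isomorphisms by the previous step, and $\lambda_{F/E}$ factors as $\lambda_{F/E'}\circ (\lambda_{E'/E})_F$ by the functoriality of base change homomorphisms in towers (a diagram chase from Definition \ref{def 15}). Since $(\lambda_{E'/E})_F$ becomes an isomorphism after the faithfully flat base change $\Spec F\to\Spec E'$, it is already an isomorphism over $E'$. The same argument applies verbatim to $\otimes_R$ and $\uset{R}{S}^r$. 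The main obstacle I anticipate is the Galois-descent step for profinite (not necessarily finite) $M$: one must check that the $\Gamma$-action on the inverse system of finite quotients is compatible with the descent, so that $\epR^r$ of the limit descends correctly; this is handled by reducing to finite $M$ via Theorem \ref{thm41} and Definition \ref{def4111}, where effectivity of the descent datum is guaranteed because the objects are affine (indeed finite) over the base.
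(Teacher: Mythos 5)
Your strategy is sound and would yield a correct proof, but it is organized quite differently from the paper's. The paper works directly with the description $\epR^rM\cong\widetilde{\innAlt}^R(M^r,\BG_m)^*$ from Theorem \ref{thm41} and reduces everything to a single cofinality statement: writing $G:=\widetilde{\innAlt}^R(M^r,\BG_m)$, the base change map is the projection from $\uset{\bar H}{\invlim}\,\bar H^*$ (over all finite subgroups $\bar H\subseteq G_L$) onto the sublimit indexed by subgroups of the form $H_L$ with $H\subseteq G$ finite; this is automatically an epimorphism, and for $L=E_s$ a separable closure it is an isomorphism because the Galois orbit of any finite $\bar H$ generates a finite $\Gamma$-stable subgroup scheme, which descends to $E$ by Galois descent --- so the sublimit is cofinal. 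An intermediate $L$ is then handled by the triangle $E\subseteq L\subseteq E_s$ together with faithfully flat descent of isomorphisms. You instead go downward first (filtered union of finite separable subextensions), then upward to a finite Galois closure, and apply Galois descent to the \emph{whole} profinite object $\epR^r(M_F)$ rather than to individual finite subgroup schemes. The two routes buy different things: the paper never has to descend a profinite object or justify a passage to the colimit over subextensions, at the cost of working over $E_s$; your route keeps all extensions finite, at the cost of (a) the spreading-out step, which is not quite as ``routine'' as you suggest --- you need that every finite subgroup scheme of $G_L$ is already defined over some finite subextension $E_i$, and that the resulting cofinality statements assemble in the limit --- and (b) effectivity of the descent datum on the profinite exterior power, which you correctly flag and which does hold since these objects are inverse limits of finite affine schemes. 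Your final tower argument $E\subseteq E'\subseteq F$ with $\lambda_{F/E}=\lambda_{F/E'}\circ(\lambda_{E'/E})_F$ is the exact analogue of the paper's triangle for $E\subseteq L\subseteq E_s$. Neither step you leave implicit is a genuine obstruction, but both would need to be written out for the argument to be complete.
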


%PROOF OF PROPOSITION 41
\begin{proof}
We prove the statement just for the tensor product, and drop the similar proof of the exterior and symmetric powers. For simplicity, denote by $ T(M_L) $ and $ T(M) $ the tensor products $ M_{1,L}\otimes_R\dots\otimes_RM_{r,L} $ and  $ M_1\otimes_R\dots\otimes_RM_r $ respectively. By Theorem \ref{thm41} we know that $$ T(M_L)\cong\widetilde{\innMult}^R(M_{1,L}\times\dots\times M_{r,L},\BG_m)^* $$ and $$ T(M)_L\cong \big(\widetilde{\innMult}^R(M_1\times\dots\times M_r,\BG_m)^*\big)_L .$$ By definition, we have \[ \widetilde{\innMult}^R(M_1\times\dots\times M_r,\BG_m)^*=\uset{H}{\invlim}\,H^*, \] where $H$ runs through all finite subgroup schemes of $ G:=\widetilde{\innMult}^R(M_1\times\dots\times M_r,\BG_m) $. Consequently, we have \[\big(\widetilde{\innMult}^R(M_1\times\dots\times M_r,\BG_m)^*\big)_L \cong  (\uset{H}{\invlim}\,H^*)_L\cong  \uset{H}{\invlim}\,(H_L^*)  \] since the transition homomorphisms are epimorphisms and $L/E$ is flat. Let $ H $ be a finite subgroup scheme of $G$. Then $ H_L $ is a finite subgroup scheme of $$G_L=\widetilde{\innMult}^R(M_1\times\dots\times M_r,\BG_m)_L\cong  \widetilde{\innMult}^R(M_{1,L}\times\dots\times M_{r,L},\BG_m) .$$ Therefore, there exists a natural homomorphism $$\pi: \widetilde{\innMult}^R(M_{1,L}\times\dots\times M_{r,L},\BG_m)^*\to \widetilde{\innMult}^R(M_1\times\dots\times M_r,\BG_m)^*_L$$ which corresponds to the base change homomorphism $ b:T(M_L)\to T(M)_L $. Again, since the transition homomorphisms are epimorphisms, this homomorphism ( ``projection to sublimit") is an epimorphism.\\

Let us assume at first that $L$ is a separable closure of $E$ and denote by $\Gamma$ the absolute Galois group of $E$. We would like to show that $\pi$ is an isomorphism. It is sufficient to prove that the system of finite subgroups of $ G_L$, which are of the form $ H_L $ for a finite subgroup $H$ of $ G $, is cofinal in the system of all finite subgroups of $ G_L $. Let $ \bar{H} $ be a finite subgroup of $ G_L $. So, elements of $\bar{H}$ are closed points of $ G_L $, in other words, $ \bar{H} $ is a finite subgroup of the group $ G_L(L)=G(L) $ of $ L $-rational points of $G$. It follows that $ \Gamma\cdot \bar{H} $ (the Galois conjugate of $ \bar{H} $) is again a finite subset of $ G(L) $ (note that $ \Gamma $ acts canonically on $ G_L $ and $ G(L) $). The subgroup of $ G(L) $ generated by $ \Gamma\cdot \bar{H} $ is therefore finite, because $G$ is Abelian. This subgroup, denoted by $ H_{\Gamma} $, is a finite subgroup scheme of $ G_L $, which is invariant under the action of $ \Gamma $. By Galois descent, there exists a finite subgroup $ H $ of $ G $ such that $ H_L=H_{\Gamma} $. As $ \bar{H}\subseteq H_{\Gamma} $, we have that $ \bar{H}\subseteq H_L $, proving the claim.\\

Now, assume that $L$ is a separable extension of $E$ and thus subextension of a separable closure $ E_s $, of $E$. The base change homomorphisms yield the following commutative triangle \[ \xymatrix{&T(M_{E_s})\ar[dr]^{b_{E_s/E}}\ar[dl]_{b_{E_s/L}}&\\ T(M_L)_{E_s}\ar[rr]_{b_{E_s}}&&T(M)_{E_s}.} \] By the above discussion, $ b_{E_s/E} $ is an isomorphism and $ b_{E_s/L} $ is an epimorphism. It follows that $ b_{E_s/L} $ is also a monomorphism and thus an isomorphism. This implies that $ b_{E_s} $  is an isomorphism. This homomorphism is the extension to $ E_s $ of the base change homomorphism $b: T(M_L)\to T(M)_L $. As this extension is an isomorphism, we conclude that $ b $ is an isomorphism as well.
\end{proof}

Recall that if $E$ is a field and $L$ is a finite extension of $E$, then the Weil restriction $ \res_{L/E} $ is the right adjoint of the base change functor from the category of affine schemes over $E$ to the category of affine schemes over $L$, i.e., for every affine $E$-scheme $X$ and every affine $L$-scheme $Y$ we have a bijection of sets: \[ \Mor_L(X_L,Y)\cong \Mor_E(X,\res_{L/E}Y). \] In fact, the Weil restriction is the restriction, to the full subcategory of affine schemes, of the pushforward functor from the category of fppf sheaves over $\Spec L$ to the category of fppf sheaves over $\Spec E$. Explicitly, this functor sends an fppf sheaf $\CF$ over $\Spec L$ to the sheaf \[ T\mapsto \res_{L/E}\CF(T):=\CF(T_L).\]

Recall also that the Weil restriction preserves the group objects, i.e., $ \res_{L/E} H $ is an affine group scheme over $E$ if $H$ is an affine group scheme over $L$ and the above bijection restricts to a group isomorphism \[ \Hom_L(G_L,H)\cong \Hom_E(G,\res_{L/E}H) \] for every affine group scheme $G$ over $E$. It also follows from the adjunction that if $H$ is an affine $R$-module scheme over $L$, then $ \res_{L/E}H $ is an affine $R$-module scheme over $E$ and the above isomorphism induces an $R$-module isomorphisms, when restricted to the subgroup of $R$-linear homomorphisms. Finally, recall that the Weil restriction commutes with base change, that is, if $T$ is an $E$-scheme, then there exists a canonical isomorphism $$ (\res_{L/E}Y)_T\cong \res_{L\times_ET/T}(Y\times_ET). $$

%LEMMA 7
\begin{lem}
\label{lem7}
Let $ E $ be a field and $ L/E $ a finite field extension. Let $M$ be an affine $R$-module scheme over $E$ and $ N $ an fppf sheaf of $R$-modules over $\Spec L$. Then there exists a canonical sheaf isomorphism \[ \res_{L/E}\innHom_L(M_L,N)\cong \innHom_E(M,\res_{L/E}N) \]  which is the ``sheafified" version of the Weil restriction, i.e., the global sections of this isomorphism deliver the usual Weil restriction.
\end{lem}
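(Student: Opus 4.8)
I would verify the claimed isomorphism by computing the value of both sheaves on an arbitrary $E$-scheme $T$ and producing a natural bijection there; since a natural transformation of presheaves that is bijective on every object is an isomorphism of presheaves, and both sides are fppf sheaves (each being an $\innHom$, or a pushforward of an $\innHom$, of fppf sheaves), this yields the desired sheaf isomorphism. Evaluating at $T=\Spec E$ will then recover the classical Weil-restriction adjunction restricted to $R$-linear homomorphisms, which is precisely the ``sheafified'' assertion.

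First I would fix an $E$-scheme $T$, set $T_L:=T\times_E L$, and let $f\colon T_L\to T$ be the projection. Using the description of $\res_{L/E}$ as the pushforward of fppf sheaves recalled just before the lemma, $\bigl(\res_{L/E}\innHom_L(M_L,N)\bigr)(T)=\innHom_L(M_L,N)(T_L)=\Hom^R_{T_L}\bigl(M_L|_{T_L},\,N|_{T_L}\bigr)$. Here $M_L|_{T_L}$, $(M_T)_L$, and the pullback of $M$ from $E$ to $T_L$ all coincide canonically (each is $M\times_E T_L$); I write $M_{T_L}$ for this common object and $N_{T_L}$ for the pullback of $N$ from $L$ to $T_L$. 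Thus the left-hand side at $T$ is $\Hom^R_{T_L}(M_{T_L},N_{T_L})$.

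Next I would compute the right-hand side at $T$: by definition $\innHom_E(M,\res_{L/E}N)(T)=\Hom^R_T\bigl(M_T,(\res_{L/E}N)_T\bigr)$, and by the compatibility of Weil restriction with base change (also recalled before the lemma) there is a canonical identification $(\res_{L/E}N)_T\cong \res_{T_L/T}(N_{T_L})=f_*(N_{T_L})$. So the right-hand side at $T$ equals $\Hom^R_T\bigl(M_T,\,f_*(N_{T_L})\bigr)$. Now apply the adjunction $f^*\dashv f_*$ on fppf sheaves over $T$, which is the sheaf-theoretic form of the Weil-restriction adjunction; since $f^*$ and $f_*$ carry $R$-module sheaves to $R$-module sheaves and are additive, it restricts to an isomorphism on $R$-linear homomorphism modules:
\[ \Hom^R_T\bigl(M_T,\,f_*(N_{T_L})\bigr)\;\cong\;\Hom^R_{T_L}\bigl(f^*M_T,\,N_{T_L}\bigr)\;=\;\Hom^R_{T_L}\bigl(M_{T_L},\,N_{T_L}\bigr),\]
using $f^*M_T=M\times_E T_L=M_{T_L}$. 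This is exactly the value of the left-hand side at $T$. Each identification used is natural in $T$ (they arise from base-change isomorphisms of fiber products and of $f_*$, $f^*$), so the composite is an isomorphism of functors on $\Scheme/E$, hence of fppf sheaves; specializing $T=\Spec E$ turns the displayed chain into $\Hom^R_L(M_L,N)\cong\Hom^R_E(M,\res_{L/E}N)$, the usual Weil restriction.

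\textbf{Main obstacle.} The content is formal; the only real work is bookkeeping. One must pin down the several canonical isomorphisms of fiber products so that ``$M_L|_{T_L}$'', ``$(M_T)_L$'' and the pullback of $M$ to $T_L$ are identified coherently, check that the base-change isomorphism for $\res_{L/E}$ and the adjunction isomorphism $f^*\dashv f_*$ are the expected ones and genuinely natural in $T$, and confirm that everything respects the $R$-module structures (immediate, since these are defined levelwise and $f_*,f^*$ preserve the relevant structure). Finiteness of $L/E$ enters only through the properties of $\res_{L/E}$ already recalled in the text; no geometric input beyond that is needed.
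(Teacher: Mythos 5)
Your proposal is correct and follows essentially the same route as the paper's proof: evaluate both sheaves on an arbitrary $E$-scheme $T$, identify $(\res_{L/E}N)_T$ with $\res_{T_L/T}(N_{T_L})$ via base-change compatibility of the Weil restriction, and then apply the adjunction (pushforward/pullback) together with the canonical identifications of the various fiber products $M\times_E T_L$. The only cosmetic difference is that you run the chain of isomorphisms starting from the left-hand side and make the naturality in $T$ explicit, which the paper leaves implicit.
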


%PROOF OF LEMMA 7
\begin{proof}
Let $T$ be a scheme over $E$. We have the following isomorphisms \[ \innHom_E(M,\res_{L/E}N)(T)=\Hom_T(M_T,(\res_{L/E}N)_T)\cong \] \[\Hom_T(M_T,\res_{T_L/T}(N\times_E T)\cong \Hom_T(M_T,\res_{T_L/T}(N\times_L T_L))\cong\] \[ \Hom_{T_L}((M_T)_{T_L},N\times_L T_L))\cong \Hom_{T_L}((M_L)_{T_L},N\times_L T_L))=\] \[  \innHom_L(M_L,N)(T_L)\cong \res_{L/E}(\innHom_L(M_L,N))(T),\] where the first isomorphism (not equality) follows from the fact that the Weil restriction commutes with base change. The second isomorphisms is induced by the canonical isomorphism $ N\times_E T\cong N\times_L T_L $. The third isomorphism follows from the adjunction property of the Weil restriction. The fourth isomorphism follows from the canonical isomorphism $ (M_T)_{T_L}\cong (M_L)_{T_L}$. Finally, the last isomorphism is given by the ``definition" of the Weil restriction.
\end{proof}

%PROPOSITION 49
\begin{prop}
\label{prop 49}
Let $ E $ be a field and $ L/E $ a finite field extension. Let $ M_1, M_2,\dots, M_r$  and $M$ be affine $R$-module schemes over $E$ and $N$ an fppf sheaf of Abelian groups over $L$.
\begin{itemize}
\item[a)] The bijection \[ \Mor_L(M_{1,L}\times\dots\times M_{r,L}, N)\cong \Mor_E(M_1\times\dots\times M_r, \res_{L/E}N) \] restricts to an  isomorphism \[ \widetilde{\Mult}^R_L(M_{1,L}\times\dots\times M_{r,L}, N)\cong \widetilde{\Mult}^R_E(M_1\times\dots\times M_r, \res_{L/E}N).\] 
\item[b)] The  isomorphism $ \widetilde{\Mult}^R_L(M_L^r,N)\cong \widetilde{\Mult}^R_E(M,\res_{L/E}N) $ of part a) restricts to an isomorphism \[ \widetilde{\Alt}^R_L(M_L^r,N)\cong \widetilde{\Alt}^R_E(M,\res_{L/E}N).\]
\item[c)] The isomorphism $ \widetilde{\Mult}^R_L(M_L^r,N)\cong \widetilde{\Mult}^R_E(M,\res_{L/E}N) $ of part a) restricts to an isomorphism \[ \widetilde{\Sym}^R_L(M_L^r,N)\cong \widetilde{\Sym}^R_E(M,\res_{L/E}N).\]
\end{itemize}
\end{prop}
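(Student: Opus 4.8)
The plan is to verify, for each of the three parts, that the relevant subgroup of the set of morphisms is preserved under the Weil-restriction bijection, and that this is essentially a formal (functor-theoretic) consequence of the fact that Weil restriction is a pushforward of fppf sheaves together with Lemma \ref{lem7}. The key point to exploit is that all three conditions defining the subgroups in question — pseudo-$R$-multilinearity, being alternating, being symmetric — are formulated \emph{pointwise} on test schemes, and the Weil-restriction bijection $\Mor_L(X_L,Y)\cong\Mor_E(X,\res_{L/E}Y)$ is itself induced by a natural transformation on test schemes, namely $\Hom_{T_L}(X_L\times_E T_L, Y)\cong\Hom_T(X\times_E T,\res_{L/E}Y)$ for $T/E$ (using $(X\times_E T)_{T_L}\cong X_L\times_{L}T_L$). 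So the strategy is: first translate the universal statement into a statement about sections over arbitrary $E$-schemes $T$, then use the $R$-module structure compatibility already recorded (Weil restriction of an affine $R$-module scheme is an affine $R$-module scheme, and the adjunction isomorphism is $R$-linear on $R$-linear homs) to handle part (a), and finally observe that the remaining conditions in (b) and (c) are preserved because they are phrased via the generalized diagonals $\Delta^r_{ij}$ and the $S_r$-action, which commute with base change to $T_L$.

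For part (a), I would argue as follows. Given $\phi\in\widetilde{\Mult}^R_L(M_{1,L}\times\dots\times M_{r,L},N)$, let $\psi\in\Mor_E(M_1\times\dots\times M_r,\res_{L/E}N)$ be its image under the Weil-restriction bijection. For an $E$-scheme $T$ and sections $m_i\in M_i(T)=M_{i,L}(T_L)$, unwinding the definition of the bijection shows $\psi(T)(m_1,\dots,m_r)$ corresponds, under $\res_{L/E}N(T)=N(T_L)$, to $\phi(T_L)(m_1,\dots,m_r)$. Hence $\psi$ is additive in each variable exactly because $\phi$ is, and it satisfies the pseudo-$R$-multilinearity relation $\psi(a\cdot m_1,m_2,\dots,m_r)=\psi(m_1,\dots,a\cdot m_i,\dots,m_r)$ precisely because $\phi$ does (here one uses that the $R$-action on $M_i$, viewed over $T$, agrees with the $R$-action on $M_{i,L}$ viewed over $T_L$). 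Conversely, the inverse of the bijection sends pseudo-$R$-multilinear morphisms back to pseudo-$R$-multilinear ones by the same computation. This gives the claimed restriction of the bijection to $\widetilde{\Mult}^R$; alternatively one can phrase the whole thing via Lemma \ref{lem7} and an induction on $r$, reducing to the $r=1$ case already recorded in the text, but the direct pointwise check is cleaner.

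For parts (b) and (c), the point is that, under the correspondence $\psi(T)(m_1,\dots,m_r)\leftrightarrow\phi(T_L)(m_1,\dots,m_r)$, precomposition with a generalized diagonal $\Delta^r_{ij}$ on the $E$-side corresponds to precomposition with $\Delta^r_{ij}$ on the $L$-side, and likewise the $S_r$-action permuting the factors of $M^r$ is compatible with that permuting the factors of $M_L^r$. Thus $\psi$ kills all $\Delta^r_{ij}$ iff $\phi$ does (giving (b)), and $\psi$ is $S_r$-invariant iff $\phi$ is (giving (c)). I do not expect any genuine obstacle here: the only mildly delicate book-keeping is making the identification $(M_T)_{T_L}\cong(M_L)_{T_L}$ and the naturality of the adjunction isomorphism explicit enough that "the diagram commutes", so that the pointwise formula for $\psi$ in terms of $\phi$ is unambiguous — this is exactly the content of the chain of isomorphisms in the proof of Lemma \ref{lem7}, so I would simply invoke that lemma and its proof. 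The main (very minor) subtlety is thus purely notational: ensuring that "$R$-multilinear in each factor" translates correctly through the canonical isomorphisms, which is immediate once one notes the $R$-module structure on $\res_{L/E}N$ is defined via the $R$-module structure on $N$ levelwise.
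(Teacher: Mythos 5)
The central identification on which your ``direct pointwise check'' rests, namely $M_i(T)=M_{i,L}(T_L)$ for an $E$-scheme $T$, is false: for instance $\BG_a(\Spec E)=E$ while $\BG_{a,L}(\Spec L)=L$. What exists is only a natural $R$-linear pullback map $M_i(T)\to M_{i,L}(T_L)$, and the adjunction formula reads $\psi(T)(m_1,\dots,m_r)=\phi(T_L)(m_{1,T_L},\dots,m_{r,T_L})$ with these pullbacks inserted. This still gives the forward direction of (a) (if $\phi$ is pseudo-$R$-multilinear, so is $\psi$, since pullback is additive and $R$-linear), but your claim that the converse holds ``by the same computation'' does not follow: pseudo-$R$-multilinearity of $\psi$ only controls $\phi(T_L)$ on tuples lying in the image of the pullback maps, whereas one must check the relations for arbitrary sections of the $M_{i,L}$ over arbitrary $L$-schemes. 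The gap is repairable: given an $L$-scheme $S'$, regard it as an $E$-scheme $T$; then $M_{i,L}(S')=\Mor_E(S',M_i)=M_i(T)$ literally, and the canonical section $s:S'\to S'\times_EL=T_L$ retracts the projection, so $\phi(S')(n_1,\dots,n_r)=s^*\bigl(\psi(T)(n_1,\dots,n_r)\bigr)$ and the relations for $\phi$ descend from those for $\psi$. Alternatively, one can avoid the pointwise analysis altogether by the induction on $r$ via Lemma \ref{lem7}, reducing to the $r=1$ adjunction --- the route you mention in passing, and the one the paper actually takes for part (a).

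For parts (b) and (c) your observation is the right one and matches the paper's argument in substance: the Weil-restriction isomorphism intertwines precomposition with the generalized diagonals $\Delta^r_{ij}$ on the two sides (and likewise the $S_r$-action), so the alternating and symmetric submodules correspond. The paper packages the forward direction of (b) through the counit $\sigma_N:(\res_{L/E}N)_L\to N$ and the backward direction through exactly the commuting square with $\Delta^*_{i,j}$ that you invoke. These parts are fine in spirit, but they presuppose a correctly established part (a), so the gap above must be closed first.
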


%PROOF OF PROPOSITION 49
\begin{proof}
\begin{itemize}
\item[a)] We will proceed by induction on $r$. If $r=1$, then the statement follows from the adjunction of the Weil restriction or more generally of the pushforward, discussed before Lemma \ref{lem7}. So, assume that $r>1$ and that we know the result for smaller integers. We have \[ \widetilde{\Mult}^R_L(M_{1,L}\times\dots\times M_{r,L}, N)\cong \] \[\widetilde{\Mult}^R_L(M_{1,L}\times\dots\times M_{r-1,L}, \innHom_L(M_{r,L},N))\ovset{\text{ind'n}}{\cong} \] \[ \widetilde{\Mult}^R_E(M_{1}\times\dots\times M_{r-1}, \res_{L/E}\innHom_L(M_{r,L},N))\ovset{\ref{lem7}}{\cong} \] \[ \widetilde{\Mult}^R_E(M_{1}\times\dots\times M_{r-1}, \innHom_E(M_{r},\res_{L/E}N))\cong \] \[ \widetilde{\Mult}^R_E(M_1\times\dots\times M_r, \res_{L/E}N). \]
\item[b)] By the adjunction property, we know that there exists a unique $R$-linear homomorphism $ \sigma_N:(\res_{L/E}N)_L\to N $ with the following universal property: for every $E$-scheme $X$ the map \[ \Mor_E(X,\res_{L/E}N)\to\Mor_L(X_L,N) \] induced by base change to $L$ and composing with $ \sigma_N $ is an isomorphism. It follows that if $ \phi:M^r\to \res_{L/E}N $ is alternating, the morphism $ \sigma_N\circ\phi_L $ is alternating too and therefore, the $R$-linear isomorphism of part $a)$ restricts to an $R$-linear monomorphism \[ \widetilde{\Alt}^R_E(M^r,\res_{L/E}N)\into \widetilde{\Alt}^R_L(M_L^r,N).\] We have to show that this is surjective too. Take an alternating morphism $ \phi:M_L^r\to N $. Because of part $a)$, we know that there exists a pseudo-$R$-multilinear morphism $ \psi:M^r\to\res_{L/EN} $ such that  $ \phi=\sigma_N\circ \psi_L $. We want to show that $ \psi $ is alternating. For any $ 1\leq i<j\leq r $, let $$\Delta_{i,j}^r:M^{r-1}\to M^r,\quad (m_1,\dots,m_{r-1})\mapsto (m_1,\dots,m_{j-1},m_i,m_j,\dots,m_r)$$ denote the generalized diagonal embedding equating the $ i^{\text{th}} $ and $ j^{\text{th}} $ components. By functoriality of the Weil restriction, the following diagram commutes: \[ \xymatrix{\widetilde{\Mult}^R_E(M^r,\res_{L/E}N)\ar[rr]^{\cong}\ar[d]_{\Delta_{i,j}^{*}}&&\widetilde{\Mult}^R_L(M_L^r,N)\ar[d]^{\Delta_{i,j}^{*}}\\\widetilde{\Mult}^R_E(M^{r-1},\res_{L/E}N)\ar[rr]_{\cong}&&\widetilde{\Mult}^R_L(M_L^{r-1},N).}\] Since $ \phi $ is mapped to zero under the homomorphism $$ \widetilde{\Mult}^R_L(M_L^r,N)\to\widetilde{\Mult}^R_L(M^{r-1}_L,N) $$ (because it is alternating), the morphism $ \psi $ lies in the kernel of the homomorphism $$ \widetilde{\Mult}^R_E(M^r,\res_{L/E}N)\to\widetilde{\Mult}^R_E(M^{r-1},\res_{L/E}N),$$ and this holds for every pair $ i<j $. Hence $ \psi $ is alternating.
\item[c)] Similar arguments prove the desired isomorphism.
\end{itemize}
\end{proof}

%LEMMA 8
\begin{prop}
\label{lem8}
Let $ E $ be a field and $ L/E $ a finite field extension. Let $ M_1, M_2,\dots, M_r$  and $M$ be finite $R$-module schemes over $E$. Then the three base change homomorphisms \[M_{1,L}\otimes_R\dots\otimes_R M_{r,L}\to (M_1\otimes_R\dots\otimes_R M_r)_L, \]  \[ \epR^r(M_L)\to (\epR^rM)_L \] and \[ \uset{R}{S}^r(M_L)\to (\uset{R}{S}^rM)_L \] are isomorphisms.
\end{prop}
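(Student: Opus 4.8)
The plan is to verify that the base-changed object $(M_1\otimes_R\dots\otimes_R M_r)_L$, together with the base change $\tau_L$ of the universal pseudo-$R$-multilinear morphism $\tau\colon M_1\times\dots\times M_r\to M_1\otimes_R\dots\otimes_R M_r$ over $E$, satisfies the universal property characterizing $M_{1,L}\otimes_R\dots\otimes_R M_{r,L}$; by uniqueness of the tensor product (Remark \ref{rem 4}) this forces the canonical comparison morphism $\tau_{L/E}$ of Definition \ref{def 15} to be an isomorphism. The very same argument handles the exterior and symmetric powers, using Proposition \ref{prop 49}(b),(c) in place of (a), so I will only discuss the tensor product.

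Before that, two reductions. First, since the $M_i$ are finite, Theorem \ref{thm41} guarantees that $M_1\otimes_R\dots\otimes_R M_r$ (over $E$) and $M_{1,L}\otimes_R\dots\otimes_R M_{r,L}$ (over $L$) exist and are profinite, hence affine. Second, if $f\colon A\to B$ is a morphism of affine group schemes over $L$ such that $f^{*}\colon\Hom_L(B,N)\to\Hom_L(A,N)$ is bijective for every affine group scheme $N$ over $L$, then $f$ is an isomorphism (take $N=A$ to produce $s\colon B\to A$ with $s\circ f=\Id_A$, then $N=B$ to deduce $f\circ s=\Id_B$). So it suffices to prove that $(\tau_{L/E})^{*}$ is bijective on $\Hom_L(-,N)$ for every affine group scheme $N$ over $L$.

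Now fix such an $N$. Then $\res_{L/E}N$ is an affine group scheme over $E$, and I would string together the natural isomorphisms
\[\Hom_L\big((M_1\otimes_R\dots\otimes_R M_r)_L,N\big)\cong\Hom_E\big(M_1\otimes_R\dots\otimes_R M_r,\res_{L/E}N\big)\]
(the Weil restriction adjunction for group schemes, recalled before Lemma \ref{lem7}),
\[\cong\widetilde{\Mult}^R_E\big(M_1\times\dots\times M_r,\res_{L/E}N\big)\]
(the universal property of the tensor product over $E$, Definition \ref{def 7} and Theorem \ref{thm41}), and
\[\cong\widetilde{\Mult}^R_L\big(M_{1,L}\times\dots\times M_{r,L},N\big)\]
(Proposition \ref{prop 49}(a)). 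The crucial point is to check that this composite equals pullback along $\tau_L$: writing $\sigma_N\colon(\res_{L/E}N)_L\to N$ for the counit of the Weil restriction adjunction, the first isomorphism sends $\psi$ to the unique $\tilde\psi$ with $\sigma_N\circ\tilde\psi_L=\psi$, the second sends $\tilde\psi$ to $\tilde\psi\circ\tau$, and the third (which is itself built from $\sigma_N$) sends $\tilde\psi\circ\tau$ to $\sigma_N\circ(\tilde\psi\circ\tau)_L=\sigma_N\circ\tilde\psi_L\circ\tau_L=\psi\circ\tau_L$. Since $\tau_L=\tau_{L/E}\circ\tau'$, where $\tau'\colon M_{1,L}\times\dots\times M_{r,L}\to M_{1,L}\otimes_R\dots\otimes_R M_{r,L}$ is the universal morphism over $L$, the composite equals $(\tau')^{*}\circ(\tau_{L/E})^{*}$. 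As the composite is an isomorphism and $(\tau')^{*}$ is an isomorphism (universal property of the tensor product over $L$), $(\tau_{L/E})^{*}$ is bijective on $\Hom_L(-,N)$; by the second reduction, $\tau_{L/E}$ is an isomorphism. Replacing $\widetilde{\Mult}$ by $\widetilde{\Alt}$ (resp. $\widetilde{\Sym}$) and invoking Proposition \ref{prop 49}(b) (resp. (c)) yields the statements for $\epR^r$ and $\underset{R}{S}^r$.

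I expect the genuine work to lie in the bookkeeping of the previous paragraph: verifying that the concatenation of the Weil restriction adjunction, the universal property over $E$, and the isomorphism of Proposition \ref{prop 49} really does compute pullback along the base-changed universal morphism $\tau_L$, which requires some care with the unit and counit of the Weil restriction adjunction. The reduction to affine test schemes $N$ is the other place where the finiteness hypothesis enters, through the profiniteness assertion of Theorem \ref{thm41}; everything else is formal adjunction-chasing.
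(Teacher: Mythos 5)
Your proposal is correct and follows essentially the same route as the paper: test against an arbitrary affine group scheme $N$ over $L$, chain the Weil restriction adjunction, the universal property over $E$, and Proposition \ref{prop 49}, and conclude via the commutative triangle with the base change morphism. The only difference is cosmetic — you spell out the compatibility of the adjunction isomorphisms with $\tau_L$ and the final Yoneda step (taking $N$ to be source and target), which the paper leaves implicit.
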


%PROOF OF LEMMA 8
\begin{proof}
We prove the statement for the tensor product and drop the proofs for the exterior and symmetric power, as they can be similarly proved.\\

Let $N$ be an affine group scheme over $L$. By functoriality of the Weil restriction, we have the following commutative diagram: \[ \xymatrix{\Hom_E(M_1\otimes_R\dots\otimes_R M_r,\res_{L/E}N)\ar[d]_{\cong}\ar[r]^{\cong}&\Hom_L((M_1\otimes_R\dots\otimes_R M_r)_L,N)\ar[d]\\\widetilde{\Mult}_E^R(M_1\times_R\dots\times_R M_r,\res_{L/E}N)\ar[r]_{\cong}&\widetilde{\Mult}_L^R(M_{1,L}\times_R\dots\times_R M_{r,L},N)} \] where the vertical homomorphisms are induced by the universal $R$-multilinear morphisms. The left vertical morphism is an isomorphism by the definition of $ M_1\otimes_R\dots\otimes_R M_r $. The horizontal morphisms are isomorphism by the Weil restriction. Thus the right vertical homomorphism is an isomorphism as well. The base change morphism induces a commutative triangle \[ \xymatrix{\Hom_L((M_1\otimes_R\dots\otimes_R M_r)_L,N)\ar[r]\ar[d]&\Hom_L(M_{1,L}\otimes_R\dots\otimes_R M_{r,L},N)\ar[dl]\\ \widetilde{\Mult}_L^R(M_{1,L}\times_R\dots\times_R M_{r,L},N)& }\] where the vertical and oblique morphisms are isomorphisms. In fact the vertical arrow is an isomorphism because of the last diagram and the oblique arrow is an isomorphism by the definition of $ M_{1,L}\otimes_R\dots\otimes_R M_{r,L} $. Consequently, the homomorphism \[ \Hom_L((M_1\otimes_R\dots\otimes_R M_r)_L,N)\to \Hom_L(M_{1,L}\otimes_R\dots\otimes_R M_{r,L},N) \] is an isomorphism, and this holds for every affine group scheme $N$ over $L$. It follows that the base change homomorphism is an isomorphism.
\end{proof}

\section{Main properties of exterior powers}

%PROPOSITION 5
\begin{prop}
\label{prop 5} Let $M$ be an $R$-module scheme over $S$. If
$\underset{R}{\bigwedge}^nM=0$ then we have $\underset{R}{\bigwedge}^mM=0$ for all $m\geq n$.
\end{prop}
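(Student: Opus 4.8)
The plan is to reduce the statement for general $m\ge n$ to the single step $m=n+1$, and then prove that step by exhibiting a natural surjection $\underset{R}{\bigwedge}^{n+1}M \onto \underset{R}{\bigwedge}^n M$ (more precisely, that any alternating morphism out of $M^{n+1}$ factors through one out of $M^n$ composed with a fixed map $M^{n+1}\to M^n$, in a way that forces $\underset{R}{\bigwedge}^{n+1}M=0$ once $\underset{R}{\bigwedge}^n M=0$). First I would observe that it suffices to show $\underset{R}{\bigwedge}^{n+1}M=0$, since then $\underset{R}{\bigwedge}^{n+2}M=0$ and so on by induction; formally, one inducts on $m$, the base case $m=n$ being the hypothesis.

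For the key step, I would argue via the universal property in Definition \ref{def 7}(iii). Let $N$ be any group scheme over $S$ and let $\phi:M^{n+1}\to N$ be alternating pseudo-$R$-multilinear. Consider the ``partial'' composite: fix the projection/forgetting map and define, for each $S$-scheme $T$ and each $(m_1,\dots,m_n)\in M(T)^n$, the assignment $\psi(m_1,\dots,m_n):=\phi(m_1,\dots,m_n, m_n)$ — i.e. precompose $\phi$ with the generalized diagonal $\Delta^{n+1}_{n,n+1}:M^n\to M^{n+1}$. Since $\phi$ is alternating, its composite with any generalized diagonal $\Delta^{n+1}_{ij}$ is trivial, so in fact $\psi=\phi\circ\Delta^{n+1}_{n,n+1}=0$. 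This alone does not immediately kill $\phi$; the right move is instead to use multilinearity to expand $\phi(m_1,\dots,m_n,m_{n+1})$ in terms of diagonal-type evaluations. Concretely, write $m_{n+1}$ without loss as an $R$-linear combination is not available, so instead I would use the standard fact that an alternating multilinear map on $M^{n+1}$ restricted along the $(n+1)$-st variable, with the first $n$ variables held at an arbitrary tuple, is a homomorphism $M\to N$ which is killed by precomposition with the inclusion of the subgroup generated by $m_1,\dots,m_n$ at each slot — and then appeal to the representing object. The cleanest route: by Theorem \ref{thm41} (when working over a field and $M$ finite) $\underset{R}{\bigwedge}^{n+1}M\cong\widetilde{\innAlt}^R(M^{n+1},\BG_m)^*$, and $\underset{R}{\bigwedge}^n M\cong\widetilde{\innAlt}^R(M^n,\BG_m)^*$; the diagonal $\Delta^{n+1}_{n,n+1}$ induces $\widetilde{\innAlt}^R(M^{n+1},\BG_m)\to\widetilde{\innAlt}^R(M^n,\BG_m)$ which, because alternating forms on $M^{n+1}$ are determined by their ``contractions'', is injective; dualizing gives a surjection $\underset{R}{\bigwedge}^n M\onto\underset{R}{\bigwedge}^{n+1}M$, whence $\underset{R}{\bigwedge}^n M=0$ forces $\underset{R}{\bigwedge}^{n+1}M=0$.

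Since the proposition is stated for an arbitrary base $S$ and arbitrary $M$, I would avoid the field/finite hypotheses and instead argue purely from the universal property: given alternating $\phi:M^{n+1}\to N$, I construct from it an alternating $\phi':M^n\to N$ together with the fact that $\phi$ is recovered from $\phi'$ by precomposition with a universal map $M^{n+1}\to M^n$ (the projection onto the first $n$ coordinates composed with the observation that the last coordinate's contribution vanishes by the alternating property applied to the $(i,n+1)$ diagonals). Then $\Hom(\underset{R}{\bigwedge}^{n+1}M,N)\cong\widetilde{\Alt}^R(M^{n+1},N)$ embeds into $\widetilde{\Alt}^R(M^n,N)\cong\Hom(\underset{R}{\bigwedge}^n M,N)=\Hom(0,N)=0$ for every $N$, so $\underset{R}{\bigwedge}^{n+1}M=0$ by Yoneda. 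Iterating (or inducting on $m$) yields $\underset{R}{\bigwedge}^m M=0$ for all $m\ge n$.

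The main obstacle is making rigorous the claim that an alternating morphism on $M^{n+1}$ is determined by an alternating morphism on $M^n$ — i.e. constructing the natural monomorphism $\widetilde{\Alt}^R(M^{n+1},N)\hookrightarrow\widetilde{\Alt}^R(M^n,N)$ functorially in $N$, without a basis. I expect this to require the identity $\phi(m_1,\dots,m_{n+1})=\phi(m_1,\dots,m_n, m_{n+1})$ and a telescoping/alternating argument showing that if the induced $M^n\to N$ vanishes then $\phi$ vanishes, using that the universal alternating morphism $\lambda:M^n\to\underset{R}{\bigwedge}^n M$ is an epimorphism of fppf sheaves and that alternating morphisms out of $M^{n+1}$ factor through $\lambda\times\mathrm{id}_M$ followed by an alternating morphism $\underset{R}{\bigwedge}^n M\times M\to N$ whose source is $0$. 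Once that factorization is in place, the rest is a formal Yoneda argument.
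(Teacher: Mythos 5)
Your reduction to the single step $m=n+1$ by induction matches the paper, but the key step is not actually established: you explicitly label it ``the main obstacle'' and leave it open, and the two routes you do sketch in detail are both broken. The Cartier-duality route fails because precomposition with a generalized diagonal $\Delta^{n+1}_{ij}$ kills \emph{every} alternating morphism by the very definition of alternating, so the induced map $\widetilde{\innAlt}^R(M^{n+1},\BG_m)\to\widetilde{\innAlt}^R(M^{n},\BG_m)$ is zero, not injective, and no surjection $\epR^nM\onto\epR^{n+1}M$ comes out of it (besides, that route needs $S$ a field and $M$ finite, which the proposition does not assume). The claimed factorization of an alternating $\phi\colon M^{n+1}\to N$ through the projection onto the first $n$ coordinates is also false: such a factorization would make $\phi$ independent of the last variable, and multilinearity in that variable (set it to $0$) would force $\phi=0$ identically.

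The step you are missing is supplied by Lemma \ref{lem 19} (currying), and it makes the whole difficulty disappear: the isomorphism $\Mult^R(M^{n+1},N)\cong\Mult^R(M^{n},\innHom^R(M,N))$ carries the submodule $\Alt^R(M^{n+1},N)$ injectively into $\Alt^R(M^{n},\innHom^R(M,N))$, since a morphism alternating in all $n+1$ variables is in particular alternating in the first $n$. By the universal property, $\Alt^R(M^{n},\innHom^R(M,N))\cong\Hom^R(\epR^nM,\innHom^R(M,N))=\Hom^R(0,-)=0$, hence $\Alt^R(M^{n+1},N)=0$ for all $N$ and $\epR^{n+1}M=0$. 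Note the crucial point you missed: you do not need to recover $\phi$ from an $n$-variable alternating morphism into $N$ (which is impossible, as above); you recover it from one into $\innHom^R(M,N)$, which is exactly what the adjunction gives for free. No telescoping, epimorphism of fppf sheaves, or ``contraction'' argument is required. Your final gesture at factoring through $\lambda\times\Id_M$ is pointing in this direction, but as written it is not a proof.
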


%PROOP OP PROPOSITION 5
\begin{proof}[\textsc{Proof}. ]
We show that $\underset{R}{\bigwedge}^{n+1}M=0$; the result follows immediately by induction. Let $N$ be an $R$-module scheme. By the definition, we have an isomorphism $$\Hom^R(\underset{R}{\bigwedge}^{n+1}M,N)\cong\Alt^R(M^{n+1},N).$$ Under the isomorphism
$$\Mult(M^{n+1},H)\cong\Mult(M^n,\innHom(M,H))$$
given in Lemma \ref{lem 19}, the image of the sub-$R$-module $\Alt^R(M^{n+1},N)$ lies in the sub-$R$-module $\Alt^R(M^n,\innHom^R(M,N))$ of $\Mult^R(M^n,\innHom^R(M,N))$, i.e., we have an isomorphism
\[\Alt^R(M^{n+1},N)\cong \Alt^R(M^n,\innHom^R(M,N)).\]
Again by definition, we have
$$\Alt^R(M^n,\innHom^R(M,N))\cong\Hom^R(\underset{R}{\bigwedge}^nM,\innHom^R(M,N)).$$
The latter $R$-module is trivial by hypothesis and thus, we have $$\Hom^r(\underset{R}{\bigwedge}^{n+1}M,N)\cong \Alt^R(M^{n+1},N)\cong \Alt^R(M^n,\innHom^R(M,N))=0$$ for all $R$-module schemes $N$, which implies that $\underset{R}{\bigwedge}^{n+1}M=0$.
\end{proof}

%PROPOSITION 6
\begin{prop}
\label{prop 6} Let $M_1, M_2$ and $P$ be $R$-module schemes over $S$
and $r_1,r_2$ two positive integers. We have a natural isomorphism
$$\lambda_{1,2}^*:\Alt^R(M_1^{r_1}\times M_2^{r_2},P)\cong\Hom^R(\underset{R}{\bigwedge}^{r_1}M_1\otimes_R\underset{R}{\bigwedge}^{r_2}M_2,P).$$
\end{prop}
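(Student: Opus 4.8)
The statement is the scheme-theoretic analogue of the classical identity $\mathrm{Alt}(M_1^{r_1}\times M_2^{r_2},P)\cong\Hom(\Lambda^{r_1}M_1\otimes\Lambda^{r_2}M_2,P)$, and I would prove it by the same ``soft'' argument: assemble the isomorphism out of the adjunction isomorphisms of Section~\ref{section R-modules} together with the universal properties of exterior powers and tensor products. Concretely, let $\lambda_i\colon M_i^{r_i}\to\epR^{r_i}M_i$ be the universal alternating morphisms and $\tau\colon\epR^{r_1}M_1\times\epR^{r_2}M_2\to\epR^{r_1}M_1\otimes_R\epR^{r_2}M_2$ the universal pseudo-$R$-multilinear morphism of the tensor product. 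Using Remark~\ref{rem 8} (the $R$-module structures on $\epR^{r_i}M_i$ make $\lambda_i$ $R$-linear in each slot, and $\tau$ is $R$-bilinear), the morphism $\tau\circ(\lambda_1\times\lambda_2)$ is $R$-multilinear and alternating in each of the two blocks $M_1^{r_1}$ and $M_2^{r_2}$; precomposition with it is the map $\lambda^*_{1,2}$ of the statement, and it remains to see it is bijective. For that it suffices to realize it as the composite of a chain of natural $R$-linear isomorphisms.

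\textbf{The chain.} Writing $\Alt^R(M_1^{r_1}\times M_2^{r_2},P)$ for the $R$-multilinear morphisms alternating in each block, I would argue
\begin{align}
\Alt^R(M_1^{r_1}\times M_2^{r_2},P)
&\cong \Alt^R\bigl(M_2^{r_2},\innAlt^R(M_1^{r_1},P)\bigr)
&&\text{(Prop.~\ref{prop 4})}\notag\\
&\cong \Hom^R\bigl(\epR^{r_2}M_2,\innAlt^R(M_1^{r_1},P)\bigr)
&&\text{(univ.\ prop.\ of }\epR^{r_2}M_2\text{)}\notag\\
&\cong \Alt^R\bigl(M_1^{r_1},\innHom^R(\epR^{r_2}M_2,P)\bigr)
&&\text{(Prop.~\ref{prop 4}, Lemma~\ref{lem 19})}\notag\\
&\cong \Hom^R\bigl(\epR^{r_1}M_1,\innHom^R(\epR^{r_2}M_2,P)\bigr)
&&\text{(univ.\ prop.\ of }\epR^{r_1}M_1\text{)}\notag\\
&\cong \Mult^R\bigl(\epR^{r_1}M_1\times\epR^{r_2}M_2,P\bigr)
&&\text{(Lemma~\ref{lem 19})}\notag\\
&\cong \Hom^R\bigl(\epR^{r_1}M_1\otimes_R\epR^{r_2}M_2,P\bigr)
&&\text{(Remark~\ref{rem 28}).}\notag
\end{align}
The first isomorphism is Proposition~\ref{prop 4} applied with $M_2^{r_2}$ in the role of the (non-alternating) factors $M_1,\dots,M_r$ and $M_1^{r_1}$ in the role of $N^n$: it identifies the $R$-multilinear morphisms alternating in $M_1^{r_1}$ with $\Mult^R(M_2^{r_2},\innAlt^R(M_1^{r_1},P))$, and the sub-$R$-module of morphisms additionally alternating in $M_2^{r_2}$ corresponds exactly to the alternating morphisms $M_2^{r_2}\to\innAlt^R(M_1^{r_1},P)$, since composing with the generalized diagonals $\Delta^{r_2}_{ij}$ of the $M_2$-block is intertwined by the adjunction. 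The third isomorphism is obtained in the same way: Proposition~\ref{prop 4} rewrites $\Hom^R(\epR^{r_2}M_2,\innAlt^R(M_1^{r_1},P))$ as the $R$-multilinear morphisms $\epR^{r_2}M_2\times M_1^{r_1}\to P$ alternating in $M_1^{r_1}$, and Lemma~\ref{lem 19} (after reordering the factors, and noting $\Mult^R$ of a single factor is $\Hom^R$) rewrites these as $\Alt^R(M_1^{r_1},\innHom^R(\epR^{r_2}M_2,P))$. The two uses of a universal property of an exterior power are the $R$-linear form (cf.\ Definition~\ref{def 7} together with the refinement in Remark~\ref{rem 28}), applied with the $R$-module scheme $\innAlt^R(M_1^{r_1},P)$, resp.\ $\innHom^R(\epR^{r_2}M_2,P)$, as target. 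Every isomorphism in the chain is $R$-linear and natural in $P$ (and in $M_1,M_2$) by construction, and one checks directly that the composite inverts precomposition with $\tau\circ(\lambda_1\times\lambda_2)$, which is the asserted $\lambda^*_{1,2}$.

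\textbf{Main obstacle.} There is nothing deep here, but two points must be handled with care. First, applying the universal property of $\epR^{r_i}M_i$ to the ``inner'' targets $\innAlt^R(M_1^{r_1},P)$ and $\innHom^R(\epR^{r_2}M_2,P)$ is legitimate because these are $R$-module schemes in the situations where the hypotheses used here apply (cf.\ Remarks~\ref{rem 2} and~\ref{rem 26}), and, more generally, because the universal property is a statement about the associated fppf sheaves and therefore holds with any sheaf of $R$-modules as target under the standing convention that the objects written down exist. Second, and this is the only genuinely tedious part of the proof, one must verify at each currying/uncurrying step of Lemma~\ref{lem 19} and Proposition~\ref{prop 4} that the condition ``alternating in the block $M_i^{r_i}$'' on a morphism matches the analogous condition on its adjoint — equivalently, that composition with the diagonals $\Delta^{r_i}_{jk}$ commutes with the adjunctions. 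This is routine but has to be done for each block and each arrow; once it is in place, the Proposition follows formally from the universal properties already established.
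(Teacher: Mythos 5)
Your proposal is correct and follows essentially the same route as the paper's own proof: the paper likewise chains Proposition~\ref{prop 4}, the universal property of the exterior powers, and the universal property of the tensor product, only peeling off the $M_1^{r_1}$ block first where you peel off $M_2^{r_2}$ first — an immaterial difference. The compatibility of the alternating conditions with the currying steps, which you flag as the only tedious point, is exactly what the paper leaves implicit as well.
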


%PROOF OF PROPOSITION 6
%\begin{comment}
\begin{proof}[\textsc{Proof}. ]
Using Proposition \ref{prop 4} we have a natural isomorphism
$$\Alt^R(M_1^{r_1}\times M_2^{r_2},P)\cong\Alt^R(M_1^{r_1},\innAlt^R(M_2^{r_2},P))$$
and by definition of $\underset{R}{\bigwedge}^{r_1}M_1$ this is isomorphic to $\Hom^R(\underset{R}{\bigwedge}^{r_1}M_1,\innAlt^R(M_2^{r_2},P))$ which is again by Proposition \ref{prop 4} isomorphic to $$\Alt^R(\underset{R}{\bigwedge}^{r_1}M_1\times M_2^{r_2},P)\cong\Alt^R(M_2^{r_2},\innHom^R(\underset{R}{\bigwedge}^{r_1}M_1,P))\cong$$ $$\Hom^R(\underset{R}{\bigwedge}^{r_2}M_2,\innHom^R(\underset{R}{\bigwedge}^{r_1}M_1,P))\cong\Mult^R(\underset{R}{\bigwedge}^{r_1}M_1\times\underset{R}{\bigwedge}^{r_2}M_2,P)\cong$$ $$\Hom^R(\underset{R}{\bigwedge}^{r_1}M_1\otimes\underset{R}{\bigwedge}^{r_2}M_2,P).$$
\end{proof}
%\end{comment}

%REMARK 5
\begin{rem}
\label{rem 5}
$  $
\begin{itemize}
\item[1)] Let $\lambda_{1,2}:M_1^{r_1}\times M_2^{r_2}\to\underset{R}{\bigwedge}^{r_1}M_1\otimes_R
\underset{R}{\bigwedge}^{r_2}M_2$ be the multilinear morphism in $\Alt^R(M_1^{r_1}\times M_2^{r_2},\underset{R}{\bigwedge}^{r_1}M_1\otimes_R\underset{R}{\bigwedge}^{r_2}M_2)$ that maps to the identity of\\
$\underset{R}{\bigwedge}^{r_1}M_1\otimes_R\underset{R}{\bigwedge}^{r_2}M_2$ by the isomorphism given in the Proposition \ref{prop 6}. Then, one can easily see that the $R$-module scheme $\underset{R}{\bigwedge}^{r_1}M_1\otimes_R\underset{R}{\bigwedge}^{r_2}M_2$ has the following universal property: Given any $R$-multilinear morphism $\phi:M_1^{r_1}\times M_2^{r_2}\to P$ which is alternating in $M_1^{r_1}$ and $M_2^{r_2}$, there exists a unique $R$-linear homomorphism $$\widetilde{\phi}:\underset{R}{\bigwedge}^{r_1}M_1\otimes_R\underset{R}{\bigwedge}^{r_2}M_2\to P$$ making the following diagram commute:
$$\xymatrix{M_1^{r_1}\times
M_2^{r_2}\ar[rr]^{\phi}\ar[dr]_{\lambda_{1,2}}&&P\\
&\underset{R}{\bigwedge}^{r_1}M_1\otimes_R\underset{R}{\bigwedge}^{r_2}M_2.\ar[ur]_{\exists!\widetilde{\phi}}&
}$$
\item[2)] We can generalize the Proposition \ref{prop 6}, i.e., if $M_1,\cdots,M_n$ are  $R$-module schemes and $r_1,\cdots,r_n$ are positive integers, then there is a multilinear morphism
$$\lambda_{1,\cdots,n}:M_1^{r_1}\times\cdots\times M_n^{r_n}\to\underset{R}{\bigwedge}^{r_1}M_1\otimes_R\cdots\otimes_R\underset{R}{\bigwedge}^{r_n}M_n$$
alternating in each $M_i^{r_i}$ such that the homomorphism
$$\Hom^R(\underset{R}{\bigwedge}^{r_1}M_1\otimes_R\cdots\otimes_R\underset{R}{\bigwedge}^{r_n}M_n,P)\to\Alt(M_1^{r_1}\times\cdots\times M_n^{r_n},P)$$
$$\phi\mapsto\phi\circ\lambda_{1,\cdots,n}$$is an isomorphism.
\item[3)] Let $\phi_i:M_i\to M$, $i=1,2$ be $R$-linear morphisms and $r_1,r_2$ two positive natural numbers. We have a morphism $\phi_1^{r_1}\times\phi_2^{r_2}:M_1^{r_1}\times M_2^{r_2}\to M^{r_1+r_2}$ whose composition with $\lambda:M^{r_1+r_2}\to\epR^{r_1+r_2}M$ lies in the module $\Alt^{R}(M_1^{r_1}\times M_2^{r_2},\epR^{r_1+r_2}M)$. Using the isomorphism of Proposition \ref{prop 6} with $P$ replaced by $\epR^{r_1+r_2}M$, we obtain an $R$-homomorphism $$\ep^{r_1}\phi_1\wedge\ep^{r_2}\phi_2:\epR^{r_2}M_1\otimes_R\epR^{r_2}M_2\to \epR^{r_1+r_2}M.$$
That is, we have the following commutative diagram:
\[\xymatrix{
M_1^{r_1}\times M_2^{r_2}\ar[d]_{\lambda_{1,2}}\ar[rrr]^{\phi_1^{r_1}\times\phi_2^{r_2}}&&&M^{r_1+r_2}\ar[d]^{\lambda}\\
\epR^{r_1}M_1\otimes_R\epR^{r_2}M_2\ar@{-->}[rrr]_{\ep^{r_1}\phi_1\wedge\ep^{r_2}\phi_2}&&&\epR^{r_1+r_2}M.}\] If $r_1$ (respectively $r_2$) is equal to $1$, we write $\phi_1\wedge\ep^{r_2}\phi_2$ instead of $\ep^{1}\phi_1\wedge\ep^{r_2}\phi_2$ \ (respectively $\ep^{r_1}\phi_1\wedge\phi_2$ instead of $\ep^{r_1}\phi_1\wedge\ep^{1}\phi_2$). 
\item[4)] Let $P$ be an $R$-module scheme over $S$. The homomorphism $\ep^{r_1}\phi_1\wedge\ep^{r_2}\phi_2$ induces an $R$-module homomorphism \[ \Hom^R(\epR^{r_1+r_2}M,P)\to \Hom^R(\epR^{r_1}M_1\otimes_R\epR^{r_2}M_2,P) \] that together with the isomorphisms of Proposition \ref{prop 6} and Definition \ref{def 7} $(iii)$ gives the following commutative diagram:
\[\xymatrix{
\Alt^R(M^{r_1+r_2},P)\ar[d]_{\lambda^*}^{\cong}\ar[rr]&&\Alt^R(M_1^{r_1}\times M_2^{r_2},P)\ar[d]^{\lambda_{1,2}^*}_{\cong}\\
\Hom^R(\epR^{r_1+r_2}M,P)\ar[rr]&&\Hom^R(\epR^{r_1}M_1\otimes_R\epR^{r_2}M_2,P).
}\]
\end{itemize}
\end{rem}

%LEMMA 5
\begin{lem}
\label{lem 5}
Let $\phi_i:M_i\to M,\ i=1,2$ be $R$-epimorphisms of $R$-module schemes over $S$ and $r_1, r_2$ two positive natural numbers. Then the morphism (cf. Remark \ref{rem 5} 3))
\[\ep^{r_1}\phi_1\wedge\ep^{r_2}\phi_2:\epR^{r_1}M_1\otimes_R\epR^{r_2}M_2\to \epR^{r_1+r_2}M\]
is an epimorphism. In particular, we have a canonical epimorphism
\[\epR^{r_1}M\otimes_R\epR^{r_2}M\onto \epR^{r_1+r_2}M.\]
\end{lem}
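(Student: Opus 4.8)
Write $f:=\ep^{r_1}\phi_1\wedge\ep^{r_2}\phi_2\colon \epR^{r_1}M_1\otimes_R\epR^{r_2}M_2\to\epR^{r_1+r_2}M$. The plan is to show that the fppf sheaf image of $f$ is all of $\epR^{r_1+r_2}M$. By construction (Remark \ref{rem 5}, 3)) the morphism $f$ fits into the commutative diagram expressing
\[ f\circ\lambda_{1,2}=\lambda\circ(\phi_1^{r_1}\times\phi_2^{r_2}), \]
where $\lambda\colon M^{r_1+r_2}\to\epR^{r_1+r_2}M$ is the universal alternating morphism and $\lambda_{1,2}\colon M_1^{r_1}\times M_2^{r_2}\to\epR^{r_1}M_1\otimes_R\epR^{r_2}M_2$ is the alternating morphism of Remark \ref{rem 5}, 1).

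First I would check that $\phi_1^{r_1}\times\phi_2^{r_2}\colon M_1^{r_1}\times M_2^{r_2}\to M^{r_1+r_2}$ is an epimorphism of fppf sheaves. Indeed it is the composite of the morphisms of the form $\Id\times\dots\times\phi_i\times\dots\times\Id$, each obtained from the epimorphism $\phi_i$ by base change; since epimorphisms of fppf sheaves are stable under base change and under composition, the composite is an epimorphism. Consequently, precomposing with $\phi_1^{r_1}\times\phi_2^{r_2}$ does not alter the image subsheaf, so the image of $\lambda$ equals the image of $\lambda\circ(\phi_1^{r_1}\times\phi_2^{r_2})=f\circ\lambda_{1,2}$, which is contained in $\image f\subseteq\epR^{r_1+r_2}M$. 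Hence $\lambda$ factors as $\lambda=\iota\circ\mu$, with $\iota\colon\image f\hookrightarrow\epR^{r_1+r_2}M$ the inclusion and $\mu\colon M^{r_1+r_2}\to\image f$ the induced morphism; as $\iota$ is a monomorphism, $\mu$ is again alternating and pseudo-$R$-multilinear.

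Then I would invoke the universal property of $\epR^{r_1+r_2}M$ (Definition \ref{def 7}, (iii)). Applied to $N=\image f$ it produces a unique homomorphism $h\colon\epR^{r_1+r_2}M\to\image f$ with $h\circ\lambda=\mu$. Then $\iota\circ h\circ\lambda=\iota\circ\mu=\lambda=\Id\circ\lambda$, and the uniqueness clause of the universal property, now applied to $N=\epR^{r_1+r_2}M$, forces $\iota\circ h=\Id_{\epR^{r_1+r_2}M}$. Thus the monomorphism $\iota$ is split and hence an isomorphism, i.e.\ $\image f=\epR^{r_1+r_2}M$, so $f$ is an epimorphism. Taking $M_1=M_2=M$ and $\phi_1=\phi_2=\Id_M$ gives the asserted canonical epimorphism $\epR^{r_1}M\otimes_R\epR^{r_2}M\onto\epR^{r_1+r_2}M$.

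The one point requiring care is the last step: applying Definition \ref{def 7}, (iii) to $N=\image f$ presupposes that the image $\image f$ is again a group scheme (equivalently, an $R$-module scheme). This is automatic in all the situations in which the lemma is used — e.g.\ for profinite group schemes over a field, or for finite flat group schemes over a base — and, following the standing conventions of the paper, we take such images to exist whenever we speak of them; the real content of the step is simply that the universal property of $\lambda$ forces the subgroup sheaf generated by the image of $\lambda$ to be all of $\epR^{r_1+r_2}M$. If one prefers to avoid any representability hypothesis, the same conclusion follows by working with the cokernel sheaf $C:=\epR^{r_1+r_2}M/\image f$: the composite $q\circ\lambda$ (with $q$ the projection) is alternating and pseudo-$R$-multilinear and vanishes because $\lambda$ factors through $\image f$, so the bijectivity of $\lambda^{*}$ gives $q=0$, whence $C=0$.
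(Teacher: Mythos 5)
Your proof is correct and rests on the same two ingredients as the paper's: that $\phi_1^{r_1}\times\phi_2^{r_2}$ is an epimorphism and the universal property of $\lambda$; your closing cokernel-sheaf variant is essentially the paper's argument, which simply observes that $\Alt^R(M^{r_1+r_2},P)\to\Alt^R(M_1^{r_1}\times M_2^{r_2},P)$ is injective for every $R$-module scheme $P$ and transports this through the commutative square of Remark \ref{rem 5} 4) to get injectivity of $\Hom^R(\epR^{r_1+r_2}M,P)\to\Hom^R(\epR^{r_1}M_1\otimes_R\epR^{r_2}M_2,P)$. This dual phrasing is preferable to your main route via $\image f$, since it never needs the image to be representable — the worry you rightly flag — so you should lead with the cokernel/injectivity argument rather than the splitting of $\iota$.
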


%PROOF OF LEMMA 5
\begin{proof}[\textsc{Proof}]
The morphism $ \phi_1^{r_1}\times \phi_2^{r_2}:M_1^{r_2}\times M_2^{r_2}\to M^{r_1+r_2}$ is an epimorphism. Therefore, for every $R$-module scheme $P$, the induced morphism \[\Alt^R(M^{r_1+r_2},P)\to \Alt^R(M_1^{r_2}\times M_2^{r_2},P)\]
is injective.\\

Using the diagram of Remark \ref{rem 5} 4), we conclude that the induced morphism \[ \Hom^R(\epR^{r_1+r_2}M,P)\to \Hom^R(\epR^{r_1}M_1\otimes_R\epR^{r_2}M_2,P) \] is injective for all $R$-module schemes $P$. This proves the first part of the statement. The second part follows from the first part by replacing $\phi_i$ with the identity morphism of $M$. 
\end{proof}

%NOTATION
\textbf{Notations.} Let $M',M''$ be sub-$R$-module schemes of $M$ and $P$ and $N$ $R$-module schemes. By $\Alt^R(M'^r\times M''^s\times P^t,N)$ we
mean the module of $R$-multilinear morphisms that are alternating in
$M'^r, M''^s$ and $(M'\cap M'')^{r+s}$ (as a submodule scheme of $M^{r+s})$) and in $P^{t}$. When we say that a
multilinear morphism $$M'^r\times M''^s\times P^t\to N$$ is
alternating, we mean that it belongs to the module $\Alt^R(M'^r\times
M''^s\times P^t,N)$. Likewise, we define the module
$$\Alt^R(M_1^{r_1}\times \cdots\times M_n^{r_n}\times P_1^{s_1}\times
\cdots\times P_m^{s_m},N)$$ with $M_i$ sub-$R$-module schemes of $M$ and
$P_j$'s arbitrary $R$-module schemes.

%LEMMA 1
\begin{lem}
\label{lem 1}
Let $\pi:M\onto M''$ be an epimorphism and let $\phi:M''^r\to H$ be an $R$-multilinear morphism such that the composition $\phi\circ \pi^r:M^r\to H$ is alternating. Then $\phi$ is alternating as well.
\end{lem}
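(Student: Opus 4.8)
The plan is to unwind the definition of ``alternating'' and reduce the claim to cancelling the epimorphism $\pi$ on the right. Recall (cf.\ Definitions \ref{def 5} and \ref{def 18}) that $\phi:M''^r\to H$ is alternating precisely when $\phi\circ\Delta^r_{ij}=0$ for every pair $1\leq i<j\leq r$, where $\Delta^r_{ij}:M''^{r-1}\to M''^r$ is the generalized diagonal equating the $i$-th and $j$-th components. So I would fix such a pair $i<j$, set $\psi:=\phi\circ\Delta^r_{ij}:M''^{r-1}\to H$, and aim to show $\psi=0$.

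The first step is to exploit the naturality of the generalized diagonals: since $\Delta^r_{ij}$ is given coordinatewise and the formation of the $r$-fold product is functorial, one has $\pi^r\circ\Delta^r_{ij}=\Delta^r_{ij}\circ\pi^{r-1}$ as morphisms $M^{r-1}\to M''^r$ (both send a tuple to its image under $\pi$ with the $i$-th entry repeated in position $j$). Composing on the left with $\phi$ and using the hypothesis that $\phi\circ\pi^r$ is alternating, hence annihilated by $\Delta^r_{ij}$, I would then obtain
\[
\psi\circ\pi^{r-1}=\phi\circ\Delta^r_{ij}\circ\pi^{r-1}=\phi\circ\pi^r\circ\Delta^r_{ij}=(\phi\circ\pi^r)\circ\Delta^r_{ij}=0.
\]

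The second step is to cancel $\pi^{r-1}$ on the right. Since $\pi$ is an epimorphism of fppf sheaves and epimorphisms of sheaves are stable under base change, the morphism $\pi^{r-1}=\pi\times\dotsb\times\pi$, being a composite of base changes of $\pi$, is again an epimorphism; as $H$ has a zero section, the equality $\psi\circ\pi^{r-1}=0=0\circ\pi^{r-1}$ then forces $\psi=0$. Letting $i<j$ range over all pairs gives that $\phi$ is alternating. The argument is essentially formal, and I do not anticipate a genuine obstacle; the only point that deserves a line of justification is the stability of epimorphisms of sheaves under finite products, which follows immediately from their stability under base change.
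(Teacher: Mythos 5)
Your proof is correct and is essentially the paper's own argument: both rest on the compatibility $\pi^r\circ\Delta^r_{ij}=\Delta^r_{ij}\circ\pi^{r-1}$ (the paper phrases it via the induced epimorphism $\Delta^r_{ij}\pi$ between the diagonal subobjects, you via the diagonal embeddings themselves) followed by cancellation of an epimorphism on the right. No gap to report.
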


%PROOF OF LEMMA 1
\begin{proof}[\textsc{Proof}]
The morphism $\pi$ induces a morphism $\Delta^r \pi:\Delta^r M\to\Delta^r M''$ between diagonals and since the morphism $\pi$ is an epimorphism, the morphism $\Delta^r\pi$ is an epimorphism too.

Similarly, we have an induced epimorphism between $\Delta_{ij}^rM\subset M^r$ and $\Delta_{ij}^r M''\subset M''^r$ for all $1\leq i < j\leq r$, which we denote by $\Delta_{ij}^r \pi$. In order to show that $\phi$ is alternating, we must show that for any $1\leq i<j\leq r$ the composition $\Delta_{ij}^rM''\into M''^r\arrover{\phi}N$ is trivial. But we have a commutative diagram
$$\xymatrix{
\Delta^r_{ij}M\ar[r]^{\Delta\pi}\ar@{^{
(}->}[d]_{\iota}&\Delta^r_{ij}M''\ar@{^{ (}->}[d]^{\iota''}&\\
M^r\ar[r]_{\pi^r}&M''^r\ar[r]_{\phi}&N.}$$
and since the composite $\phi\circ\pi^r$ is alternating, the composition $\phi\circ\pi^r\circ\iota$ is trivial, and so is the composition $\phi\circ\iota''\circ\Delta\pi$. The morphism $\Delta\pi$ is an epimorphism, which implies that $\phi\circ\iota''$ is trivial.
\end{proof}

%REMARK 6
\begin{rem}
\label{rem 6}
Let $M'$ be a sub-$R$-module scheme of $M$ and $\pi:M\onto M''$ an epimorphism. It can be shown in the same fashion that if the composition of a multilinear morphism $M''^r\times M^s\times M'^t\to N$ with the epimorphism $$\pi^r\times\Id_{M's}\times\Id_{M'^t}:M^r\times M^s\times M'^t\to M''^r\times M^s\times M'^t$$ is alternating (in the sense of the Notations above), then this multilinear morphism is also alternating.
\end{rem}

%LEMMA 2
\begin{lem}
\label{lem 2} Let $M_1\cdots,M_r$ be $R$-module schemes and $\psi:M_1\times\cdots\times M_r\to N$ an $R$-multilinear morphism. Assume that for some $1\leq i\leq r$ we have an exact sequence $M'_i\arrover{\iota} M_i\arrover{\pi} M''_i\to 0$. If the restriction $\psi|_{M_1\times \cdots\times M'_i\times\cdots\times M_r}$ is zero, then there is a unique multilinear morphism $$\psi': M_1\times \cdots\times M''_i\times\cdots\times M_r\to N$$ such that $\psi=\psi'\circ(\Id_{M_1}\times\cdots\times\pi\times \cdots\times\Id_{M_r})$ with $\pi$ at the $i^{\text{th}}$ place.
\end{lem}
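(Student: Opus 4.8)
The plan is to deduce the statement formally from the adjunction isomorphism of Lemma~\ref{lem 19} together with the fact that $\pi$ presents $M''_i$ as the cokernel of $\iota$, working throughout at the level of presheaves of $R$-modules so that the inner-Hom functors need not be representable. First I would record the exactness I need. By our conventions, exactness of $M'_i\arrover{\iota}M_i\arrover{\pi}M''_i\to 0$ means exactness as a sequence of fppf sheaves of $R$-modules, so $\pi$ is an epimorphism of sheaves and $M''_i\cong\cokernel(\iota)$ in that abelian category. Since exact sequences of sheaves are stable under base change, for every $S$-scheme $T$ the base-changed sequence $(M'_i)_T\to(M_i)_T\to(M''_i)_T\to 0$ is again right exact, and the universal property of the cokernel over $T$ shows that $\pi^*\colon\Hom^R_T((M''_i)_T,N_T)\to\Hom^R_T((M_i)_T,N_T)$ is injective with image the kernel of $\iota^*$. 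Letting $T$ vary, $\pi^*\colon\innHom^R(M''_i,N)\to\innHom^R(M_i,N)$ is a monomorphism of presheaves of $R$-modules identifying $\innHom^R(M''_i,N)$ with the subpresheaf $\kernel\big(\iota^*\colon\innHom^R(M_i,N)\to\innHom^R(M'_i,N)\big)$.

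Next I would pull the $i$-th factor out of $\psi$. After reordering the factors so that $M_i$ occupies the last position, Lemma~\ref{lem 19} provides a natural isomorphism $\Mult^R(M_1\times\dots\times M_r,N)\cong\Mult^R(\prod_{j\neq i}M_j,\innHom^R(M_i,N))$; let $\bar\psi$ denote the $R$-multilinear morphism corresponding to $\psi$. By naturality of this isomorphism in the pulled-out variable, the composite $\psi\circ(\Id_{M_1}\times\dots\times\iota\times\dots\times\Id_{M_r})$ corresponds to $\iota^*\circ\bar\psi$, so the hypothesis that $\psi$ restricts to $0$ on $M_1\times\dots\times M'_i\times\dots\times M_r$ is equivalent to $\iota^*\circ\bar\psi=0$. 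By the previous paragraph this says exactly that $\bar\psi$ factors through the subpresheaf $\innHom^R(M''_i,N)\hookrightarrow\innHom^R(M_i,N)$, say $\bar\psi=\pi^*\circ\bar\psi'$; since $\pi^*$ is a monomorphism, this $R$-multilinear morphism $\bar\psi'\colon\prod_{j\neq i}M_j\to\innHom^R(M''_i,N)$ is unique, and it inherits $R$-multilinearity from $\bar\psi$.

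Finally I would push $\bar\psi'$ back through Lemma~\ref{lem 19} to obtain the desired $R$-multilinear morphism $\psi'\colon M_1\times\dots\times M''_i\times\dots\times M_r\to N$. Naturality of the Lemma~\ref{lem 19} isomorphism with respect to the morphism $\pi\colon M_i\to M''_i$ in the pulled-out slot yields a commuting square relating precomposition with $\Id_{M_1}\times\dots\times\pi\times\dots\times\Id_{M_r}$ to postcomposition with $\pi^*$; chasing $\psi'$ through it shows that $\psi'\circ(\Id_{M_1}\times\dots\times\pi\times\dots\times\Id_{M_r})$ and $\psi$ both correspond to $\bar\psi$, hence are equal. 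Uniqueness of $\psi'$ I would obtain either from the uniqueness of $\bar\psi'$ above, or directly from the fact that $\Id_{M_1}\times\dots\times\pi\times\dots\times\Id_{M_r}$ is an epimorphism of sheaves (a product of epimorphisms), so that precomposing with it is injective on morphisms of sheaves. I expect the only mildly delicate point to be the bookkeeping of these naturality statements for the adjunction isomorphism; once those are granted, the argument uses nothing beyond Lemma~\ref{lem 19} and the right-exactness of the given sequence, and is otherwise purely formal.
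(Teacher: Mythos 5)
Your proof is correct and follows essentially the same route as the paper: both curry out the $i$-th variable via the adjunction of Chapter 2 and then conclude by left exactness of a Hom functor applied to the given right-exact sequence. The only difference is a transposition — the paper uses Proposition \ref{prop 2} to identify $\Mult^R(M_1\times\cdots\times M_r,N)$ with $\Hom^R(M_i,\innMult^R(M_1\times\cdots\times\check{M_i}\times\cdots\times M_r,N))$ and applies left exactness of the outer $\Hom^R(-,P)$, whereas you use Lemma \ref{lem 19} to land in $\Mult^R(\prod_{j\neq i}M_j,\innHom^R(M_i,N))$ and identify $\innHom^R(M''_i,N)$ with $\kernel(\iota^*)$ at the presheaf level — but the underlying argument is the same.
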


%PROOF OF LEMMA 2
\begin{proof}[\textsc{Proof}]
By functoriality of the isomorphism in Proposition \ref{prop 2} we have a commutative diagram (we omit the superscript $R$ to avoid heavy notations, and so the homomorphisms and multilinear morphisms are all $R$-linear or $R$-multilinear):
$$\xymatrix{
\Mult(M_1\times\cdots\times M_r,N)\ar[r]^{\cong\qquad\qquad\quad}\ar[d]^{\overline{\pi}}&\Hom(M''_i,\innMult(M_1\times\cdots\times\check{M_i}\times\cdots\times M_r,N))\ar[d]^{\pi^*}\\
\Mult(M_1\times\cdots\times M_r,N)\ar[d]^{\overline{\iota}}\ar[r]^{\cong\qquad\qquad\quad}&\Hom(M_i,\innMult(M_1\times\cdots\times\check{M_i}\times\cdots\times M_r,N))\ar[d]^{\iota^*}\\
\Mult(M_1\times\cdots\times M_r,N)\ar[r]^{\cong\qquad\qquad\quad}&\Hom(M'_i,\innMult(M_1\times\cdots\times\check{M_i}\times\cdots\times M_r,N))}$$
where the indicated morphisms are the obvious ones and $\check{M_i}$ means that this factor is omitted. The right column is exact and $\pi^*$ is injective, because the sequence $0\to M'_i\arrover{\iota} M_i\arrover{\pi} M''_i\to 0$ is exact and the functor $\Hom^R(\_ ,P)$ is left exact for any $R$-module scheme $P$. Therefore, the left column is exact too and $\overline{\pi}$ is injective. The morphism $\psi$ is an element of $\Mult^R(M_1\times\cdots\times M_r,N)$ which goes to zero under the morphism $\overline{\iota}$ (restriction morphism). By exactness, there is a unique multilinear morphism $\psi'\in\Mult^R(M_1\times\cdots\times M''_i\times\cdots\times M_r,N)$ which is mapped to $\psi$ under $\overline{\pi}$. This proves the lemma.
\end{proof}

%LEMMA 3
\begin{lem}
\label{lem 3} Let $M_1\arrover{\iota} M_2\arrover{\pi} M_3\to 0$ be an exact sequence of $R$-module schemes over $S$ and $n$ a positive natural number. Then the sequence $$0\to\Alt^R(M_3^n,N)\to\Alt^R(M_2^n,N)\to \Alt^R(M_1\times M_2^{n-1},N)$$
is exact.
\end{lem}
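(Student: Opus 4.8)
The plan is to reduce the statement to the already-established left-exactness of the $\Hom^R$-functors and to Lemma~\ref{lem 2}. The three things to check are: (1) the map $\Alt^R(M_3^n,N)\to\Alt^R(M_2^n,N)$ induced by $\pi$ is injective; (2) its image lands inside the kernel of the restriction map $\Alt^R(M_2^n,N)\to\Alt^R(M_1\times M_2^{n-1},N)$ induced by $\iota\times\Id_{M_2}\times\dots\times\Id_{M_2}$; (3) anything in $\Alt^R(M_2^n,N)$ killed by that restriction comes from $\Alt^R(M_3^n,N)$.

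First I would handle injectivity. Since $\pi:M_2\to M_3$ is an epimorphism, so is $\pi^n:M_2^n\to M_3^n$, hence precomposition with $\pi^n$ is injective already on the level of all morphisms of fppf sheaves, and in particular on $R$-multilinear alternating ones; this gives (1). Statement (2) is immediate: the composite $M_1\times M_2^{n-1}\xrightarrow{\iota\times\Id}M_2^n\xrightarrow{\pi^n}M_3^n$ factors through $(\pi\circ\iota)\times\Id=0$ in the first coordinate, so for $\phi\in\Alt^R(M_3^n,N)$ the restriction of $\phi\circ\pi^n$ to $M_1\times M_2^{n-1}$ is the zero morphism.

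The heart of the matter is (3), and it is where I expect to spend the real effort. Let $\psi\in\Alt^R(M_2^n,N)$ with $\psi|_{M_1\times M_2^{n-1}}=0$, i.e. the restriction to the first factor along $\iota$ vanishes. Apply Lemma~\ref{lem 2} with $i=1$: since $M_1\xrightarrow{\iota}M_2\xrightarrow{\pi}M_3\to0$ is exact and $\psi$ restricted to $M_1\times M_2^{n-1}$ is zero, there is a unique $R$-multilinear $\psi':M_3\times M_2^{n-1}\to N$ with $\psi=\psi'\circ(\pi\times\Id_{M_2}^{n-1})$. Now I must upgrade $\psi'$ to an alternating morphism on $M_3^n$. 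The idea is to repeat the descent in each of the remaining coordinates: using that $\psi$ is alternating in $M_2^n$, one shows that for $j=2,\dots,n$ the restriction of $\psi$ along $\iota$ in the $j$-th coordinate also vanishes — here one uses the diagonal-vanishing $\psi\circ\Delta^n_{1j}=0$ together with the fact that $\iota(M_1)$ maps to $M_3$ trivially, so an element of $\iota(M_1)$ in slot $j$ can be "moved" to slot $1$ up to the already-known vanishing — and then Lemma~\ref{lem 2} applied successively (and its uniqueness clause) produces a well-defined $R$-multilinear $\bar\psi:M_3^n\to N$ with $\psi=\bar\psi\circ\pi^n$. Finally, since $\pi^n$ is an epimorphism and $\psi=\bar\psi\circ\pi^n$ is alternating, Lemma~\ref{lem 1} (applied coordinate-pair by coordinate-pair, i.e. the obvious iteration of it, or Remark~\ref{rem 6}) forces $\bar\psi$ to be alternating, i.e. $\bar\psi\in\Alt^R(M_3^n,N)$. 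This exhibits $\psi$ as the image of $\bar\psi$, completing exactness at the middle term.

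The main obstacle is the bookkeeping in the iterated descent of step (3): one must be careful that after descending in the first coordinate the hypothesis "$\psi$ vanishes on $\iota$ in the $j$-th coordinate" genuinely transfers to $\psi'$ on $M_3\times M_2^{n-1}$ (it does, because $\pi\times\Id$ is an epimorphism, so vanishing of a restriction of $\psi'$ is detected after pulling back along it), and that the uniqueness in Lemma~\ref{lem 2} makes all the intermediate descended morphisms compatible so that the final $\bar\psi$ is unambiguous. Everything else is formal manipulation with the representable-functor / universal-property machinery already set up in Lemmas~\ref{lem 19}--\ref{lem 2} and Proposition~\ref{prop 2}.
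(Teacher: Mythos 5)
Your proposal is correct and follows essentially the same route as the paper: injectivity from $\pi^n$ being an epimorphism, then a coordinate-by-coordinate descent via Lemma \ref{lem 2}, where the vanishing on $\iota(M_1)$ in slot $1$ propagates to every slot by alternation and transfers through each descent step because $\pi^{i}\times\Id$ is an epimorphism, and finally Lemma \ref{lem 1} (Remark \ref{rem 6}) to see the descended morphism is alternating. The bookkeeping you flag as the main obstacle is exactly the diagram chase the paper carries out in its induction, so nothing further is needed.
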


%PROOF OF LEMMA 3
\begin{proof}[\textsc{Proof}]
Since $ \pi:M_2\to M_3 $ is an epimorphism, the morphism $ \pi^n:M_2^n\to M_3^n$ is also an epimorphism and so the induced morphism $ \Alt^R(M_3^n,N)\to \Alt^R(M_2^n,N) $ is injective.\\

Let $\phi:M_2^n\to N$ be an alternating morphism and assume that the restriction $\phi|_{M_1\times M_2^{n-1}}$ is zero. We will show that
there is a multilinear morphism $\phi':M_3^n\to N$ such that $\phi=\phi'\circ\pi^n$. The result will then follow, since by Lemma \ref{lem 2}, $\phi'$ is also alternating, and it is thus inside the $R$-module $\Alt^R(M_3^n,N)$. This will prove the exactness at the middle of the sequence (the fact that the composite $$\Alt^R(M_3^n,N)\to\Alt^R(M_2^n,N)\to \Alt^R(M_1\times M_2^{n-1},N)$$ is zero follows directly from the fact that the composition $ \pi\circ \iota $ is zero) and the proof will be achieved.\\

Note that since $\phi$ is alternating and the restriction $\phi|_{M_1\times M_2^{n-1}}$ is zero, the restrictions $\phi|_{M_2^i\times M_1\times M_2^{n-i-1}}$ are zero for any $1\leq i\leq n-1$. Set $\phi_0=\phi$. We show by induction on $0\leq i\leq n$ that there is a multilinear morphism $\phi_i:M_3^i\times M_2^{n-i}\to N$ such that $\phi=\phi_i\circ(\pi^i\times\Id_{M_2^{n-i}})$. This is clear for $i=0$, so let $i>0$ and assume that we have $\phi_{i-1}$ with the stated property. Consider the following commutative diagram
$$\xymatrix{
M_2^{i-1}\times M_1\times M_2^{n-i}\ar[d]_{\widehat{\pi}}\ar@{^{(}->}[r]^{\rho}&M_2^{i-1}\times M_2\times M_2^{n-i}\ar[d]^{\widetilde{\pi}}\ar[dr]^{\phi}&\\
M_3^{i-1}\times M_1\times M_2^{n-i}\ar@{^{(}->}[r]^{\rho'}&M_3^{i-1}\times M_2\times M_2^{n-i}\ar[r]^{\qquad \quad\phi_{i-1}}&N}$$
where
$\widehat{\pi}=\pi^{i-1}\times\Id_{M_1}\times\Id_{M_2^{n-i}}$,
$\widetilde{\pi}=\pi^{i-1}\times\Id_{M_2}\times \Id_{M_2^{n-i}}$ and $\rho,\rho'$ are the inclusion morphisms. We know that $0=\phi\circ\rho=\phi_{i-1}\circ\widetilde{\pi}\circ \rho$, which implies that $\phi_{i-1}\circ\rho'\circ\widehat{\pi}=0$. The morphism $\widehat{\pi}$ is an epimorphism and so
$\phi_{i-1}\circ\rho'$, the restriction of $\phi_{i-1}$ to $ M_3^{i-1}\times M_1\times M_2^{n-i} $, is zero. We can therefore apply Lemma \ref{lem 2} and obtain a multilinear morphism $\phi_i:M_3^i\times M_2^{n-i}\to N$ such that $$\phi_i=\phi_{i-1}\circ(\Id_{M_3^{i-1}}\times\pi\times\Id_{M_2^{n-i}}).$$ We have thus $\phi=\phi_i\circ(\pi^i\times \Id_{M_2^{n-i}})$.\\

Now put $i=n$, the statement says that there is a multilinear morphism $\phi_n:M_3^n\to N$ with $\phi=\phi_n\circ\pi^n$. This $\phi_n$ is the required $\phi'$.
\end{proof}

For the following lemma, which is in some sense the ``linearized'' version of the previous one, we use notations introduced in Remark \ref{rem 5} $3)$.

%LEMMA 4
\begin{lem}
\label{lem 4} 
Let $ M_1\arrover{\alpha} M_2\arrover{\beta} M_3\to 0 $ be an exact sequence of $R$-module schemes and $ n $ a positive natural number. Then the sequence
\[ M_1\otimes_R\underset{R}{\bigwedge}^{n-1}M_2\arrover{\alpha\wedge\ep^{n-1}\Id_{M_2}} \underset{R}{\bigwedge}^nM_2\arrover{\bigwedge^n\beta}\underset{R}{\bigwedge}^nM_3\to 0 \] is exact.
\end{lem}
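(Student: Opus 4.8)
The plan is to establish this as the ``linearized'' counterpart of Lemma \ref{lem 3}, by applying the contravariant functor $\Hom^R(\_,N)$ and transporting everything through the universal properties of exterior powers and tensor products. First I would dispose of the case $n=1$, where (reading $\epR^0M_2$ as the unit object, so that $M_1\otimes_R\epR^0M_2=M_1$) the assertion is simply the hypothesis; so assume $n\geq 2$. Next I would check that $\epR^n\beta\circ(\alpha\wedge\ep^{n-1}\Id_{M_2})=0$: composing on the right with the universal alternating morphism $\lambda\colon M_1\times M_2^{n-1}\to M_1\otimes_R\epR^{n-1}M_2$ and using the defining squares of Remarks \ref{rem 5}(3) and \ref{rem 7}(3), this composite becomes $\lambda_{M_3}\circ\beta^n\circ(\alpha\times\Id_{M_2^{n-1}})=\lambda_{M_3}\circ\big((\beta\circ\alpha)\times\beta^{n-1}\big)$, which is zero because $\beta\circ\alpha=0$ and $\lambda_{M_3}$ is $R$-multilinear; since $\lambda$ is an epimorphism onto the tensor product (by its universal property), the composite itself vanishes.

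Then, for an arbitrary $R$-module scheme $N$, I would apply $\Hom^R(\_,N)$ to the sequence and identify the result with a known exact sequence. The $R$-linear form of the universal property of exterior powers (Definition \ref{def 7}(iii) together with Remark \ref{rem 8}, exactly as used in the proofs of Propositions \ref{prop 5} and \ref{prop 6}) gives $\Hom^R(\epR^nM_i,N)\cong\Alt^R(M_i^n,N)$ for $i=2,3$, and Proposition \ref{prop 6} applied with $r_1=1$ (so that $\epR^1M_1=M_1$) and $r_2=n-1$ gives $\Hom^R(M_1\otimes_R\epR^{n-1}M_2,N)\cong\Alt^R(M_1\times M_2^{n-1},N)$. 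Chasing the commutative squares of Remarks \ref{rem 5} and \ref{rem 7}, one identifies $(\epR^n\beta)^*$ with precomposition by $\beta^n$ and $(\alpha\wedge\ep^{n-1}\Id_{M_2})^*$ with precomposition by $\alpha\times\Id_{M_2^{n-1}}$ — that is, with restriction along $M_1\times M_2^{n-1}\hookrightarrow M_2^n$. Hence $\Hom^R(\_,N)$ turns our sequence into \[0\to\Alt^R(M_3^n,N)\to\Alt^R(M_2^n,N)\to\Alt^R(M_1\times M_2^{n-1},N),\] which is exact by Lemma \ref{lem 3} (its leading $0\to$ encoding the surjectivity of $\epR^n\beta$).

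Finally, I would conclude by the principle already invoked in the proof of Lemma \ref{lem 5}: a sequence $A\to B\to C\to 0$ of $R$-module schemes is exact if and only if $0\to\Hom^R(C,N)\to\Hom^R(B,N)\to\Hom^R(A,N)$ is exact for every $R$-module scheme $N$. Since we have verified the latter for $A=M_1\otimes_R\epR^{n-1}M_2$, $B=\epR^nM_2$, $C=\epR^nM_3$, the sequence in the statement is exact. The step I expect to be the real obstacle is the identification of the second arrow $(\alpha\wedge\ep^{n-1}\Id_{M_2})^*$ under the isomorphism of Proposition \ref{prop 6}: this requires carefully unwinding the construction of $\alpha\wedge\ep^{n-1}\Id_{M_2}$ in Remark \ref{rem 5}(3) and the compatibility of the universal morphism $\lambda$ with $\lambda_{M_2}$ and $\lambda_{M_3}$, after which everything is formal; the concluding descent from $\Hom^R(\_,N)$-exactness back to exactness of the original sequence is routine but should be stated explicitly.
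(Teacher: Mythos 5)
Your proposal is correct and follows essentially the same route as the paper: apply $\Hom^R(\_,N)$ for arbitrary $N$, identify the resulting sequence with $0\to\Alt^R(M_3^n,N)\to\Alt^R(M_2^n,N)\to\Alt^R(M_1\times M_2^{n-1},N)$ via the universal property of $\epR^n$ and Proposition \ref{prop 6}, check commutativity of the two squares using Remarks \ref{rem 7} 3) and \ref{rem 5} 4), and invoke Lemma \ref{lem 3}. The extra steps you flag (the separate $n=1$ case, the direct verification that the composite vanishes, and the explicit statement of the duality principle for exactness) are harmless refinements of what the paper leaves implicit.
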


%PROOF OF LEMMA 4
\begin{proof}[\textsc{Proof}]
The statement follows essentially from Lemma \ref{lem 3} and Proposition \ref{prop 6}. Indeed, the exactness of this sequence is equivalent to the exactness of the sequence obtained by applying the contravariant functor $\Hom^R(-,N)$ on it for all $R$-module schemes $N$ over $S$. So, let $N$ be an arbitrary $R$-module scheme over $S$ and consider the following diagram:
\[\xymatrix{
0\ar[r]&\Hom(\epR^{n}M_3,N)\ar[d]_{\cong}^{\lambda_3^*}\ar[r]&\Hom(\epR^{n}M_2,N)\ar[d]_{\cong}^{\lambda_2^*}\ar[r]&\Hom(M_1\otimes_R\underset{R}{\bigwedge}^{n-1}M_2,N)\ar[d]_{\cong}^{\lambda_{1,2}^*}\\
0\ar[r]&\Alt^R(M_3^{n},N)\ar[r]&\Alt^R(M_2^{n},N)\ar[r]&\Alt^R(M_1\times M_2^{n-1},N)
}\]where the first two vertical morphisms, i.e., $\lambda_2^*,\lambda_3^*$ are given by the universal property of the exterior powers and the last one by Proposition \ref{prop 6}. The commutativity of the first square follows from Remark \ref{rem 7} 3) and that of the second square from Remark \ref{rem 5} 4). The second row is exact by Lemma \ref{lem 3}. The exactness of the first row follows immediately and this achieves the proof.
\end{proof}

%THEOREM 1
\begin{thm}
\label{thm 1} 
Assume that $0\to M_1\arrover{\iota} M_2\arrover{\pi}
M_3\to 0$ is a short exact sequence of $R$-module schemes over $S$.
Let $n_2$ be a non-negative integer and write $n_2=n_1+n_3$ for non-negative integers $n_1$ and $n_3$. Consider the diagram
$$\xymatrix{
\Alt^R(M_2^{n_2},N)\ar[r]^{\rho\qquad}&\Alt^R(M_1^{n_1}\times M_2^{n_3},N)\\
&\Alt^R(M_1^{n_1}\times M_3^{n_3},N)\ar@{^{ (}->}[u]_{\pi^*}}$$ where
$\rho$ is the restriction morphism.
\begin{itemize}
\item[\emph{(a)}] If $\underset{R}{\bigwedge}^{n_3+1}M_3=0$, then $\rho$ is injective.
\item[\emph{(b)}] If $\underset{R}{\bigwedge}^{n_1+1}M_1=0$, then $\rho$ factors through $\pi^*$.
\item[\emph{(c)}] If both conditions hold, then there is a natural epimorphism
$$\zeta:\underset{R}{\bigwedge}^{n_1}M_1\otimes_R\underset{R}{\bigwedge}^{n_3}M_3\onto\underset{R}{\bigwedge}^{n_2}M_2.$$
\item[\emph{(d)}] If furthermore the sequence is split, then the epimorphism
$\zeta$ is an isomorphism.
\end{itemize}
\end{thm}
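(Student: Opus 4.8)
The strategy is to rephrase each assertion in terms of the $R$-modules $\Alt^R(-,N)$, using the universal property of exterior powers (Definition~\ref{def 7}(iii) and its $R$-linear version, cf. Remark~\ref{rem 28}) together with Proposition~\ref{prop 6} to identify $\Alt^R(M_2^{n_2},N)\cong\Hom^R(\epR^{n_2}M_2,N)$ and $\Alt^R(M_1^{n_1}\times M_3^{n_3},N)\cong\Hom^R(\epR^{n_1}M_1\otimes_R\epR^{n_3}M_3,N)$, and then to feed the exactness lemmas of this section into the picture. For \emph{(a)} I would iterate Lemma~\ref{lem 3} (together with the evident variant allowing the distinguished $M_1$-slot to sit anywhere among the $M_2$-slots, proved the same way via Lemmas~\ref{lem 1} and~\ref{lem 2}): the successive kernels of the restrictions $\Alt^R(M_2^{n_2},N)\to\Alt^R(M_1^{j}\times M_2^{n_2-j},N)$ are controlled, up to the mixed-alternating bookkeeping, by modules of alternating morphisms out of products involving $M_3^{m}$ with $m\ge n_2-j+1$. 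For $j\le n_1$ one has $m\ge n_3+1$, so by the hypothesis $\epR^{n_3+1}M_3=0$, Proposition~\ref{prop 5} and Proposition~\ref{prop 6} each such module is of the form $\Hom^R(\epR^{m}M_3\otimes_R(\cdots),N)=0$; hence the kernel of $\rho$ is zero.

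For \emph{(b)}, given $\phi\in\Alt^R(M_2^{n_2},N)$ I would factor $\rho(\phi)=\phi\circ(\iota^{n_1}\times\Id^{n_3})$ through $\Id^{n_1}\times\pi^{n_3}$ one $M_2$-slot at a time. The point at each step is that the morphism obtained so far, restricted so as to place an element of $M_1$ in the next free $M_2$-slot, equals (after composing with the surjection $\pi$ on the slots already converted) a restriction of $\phi$ to a configuration with at least $n_1+1$ arguments in $\iota(M_1)$; being alternating in those slots it factors through $\epR^{n_1+1}M_1\otimes_R(\cdots)=0$, using $\epR^{n_1+1}M_1=0$ and Propositions~\ref{prop 5} and~\ref{prop 6}, and hence vanishes. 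Lemma~\ref{lem 2} then produces the factorization through $\pi$ in that slot; after $n_3$ steps one obtains $\psi$ with $\rho(\phi)=\pi^*\psi$, and Lemma~\ref{lem 1} together with Remark~\ref{rem 6} show that $\psi$ lies in $\Alt^R(M_1^{n_1}\times M_3^{n_3},N)$. Thus $\rho$ factors through $\pi^*$.

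For \emph{(c)}, parts (a) and (b) give, for every $R$-module scheme $N$, a natural injection $\bar\rho_N\colon\Hom^R(\epR^{n_2}M_2,N)\hookrightarrow\Hom^R(\epR^{n_1}M_1\otimes_R\epR^{n_3}M_3,N)$; since these functors are representable (Theorem~\ref{thm41}), Yoneda yields a morphism $\zeta\colon\epR^{n_1}M_1\otimes_R\epR^{n_3}M_3\to\epR^{n_2}M_2$ with $\bar\rho_N=(-)\circ\zeta$, and injectivity of $(-)\circ\zeta$ for all $N$ is exactly the statement that $\zeta$ is an epimorphism. For \emph{(d)}, fix an $R$-linear retraction $p\colon M_2\to M_1$ of $\iota$ (its existence is equivalent to the splitting); it suffices to show $\bar\rho_N$ is \emph{surjective}, for then it is bijective for all $N$ and $\zeta$ is an isomorphism. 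Given $\psi\in\Alt^R(M_1^{n_1}\times M_3^{n_3},N)$ I would set
\[
\widetilde\psi(x_1,\dots,x_{n_2})=\sum_{\tau}\sgn(\tau)\,\psi\big(p(x_{\tau(1)}),\dots,p(x_{\tau(n_1)}),\pi(x_{\tau(n_1+1)}),\dots,\pi(x_{\tau(n_2)})\big),
\]
the sum running over the $(n_1,n_3)$-shuffles $\tau$ (written formally through the universal alternating morphisms), and check that $\widetilde\psi$ is $R$-multilinear and alternating, and that restricting its first $n_1$ variables along $\iota$ annihilates every summand except $\tau=\Id$, so that $\rho(\widetilde\psi)=\pi^*\psi$ and hence $\bar\rho_N(\widetilde\psi)=\psi$ (as $\pi^*$ is injective).

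I expect the main obstacle to be the combinatorial verification in \emph{(d)}: that $\widetilde\psi$ is genuinely alternating on all of $M_2^{n_2}$ — the terms with a repeated argument inside one block vanish because $\psi$ is alternating there, while the mixed terms cancel in pairs under transposing the two shuffle-positions — and that it restricts correctly. Everything else is a fairly mechanical combination of Lemmas~\ref{lem 1}--\ref{lem 4} and Propositions~\ref{prop 5} and~\ref{prop 6} with a Yoneda argument; a secondary (but routine) point is making the iteration of Lemma~\ref{lem 3} in \emph{(a)} fully precise, keeping track of which slot carries which module.
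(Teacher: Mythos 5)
Your proposal is correct and follows essentially the same route as the paper's proof: part (a) is an iteration of Lemma~\ref{lem 3} combined with the vanishing supplied by Propositions~\ref{prop 5} and~\ref{prop 6}, part (b) is the same slot-by-slot factorization through $\pi$ via Lemma~\ref{lem 2}, part (c) is the same Yoneda argument, and your shuffle formula $\widetilde\psi$ in part (d) is precisely the paper's map $\omega^*$ built from the retraction and the section. The only cosmetic difference is that in (a) you track successive kernels of the restrictions directly, whereas the paper packages the same induction as an injection into the curried modules $\Alt^R(M_2^{n_3},\innMult^R(M_1^{n_1},N))$ via Proposition~\ref{prop 3}.
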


%PROOF OF THEOREM 1
\begin{proof}[\textsc{Proof}]
If $n_2=0$ then $n_1=0=n_3$ and all statements are trivially true, so
assume $n_2>0$. We prove each point of the proposition separately.
\begin{itemize}
\item[(a)] Fix $n_2$. We show by induction on $ n_1 $, with $0\leq n_1\leq n_2$, that the restriction morphism gives an injective morphism
$$\Alt^R(M_2^{n_2},N)\into \Alt^R(M_2^{n_3},\innMult^R(M_1^{n_1},N)).$$

If $n_1=0$ then $n_3=n_2$, and $\rho$ is the identity morphism, so there is
nothing to show. So assume that $0<n_1\leq n_2$ and that the statement
is true for $n_1-1$ and $n_3+1$ in place of $n_1$ and $n_3$. Then
$\epR^{n_3+1}M_3=0$ implies $\epR^{n_3+2}M_3=0$ by proposition
\ref{prop 5}; so by the induction hypothesis we have an injection
$$\Alt^R(M_2^{n_2},N)\into \Alt^R(M_2^{n_3+1},\innMult^R(M_1^{n_1-1},N)).$$ Since by hypothesis we have
$\epR^{n_3+1}M_3=0$ we can use Lemma \ref{lem 3} (note that $\epR^{n_3+1}M_3=0$ implies that $ \Alt^R(M_3^{n_3+1},P)=0 $ for every $R$-module scheme $P$), and we have thus
an injection
$$\Alt^R(M_2^{n_3+1},\innMult^R(M_1^{n_1-1},N))\into\Alt^R(M_2^{n_3}\times
M_1,\innMult^R(M_1^{n_1-1},N)).$$ The latter $R$-module is inside the $R$-module
$$\Alt^R(M_2^{n_3},\innHom^R(M_1,\innMult^R(M_1^{n_1-1},N))).$$ By proposition
\ref{prop 3}, $\innHom^R(M_1,\innMult^R(M_1^{n_1-1},N))\cong
\innMult^R(M_1^{n_1},N)$. Putting these together, we conclude that there is an injection
$$\theta:\Alt^R(M_2^{n_2},N)\into \Alt^R(M_2^{n_3},\innMult^R(M_1^{n_1},N)).$$ Following through the above isomorphisms and inclusions, one verifies that this injection
is induced by the restriction morphism. Under the isomorphism $$\Mult^R(M_2^{n_3},\innMult^R(M_1^{n_1},N))\cong
\Mult^R(M_2^{n_3}\times M_1^{n_1},N)$$ given by Proposition \ref{prop 2},
the image of $\Alt^R(M_2^{n_2},N)$ by $\theta$ lies inside the submodule
$\Alt^R(M_2^{n_3}\times M_1^{n_1},N)$ of $ \Mult^R(M_2^{n_3}\times M_1^{n_1},N) $ and we can easily see that the
injection $$\Alt^R(M_2^{n_2},N)\into \Alt^R(M_2^{n_3}\times M_1^{n_1},N)$$ thus
obtained is given by the restriction morphism.

\item[(b)] Choose an alternating multilinear morphism $\phi:M_2^{n_2}\to N$ and write
$\phi_0$ for the restriction $\phi|_{M_1^{n_1}\times M_2^{n_3}}$. For
any $0\leq j\leq n_3-1$ the restriction of $\phi_0$ to the submodule
scheme $M_1^{n_1}\times M_2^j\times M_1\times M_2^{n_3-j-1}$ belongs to the
$R$-module
$$\Alt^R(M_1^{n_1}\times M_2^j\times M_1\times M_2^{n_3-j-1},N)$$ which injects into $$\Alt^R(M_1^{n_1+1},\innMult^R(M_2^{n_3-1},N)).$$ The latter $R$-module is isomorphic to
$$\Hom^R(\epR^{n_1+1}M_1,\innMult^R(M_2^{n_3-1},N)),$$ which is zero by
assumption. So, the restriction $\phi_0|_{M_1^{n_1}\times
M_2^j\times M_1\times M_2^{n_3-j-1}}$ is zero.

Now we show by induction on $0\leq i\leq n_3$, that there exists a
multilinear morphism $\phi_i:M_3^i\times M_2^{n_3-i}\times M_1^{n_1}\to
N$ such that the composition $$M_2^i\times M_2^{n_3-i}\times
M_1^{n_1}\arrover{\overline{\pi}}M_3^i\times M_2^{n_3-i}\times
M_1^{n_1}\arrover{\phi_i}N$$ is $\phi_0$, where
$\overline{\pi}=\pi^i\times \Id_{M_2^{n_3-i}}\times \Id_{M_1^{n_1}}$. If
$i=0$ then we have nothing to show, so let $i<n_3$ and assume that
we have constructed $\phi_i$ with the desired property and we
construct $\phi_{i+1}$. Consider the following commutative diagram:
$$\xymatrix{
M_2^i\times M_1\times M_2^{n_3-i-1}\times
M_1^{n_1}\ar@{->>}[d]_{\widehat{\pi}}\ar@{^{ (}->}[r]^{\quad
\rho}&M_2^i\times
M_2^{n_3-i}\times M_1^{n_1}\ar@{->>}[d]^{\overline{\pi}}\ar[dr]^{\phi_0}&\\
M_3^i\times M_1\times M_2^{n_3-i-1}\times M_1^{n_1}\ar@{^{
(}->}[r]^{\quad \rho'}&M_3^i\times M_2^{n_3-i}\times
M_1^{n_1}\ar[r]^{\qquad \quad\phi_i}&N.}$$ As we have said above, the
restriction of $\phi_0$, $\phi_0\circ \rho$, is zero. By the
induction hypothesis, we have $\phi_0=\phi_i\circ\overline{\pi}$ and
therefore, $0=\phi_0\circ
\rho=\phi_i\circ\overline{\pi}\circ\rho=\phi_i\circ\rho'\circ\widehat{\pi}$.
The morphism $\widehat{\pi}$ being an epimorphism, we conclude that the
restriction of $\phi_i$, i.e., $\phi_i\circ \rho'$ is zero. This
allows us to use Lemma \ref{lem 2} in order to find a multilinear
morphism $$\phi_{i+1}:M_3^{i+1}\times M_2^{n_3-i-1}\times M_1^{n_1}\to N$$
such that
$$\phi_i=\phi_{i+1}\circ(\Id_{M_3^i}\times\pi\times\Id_{M_2^{n_3-i-1}}\times\Id_{M_1^{n_1}}).$$
It follows at once that $\phi_0=\phi_{i+1}\circ(\pi^{i+1}\times
\Id_{M_2^{n_2-i-2}}\times \Id_{M_1})$.

Put $i=n_3$, then the statement says that there is a multilinear
morphism $\phi_{n_3}:M_3^{n_3}\times M_1^{n_1}\to H$ such that
$\phi_0=\phi_{n_3}\circ(\pi^{n_3}\times \Id_{M_1^{n_1}})$. Since
$\phi_0$ is alternating, by Remark \ref{rem 6}, $\phi_{n_3}$ is also
alternating.

\item[(c)] If both conditions hold, then by (a), $\rho$ is
injective and therefore the homomorphism
$\Alt^R(M_2^{n_2},H)\to\Alt(M_1^{n_1}\times M_3^{n_3},H)$ defined in (b) is
injective as well. So we obtain
$$\Hom^R(\epR^{n_2}M_2,N)\cong\Alt(M_2^{n_2},N)\into\Alt^R(M_1^{n_1}\times
M_3^{n_3},N)\overset{\ref{prop 6}}{\cong}$$
$$\Hom^R(\epR^{n_1}M_1\otimes\epR^{n_3}M_3,N)$$ which is
natural, in other words we have a natural injection of functors
$$\tau:\Hom^R(\epR^{n_2}M_2,-)\into\Hom^R(\epR^{n_1}M_1\otimes\epR^{n_3}M_3,-).$$
It is a known fact that any natural transformation between such
functors is induced by a unique morphism
$$\zeta:\epR^{n_1}M_1\otimes\epR^{n_3}M_3\to \epR^{n_2}M_2,$$ in
fact, this morphism is the image of the identity morphism of
$\epR^{n_2}M_2$ under this transformation. This means that for any
$R$-module scheme $N$,
$$\tau_N:\Hom^R(\epR^{n_2}M_2,N)\to\Hom^R(\epR^{n_1}M_1\otimes\epR^{n_3}M_3,N)$$
sends a morphism $f:\epR^{n_2}M_2\to N$ to the morphism $f\circ \zeta$.
The injectivity of $\tau$ implies that $\zeta$ is an epimorphism.

\item[(d)]
%Note that although $M_3$ can be regarded as a subgroup scheme of $G$, by
%$\Alt(M_1^{m'}\times M_3^{m''},H)$ we mean the group of multilinear
%morphisms that are alternating only in $M_1^{m'}$ and $M_3^{m''}$ and
%not on the intersection $(M_1\cap M_3)^m$.
Let $s:M_3\to M_2$ be a section of $\pi$, i.e., $\pi\circ s=\Id_{M_3}$
and $r:M_2\to M_1$ the corresponding retraction of $\iota$, that is,
$r\circ\iota=\Id$ and that the short sequence $$0\to
M_3\arrover{s}M_2\arrover{r}M_1\to0$$ is exact. Then we show that the
morphism \[\mu:\Alt^R(M_2^{n_2},N)\to \Alt^R(M_1^{n_1}\times M_3^{n_3},N)\] whose
composition with $\pi^*$ is $\rho$ (given by (b)) is induced by the
inclusion $$j:=\iota^{n_1}\times s^{n_3}:M_1^{n_1}\times M_3^{n_3}\into
M_2^{n_2}.$$ Indeed, given a morphism $f\in\Alt^R(M_2^{n_2},N)$, we have
$\rho(f)=\pi^*(\mu(f))$, or in other words, $\mu(f)\circ
(\Id_{M_1}^{n_1}\times \pi^{n_3})=f\circ (\iota^{n_1}\times
\Id_{M_2}^{n_3})$. Hence the following diagram is commutative
$$\xymatrix{
M_1^{n_1}\times M_3^{n_3}\ar@{^{ (}->}[d]_{\widetilde{s}}\ar@/_3pc/[dd]_{\Id}\ar@{^{ (}->}_{j\,}[dr]\ar[drrr]^{\mu(f)}&&\\
M_1^{n_1}\times M_2^{n_3}\ar@{->>}[d]_{\widetilde{\pi}}\ar@{^{ (}->}[r]^{\quad i}&M_2^{n_2}\ar[rr]^{f\quad}&&N\\
M_1^{n_1}\times M_3^{n_3}\ar[urrr]_{\mu(f)}&&}$$ where $\widetilde{s},
\widetilde{\pi}$ and $i$ are respectively the morphisms
$\Id_{M_1}^{n_1}\times s^{n_3}, \Id_{M_1}^{n_1}\times \pi^{n_3}$ and the
inclusion $\iota^{n_1}\times \Id_{M_2}^{n_3}$. Consequently,
$\mu(f)=f\circ j$. This shows that $\mu$ is induced by $j$ as we
claimed.

Now define a morphism $\omega:M_2^{n_2}\to M_1^{n_1}\times M_3^{n_3}$ as
follows: for any $k$-algebra $A$ $\omega$ sends an element
$(g_1,\cdots,g_{n_2})\in M_2(A)^{n_2}$ to
$$\sum_{\sigma}\text{sgn}(\sigma,\tau)(r(g_{\sigma(1)}),\cdots,r(g_{\sigma(n_1)}),\pi(g_{\tau(1)}),\cdots,\pi(g_{\tau(n_3)}))$$
where the sum runs over all length $n_1$ subsequences
$\sigma=(\sigma(1),\cdots,\sigma(n_1))$ of $(1,2,\cdots,n_2)$ with
complementary subsequences $\tau=(\tau(1),\cdots,\tau(n_3))$ and
$\text{sgn}(\sigma,\tau)$ is the signature of $(\sigma,\tau)$ as a
permutation of $n_2$ elements. This morphism induces a homomorphism
$$\omega^*:\Alt^R(M_1^{n_1}\times M_3^{n_3},N)\to \Mult^R(M_2^{n_2},N)$$
and it is straightforward to see that in fact the image lies inside the
submodule $\Alt^R(M_2^{n_2},N)$. We also denote by $\omega^*$ the
homomorphism $$\Alt^R(M_1^{n_1}\times M_3^{n_3},H)\to \Alt^R(M_2^{n_2},N)$$
obtained by restricting the codomain of $\omega^*$. Since the
composites $r\circ s$ and $\pi\circ\iota$ are trivial and
$r\circ\iota$ and $\pi\circ s$ are the identity morphisms, we see
that the composition $\omega\circ j$ is the identity morphism of
$M_1^{n_1}\times M_3^{n_3}$. Therefore the composite
$\mu\circ\omega^*$ is the identity homomorphism. Consequently, the
homomorphism $\mu:\Alt^R(M_2^{n_2},N)\to\Alt^R(M_1^{n_1}\times M_3^{n_3},N)$ is
an epimorphism. We know from $(c)$ that it is a monomorphism, and
hence it is an isomorphism. We obtain thus
$$\Hom^R(\epR^{n_2}M_2,N)\cong\Alt^R(M_2^{n_2},N)\isoto$$
$$\Alt^R(M_1^{n_1}\times
M_3^{n_3},N)\cong\Hom^R(\epR^{n_1}M_1\otimes\epR^{n_3}M_3,N).$$ As
we know, this homomorphism is induced by the morphism
$$\zeta:\epR^{n_1}M_1\otimes\epR^{n_3}M_3\to \epR^{n_2}M_2.$$
Since it is an isomorphism, the morphism $\zeta$ must be an
isomorphism as well.
\end{itemize}
\end{proof}

%LEMMA 6
\begin{lem}
\label{lem 6}
Let $M$ be an $R$-module scheme over $S$ and $r\in R$. Then for every positive natural number $n$ we have a commutative diagram
\[\xymatrix{
M\otimes_R\epR^{n-1}M\ar[rr]^{(r.)\otimes_R\Id}\ar@{->>}[d]_{\Id\wedge\ep^{n-1}\Id}&&M\otimes_R\epR^{n-1}M\ar@{->>}[d]^{\Id\wedge\ep^{n-1}\Id}\\
\epR^{n}M\ar[rr]_{r.}&&\epR^nM. }\]
\end{lem}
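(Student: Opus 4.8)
The plan is to deduce the commutativity from the universal property of the tensor product $M\otimes_R\epR^{n-1}M$, in the spirit of the proof of Lemma \ref{lem 4}. Both ways around the square are $R$-linear morphisms $M\otimes_R\epR^{n-1}M\to\epR^nM$: the map $\Id\wedge\ep^{n-1}\Id$ is $R$-linear by its construction in Remark \ref{rem 5} 3), multiplication by $r$ on an $R$-module scheme is $R$-linear because $R$ is commutative, and $(r.)\otimes_R\Id$ is $R$-linear as a tensor product of $R$-linear morphisms (Remark \ref{rem 7} 1)). By Proposition \ref{prop 6}, applied with $r_1=1$, $r_2=n-1$ and $M_1=M_2=M$ (so that $\epR^1M=M$), precomposition with the universal morphism $\lambda_{1,n-1}\colon M\times M^{n-1}\to M\otimes_R\epR^{n-1}M$ induces an isomorphism $\Hom^R(M\otimes_R\epR^{n-1}M,\epR^nM)\cong\Alt^R(M\times M^{n-1},\epR^nM)$. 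So it suffices to check that the two composites agree after precomposition with $\lambda_{1,n-1}$.

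Next I would compute both precompositions and verify that each equals $\lambda\circ\bigl((r.)\times\Id_{M^{n-1}}\bigr)$, where $\lambda\colon M^n\to\epR^nM$ is the universal alternating morphism. Going around through $\epR^nM$ on the left: Remark \ref{rem 5} 3) with $\phi_1=\phi_2=\Id_M$ gives $(\Id\wedge\ep^{n-1}\Id)\circ\lambda_{1,n-1}=\lambda$, and then Remark \ref{rem 8} 3) gives $(r.)\circ\lambda=\lambda\circ\bigl((r.)\times\Id_{M^{n-1}}\bigr)$. Going around through $M\otimes_R\epR^{n-1}M$ on the right: by Remark \ref{rem 8} 1) the morphism $(r.)\otimes_R\Id$ is exactly the scalar action of $r$ on the $R$-module scheme $M\otimes_R\epR^{n-1}M$ (the slot carrying the scalar being immaterial), and since $\lambda_{1,n-1}$ is $R$-multilinear, postcomposing it with this scalar action coincides with precomposing it with multiplication by $r$ in the first factor, i.e. $((r.)\otimes_R\Id)\circ\lambda_{1,n-1}=\lambda_{1,n-1}\circ\bigl((r.)\times\Id_{M^{n-1}}\bigr)$; applying $\Id\wedge\ep^{n-1}\Id$ and invoking $(\Id\wedge\ep^{n-1}\Id)\circ\lambda_{1,n-1}=\lambda$ once more produces $\lambda\circ\bigl((r.)\times\Id_{M^{n-1}}\bigr)$ again. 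Hence the two composites agree, and the square commutes.

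The argument is purely formal, so no step is really an obstacle; the only points needing a little attention are the identification of the map $(r.)\otimes_R\Id$ in the statement with the $R$-module structure map of $M\otimes_R\epR^{n-1}M$ (Remark \ref{rem 8} 1)) and the fact that $\lambda_{1,n-1}$ lies in $\Alt^R$, hence is $R$-multilinear rather than merely pseudo-$R$-multilinear, which is what permits transporting a scalar in a source factor to a scalar in the target. Once this is granted, the lemma may even be read off directly: an $R$-linear morphism --- here $\Id\wedge\ep^{n-1}\Id$ --- commutes with the scalar actions on its source and target by the very definition of $R$-linearity, and the content of the statement is precisely the recognition that the two horizontal arrows are those scalar actions.
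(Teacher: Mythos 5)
Your proof is correct and takes essentially the same route as the paper's: both reduce the commutativity to an equality after precomposition with the universal morphism $\lambda_{1,n-1}\colon M^n\to M\otimes_R\epR^{n-1}M$ (the paper invokes the universal property from Remark \ref{rem 5} 1), you invoke the equivalent isomorphism of Proposition \ref{prop 6}), and both verify that each composite equals $\lambda\circ\bigl((r.)\times\Id_{M^{n-1}}\bigr)$ using Remarks \ref{rem 5} 3) and \ref{rem 8}. Your justification of the identity $((r.)\otimes_R\Id)\circ\lambda_{1,n-1}=\lambda_{1,n-1}\circ\bigl((r.)\times\Id_{M^{n-1}}\bigr)$ via $R$-multilinearity is slightly more explicit than the paper's, but the argument is the same.
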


%PROOF OF LEMMA 6
\begin{proof}[\textsc{Proof}]
Remark \ref{rem 8} 3) gives us the following commutative diagram:
\begin{myequation}
\label{diagram 1}
\xymatrix{
M\times\cdots\times M\ar[rrr]^{(r.)\times\Id_M\times\cdots\times\Id_M}\ar[d]_{\lambda}&&&M\times\cdots\times M\ar[d]^{\lambda}\\
\underset{R}{\bigwedge}^nM\ar[rrr]_{r.}&&&\underset{R}{\bigwedge}^nM}.
\end{myequation} 
Using Remark \ref{rem 5} 1), we can split this diagram into two squares:
\[\xymatrix{
M\times\cdots\times M\ar[d]_{\lambda_{1,2}}\ar[rr] ^{(r.)\times\Id_M^{n-1}}&& M\times\cdots\times M\ar[d]^{\lambda_{1,2}}\\
M\otimes_R\epR^{n-1}M\ar[d]_{\Id\wedge\ep^{n-1}\Id}\ar[rr]^{(r.)\otimes_R\Id} && M\otimes_R\epR^{n-1}M\ar[d]^{\Id\wedge\ep^{n-1}\Id}\\
\epR^nM \ar[rr]_{r.}&& \epR^nM
}\]
with the top square commutative.\\

The commutativity of the bottom square follows from the commutativity of the top diagram, diagram \ref{diagram 1}
and the universal property of the morphism $$\lambda_{1,2}:M\times\cdots\times M\to M\otimes_R\epR^{n-1}M$$ given in Remark \ref{rem 5} 1); more precisely, we have:
\[(\Id\wedge\ep^{n-1}\Id)\circ ((r.)\otimes_R\Id)\circ \lambda_{1,2}= (\Id\wedge\ep^{n-1}\Id)\circ \lambda_{1,2}\circ (r.)\times \Id^{n-1}=\]
\[(r.)\circ (\Id\wedge\ep^{n-1}\Id)\circ \lambda_{1,2}\]and the universal property of $\lambda_{1,2}$ implies that \[(r.)\circ (\Id\wedge\ep^{n-1}\Id)=(\Id\wedge\ep^{n-1}\Id)\circ ((r.)\otimes_R\Id).\] The fact that the vertical arrows in the diagram of the lemma are epimorphisms has been proved in Lemma \ref{lem 5}.
\end{proof}

%PROPOSITION 7
\begin{prop}
\label{prop 7}
Let $M$ be an $R$-module scheme over $S$, and $Q$ the cokernel of multiplication by an element $r\in R$, i.e., we have an exact sequence\\
 $M\arrover{r.}M\arrover{p}Q\to 0$. Then, for any positive natural number $n$ the following sequence is exact:
\[ \epR^nM\arrover{r.}\epR^nM\arrover{\ep^np}\epR^nQ\to 0. \]
\end{prop}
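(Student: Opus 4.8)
The plan is to reduce everything to Lemma \ref{lem 4}, applied to the given short exact sequence, and then to identify the first map it produces with multiplication by $r$ on $\epR^nM$, up to the canonical epimorphism of Lemma \ref{lem 5}.

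Concretely, I would first dispose of the case $n=1$, where the assertion is literally the hypothesis that $p$ is the cokernel of $r\cdot$. For $n\ge 2$, apply Lemma \ref{lem 4} to the exact sequence $M\xrightarrow{r\cdot}M\xrightarrow{p}Q\to 0$: it yields the exact sequence $M\otimes_R\epR^{n-1}M\xrightarrow{(r\cdot)\wedge\ep^{n-1}\Id_M}\epR^nM\xrightarrow{\ep^np}\epR^nQ\to 0$. Reading this off gives two things at once: $\ep^np$ is an epimorphism, so the sequence in the proposition is exact at $\epR^nQ$; and $\kernel(\ep^np)=\image\big((r\cdot)\wedge\ep^{n-1}\Id_M\big)$. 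It then remains only to show that this image equals $\image(r\cdot\colon\epR^nM\to\epR^nM)$.

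The core step is the identity $(r\cdot)\wedge\ep^{n-1}\Id_M=(r\cdot)\circ(\Id\wedge\ep^{n-1}\Id)$ in $\Hom^R(M\otimes_R\epR^{n-1}M,\epR^nM)$, where $\Id\wedge\ep^{n-1}\Id$ denotes the canonical epimorphism $M\otimes_R\epR^{n-1}M\onto\epR^nM$ of Lemma \ref{lem 5}. I would prove it via the universal property of $\lambda_{1,2}\colon M\times M^{n-1}\to M\otimes_R\epR^{n-1}M$ (Remark \ref{rem 5} 1)): both sides are $R$-linear, so it suffices to compare their composites with $\lambda_{1,2}$. The left composite is $\lambda\circ\big((r\cdot)\times\Id_M^{n-1}\big)$ by the construction of $(r\cdot)\wedge\ep^{n-1}\Id_M$ (Remark \ref{rem 5} 3)); the right composite is $(r\cdot)\circ\lambda$ because $(\Id\wedge\ep^{n-1}\Id)\circ\lambda_{1,2}=\lambda$ (the same construction with all $\phi_i=\Id_M$); and $\lambda\circ\big((r\cdot)\times\Id_M^{n-1}\big)=(r\cdot)\circ\lambda$ is precisely Remark \ref{rem 8} 3). (Alternatively, one first identifies $(r\cdot)\wedge\ep^{n-1}\Id_M$ with $(\Id\wedge\ep^{n-1}\Id)\circ\big((r\cdot)\otimes_R\Id\big)$ using Remark \ref{rem 7} 1), and then quotes Lemma \ref{lem 6} verbatim.)

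With the identity in hand, since $\Id\wedge\ep^{n-1}\Id$ is an epimorphism of fppf sheaves we get $\image\big((r\cdot)\wedge\ep^{n-1}\Id_M\big)=\image\big((r\cdot)\circ(\Id\wedge\ep^{n-1}\Id)\big)=\image(r\cdot)$; combining with $\kernel(\ep^np)=\image\big((r\cdot)\wedge\ep^{n-1}\Id_M\big)$ gives exactness at the middle term, and together with the epimorphism $\ep^np$ this is the whole claim. I expect the only genuine obstacle to be the bookkeeping in the core identity — matching the map coming out of Lemma \ref{lem 4} with the intrinsic multiplication-by-$r$ on $\epR^nM$; the points to watch are that the "image" in use is the sheaf-theoretic image (so that $\image(f\circ g)=\image(f)$ for $g$ an epimorphism) and that every morphism in sight is honestly $R$-linear, so that the uniqueness clause in the universal property of $\lambda_{1,2}$ does apply.
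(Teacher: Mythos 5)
Your proposal is correct and follows essentially the same route as the paper: apply Lemma \ref{lem 4} to the sequence $M\arrover{r.}M\arrover{p}Q\to 0$, identify $(r.)\wedge\ep^{n-1}\Id_M$ with $(r.)\circ(\Id\wedge\ep^{n-1}\Id)$ via Lemma \ref{lem 6} (equivalently, your direct check against $\lambda_{1,2}$ using Remarks \ref{rem 5} and \ref{rem 8}), and conclude using that $\Id\wedge\ep^{n-1}\Id$ is an epimorphism. The parenthetical alternative you mention is word-for-word the paper's argument.
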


%PROOF OF PROPOSITION 7
\begin{proof}[\textsc{Proof}]
It follows from Remarks \ref{rem 8} and \ref{rem 5} that the composition\\ $(\Id\wedge\ep^{n-1}\Id)\circ ((r.)\otimes_R\Id)$ as in previous lemma is equal to $(r.)\wedge\ep^{n-1}\Id$. So, the diagram of the previous lemma can be rewritten as: \begin{myequation}
\label{diagram 2}
\xymatrix{M\otimes_R\epR^{n-1}M\ar[rr]^{(r.)\wedge\ep^{n-1}\Id}\ar@{->>}[dr]_{\Id\wedge\ep^{n-1}\Id}&&\epR^nM\\
&\epR^nM\ar[ur]_{r.}.&}
\end{myequation}Now, apply Lemma \ref{lem 4} to the exact sequence $M\arrover{r.}M\arrover{p}Q\to 0$ in order to obtain the following exact sequence:
\[M\otimes_R\underset{R}{\bigwedge}^{n-1}M\arrover{(r.)\wedge\ep^{n-1}\Id_{M}} \underset{R}{\bigwedge}^nM\arrover{\bigwedge^np}\underset{R}{\bigwedge}^nQ\to 0.\]

Using diagram \ref{diagram 2}, we can factorize the first morphism of the sequence, so that the following diagram is commutative with exact row
\[\xymatrix{
M\otimes_R\epR^{n-1}M\ar[rr]^{(r.)\wedge\ep^{n-1}\Id}\ar@{->>}[dr]_{\Id\wedge\ep^{n-1}\Id} && \epR^nM\ar[rr]^{\ep^np} && \epR^nQ \ar[rr]&& 0\\
&\epR^nM\ar[ur]_{r.}.&
}\]
Since the morphism $\Id\wedge\ep^{n-1}\Id:M\otimes_R\epR^{n-1}M\to \epR^nM$ is an epimorphism, we conclude that the sequence
\[ \epR^nM\arrover{r.}\epR^nM\arrover{\ep^np}\epR^nQ\to 0. \]
is exact as well, and the proof is achieved.
\end{proof}

%\subsection{Dieudonn\'e modules}
\section{Dieudonn\'e modules}

We are going to study the covariant Dieudonn\'e modules of tensor products and exterior powers of $R$-module schemes over perfect fields of characteristic $p>2$ and let $k$ denote such a field for the rest of this section.

%DEFINITION 17
\begin{dfn}
\label{def 17}
Let $ R $ be a ring and $ P $ an $R$-module endowed with four set-theoretic maps $ F,V:P\to P $ and $ \phi, \upsilon:\epR^jP\to \epR^jP$. Denote by $\lambda$ the alternating morphism $ \lambda:P\times\cdots\times P\to \epR^jP $ which sends $ (x_1,\cdots,x_j) $ to $ x_1\wedge\cdots\wedge x_j$.
\begin{itemize}
\item[(i)] The following diagram is called the \emph{$F$-diagram associated to $\phi$}
\[\xymatrix{
P\times P\times\dots\times
P\ar[rr]^{\qquad\quad\lambda}&&\epR^jP\ar[dd]^{\phi}\\
P\times P\times\dots\times
P\ar[d]_{F\times\Id\times\cdots\times\Id}\ar[u]^{\Id\times V\times\cdots\times V}&&\\
P\times P\times\dots\times P\ar[rr]_{\qquad\lambda}&& \epR^jP.}\]
\item[(ii)] The following diagram is called the \emph{$V$-diagram associated to $\upsilon$}
\[\xymatrix{P\times P\times\cdots\times P\ar[d]_{V\times\cdots\times V}\ar[rr]^{\qquad\quad\lambda}&&\epR^jP\ar[d]^{\upsilon}\\
P\times P\times\cdots\times P\ar[rr]_{\qquad\quad\lambda}&&\epR^jP.}\]
\end{itemize}
\end{dfn}

%DEFINITION 13
\begin{dfn}
\label{def 13}
Let $n$ be a positive natural number.
\begin{itemize}
\item[(i)] Let $ M_1,\cdots, M_n $ be left $ \BE_k\otimes_{\BZ}R $-modules. Consider the tensor product $$ \BE_k\otimes_WM_1\otimes_{W\otimes_{\BZ}R}\cdots\otimes_{W\otimes_{\BZ}R}M_n $$
which uses the action $ W $ on $ \BE_k $ by right multiplication. This is a left $ \BE_k\otimes_{\BZ}R $-module with respect to left multiplication of $\BE_k$ on the first factor and the action of $R$ on the other factors. Define $ T(M_1\times\cdots\times M_n) $ to be its quotient by the $ \BE_k $-submodule generated by the elements
\[V\otimes m_1\otimes\cdots\otimes m_n-1\otimes Vm_1\otimes\cdots\otimes Vm_n,\]
\[F\otimes m_1\otimes Vm_2\otimes\cdots\otimes Vm_n-1\otimes Fm_1\otimes m_2\otimes\cdots\otimes m_n,\]
\[\vdots\]
\[F\otimes Vm_1\otimes Vm_2\otimes\cdots\otimes Vm_{n-1}\otimes m_n-1\otimes m_1\otimes m_2\otimes\cdots\otimes m_{n-1}\otimes Fm_n\]
for all $ m_i\in M_i $

\item[(ii)] Let $ M_1,\cdots, M_n $ be profinite topological left $ \BE_k\otimes_{\BZ}R $-modules. Then each $ M_i $ is the inverse limit of all its finite quotients $ M''_i $ by open $ \BE_k\otimes_{\BZ}R $-submodules. Define $ \hat{T}(M_1\times\cdots\times M_n) $ to be the inverse limit of all finite $ \BE_k\otimes_{\BZ}R $-module quotients of $ T(M''_1\times\cdots\times M''_n) $ for all $ M''_i $, which is again profinite topological left $ \BE_k\otimes_{\BZ}R $-module.

\item[(iii)] Let $M$ be a left $ \BE_k\otimes_{\BZ}R $-module. Define $ T_{\text{sym}}(M^n) $ to be the quotient of $ T(M^n) $ by the $ \BE_k\otimes_{\BZ}R $-submodule generated by the elements:
\[[1\otimes m_1\otimes\cdots\otimes m_n]-[1\otimes m_{\varrho(1)}\otimes\cdots\otimes m_{\varrho(n)}]\]
for all $ m_i\in M $ and all permutations $ \varrho\in S_n $.\\

Similarly, define $T_{\text{antisym}}(M^n)$ to be the quotient of $ T(M^n) $  by the $ \BE_k\otimes_{\BZ}R $-submodule generated by the elements:
\[[1\otimes m_1\otimes\cdots\otimes m_n]-\text{sgn}(\varrho)\cdot[1\otimes m_{\varrho(1)}\otimes\cdots\otimes m_{\varrho(n)}].\]

Finally, define $  T_{\text{weakalt}}(M^n)$ to be the quotient of $T_{\text{antisym}}(M^n)$ by the $ \BE_k\otimes_{\BZ}R $-submodule generated by the elements $ [1\otimes m_1\otimes\cdots\otimes m_n] $ for all $ m_i\in M $ of which at least two coincide and lie in the the submodule $ VM$.

\item[(iv)] Let $M$ be a profinite topological left $ \BE_k\otimes_{\BZ}R $-module. For any $ *\in\{$sym, antisym, weakalt$\}$, define $ \hat{T}_*(M^n) $ to be the inverse limit of finite $ \BE_k\otimes_{\BZ}R $-module quotients of $ T_*(M''^n) $ for all finite quotients $ M'' $ of $M$.
\end{itemize}
\end{dfn}

%REMARK 43
\begin{rem}
\label{rem43}
Let $R$ be a ring and $M_1,\dots, M_j, M$ be $ \BE_k\otimes_{\BZ}R$-modules which are of finite length as a $ W\otimes_{\BZ}R$-module.
\begin{itemize}
\item[1)] There is a canonical morphism \[\tau:M_1\times\dots\times M_j\to  T(M_1\times\dots\times M_j),\quad (m_1,\dots,m_j)\mapsto [1\otimes m_1\otimes\dots\otimes m_j].\] This morphism belongs to the $R$-module $ L^R(M_1\times\dots\times M_j, T(M_1\times\dots\times M_j))$ and has the following universal property:\\ for every $ \BE_k\otimes_{\BZ}R$-module $N$, the homomorphism \[ \tau^*:\Hom(T(M_1\times\dots\times M_j),N)\to L^R(M_1\times\dots\times M_j,N) \] induced by $ \tau $ is an isomorphism.
\item[2)] Similarly, there is a canonical morphism \[\lambda:M^j\to  T_{\text{weakalt}}(M^j),\quad (m_1,\dots,m_j)\mapsto [1\otimes m_1\otimes\dots\otimes m_j].\] This morphism belongs to the $R$-module $ L_{\text{alt}}^R(M^j, T_{\text{weakalt}}(M^j))$ and has the following universal property:\\ for every $ \BE_k\otimes_{\BZ}R$-module $N$, the homomorphism \[ \lambda^*:\Hom(T_{\text{weakalt}}(M^j),N)\to L_{\text{alt}}^R(M^j,N) \] induced by $ \lambda $ is an isomorphism.
\item[3)] Finally, there exists a universal morphism $ \sigma:M^j\to T_{\text{sym}}(M^j) $ inside the $R$-module $ L_{\text{sym}}^R(M^j, T_{\text{sym}}(M^j))$.
\end{itemize}
\end{rem}

%LEMMA 12
\begin{lem}
\label{lem 12}
Let $R$ be a ring and $P$ an $ \BE_k\otimes_{\BZ}R$-module which is of finite length as a $ W\otimes_{\BZ}R$-module. Assume further that we have two commuting maps $ \phi:\ovset{j}{\unset{W\otimes_{\BZ}R}{\ep}} P\to \ovset{j}{\unset{W\otimes_{\BZ}R}{\ep}} P $ and respectively $ \upsilon: \ovset{j}{\unset{W\otimes_{\BZ}R}{\ep}} P\to  \ovset{j}{\unset{W\otimes_{\BZ}R}{\ep}} P$ which are $ ^{\sigma^{-1}}\otimes\Id$-linear and respectively $ ^{\sigma}\otimes\Id$-linear and make the $F$-diagram associated to $ \phi $ and respectively the $V$-diagram associated to $ \upsilon $ commute. Then there is a natural structure of $\BE_k$-module on $\ovset{j}{\unset{W\otimes_{\BZ}R}{\ep}} P$, where $F$ and $V$ act through $ \phi $ and $ \upsilon$ and we have a canonical $ \BE_k\otimes_{\BZ}R$-linear isomorphism
$$\hat{T}_{\text{weakalt}}(P^j)\cong \ovset{j}{\unset{W\otimes_{\BZ}R}{\ep}} P.$$
\end{lem}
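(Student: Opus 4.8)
The strategy is to verify directly that the $\BE_k\otimes_{\BZ}R$-module $\ovset{j}{\unset{W\otimes_{\BZ}R}{\ep}}P$, equipped with $F$ acting via $\phi$ and $V$ acting via $\upsilon$, satisfies the universal property of $\hat T_{\text{weakalt}}(P^j)$ recorded in Remark \ref{rem43} (2), and then to invoke that universal property to produce the canonical isomorphism. First I would check that the prescribed action is well-defined as an $\BE_k$-module structure: since $\phi$ is $^{\sigma^{-1}}\otimes\Id$-linear and $\upsilon$ is $^{\sigma}\otimes\Id$-linear, the relations $F\xi = \xi^{\sigma}F$ and $V\xi^{\sigma} = \xi V$ of $\BE_k$ are respected on the nose; and one must confirm $\phi\circ\upsilon = p = \upsilon\circ\phi$, which is exactly the relation $FV=VF=p$ in $\BE_k$. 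The hypothesis says $\phi$ and $\upsilon$ commute; the identity $\phi\circ\upsilon = p$ should follow from the commutativity of the $F$- and $V$-diagrams together with the fact that on $P$ itself $FV=VF=p$, by a diagram chase on $\lambda$: apply $\upsilon$ then $\phi$ to $x_1\wedge\cdots\wedge x_j$ and unwind both diagrams to get $(FV x_1)\wedge x_2\wedge\cdots\wedge x_j = p\,(x_1\wedge\cdots\wedge x_j)$. One also needs finite length of $\ovset{j}{\unset{W\otimes_{\BZ}R}{\ep}}P$ over $W\otimes_{\BZ}R$, which is automatic since $P$ has finite length.

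Next I would build the comparison map. The canonical alternating morphism $\lambda: P^j \to \ovset{j}{\unset{W\otimes_{\BZ}R}{\ep}}P$, $(x_1,\dots,x_j)\mapsto x_1\wedge\cdots\wedge x_j$, is $W\otimes_{\BZ}R$-multilinear and alternating; I must check it lies in $L^R_{\text{alt}}(P^j, \ovset{j}{\unset{W\otimes_{\BZ}R}{\ep}}P)$, i.e. that it satisfies the $F$- and $V$-conditions of Definition \ref{def07}. But these conditions are precisely the commutativity of the $F$-diagram associated to $\phi$ and the $V$-diagram associated to $\upsilon$: the relation $\lambda(Vx_1,\dots,Vx_j) = V\lambda(x_1,\dots,x_j) = \upsilon(x_1\wedge\cdots\wedge x_j)$ is the $V$-diagram, and $\lambda(Fx_1, Vx_2,\dots,Vx_j) = F\lambda(x_1,\dots,x_j) = \phi(\lambda(Vx_1, Vx_2,\dots, Vx_j))$ is the $F$-diagram (after permuting factors to get the other $r$ conditions, using that $T_{\text{weakalt}}$ kills the antisymmetrized relations so the $i$-th $F$-condition follows from the first). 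Hence by the universal property in Remark \ref{rem43} (2), $\lambda$ factors through a unique $\BE_k\otimes_{\BZ}R$-linear homomorphism $\hat T_{\text{weakalt}}(P^j)\to \ovset{j}{\unset{W\otimes_{\BZ}R}{\ep}}P$.

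It remains to prove this map is an isomorphism, which I would do by exhibiting an inverse via the universal property of the exterior power $\ovset{j}{\unset{W\otimes_{\BZ}R}{\ep}}P$ in the category of $W\otimes_{\BZ}R$-modules: the canonical map $P^j\to \hat T_{\text{weakalt}}(P^j)$ is $W\otimes_{\BZ}R$-alternating, so it factors through a unique $W\otimes_{\BZ}R$-linear map $\ovset{j}{\unset{W\otimes_{\BZ}R}{\ep}}P\to \hat T_{\text{weakalt}}(P^j)$; one then checks the two composites are the identities by the uniqueness clauses in both universal properties (both composites restrict to the respective canonical alternating morphisms on $P^j$). The profinite/inverse-limit bookkeeping — reducing to finite quotients $P''$ of $P$ and passing to the limit — is routine once the finite-length case is settled, so I would handle the general statement by taking inverse limits over finite $\BE_k\otimes_{\BZ}R$-quotients, noting $\phi$ and $\upsilon$ descend compatibly. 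The main obstacle I anticipate is the verification that $\phi\circ\upsilon = p$ (and not merely that $\phi$, $\upsilon$ commute) from the diagram hypotheses — i.e. making sure the weak-alternating quotient is taken correctly so that the diagonal relations force this — together with carefully tracking the $r$ distinct $F$-conditions of Definition \ref{def07} against the single $F$-diagram; the rest is formal manipulation with universal properties.
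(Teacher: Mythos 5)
Your proposal is correct and follows essentially the same route as the paper: the crucial step in both is the computation $\phi\circ\upsilon(x_1\wedge\dots\wedge x_j)=\phi(Vx_1\wedge\dots\wedge Vx_j)=FVx_1\wedge x_2\wedge\dots\wedge x_j=p\,(x_1\wedge\dots\wedge x_j)$, combined with the hypothesis that $\phi$ and $\upsilon$ commute, and then the observation that the $F$- and $V$-diagrams are exactly the $F$- and $V$-conditions (equivalently, the relations $\rho_1,\rho_2$) defining $T_{\text{weakalt}}(P^j)\cong T_{\text{antisym}}(P^j)$ as a quotient of $\BE_k\otimes_W\ovset{j}{\unset{W\otimes_{\BZ}R}{\ep}}P$. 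The only cosmetic difference is that you phrase the comparison via the two universal properties (Remark \ref{rem43} and the exterior power of modules), whereas the paper writes down the mutually inverse maps $\theta$ and $\vartheta$ explicitly; do make sure to note, as the paper does, that $p\neq 2$ (and the $p$-power torsion of $P$) is what makes the antisymmetric and weak-alternating quotients coincide.
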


%PROOF OF LEMMA 12
\begin{proof}[\textsc{Proof}]
We first show that $\phi\circ\upsilon=p$. Indeed, we have for all $d_1,\cdots,d_j\in P$
\[\phi\circ\upsilon(d_1\wedge\cdots\wedge d_j)=\phi(Vd_1\wedge\cdots\wedge Vd_j)=FVd_1\wedge d_2\wedge \cdots\wedge d_j=pd_1\wedge d_2\wedge \cdots\wedge d_j\]where the first equality follows from the $V$-diagram and the second equality from the $F$-diagram. As $\upsilon\circ\phi=\phi\circ\upsilon$, we have $\upsilon\circ\phi=\phi\circ\upsilon=p$. Now, since $p$ is different from $2$, the antisymmetry condition in the construction of $T_{\text{antisym}}(P^j)$ (cf. Definition \ref{def 13} (iii) ) means that $ T_{\text{weakalt}}(P^j)\cong T_{\text{antisym}}(P^j) $ and that this module is the quotient of $\BE_k\otimes_W\ovset{j}{\unset{W\otimes_{\BZ}R}{\ep}}P$ by the submodule generated by the relations:
\[(\rho_1)\qquad V\otimes m_1\wedge\cdots\wedge m_j-1\otimes Vm_1\wedge\cdots\wedge Vm_j,\]
\[(\rho_2)\qquad F\otimes m_1\wedge Vm_2\wedge\cdots\wedge Vm_j-1\otimes Fm_1\wedge m_2\wedge\cdots\wedge m_j\]
(note that the other relations follow from these two).
Now, define a morphism $$\theta:\BE_k\otimes_W\ovset{j}{\unset{W\otimes_{\BZ}R}{\ep}} P\to\ovset{j}{\unset{W\otimes_{\BZ}R}{\ep}} P$$ by
\[F^i\otimes x\mapsto \phi^i(x)\quad \text{and}\quad V^i\otimes x\mapsto \upsilon^i(x).\]
Since $\upsilon\circ\phi=\phi\circ\upsilon=p$, this morphism is a well defined $\BE_k$-linear morphism. We claim that this morphism factors through the quotient $T_{\text{antisym}}(P^j)$. i.e., it is zero on the relations $\rho_1$ and $\rho_2$.
\begin{itemize}
\item $(\rho_1)$: We have $\theta(V\otimes m_1\wedge\cdots\wedge m_j-1\otimes Vm_1\wedge\cdots\wedge Vm_j)=$
\[\upsilon(m_1\wedge\cdots\wedge m_j)-Vm_1\wedge\cdots\wedge Vm_j=0\] by the $V$-diagram.
\item $(\rho_1)$: We have $\theta(F\otimes m_1\wedge Vm_2\wedge\cdots\wedge Vm_j-1\otimes Fm_1\wedge m_2\wedge\cdots\wedge m_j)=$
\[\phi(m_1\wedge Vm_2\wedge\cdots\wedge Vm_j)-Fm_1\wedge m_2\wedge\cdots\wedge m_j=0\] by the $F$-diagram.
\end{itemize}
It is straightforward to see that the morphism $\vartheta:\ovset{j}{\unset{W\otimes_{\BZ}R}{\ep}}P\to T_{\text{antisym}}(P^j)$ sending an element $x$ to $[1\otimes x]$ is an inverse of $\bar{\theta}:T_{\text{antisym}}(P^j)\to\ovset{j}{\unset{W\otimes_{\BZ}R}{\ep}}P $ induced by $\theta$. Therefore, $$T_{\text{antisym}}(P^j)\cong\ovset{j}{\unset{W\otimes_{\BZ}R}{\ep}}P.$$ The latter being a finite length module over $W\otimes_{\BZ}R$, we deduce that 
\[\hat{T}_{\text{weakalt}}(P^j)\cong T_{\text{weakalt}}(P^j)\cong T_{\text{antisym}}(P^j)\cong \ovset{j}{\unset{W\otimes_{\BZ}R}{\ep}}P\] and the proof is achieved.
\end{proof}

%LEMMA 22
\begin{lem}
\label{lem 22}
Let $R$ be a ring and $P, Q$ two $\BE_k\otimes_{\BZ}R$-modules which are of finite length as $ W\otimes_{\BZ}R$-modules. Assume further that the multiplication by $V$ on $P$ is an isomorphism. Then there exists a natural structure of $\BE_k$-module on $P\otimes_{\scriptscriptstyle W\otimes_{\BZ}R}Q$, where $F$ acts as $V^{-1}\otimes F$ and $V$ acts as $V\otimes V$. Furthermore, we have a canonical $\BE_k\otimes_{\BZ}R$-linear isomorphism
\[\hat{T}(P\times Q)\cong P\otimes_{\scriptscriptstyle W\otimes_{\BZ}R}Q.\]
\end{lem}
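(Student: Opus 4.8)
The plan is to follow the pattern of the proof of Lemma \ref{lem 12}, with the exterior power replaced by the tensor product and with the hypothesis that $V$ is bijective on $P$ taking over the role played there by the identity $\phi\circ\upsilon=p$.

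\emph{The module structure.} First I would record that, $V\colon P\to P$ being an isomorphism and semilinear over $W\otimes_{\BZ}R$ (semilinear with respect to the appropriate power of the Frobenius $\sigma$ of $W$), its inverse $V^{-1}\colon P\to P$ is semilinear with the opposite twist, i.e. with the same twist as the operator $F$ on $Q$. Consequently $V^{-1}\otimes F$ and $V\otimes V$ descend to well-defined additive endomorphisms of $P\otimes_{\scriptscriptstyle W\otimes_{\BZ}R}Q$, the first with the twist of a Frobenius and the second with the twist of a Verschiebung (here one uses that the tensor product is taken over all of $W\otimes_{\BZ}R$). On simple tensors one computes
\[ (V^{-1}\otimes F)(V\otimes V)(x\otimes y)=x\otimes FVy=p(x\otimes y),\qquad (V\otimes V)(V^{-1}\otimes F)(x\otimes y)=x\otimes VFy=p(x\otimes y), \]
so these two operators satisfy $FV=VF=p$ and hence equip $P\otimes_{\scriptscriptstyle W\otimes_{\BZ}R}Q$ with a structure of left $\BE_k$-module extending the $W\otimes_{\BZ}R$-structure; together with the $R$-action carried over from either factor this gives the asserted $\BE_k\otimes_{\BZ}R$-module structure.

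\emph{The comparison isomorphism.} Exactly as in Lemma \ref{lem 12}, set
\[ \theta\colon \BE_k\otimes_W\bigl(P\otimes_{\scriptscriptstyle W\otimes_{\BZ}R}Q\bigr)\longrightarrow P\otimes_{\scriptscriptstyle W\otimes_{\BZ}R}Q,\qquad F^i\otimes x\mapsto (V^{-1}\otimes F)^i(x),\quad V^i\otimes x\mapsto (V\otimes V)^i(x); \]
this is a well-defined $\BE_k$-linear map precisely because $(V^{-1}\otimes F)$ and $(V\otimes V)$ compose, in either order, to multiplication by $p$. I would then check that $\theta$ kills each of the relations defining $T(P\times Q)$: the relation $V\otimes m_1\otimes m_2-1\otimes Vm_1\otimes Vm_2$ is sent to $(V\otimes V)(m_1\otimes m_2)-Vm_1\otimes Vm_2=0$; the relation $F\otimes Vm_1\otimes m_2-1\otimes m_1\otimes Fm_2$ to $V^{-1}Vm_1\otimes Fm_2-m_1\otimes Fm_2=0$; and the relation $F\otimes m_1\otimes Vm_2-1\otimes Fm_1\otimes m_2$ to $V^{-1}m_1\otimes FVm_2-Fm_1\otimes m_2=pV^{-1}m_1\otimes m_2-Fm_1\otimes m_2=FVV^{-1}m_1\otimes m_2-Fm_1\otimes m_2=0$, where I used $p=FV$ and that $p\in W$ passes across $\otimes_{\scriptscriptstyle W\otimes_{\BZ}R}$. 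Hence $\theta$ induces a map $\bar\theta\colon T(P\times Q)\to P\otimes_{\scriptscriptstyle W\otimes_{\BZ}R}Q$, which is $\BE_k\otimes_{\BZ}R$-linear.

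\emph{The inverse and conclusion.} For the inverse I would take $\vartheta\colon P\otimes_{\scriptscriptstyle W\otimes_{\BZ}R}Q\to T(P\times Q)$, $x\mapsto[1\otimes x]$; plainly $\bar\theta\circ\vartheta=\Id$. Conversely, the $V$-relation gives $[V^i\otimes p\otimes q]=[1\otimes V^ip\otimes V^iq]$, while substituting $m_1=V^{-1}p$ into the relation $F\otimes Vm_1\otimes m_2=1\otimes m_1\otimes Fm_2$ (legitimate since $V$ is bijective on $P$) gives $[F^i\otimes p\otimes q]=[1\otimes V^{-i}p\otimes F^iq]$; thus every element of $T(P\times Q)$ lies in the image of $\vartheta$, so $\vartheta\circ\bar\theta=\Id$ as well, and $T(P\times Q)\cong P\otimes_{\scriptscriptstyle W\otimes_{\BZ}R}Q$. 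Finally, since $P$ and $Q$ are of finite length over $W\otimes_{\BZ}R$, so is $P\otimes_{\scriptscriptstyle W\otimes_{\BZ}R}Q$ (filter $Q$ by a composition series), hence also $T(P\times Q)$, and therefore $\hat{T}(P\times Q)\cong T(P\times Q)$ just as at the end of the proof of Lemma \ref{lem 12}. Composing the two isomorphisms proves the lemma. The only points needing genuine care are the semilinearity of $V^{-1}$, so that $V^{-1}\otimes F$ makes sense over the whole of $W\otimes_{\BZ}R$ and has the correct twist to serve as a Frobenius, and the vanishing of $\theta$ on the mixed $F$-relation, where one rewrites $p=FV$ and uses the invertibility of $V$ on $P$; I do not expect any serious obstacle beyond these.
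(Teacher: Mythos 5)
Your proposal is correct and follows essentially the same route as the paper: both establish the $\BE_k$-structure from $(V^{-1}\otimes F)\circ(V\otimes V)=(V\otimes V)\circ(V^{-1}\otimes F)=p$, and both exhibit the isomorphism via the explicit mutually inverse maps $[e\otimes x\otimes y]\mapsto e\cdot(x\otimes y)$ (which on $F^i$ and $V^i$ gives exactly the paper's formulas $V^{-i}x\otimes F^iy$ and $V^ix\otimes V^iy$) and $x\otimes y\mapsto[1\otimes x\otimes y]$, together with $\hat T\cong T$ by finite length. Your additional checks — the semilinearity of $V^{-1}$ needed for $V^{-1}\otimes F$ to descend, and the verification on the mixed $F$-relations — are details the paper leaves as "straightforward," and you have carried them out correctly.
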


%PROOF OF LEMMA 22
\begin{proof}[\textsc{Proof}]
The compositions $(V^{-1}\otimes F)\circ (V\otimes V)$ and $(V\otimes V)\circ (V^{-1}\otimes F)$ are equal to multiplication by $p$ and so there is a natural structure of $\BE_k$-module on the tensor product $P\otimes_{\scriptscriptstyle W\otimes_{\BZ}R}Q$. It remains to show the stated isomorphism.\\

Since the modules $P$ and $Q$ are of finite length over $W\otimes_{\BZ}R$, we have
\[\hat{T}(P\times Q)\cong T(P\times Q).\]
We define morphisms
\[\theta:T(P\times Q)\to P\otimes_{\scriptscriptstyle W\otimes_{\BZ}R}Q\]
and
\[\eta:P\otimes_{\scriptscriptstyle W\otimes_{\BZ}R}Q\to T(P\times Q)\]
as follows:
$\theta([1\otimes x\otimes y])=x\otimes y, \theta([F^i\otimes x\otimes y])=V^{-i}(x)\otimes F^i(y)$ and $\theta([V^i\otimes x\otimes y])=V^i(x)\otimes V^i(y)$. And for $\eta$, we set $\eta(x\otimes y)=[1\otimes x\otimes y]$, where by elements in brackets, we mean their class in the quotient $T(P\times Q)$. It is now straightforward to check that $\theta$ is well-defined, and these morphism are inverse to each other.
\end{proof}

The following proposition is a direct generalization of the theorem 5.6.2 in \cite{P} and its proof is the same as the proof of theorem 5.6.2 in \cite{P} with the slight and easy modifications due to $R$-linearity and $R$-multilinearity, and therefore we omit the proof of the proposition. Note however that we should assume $ p\neq 2$ in the proposition, which is not the case in theorem 5.6.2 in \cite{P}.

%PROPOSITION 10
\begin{prop}
\label{prop 10}
Let $G_1, G_2$ and $G$ be pro-$p$ $R$-module schemes over $k$, then the tensor product $G_1\otimes_R G_2$, the symmetric power $ \unset{R}{S}^jG $ and the exterior power $ \unset{R}{\ep}^jG $ exist and are again pro-$p$ $R$-module schemes over $k$, and there are natural isomorphisms
\begin{itemize}
\item $D_*(G_1\otimes_R G_2)\cong \hat{T}(D_*(G_1)\times D_*(G_2)),$
\item $D_*(\unset{R}{S}^jG)\cong \hat{T}_{\text{sym}}(D_*(G)^j),$
\item $D_*(\unset{R}{\ep}^jG)\cong \hat{T}_{\text{weakalt}}(D_*(G)^j).$
\end{itemize}
\qed

\end{prop}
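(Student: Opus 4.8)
The plan is to follow the proof of Theorem~5.6.2 of \cite{P} (the case $R=\BZ$, for pro-$p$ group schemes) and insert the bookkeeping for the $R$-action throughout, deducing everything from the multilinear covariant Dieudonn\'e theory of Chapter~3. Existence and profiniteness of $G_1\otimes_R G_2$, $\uset{R}{S}^jG$ and $\epR^jG$ are already furnished by Theorem~\ref{thm41}; that these objects are in fact \emph{pro-$p$} follows from the observation that a pseudo-$R$-multilinear morphism out of a product of pro-$p$ $R$-module schemes to a group scheme of prime-to-$p$ torsion must vanish (scale one factor by $p^m$, where $p^m$ kills the relevant finite quotient, and use that $p^m$ is invertible on the target), so by the universal property of Definition~\ref{def 7} the prime-to-$p$ part of each construction is trivial. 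It therefore remains to compute the covariant Dieudonn\'e modules, and since $\hat T$, $\hat T_{\rm sym}$, $\hat T_{\rm weakalt}$ are by Definition~\ref{def 13} the inverse limits of their finite analogues and $D_*$ is compatible with the passage to finite quotients, I would reduce to the case that $G_1,G_2,G$ are \emph{finite} $R$-module schemes of $p$-power order.

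In the finite case the computation is a Yoneda argument over $\BE_k\otimes_{\BZ}R$-modules. For a finite $p$-group scheme $N$, Definition~\ref{def 7} gives $\Hom(G_1\otimes_R G_2,N)\cong\widetilde\Mult^R(G_1\times G_2,N)$, and I would translate the right-hand side through Dieudonn\'e theory: writing $D_i:=D_*(G_i)$, the construction $T(D_1\times D_2)$ of Definition~\ref{def 13}(i) is the quotient of Pink's $\BZ$-construction $T_{\BZ}(D_1\times D_2)$ by the relations forcing the two $R$-actions to coincide, so that $\BE_k$-linear maps out of it correspond, via Pink's universal property and the multilinear Dieudonn\'e correspondence (Corollary~\ref{cor03} and its pseudo-$R$-multilinear variant), exactly to the pseudo-$R$-multilinear morphisms $G_1\times G_2\to N$; that is,
\[
\widetilde\Mult^R(G_1\times G_2,N)\ \cong\ \Hom_{\BE_k}\!\big(T(D_*(G_1)\times D_*(G_2)),\,D_*(N)\big).
\]
Combining the two isomorphisms and letting $N$ range over all finite $p$-group schemes, the covariant Dieudonn\'e equivalence yields $D_*(G_1\otimes_R G_2)\cong T(D_*(G_1)\times D_*(G_2))$; taking the limit over finite quotients gives the stated isomorphism for pro-$p$ inputs. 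The symmetric power is handled in the same way with $T,\widetilde\Mult^R$ replaced by $T_{\rm sym},\widetilde\Sym^R$ and the symmetric half of Remark~\ref{rem33}(2); the exterior power with $T_{\rm weakalt},\widetilde\Alt^R$ together with Remark~\ref{rem33}(2) and Lemma~\ref{lem 12}, where the hypothesis $p>2$ is used — both for the identification of $L^R_{\rm alt}$-morphisms with alternating ones and, through Lemma~\ref{lem 12}, for the coincidence $T_{\rm weakalt}=T_{\rm antisym}$ that lets the weakalt quotient play the role of a $j$-th exterior power.

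I expect the main obstacle to be precisely this reconciliation step: bridging the universal property of Definition~\ref{def 7}, which quantifies over all group schemes and is phrased with pseudo-$R$-multilinearity, with the clean module-theoretic statement over $\BE_k\otimes_{\BZ}R$ — i.e.\ reducing to finite $p$-group targets, checking that ``the two $R$-actions on $T$ agree'' translates into ``pseudo-$R$-multilinear'', and verifying that the universal morphisms on the two sides are carried to one another by the Dieudonn\'e identifications — along with the sign bookkeeping and the characteristic-$2$ exclusion in the alternating case. Everything else is the routine $R$-decorated version of \cite{P}, so in the write-up I would only indicate the modifications.
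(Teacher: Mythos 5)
Your plan is exactly the route the paper takes: the paper omits the proof of Proposition \ref{prop 10} altogether, citing Pink's Theorem 5.6.2 and asserting that the $R$-decorations — reduction to finite quotients, the universal properties of Definition \ref{def 7}, the multilinear Dieudonn\'e correspondence of Chapter 3, and the description of $T$, $T_{\text{sym}}$, $T_{\text{weakalt}}$ as quotients of the $\BZ$-constructions by the relations identifying the $R$-actions — are routine, which is precisely what you spell out, including the correct identification of where $p>2$ enters. One minor remark: Lemma \ref{lem 12} is not actually needed here, since the proposition only identifies $D_*(\epR^jG)$ with $\hat{T}_{\text{weakalt}}(D_*(G)^j)$ and the further comparison with $\ep^jD_*(G)$ (where Lemma \ref{lem 12} and its extra hypotheses enter) is deferred to Lemma \ref{lem 11}; for the proposition itself the characteristic restriction comes only through Remark \ref{rem33}.
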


%PROOF OF PROPOSITION 10
%\begin{proof}[\textsc{Proof}]
%\end{proof}

%REMARK 44
\begin{rem}
\label{rem44}
Let $M$ be a finite $p$-torsion $R$-module scheme over $k$. According to Remark \ref{rem33}, we have an isomorphism 
\begin{myequation}
\label{univ1}
L_{\text{alt}}^R(D_*(M)^j,D_*(\epR^jM))\cong \Alt^R(M^j,\epR^jM).
\end{myequation}
We also explained in Remark \ref{rem44} that the universal morphism $ \lambda:D_*(M)^j\to T_{\text{weakalt}}(D_*(M)^j) $ induces an isomorphism 
\begin{myequation}
\label{univ2}
\Hom^R(T_{\text{weakalt}}(D_*(M)^j),D_*(\epR^jM))\to L_{\text{alt}}^R(D_*(M)^j,D_*(\epR^jM)).
\end{myequation} 
It follows that the isomorphism $ T_{\text{weakalt}}(D_*(M)^j)\cong D_*(\epR^jM) $ given in the previous proposition is mapped to the universal alternating morphism $ M^j\to\epR^jM $, under the composition of the two isomorphism \eqref{univ1} and \eqref{univ2}. In other words, using the isomorphism $ T_{\text{weakalt}}(D_*(M)^j)\cong D_*(\epR^jM) $ we obtain an isomorphism \[ L_{\text{alt}}^R(D_*(M)^j,T_{\text{weakalt}}(D_*(M)^j))\cong \Alt^R(M^j,\epR^jM) \] and under this isomorphism, the universal elements correspond to each other.
\end{rem}

%LEMMA 11
\begin{lem}
\label{lem 11}
Let $R$ be a ring and $G$ a finite $R$-module scheme over $k$ of order a power of $p$. Assume that there are commuting morphisms $\phi:\ovset{j}{\unset{W\otimes_{\BZ}R}{\ep}} D_*(G)\to \ovset{j}{\unset{W\otimes_{\BZ}R}{\ep}} D_*(G)$ respectively $\upsilon:\ovset{j}{\unset{W\otimes_{\BZ}R}{\ep}} D_*(G)\to \ovset{j}{\unset{W\otimes_{\BZ}R}{\ep}} D_*(G)$ which are $^{\sigma^{-1}}\otimes\Id$ respectively $^{\sigma}\otimes\Id$-linear and make the $F$-diagram associated to $ \phi $ and the $V$-diagram associated to $ \upsilon $ commute.
Then $\ovset{j}{\unset{W\otimes_{\BZ}R}{\ep}} D_*(G)$ is the covariant Dieudonn\'e module of $\epR^jG$ with $F$ and $V$ acting respectively through $\phi$ and $\upsilon$ respectively.
\end{lem}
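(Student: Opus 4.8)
The strategy is to combine Proposition \ref{prop 10} with Lemma \ref{lem 12}, the hypotheses of Lemma \ref{lem 12} being literally those imposed here on $\phi$ and $\upsilon$. First I would record that $D_*(G)$ has finite length as a $W(k)$-module, because $G$ is finite of order a power of $p$; consequently it also has finite length as a $W(k)\otimes_{\BZ}R$-module, since any strictly ascending chain of $W(k)\otimes_{\BZ}R$-submodules is in particular a strictly ascending chain of $W(k)$-submodules. Thus $P:=D_*(G)$ satisfies all the hypotheses of Lemma \ref{lem 12}: it is an $\BE_k\otimes_{\BZ}R$-module of finite length over $W(k)\otimes_{\BZ}R$, and $\phi$, $\upsilon$ are commuting, $^{\sigma^{-1}}\otimes\Id$- resp. $^{\sigma}\otimes\Id$-linear maps on $\ovset{j}{\unset{W\otimes_{\BZ}R}{\ep}}D_*(G)$ making the $F$-diagram associated to $\phi$ resp. the $V$-diagram associated to $\upsilon$ commute (so that, by the proof of that lemma, $\phi\circ\upsilon=p=\upsilon\circ\phi$ automatically).

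Lemma \ref{lem 12} then endows $\ovset{j}{\unset{W\otimes_{\BZ}R}{\ep}}D_*(G)$ with a natural $\BE_k$-module structure for which $F$ and $V$ act through $\phi$ and $\upsilon$, and furnishes a canonical $\BE_k\otimes_{\BZ}R$-linear isomorphism
\[ \hat{T}_{\text{weakalt}}(D_*(G)^j)\;\cong\;\ovset{j}{\unset{W\otimes_{\BZ}R}{\ep}}D_*(G). \]
On the other hand, $G$ is a pro-$p$ $R$-module scheme over $k$, so Proposition \ref{prop 10} applies and gives a natural $\BE_k\otimes_{\BZ}R$-linear isomorphism
\[ D_*(\epR^jG)\;\cong\;\hat{T}_{\text{weakalt}}(D_*(G)^j). \]
Composing these two isomorphisms yields an $\BE_k\otimes_{\BZ}R$-linear, hence in particular $\BE_k$-linear, isomorphism $D_*(\epR^jG)\cong\ovset{j}{\unset{W\otimes_{\BZ}R}{\ep}}D_*(G)$; being $\BE_k$-linear, it carries the Frobenius and Verschiebung of the Dieudonn\'e module $D_*(\epR^jG)$ to the operators $F$ and $V$ on the right-hand side, i.e.\ to $\phi$ and $\upsilon$. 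This is exactly the assertion of the lemma. To see that this is the expected, natural identification --- compatible with the universal alternating morphism $G^j\to\epR^jG$ and with $\lambda:D_*(G)^j\to\ovset{j}{\unset{W\otimes_{\BZ}R}{\ep}}D_*(G)$, $(d_1,\dots,d_j)\mapsto d_1\wedge\dots\wedge d_j$ --- I would cite Remark \ref{rem44}, which states precisely that under the isomorphism of Proposition \ref{prop 10} the universal element of $L^R_{\text{alt}}$ corresponds to the universal alternating morphism, together with the explicit description $x\mapsto[1\otimes x]$ of the isomorphism produced in the proof of Lemma \ref{lem 12}.

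In short, there is no new obstacle: the lemma is a formal consequence of Proposition \ref{prop 10} (Dieudonn\'e modules of tensor, symmetric and exterior powers) and Lemma \ref{lem 12} ($\hat{T}_{\text{weakalt}}$ of a Dieudonn\'e module equipped with compatible $\phi,\upsilon$ is again its exterior power). The only point that warrants care is the verification, already subsumed in the statement of Lemma \ref{lem 12}, that the $\BE_k$-structure on $\hat{T}_{\text{weakalt}}(D_*(G)^j)$ coming from the quotient presentation in Definition \ref{def 13} matches, under that isomorphism, the structure in which $F$ and $V$ act via $\phi$ and $\upsilon$; this is precisely why the commutativity of the $F$- and $V$-diagrams is the right hypothesis. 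I expect assembling the bookkeeping of these compatibilities to be the most delicate part, but it is essentially routine once the two cited results are in hand.
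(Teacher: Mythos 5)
Your proof is correct and follows essentially the same route as the paper's: finite length of $D_*(G)$ over $W\otimes_{\BZ}R$, then Lemma \ref{lem 12} to identify $\ovset{j}{\unset{W\otimes_{\BZ}R}{\ep}}D_*(G)$ with $\hat{T}_{\text{weakalt}}(D_*(G)^j)$, then Proposition \ref{prop 10} to identify the latter with $D_*(\epR^jG)$. The extra remarks on $\BE_k$-linearity and the compatibility via Remark \ref{rem44} are consistent with what the paper leaves implicit.
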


%PROOF OF LEMMA 11
\begin{proof}[\textsc{Proof}]
The existence of $\epR^jG$ is guaranteed by Proposition \ref{prop 10}. So, assume the existence of $\phi$ and $\upsilon$. We know that the Dieudonn\'e module of $G$, $ D_*(G) $, is a finite length $W$-module, and therefore $ \ovset{j}{\unset{W\otimes_{\BZ}R}{\ep}} D_*(G) $ is a finite length module over $ W\otimes_{\BZ}R $. Now, according to Lemma \ref{lem 12}, We have 
\[\ovset{j}{\unset{W\otimes_{\BZ}R}{\ep}} D_*(G) \cong \hat{T}_{\text{weakalt}}(D_*(G)^j)\]which is isomorphic to $ D_*(\unset{R}{\ep}^jG) $ by Proposition \ref{prop 10} and therefore, we have \[\ovset{j}{\unset{W\otimes_{\BZ}R}{\ep}} D_*(G)\cong  D_*(\unset{R}{\ep}^jG)\] and the proof is achieved.
\end{proof}

%REMARK 45
\begin{rem}
\label{rem45}
It follows from this lemma that the universal morphism  $$D_*(G)^j\to D_*(\epR^jG)\cong \epR^jD_*(G) $$ (cf. Remarks \ref{rem43} and \ref{rem44}) is the ``natural" one, i.e., the one sending $ (x_1,\dots,x_j)$ to $ x_1\wedge\dots\wedge x_j$.
\end{rem}

%LEMMA 23
\begin{lem}
\label{lem 23}
Let $R$ be a ring and $G_1, G_2$ finite $R$-module schemes over $k$, of order a power of $p$ with $G_1$ \'etale. Then the tensor product $G_1\otimes G_2$ exist and its Dieudonn\'e module, $D_*(G_1\otimes G_2)$, is canonically isomorphic to the tensor product $D_*(G_1)\otimes_{\scriptscriptstyle W\otimes_{\BZ}R} D_*(G_2)$, of Dieudonn\'e modules of $G_1$ and $G_2$.
\end{lem}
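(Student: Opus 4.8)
The plan is to reduce to the case where $G_2$ is local (equivalently, connected-étale decomposition handled separately) and then apply the machinery already built for $\hat T$. First I would recall from Proposition \ref{prop 10} that $G_1\otimes_R G_2$ exists as a pro-$p$ $R$-module scheme over $k$ and that $D_*(G_1\otimes_R G_2)\cong \hat T(D_*(G_1)\times D_*(G_2))$; since $G_1$ and $G_2$ are finite, the relevant modules are of finite length over $W\otimes_{\BZ}R$, so $\hat T$ collapses to $T$ and we are reduced to computing $T(D_*(G_1)\times D_*(G_2))$. The key structural input is that $G_1$ is étale, which on the Dieudonné side means that Verschiebung acts as an isomorphism on $D_*(G_1)$ (the covariant Dieudonné module of an étale group scheme has bijective $V$; equivalently $F$ acts topologically nilpotently and $V$ is invertible). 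This is exactly the hypothesis of Lemma \ref{lem 22} with $P=D_*(G_1)$ and $Q=D_*(G_2)$.

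So the main step is to invoke Lemma \ref{lem 22}: since $V$ is bijective on $P:=D_*(G_1)$, the tensor product $P\otimes_{\scriptscriptstyle W\otimes_{\BZ}R}Q$ carries a natural $\BE_k$-module structure with $F$ acting as $V^{-1}\otimes F$ and $V$ acting as $V\otimes V$, and there is a canonical $\BE_k\otimes_{\BZ}R$-linear isomorphism $\hat T(P\times Q)\cong P\otimes_{\scriptscriptstyle W\otimes_{\BZ}R}Q$. Combining this with the isomorphism $D_*(G_1\otimes_R G_2)\cong \hat T(D_*(G_1)\times D_*(G_2))$ from Proposition \ref{prop 10} yields
\[
D_*(G_1\otimes_R G_2)\;\cong\; D_*(G_1)\otimes_{\scriptscriptstyle W\otimes_{\BZ}R} D_*(G_2),
\]
which is the claim. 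I would check that the isomorphism is canonical and functorial by tracing the universal morphisms: the canonical morphism $\tau$ of Remark \ref{rem43}(1) corresponds under these identifications to the natural map $(x,y)\mapsto x\otimes y$, so naturality in $G_1$ and $G_2$ follows from the functoriality already recorded in Proposition \ref{prop 10} and Lemma \ref{lem 22}.

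The one point needing a little care — and the only real obstacle — is justifying that $V$ is an isomorphism on $D_*(G_1)$ for $G_1$ finite étale of $p$-power order over the perfect field $k$. This is standard: an étale group scheme over a perfect field is, after base change to $\bar k$, a product of copies of $\BZ/p^n\BZ$'s up to the Galois action, and the covariant Dieudonné module of $\BZ/p^n\BZ$ is $W_n(k)$ with $F$ acting as the canonical projection composed with nothing problematic and $V$ acting bijectively (indeed $D_*$ of an étale group has trivial/bijective Verschiebung by the general theory in \cite{F}); alternatively one notes $G_1$ has no local part so $F$ is surjective and, by finiteness and the relation $FV=VF=p$ together with topological nilpotence of $F$ on $D_*(G_1)$, $V$ must be invertible. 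Once this is in hand the rest is a formal concatenation of Proposition \ref{prop 10} and Lemma \ref{lem 22}, so no grinding calculation is required.
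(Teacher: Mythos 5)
Your proof is correct and follows essentially the same route as the paper: existence and the identification $D_*(G_1\otimes_R G_2)\cong \hat{T}(D_*(G_1)\times D_*(G_2))$ come from Proposition \ref{prop 10}, and the key step is that $V$ is bijective on $D_*(G_1)$ because $G_1$ is \'etale, which puts you in the situation of Lemma \ref{lem 22}. The extra justification you give for the bijectivity of $V$ is correct but is simply asserted in the paper's proof.
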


%PROOF OF LEMMA 23
\begin{proof}[\textsc{Proof}]
The existence of the tensor product follows at once from the first part of Proposition \ref{prop 10} (about the existence of tensor product).\\

In order to show the isomorphism, notice that the Dieudonn\'e modules of $G_1$ and $G_2$ are of finite length over the ring $ W\otimes_{\BZ}R $ and since $G_1$ is \'etale, its Verschiebung morphism is an isomorphism. We can now apply Lemma \ref{lem 22} to obtain a canonical isomorphism
\[\hat{T}(D_*(G_1)\times D_*(G_2))\cong D_*(G_1)\otimes_{\scriptscriptstyle W\otimes_{\BZ}R}D_*(G_2)\]
which together with the first isomorphism of Proposition \ref{prop 10}, gives the desired result.
\end{proof}

%REMARK 15
\begin{rem}
\label{rem 15}
If the ring $R$ in the previous two lemmas is an $S$-algebra, where $S$ is a subring of the ring of Witt vectors $W$, and if the actions of $R$ on $G_1$, $G_2$ and $G$ are $S$-linear, then the statements of these lemmas remain true, if we replace $\BZ$ by $S$. This is so for example if the ring $R$ is our discrete valuation ring $\CO$ and $S$ is the ring of $p$-adic integers $\BZ_p$. We will mainly use this version of these lemmas in the sequel.
\end{rem}

%SECTION 5
%\section{\texorpdfstring{$\pi$-Divisible modules and their exterior powers}{pi-Divisible modules and their exterior powers}}
%\chapter{\texorpdfstring{$\pi$-Divisible Modules and Their Exterior Powers}{pi-Divisible Modules and Their Exterior Powers}}

\chapter{\texorpdfstring{Multilinear Theory of $ \pi $-Divisible Modules}{Multilinear Theory of pi-Divisible Modules}}

In this chapter, $\mathcal{O}$ denotes a complete discrete valuation ring with a fixed uniformizing parameter $\pi$ and finite residue field $\BF_q$ ($q=p^f$). We denote by $K$ the fraction field of $\CO$. If $\CO$ has mixed characteristic, then it is a finite extension of $\BZ_p$ of degree $ef$ where $e$ is the ramification index of the extension $K/\BQ_p$ and $f$ is its residue degree and in this case, we have $ p=u\cdot\pi^e $, with $u$ a unit of $\CO$. In the equal characteristic case, $\CO$ is isomorphic to $\BF_q\lbb\pi\rbb$, the formal power series in $\pi$.\\

%\subsection{First definitions}
\section{First definitions}

In this section, we would like to generalize the notion of a $p$-divisible group. Let us fix some notations.\\

\begin{comment}
%DEFINITION 8
\begin{dfn}
\label{def 8}
$ $ % THIS IS JUST TO CHEAT LATEX IN ORDER TO ADD A NEW LINE!
\begin{itemize}
\item[(i)] A linearly topologized commutative ring $A$ is called \emph{pseudo-compact} if it is separated, complete and for every open ideal $\mathfrak{a}$, the quotient ring $A/\mathfrak{a}$ is an Artin ring.
\item[(ii)] A linearly topologized module $M$ over a pseudo-compact ring $A$ is called \emph{profinite}, if it is separated, complete and for every open sub-$A$-module $M'$, the quotient $M/M'$ is of finite length.
\item[(iii)] A \emph{finite} algebra over a pseudo-compact ring $A$ is an $A$-algebra whose underlying $A$-module is profinite and of finite length.
\end{itemize}
\end{dfn}

%DEFINITION 9
\begin{dfn}
\label{def 9}
Let $S=\Spec(A)$ be the spectrum of a pseudo-compact ring and $R$ a ring. A \emph{formal $R$-module scheme over $S$} is a covariant functor from the category of finite $A$-algebras to the category of $R$-modules. 
\end{dfn}
\end{comment}

%DEFINITION BT GROUPS
\begin{dfn}
\label{piBT}
Let $S$ be a scheme and $\CM$ an fppf sheaf of $ \CO $-modules over $S$. We call $\CM$ a \emph{$ \pi $-Barsotti-Tate group} or \emph{$\pi$-divisible $ \CO $-module scheme over $S$} if the following conditions are satisfied:
\begin{itemize}
\item[(i)] $\CM$ is $\pi$-divisible, i.e., the homomorphism $\pi:\CM \to \CM$ is an epimorphism.
\item[(ii)] $\CM$ is $\pi$-torsion, i.e., the canonical homomorphism $ \uset{n}{\dirlim}\, \CM[\pi^n]\to \CM $ is an isomorphism.
\item[(iii)] $ \CM[\pi] $ is representable by a finite locally free $ \CO $-module scheme over $S$.
\end{itemize}
The rank of $ \CM[\pi]$ is of the form $ q^h $, where $h:S\to \BQ_{\geq 0}$ is a locally constant function, called \emph{the height} of $\CM$. If the ring $ \CO $ is clear from the context, we may call $ \CM $ simply a $ \pi $-divisible module. We denote by $ \CM_i $ the kernel of multiplication by $ \pi^i $.
\end{dfn}

%\begin{dfn}
%\label{defscalaraction}\\
%Let $S$ be an $ \CO $-scheme and $ \CM $ a $ \pi $-divisible $ \CO $
%\end{dfn}

\begin{comment}
%DEFINITION 10
\begin{dfn}
\label{def 10}
A formal $\mathcal{O}$-module scheme $\CM$ over $S$ is called \emph{$\pi$-divisible} if it satisfies the following three properties:
\begin{itemize}
\item $\pi\cdot:\CM\to \CM$ is an epimorphism, i.e., formally faithfully flat morphism,
\item $\CM$ is a $\pi$-torsion module, i.e., $\CM=\underset{j\geq 1}{\bigcup} \kernel(\pi^j\cdot)$,
\item $\kernel(\pi\cdot)$ is finite. 
\end{itemize}
We will write $\CM_i$ for $\kernel(\pi^i\cdot)$. The order of $\CM_1$ is a power of $p$ and therefore is equal to $q^h$ for a rational number $h$ which is called the \emph{height} of $\CM$ and is denoted by $h(\CM)$. If the ring $ \CO $ is clear from the context, we may call $ \CM $ simply a $ \pi $-divisible module.
\end{dfn}
\end{comment}

%REMARK 9
\begin{rem}
\label{rem 9}
$  $
\begin{itemize}
\item[1)] Using the first property and induction on $j$, we can show that for every $j\in \BN$ we have an exact sequence:\[0\to \CM_1\arrover{\text{inclusion}} \CM_{j+1}\arrover{\pi.} \CM_j\to 0.\]
\item[2)] It follows from this exact sequence that $\CM_j$ is finite locally free (flat) $ \CO $-module scheme over $S$ and has rank (order) equal to $q^{jh}$. 
\item[3)] Now, it can be shown easily that for every $i,j\in \BN$, we have an exact sequence\[0\to \CM_i\arrover{\text{inclusion}} \CM_{j+i}\arrover{\pi^i.} \CM_j\to 0.\]  
\end{itemize}
\end{rem}

The following remark gives another definition of $\pi$-divisible modules:

%REMARK 10
\begin{rem}
\label{rem 10}
Assume that we have a diagram
\[\CM_1\arrover{\iota_1}\CM_2\arrover{\iota_2}\CM_3\to\cdots\]
where the $\CM_i$ are finite locally free $\mathcal{O}$-module schemes over $S$ with the following properties:
\begin{itemize}
\item the order of $\CM_j$ is equal to $q^{jh}$, with $h$ a fixed locally constant map $ S\to \BQ_{\geq 0} $,
\item the sequences $0\to \CM_j\arrover{\iota_j}\CM_{j+1}\arrover{\pi^j}\CM_{j+1}$ are exact.
\end{itemize}
Then, the limit $\dirlim (\CM_n,\iota_n)$ is a $\pi$-divisible $\CO$-module scheme over $S$, of height $h$ and $\kernel(\pi^j\cdot)\cong \CM_j$ for every $j>0$.
\end{rem}

%REMARK 12
\begin{rem}
\label{rem 12}
Let $A$ be a Henselian local ring and $\CM$ a $\pi$-divisible formal $\CO$-module over $A$. The same arguments as in Remark \ref{rem 1} 7) show that the connected-\'etale sequence of $\CM$, as a formal group scheme over $A$, \[0\to \CM^0\to \CM\to \CM^{\text{\'et}}\to0\] is in fact a sequence of formal $\CO$-module schemes over $A$. Using the functoriality of this sequence, the multiplicativity of the order of finite flat group schemes with respect to exact sequences and what we know about the orders of $\CM_n$ (with notations as in Definition \ref{piBT}), one can show that connected and \'etale factors of $\CM$ are $\pi$-divisible $\CO$-module schemes over $A$ as well and that the connected-\'etale sequence of $\CM$ is the direct limit (over $n>0$) of the connected-\'etale sequence of $\CM_n$ and in particular we have
\begin{itemize}
\item $(\CM^0)_n=(\CM_n)^0$ and
\item $(\CM^{\text{\'et}})_n=(\CM_n)^{\text{\'et}}.$ 
\end{itemize}
\end{rem}

\begin{prop}
\label{etalepidiv}
Let $ \CM $ be an \'etale $ \pi $-divisible $ \CO $-module scheme over a base scheme $S$. Then there exists a finite \'etale cover $ T\to S $ such that $ \CM_T $ is the constant $ \pi $-divisible module with $ \CM_{n,T}\cong \ul{(\CO/\pi^n)}^h $, where $h$ is the height of $ \CM $. If $S$ is connected, we can take a connected finite \'etale cover $T$.
\end{prop}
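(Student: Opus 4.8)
The plan is to pass through the étale–fundamental-group dictionary and thereby translate the statement into a purely group-theoretic one about the monodromy of the Tate module of $\CM$. First I would reduce to the case that $S$ is connected: the height $h$ is a locally constant function on $S$, and a finite étale cover of a disjoint decomposition of $S$ is assembled from finite étale covers of the pieces, so it suffices to treat connected components separately. This reduction is exactly what makes the final addendum (a \emph{connected} cover when $S$ is connected) the substantive case. So assume $S$ connected, fix a geometric point $\sbar$, and write $\Gamma:=\pi_1(S,\sbar)$, a profinite group.

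Next I would invoke the equivalence between the category of finite étale $S$-schemes and the category of finite continuous $\Gamma$-sets, refined (carrying the $\CO$-action along) to an equivalence between finite étale $\CO$-module schemes over $S$ and finite continuous $\CO[\Gamma]$-modules. Under this equivalence each truncation $\CM_n$ --- finite étale, and by Remark \ref{rem 9} free of rank $h$ over $\CO/\pi^n$ --- corresponds to a finite module $V_n\cong(\CO/\pi^n)^h$ with a continuous $\Gamma$-action, and the projections $\CM_{n+1}\onto\CM_n$ assemble the $V_n$ into the Tate module $T:=\invlim_n V_n$, a free $\CO$-module of rank $h$ carrying a continuous representation $\rho:\Gamma\to\Aut_{\CO}(T)=\GL_h(\CO)$. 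The key observation is that producing a single finite étale cover $T\to S$ --- which, if attached to an open normal subgroup, is Galois and connected --- over which $\CM_{n,T}\cong\ul{(\CO/\pi^n)}^h$ for all $n$ is \emph{equivalent} to exhibiting an open subgroup of $\Gamma$ acting trivially on all of $T$; that is, to showing that $\rho$ factors through a finite quotient of $\Gamma$.

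The main obstacle is therefore precisely the finiteness of the monodromy: I must show $\rho(\Gamma)$ is finite. The reduction modulo $\pi$ is free of charge, since $\bar\rho:\Gamma\to\GL_h(\BF_q)$ lands in a finite group; thus the open normal subgroup $\Gamma_1:=\ker\bar\rho$ already trivializes $\CM_1$, and the associated connected Galois cover $T_1\to S$ makes $\CM_{1,T_1}\cong\ul{(\CO/\pi)}^h$. The real work lies in the higher congruence levels: over $T_1$ the restricted representation takes values in the pro-$p$ congruence subgroup $1+\pi\,\Mat_h(\CO)$, and one must argue that its image is finite so that a \emph{single} further finite Galois cover trivializes the whole of $T$ at once, rather than merely trivializing $\CM_n$ level by level. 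I would attack this by analysing the successive graded pieces $\pi^i\Mat_h(\CO/\pi)$ of the congruence filtration as $\Gamma_1$-modules, controlling how the action is constrained by compatibility across all levels $n$, and isolating the obstruction to a uniform (all-level) trivialization after one more finite cover.

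This uniform-trivialization step is the crux of the matter and the part I expect to be genuinely delicate, since it is exactly where a single finite cover must do the work of infinitely many truncations. Once the finiteness of $\rho(\Gamma)$ is secured, the cover $T\to S$ attached to $\ker\rho$ is finite étale, connected, and Galois, and by construction $\pi_1(T)$ acts trivially on every $V_n$; hence $\CM_{n,T}\cong\ul{(\CO/\pi^n)}^h$ compatibly in $n$, so $\CM_T$ is the constant $\pi$-divisible module of height $h$, and in the connected case the cover $T$ is itself connected, as required.
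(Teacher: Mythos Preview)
You have correctly located the crux: showing that the monodromy $\rho:\pi_1(S,\sbar)\to\GL_h(\CO)$ has finite image. Your instinct that this step is ``genuinely delicate'' is an understatement---it is impossible in general, and the proposition as stated is \emph{false}. Take $\CO=\BZ_p$ and $S=\Spec\BQ_p$: then $\mu_{p^\infty}$ is an \'etale $p$-divisible group of height $1$ over $S$ (every finite group scheme over a field of characteristic zero is \'etale), and its Tate module is the cyclotomic character $G_{\BQ_p}\to\BZ_p^*$, whose image is infinite. No finite extension of $\BQ_p$ trivializes all the $\mu_{p^n}$ simultaneously. Equally, over $S=\Spec\BF_\ell$ with $\ell\neq p$, sending Frobenius to $1+p\in\BZ_p^*$ gives a height-$1$ counterexample. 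So the programme you outline---squeezing finiteness of $\rho(\Gamma)$ out of the congruence filtration---cannot succeed without further hypotheses on $S$.

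The paper's own proof has the same gap in different clothing: after trivializing $\CM_1$ it argues inductively via the sequence $0\to\CM_1(\bar t)\to\CM_{n+1}(\bar t)\to\CM_n(\bar t)\to 0$, asserting that trivial $\pi_1(T,\bar t)$-action on the two outer terms forces trivial action on the middle. That inference is false; such extensions are parametrized by homomorphisms $\pi_1(T,\bar t)\to\Hom_{\CO}(\CM_n(\bar t),\CM_1(\bar t))$, which need not vanish (and do not, in the $\mu_{p^\infty}$ example). The good news is that the only place the paper invokes this proposition (Lemma~\ref{lemsplitting3}) fixes a single level $n$ and merely needs a finite \'etale cover trivializing that one $\CM_n$; this level-by-level statement is immediate from the \'etale dictionary together with the structure-theorem identification of the fibre as $(\CO/\pi^n)^h$. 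The correct fix is to weaken the proposition to that form.
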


\begin{proof}
We can assume that the base scheme is connected, since otherwise, $ \CM $ is a disjoint union of $ \pi $-divisible $ \CO $-module schemes over each connected component of $S$ and having the result for each of them, provides a finite \'etale cover of $S$ satisfying the required property. By ``\'etale dictionary'', the category of finite $ \CO $-module schemes is equivalent to the category of finite $ \CO $-modules with a continuous action of the \'etale fundamental group, $ \pi^{\text{\'et}}_1(S,\sbar) $, at a (fixed) geometric point, $ \sbar $, of $S$. It means that for any given finite $ \CO $-module scheme $M$, there exists a connected finite \'etale cover $S'$ of $S$ such that the action of the \'etale fundamental group at a geometric point of $S'$ (mapping to $ \sbar $) on $ M_{S'} $ is trivial. In other words, $ M_{S'} $ is a constant $ \CO $-module scheme.\\

Let $ T\to S $ be a finite connected \'etale cover such that $ \CM_{1,T} $ is a constant $ \CO $-module scheme. Fix geometric points $ \sbar $ of $S$ and $ \bar{t} $ of $T$, with $ \bar{t} $ mapping to $ \sbar $. We want to show, by induction on $n\geq 1$, that $ \CM_{n,T} $ is the constant $ \CO $-module scheme $ \ul{(\CO/\pi^n)}^h $. If $ n=1 $, the we know that $ \CM_1 $ is a constant $ \CO $-module scheme corresponding to a $ \CO $-module $ M $ with $ q^h $ elements and annihilated by $ \pi $. By the structure theorem of finitely generated modules over principal ideal domains, there is only one possibility for such an $ \CO $-module, namely, $ (\CO/\pi)^h $. Consider the short exact sequence \[ 0\to \CM_{1,T}\to \CM_{n+1,T}\arrover{\pi} \CM_{n,T}\to 0 .\] This gives rise to a short exact sequence \[ 0\to \CM_{1}(\bar{t})\to \CM_{n+1}(\bar{t})\arrover{\pi} \CM_{n}(\bar{t})\to 0 \] of $ \CO $-modules with continuous $ \pi_1^{\text{\'et}}(T,\bar{t}) $-action. Since the action of $ \pi_1^{\text{\'et}}(T,\bar{t})  $ on two terms (the left and the right terms) of this sequence is trivial, it acts trivially on the third (one can use the long exact sequence of group cohomology to deduce this fact), and therefore, $ \CM_{n+1,T} $ is also a constant $ \CO $-module scheme. We know that $ \CM_{1}(\bar{t})\cong (\CO/\pi)^h $ and $  \CM_{n}(\bar{t})\cong (\CO/\pi^n)^h $, and that $ \CM_{n+1}(\bar{t}) $ is a finite $ \CO $-module annihilated by $ \pi^{n+1} $ and of order $ q^{(n+1)h} $. The only $ \CO $-module that fits to the above exact sequence and has these properties is $ (\CO/\pi^{n+1})^h $. This achieves the proof.
\end{proof}

%REMARK 11
\begin{rem}
\label{rem 11}
In the mixed characteristic case, any $\pi$-divisible module of height $h$ is canonically a $p$-divisible group of height $efh$. Indeed, let $\CM$ be a $\pi$-divisible module. As $p=u\cdot\pi^e$, with $u$ a unit in $\CO$, the morphism $p^n.:\CM\to \CM$ (multiplication by $p^n$) is equal to the composition $\CM\arrover{u^n} \CM\arrover{\pi^{ne}}\CM$ and so with the notation of Definition \ref{piBT}, $\CM_{ne}=\kernel(p\cdot)=:\CM'_n$ for all $n\geq1$. Now:
\begin{itemize}
\item $\pi:\CM\to \CM$ is an epimorphism if and only if  $p=u\cdot\pi^e:\CM\to \CM$ is an epimorphism (multiplication by $u$ is an isomorphism).
\item the subset $\{ne\mid n>0\}$ of $\BN$ is cofinal, so we have
\[\CM=\underset{n}{\bigcup}\, \CM_n=\underset{n}{\bigcup}\, \CM_{ne}=\underset{n}{\bigcup}\, \CM'_n\]
\item according to Remark \ref{rem 9} 2), $\CM'_1=\kernel(p\cdot)=\CM_e$ is finite and the order of $\CM'_1$  is $q^{eh}=p^{efh}$.
\end{itemize}
Hence, $\CM$ is a $p$-divisible group of height $efh$.\\

If the base scheme is the spectrum of a field, then the converse is also true, in other words, every $ p $-divisible group with an $ \CO $-action is canonically a $ \pi $-divisible $ \CO $-module scheme. By what we have said above, the first two conditions of Definition \ref{piBT} are satisfied. Since the kernel of multiplication by $ \pi $ is a subgroup of the kernel of multiplication by $p$, and we are over a field, the third condition is also satisfied.
\end{rem}

% DEFINITION 16
\begin{dfn}
\label{def 16}
Let $ \CM $ be a $\pi$ divisible $ \CO $-module scheme over $S$ and denote by $ \CM_n^* $ the Cartier dual of $ \CM_n $, i.e., we have $ \CM_n^*=\innHom_S(\CM_n,\BG_m) $. The inductive system \[ \CM_1^*\to\CM_2^*\to\CM_3^*\to\dots \] induced by the homomorphisms $ \CM_{i+1}\to \CM_i $ (cf. Remark \ref{rem 9}), is called the dual $ \pi $-divisible $ \CO $-module scheme of $ \CM $.
\end{dfn}

%REMARK 22
\begin{rem}
\label{rem 22}
By functoriality of Cartier duality, the action of $ \CO $ on $ \CM_n $ induces an action on $ \CM_n^* $. The exacts sequences \[ 0\to\CM_n\to\CM_{n+m}\to\CM_m\to0 \] are then transformed to the exact sequences \[ 0\to\CM_n^*\to\CM_{n+m}^*\to\CM_m^*\to0.\] Since the order of $\CM_n^*  $ over $S$ is equal to the order of$ \CM_n $, which is equal to $ p^{nh} $, it follows from Remark \ref{rem 10} that the dual of a $ \pi $-divisible $ \CO $-module scheme is again a $\pi$-divisible $\CO$-module scheme of the same height.
\end{rem}

%\subsection{Some properties}
\section{Some properties}

%LEMMA 7
\begin{lem}
\label{lem 7}
Let $R$ be an integral domain over a field $k$ which is infinite dimensional as $k$-vector space and let $M$ be a finite free $R$-module. Assume that we have a ring automorphism $\sigma:R\to R$ which restricts to a ring automorphism of $k$, i.e., $\sigma(k)=k$ and we have a $\sigma$-linear morphism $\phi:M\to M$, i.e., $\phi(rm)=\sigma(r)\phi(m)$ for all $r\in R$ and $m\in M$. If the dimension (as $k$-vector space) of the cokernel of $\phi$ is finite, then $\phi$ is injective.
\end{lem}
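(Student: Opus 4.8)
The plan is to reduce the statement to an elementary linear-algebra fact over the fraction field $F:=\operatorname{Frac}(R)$ by inverting all nonzero elements of $R$, exploiting that ``finite $k$-dimension'' forces the cokernel to be a torsion $R$-module (since $R$ itself is infinite-dimensional over $k$), and that a surjective semilinear endomorphism of a finite-dimensional vector space is automatically bijective.

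First I would note that $\phi(M)$ is an honest $R$-submodule of $M$: because $\sigma$ is surjective, $\phi(M)=\phi(RM)=\sigma(R)\,\phi(M)=R\,\phi(M)$, so $\cokernel(\phi)=M/\phi(M)$ is an $R$-module quotient of $M$. Next comes the key observation: every element $x$ of $\cokernel(\phi)$ is torsion over $R$, for otherwise $Rx\cong R$ would be an infinite-dimensional $k$-subspace of the finite-dimensional space $\cokernel(\phi)$, a contradiction. Hence $\cokernel(\phi)$ is a torsion $R$-module, and therefore $\cokernel(\phi)\otimes_R F=0$.

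Now set $S:=R\setminus\{0\}$ and $M_F:=M\otimes_R F=S^{-1}M$. Since $\sigma$ is a ring automorphism of $R$ fixing $k$, it extends to a field automorphism of $F$ (and permutes $S$), and $\phi$ extends to a $\sigma$-semilinear endomorphism $\phi_F$ of $M_F$ via $\phi_F(m/s):=\phi(m)/\sigma(s)$; one checks well-definedness using only that $\phi$ is additive and $\sigma$-semilinear. Localizing the $R$-linear exact sequence $0\to\phi(M)\to M\to\cokernel(\phi)\to 0$ and identifying $S^{-1}\phi(M)$ with $\phi_F(M_F)$ (here one uses that $\sigma$ is onto $S$, so $\{\phi(m)/\sigma(s)\}=\{\phi(m)/s'\}$), I obtain $\cokernel(\phi_F)\cong\cokernel(\phi)\otimes_R F=0$. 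Thus $\phi_F$ is a \emph{surjective} $\sigma$-semilinear endomorphism of the $F$-vector space $M_F$, which is finite-dimensional of dimension $\rank_R M$. Choosing an $F$-basis, $\phi_F$ takes the coordinate form $c\mapsto A\,\sigma(c)$ for some square matrix $A$ over $F$; since $\sigma$ acts bijectively on $F^n$ componentwise, surjectivity of $\phi_F$ forces $A\in\GL_n(F)$, whence $\phi_F$ is injective. Finally, $M$ is free, hence torsion-free, so the canonical map $M\to M_F$ is injective; if $\phi(m)=0$ then the image of $m$ in $M_F$ lies in $\kernel(\phi_F)=0$, so $m=0$. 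Therefore $\phi$ is injective.

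The argument is essentially routine; the only point demanding care is the transition between $\phi$ and $\phi_F$ at the level of images and kernels, since $\phi$ is not $R$-linear. One must genuinely use the surjectivity of $\sigma$ on $R$ (equivalently on $S$) to justify $\phi_F(M_F)=S^{-1}\phi(M)$ and that a nonzero element of $\kernel(\phi)$ survives in $\kernel(\phi_F)$. Everything else is the standard ``torsion module dies after inverting'' principle together with the folklore fact that a semilinear surjection in finite dimension is a bijection. (Note that the hypothesis that $\sigma$ restricts to an automorphism of $k$ is used only to ensure $\sigma$ extends to $F$; it is otherwise not needed for this lemma.)
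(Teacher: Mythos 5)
Your proof is correct and follows essentially the same route as the paper's: both arguments show that the cokernel is a torsion $R$-module because $R$ is infinite-dimensional over $k$, pass to the fraction field where the induced map on a finite-dimensional vector space becomes surjective and hence injective, and then pull injectivity back to $M$ using torsion-freeness. The only (minor) difference is that the paper first reduces to an $R$-linear map by factoring $\phi$ through the semilinear automorphism $\sigma^n$ of $R^n$, whereas you carry the $\sigma$-semilinearity through the localization directly; both handlings are valid.
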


%PROOF OF LEMMA 7
\begin{proof}[\textsc{Proof}]
Take an $R$-basis of $M$, say $m_1,\cdots, m_n$. For every $i=1,\cdots, n$ write $\phi(m_i)=\sum_{j=1}^nr_{ji}m_j$ with $r_{ji}\in R$. Take an element $m\in M$ and write it as $\sum_{i=1}^n x_im_i$. We have 
\[\phi(m)=\phi(\sum_{i=1}^nx_im_i)=\sum_{i=1}^n\phi(x_im_i)=\]
\[\sum_{i=1}^n \sigma(x_i)\phi(m_i)=\sum_{i=1}^n\sigma(x_i)\sum_{j=1}^nr_{ji}m_j=\sum_{j=1}^n(\sum_{i=1}^nr_{ji}\sigma(x_i))m_j.\] It follows that $\phi$ is the following composition of morphisms:
\[M\overset{\cong}{\to} R^n\overset{\sigma^n}{\to} R^n\overset{\rho}{\to} R^n\overset{\cong}{\to} M\]
where the first and last morphisms are the $R$-linear isomorphisms given by the choice of the basis $\{m_1,\cdots, m_n\}$ and $\rho$ is the $R$-linear morphism given by the matrix $(r_{ji})$. The morphism $\sigma^n:R^n\to R^n$ is a $\sigma$-linear isomorphism. It follows from these observations that $\phi$ is injective if and only if $\rho$ is injective and that the cokernel of $\phi$ is isomorphic (as $R$-module) to the cokernel of $\rho$. So, we have reduced the problem to the case, where $\phi$ is an $R$-linear morphism and not only $\sigma$-linear. So, we assume that $\phi:M\to M$ is an $R$-linear morphism.\\

We claim that cokernel of $\phi$ is a torsion $R$-module. Indeed, take a non-zero element $m\in\cokernel(\phi)$ and an infinite set of $k$-linearly independent elements of $R$, say $\{r_1,r_2,\cdots\}$. Since $\dim_k(\cokernel(\phi))<\infty$ we have that $\sum_{i=1}^nx_i(r_im)=0$ for some $n\in\BN$ and a subset $\{x_1,x_2,\cdots, x_n\}\neq\{0\}$ of $k$. Since $r_i$ are linearly independent, the sum $r:=\sum_{i=1}^nx_ir_i$ is not zero but $rm=0$, which shows that $m$ is a torsion element. This proves the claim.\\

Let us denote the fraction field of $R$ by $Q$. Cokernel of $\phi$ being a torsion $R$-module implies that the tensor product $\cokernel(\phi)\otimes_RQ$ is zero. It follows that $\phi\otimes\Id:M\otimes_RQ\to M\otimes_RQ$ is surjective. The $Q$-vector space $M\otimes_RQ$ has finite dimension, which implies that the morphism $\phi\otimes\Id$ is injective too. By flatness of $Q$ over $R$, we have $\kernel(\phi)\otimes_RQ=\kernel(\phi\otimes\Id)=0$. Since $M$ is a torsion-free $R$-module, its submodule $\kernel(\phi)$ is also torsion-free and therefore embeds in $\kernel(\phi)\otimes_RQ$ which is a trivial module. Hence $\kernel(\phi)=0$ and $\phi$ is injective.
\end{proof}

\begin{lem}
\label{lem 8}
Let $\CO=\BF_q\lbb\pi\rbb$ and $\CM$ be a $\pi$-divisible $\CO$-module scheme over a perfect field $k$ containing $\BF_q$. Then the contravariant Dieudonn\'e module of $\CM$, $D(\CM)$, is a finite free module over $\CO\otimes_{\BF_q}k\cong k\lbb\pi\rbb$.
\end{lem}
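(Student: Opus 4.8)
The plan is to compute the contravariant Dieudonné module $D(\CM)$ as the inverse limit of the Dieudonné modules $D(\CM_n)$ and to show that the natural $\CO\otimes_{\BF_q}k$-module structure makes it finite free over $k\lbb\pi\rbb$. First I would record the structural facts we already have: by Remark \ref{rem 9}, each $\CM_n$ is a finite locally free $\CO$-module scheme over $k$ of order $q^{nh}$, and the exact sequences $0\to\CM_1\to\CM_{n+1}\xrightarrow{\pi}\CM_n\to 0$ hold. Applying the contravariant Dieudonné functor (which is exact on finite group schemes of $p$-power order over a perfect field) yields short exact sequences
\[
0\to D(\CM_n)\xrightarrow{\pi} D(\CM_{n+1})\to D(\CM_1)\to 0
\]
of $\BE_k\otimes_{\BZ}\CO$-modules, or rather, since we are in equal characteristic $p$ and $\CO=\BF_q\lbb\pi\rbb$, of modules over the relevant Dieudonné ring tensored with $\CO$. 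By Remark \ref{rem31}, $D(\CM_n)$ carries an action of $\BE_k\otimes_{\BZ}\CO$; restricting attention to the $W(k)\otimes_{\BZ}\CO$-action, and noting that in the equal characteristic case $W(\BF_q)=\BF_q$ sits inside $\CO$ so that $W(k)\otimes_{\BF_q}\CO\cong k\lbb\pi\rbb$, I would set $D(\CM):=\invlim_n D(\CM_n)$ with transition maps the surjections dual to the inclusions $\CM_n\into\CM_{n+1}$.

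The key step is to show $D(\CM_n)$ is a free $k\lbb\pi\rbb/\pi^n$-module, i.e. a free $\CO/\pi^n$-module over $k$, of the correct rank $h$. I would argue by induction on $n$. For $n=1$: $\CM_1$ is a finite $\CO/\pi$-module scheme, i.e. an $\BF_q$-vector space scheme (a module scheme over the field $\BF_q=\CO/\pi$), so its Dieudonné module $D(\CM_1)$ is a module over $\BF_q\otimes_{\BF_q}k=k$, hence automatically free over $\CO/\pi\otimes_{\BF_q}k\cong k$; its rank over $k$ is determined by the order: $\dim_k D(\CM_1)=\log_p|\CM_1|/\log_p q\cdot$(normalization), and since $|\CM_1|=q^h$ one gets rank $h$ over $k=\CO/\pi\otimes k$. (Here one uses that the length of $D(\CM_1)$ over $W(k)$ equals $\log_p$ of the order of $\CM_1$, so its $k$-dimension is $\log_p q^h = fh$, matching an $h$-dimensional free module over $\CO/\pi\otimes_{\BF_q}k\cong k\times\cdots$ — more precisely over $\BF_q\otimes_{\BF_p}k$ which has $k$-dimension $f$.) For the inductive step I would apply the snake/exactness of $0\to D(\CM_n)\xrightarrow{\pi}D(\CM_{n+1})\to D(\CM_1)\to 0$: the submodule $\pi D(\CM_{n+1})\cong D(\CM_n)$ is free over $\CO/\pi^n\otimes k$ by induction, the quotient is free over $\CO/\pi\otimes k$, and one checks $D(\CM_{n+1})$ is annihilated exactly by $\pi^{n+1}$ (since $\CM_{n+1}$ is, and $D$ is faithful) and has the right length; a standard argument over the local ring $\CO/\pi^{n+1}\otimes_{\BF_q}k\cong k\lbb\pi\rbb/\pi^{n+1}$ (lifting a basis of $D(\CM_{n+1})/\pi$ and using Nakayama together with the length count) shows freeness of rank $h$.

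Finally I would pass to the limit. The transition maps $D(\CM_{n+1})\onto D(\CM_n)$ are surjective with kernel $\pi^n D(\CM_{n+1})$ (dual to $0\to\CM_n\to\CM_{n+1}\xrightarrow{\pi^n}\CM_1\cdots$, or more directly: $D$ applied to $\CM_{n+1}/\CM_n\cong$ the image of $\pi^n$), so $D(\CM)=\invlim_n D(\CM_n)$ is the $\pi$-adic completion of a compatible system of free $\CO/\pi^n\otimes_{\BF_q}k$-modules of rank $h$; choosing compatible bases (possible since each quotient map splits over the respective quotient ring once a basis is lifted) exhibits $D(\CM)$ as a free $k\lbb\pi\rbb$-module of rank $h$, using that $\invlim_n (k\lbb\pi\rbb/\pi^n)^h\cong k\lbb\pi\rbb^h$. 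The main obstacle I anticipate is the bookkeeping in the inductive step: one must be careful that the $\CO$-action and the Verschiebung-induced $k\lbb\pi\rbb$-structure agree, and that the length counts over $W(k)$ translate correctly into rank statements over $\CO\otimes_{\BF_q}k$ — in particular handling the factor $f=[\BF_q:\BF_p]$ correctly, since $D(\CM_1)$ has $W(k)$-length $fh$ but $\CO/\pi\otimes_{\BF_q}k$-rank $h$. An alternative, possibly cleaner, route would be to invoke Lemma \ref{lem 7} with $R=k\lbb\pi\rbb$, $\sigma$ the Frobenius, to control injectivity of Frobenius on $D(\CM)$ directly, but the inductive approach via the finite levels seems most robust.
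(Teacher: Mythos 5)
Your proposal is correct in outline but takes a genuinely different route from the paper, and it contains one bookkeeping error worth fixing. The paper's proof avoids induction entirely: it observes that each $D(\CM_n)$ is a finite-dimensional $k$-vector space (because $p\cdot\CM=0$), lifts a $k$-basis of $D(\CM_1)$ to elements of $D=\invlim D(\CM_n)$ to get a map $k\lbb\pi\rbb^r\to D$ that is surjective at each finite level by Nakayama, and then concludes freeness from torsion-freeness: since $\pi$ is an epimorphism on $\CM$, it acts injectively on $D$, so $D$ is a finitely generated torsion-free module over the discrete valuation ring $k\lbb\pi\rbb$, hence free. Your route instead proves freeness of each $D(\CM_n)$ over $k\lbb\pi\rbb/\pi^n$ by induction, using the exact sequences together with a Nakayama-plus-length argument, and then assembles compatible bases in the limit; this works and yields the finer statement that each finite level is free (recorded by the paper in the analogous mixed-characteristic setting, Remark \ref{rem 14}), at the cost of more bookkeeping. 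The error: the rank is $fh$, not $h$. Since $D(\CM_1)$ has $W(k)$-length $\log_p|\CM_1|=fh$ and is killed by $p$, it is a $k$-vector space of dimension $fh$, and $\CO/\pi\otimes_{\BF_q}k\cong k$, so the free rank over $k\lbb\pi\rbb\cong\CO\otimes_{\BF_q}k$ is $fh$; the rank would be $h$ only over each factor of the larger ring $\CO\otimes_{\BF_p}k\cong\prod_{\BZ/f\BZ}k\lbb\pi\rbb$, which is the decomposition the paper exploits later in the proof of Theorem \ref{thm 3} --- your parenthetical shows you conflated the two tensor products. The lemma asserts no rank, and your Nakayama/length step goes through verbatim with $fh$ in place of $h$ (indeed the length count $(n+1)fh$ only matches a free module of that rank), so the argument survives the correction. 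One last small point: the first map in your displayed sequence $0\to D(\CM_n)\to D(\CM_{n+1})\to D(\CM_1)\to 0$ is not literally multiplication by $\pi$ but the injection dual to the projection $\CM_{n+1}\onto\CM_n$; what your argument actually uses is that its image equals $\pi D(\CM_{n+1})$, which does hold.
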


%PROOF OF LEMMA 8
\begin{proof}[\textsc{Proof}]
Let us write $\CM=\bigcup \CM_n$ where $\CM_n$ are kernels of $\pi^n$ and write $D$ (respectively $D_n$) for $D(\CM)$ (respectively $D(\CM_n)$). Then $D=\invlim D_n$. The Dieudonn\'e module $D_n$ is finite over $W(k)$, the ring of Witt vectors on $k$, but $p\cdot \CM=0$, which implies that $p\cdot D=0$ and thus $D_n$ is finite over $W(k)/p=k$. Let $d_1,\cdots, d_r$ be elements in $D$ whose images in $D_1$ is a basis over $k$ and define a morphism $k\lbb\pi\rbb^r\to D$ by sending basis elements to $d_i$. This morphism induces morphisms $(k\lbb\pi\rbb/(\pi^n))^r\to D/\pi^nD\cong D_n$ which are surjective (since modulo $\pi$ they are surjective) an so, being an inverse limit of surjective morphisms, $k\lbb\pi\rbb^r\to D$ is surjective. This implies that $D$ is a finite module over $k\lbb\pi\rbb$. The action of $\pi$ on $\CM$ is surjective and therefore its action on $D$ is injective. It follows that $D$ is a torsion-free $k\lbb\pi\rbb$-module and hence is free over it, since $k\lbb\pi\rbb$ is a principal ideal domain.
\end{proof}

%THEOREM 2
\begin{thm}
\label{thm 2}
Finite dimensional $\pi$-divisible $ \CO $-module schemes are formally smooth.
\end{thm}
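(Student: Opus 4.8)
The plan is to reduce the statement to the local (infinitesimal/connected) case and there to use the theory of formal groups together with the finiteness of the $\pi^n$-torsion subschemes. First I would recall that formal smoothness is a local question on the base and, for a formal scheme presented as a direct limit $\CM = \dirlim \CM_n$ of its $\pi^n$-torsion, can be checked on the completion at the identity section. So I would begin by passing to a base that is the spectrum of a local ring, and then to the connected component $\CM^0$ of the identity: using Remark~\ref{rem 12}, over a Henselian local base the connected-\'etale sequence $0\to\CM^0\to\CM\to\CM^{\text{\'et}}\to 0$ is a sequence of $\pi$-divisible $\CO$-module schemes, the \'etale part is manifestly formally smooth (it is formally \'etale), and an extension of formally smooth formal groups is formally smooth; hence it suffices to treat $\CM^0$.

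Next, for the connected part I would invoke the structure theory of (commutative) formal groups: the formal completion $\widehat{\CM^0}$ at the identity is a formal group whose affine algebra is, locally on the base, a power series ring provided the module of invariant differentials $\omega_{\CM^0}$ is locally free of finite rank — equivalently, provided $\CM^0$ is $p$-divisible (a $\pi$-divisible $\CO$-module is a $p$-divisible group, cf. Remark~\ref{rem 11} in mixed characteristic, and in equal characteristic one argues directly with the $\pi^n$-torsion) and its $\pi$-torsion $\CM_1^0 = (\CM_1)^0$ is a finite locally free group scheme. The key finiteness input is Remark~\ref{rem 9}: each $\CM_n$ is finite locally free of constant rank $q^{nh}$, and the exact sequences $0\to\CM_i\to\CM_{i+j}\xrightarrow{\pi^i}\CM_j\to 0$ show that multiplication by $\pi$ on $\CM^0$ is an isogeny with finite locally free kernel. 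From this one gets that $\omega_{\CM^0}$ is a finitely generated locally free $\CO_S$-module (its rank is the dimension $d$ of $\CM$), and then the standard argument — lifting a basis of $\omega$ to coordinates and using that $\CM^0$ is the direct limit of the infinitesimal neighborhoods $\CM_n^0$, each finite and flat — shows the affine algebra of $\widehat{\CM^0}$ is $\CO_S\lbb x_1,\dots,x_d\rbb$, which is visibly formally smooth over $\CO_S$.

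Finally I would assemble the pieces: formal smoothness of $\widehat{\CM^0}$ plus formal smoothness of $\CM^{\text{\'et}}$ plus stability of formal smoothness under extensions of commutative formal group schemes gives formal smoothness of $\CM$ over the Henselian local base, and since formal smoothness is local on $S$ and insensitive to Henselization, this yields the theorem in general. I expect the main obstacle to be the precise verification that the affine algebra of the connected part is genuinely a power series ring over $\CO_S$ in the number of variables equal to $\dim\CM$ — i.e. controlling $\omega_{\CM^0}$ and the compatibility of the coordinates across the levels $\CM_n^0$ — rather than the formal-group-theoretic bookkeeping, which is routine once that local freeness is in hand. (Over a field this is classical; the content here is making it work with $\CO_S$-coefficients and with a possibly nontrivial base, where one leans on the finiteness and flatness of all the $\CM_n$ established in Remark~\ref{rem 9}.)
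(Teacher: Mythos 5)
Your reduction to the connected part over a (Henselian, or after base change, algebraically closed) local base is fine and matches the paper's first step, and your treatment of the mixed characteristic case is essentially correct: there $\CM$ is a genuine $p$-divisible group by Remark~\ref{rem 11}, $p=F_{\CM}\circ V_{\CM}$ is an epimorphism, hence so is the Frobenius, and for a connected formal scheme this is equivalent to smoothness. The gap is in the equal characteristic case, which is where the actual content of the theorem lies and which your proposal dispatches with the phrase ``in equal characteristic one argues directly with the $\pi^n$-torsion.'' Two things go wrong. First, the general principle you invoke --- that a connected formal group with $\omega$ locally free of finite rank has a power series ring as affine algebra --- is false in characteristic $p$ without a divisibility input (the infinitesimal group $\alpha_p$, with one-dimensional Lie algebra, already violates it), so local freeness of $\omega_{\CM^0}$ cannot by itself produce the coordinates you want. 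Second, the divisibility input that rescues the mixed characteristic case is unavailable here: when $\CO=\BF_q\lbb\pi\rbb$ the whole module $\CM$ is killed by $p$, so $\CM$ is \emph{not} $p$-divisible, multiplication by $p$ is the zero map, and the factorization $p=F\circ V$ gives no information about the Frobenius. Your proposal never explains why divisibility by $\pi$ alone (with $p$ acting as zero) forces the Frobenius to be an epimorphism, and it never uses the finite-dimensionality hypothesis in an essential way at this point.

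The paper closes exactly this gap by a Dieudonn\'e-theoretic argument: by Lemma~\ref{lem 8} the contravariant Dieudonn\'e module $D=D(\CM)$ is a finite free module over $k\lbb\pi\rbb$; the Frobenius of $D$ is a $\sigma$-linear endomorphism whose cokernel is dual to the tangent space of $\CM$ and is therefore finite-dimensional over $k$ (this is where ``finite dimensional'' enters); and Lemma~\ref{lem 7} (a semilinear endomorphism of a finite free module over an infinite-dimensional domain with finite-dimensional cokernel is injective) then shows that the Frobenius of $D$ is injective, equivalently that the Frobenius of $\CM$ is an epimorphism, which gives smoothness. To repair your proof you would need to supply an argument of this kind --- or some other mechanism converting $\pi$-divisibility plus finite-dimensionality of the tangent space into surjectivity of the Frobenius --- at the point where you currently assert that the affine algebra of $\widehat{\CM^0}$ is $\CO_S\lbb x_1,\dots,x_d\rbb$.
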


%PROOF OF THEOREM 2
\begin{proof}[\textsc{Proof}]
Note that we may assume that the base scheme is an algebraically closed field $k$.\\

Let $\CM$ be a $\pi$-divisible $\CO$-module scheme over $k$. If $k$ is has characteristic different from $p$, then $ \CM $ is \'etale and so it is smooth. So, we can assume that the characteristic of $k$ is $p$. Since $k$ is perfect, $\CM$ splits into the \'etale and connected factors and so, $\CM$ is smooth if and only if both the \'etale and connected parts are smooth. The \'etale factor is smooth and so we may assume that $\CM$ is connected. As for connected formal schemes, being smooth is equivalent to the Frobenius morphism being an epimorphism, we will show that the Frobenius morphism is an epimorphism.\\

In the mixed characteristic case, by Remark \ref{rem 11}, every $\pi$-divisible module is $p$-divisible. Since the multiplication by $p$ factors through Frobenius, i.e., $p=F_{\CM}\circ V_{\CM}$, and multiplication by $p$ is an epimorphism, we see that $F_{\CM}$ is an epimorphism too.\\

Now, assume that $\CO$ has characteristic $p$ and so $\CO=\BF_q\lbb \pi\rbb$. By Lemma \ref{lem 8}, the contravariant Dieudonn\'e module of $\CM$, $D:=D(\CM)$ is a finite free $k\lbb\pi\rbb$-module. Denote by $\sigma:k\to k$ the Frobenius morphism of $k$, i.e., $\sigma(x)=x^p$ for all $x\in k$. It has a natural extension to $k\lbb\pi\rbb$ by sending $\pi$ to itself, and also denote this extension by $\sigma$. Since the action of $\pi$ on $\CM$ is a morphism of formal group schemes, it commutes with the Frobenius of $\CM$ and thus the Frobenius morphism of $D$ is $\sigma$-linear morphism. The dimension of the tangent space of $\CM$ is equal to the dimension of $\CM$ and the tangent space of $\CM$ is isomorphic to the dual of the cokernel of the Frobenius of $D$. This shows that the cokernel of $D$ has finite dimension over $k$. It follows from Lemma \ref{lem 7} that Frobenius of $D$ is injective and therefore the Frobenius of $\CM$ is an epimorphism. Hence the smoothness of $\CM$.
\end{proof}

%REMARK 16
\begin{rem}
\label{rem 16}
What we have shown in the last Theorem is that the Frobenius morphism of the contravariant Dieudonn\'e module of a $ \pi$-divisible $\CO$-module scheme is injective. We will see in the next lemma that similarly, the Verschiebung of the covariant Dieudonn\'e module of a $ \pi$-divisible $\CO$-module scheme is injective as well.
\end{rem}

\begin{comment}
%REMARK 13
\begin{rem}
\label{rem 13}
On the action of $\CO$ on the Lie algebra of $\CM$:
\begin{itemize}
\item[1)] Usually people assume scalar multiplication.
\item[2)] If the dimension of $\CM$ is 1, then the Lie algebra of $\CM$ is canonically isomorphic to the ground field $k$, and therefore $k$ inherits a natural $\CO$-algebra structure with $\pi$ going to zero. It follows that the action of $\CO$ on $\CL ie(\CM)$  is by scalar multiplication.
\end{itemize}
\end{rem}
\end{comment}

%LEMMA 13
\begin{lem}
\label{lem 13}
Let $ \CM $ be a finite dimensional $ \pi $-divisible $ \CO $-module scheme over a perfect field of characteristic $p$, then the Verschiebung morphism of the covariant Dieudonn\'e module of $ \CM $ is injective.
\end{lem}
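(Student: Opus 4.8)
The strategy is to transport the statement to the \emph{contravariant} side, where Theorem~\ref{thm 2} already provides the injectivity of Frobenius. Recall that the covariant Dieudonn\'e module of $\CM$ is $D_*(\CM)=\invlim_n D_*(\CM_n)=\invlim_n D^*(\CM_n^*)=D^*(\CM^*)$, where $\CM^*$ is the dual $\pi$-divisible $\CO$-module scheme of $\CM$ (Remark~\ref{rem 22}); passing to Cartier duals at the finite levels, $D_*(\CM)$ is canonically the linear dual of $D^*(\CM)$ over the relevant ground ring, with the roles of $F$ and $V$ interchanged. In particular the Verschiebung of $D_*(\CM)$ is the transpose of the Frobenius of $D^*(\CM)$.

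If $\CO$ has mixed characteristic, then $\CM$ is canonically a $p$-divisible group (Remark~\ref{rem 11}), so $D_*(\CM)$ is a finite free $W(k)$-module; since $FV=p$ on $D_*(\CM)$ and $W(k)$ is an integral domain, $V$ is injective (if $Vy=0$ then $py=FVy=0$, hence $y=0$). Now suppose $\CO$ has equal characteristic, $\CO=\BF_q\lbb\pi\rbb$. Applying Lemma~\ref{lem 8} to $\CM^*$ --- again a $\pi$-divisible $\CO$-module scheme by Remark~\ref{rem 22} --- shows that $D_*(\CM)=D^*(\CM^*)$ is finite free over $\CO\otimes_{\BF_q}k\cong k\lbb\pi\rbb$. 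By Theorem~\ref{thm 2} (cf.\ Remark~\ref{rem 16}) the Frobenius $F$ of the finite free $k\lbb\pi\rbb$-module $D^*(\CM)$ is injective, and, exactly as in the proof of that theorem, its cokernel $C$ is dual to the tangent space of $\CM$, hence of finite length over $k\lbb\pi\rbb$. Applying $\Hom_{k\lbb\pi\rbb}(-,k\lbb\pi\rbb)$ to $0\to D^*(\CM)\xrightarrow{F} D^*(\CM)\to C\to 0$ and using $\Hom_{k\lbb\pi\rbb}(C,k\lbb\pi\rbb)=0$ (since $C$ is torsion), one sees that the transpose of $F$ --- that is, the Verschiebung of $D_*(\CM)$ --- is injective, with cokernel $\Ext^1_{k\lbb\pi\rbb}(C,k\lbb\pi\rbb)$, again of finite length. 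Equivalently, once this cokernel is known to be finite-dimensional over $k$, injectivity of $V$ follows at once from Lemma~\ref{lem 7}, applied with $R=k\lbb\pi\rbb$, the ring automorphism $\sigma^{-1}$ (where $\sigma$ is the Frobenius of $k$ extended to $k\lbb\pi\rbb$ by $\pi\mapsto\pi$), the finite free module $D_*(\CM)$, and the $\sigma^{-1}$-linear map $V$.

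The step that really needs to be nailed down is the identification in the first paragraph. One must \emph{not} simply quote ``the Frobenius of $D^*(\CM^*)$ is injective'' from Theorem~\ref{thm 2}: the operator we need is the \emph{Verschiebung} of $D_*(\CM)=D^*(\CM^*)$, which is a genuinely different operator (for instance, for a connected $\CM$ in equal characteristic the Frobenius of $D^*(\CM)$ is already injective while its Verschiebung need not be). The whole content is therefore the passage ``$V$ on $D_*(\CM)$ equals the transpose of $F$ on $D^*(\CM)$'', or concretely the finiteness of $\cokernel\bigl(V\colon D_*(\CM)\to D_*(\CM)\bigr)$; granting that, the conclusion is formal, in exact parallel with Theorem~\ref{thm 2}.
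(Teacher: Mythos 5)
Your route is genuinely different from the paper's, and half of it works. The paper never leaves the covariant side: it sets $K_i:=\kernel(V\colon D_i\to D_i)\cong\CL ie(\CM_i)$, notes that the $k$-dimensions of the $K_i$ are bounded by $\dim_k\CL ie(\CM)$ so that the inclusions $K_i\into K_{i+1}$ stabilize from some $n_0$ on, deduces that the transition maps $K_{i+n_0}\to K_i$ of the inverse system are zero (the round trip $K_{i+n_0}\to K_i\into K_{i+n_0}$ is multiplication by $\pi^{n_0}$, which kills $K_{n_0}\subset D_{n_0}$, and $K_{n_0}\into K_{i+n_0}$ is by then an isomorphism), and concludes that $\kernel(V\colon D\to D)=\invlim K_i=0$ by left exactness of $\invlim$. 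Your mixed-characteristic argument ($D$ torsion-free over $W(k)$ and $FV=p$) is correct and shorter than this, provided you import the classical fact that the Dieudonn\'e module of a $p$-divisible group is finite free over $W(k)$; you cannot cite Lemma \ref{lem 10} for it, since that lemma is proved later using the very statement at hand.

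The equal-characteristic half has a genuine gap, and it is exactly the one you flag yourself: the assertion that $D_*(\CM)=D^*(\CM^*)$ is the $k\lbb\pi\rbb$-linear dual of $D^*(\CM)$ with $V$ adjoint to $F$. At finite level the duality between $D^*(\CM_n)$ and $D^*(\CM_n^*)$ is a Pontryagin-type pairing into $CW$, not a linear pairing into $k\lbb\pi\rbb$; turning this system of pairings into a perfect $k\lbb\pi\rbb$-bilinear pairing on the limits, compatible with the transition maps and exchanging $F$ and $V$, is the entire content of your argument in this case, and it is neither carried out in the proposal nor available anywhere in the paper (the paper only \emph{defines} $D_*$ as $D^*$ of the Cartier dual; it never constructs such a pairing, and since $p$ kills $\CM$ here one cannot simply quote the standard duality for $p$-divisible groups). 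Your fallback --- ``once $\cokernel(V\colon D_*(\CM)\to D_*(\CM))$ is known to be finite-dimensional over $k$, apply Lemma \ref{lem 7}'' --- does not close the loop either: that finiteness is Lemma \ref{lem 18}, which appears later and whose proof is of the same nature as the paper's proof of the present lemma (stabilization of the kernels $K_i$). So as written, the equal-characteristic case reduces the lemma to an unproven statement of comparable difficulty; either prove the perfect pairing, or argue directly with the kernels $K_i$ as the paper does.
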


%PROOF OF THEOREM 2
\begin{proof}[\textsc{Proof}]
Let $D$ and respectively $D_i$ (for any natural number $i$) denote the covariant Dieudonn\'e module of $\CM$ and respectively of $\CM_i$, the kernel of $ \pi^i:\CM\to\CM $. Let us also denote by  $K_i$ the kernel of the Verschiebung of $D_i$. We know that  $ K_i\cong \CL ie(\CM_i)$, and so $ \dim_kK_i\leq \dim_k\CL ie(\CM)<\infty $. Since the inclusion $ \eta:D_i \into D_{i+j} $ ($j$ a natural number) is compatible with the Verschiebungen, it induces a morphism between kernels of $V$, which we denote also by $ \eta:K_i\into K_{i+j} $. Similarly, the epimorphism $ \zeta:D_{i+j}\onto D_i $ induces the morphism $ \zeta:K_{i+j}\to K_i $, and as we have seen before, the composition  $K_{i+j}\ovset{\zeta}{\to}K_i\ovset{\eta}{\into} K_{i+j}$ is the multiplication by $ \pi^j $ (we have seen it for the composition of the morphisms between Dieudonn\'e modules, but as $ K_i $ is a submodule of $ D_i $, this is also true for the morphisms between these kernels).\\

Now, since the dimension of $K_i$ is bounded above, and we have inclusions $ K_i\into K_{i+1} $, there exists a natural number $n_0$, such that for all $ i\geq n_0 $, we have $ \dim_kK_i=\dim_kK_{n_0} $ and so $ \eta :K_i\into K_{i+j}$ is an isomorphism for any natural number $j$ and any $ i\geq n_0 $. We claim that the morphisms $ \zeta:K_{i+n_0}\to K_{i} $ are zero for all $ i\geq 1 $. Indeed, the composition $  K_{i+n_0}\to K_{i}\into K_{i+n_0} $ is the multiplication by $\pi^{n_0}$, and so, composed with the inclusion $ K_{n_0}\into K_{i+n_0} $ is zero (note that $ K_{n_0}\subset D_{n_0} $ and $ D_{n_0} $ is killed by $ \pi^{n_0} $), which implies that the composition $ K_{n_0}\into K_{i+n_0}\to K_i $ is zero ($ K_{i}\into K_{i+n_0} $ is injective). But, the morphism $ K_{n_0}\into K_{i+n_0}$ is actually an isomorphism by the choice of $ n_0 $, and therefore the morphism $ K_{i+n_0}\to K_i $ is zero and the claim is proved.\\

For every natural number $ i $, we have an exact sequence
$0\to K_i\to D_i\ovset{V}{\to} D_i.$
Taking the inverse limit over $i$ with the transition morphisms $ \zeta $, and recalling that $D$ is the inverse limit of $D_i$, we obtain the exact sequence
$0\to \invlim K_i\to D\ovset{V}{\to} D$ (note that the inverse limit is a left exact functor). But, since for every $i\geq 1$, the transition morphism $ K_{i+n_0}\to K_{i} $ is zero, it follows that the inverse limit $ \invlim K_i $ is trivial, and hence $V:D\to D$ is injective.
\end{proof}

%THEOREM 3
\begin{thm}
\label{thm 3}
Let $S$ be a scheme and $\CM$ a $\pi$-divisible $ \CO $-module scheme over $S$. Then the height $h(\CM):S\to \BQ_{\geq 0}$ takes integer values.
\end{thm}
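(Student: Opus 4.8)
The plan is to reduce to the two standard cases and exploit the structural results already established. Since the height $h(\CM)$ is by definition a locally constant function $S \to \BQ_{\geq 0}$, and the question of whether it takes integer values is local on $S$, I may first replace $S$ by the spectrum of a field, and then, since $\CM[\pi]$ is finite locally free, by the spectrum of an algebraically closed field $k$. The order of $\CM[\pi]$ is $q^h$, so I must show that $q^h$ is an honest power of $q$, i.e., that $h \in \BN$.

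First I would dispose of the mixed characteristic case. By Remark \ref{rem 11}, a $\pi$-divisible $\CO$-module scheme $\CM$ of height $h$ over any base is canonically a $p$-divisible group, and the computation there shows that its height as a $p$-divisible group equals $efh$, where $e$ and $f$ are the ramification index and residue degree of $\CO/\BZ_p$. But the height of a $p$-divisible group is always a non-negative integer (it is the $\log_p$ of the rank of $G[p]$, a finite locally free group scheme, whose rank is a power of $p$ by Definition \ref{BT}). Hence $efh \in \BN$. This does not yet give $h \in \BN$, so I would look more carefully: over a field, $\CM[\pi]$ is a genuine finite group scheme of order $q^h = p^{fh}$, and the order of a finite group scheme over a field is a prime power; therefore $fh \in \BN$. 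To upgrade $fh \in \BN$ to $h \in \BN$ I would use that $\CM[\pi]$ is killed by $\pi$, hence is a module scheme over $\CO/\pi \cong \BF_q$, so that its Dieudonné module (or its coordinate ring, in the étale case, via the equivalence of Proposition \ref{etalepidiv} and its local analogue) is naturally a vector space over $\BF_q$; counting $\BF_q$-dimensions shows the order is a power of $q = p^f$, i.e. $h \in \BN$. Concretely, in the connected/local case one invokes Lemma \ref{lem 8} in equal characteristic and the analogous statement (the covariant Dieudonné module of $\CM$ being free over $W(k)\widehat\otimes_{\BZ_p}\CO$, alluded to in the introduction) in mixed characteristic: a finite free module of rank $h$ over $\CO/\pi \otimes_{\BF_q} k$ has $q^h$ elements after the appropriate length count.

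For the equal characteristic case, $\CO \cong \BF_q\lbb \pi \rbb$, and over the perfect closure $k$ of the residue field I would apply Lemma \ref{lem 8} directly: the contravariant Dieudonné module $D(\CM)$ is a finite free module over $\CO \otimes_{\BF_q} k \cong k\lbb \pi \rbb$, say of rank $r$. Then $D(\CM_1) = D(\CM)/\pi D(\CM)$ is an $r$-dimensional $k$-vector space, and since contravariant Dieudonné theory over a perfect field is an (anti)equivalence reversing orders appropriately, $\CM_1 = \CM[\pi]$ has order $p^{\dim_{\BF_p} D(\CM_1)}$; but $D(\CM_1)$ is also a module over $\CO/\pi \cong \BF_q$ of dimension $r$, so its $\BF_p$-dimension is $fr$ and $\mathrm{ord}(\CM[\pi]) = p^{fr} = q^r$. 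Comparing with $q^h$ gives $h = r \in \BN$. One subtlety: the base field need not be perfect. Here I would invoke the base-change compatibility of the relevant constructions — passing to the perfect closure does not change the order of the finite locally free group scheme $\CM[\pi]$, since order is insensitive to field extension — so the integrality over the perfect closure implies it over the original field.

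The main obstacle I anticipate is bookkeeping the passage from "order is a power of $p$" to "order is a power of $q$": the former is automatic for any finite group scheme over a field, but the latter genuinely uses the $\CO$-module structure, and the cleanest route is through the freeness of the Dieudonné module over $\CO \otimes k$ (equal characteristic, Lemma \ref{lem 8}) or over $W(k)\widehat\otimes_{\BZ_p}\CO$ (mixed characteristic). I would also need to handle the étale part separately — but there Proposition \ref{etalepidiv} already shows $\CM$ becomes constant $\ul{(\CO/\pi^n)}^h$ after a finite étale cover, which makes the order computation transparent. Splitting $\CM$ into connected and étale parts over the (perfect, algebraically closed) residue field via Remark \ref{rem 12}, treating each, and multiplying orders finishes the argument.
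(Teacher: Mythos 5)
Your reduction to an algebraically closed field, the connected--\'etale splitting, and the \'etale case (where $\CM_1(k)$ is a finite $\BF_q$-vector space, hence of order a power of $q$) all match the paper. The gap is in the connected case, at exactly the step you yourself flag as the main obstacle: passing from ``order is a power of $p$'' to ``order is a power of $q$''. Your proposed mechanism --- that $\CM[\pi]$ is an $\BF_q$-module scheme, so its Dieudonn\'e module is ``naturally a vector space over $\BF_q$'' and counting dimensions gives a power of $q$ --- does not work. Over an algebraically closed $k$, the Dieudonn\'e module of a finite group scheme killed by $p$ and equipped with an $\BF_q$-action is a module over $k\otimes_{\BF_p}\BF_q\cong\prod_{j\in\BZ/f\BZ}k$, i.e.\ a product of $f$ vector spaces $C_j$ whose $k$-dimensions need not be equal; the order is $p^{\sum_j\dim_kC_j}$, which need not be a power of $q$ (concretely, $\alpha_p$ with a scalar $\BF_q$-action has order $p$). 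Your equal-characteristic computation via Lemma \ref{lem 8} contains the same problem in a different guise: the order of $\CM_1$ is $p^{\dim_k D(\CM_1)}$, not $p^{\dim_{\BF_p}D(\CM_1)}$ (the latter is infinite for infinite $k$), so the rank of $D(\CM)$ over $k\lbb\pi\rbb$ comes out as $fh$, not $h$, and you only recover $fh\in\BN$, which you already had.

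The missing ingredient is where $\pi$-divisibility actually enters. In the paper's equal-characteristic argument, the Frobenius is injective on $D(\CM)$ (Theorem \ref{thm 2} via Lemma \ref{lem 7}, which uses that multiplication by $\pi$ is an epimorphism), and since $F$ is $\sigma$-linear it maps the factor $C_j$ of the decomposition $D=\prod_{j\in\BZ/f\BZ}C_j$ into $C_{j-1}$; injectivity forces $h_j\le h_{j-1}$ cyclically, hence all the $h_j$ are equal to a common value $h$, and $\sum_jh_j=fh(\CM)$ then gives $h(\CM)=h\in\BN$. In the mixed-characteristic connected case the paper instead quotes Waterhouse's theorem that the height of $\CM$ as a $p$-divisible group is divisible by $[K:\BQ_p]=ef$. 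Some such semilinearity or divisibility input beyond the bare $\BF_q$-module structure on the $\pi$-torsion is indispensable; note also that the freeness statement you invoke in mixed characteristic (Lemma \ref{lem 10}) appears after this theorem in the paper and its proof contains the very same equal-ranks argument, so appealing to it here would be circular.
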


%PROOF OF THEOREM 3
\begin{proof}[\textsc{Proof}]
Since the height is invariant under base change, we may assume that $S$ is the spectrum of an algebraically closed field $k$. The order of finite group schemes is multiplicative with respect to exact sequences and it follows from Remark \ref{rem 12} that the height of $\CM$ is the sum of the heights of its connected and \'etale parts. So, we prove the statement for \'etale and connected $\pi$-divisible $\CO$-modules.\\

Assume that $\CM$ is \'etale. Then by Remark \ref{rem 12}, $\CM_1$ is also \'etale, and since $k$ is separably closed, $\CM_1$ is a constant group scheme, and so, the order of $\CM_1$ is equal to the order of the group $\CM_1(k)$, which is a module over $\CO/\pi=\BF_q$. The height of $\CM$ is by definition equal to $\log_q|\CM_1|=\log_q| \CM_1(k)|$. But being a vector space over the field $\BF_q$, the order of $\CM_1(k)$ is a power of $q$ and therefore the height of $\CM$ is a natural number.\\

Now, assume that $\CM$ is connected and so $k$ has characteristic $p$. By Theorem \ref{thm 2}, $\CM$ is smooth and therefore it is a commutative formal Lie group.
Again, we consider the problem in mixed and equal characteristic cases separately. If $\CO$ has characteristic zero, then by Remark \ref{rem 11}, $\CM$ is a $p$-divisible group of height $efh(\CM)$. Note that for $p$-divisible groups, the height is always a natural number, which follows from its definition (the residue degree is $1$), i.e., $efh(\CM)\in \BN$. From Theorem 1 of \cite{W2}, we know that the height of $\CM$, regarded as a $p$-divisible group, is divisible by the degree of the extension $K/\BQ_p$, which is $ef$. Therefore, $h(\CM)$ is a natural number.\\

Now, assume that $\CO$ has characteristic $p$ and therefore is isomorphic to $\BF_q\lbb\pi\rbb$. The Dieudonn\'e module of $\CM$, $D:=D(\CM)$, is a finitely generated module over the ring $k\otimes_{\BF_p}\CO$ (see Lemma \ref{lem 8}). This ring is isomorphic to the product $$\underset{\text{Gal}(\BF_q/\BF_p)}{\prod}k\otimes_{\BF_q}\CO=\underset{j\, \text{mod}\, f}{\prod}k\otimes_{\BF_q}\CO,$$ where we identify the Galois group Gal$(\BF_q/\BF_p)$ with the group $\BZ/f\BZ$ by sending the Frobenius to $1$ and the isomorphism is given explicitly by
$$\theta:k\otimes_{\BF_p}\BF_q\lbb\pi\rbb\to\underset{j\, \text{mod}\, f}{\prod}k\otimes_{\BF_q}\BF_q\lbb\pi\rbb=\underset{j\, \text{mod}\, f}{\prod}k\lbb\pi\rbb$$
$$x\otimes(\sum_{i=0}^{\infty}a_i\pi^i)\mapsto (x^{p^j}\otimes(\sum_{i=0}^{\infty}a_i\pi^i))_j=(\sum_{i=0}^{\infty}(a_ix^{p^{j}})\pi^i)_j.$$
This decomposition of $k\otimes_{\BF_p}\CO$ gives a decomposition of $D$, so $D=\underset{j\, \text{mod}\, f}{\prod}C_j$ where each $C_j$ is a module over $k\lbb\pi\rbb$. We know from Lemma \ref{lem 8} that $D$ is a finite free module over $k\lbb\pi\rbb$, it follows that $C_j$ are also finite free modules over $k\lbb\pi\rbb$. Let us denote by $h_j$ the rank of $C_j$ over $k\lbb\pi\rbb$. Then the rank of $D$ is $\sum_{j\, \text{mod}\, f}h_j$. Using the isomorphism $\theta$ one can show that the Frobenius morphism of $D$ restricted to $C_j$ maps to $C_{j-1}$ (note that the addition $j-1$ is modulo $f$), i.e., we have $F:C_j\to C_{j-1}$. As we have seen in the proof of Theorem \ref{thm 2}, Lemma \ref{lem 7} implies that the Frobenius of $D$ is injective, therefore, we have $h_j\leq h_{j-1}$. This being true for every $j$ mod $f$, we conclude that $h_j=h_{j'}$ for all $j$ and $j'$ in $\BZ/f\BZ$. Call this common number $h$. As we said above, we have then that the rank of $D$ as a module over $k\lbb\pi\rbb$ is equal to $fh$. But we know from Dieudonn\'e theory that the rank of $D$ (which is equal to the length of $D(\CM_1)$ over $k$), is equal to $\log_p|\CM_1|=fh(\CM)$. Hence, $h(\CM)=h$ and it is a natural number.
\end{proof}

%\subsection{Exterior powers}
\section{Exterior powers}

In this section, we define the notion of exterior powers of $ \pi $-divisible $ \CO $-module schemes. These are the objects that we will be interested in and their existence and construction are the main challenge of this work.  

%DEFINITION 41
\begin{dfn}
\label{def41}
Let $ \CM_0, \dots, \CM_r, \CM $ and $\CN$ be $\pi$-divisible $ \CO $-module schemes over a scheme $S$.
\begin{itemize}
\item[(i)] A \emph{$ \CO $-multilinear morphism} $ \phi:\CM_1\times\dots\times \CM_r\to \CM_0 $ is a system of $ \CO $-multilinear morphisms $ \{\phi_n:\CM_{1,n}\times\dots\times \CM_{r,n}\to \CM_{0,n}\}_n $ over $S$, compatible with the projections $ \pi.:\CM_{i,n+1}\twoheadrightarrow \CM_{i,n} $ and $ \pi.:\CM_{0,n+1}\twoheadrightarrow \CM_{0,n} $. In other words, it is an element of the inverse limit $$\uset{n}{\invlim}\Mult_S(\CM_{1,n}\times\dots\times \CM_{r,n}, \CM_{0,n}),$$ with transition homomorphisms induced by the projections $ \pi.:\CM_{i,n+1}\twoheadrightarrow \CM_{i,n} $. Denote the group of $ \CO $-multilinear morphisms from $\CM_1\times\dots\times \CM_r$ to $\CM_0$ by $ \Mult_S^{\CO}(\CM_1\times\dots\times \CM_r, \CM_0) $.
\item[(ii)] A \emph{symmetric} $ \CO $-multilinear morphism $ \phi:\CM^r\to \CN $ is a system of symmetric $ \CO $-multilinear morphisms $ \{\phi_n:\CM^r_n\to \CN_n\}_n $ over $S$, compatible with the projections $ \pi.:\CM_{n+1}\twoheadrightarrow \CM_n $ and $ \pi.:\CN_{n+1}\twoheadrightarrow \CN_n $. In other words, it is an element of the inverse limit $\uset{n}{\invlim}\Sym^{\CO}_S(\CM_n^r,\CN_n) $, with transition homomorphisms induced by the projections $ \pi.:\CM_{n+1}\twoheadrightarrow \CM_n $ and $ \pi.:\CN_{n+1}\twoheadrightarrow \CN_n $. Denote the group of symmetric $ \CO $-multilinear morphisms from $\CM^r$ to $\CN$ by $ \Sym^{\CO}_S(\CM^r,\CN) $.
\item[(iii)] An \emph{alternating} $ \CO $-multilinear morphism $ \phi:\CM^r\to \CN $ is a system of alternating $ \CO $-multilinear morphisms $ \{\phi_n:\CM^r_n\to \CN_n\}_n $ over $S$, compatible with the projections $ \pi.:\CM_{n+1}\twoheadrightarrow \CM_n $ and $ \pi.:\CN_{n+1}\twoheadrightarrow \CN_n $. In other words, it is an element of the inverse limit $\uset{n}{\invlim}\Alt^{\CO}_S(\CM_n^r,\CN_n) $, with transition homomorphisms induced by the projections $ \pi.:\CM_{n+1}\twoheadrightarrow \CM_n $ and $ \pi.:\CN_{n+1}\twoheadrightarrow \CN_n $. Denote the group of alternating $ \CO $-multilinear morphisms from $\CM^r$ to $\CN$ by $ \Alt^{\CO}_S(\CM^r,\CN) $.
\end{itemize}
\end{dfn}

%REMARK 023
\begin{rem}
\label{rem023}
The same arguments as in the proof of Proposition \ref{prop023} show that under this functor, the group of multilinear, respectively symmetric and respectively alternating morphisms of $p$-divisible groups (respectively of truncated Barsotti-Tate groups of level $i$) over $ X $ is isomorphic to the group of multilinear, respectively symmetric and respectively alternating morphisms of $p$-divisible groups (respectively of truncated Barsotti-Tate groups of level $i$) over $ \FX $.
\end{rem}

%DEFINITION 42
\begin{dfn}
\label{def42}
Let $ \CM, \CM' $ be $ \pi $-divisible $ \CO $-module schemes over $S$. An alternating $\CO$-multilinear morphism $\lambda:\CM^r\to \CM'$, or by abuse of terminology, the $ \pi $-divisible $ \CO $-module scheme $\CM'$, is called an \emph{$r^{\text{th}}$ exterior power} of $\CM$ over $\CO$, if for all $ \pi $-divisible $ \CO $-module scheme $\CN$ over $S$, the induced morphism $$\lambda^*:\Hom_S(\CM',\CN)\to \Alt_S^{\CO}(\CM^r,\CN),\quad \psi\mapsto \psi\circ\lambda,$$ is an isomorphism. If such $\CM'$ and $\lambda$ exist, we write $\underset{{\CO}}{\bigwedge}^r\CM$ for $\CM'$ and call $ \lambda $ \emph{the universal alternating morphism} defining $\underset{\CO}{\bigwedge}^r\CM$.
\end{dfn}

%\subsection{The \'etale case}
\section{The main theorem: the \'etale case}

The category of finite \'etale group schemes is equivalent to the category of finite Abelian groups with a continuous action of the \'etale fundamental group of the base. Also, under this equivalence, finite \'etale $ \CO $-module schemes correspond to finite $ \CO $-modules with a continuous. This ``dictionary" allows us to construct the tensor objects and in particular the exterior powers of finite \'etale $ \CO $-module schemes and $\pi$-divisible $ \CO $-module schemes. This is what we do in this section.

%PROPOSITION 025
\begin{prop}
\label{prop025}
Let $S$ be a base scheme.
\begin{itemize}
\item Let $H$ be a finite \'etale $ \CO $-module scheme over $S$. Then there exists a finite \'etale $ \CO $-module scheme $ \epO^rH $ over $S$ and an alternating morphism $ \lambda:H^r\to \epO^rH $ such that for all $ \CO $-module schemes $X$ over $S$ the induced homomorphism \[ \lambda^*:\Hom^{\CO}_S(\epO^rH,X)\to \Alt_S^{\CO}(H^r,X)\] is an isomorphism. Furthermore, if $T$ is an $S$-scheme, then the canonical homomorphism $\epO^r(H_T)\to (\epO^rH)_T$, induced by the universal property of $ \epO^r(H_T) $ and $ \lambda_T $, is an isomorphism. In other words, the exterior powers of $H$ exist, are again \'etale and their construction commutes with arbitrary base change.
\item Let $G$ be an \'etale $\pi$-divisible $ \CO $-module over $S$ of height $h$. Then there exists a $\pi$-divisible group $ \epO^rG$ over $S$, of height $ \binom{h}{r} $, and an alternating morphism $ \lambda:G^r\to \epO^rG $ such that for all $\pi$-divisible $ \CO $-module $Y$ over $S$ the induced homomorphism \[ \lambda^*:\Hom_S^{\CO}(\epO^rG,Y)\to \Alt_S^{\CO}(G^r,Y)\] is an isomorphism. Furthermore, if $T$ is an $S$-scheme, then the canonical homomorphism $\epO^r(G_T)\to (\epO^rG)_T$, induced by the universal property of $ \epO^r(G_T) $ and $ \lambda_T $, is an isomorphism. In other words, the exterior powers of $G$ exist, are again \'etale and their construction commutes with arbitrary base change.
\end{itemize}
\end{prop}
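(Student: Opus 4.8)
The plan is to build the two exterior powers through the \'etale dictionary and to verify the universal properties by \'etale descent, reducing in each case to a finite \'etale cover of $S$ over which the relevant object becomes constant; there the universal property degenerates to the ordinary universal property of the exterior power of a locally free $\CO$-module.

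For the first assertion I would reduce to $S$ connected (treating the connected components separately and reassembling), fix a geometric point $\sbar$, and let $M$ be the finite $\CO$-module with continuous $\pi^{\text{\'et}}_1(S,\sbar)$-action corresponding to $H$. Since the action factors through a finite quotient, $\bigwedge_\CO^rM$ inherits a continuous action; I define $\epO^rH$ to be the finite \'etale $\CO$-module scheme attached to it, and let $\lambda\colon H^r\to\epO^rH$ be induced by the tautological alternating map $M^r\to\bigwedge_\CO^rM$. To prove that $\lambda^*\colon\Hom^\CO_S(\epO^rH,X)\to\Alt^\CO_S(H^r,X)$ is an isomorphism for an \emph{arbitrary} $\CO$-module scheme $X$ over $S$, note that both $T\mapsto\Hom^\CO_T((\epO^rH)_T,X_T)$ and $T\mapsto\Alt^\CO_T(H_T^r,X_T)$ are sheaves on $S_{\text{\'et}}$, so it suffices to check after base change along a finite \'etale cover $T\to S$ over which $H$ becomes constant, $H_T\cong\underline M$. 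Over $T$ one has $(\epO^rH)_T\cong\underline{\bigwedge_\CO^rM}$ because the dictionary commutes with base change, and for a constant $\CO$-module scheme $\underline N=\coprod_{n\in N}T$ and any $\CO$-module scheme $Y$ over $T$ there are canonical identifications $\Hom^\CO_T(\underline N,Y)=\Hom_\CO(N,Y(T))$ and $\Alt^\CO_T(\underline M^r,Y)=\Alt_\CO(M^r,Y(T))$; under these $(\lambda_T)^*$ is the classical bijection $\Hom_\CO(\bigwedge_\CO^rM,Y(T))\xrightarrow{\ \sim\ }\Alt_\CO(M^r,Y(T))$. The base change isomorphism $\epO^r(H_T)\cong(\epO^rH)_T$ for an arbitrary $S$-scheme $T$ is again functoriality of the dictionary along the induced map on fundamental groups.

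For the second assertion, by Remark~\ref{rem 9} each $G_n$ is a finite \'etale $\CO$-module scheme of order $q^{nh}$, so the first part supplies finite \'etale $\epO^rG_n$ with universal alternating maps $\lambda_n\colon G_n^r\to\epO^rG_n$. By Proposition~\ref{etalepidiv} the sheaf $T_\pi G:=\invlim_nG_n$ is locally free of rank $h$ over $\CO$, and via the dictionary (together with compatibility of exterior powers with the base change $\CO\to\CO/\pi^n$) the scheme $\epO^rG_n$ corresponds to $(\bigwedge_\CO^rT_\pi G)\otimes_\CO\CO/\pi^n$. As $\bigwedge_\CO^rT_\pi G$ is locally free of rank $\binom{h}{r}$, reduction modulo powers of $\pi$ produces, by flatness, short exact sequences $0\to\epO^rG_m\to\epO^rG_{n+m}\xrightarrow{\pi^m}\epO^rG_n\to0$ compatible with the $\lambda_n$ (all of which are reductions of the wedge product $(T_\pi G)^r\to\bigwedge_\CO^rT_\pi G$), with $\epO^rG_n$ of order $q^{n\binom{h}{r}}$. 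By Remark~\ref{rem 10} the inductive system $\{\epO^rG_n\}_n$ is an \'etale $\pi$-divisible $\CO$-module scheme $\epO^rG$ of height $\binom{h}{r}$, and the $\lambda_n$ assemble into an alternating $\CO$-multilinear morphism $\lambda\colon G^r\to\epO^rG$ in the sense of Definition~\ref{def41}. For the universal property of Definition~\ref{def42}, let $\CN$ be any $\pi$-divisible $\CO$-module scheme over $S$; unwinding the definitions, $\Hom_S(\epO^rG,\CN)=\invlim_n\Hom^\CO_S(\epO^rG_n,\CN_n)$, $\Alt^\CO_S(G^r,\CN)=\invlim_n\Alt^\CO_S(G_n^r,\CN_n)$, and $\lambda^*=\invlim_n\lambda_n^*$; each $\lambda_n^*$ is an isomorphism by the first part applied with the finite locally free $\CO$-module scheme $X=\CN_n$, hence so is $\lambda^*$. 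Finally, base change commutes because it does so at each finite level and the formation of the inductive limit commutes with base change.

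The routine ingredients are the \'etale dictionary and the elementary description of $\Hom$ out of a constant scheme; the one genuinely delicate point is that the universal properties must be tested against \emph{all} $\CO$-module schemes (respectively all $\pi$-divisible $\CO$-module schemes), not merely \'etale ones. This is exactly what forces the \'etale-local reduction to a trivializing cover — where the source becomes a constant scheme and $\Hom$ out of it is computed on the nose — followed by descent for the $\Hom$- and $\Alt$-sheaves, and, in the $\pi$-divisible case, by passage to the inverse limit over $n$.
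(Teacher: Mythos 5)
Your proof is correct, and its skeleton --- the \'etale dictionary identifying finite \'etale $\CO$-module schemes (resp.\ \'etale $\pi$-divisible modules) with finite (resp.\ finite free) $\CO$-modules carrying a continuous $\pi^{\text{\'et}}_1(S,\sbar)$-action, in which category exterior powers exist for free --- is the same as the paper's. Where you genuinely diverge is in how the universal property is verified and in how the second bullet is assembled. The paper simply asserts that the universal alternating morphism $H(\sbar)^r\to\epO^r(H(\sbar))$ induces the universal alternating morphism over $S$; but the universal property in the statement is tested against \emph{arbitrary} $\CO$-module schemes $X$, not merely \'etale ones, and the dictionary by itself only yields it for \'etale $X$. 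Your \'etale-descent argument --- the presheaves $T\mapsto\Hom^{\CO}_T((\epO^rH)_T,X_T)$ and $T\mapsto\Alt^{\CO}_T(H_T^r,X_T)$ are sheaves for the \'etale topology, so one may pass to a trivializing finite \'etale cover where $H$ becomes constant and $\Hom^{\CO}_T(\underline{N},Y)\cong\Hom_{\CO}(N,Y(T))$ reduces everything to the classical universal property of $\bigwedge^r_{\CO}M$ --- closes this gap cleanly and is the more complete argument. In the second bullet the paper works directly with the Tate-module equivalence and asserts that it preserves alternating morphisms; you instead build $\epO^rG$ from the finite levels $\epO^rG_n$, extract the exact sequences and the orders $q^{n\binom{h}{r}}$ from flatness of $\bigwedge^r_{\CO}T_\pi G$, and invoke Remark \ref{rem 10}, finishing the universal property by passing to the inverse limit with $X=\CN_n$ at each level. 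This costs a little extra bookkeeping (compatibility of the $\lambda_n$ with the projections, which you correctly obtain because all of them are reductions of a single wedge map on $T_\pi G$), but it buys a verification that matches Definitions \ref{def41} and \ref{def42} on the nose and mirrors the finite-level strategy used later in the non-\'etale case.
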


%PROOF OF PROPOSITION 025
\begin{proof}
$ $
\begin{itemize}
\item Assume at first that $S$ is connected and choose a geometric point $ \bar{s} $ of $S$. Then the functor \[ J\mapsto J(\bar{s})=\Mor_S(\bar{s},J)\] from the category of finite \'etale $ \CO $-module schemes over $S$ to the category of finite $ \CO $-modules with a continuous action of the \'etale fundamental group at $ \bar{s} $, i.e., $ \pi_1^{\text{\'et}}(S,\bar{s}) $, is an equivalence of categories. Moreover, this functor preserves multilinear and alternating morphisms. The category of $ \CO $-modules with a continuous $\pi_1^{\text{\'et}}(S,\bar{s})$-action is a tensor category and has in particular all exterior powers. It follows that the category of finite \'etale $ \CO $-module schemes over $S$ possesses all exterior powers over $S$. The universal alternating morphism $ H(\bar{s})^r\to \epO^r(H(\bar{s})) $ induces the universal alternating morphism $ \lambda:H^r\to \epO^rH $ with the universal property stated in the proposition.\\

If $S$ is not connected, the finite \'etale group scheme $H$ decomposes into disjoint union of finite \'etale $ \CO $-module schemes over connected components of $S$. The above construction yields exterior powers of each of them over a connected base and these exterior powers glue together to produce exterior powers of the whole $ \CO $-module scheme $H$. One has to verify that these exterior powers satisfy the universal property of exterior powers, but this is true since homomorphisms and multilinear morphisms of $ \CO $-module schemes over $S$ decompose into homomorphisms and multilinear morphisms of $ \CO $-module schemes over each connected component. This proves the existence of the exterior powers.\\

Now assume that $T$ is an $S$-scheme. For the statement on the base change, we can assume that $S$ and $T$ are both connected. Fix a geometric point $ \bar{t} $ of $T$ lying over the fixed geometric point $ \bar{s} $ of $S$. The structural morphism $ f:T\to S $ induces a $ \CO $-module homomorphism $f_*:\pi_1^{\text{\'et}}(T,\bar{t}) \to  \pi_1^{\text{\'et}}(S,\bar{s})$. For every finite \'etale  scheme $J$ over $S$, the map of sets $ J(\bar{s})\to J_T(\bar{t}) $ is a bijection and the action of $ \pi_1^{\text{\'et}}(T,\bar{t}) $ on $ J_T(\bar{t}) $ is the restriction, under the homomorphism $ f_* $, of the action of $ \pi_1^{\text{\'et}}(S,\bar{s}) $ on $ J(\bar{s}) $, after identifying the two sets $ J(\bar{s}) $ and $ J_T(\bar{t}) $. In other words, the base extension from $S$ to $T$ of \'etale group schemes over $S$ corresponds to the restriction of the action of the \'etale fundamental group of $S$ at $ \bar{s}$ to that of $T$ at $ \bar{t} $. It follows at once that the $ \CO $-module scheme homomorphism  $ \epO^r(H_T)\to (\epO^rH)_T $ is an isomorphism.

\item Similarly, assuming $S$ is connected and fixing a geometric point $ \bar{s} $ of $S$, the functor that assigns to an \'etale $\pi$-divisible $ \CO $-module its Tate module (at $ \bar{s} $) defines an equivalence of categories between the category of \'etale $\pi$-divisible $ \CO $-module over $S$ and the category of finite free $ \CO $-modules with a continuous action of the \'etale fundamental group of $S$ at $\bar{s}$. Also, Multilinear (respectively alternating) morphisms are preserved under this functor. The category of finite free $ \CO $-modules with a continuous $  \pi_1^{\text{\'et}}(S,\bar{s}) $-action is a tensor category and therefore possesses all exterior powers. It implies that the category of \'etale $\pi$-divisible $ \CO $-modules over $S$ has all exterior powers in the sense stated in the proposition, with the universal alternating morphism $ \lambda:G^r\to \epO^rG $ obtained from the universal alternating morphism $ T_p(G)^r\to \epO^rT_p(G)\cong T_p(\epO^rG) $.\\

The height of an \'etale $\pi$-divisible group is equal to the rank over $ \CO $ of its Tate module. Thus, the height of $ \epO^rG $ is equal to the rank over $ \CO $ of $ T_p(\epO^rG)\cong \epO^r(T_p(G)) $, which is equal to $ \binom{h}{r} $.\\

The same arguments as in the last case (of finite \'etale $ \CO $-module schemes) show that this construction commutes with arbitrary base change.
\end{itemize}
\end{proof}

%REMARK 024
\begin{rem}
\label{rem024}
$  $
\begin{itemize}
\item[1)] Note that if $\CM$ is an \'etale $\pi$-divisible $ \CO $-module scheme over a scheme $S$, then for every positive natural number $n$, we have $ \ep^r(\CM_n)\cong (\ep^r\CM)_n $.
\item[2)] For details on the \'etale fundamental group of a scheme and the equivalence of categories mentioned in the above proof, we refer to \cite{SGA1} and \cite{JM}.
\end{itemize}
\end{rem}

%\subsection{Over fields of characteristic $p$}
\section{The main theorem: over fields of characteristic $p$}

In this section, we want to construct the exterior powers of $ \pi $-divisible modules over fields of characteristic $p$. In this section, unless otherwise specified, $k$ is field of characteristic $p$.\\

Let $\CM$ be a $\pi$-divisible $\CO$-module scheme over $k$ and let us denote as usual $\CM_i$ for the kernel of $\pi^i:\CM\to \CM$. For every $i>0$, we have a natural monomorphism $\iota:\CM_i\into \CM_{i+1}$ and a natural epimorphism $\varPi:\CM_{i+1}\onto \CM_i$ (multiplication by $\pi$) such that the both compositions $ \CM_i\ovset{\iota}{\into}\CM_{i+1}\ovset{\varPi}{\onto}\CM_i $ and respectively $ \CM_{i+1}\ovset{\varPi}{\onto}\CM_i\ovset{\iota}{\into}\CM_{i+1}  $, are just the multiplication by $\pi$ on $\CM_i$ and respectively on $ \CM_{i+1} $ and give rise to the following exact sequences:
\begin{myequation}
\label{cokernel of pi^i}
\quad0\to \CM_i\ovset{\iota}{\to}\CM_{i+1}\ovset{\pi^i}{\to}\CM_{i+1}\quad\text{and}
\end{myequation}

\begin{myequation}
\label{kernel of pi^i}
\quad\CM_{i+1}\ovset{\pi^i}{\to}\CM_{i+1}\ovset{\varPi}{\to}\CM_i\to 0.\qquad
\end{myequation}

Therefore, if $k$ is a perfect field, whether we use the covariant or contravariant Dieudonn\'e theory (and if we denote by $D$ the Dieudonn\'e functor), we have an injection $\eta:D(\CM_i)\into D(\CM_{i+1})$ and a surjection $\zeta:D(\CM_{i+1})\onto D(\CM_i)$ such that the compositions $$D(\CM_i)\ovset{\eta}{\into}D(\CM_{i+1})\ovset{\zeta}{\onto}D(\CM_i)$$ and $$ D(\CM_{i+1})\ovset{\zeta}{\onto}D(\CM_i)\ovset{\eta}{\into}D(\CM_{i+1}) $$ are multiplication by $\pi$ and we have the following exact sequences:
\[D(\CM_{i+1})\ovset{\pi^i}{\to}D(\CM_{i+1})\ovset{\zeta}{\to}D(\CM_i)\to 0\quad \text{and}\]
\[0\to D(\CM_i)\ovset{\eta}{\to}D(\CM_{i+1})\ovset{\pi^i}{\to}D(\CM_{i+1})\qquad\]
(in the covariant case, we use the sequence $(\ref{kernel of pi^i})$ in order to obtain the first sequence and we use $(\ref{cokernel of pi^i})$ to obtain the second one, and in the contravariant case we use the sequence $(\ref{cokernel of pi^i})$ for the first sequence and $(\ref{kernel of pi^i})$ for the second one).

% DEFINITION 12
\begin{dfn}
\label{def 12}
Assume that $k$ is perfect and let $ \CM $ be a $ \pi $-divisible $ \CO $-module scheme over $k$.
\begin{itemize}
\item[(i)] We define the \emph{Dieudonn\'e module} of a $\pi$-divisible $\CO$-module scheme $\CM$ to be the inverse limit $\unset{i}{\invlim} (D(\CM_i),\zeta)$. It is called the \emph{covariant Dieudonn\'e module}, if it is the inverse limit of covariant Dieudonn\'e modules and is called the \emph{contravariant Dieudonn\'e module} in the other case.
\item[(ii)] The morphism induced on the Dieudonn\'e module of $\CM$ by the Frobenius morphisms (respectively Verschiebungen) of $D(\CM_i)$ is called \emph{the Frobenius} (respectively \emph{Verschiebung}) and is denoted by $F$ (respectively $V$).
\end{itemize}
\end{dfn}

\begin{cons}
\label{conslimitdieudonne}
Let $ \CM_0,\CM_1,\dots,\CM_r $ be $ \pi $-divisible $ \CO $-module schemes over a perfect field $k$ of characteristic $p$ and for every $ i=0,\dots,r $, denote by $ D_i$ the Dieudonn\'e module of $ \CM_i$. Let $ f:D_1\times\dots\times D_r\to D_0 $ be an $ \CO $-multilinear morphism (of $ W(k)\otimes_{\BZ_p}\CO $-modules) satisfying the $ V $-condition, i.e., \[ Vf(x_1,\dots,x_r)=f(Vx_1,\dots,Vx_r) \] for every $ x_i\in D_i $. For every $ n\geq 1 $, this morphisms induces a morphism $ D_1\times\dots\times D_r\to D_0/\pi^nD_0 $, and using the multilinearity of $f$, we obtain an $ \CO $-multilinear morphism \[ D_1/\pi^nD_1\times\dots\times D_r/\pi^nD_r\to D_0/\pi^nD_0 \] that we denote by $ f_n $. It follows from its construction, that $ f_n $ satisfies the $V$-condition. We claim that it satisfies also the $ F $-conditions, i.e., that for every $i=1,\dots,r$, and every $(x_1,\dots,x_r)\in D_1\times\dots\times D_r$ we have \[ Ff_n(Vx_1,\dots,Vx_{i-1},x_i,Vx_{i+1},\dots,Vx_r)=f_n(x_1,\dots,x_{i-1},Fx_i,x_{i+1},\dots,x_r). \] In fact, we claim that $ f $ itself satisfies the $F$-condition, and therefore, $f_n$ inherits this property. Let $ (x_1,\dots,x_r) $ be an arbitrary element of the product $ D_1\times\dots\times D_r $. We have \[ VFf(Vx_1,\dots,Vx_{i-1},x_i,Vx_{i+1},\dots,Vx_r)=\]\[pf(Vx_1,\dots,Vx_{i-1},x_i,Vx_{i+1},\dots,Vx_r)=\]\[f(Vx_1,\dots,Vx_{i-1},px_i,Vx_{i+1},\dots,Vx_r)=\]\[f(Vx_1,\dots,Vx_{i-1},VFx_i,Vx_{i+1},\dots,Vx_r)=\]\[ Vf(x_1,\dots,x_{i-1},Fx_i,x_{i+1},\dots,x_r)\] and since by Lemma \ref{lem 13} $V$ is injective, we cancel $V$ from both side of the equality and conclude that \[ Ff(Vx_1,\dots,Vx_{i-1},x_i,Vx_{i+1},\dots,Vx_r)=f(x_1,\dots,x_{i-1},Fx_i,x_{i+1},\dots,x_r) \] as claimed. Let us denote by $ \Mult^{\CO}(D_1\times\dots\times D_r,D_0) $ the $ \CO $-module of all $ \CO $-multilinear morphisms from $ D_1\times\dots\times D_r $ to $ D_0 $ that satisfy the $V$-condition. Thus, the construction of $ f_n $ from $f$ defines an $ \CO $-linear morphism \[ \alpha_n:\Mult^{\CO}(D_1\times\dots\times D_r, D_0)\to L^{\CO}(D_{1,n}\times\dots\times D_{r,n},D_{0,n}),\] where we denote by $ D_{i,n} $ the Dieudonn\'e module of $ \CM_{i,n}=\kernel(\pi^n:\CM_i\to\CM_i)$, which is canonically isomorphic to $ D_i/\pi^nD_i $. These morphisms are compatible with the canonical morphisms \[ L^{\CO}(D_{1,n+1}\times\dots\times D_{r,n+1})\to L^{\CO}(D_{1,n}\times\dots\times D_{r,n},D_{0,n}) \] given by the projections $ D_{i,n+1}\to D_{i,n} $ and therefore define an $ \CO $-linear morphism \[\alpha:\Mult^{\CO}(D_1\times\dots\times D_r,D_0)\to \uset{n}{\invlim}\,L^{\CO}(D_{1,n}\times\dots\times D_{r,n}).\]\\

Similarly, we define the $ \CO $-modules $ \Sym^{\CO}(D_1^r,D_0) $ and $ \Alt^{\CO}(D_1^r,D_0) $ and the $ \CO $-linear morphisms \[\Sym^{\CO}(D_1^r,D_0)\to\uset{n}{\invlim}\,L^{\CO}_{\rm sym}(D_{1,n}^r,D_{0,n})\] and \[ \Alt^{\CO}(D_1^r,D_0)\to\uset{n}{\invlim}\,L^{\CO}_{\rm alt}(D_{1,n}^r,D_{0,n})\] which are the restrictions of $ \alpha $.
\end{cons}

%LEM ON MULT OF DIEUDONNÉ MODULES
\begin{lem}
\label{lem Dieudonné modules}
Let $ \CM_0,\CM_1,\dots,\CM_r $ be $ \pi $-divisible $ \CO $-module schemes over a perfect field $k$ of characteristic $p$. The $ \CO $-linear morphisms \[\alpha:\Mult^{\CO}(D_1\times\dots\times D_r,D_0)\to \uset{n}{\invlim}\,L^{\CO}(D_{1,n}\times\dots\times D_{r,n},D_{0,n}),\]  \[\Sym^{\CO}(D_1^r,D_0)\to\uset{n}{\invlim}\,L^{\CO}_{\rm sym}(D_{1,n}^r,D_{0,n}) \] and \[ \Alt^{\CO}(D_1^r,D_0)\to\uset{n}{\invlim}\,L^{\CO}_{\rm alt}(D_{1,n}^r,D_{0,n})\] constructed above are isomorphisms.
\end{lem}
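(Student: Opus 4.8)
The morphisms on the symmetric and alternating pieces are restrictions of $\alpha$, so the plan is to prove that $\alpha$ itself is bijective and then to check that this bijection carries the symmetric (resp.\ alternating) submodule onto the symmetric (resp.\ alternating) submodule. Everything rests on one structural fact: for each $i$ we have $D_i=\uset{n}{\invlim}\,D_{i,n}$, the projection $D_i\to D_{i,n}$ is surjective with kernel $\pi^nD_i$ (which is the content of the canonical isomorphism $D_{i,n}\cong D_i/\pi^nD_i$), and these projections are compatible with $F$, with $V$, with the $\CO$-action and --- once all the $\CM_i$ are taken equal --- with the $S_r$-action, all of this being immediate from Definition \ref{def 12} and functoriality of the Dieudonn\'e functor. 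In particular $\bigcap_n\pi^nD_i=0$, since an element of $\uset{n}{\invlim}\,D_{i,n}$ killed in every $D_{i,n}$ is zero.

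First I would settle injectivity. If $f\in\Mult^{\CO}(D_1\times\dots\times D_r,D_0)$ and $\alpha(f)=(f_n)_n$ is the zero system, then every value $f(x_1,\dots,x_r)$ lies in $\ker(D_0\to D_{0,n})=\pi^nD_0$ for all $n$, hence in $\bigcap_n\pi^nD_0=0$; so $f=0$, and the argument is identical for the symmetric and alternating versions. For surjectivity, I would start from a compatible system $(g_n)_n$ with $g_n\in L^{\CO}(D_{1,n}\times\dots\times D_{r,n},D_{0,n})$ and build
\[f:D_1\times\dots\times D_r\longrightarrow D_0=\uset{n}{\invlim}\,D_{0,n},\qquad f(x_1,\dots,x_r):=\big(g_n(\bar x_1,\dots,\bar x_r)\big)_n,\]
where $\bar x_i$ denotes the image of $x_i$ in $D_{i,n}$. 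Compatibility of the $g_n$ makes this land in $D_0$, and $f$ is $W(k)\otimes_{\BZ_p}\CO$-multilinear because each $g_n$ is. Since $V$ acts componentwise on $\uset{n}{\invlim}\,D_{0,n}$ and commutes with all reductions, the $V$-condition for the $g_n$ forces $Vf(x_1,\dots,x_r)=\big(g_n(V\bar x_1,\dots,V\bar x_r)\big)_n=f(Vx_1,\dots,Vx_r)$, so $f\in\Mult^{\CO}(D_1\times\dots\times D_r,D_0)$. By the definition of $\alpha$ in Construction \ref{conslimitdieudonne} the $n$-th component of $\alpha(f)$ is the reduction $f_n$ of $f$ modulo $\pi^n$, which by construction equals $g_n$; hence $\alpha(f)=(g_n)_n$ and $\alpha$ is bijective.

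For the last part I would specialize to $\CM_1=\dots=\CM_r=\CM$, so that $D_1=\dots=D_r=D$ and $D_{1,n}=\dots=D_{r,n}=D_n$. Because a section of $D_0=\uset{n}{\invlim}\,D_{0,n}$ vanishes if and only if all its reductions do, a morphism $f\in\Mult^{\CO}(D^r,D_0)$ satisfies $f\circ\varrho=f$ for $\varrho\in S_r$ (resp.\ $f\circ\Delta^r_{ij}=0$ for all $i<j$) if and only if every $f_n$ does; thus $f$ is symmetric (resp.\ alternating) precisely when all the $f_n$ are, and the bijection $\alpha$ restricts to bijections between $\Sym^{\CO}(D^r,D_0)$ and $\uset{n}{\invlim}\,L^{\CO}_{\rm sym}(D_n^r,D_{0,n})$, and between $\Alt^{\CO}(D^r,D_0)$ and $\uset{n}{\invlim}\,L^{\CO}_{\rm alt}(D_n^r,D_{0,n})$.

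I do not expect a real obstacle: at bottom this is the formal identity ``multilinear maps into an inverse limit $=$ inverse limit of multilinear maps'', together with the point --- already recorded in Construction \ref{conslimitdieudonne} via the injectivity of $V$ (Lemma \ref{lem 13}) --- that the $V$-condition on the $D_i$ automatically yields the $F$-condition, so that the two sides of $\alpha$ impose the same amount of structure. The only items deserving explicit care are the $\pi$-adic separatedness of $D_0$ (used for injectivity) and the compatibility of reduction modulo $\pi^n$ with $F$, $V$, the $\CO$-action and permutations (used to see that the morphism $f$ built above is a legitimate element of $\Mult^{\CO}$).
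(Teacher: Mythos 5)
Your proof is correct and follows essentially the same route as the paper: the map you build from a compatible system $(g_n)_n$ is exactly the inverse $\omega$ constructed there, and your separate injectivity argument via $\bigcap_n\pi^nD_0=0$ is just a repackaging of the check that $\omega\circ\alpha=\mathrm{id}$. The treatment of the symmetric and alternating cases also matches the paper's.
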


\begin{proof}
Let us at first show that $ \alpha $ is an isomorphism. Define a map \[ \omega:\uset{n}{\invlim}\,L^{\CO}(D_{1,n}\times\dots\times D_{r,n},D_{0,n})\to \Mult^{\CO}(D_1\times\dots\times D_r,D_0) \] as follows. Take an element $ g=(g_n)_n $ in the inverse limit. By assumption, the following diagram commutes:\[ \xymatrix{D_{1,n+1}\times\dots\times D_{r,n+1}\ar@{->>}[d]\ar[rr]^{\quad g_{n+1}}&& D_{0,n+1}\ar@{->>}[d]\\D_{1,n}\times\dots\times D_{r,n}\ar[rr]_{g_n}&&D_{0,n}},\] where the vertical morphisms are the canonical projections. Let $$ \omega(g):D_1\times\dots\times D_r\to D_0 $$ be the following morphism \[ \big((x_{1,j})_j,(u_{2,j})_j,\dots,(u_{r,j})_j\big)\mapsto \big(g_j(u_{1,j},\dots,u_{r,j})\big)_j,\] where $ (u_{i,j})_j $ is an element of $ D_i=\uset{j}{\invlim}\, D_{i,j} $. The commutativity of the above diagram implies that $ \big(g_j(u_{1,j},\dots,u_{r,j})\big)_j $ is an element of the inverse limit $ \uset{j}{\invlim}\, D_{0,j}=D_0 $, and by construction, $ \omega(g) $ satisfies the $ V $-condition. It is now straightforward to check that the compositions $ \alpha\circ\omega $ and $ \omega\circ \alpha $ are identities, showing that $ \alpha $ is an isomorphism.\\

If $ \CM_1=\CM_2=\dots=\CM_r $ and for all $n\geq 1$, $ g_n $ is symmetric (respectively alternating), then $ \omega(g) $ is symmetric (respectively alternating), which implies that the restriction of $ \alpha $ to $ \Sym^{\CO}(D_1^r,D_0) $ (respectively $ \Alt^{\CO}(D_1^r,D_0) $) induces an isomorphism \[\Sym^{\CO}(D_1^r,D_0)\to\uset{n}{\invlim}\,L^{\CO}_{\rm sym}(D_{1,n}^r,D_{0,n}) \] (respectively \[\Alt^{\CO}(D_1^r,D_0)\to\uset{n}{\invlim}\,L^{\CO}_{\rm alt}(D_{1,n}^r,D_{0,n}) ).\]
\end{proof}

%COR MULT OF DIEUDONNÉ MODULES AND PI-DIV. MODULES
\begin{cor}
\label{cordieudonnpidiv}
Let $ \CM_0,\CM_1,\dots,\CM_r $ be $ \pi $-divisible $ \CO $-module schemes over a perfect field $k$ of characteristic $p$. For every $ i=0,\dots,r $, denote by $ D_i$ the (covariant) Dieudonn\'e module of $\CM_i$. There exist natural isomorphisms \[ \Mult^{\CO}(D_1\times\dots\times D_r,D_0)\cong \Mult^{\CO}_k(\CM_1\times\dots\times\CM_r,\CM_0),\]
\[\Sym^{\CO}(D_1^r,D_0)\cong \Sym^{\CO}_k(\CM_1^r,\CM_0)\] and \[\Alt^{\CO}(D_1^r,D_0)\cong \Alt^{\CO}_k(\CM_1^r,\CM_0)\] functorial in all arguments.
\end{cor}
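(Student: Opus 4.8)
The plan is to deduce the three isomorphisms by passing to the inverse limit in the level-wise $\CO$-multilinear covariant Dieudonné theory of Chapter~3, and then to identify the resulting inverse limit by means of Lemma~\ref{lem Dieudonné modules}. For each $n\geq 1$ the $\CO$-module schemes $\CM_{0,n},\CM_{1,n},\dots,\CM_{r,n}$ are finite and of $p$-power torsion (in mixed characteristic $p=u\pi^{e}$, so $\pi^{n}$-torsion is killed by $p^{n}$; in equal characteristic $p=0$ in $\CO$), and their covariant Dieudonné modules are canonically $D_{i,n}:=D_{i}/\pi^{n}D_{i}$, as recalled in Construction~\ref{conslimitdieudonne}. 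First I would apply Corollary~\ref{cor03} with $R=\CO$ at each level $n$, obtaining a natural isomorphism
\[ L^{\CO}(D_{1,n}\times\dots\times D_{r,n},D_{0,n})\;\cong\;\Mult^{\CO}_{k}(\CM_{1,n}\times\dots\times\CM_{r,n},\CM_{0,n}), \]
which by Remark~\ref{rem33}~2) (the alternating case using the standing hypothesis $p>2$) restricts to natural isomorphisms $L^{\CO}_{\rm sym}(D_{1,n}^{r},D_{0,n})\cong\Sym^{\CO}_{k}(\CM_{1,n}^{r},\CM_{0,n})$ and $L^{\CO}_{\rm alt}(D_{1,n}^{r},D_{0,n})\cong\Alt^{\CO}_{k}(\CM_{1,n}^{r},\CM_{0,n})$.

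Next I would take the inverse limit over $n$. On the scheme side, by Definition~\ref{def41} the transition maps of the system $\{\Mult^{\CO}_{k}(\CM_{1,n}\times\dots\times\CM_{r,n},\CM_{0,n})\}_{n}$ are those induced by the projections $\pi:\CM_{i,n+1}\onto\CM_{i,n}$, so its limit is $\Mult^{\CO}_{k}(\CM_{1}\times\dots\times\CM_{r},\CM_{0})$, and similarly for $\Sym^{\CO}_{k}$ and $\Alt^{\CO}_{k}$. On the Dieudonné side, the covariant functor sends $\pi:\CM_{i,n+1}\onto\CM_{i,n}$ to the canonical projection $D_{i,n+1}\onto D_{i,n}$ (this is exactly the identification built into Definition~\ref{def 12} through the exact sequences \eqref{cokernel of pi^i} and \eqref{kernel of pi^i}), so the transition maps of $\{L^{\CO}(D_{1,n}\times\dots\times D_{r,n},D_{0,n})\}_{n}$ are precisely the ones occurring in Construction~\ref{conslimitdieudonne} and Lemma~\ref{lem Dieudonné modules}. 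Since the isomorphism of Corollary~\ref{cor03} is functorial in all arguments, it commutes with these transition maps, hence induces an isomorphism of the inverse limits
\[ \uset{n}{\invlim}\,L^{\CO}(D_{1,n}\times\dots\times D_{r,n},D_{0,n})\;\cong\;\Mult^{\CO}_{k}(\CM_{1}\times\dots\times\CM_{r},\CM_{0}), \]
and likewise for the symmetric and alternating variants. Composing with the isomorphisms of Lemma~\ref{lem Dieudonné modules}, which identify the left-hand sides with $\Mult^{\CO}(D_{1}\times\dots\times D_{r},D_{0})$, $\Sym^{\CO}(D_{1}^{r},D_{0})$ and $\Alt^{\CO}(D_{1}^{r},D_{0})$ respectively, yields the three claimed isomorphisms; functoriality in all arguments is inherited, being an inverse limit of natural transformations.

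The hard part will be the bookkeeping of transition maps in the passage to the limit: one must check that the inverse system produced by applying Corollary~\ref{cor03} level by level is literally the one whose limit appears in Lemma~\ref{lem Dieudonné modules}, i.e. that ``reduce a morphism of full Dieudonné modules modulo $\pi^{n}$'' corresponds, under the level-$n$ Dieudonné equivalence, to ``restrict a multilinear morphism of $\pi$-divisible modules from level $n+1$ to level $n$''. This in turn reduces to the compatibility of the covariant Dieudonné functor with the inclusions $\iota$ and the projections of \eqref{cokernel of pi^i}--\eqref{kernel of pi^i}, which is recorded in the discussion preceding Definition~\ref{def 12}; once this is granted, the remainder of the argument is purely formal.
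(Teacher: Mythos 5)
Your proposal is correct and follows essentially the same route as the paper: apply Corollary~\ref{cor03} level by level, use functoriality to pass to the inverse limit, and then identify the two limits via Lemma~\ref{lem Dieudonné modules} and Definition~\ref{def41}. The extra care you take with the transition maps (and the explicit appeal to Remark~\ref{rem33} for the symmetric and alternating variants, which the paper dismisses as ``similar'') only makes the argument more complete.
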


\begin{proof}
We prove only the first isomorphism; the proofs of the other two are similar. Let us use the notations of the Construction \ref{conslimitdieudonne}. It follows from Corollary \ref{cor03} that for every $ n\geq 1 $, there exists a natural isomorphism \[ L^{\CO}(D_{1,n}\times\dots\times D_{r,n},D_{0,n})\cong \Mult^{\CO}_k(\CM_{1,n}\times\dots\times \CM_{r,n},\CM_{0,n}).\] As these isomorphisms are functorial in all arguments, we obtain an isomorphism \[ \uset{n}{\invlim}\,L^{\CO}(D_{1,n}\times\dots\times D_{r,n},D_{0,n})\cong \uset{n}{\invlim}\,\Mult^{\CO}_k(\CM_{1,n}\times\dots\times \CM_{r,n},\CM_{0,n}). \] Now, applying the previous Lemma  and using Definition \ref{def41}, we obtain the required isomorphism \[ \Mult^{\CO}(D_1\times\dots\times D_r,D_0)\cong \Mult^{\CO}_k(\CM_1\times\dots\times\CM_r,\CM_0),\] functorial in all arguments.
\end{proof}

Let us fix some notations for the rest of this section.\\

\textbf{Notations:}
\begin{itemize}
\item We fix a natural number $ j $.
\item Unless otherwise specified, $k$ is a perfect field of characteristic $p>2$.
\item $\mathcal{M}$ is a $\pi$-divisible $\CO$-module scheme of dimension $1$ and height $h$ over $k$, and for every natural number $i$, $\CM_i$ is the kernel of $\pi^i.:\CM\to \CM$.
\item $W$ is the ring of Witt vectors over $k$ and $L$ is the fraction field of $W$.
\item $D:=D(\CM)$ is the covariant Dieudonn\'e module of $\CM$ and $\ep^jD:=\unset{W\widehat{\otimes}_{\BZ_p}\CO}{\ep^j}D$.
\item For every natural number $i$, $D_i:=D_{*}(\CM_i)$ is the covariant Dieudonn\'e module of $\CM_i$ and $\ep^jD_i:=\unset{W\otimes_{\BZ_p}\frac{\CO}{\pi^i}}{\ep^j}D_i$.
\item Denote by $ \zeta $ the surjection $ \zeta:D\onto D_i $ and by $ \ep^j\zeta $ the surjection $ \ep^jD\onto\ep^jD_i $. Note that $ \zeta$ doesn't have any index (to avoid complexity) and we use the same letter for different indices.
\item Denote by $ \Upsilon $ (respectively $ \upsilon $) the morphism $ \ep^jV:\ep^jD\to \ep^jD $ (respectively $ \ep^jV:\ep^jD_i\to \ep^jD_i $) sending an element $ d_1\wedge\cdots\wedge d_j $ with $ d_1, \cdots, d_j\in D $ (respectively in $ D_i $) to $ Vd_1\wedge\cdots\wedge Vd_j $.
\end{itemize}

%REMARK 18
\begin{rem}
\label{rem 18}
Note that we have $ \ep^j\zeta\circ\Upsilon=\upsilon\circ\ep^j\zeta $.
\end{rem}

%LEMMA 9
\begin{lem}
\label{lem 9}
Let $k$ be algebraically closed.
\begin{itemize}
\item[a)] There exist a ring $A$, a ring homomorphism $ \BZ_p\to A $ and a decomposition $$W\otimes_{\BZ_p}\CO=\prod_{\BZ/f\BZ}W\otimes_A\CO,$$ where $W\otimes_A\CO$ is a discrete valuation ring with residue field $k$ and maximal ideal generated by $1\otimes\pi$. 
\item[b)] The decomposition in $a)$ gives the following decomposition of the completed tensor product $W\widehat{\otimes}_{\BZ_p}\CO$ as a product of complete discrete valuation rings with maximal ideal generated by $1\widehat{\otimes}\pi$ and residue field equal to $k$: $$W\widehat{\otimes}_{\BZ_p}\CO=\prod_{\BZ/f\BZ}W\widehat{\otimes}_A\CO.$$
\item[c)] Let $N$ be a $W\widehat{\otimes}_{\BZ_p}\CO$-module endowed with a $^{\sigma}$-linear morphism $\phi:N\to N$, i.e., for every $x\in W\widehat{\otimes}_{\BZ_p}\CO$ and $n\in N$, we have $\phi(x\cdot n)=(^{\sigma}\widehat{\otimes}\Id)(x)\cdot\phi(n)$. Then there is a decomposition of $N$ as a product $\prod_{i\in\BZ/f\BZ}N_i$ into $W\widehat{\otimes}_A\CO$-modules, according to the decomposition of $W\widehat{\otimes}_{\BZ_p}\CO$ given above, such that the morphism $\phi$ restricts to morphisms $\phi:N_i\to N_{i-1}$ for all $i\in \BZ/f\BZ$.
\end{itemize}
\end{lem}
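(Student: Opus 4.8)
The plan is to take $A$ to be the copy of the Witt vectors of the residue field sitting canonically inside $\CO$, exploit that the residue extension $\BF_q/\BF_p$ is separable of degree $f$ and splits in the algebraically closed field $k$, and deduce (a)--(c) by base change and a count of idempotents. Concretely I would set $A:=\BZ_q=W(\BF_q)$, with $\BZ_p\hookrightarrow A$ the standard inclusion and $A\to W$ the map $W(\BF_q)\to W(k)$ functorially induced by $\BF_q\hookrightarrow k$ (legitimate since $k$, being algebraically closed of characteristic $p$, contains $\BF_q$); the structure map $A\to\CO$ is the inclusion $\BZ_q\hookrightarrow\CO$ when $\CO$ has mixed characteristic, and the composite $\BZ_q\twoheadrightarrow\BF_q\hookrightarrow\BF_q\lbb\pi\rbb=\CO$ when $\CO$ has equal characteristic.

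\emph{Part (a).} Since $A=\BZ_q$ is finite \'etale over $\BZ_p$ of rank $f$ and $W$ is a complete (hence henselian) local ring with residue field $k\supseteq\BF_q$, the finite \'etale $W$-algebra $W\otimes_{\BZ_p}A$ has totally split special fibre $\BF_q\otimes_{\BF_p}k\cong\prod_{\BZ/f\BZ}k$ (one factor per embedding $\BF_q\hookrightarrow k$, permuted simply transitively by $\Gal(\BF_q/\BF_p)\cong\BZ/f\BZ$), so lifting idempotents gives $W\otimes_{\BZ_p}A\cong\prod_{\BZ/f\BZ}W$. Since $\CO$ is an $A$-algebra, associativity of tensor products then yields
\[
W\otimes_{\BZ_p}\CO\;\cong\;(W\otimes_{\BZ_p}A)\otimes_A\CO\;\cong\;\bigl(\prod_{\BZ/f\BZ}W\bigr)\otimes_A\CO\;\cong\;\prod_{\BZ/f\BZ}\bigl(W\otimes_A\CO\bigr),
\]
which is the asserted decomposition. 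That each factor is a discrete valuation ring with residue field $k$ and maximal ideal $(1\otimes\pi)$ I would check in the two cases: in mixed characteristic write $\CO=A[\pi]/(g)$ with $g$ Eisenstein over $A$; then $g$ stays Eisenstein over $W$, so $W\otimes_A\CO=W[\pi]/(g)$ is a totally ramified finite extension of the complete DVR $W$, hence a complete DVR with uniformiser $1\otimes\pi$ and residue field $k$. In equal characteristic $p=0$ in $\CO$, so $W\otimes_A\CO=k\otimes_{\BF_q}\CO=k\otimes_{\BF_q}\BF_q\lbb\pi\rbb$; on any finitely generated subextension $k'/\BF_q$ the ring $k'\otimes_{\BF_q}\BF_q\lbb\pi\rbb$ is a DVR with uniformiser $1\otimes\pi$, so in the colimit every element of $k\otimes_{\BF_q}\BF_q\lbb\pi\rbb$ is $(1\otimes\pi)^n$ times a unit and the ring is a DVR with residue field $k$.

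\emph{Parts (b) and (c).} For (b) I would $(1\otimes\pi)$-adically complete the decomposition of (a); a finite product commutes with completion, so $W\widehat{\otimes}_{\BZ_p}\CO\cong\prod_{\BZ/f\BZ}(W\widehat{\otimes}_A\CO)$, and each factor, being the completion of a DVR at its maximal ideal, is a complete DVR with the same uniformiser and residue field. For (c), the decomposition of (b) furnishes orthogonal idempotents $e_0,\dots,e_{f-1}\in W\widehat{\otimes}_{\BZ_p}\CO$; setting $N_i:=e_iN$ gives $N=\prod_{i\in\BZ/f\BZ}N_i$ as $W\widehat{\otimes}_A\CO$-modules. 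Since $\sigma$ on $W=W(k)$ is $p$-adically continuous, $\sigma\widehat{\otimes}\Id$ is defined on the completion, and it remains to see that it permutes the $e_i$ by a single $f$-cycle. This reduces, via (a), to the fact that $\sigma\widehat{\otimes}\Id$ acts on $\BF_q\otimes_{\BF_p}k\cong\prod_{\BZ/f\BZ}k$ through Frobenius on the $k$-factor, which amounts to post-composing each embedding $\BF_q\hookrightarrow k$ with Frobenius, i.e.\ to the generator of $\Gal(\BF_q/\BF_p)$ and hence to an $f$-cycle. Indexing the $e_i$ so that this cycle sends $e_i$ to $e_{i-1}$, the ${}^{\sigma}$-linearity relation $\phi(e_in)=(\sigma\widehat{\otimes}\Id)(e_i)\,\phi(n)=e_{i-1}\,\phi(n)$ gives $\phi(N_i)\subseteq N_{i-1}$, as required.

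\textbf{Main obstacle.} The conceptual content is light and the work is bookkeeping. The two points needing care are: making the isomorphism $W\otimes_{\BZ_p}A\cong\prod_{\BZ/f\BZ}W$ compatible with \emph{both} the $A$-structure coming from $W$ and the one coming from the tensor factor, so that the $\otimes_A$ in the displayed chain is unambiguous (equivalently, so that the $i$-th factor is $W\otimes_A\CO$ with the $\CO$-side $A$-action twisted by $\sigma^i$); and tracking the direction of the cyclic shift of the idempotents under $\sigma$ so that it matches the asserted $\phi:N_i\to N_{i-1}$ rather than $\phi:N_i\to N_{i+1}$. The discrete-valuation-ring and completion claims are then routine.
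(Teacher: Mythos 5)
Your proof is correct and follows essentially the same route as the paper's: the paper likewise takes $A$ to be $\BF_q$ in equal characteristic and the ring of integers of the maximal unramified subextension (i.e.\ $\BZ_q$) in mixed characteristic, splits $W\otimes_{\BZ_p}A\cong\prod_{\BZ/f\BZ}W$ (via the explicit map $w\otimes a\mapsto (a\cdot w^{\sigma^i})_i$ rather than henselian idempotent lifting), checks the DVR claim with an Eisenstein element, completes factorwise for (b), and proves (c) with the idempotents exactly as you do. The one caveat — which the paper shares, so it is not a defect of your argument relative to the source — is that in equal characteristic the uncompleted ring $k\otimes_{\BF_q}\BF_q\lbb\pi\rbb$ is only a dense subring of $k\lbb\pi\rbb$ and fails to be local when $k$ is transcendental over $\BF_q$ (e.g.\ $1-t\otimes\pi$ is then a non-unit not lying in $(1\otimes\pi)$, and your finitely generated subextensions $k'$ need not yield DVRs unless they are finite over $\BF_q$), so strictly speaking only the completed assertion (b) is correct there, which is all that is used in the sequel.
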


%PROOF OF LEMMA 9
\begin{proof}[\textsc{Proof}]
$ $
\begin{itemize}
\item[$a$)] We prove this lemma in equal and mixed characteristic cases separately. 
\begin{itemize}
\item Equal characteristic: Set $A:=\BF_q$ and let $ \BZ_p\to \BF_q$ be the canonical ring homomorphism. In this case, $\CO$ is isomorphic to $\BF_q\lbb\pi\rbb$ and therefore, the tensor product $W\otimes_{\BZ_p}\CO$ is isomorphic to $k\otimes_{\BF_p}\BF_q\lbb\pi\rbb$ which decomposes as $$\prod_{\BZ/f\BZ}k\otimes_{\BF_q}\BF_q\lbb\pi\rbb=\prod_{\BZ/f\BZ}k\lbb\pi\rbb$$ as we have seen in the proof of Theorem \ref{thm 3}. It is then clear that the ring $k\lbb\pi\rbb$ is a discrete valuation ring with residue field $k$ and maximal ideal generated by $\pi.$
\item Mixed characteristic: Let $E$ be the maximal unramified subextension of $K$ (recall that $K$ is the fraction field of $\CO$) and denote by $A$ its ring of integers. We then have a canonical ring extension $ \BZ_p\into A $. Since $k$ is algebraically closed, there is a copy of $A$ inside $W$. As $E$ is the maximal unramified subextension of $K$, the degree of the extension $E/\BQ_p$ is equal to $f$ and therefore we have an $A$-algebra isomorphism $$W\otimes_{\BZ_p}A\cong \prod_{\BZ/f\BZ}W\otimes_AA=\prod_{\BZ/f\BZ}W$$ given on elements by $w\otimes a\mapsto (a\cdot w^{\sigma^{i}})_i$, where $^{\sigma}:A\to A$ is the Frobenius of $A$, induced by the Frobenius of $W$. It follows that 
\[W\otimes_{\BZ_p}\CO\cong W\otimes_{\BZ_p}A\otimes_A\CO\cong\prod_{\BZ/f\BZ}W\otimes_A\CO\] and the Frobenius, i.e., the morphism $^{\sigma}\otimes\Id$ interchanges the factors. This shows the first statement. Now, as $K$ is totally ramified over $E$, $\CO$ is generated over $A$ by an Eisenstein element and since $L$ is unramified over $E$, the same element is again Eisenstein over $W$. Hence, $L\otimes_EK$ is a field and $W\otimes_A\CO$ is the valuation ring in it. Again, since $E/\BQ_p$ is the maximal unramified extension inside $K$, the residue degree of the extension $K/E$ is one and therefore $\CO$ and $A$ have the same residue fields $\BF_q$. Therefore, we have \[\frac{W\otimes_A\CO}{(1\otimes\pi)W\otimes_A\CO}=W\otimes_A\CO/\pi\cong W\otimes_{\BF_q}\BF_q=k\] 
where the first equality follows from flatness of $W$ over $\BZ_p$. This proves the other statement. 
\end{itemize}
\item[$b$)] Since for every $s>0$, $p^s\CO\subset \pi^{s'}\CO$ for some $s'$ (in the equal characteristic case, $s'=1$ and in the mixed characteristic $s'= s$), the submodule $p^s\otimes\CO+W\otimes\pi^{r}\CO$ of $W\otimes_{\BZ_p}\CO$ is equal to $W\otimes\pi^{r'}\CO$ for some $r'$, and therefore the completed tensor product $W\widehat{\otimes}_{\BZ_p}\CO$ is equal to $$\unset{r}{\invlim}\, \dfrac{W\otimes_{\BZ_p}\CO}{W\otimes_{\BZ_p}\pi^r\CO}=\invlim\, W\otimes_{\BZ_p}\frac{\CO}{\pi^r}$$ where the last equality follows from flatness of $W$ over $\BZ_p$. Now using part $a)$, we have
\[\invlim (W\otimes_{\BZ_p}\frac{\CO}{\pi^r})=\invlim \prod W\otimes_A\frac{\CO}{\pi^r}=\prod \invlim (W\otimes_A\frac{\CO}{\pi^r}).\]
As we have seen in $a)$, $W\otimes_A\CO$ is a discrete valuation ring with uniformizer $1\otimes\pi$, and this implies that $\invlim W\otimes_A\frac{\CO}{\pi^r}$ is the $1\otimes\pi$-adic completion of it, which is a complete discrete valuation ring with uniformizer $1\widehat{\otimes}\pi$ and residue field equal to $k$.
\item[$c$)] Let $e_i$ be the primitive idempotent for the $i^{\text{th}}$ factor of the decomposition of $W\widehat{\otimes}_{\BZ_p}\CO$. This decomposition gives a decomposition of $N$, say $N=\prod N_i$, where $N_i=e_iN$. Now, for any $n\in N$, we have $\phi(e_in)=(^{\sigma}\widehat{\otimes}\Id)(e_i)\phi(n)\subset e_{i-1}N=N_{i-1}$. Hence, $\phi(N_i)\subset N_{i-1}$. 
\end{itemize}
\end{proof}

%REMARK 24
\begin{rem}
\label{rem 24}
$  $
\begin{itemize}
\item[1)] The proof of part $a$) of the lemma in the mixed characteristic case is inspired by the proof of the lemma in \cite{W2}.
\item[2)] Part $b)$ of the lemma implies that  \[\frac{W\widehat{\otimes}_{\BZ_p}\CO}{(1\widehat{\otimes}\pi^i)W\widehat{\otimes}_{\BZ_p}\CO}=\prod \frac{W\widehat{\otimes}_A\CO}{(1\widehat{\otimes}\pi^i)W\widehat{\otimes}_{A}\CO}=\prod W\otimes_A\frac{\CO}{\pi^i}= W\otimes_{\BZ_p}\frac{\CO}{\pi^i}.\]
\end{itemize}
\end{rem}

%LEMMA 10
\begin{lem}
\label{lem 10}
Assume that $k$ is algebraically closed. The Dieudonn\'e module of $\CM$, is a free $W\widehat{\otimes}_{\BZ_p}\CO$-module of rank $h$. If $\CM$ is connected, then there exists an element $\epsilon\in D$ such that the set $\{\eps,V^f\eps,\cdots,V^{(h-1)f}\eps\}$ is a basis of $D$ over $W\widehat{\otimes}_{\BZ_p}\CO$.
\end{lem}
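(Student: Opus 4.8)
The strategy is to use the product decomposition of $W\widehat{\otimes}_{\BZ_p}\CO$ from Lemma \ref{lem 9} to reduce everything to the structure theory of Dieudonné modules over a complete discrete valuation ring. Put $S:=W\widehat{\otimes}_{\BZ_p}\CO$. By Lemma \ref{lem 9}(b) we have $S=\prod_{j\in\BZ/f\BZ}R_j$ with each $R_j=W\widehat{\otimes}_A\CO$ a complete discrete valuation ring of residue field $k$ and uniformizer $\varpi:=1\widehat{\otimes}\pi$, and by Definition \ref{def 12} the covariant Dieudonné module $D$ is an $S$-module; Lemma \ref{lem 9}(c) then gives a compatible decomposition $D=\prod_{j}C_j$ on which the Verschiebung $V$ acts by permuting the factors cyclically. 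From the exact sequences $(\ref{cokernel of pi^i})$, $(\ref{kernel of pi^i})$ and the ones they induce on the $D_i=D_*(\CM_i)$, together with $D=\invlim_i D_i$, one gets (exactly as for $p$-divisible groups) that $D/\varpi^iD\cong D_i$; in particular $D$ is $\varpi$-adically complete and separated, and $D/\varpi D\cong D_1=D_*(\CM_1)$ is a $k$-vector space of dimension $\log_p|\CM_1|=fh$ (note that $\CM_1$ is killed by $p$). Since $S$ is $\varpi$-adically complete with $\varpi S$ contained in its Jacobson radical, the topological Nakayama lemma shows that $D$ is generated over $S$ by $fh$ elements, so each $C_j$ is finitely generated over $R_j$.

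Next I would prove freeness and compute the rank. Because $\pi\colon\CM\to\CM$ is an epimorphism, multiplication by $\varpi$ is injective on $D$: given $x=(x_i)_i\in\invlim D_i$ with $\varpi x=0$, for $i\ge1$ we have $x_{i+1}\in D_{i+1}[\varpi]\subseteq D_{i+1}[\varpi^{i}]=\eta(D_i)$, say $x_{i+1}=\eta(y_i)$ with $\varpi y_i=0$, whence $x_i=\zeta(x_{i+1})=\zeta\eta(y_i)=\varpi y_i=0$ using $\zeta\circ\eta=\pi$. Hence each $C_j$ is a finitely generated torsion-free module over the discrete valuation ring $R_j$, therefore free, and $\sum_j\rank_{R_j}C_j=\dim_k(D/\varpi D)=fh$. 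By Lemma \ref{lem 13} the Verschiebung is injective on $D$, so it induces injections between consecutive factors $C_j$; running once around the cycle $\BZ/f\BZ$ forces $\rank_{R_j}C_j$ to be independent of $j$, hence equal to $h$. Thus $D=\prod_jC_j$ is free of rank $h$ over $S=\prod_jR_j$, which is the first assertion; note that connectedness was not needed here.

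For the second assertion assume $\CM$ connected and set $\psi:=V^{f}$, which by the cyclic action of $V$ is a semilinear endomorphism of each free rank-$h$ $R_j$-module $C_j$; it is $\varpi$-adically topologically nilpotent because connectedness of $\CM$ makes $V$ topologically nilpotent on $D$. Let $\bar C_j:=C_j/\varpi C_j$ and let $\bar\psi_j$ be the induced nilpotent semilinear endomorphism of the $h$-dimensional $k$-space $\bar C_j$. Since $\dim\CM=1$, $\ker(V\colon D_1\to D_1)\cong\Lie(\CM_1)$ is $1$-dimensional over $k$; under $D_1=\prod_j\bar C_j$ this kernel is the product of the kernels of the maps $V\colon\bar C_j\to\bar C_{j\pm1}$, so exactly one of these maps has $1$-dimensional kernel and the rest are isomorphisms, and therefore, going around the cycle, each $\bar\psi_j$ has $1$-dimensional kernel. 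A nilpotent semilinear endomorphism of an $h$-dimensional vector space over the perfect field $k$ with $1$-dimensional kernel is cyclic --- otherwise its minimal relation would have degree $<h$, forcing at least two ``Jordan blocks'' and hence a kernel of dimension $\ge2$ --- so there is $\bar\eps_j\in\bar C_j$ with $\bar\eps_j,\bar\psi_j\bar\eps_j,\dots,\bar\psi_j^{\,h-1}\bar\eps_j$ a $k$-basis. Lifting to $\eps_j\in C_j$ and applying Nakayama, $\{\psi^{\ell}\eps_j\}_{\ell=0}^{h-1}$ is an $R_j$-basis of $C_j$. Finally, $\eps:=(\eps_j)_j\in D$ works: the $j$-th component of $V^{\ell f}\eps$ is $\psi^{\ell}\eps_j$, so $\{\eps,V^{f}\eps,\dots,V^{(h-1)f}\eps\}$ is an $S$-basis of $D$.

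The main obstacle I anticipate is this last step, namely pinning down the normal form of $V^{f}$ on the factors $C_j$: this is precisely where the dimension-one hypothesis enters (through $\dim_k\Lie\CM_1=1$, which forces the $1$-dimensional kernel) and where it must be combined with topological nilpotency to obtain a cyclic vector, after which one still has to check that the component-wise cyclic generators assemble into a single $\eps\in D$ with the stated iterated form. The finite-generation and freeness steps, by contrast, are routine once the identification $D/\varpi^iD\cong D_*(\CM_i)$ and the two injectivity statements (Lemma \ref{lem 13}, and injectivity of $\varpi$ from $\pi$-divisibility) are in hand.
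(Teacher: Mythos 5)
Your proof is correct. The first half (finite generation via Nakayama from $D/\varpi D\cong D_1$, injectivity of $\varpi$ from the exact sequences, freeness over the discrete valuation rings $W\widehat{\otimes}_A\CO$, and the equal-rank argument via injectivity of $V$) is essentially the argument the paper gives. The second half, however, takes a genuinely different route. The paper invokes the explicit Hopf algebra $k[x]/(x^{q^h})$ of $\CM_1$ (citing Waterhouse) to conclude that $V^r=0$ on $D_1$ exactly when $r\geq fh$, deduces from this that $\phi^{h-1}=V^{(h-1)f}$ is nonzero on each factor $\ol{M}_i$, and then checks linear independence of $\{\phi^j\ol{\eps}_i\}$ by applying powers of $\phi$ to a putative relation. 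You instead use $\ker(V\colon D_1\to D_1)\cong\Lie(\CM_1)$, so that the dimension-one hypothesis enters as a $1$-dimensional kernel which propagates around the cycle to give each $\bar\psi_j$ a $1$-dimensional kernel, and then appeal to the fact that a nilpotent semilinear endomorphism with $1$-dimensional kernel is cyclic. Both routes consume the hypothesis $\dim\CM=1$, but yours is more intrinsic: it needs only nilpotence of $V$ mod $\varpi$ and the tangent-space computation, rather than the precise coordinate ring of $\CM_1$. The one place you should expand is the parenthetical justification of cyclicity: rather than speaking loosely of a ``minimal relation'' and ``Jordan blocks'', note that $N$ induces semilinear injections $\ker N^{i+1}/\ker N^{i}\hookrightarrow\ker N^{i}/\ker N^{i-1}$, so $\dim\ker N^{i}-\dim\ker N^{i-1}$ is non-increasing; with $\dim\ker N=1$ this forces $\dim\ker N^{h-1}\leq h-1$, hence $N^{h-1}\neq0$, and independence of $\{N^{j}v\}$ for $v$ with $N^{h-1}v\neq 0$ then follows exactly as in the paper's computation. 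With that filled in, the argument is complete.
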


%PROOF OF LEMMA 10
\begin{proof}[\textsc{Proof}]
From Lemma \ref{lem 9} $c)$, we know that there is a decomposition of the Dieudonn\'e module $D=\prod_{i\in\BZ/f\BZ}M_i$, where each $M_i$ is a module over $W\widehat{\otimes}_A\CO$ and that the Verschiebung permutes them cyclically (since it is $^{\sigma^{-1}}$-linear). We want to show that each $M_i$ is a free $W\widehat{\otimes}_A\CO$-module of rank $h$. The Dieudonn\'e module $D_1$ is a $W$-module of finite length, which implies that it is a finite length module over $W\otimes_A\CO$, where $A$ is the ring defined in Lemma \ref{lem 9}, but $\pi D_1=0$ and therefore $D_1$ is a module of finite length over $W\otimes_A\CO/\pi=k$ and we know that its length is $\log_p|\CM_1|=fh$. Take $fh$ elements in $D$ such that their images in $D_1$ generate $D_1$ over $k$, then by Nakayama lemma, they generate $D$ over $W\widehat{\otimes}_A\CO$. Note that the action of $\pi$ on $D$ is free, this follows from the fact that the kernel of $\pi$ on each $D_i$ is the same module $D_1$, and the transition morphisms from $D_{i+1}$ to $D_i$ is multiplication by $\pi$, and therefore the kernel of $\pi$ on $D$ is the inverse limit of $D_1$ with trivial transition morphisms, and hence it is trivial. Therefore $D$ is a finitely generated torsion-free $W\widehat{\otimes}_A\CO$-module and since by Lemma \ref{lem 9} $b)$ $W\widehat{\otimes}_A\CO$ is a discrete valuation ring (and in particular a principal ideal domain), $D$ is free over $W\widehat{\otimes}_A\CO$. The rank of $D$ over $W\widehat{\otimes}_A\CO$ is equal to the length of $D_1$ over $W\otimes_A\CO/\pi=k$, which is $fh$. It follows that $M_i$ are free $W\widehat{\otimes}_A\CO$-modules of finite rank. As $V:D\to D$ is injective by Lemma \ref{lem 13}, and its restriction to $M_i$ is a morphism $M_i\to M_{i+1}$ (for all $i\in \BZ/f\BZ$), the $M_i$ will all have the same rank $h$ over $W\widehat{\otimes}_A\CO$. This shows that $D$ is free of rank $h$.\\

Now, assume that $\CM$ is connected. If we find elements $\eps_i\in M_i$ ($i\in \BZ/f\BZ$) such that the set $\{\eps_i,V^f\eps_i,\cdots,V^{(h-1)f}\eps_i\}$ is a basis of $M_i$ over $W\widehat{\otimes}_A\CO$ (note that $V^{jf}(M_i)\subset M_i$ for every $j\geq 0$) , then the element $\eps:=\eps_1+\eps_2\cdots+\eps_f$ will be the desired element and we are done. Since $W\widehat{\otimes}_A\CO$ is a local ring with maximal ideal generated by $1\widehat{\otimes}\pi$ and since $M_i$ is a free module of rank $h$ over it, in order to find $\eps_i$, it suffices (by Nakayama lemma) to find an element $\overline{\eps_i}\in\cokernel(\pi:M_i\to M_i):=\overline{M}_i$ such that the set $\{\overline{\eps_i}, V^f\overline{\eps_i}, \cdots, V^{(h-1)f}\overline{\eps_i}\}$ is a basis of $\overline{M}_i$ over $W\widehat{\otimes}_A\CO/\pi\cong k$ ($M_i$ being free of rank $h$ over $W\widehat{\otimes}_A\CO$, we have that $\overline{M}_i$ has dimension $h$ over $k$) and then define $\eps_i$ to be a lift of $\ol{\eps_i}$ in $M_i$.\\

From the definition of $\overline{M}_i$ we have that $D_1=\prod_{i}\overline{M}_i$ and that Verschiebung is a morphism $V:\overline{M}_i\to \overline{M}_{i+1}$. Since the dimension of $\CM$ is $1$, the Hopf algebra of $\CM_1$ is isomorphic to $\dfrac{k[x]}{(x^{q^h})}$ (cf. \cite{W1} p.112, \S14.4, Theorem), and so $F_{\CM_1}^{r}=0$ if and only if $r\geq fh$. It follows that $V^r:D_1\to D_1$ is the zero morphism if and only if $r\geq fh$. Set $\phi:=V^f$. As stated above, we have $\phi(\overline{M}_i)\subset \overline{M}_i$, and so we have a $^{\sigma^{-f}}$-linear morphism $\phi:\overline{M}_i\to \overline{M}_i$. We claim that $\phi^{h-1}:\ol{M}_i\to \ol{M}_i$ is not the zero morphism. Indeed, if we have $V^{(h-1)f}|_{\ol{M}_i}=\phi^{h-1}|_{\ol{M}_i}=0$ for some $i$, then for every $j$ and every element $x\in \ol{M_j}$, we have $V^{\ol{i-j}}(x)\in \ol{M}_i$, where $\ol{i-j}\geq 0$ is the class of $i-j$ modulo $f$ and so  $V^{(h-1)f+\ol{(i-j)}}(x)=0$. But $(h-1)f+\ol{(i-j)}<hf$. This implies that $V^{hf-1}$ is the zero morphism on $D_1$, which is in contradiction with what we said above. Now, let $\ol{\eps_i}\in \ol{M}_i$ be an element with $\phi^{h-1}(\ol{\eps_i})\neq 0$. Then the set $\{\ol{\eps_i}, \phi(\ol{\eps_i}),\cdots,\phi^{h-1}(\ol{\eps_i})\}$ is linearly independent over $k$, for if we have a non-trivial relation $\sum_{j=j_0}^{h-1}a_j\phi^{j}(\ol{\eps_i})=0$ with $a_j\in k$ and $a_{j_0}\neq 0$, then $$0=\phi^{(h-1-j_0)}(\sum_{j=j_0}^{h-1}a_j\phi^{j}(\ol{\eps_i}))=\sum_{j=j_0}^{h-1}a_j^{q^{h-1-j_0}}\phi^{h-1-j_0+j}(\ol{\eps_i})=a_{j_0}^{q^{h-1-j_0}}\phi^{h-1}(\ol{\eps_i}),$$ because $\phi^{r}=0$ for $r\geq h$. But $\phi^{h-1}(\ol{\eps_i})$ is not zero, and so $a_{j_0}=0$, which is in contradiction with the choice of $j_0$. As the dimension of $\ol{M}_i$ over $k$ is $h$ and the set $\{\ol{\eps_i}, \phi(\ol{\eps_i}),\cdots,\phi^{h-1}(\ol{\eps_i})\}$ is linearly independent and has $h$ elements, we deduce that this set is in fact a basis of $\ol{M}_i$ over $k$ and the proof is achieved.
\end{proof}

%REMARK 14
\begin{rem}
\label{rem 14}
$  $
\begin{itemize}
\item[1)] Note that the first part of the Lemma, i.e., that the Dieudonn\'e module is free of rank $h$, is true without assuming that $ \CM $ has dimension 1.
\item[2)] Since $D_i$ is the cokernel of $\pi^i$ on $D$ and the projection from $D$ to $D_i$ commutes with $V$, it follows from Lemma \ref{lem 10} that $D_i$ is a free $W\otimes_{\BZ_p}\frac{\CO}{\pi^i}$-module of rank $h$ and that the set $\{\bar{\eps},V^f\bar{\eps},\cdots,V^{(h-1)f}\bar{\eps}\}$, where $\bar{\eps}$ is the image of $\eps$ in $D_i$, is a basis of $D_i$ over $W\otimes_{\BZ_p}\frac{\CO}{\pi^i}$.
\item[3)] In the above proof, let $i$ be such that restriction of $V^{hf-1}$ to $\ol{M}_i$ is not zero and choose $\eps_i\in \ol{M}_i$ with $V^{hf-1}(\eps_i)\neq 0$. Then for every $0\leq j\leq f-1$, we have $V^{(h-1)f}(V^j\eps_i)\neq 0$. Since for these $j$, we have $V^j\eps_i\in \ol{M}_{i+j}$, we see that we could take $\eps_{i+j}$ to be $V^{j}\eps_i$. This shows that we have a sequence of $^{\sigma^{-1}}$-linear isomorphisms \[\ol{M}_i\uset{\cong}{\arrover{V}}\ol{M}_{i+1}\uset{\cong}{\arrover{V}}\ol{M}_{i+2}\uset{\cong}{\arrover{V}}\dots\uset{\cong}{\arrover{V}}\ol{M}_{i-1}.\] Now, by Nakayama lemma, and the fact that $V$ is injective on $D$, we conclude that $V$ induces $^{\sigma^{-1}}$-linear isomorphisms \[V:M_j\to M_{j+1}\] for every $j\neq i-1$. It follows that $$VD\cong VM_{f-1}\times VM_0\times VM_1\times\dots\times VM_{f-2}\cong$$ \[M_0\times M_1\times\dots\times M_{i-1}\times VM_{i-1}\times M_{i+1}\times\dots\times M_{f-1}.\] Thus, the Lie algebra of $\CM$ is isomorphic to $$D/VD\cong M_i/VM_{i-1}\cong M_i/V^fM_i.$$
\end{itemize}
\end{rem}

%LEMMA 14
\begin{lem}
\label{lem 14}
Assume that $\CM$ is connected. Then the morphism $ \Upsilon:\ep^jD\to \ep^jD $ is injective.
\end{lem}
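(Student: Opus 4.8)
The plan is to deduce the injectivity of $\Upsilon=\ep^jV$ from the injectivity of $V$ on $D$ (Lemma \ref{lem 13}) by inverting the uniformizer. First I would reduce to the case where $k$ is algebraically closed: the formation of the covariant Dieudonn\'e module commutes with extension of perfect base fields, the formation of $j$-th exterior powers commutes with base change of the ground ring, and $W(\bar k)\widehat{\otimes}_{\BZ_p}\CO$ is faithfully flat over $W\widehat{\otimes}_{\BZ_p}\CO$, so the injectivity of $\Upsilon$ may be checked after this base change. (The one point needing attention is commuting the inverse limit defining $D$ with the base change; I would do this at the finite levels $\CM_n$, exactly as in the proof of Lemma \ref{lem 13}.) So assume $k$ algebraically closed, and write $R:=W\widehat{\otimes}_{\BZ_p}\CO$ and $\varpi:=1\widehat{\otimes}\pi$. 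By Lemma \ref{lem 9}(b), $R$ is a finite product of complete discrete valuation rings, each with uniformizer $\varpi$ and residue field $k$; in particular $\varpi$ is a non-zero-divisor in $R$. By Lemma \ref{lem 10} the module $D$ is free of rank $h$ over $R$, hence $\ep^jD$ is free of rank $\binom{h}{j}$ over $R$, so in particular torsion-free, and the localization map $\ep^jD\into(\ep^jD)[\tfrac{1}{\varpi}]$ is injective.

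Now comes the main step. By Lemma \ref{lem 13} the operator $V$ is injective on $D$, and by Remark \ref{rem 14}(3) its cokernel $D/VD$ is isomorphic to the Lie algebra $\CL ie(\CM)$, which is one-dimensional over $k$ because $\CM$ has dimension $1$. Hence $D/VD$ has finite length over $R$ and is therefore annihilated by a power of $\varpi$. Consequently, after inverting $\varpi$, the additive map $V$ becomes a bijection of $D[\tfrac{1}{\varpi}]$ onto itself: injective because $D$ is $\varpi$-torsion-free and $V$ is injective, surjective because its cokernel is $\varpi$-power torsion. By construction $\Upsilon$ is the extension of $V$ to $j$-th exterior powers, $d_1\wedge\dots\wedge d_j\mapsto Vd_1\wedge\dots\wedge Vd_j$; since forming $j$-th exterior powers commutes with localization and is functorial, $\Upsilon$ becomes, after inverting $\varpi$, the corresponding extension of the bijection $V[\tfrac{1}{\varpi}]$, whose inverse is the extension of $V[\tfrac{1}{\varpi}]^{-1}$ — in particular a bijection. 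Therefore $\Upsilon\otimes_RR[\tfrac{1}{\varpi}]$ is injective, and since $\ep^jD$ embeds into $(\ep^jD)[\tfrac{1}{\varpi}]$, the operator $\Upsilon$ itself is injective.

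The genuinely delicate point is the reduction to $k$ algebraically closed, i.e. checking that $\ep^jD$ and $\Upsilon$ behave correctly under $W\widehat{\otimes}_{\BZ_p}\CO\to W(\bar k)\widehat{\otimes}_{\BZ_p}\CO$; because $D$ is defined as an inverse limit, this is cleanest to carry out at the finite levels $\CM_n$, mimicking the inverse-limit argument already present in the proof of Lemma \ref{lem 13}. Everything else is formal: freeness of $D$ over $R$ (Lemma \ref{lem 10}), the identification $D/VD\cong\CL ie(\CM)$ (Remark \ref{rem 14}), the finiteness of $\CL ie(\CM)$ over $k$ (dimension one), and the fact that an exterior power of a bijective semilinear automorphism is again bijective. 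It is precisely the finiteness of $\CL ie(\CM)$ where the dimension-one hypothesis enters: it forces $\coker(V)$ to be torsion, so that $V$ — and hence $\Upsilon$ — becomes invertible over $R[\tfrac{1}{\varpi}]$, which is the whole mechanism of the proof.
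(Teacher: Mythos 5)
Your proof is correct, but it runs on a different engine than the paper's. The paper also reduces to $k$ algebraically closed and uses freeness of $D$ and $\ep^jD$ over $W\widehat{\otimes}_{\BZ_p}\CO$ (Lemma \ref{lem 10}), but then finishes in one line with pure (semi)linear algebra: a semilinear endomorphism of a finite free module over a commutative ring is injective if and only if its determinant is a non-zero-divisor, and $\det(\ep^jV)=\det(V)^{\binom{h-1}{j-1}}$, so injectivity of $\Upsilon$ is literally equivalent to injectivity of $V$, which is Lemma \ref{lem 13}. You instead invert $\varpi=1\widehat{\otimes}\pi$: you use the identification $D/VD\cong\CL ie(\CM)$ and the dimension-one hypothesis to see that $\coker(V)$ is $\varpi$-power torsion, conclude that $V$ and hence $\ep^jV$ become bijective over $R[\tfrac{1}{\varpi}]$, and then use torsion-freeness of $\ep^jD$ to descend injectivity. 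Both arguments are sound (your localization of the $\sigma^{-1}\otimes\Id$-semilinear $V$ is legitimate because $\varpi$ is fixed by $\sigma^{-1}\otimes\Id$, and your care about the inverse-limit step in the reduction to $\bar k$ is warranted, though the paper elides it too). The trade-off: the paper's determinant argument needs only injectivity of $V$ and freeness of $D$, so it works for any connected $\CM$ regardless of dimension and is shorter; your argument genuinely consumes the dimension-one hypothesis via finiteness of the Lie algebra, but in exchange it yields the stronger byproduct that $\Upsilon$ has $\varpi$-power-torsion cokernel, i.e.\ becomes an isomorphism after inverting $\varpi$ --- information the determinant route does not hand you directly.
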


%PROOF OF LEMMA 14
\begin{proof}[\textsc{Proof}]
Since the extension $ k\into \kbar $ is faithfully flat, we may assume that $k$ is algebraically closed. We know that a semi-linear endomorphism of a free module of finite rank over a (commutative) ring (with $1$) is injective if and only if its determinant is a non-zero divisor.
As $ D $ is a free $ W\widehat{\otimes}_{\BZ_p}\CO $-module of rank $ h $, $ \ep^jD $ is a free $ W\widehat{\otimes}_{\BZ_p}\CO $-module of rank $ \binom{h}{j} $. Now, the determinant of $ \Upsilon=\ep^jV $ is equal to det$(V)^{\binom{h-1}{j-1}} $ and from what we said at the beginning of the proof, it follows that $ V $ is injective if and only if $ \Upsilon $ is injective, and since by Lemma \ref{lem 13} $V$ is injective, $ \Upsilon $ is injective too.
\end{proof}

%REMARK ON THE UNIQUENESS OF PHI AND phi
\begin{rem}
\label{rem 21}
Assume that $ \CM $ is connected. Then, by previous lemma, there exists at most one $ ^{\sigma^{-1}}\otimes\Id $-linear morphism $ \Phi:\ep^jD\to\ep^jD $ such that $ \Upsilon\circ\Phi=p $.
\end{rem}

%DEFINITION 14
\begin{dfn}
\label{def 14}
Assume that $\CM$ is connected and $k$ is algebraically closed.
\begin{itemize}
\item[(i)] Denote by $ \Phi $ the morphism $\ep^jD\to \ep^jD$
which is defined on the basis $$ \{V^{f\alpha_1}\eps\wedge\dots\wedge V^{f\alpha_j}\eps\, |\, 0\leq\alpha_1<\alpha_2<\cdots<\alpha_j\leq h-1\} $$ by
\[V^{f\alpha_1}\eps\wedge\dots\wedge V^{f\alpha_j}\eps\mapsto FV^{f\alpha_1}\eps\wedge V^{f\alpha_2-1}\eps\wedge\dots\wedge V^{f\alpha_j-1}\eps \]
and is defined on the whole space, $ \ep^jD $, by $ ^{\sigma^{-1}}\otimes\Id $-linearity.
\item[(ii)] Denote by $ \phi $ the morphism $\ep^jD_i\to \ep^jD_i$
which is defined on the basis  $$  \{V^{f\alpha_1}\bar{\eps}\wedge\dots\wedge V^{f\alpha_j}\bar{\eps}\, |\, 0\leq\alpha_1<\alpha_2<\cdots<\alpha_j\leq h-1\}$$ by 
\[V^{f\alpha_1}\bar{\eps}\wedge\dots\wedge V^{f\alpha_j}\bar{\eps}\mapsto FV^{f\alpha_1}\bar{\eps}\wedge V^{f\alpha_2-1}\bar{\eps}\wedge\dots\wedge V^{f\alpha_j-1}\bar{\eps} \]
and is defined on the whole space, $ \ep^jD_i $, by $ ^{\sigma^{-1}}\otimes\Id $-linearity.
\end{itemize}
\end{dfn}

%REMARK 19
\begin{rem}
\label{rem 19}
Note that we have $ \ep^j\zeta\circ\Phi=\phi\circ\ep^j\zeta $.
\end{rem}

%LEMMA 15
\begin{lem}
\label{lem 15}
Assume that $\CM$ is connected and $k$ is algebraically closed.
\begin{itemize}
\item[a)] We have $ \Phi\circ\Upsilon=p=\Upsilon\circ\Phi $ and the $ F $-diagram associated to $ \Phi $ and the $ V $-diagram associated to $ \Upsilon $ are commutative.
\item[b)] We have $ \phi\circ\upsilon=p=\upsilon\circ\phi $ and the $ F $-diagram associated to $ \phi $ and the $ V $-diagram associated to $ \upsilon $ are commutative.
\end{itemize}
\end{lem}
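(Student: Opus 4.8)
The plan is to prove (a) by a short chain of manipulations with the relations $FV=VF=p$ together with the injectivity of $\Upsilon$, and then to deduce (b) from (a) by transporting everything along the surjections $\ep^j\zeta\colon\ep^jD\onto\ep^jD_i$. Throughout I use that, by Lemma \ref{lem 10}, $D$ is free over $W\widehat{\otimes}_{\BZ_p}\CO$ on the basis $\eps,V^f\eps,\dots,V^{(h-1)f}\eps$, so that $\ep^jD$ is free on the wedges $b_{\ul\alpha}:=V^{f\alpha_1}\eps\wedge\dots\wedge V^{f\alpha_j}\eps$ for $0\le\alpha_1<\dots<\alpha_j\le h-1$ (and similarly $\ep^jD_i$ is free over $W\otimes_{\BZ_p}\CO/\pi^i$ on the images of these wedges, by Remark \ref{rem 14} 2)). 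Observe first that the commutativity of the $V$-diagram associated to $\Upsilon$ (resp.\ to $\upsilon$) is just the definition of $\Upsilon=\ep^jV$ (resp.\ of $\upsilon=\ep^jV$) evaluated on decomposable elements, so nothing needs to be done there; the content of the lemma is the two identities involving $p$ and the commutativity of the $F$-diagrams.

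I would begin with the identity $\Upsilon\circ\Phi=p$ on $\ep^jD$. Because $\alpha_1<\alpha_2<\dots<\alpha_j$ forces $\alpha_2,\dots,\alpha_j\ge1$, the element $\Phi(b_{\ul\alpha})=FV^{f\alpha_1}\eps\wedge V^{f\alpha_2-1}\eps\wedge\dots\wedge V^{f\alpha_j-1}\eps$ is legitimate, and applying $\Upsilon=\ep^jV$ and the relation $VF=p$ on $D$ gives $\Upsilon(\Phi(b_{\ul\alpha}))=VFV^{f\alpha_1}\eps\wedge V^{f\alpha_2}\eps\wedge\dots\wedge V^{f\alpha_j}\eps=p\,b_{\ul\alpha}$. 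Since, as in Remark \ref{rem 21}, $\Upsilon\circ\Phi$ and multiplication by $p$ are both $W\widehat{\otimes}_{\BZ_p}\CO$-linear and agree on the basis $\{b_{\ul\alpha}\}$, they are equal. The same computation, performed on $\ep^jD_i$ with $\bar\eps$ in place of $\eps$, gives $\upsilon\circ\phi=p$, but I would prefer to get this from (a) in the last step.

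Next comes the $F$-diagram associated to $\Phi$, i.e.\ the identity $\Phi(x_1\wedge Vx_2\wedge\dots\wedge Vx_j)=Fx_1\wedge x_2\wedge\dots\wedge x_j$ for all $x_1,\dots,x_j\in D$. The key trick is to apply $\Upsilon$ to both sides. Using the identity $\Upsilon\circ\Phi=p$ from the previous step, the left-hand side maps to $p\,(x_1\wedge Vx_2\wedge\dots\wedge Vx_j)$; the right-hand side maps to $VFx_1\wedge Vx_2\wedge\dots\wedge Vx_j=p\,(x_1\wedge Vx_2\wedge\dots\wedge Vx_j)$ by $VF=p$. So $\Upsilon$ sends both sides to the same element, and since $\Upsilon=\ep^jV$ is injective by Lemma \ref{lem 14} — this is the one place where connectedness of $\CM$ enters — the two sides coincide, which is precisely the commutativity of the $F$-diagram. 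Substituting $Vx_1$ for $x_1$ in this identity and using $FV=p$ then gives $\Phi(\Upsilon(x_1\wedge\dots\wedge x_j))=FVx_1\wedge x_2\wedge\dots\wedge x_j=p\,(x_1\wedge\dots\wedge x_j)$, and since decomposable wedges span $\ep^jD$ while $\Phi\circ\Upsilon$ and $p$ are both linear, this yields $\Phi\circ\Upsilon=p$. This settles (a).

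For (b) I would invoke the surjection $\ep^j\zeta\colon\ep^jD\onto\ep^jD_i$, which by Remarks \ref{rem 18} and \ref{rem 19} satisfies $\ep^j\zeta\circ\Upsilon=\upsilon\circ\ep^j\zeta$ and $\ep^j\zeta\circ\Phi=\phi\circ\ep^j\zeta$, and which, being induced by the morphism of Dieudonné modules $\zeta\colon D\onto D_i$, commutes with $F$ and with $V$. Applying $\ep^j\zeta$ to the identities $\Upsilon\circ\Phi=p$, $\Phi\circ\Upsilon=p$ and to the $F$-diagram identity of (a), and using that $\ep^j\zeta$ is onto (so every element of $D_i$ is $\zeta(x)$ for some $x\in D$, and $\zeta(Vx)=V\zeta(x)$, $\zeta(Fx)=F\zeta(x)$), one reads off $\upsilon\circ\phi=p$, $\phi\circ\upsilon=p$ and the commutativity of the $F$-diagram associated to $\phi$; the $V$-diagram for $\upsilon$ is again its own definition. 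I expect the only real subtlety to be bookkeeping: one must respect the semilinearity conventions of Remark \ref{rem 21} so that ``equality on a basis'' is legitimate in the first step, and one uses $p>2$ (as in Lemma \ref{lem 12}) so that the exterior powers in sight are the antisymmetric quotients and the formulas defining $\Phi$ and $\phi$ are unambiguous. Apart from that, the proof is exactly the above sequence of applications of $FV=VF=p$ combined with the injectivity of $\Upsilon$.
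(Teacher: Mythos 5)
Your proof is correct and follows essentially the same route as the paper: verify $\Upsilon\circ\Phi=p$ on the basis $\{V^{f\alpha_1}\eps\wedge\dots\wedge V^{f\alpha_j}\eps\}$, deduce the $F$-diagram by applying the injective map $\Upsilon$ to both sides and cancelling (the paper phrases this as $\Upsilon\circ\Phi\circ(\Id\wedge V\wedge\cdots\wedge V)=\Upsilon\circ(F\wedge\Id\wedge\cdots\wedge\Id)$, which is the same computation), obtain $\Phi\circ\Upsilon=p$ from the two diagrams, and transport everything to $\ep^jD_i$ along the surjection $\ep^j\zeta$ using Remarks \ref{rem 18} and \ref{rem 19}. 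No gaps.
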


%PROOF OF LEMMA 15
\begin{proof}[\textsc{Proof}]
Although the statements $a)$ and $b)$ are very similar, the proofs are different and in fact $b)$ follows from $a)$ and $a)$ can be regarded as an auxiliary statement for the proof of $b)$.
\begin{itemize}
\item[$a$)] First of all we check the equality $ \Upsilon\circ\Phi=p $. It is sufficient to calculate $ \Upsilon\circ\Phi $ on the basis elements $ V^{f\alpha_1}\eps\wedge\dots\wedge V^{f\alpha_j}\eps $:
\[\Upsilon\circ\Phi (V^{f\alpha_1}\eps\wedge\dots\wedge V^{f\alpha_j}\eps)=\Upsilon(FV^{f\alpha_1}\eps\wedge V^{f\alpha_2-1}\eps\wedge\dots\wedge V^{f\alpha_j-1}\eps)=\]
\[VFV^{f\alpha_1}\eps\wedge\dots\wedge V^{f\alpha_j}\eps=pV^{f\alpha_1}\eps\wedge\dots\wedge V^{f\alpha_j}\eps\]
where the first and second equality follow respectively from the definition of $ \Phi $ (cf. Definition \ref{def 14} (i) ) and $ \Upsilon $ (cf. Notations), and the last equality follows from the equality $ VF=p $. Hence the equality $ \Upsilon\circ\Phi=p $.

Now, we calculate $ \Upsilon\circ \Phi \circ (\Id\wedge V\wedge\cdots\wedge V) $:
\[\Upsilon\circ \Phi \circ (\Id\wedge V\wedge\cdots\wedge V)=p\circ (\Id\wedge V\wedge\cdots\wedge V)=p\wedge V\wedge\cdots\wedge V=\]
\[VF\wedge V\wedge\cdots\wedge V=\Upsilon\circ (F\wedge\Id\wedge\cdots\wedge\Id)\]where the first equality follows from the equality $ \Upsilon\circ\Phi=p $ and the other equalities follow from the definition of $ \Upsilon $ and the equality $ VF=p $. But we know from Lemma \ref{lem 14} that $ \Upsilon $ is injective and therefore, we have 
\[\Phi \circ (\Id\wedge V\wedge\cdots\wedge V)=F\wedge\Id\wedge\cdots\wedge\Id.\]
Denoting by $ \lambda $ the universal alternating morphism $ D\times\cdots\times D\to \ep^jD $ sending an element $ (x_1,\cdots,x_j) $ to $ x_1\wedge\cdots\wedge x_j $, the last equality gives rise to the following diagram:
\[\xymatrix{
D\times\cdots\times D\ar[rrr]^{\lambda} & & & \ep^jD\ar[dd]^{\Phi}\\
D\times\cdots\times D\ar[u]^{\id\times V\times\cdots\times V}\ar[d]_{F\times\Id\times\cdots\times\Id}\ar[rr]^{\lambda} & & \ep^jD\ar[ur]^{\Id\wedge V\wedge\cdots\wedge V}\ar[dr]_{F\wedge\Id\wedge\cdots\wedge\Id} &\\
D\times\cdots\times D\ar[rrr]_{\lambda} & & & \ep^jD
}\]
with the right triangle commutative. It follows that the whole diagram is commutative and thus the $ F $-diagram associated to $ \Phi $ is commutative.
The commutativity of the $ V $-diagram associated to $ \Upsilon $ follows from its definition (in fact it is equivalent to the definition of $ \Upsilon $!). 
It remains to show that $ \Phi\circ\Upsilon=p $. We have:
\[\Phi\circ\Upsilon(x_1\wedge\cdots\wedge x_j)=\Phi(Vx_1\wedge\cdots\wedge Vx_j)=FVx_1\wedge x_2\wedge\cdots\wedge x_j=px_1\wedge\cdots\wedge x_j\]
where the second equality follows from the $ V $-diagram associated to $ \Upsilon $, the third one from the $ F $-diagram associated to $ \Phi $ and the last once, again, from the equality $ FV=p $. This completes the proof of $a)$.

\item[$b$)] The compatibility of $ \Upsilon $ and $ \upsilon $, and of $ \Phi $ and $ \phi $ with respect to the epimorphism $ \zeta: D \onto D_i $ (cf. Remarks \ref{rem 18} and \ref{rem 19}) and statement $a)$ of the lemma imply the following properties:
\begin{enumerate}
\item $\upsilon \circ \phi \circ \ep^j\zeta=\upsilon\circ\ep^j\zeta\circ\Phi=\ep^j\zeta\circ\Upsilon\circ\Phi=\ep^j\zeta\circ p=p\circ\ep^j\zeta.$
\item $ \phi\circ \upsilon \circ \ep^j\zeta= \phi\circ\ep^j\zeta\circ\upsilon=\ep^j\zeta\circ \Phi\circ \Upsilon=\ep^j\zeta\circ p=p\circ\ep^j\zeta.$
\item $ \phi\circ\lambda\circ (\Id\times V\times\cdots\times V)\circ \zeta^j=  \phi\circ\lambda\circ\zeta^j\circ (\Id\times V\times\cdots\times V)=\phi\circ\ep^j\zeta\circ\lambda\circ (\Id\times V\times\cdots\times V)=\ep^j\zeta\circ\Phi\circ\lambda\circ (\Id\times V\times\cdots\times V)=\\
\ep^j\zeta\circ\lambda\circ (F\times\Id\times\cdots\times\Id)=\lambda\circ\zeta^j\circ(F\times\Id\times\cdots\times\Id)=\\
\lambda\circ(F\times\Id\times\cdots\times\Id)\circ\zeta^j.$
\end{enumerate}
Since the morphism $ \zeta:D\to D_i $ is surjective, the morphisms $ \ep^j\zeta:\ep^jD\to\ep^jD_i $ and $ \zeta^j:D^j\to D_i^j $ are surjective as well and thus we can cancel them from the right and conclude from properties 1 and 2 that $ \upsilon \circ \phi =p=\phi\circ \upsilon  $ and from 3 that the $ F $-diagram associated to $ \phi $ is commutative. The commutativity of the $ V $-diagram associated to $ \upsilon $ follows once more from the definition of $ \upsilon $. The part $b)$ is now proved.
\end{itemize}
\end{proof}

%PROPOSITION 9
\begin{prop}
\label{prop 9}
Assume that $k$ is algebraically closed. Then the Dieudonn\'e module of $\underset{\CO}\ep^j\CM_i$ is isomorphic to $\ep^jD_i$ and in particular the order of $\underset{\CO}{\ep}^j\CM_i$ is equal to $q^{i\binom{h}{j}}$. More precisely we have:
\begin{itemize}
\item[a)] if $\CM$ is \'etale, then the module scheme $\underset{\CO}\ep^j\CM_i$ is isomorphic to the constant module scheme $\underset{\CO}\ep^j(\frac{\CO}{\pi^i})^h$, which has order $q^{i\binom{h}{j}}$ and 
\item[b)] if $\CM$ is connected, then the covariant Dieudonn\'e module of $\underset{\CO}\ep^j\CM_i$ is isomorphic to $\ep^jD_i$ with the actions of $F$ respectively $V$ defined by $\phi$ respectively $\upsilon$.
\end{itemize}
\end{prop}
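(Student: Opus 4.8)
The plan is to reduce everything to the two cases in the statement via the connected-\'etale decomposition, and in each case to identify $\epO^j\CM_i$ with a known object. Since $k$ is algebraically closed, by Remark \ref{rem 12} the $\pi$-divisible module $\CM$ splits as $\CM^0\times\CM^{\text{\'et}}$, and correspondingly $\CM_i\cong(\CM^0)_i\times(\CM^{\text{\'et}})_i$. Using the behaviour of exterior powers with respect to products (which follows from Theorem \ref{thm 1}(d), applied to the split exact sequence $0\to(\CM^0)_i\to\CM_i\to(\CM^{\text{\'et}})_i\to 0$), one gets a decomposition $\epO^j\CM_i\cong\bigoplus_{a+b=j}\epO^a(\CM^0)_i\otimes_{\CO}\epO^b(\CM^{\text{\'et}})_i$; taking covariant Dieudonn\'e modules (which is exact and, by Lemma \ref{lem 23} together with Remark \ref{rem 15}, turns the tensor product of an \'etale factor with another into the tensor product of Dieudonn\'e modules over $W\otimes_{\BZ_p}\CO/\pi^i$) reduces the order and Dieudonn\'e-module claims to the purely \'etale and purely connected cases. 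So it suffices to establish a) and b).

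For a), the \'etale case: by Proposition \ref{etalepidiv} there is a connected finite \'etale cover $T\to S=\Spec k$; but $k$ is algebraically closed, so $\CM$ is already the constant module scheme with $\CM_{i}\cong\ul{(\CO/\pi^i)}^h$. By Proposition \ref{prop025} (the \'etale case of the main theorem) the exterior power $\epO^j\CM_i$ is again \'etale and is computed via the "\'etale dictionary" by the usual exterior power $\epO^j(\CO/\pi^i)^h$ of $\CO/\pi^i$-modules, which is free of rank $\binom{h}{j}$ over $\CO/\pi^i$; hence $\epO^j\CM_i\cong\ul{\epO^j(\CO/\pi^i)^h}$ has order $q^{i\binom{h}{j}}$. (Its Dieudonn\'e module is then the \'etale Dieudonn\'e module of this constant scheme, with $V$ bijective, consistent with b) in the degenerate sense.)

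For b), the connected case: this is where the real work lies, and it is essentially an application of Lemma \ref{lem 11}. The Dieudonn\'e module $D_i=D_*(\CM_i)$ is an $\BE_k\otimes_{\BZ}R$-module of finite length over $W\otimes_{\BZ_p}\CO/\pi^i$, hence over $W$. By Definition \ref{def 14}(ii) we have the $^{\sigma^{-1}}\otimes\Id$-linear morphism $\phi$ and the $^{\sigma}\otimes\Id$-linear morphism $\upsilon$ on $\epO^jD_i=\ovset{j}{\unset{W\otimes_{\BZ_p}\CO/\pi^i}{\ep}}D_i$, and by Lemma \ref{lem 15}(b) these commute, satisfy $\phi\circ\upsilon=p=\upsilon\circ\phi$, and make the $F$-diagram associated to $\phi$ and the $V$-diagram associated to $\upsilon$ commute. (Here I should note a small point: Definition \ref{def 14} and Lemma \ref{lem 15} are stated for $k$ algebraically closed, which is exactly our hypothesis, and the basis $\{\eps, V^f\eps,\dots\}$ of Lemma \ref{lem 10} is what makes $\phi$ well-defined.) Thus the hypotheses of Lemma \ref{lem 11} are verified for $G=\CM_i$ with these $\phi,\upsilon$, and Lemma \ref{lem 11} yields that $\epO^jD_i$ is the covariant Dieudonn\'e module of $\epO^j\CM_i$, with $F$ and $V$ acting through $\phi$ and $\upsilon$ respectively. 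This gives b), and the order of $\epO^j\CM_i$ equals $p^{\ell_W(\epO^jD_i)}$; since $D_i$ is free of rank $h$ over $W\otimes_{\BZ_p}\CO/\pi^i$ (Remark \ref{rem 14}(2)), $\epO^jD_i$ is free of rank $\binom{h}{j}$ over that ring, whose length over $W$ is $if$ times its rank, giving length $if\binom{h}{j}$ and order $p^{if\binom{h}{j}}=q^{i\binom{h}{j}}$.

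The main obstacle is not any single step but making sure the product-decomposition argument is clean: one must check that the isomorphism $\epO^j\CM_i\cong\bigoplus_{a+b=j}\epO^a(\CM^0)_i\otimes_{\CO}\epO^b(\CM^{\text{\'et}})_i$ is compatible with the Dieudonn\'e functor and with the $F$- and $V$-actions, and that the mixed tensor terms contribute correctly to the order via Lemma \ref{lem 23}/Remark \ref{rem 15}. An alternative, cleaner route that avoids the product bookkeeping for the \emph{order} statement is to observe that $q^{i\binom hj}$ is multiplicative in exact sequences and additive-in-exponents in the same way $\binom{h}{j}$ decomposes by Vandermonde, so the order formula for $\CM_i$ follows formally from a) and b) once one knows the orders in the pure cases; the finer Dieudonn\'e-module statement for general connected-times-\'etale $\CM$ then follows because $V$ is an isomorphism on the \'etale part and Lemma \ref{lem 22} identifies the relevant $\hat T$ with an honest tensor product of Dieudonn\'e modules.
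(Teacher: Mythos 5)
Your proposal is correct and follows essentially the same route as the paper: reduce to the pure cases via the connected-\'etale splitting and the decomposition of $\epO^j$ of a direct sum, identify the \'etale case with the constant module scheme $\ul{(\CO/\pi^i)^h}$, apply Lemma \ref{lem 15}(b) and Lemma \ref{lem 11} in the connected case, and read off the order from the rank of $\ep^jD_i$ over $W\otimes_{\BZ_p}\CO/\pi^i$. The only quibble is your citation of Theorem \ref{thm 1}(d) for the full decomposition $\epO^j(M\oplus N)\cong\bigoplus_{a+b=j}\epO^aM\otimes_{\CO}\epO^bN$ — that theorem only produces the single top tensor factor under vanishing hypotheses, so (as the paper does) one should instead invoke the universal properties of exterior powers, tensor products and direct sums directly.
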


%PROOF OF PROPOSITION 9
\begin{proof}[\textsc{Proof}]
Before proving $a)$ and $b)$, let us explain how these two parts will imply the first two statements of the proposition:
For the first part (about the Dieudonn\'e module of the exterior power), note that if $\CM$ is \'etale, each $\CM_i$ is \'etale and, since $k$ is algebraically closed, $\CM_i$ are constant $\CO$-module schemes. In $a)$ we show that in fact $\CM_i$ is isomorphic to the constant $\CO$-module scheme $(\frac{\CO}{\pi^i})^h$ and $\epO^j\CM_i$ is isomorphic to the constant $\CO$-module scheme $\underset{\frac{\CO}{\pi^i}}\ep^j(\frac{\CO}{\pi^i})^h$. The Dieudonn\'e module of $\CM_i$ is therefore isomorphic to $$W\otimes_{\BZ_p}(\frac{\CO}{\pi^i})^h\cong (W\otimes_{\BZ_p}\frac{\CO}{\pi^i})^h\cong (W\otimes_{\BZ_p}\frac{\CO}{\pi^i})^h$$ and the Dieudonn\'e module of $\epO^j\CM_i$ is isomorphic to $$W\otimes_{\BZ}\underset{\frac{\CO}{\pi^i}}\ep^j(\frac{\CO}{\pi^i})^h\cong \underset{W\otimes_{\BZ_p}\frac{\CO}{\pi^i}}{\ovset{j}{\ep}}(W\otimes_{\BZ_p}\frac{\CO}{\pi^i})^h\cong \underset{W\otimes_{\BZ_p}\frac{\CO}{\pi^i}}{\ovset{j}{\ep}} D_i.$$ If $\CM$ is connected, $b)$ is exactly what we need to show.
Now, in the general case, write $\CM_i$ as the direct sum, $\CM_i^{\text{\'et}}\oplus \CM_i^0$, of its \'etale and connected parts (which is possible, since $k$ is algebraically closed). Using the universal properties of exterior power, tensor product and direct sum, we obtain a canonical isomorphism:
\[\ep^j\CM_i\cong\bigoplus_{r=0}^j(\ep^r\CM_i^{\text{\'et}}\otimes\ep^{j-r}\CM_i^0).\]

Applying the covariant Dieudonn\'e functor on the both sides of this isomorphism, and using the fact that the Dieudonn\'e functor preserves direct sums, we get the following isomorphism (in the following isomorphisms, we omit the subscript $W\otimes_{\BZ_p}\CO$ from the exterior powers and tensor product in order to avoid heavy notations):
\[D_*(\ep^j\CM_i)\cong \bigoplus_{r=0}^jD_*(\ep^r\CM_i^{\text{\'et}}\otimes \ep^{j-r}\CM_i^0).\]

Now, applying Lemma \ref{lem 23} to $\CM_i^0$ and the \'etale $\CO$-module scheme $\CM_i^{\text{\'et}}$, we can interchange the Dieudonn\'e functor with the tensor product and we obtain:
\[D_*(\ep^j\CM_i)\cong \bigoplus_{r=0}^j(D_*(\ep^r\CM_i^{\text{\'et}})\otimes D_*(\ep^{j-r}\CM_i^0)).\]

Finally, using parts $a)$ and $b)$ of the proposition, we get the following isomorphism 
\[D_*(\ep^j\CM_i)\cong \bigoplus_{r=0}^j(\ep^rD_i^{\text{\'et}}\otimes\ep^{j-r}D_i^0)\]

where $D_i^{\text{\'et}}$ and $D_i^0$ denote respectively the Dieudonn\'e module of $\CM_i^{\text{\'et}}$ and $\CM_i^0$. But the right hand side of the isomorphism is isomorphic to $\ep^j(D_i^{\text{\'et}}\oplus D_i^0)$ which is itself isomorphic to $\ep^jD_i$, again since the Dieudonn\'e functor commutes with direct sums. Hence, the canonical isomorphism 
\[D_*(\ep^j\CM_i)\cong \ep^jD_i.\]

For the statement about the order, using the fact that the Dieudonn\'e module of $\epO^j\CM_i$ is isomorphic to $\ovset{j}{\unset{W\otimes_{\BZ_p}\frac{\CO}{\pi^i}}{\ep}}D_i$ and recalling from Remark \ref{rem 14}, that $D_i$ is a free $W\otimes_{\BZ_p}\dfrac{\CO}{\pi^i}$-module of rank $h$, we deduce that $\ovset{j}{\unset{W\otimes_{\BZ_p}\frac{\CO}{\pi^i}}{\ep}}D_i$ is a free $W\otimes_{\BZ_p}\dfrac{\CO}{\pi^i}$-module of rank $\binom{h}{j}$ and that the order of $ \unset{\frac{\CO}{\pi^i}}{\ep}^j\CM_i $ is equal to $$p^{\ell_{W}(\ep^jD_i)}=p^{\binom{h}{j}\cdot\ell_{W}(W\otimes_{\BZ_p}\frac{\CO}{\pi^i})}.$$ Using Lemma \ref{lem 9} a), we have that $$\ell_W(W\otimes_{\BZ_p}\frac{\CO}{\pi^i})=f\cdot\ell_W(W\otimes_{A}\frac{\CO}{\pi^i}).$$ Now recall from the proof of Lemma \ref{lem 9} that in the equal characteristic case the ring $A$ is $\BF_q$ and in the mixed characteristic case it is the ring of integers of the maximal unramified subextension of $K/\BQ_p$. Therefore, in the equal characteristic case we have $$W\otimes_A\frac{\CO}{\pi^i}\cong k\otimes_{\BF_q}\frac{\BF_q\lbb\pi\rbb}{\pi^i}\cong\frac{k\lbb\pi\rbb}{\pi^i}$$ and in the mixed characteristic case we have $$W\otimes_A\frac{\CO}{\pi^i}\cong \frac{W}{p^i}\otimes_{\frac{A}{p^i}}\frac{\CO}{\pi^i}\cong \frac{W}{p^i}$$ since $\frac{A}{p^i}\cong \frac{\CO}{\pi^i}$. It follows that in either case we have $\ell_W(W\otimes_A\frac{\CO}{\pi^i})=i$. Hence the order of  $ \unset{\frac{\CO}{\pi^i}}{\ep}^j\CM_i $ is equal to $p^{fi\binom{h}{j}}=q^{i\binom{h}{j}}$. Now we prove parts $a)$ and $b)$.

\begin{itemize}
\item[$a$)] Since $k$ is algebraically closed, the finite group schemes $\CM_i$ are constant and by abuse of notation, we will denote by $\CM_i$ the abstract group of $k$-rational points of $\CM_i$. Again, since $k$ is algebraically closed, we have exact sequences
$$(*)\qquad 0\to \CM_n\to \CM_{n+m}\ovset{\pi^n}{\to}\CM_m\to 0$$ for all natural numbers $m$ and $n$ (here we mean the exact sequence of $\CO$-modules and not $\CO$-module schemes). For $i=1$, we have that $\CM_i$ is an $\BF_q$-vector space of dimension $h$ and so it is isomorphic to $(\BF_q)^h\cong (\frac{\CO}{\pi})^h$. For $i=2$, we know that $\CM_2$ is an $ \frac{\CO}{\pi^2}$-module of order equal to the order of $ (\frac{\CO}{\pi^2})^h$ and that it is an extension of $(\frac{\CO}{\pi})^h$ by $(\frac{\CO}{\pi})^h$, more precisely, we know that we have the following exact sequence:
$$0\to (\frac{\CO}{\pi})^h\to \CM_{2}\ovset{\pi}{\to}(\frac{\CO}{\pi})^h\to 0$$
It is now an straightforward calculation to see that the only possibility for such an extension is the following one (we use the structure of finitely generated modules over principal ideal domains):
$$0\to (\frac{\CO}{\pi})^h\ovset{\gamma}{\to}(\frac{\CO}{\pi^2})^h\to(\frac{\CO}{\pi})^h\to 0$$
where $\gamma:(\frac{\CO}{\pi})^h\into (\frac{\CO}{\pi^2})^h$ is the canonical injection (i.e., given by the injection $\frac{\CO}{\pi}\into \frac{\CO}{\pi^2}, x\mapsto \pi x$) and $(\frac{\CO}{\pi^2})^h\onto (\frac{\CO}{\pi})^h$ in the canonical projection (i.e., given by the projection $\frac{\CO}{\pi^2}\onto \frac{\CO}{\pi}$, which is not the multiplication by $\pi$ any more!). 
Proceeding in the same fashion and using the exact sequences $(*)$, we conclude that for every $i\geq 1$, we have $\CM_i\cong (\frac{\CO}{\pi^i})^h$ which is also an isomorphism of $\CO$-module schemes and thus $\underset{\CO}\ep^j\CM_i\cong \underset{\CO}\ep^j\underline{(\frac{\CO}{\pi^i})^h}$ (the underline here is to emphasize that we are dealing with a constant group scheme). By Proposition \ref{prop 8} c), we know that $\underset{\CO}\ep^j\underline{(\frac{\CO}{\pi^i})^h}\cong \underset{\frac{\CO}{\pi^i}}\ep^j\underline{(\frac{\CO}{\pi^i})^h}$. Now the universal property of exterior powers (and some straightforward calculations) imply that $ \underset{\frac{\CO}{\pi^i}}\ep^j\underline{(\frac{\CO}{\pi^i})^h}\cong \underline{\underset{\frac{\CO}{\pi^i}}\ep^j(\frac{\CO}{\pi^i})^h}$ which is isomorphic to $ \underline{(\frac{\CO}{\pi^i})^{\binom{h}{j}}}$. This finishes the proof of $a$).

\item[$b$)] We know from Lemma \ref{lem 15} $b$) that $ \phi $ and $ \upsilon $ are commuting morphisms making the $ F $-diagram associated to $ \phi $ and the $ V $-diagram associated to $ \upsilon $ commute. We can therefore apply Lemma \ref{lem 11} and conclude that the covariant Dieudonn\'e module of $ \unset{\frac{\CO}{\pi^i}}{\ep}^j\CM_i $ is isomorphic to $ \ep^jD_i $ with the actions of $ F $ and respectively $ V $ through the actions of $ \phi $ and respectively $ \upsilon $.
\end{itemize}
\end{proof}

\begin{rem}
\label{rem 17 second}
$  $
\begin{itemize}
\item[1)] In the proof of $a$) we have shown that if $\CM$ is \'etale, then $\CM_i\cong (\frac{\CO}{\pi^i})^h$ and the injections $ \iota:\CM_i\into \CM_{i+1}$ correspond to the canonical injections $(\frac{\CO}{\pi^i})^h\into(\frac{\CO}{\pi^{i+1}})^h$ given by multiplication by $\pi$. It follows that as a constant formal $\CO$-module scheme, $\CM$ is isomorphic to $(K/\CO)^h$.
\item[2)] Note that in the proof of the proposition, we didn't made any assumption on $j$ (i.e., we didn't assume $j\leq h$). If $j>h$, then in the \'etale case, $ \unset{\CO}{\ep}^j\CM_i=\underline{\unset{\CO}{\ep}^j(\frac{\CO}{\pi^i})^{\binom{h}{j}}}=0$ and in the connected case, $M_{ij}=0$ and so in any case, $ \unset{\CO}{\ep}^j\CM_i=0$ and therefore it has order $1=q^{\binom{h}{j}}$.
\end{itemize}
\end{rem}

%COROLLARY 41
\begin{cor}
\label{cor41}
Let $k$ be a perfect field of characteristic $p>2$. Then the Dieudonn\'e module of $\underset{\CO}\ep^j\CM_i$ is canonically isomorphic to $\ep^jD_i$ and the order of $\underset{\CO}{\ep}^j\CM_i$ is equal to $q^{i\binom{h}{j}}$.
\end{cor}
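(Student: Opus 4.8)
The plan is to deduce the statement over an arbitrary perfect field $k$ of characteristic $p>2$ from the algebraically closed case of Proposition \ref{prop 9}, by passing to the algebraic (= separable, since $k$ is perfect) closure $\bar k$ and descending along the Galois extension $\bar k/k$. The claim on orders is then immediate: the order of a finite group scheme is unchanged by base field extension, and by Proposition \ref{prop 41} (with $R=\CO$, the extension $\bar k/k$ being separable) one has $(\epO^j\CM_i)_{\bar k}\cong\epO^j\big((\CM_i)_{\bar k}\big)$; since $\CM_{\bar k}$ is again a $\pi$-divisible $\CO$-module scheme of dimension $1$ and height $h$, Proposition \ref{prop 9} gives that this order is $q^{i\binom{h}{j}}$.

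For the Dieudonn\'e module I would first note that over $k$ the exterior power $\epO^j\CM_i$ exists (Theorem \ref{thm41}, or Proposition \ref{prop 10} in the finite case) and that $D_*(\epO^j\CM_i)$ is \emph{canonically} isomorphic to $\hat{T}_{\text{weakalt}}(D_i^{\,j})$, which coincides with $T_{\text{weakalt}}(D_i^{\,j})$ since $D_i$ has finite length; by Remarks \ref{rem44} and \ref{rem45} ``canonically'' means that the identification carries the Dieudonn\'e realisation of the universal alternating morphism $\CM_i^j\to\epO^j\CM_i$ onto the wedge map $D_i^{\,j}\to\ep^jD_i$. It thus remains to produce a canonical isomorphism $T_{\text{weakalt}}(D_i^{\,j})\cong\ep^jD_i$ over $k$. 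Over $\bar k$ such an isomorphism is supplied by combining Proposition \ref{prop 9} with Lemma \ref{lem 12} (and Remark \ref{rem45}): one obtains a canonical isomorphism $\Psi_{\bar k}\colon D_*\big(\epO^j(\CM_i)_{\bar k}\big)\xrightarrow{\ \sim\ }\ep^jD_*\big((\CM_i)_{\bar k}\big)$, characterised as the unique isomorphism of Dieudonn\'e modules taking the universal alternating morphism to the wedge. Both sides commute with the base extension $k\to\bar k$: the source because the covariant Dieudonn\'e functor commutes with base change along extensions of perfect fields and, by Proposition \ref{prop 41}, $\epO^j$ of the finite $\CO$-module scheme $\CM_i$ commutes with the separable extension $\bar k/k$; the target because forming $\ep^j$ of a module commutes with extension of the base ring, so that $\ep^jD_*\big((\CM_i)_{\bar k}\big)\cong(\ep^jD_i)\otimes_{W(k)}W(\bar k)$. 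Under these compatibilities the wedge maps correspond, so $\Psi_{\bar k}$ is a composite of functorial isomorphisms, hence equivariant for $\Gamma=\Gal(\bar k/k)$. Since $\Psi_{\bar k}$ is determined by finitely much linear data it is already defined over a finite Galois subextension $k'/k$, and ordinary Galois descent for $k'/k$ produces a unique isomorphism $\Psi\colon D_*(\epO^j\CM_i)\xrightarrow{\ \sim\ }\ep^jD_i$ over $k$ inducing $\Psi_{k'}$ upon extension of scalars to $W(k')$; faithful flatness of $W(k)\to W(k')$ then shows $\Psi$ is an isomorphism compatible with $F$ and $V$, and composing with $D_*(\epO^j\CM_i)\cong T_{\text{weakalt}}(D_i^{\,j})$ gives the claim.

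The one delicate point, on which the descent hinges, is the canonicity of the isomorphism furnished over $\bar k$ by Proposition \ref{prop 9}: one must check it is pinned down intrinsically, by compatibility with the universal alternating morphism through Remarks \ref{rem44} and \ref{rem45}, and not merely by the auxiliary basis $\{\eps,V^f\eps,\dots\}$ used there to construct the operators $\Phi$ and $\Upsilon$ (which does not descend to $k$); one must also verify that $D_*(\epO^j\CM_i)$ commutes with the extension $\bar k/k$, which rests on Proposition \ref{prop 41} for finite group schemes and on the standard base-change compatibility of the Dieudonn\'e functor over perfect fields. Once this is in place, $\Gamma$-equivariance and descent are purely formal. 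A more hands-on alternative is to descend the operators $\phi,\upsilon$ of Definition \ref{def 14} and apply Lemma \ref{lem 12} directly over $k$: one builds $\Phi$ on $\ep^jD$ --- where $\Upsilon=\ep^jV$ is injective, injectivity being inherited from $\bar k$ by faithful flatness --- as the unique $^{\sigma^{-1}}\otimes\Id$-linear map with $\Upsilon\circ\Phi=p$, and then pushes down to $\ep^jD_i$ via $\ep^j\zeta$ as in the proof of Lemma \ref{lem 15}; this is more laborious but avoids invoking canonicity.
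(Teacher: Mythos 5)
Your argument is correct, and for the order statement it coincides with the paper's. For the Dieudonn\'e module the underlying strategy is also the same --- pass to $\bar k$ via Proposition \ref{prop 41}, apply Proposition \ref{prop 9} there, and come back down --- but your descent mechanism is heavier than necessary, and the paper's proof shows how to avoid it. The point is that the comparison map you need is \emph{already defined over $k$}: the canonical homomorphism $\vartheta\colon\ep^jD_*(\CM_i)\to T_{\text{weakalt}}(D_*(\CM_i)^j)\cong D_*(\epO^j\CM_i)$, given by $x\mapsto[1\otimes x]$ as in the proof of Lemma \ref{lem 12} combined with Proposition \ref{prop 10}, exists over $k$ with no hypotheses. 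The chain of base-change isomorphisms you write down over $\bar k$ identifies $\vartheta\otimes_{W(k)}W(\bar k)$ with the isomorphism of Proposition \ref{prop 9}, and since $W(k)\to W(\bar k)$ is faithfully flat, $\vartheta$ is itself an isomorphism. This makes your Galois-equivariance check, the spreading-out to a finite subextension $k'/k$, and the descent datum all superfluous: you have in effect already verified the only thing that matters (that the $\bar k$-isomorphism is the scalar extension of a map defined over $k$), after which plain faithful flatness finishes the proof. Your ``hands-on alternative'' of descending $\Phi$ and $\Upsilon$ is more delicate than you indicate --- those operators are constructed in Definition \ref{def 14} only in the connected case and via the basis $\{\eps,V^f\eps,\dots\}$, and defining $\Phi=\Upsilon^{-1}\circ p$ over $k$ requires knowing $p\,\ep^jD\subseteq\Upsilon(\ep^jD)$ there, which itself needs a flat-descent argument --- so the first route, pruned of the Galois machinery, is the one to keep.
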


\begin{proof}
Fix an algebraic closure $ \kbar $ of $k$. In order to simplify the notations, fix an $i$ and set $ M:=\CM_i $. By Proposition \ref{prop 41} we know that the canonical homomorphism $ \epO^j(M_{\kbar})\to(\epO^jM)_{\kbar} $ is an isomorphism. We then obtain the following series of isomorphisms: \[ D_*(\epO^jM)\otimes_{W(k)}W(\kbar)\cong D_*((\epO^jM)_{\kbar})\cong D_*(\epO^j(M_{\kbar}))\cong \ep^jD_*(M_{\kbar}) \] \[ \ep^j(D_*(M)\otimes_{W(k)}W(\kbar))\cong \ep^jD_*(M)\otimes_{W(k)}W(\kbar) \] where the first and fourth isomorphisms are the base change property of the Dieudonn\'e functor and the third property is given by the previous proposition, and finally, the last isomorphism is the base change property of the exterior powers in the category of modules. It follows from the construction of these isomorphisms that the resulting isomorphism \[\ep^jD_*(M)\otimes_{W(k)}W(\kbar)\cong D_*(\epO^jM)\otimes_{W(k)}W(\kbar)\] is the extension of scalars of the canonical homomorphism \[\vartheta:\ep^jD_*(M)\to D_*(\epO^rM)\cong T_{\text{weakalt}}(M^j)\](cf. the proof of Lemma \ref{lem 12} for the definition of this homomorphism). As the ring homomorphism $ W(k)\to W(\kbar) $ is faithfully flat, and $  \vartheta\otimes_{W(k)}\Id$ is an isomorphism, it follows that $ \vartheta $ is an isomorphism as well. The free $ W(k)\otimes_{\BZ}\CO/\pi^i $-module $ \ep^jD_*(M) $ has rank $\binom{h}{j}$ and therefore, the order of $ \epO^jM $ is equal to $ q^{i\binom{h}{j}} $.
\end{proof}

\begin{prop}
\label{propbasechangeofBT}
Let $k$ be a field of characteristic $p>2$ and let $ \ell/k $ be a field extension. Then the base change homomorphism $$ f:\epO^r(\CM_{n,\ell})\to (\epO^r\CM_n)_{\ell} $$ is an isomorphism.
\end{prop}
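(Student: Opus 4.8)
The plan is to reduce, using the two base-change statements already at our disposal — Proposition \ref{prop 41} for separable extensions and Proposition \ref{lem8} for finite extensions — together with a cofinality argument, to the case of a \emph{finitely generated} extension $\ell/k$, and to handle that case by factoring it through a purely transcendental subextension.

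First I would record the relevant identifications. Since $\CM_n$ is finite locally free over $k$ (Remark \ref{rem 9}), Theorem \ref{thm41} gives $\epO^r\CM_n\cong\widetilde{\innAlt}^{\CO}(\CM_n^r,\BG_m)^*$, and by Remark \ref{rem 26} the group scheme $G:=\widetilde{\innAlt}^{\CO}(\CM_n^r,\BG_m)$ is affine of finite type over $k$. As in the proof of Lemma \ref{lem 20}, the inner-$\widetilde{\Alt}$ functor commutes with base change, so $\widetilde{\innAlt}^{\CO}(\CM_{n,\ell}^r,\BG_m)\cong G_\ell$ and hence $\epO^r\CM_{n,\ell}\cong G_\ell^{\,*}$; moreover, exactly as in the proof of Proposition \ref{prop 41}, the base change homomorphism $f$ is identified with the natural map $G_\ell^{\,*}\to(G^*)_\ell$. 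Because $G^*=\invlim_H H^*$ over the finite subgroup schemes $H\subseteq G$, with surjective transition maps, and $(-)\otimes_k\ell$ commutes with the colimit of the corresponding Hopf algebras while Cartier duality commutes with base change, one gets $(G^*)_\ell=\invlim_H(H_\ell)^*$; consequently, as in the proof of Proposition \ref{prop 41}, it is \emph{sufficient} to show that the subsystem $\{H_\ell:H\subseteq G\text{ finite}\}$ is cofinal among all finite subgroup schemes of $G_\ell$.

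Next I would treat finitely generated $\ell/k$. If $\ell/k$ is finite, $f$ is an isomorphism by Proposition \ref{lem8}; if $\ell/k$ is separable — in particular if it is purely transcendental, being then separably generated — $f$ is an isomorphism by Proposition \ref{prop 41}. For general finitely generated $\ell/k$, choose a transcendence basis $t_1,\dots,t_m$, so that $k\subseteq k(t_1,\dots,t_m)\subseteq\ell$ with $k(t_1,\dots,t_m)/k$ purely transcendental and $\ell/k(t_1,\dots,t_m)$ algebraic and finitely generated, hence finite; composing the two base change isomorphisms in the usual commutative triangle (transitivity of the base change homomorphisms, which follows from the universal property of exterior powers, cf. the triangles used in the proof of Proposition \ref{prop 41}) shows that $f_{\ell/k}$ is an isomorphism for every finitely generated $\ell/k$.

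Finally, for an arbitrary extension $\ell/k$, I would write $\ell=\bigcup_i\ell_i$ as the filtered union of its finitely generated subextensions over $k$ and verify the cofinality criterion. Given a finite subgroup scheme $\bar H\subseteq G_\ell$, the fact that $G$ is of finite type forces $\bar H$ to be defined over some $\ell_i$, say $\bar H=(\bar H_i)_\ell$ with $\bar H_i\subseteq G_{\ell_i}$ finite. Since $f_{\ell_i/k}$ is an isomorphism by the previous step, dualizing it identifies, as ind-subgroup schemes of $G_{\ell_i}$, the union of the finite subgroup schemes of the form $H_{\ell_i}$ (with $H\subseteq G$ finite over $k$) with the union of \emph{all} finite subgroup schemes of $G_{\ell_i}$; as $\bar H_i$ is finite it is therefore contained in some $H_{\ell_i}$, whence $\bar H\subseteq H_\ell$. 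Thus $\{H_\ell\}$ is cofinal and $f=f_{\ell/k}$ is an isomorphism. The main obstacle is exactly this last step: the profinite Cartier dual does \emph{not} commute with base change along an arbitrary (infinite, or inseparable) field extension, so one cannot argue purely formally; the way around it is to exploit that $\widetilde{\innAlt}^{\CO}(\CM_n^r,\BG_m)$ is of finite type, so that every finite subgroup scheme of its base change already lives over a finitely generated subfield, where the combination of Propositions \ref{prop 41} and \ref{lem8} applies.
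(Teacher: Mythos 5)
Your reduction to finitely generated subextensions and the cofinality criterion is sound in outline, but the middle step has a genuine gap: Proposition \ref{prop 41} does not cover purely transcendental extensions. Although its statement says ``separable field extension'', its proof treats $L$ as a subextension of a separable closure $E_s$ of $E$ (Galois descent over $E_s$, then the epimorphism/monomorphism argument for intermediate fields), so it only applies to separable \emph{algebraic} extensions. A purely transcendental extension $k(t_1,\dots,t_m)/k$ is separably generated but is not contained in any separable closure of $k$, and neither Proposition \ref{prop 41} nor Proposition \ref{lem8} (finite extensions) gives you the base change isomorphism for it. Since every non-algebraic finitely generated extension forces you through this case, the argument does not close.

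The paper takes a different route that avoids the issue entirely: working over an algebraically closed field $\ell$ it shows, via Proposition \ref{prop02} and the finiteness of $\epO^r(\CM_{n,\ell})$ from Corollary \ref{cor41}, that $\lambda^*:\innHom_{\ell}(\epO^r(\CM_{n,\ell}),\BG_m)\to\widetilde{\innAlt}^{\CO}_{\ell}(\CM_{n,\ell}^r,\BG_m)$ is an isomorphism; hence $\widetilde{\innAlt}^{\CO}_{\ell}(\CM_{n,\ell}^r,\BG_m)$ is finite, and by faithfully flat descent $\widetilde{\innAlt}^{\CO}_{k}(\CM_n^r,\BG_m)$ is finite over $k$. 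Then $\epO^r\CM_n$ is the honest (finite) Cartier dual of $\widetilde{\innAlt}^{\CO}_{k}(\CM_n^r,\BG_m)$, and since both finite Cartier duality and the $\widetilde{\innAlt}^{\CO}$ construction commute with arbitrary base change, $f$ is an isomorphism for every extension $\ell/k$ at once. The key fact you are missing is precisely this finiteness: once the dual is finite rather than merely profinite, no cofinality argument is needed. If you want to keep your strategy you would have to supply a separate argument for $k(t)/k$, which in effect amounts to reproving that finiteness.
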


\begin{proof}
First assume that $\ell $ is an algebraically closed field and consider the homomorphism \[\lambda^*: \innHom_{\ell}(\epO^r(\CM_{n,\ell}),\BG_m)\to \widetilde{\innAlt}^{\CO}_{\ell}(\CM_{n,\ell}^r,\BG_m) \] obtained by universal property of $ \epO^r(\CM_{n,\ell}) $ and sheafification. This homomorphism induces an isomorphism on the $ \ell $-valued points (this is the universal property of $ \epO^r(\CM_{n,\ell}) $). For every finite group scheme $I$ over $ \ell $, we have a commutative diagram \[ \xymatrix{\Hom_{\ell}(I, \innHom_{\ell}(\epO^r(\CM_{n,\ell}),\BG_m))\ar[rr]^{\Hom(I,\lambda^*)}\ar[d]_{\cong} && \Hom_{\ell}(I,\widetilde{\innAlt}_{\ell}^{\CO}(\CM_{n,\ell}^r,\BG_m))\ar[d]^{\cong}\\ \Hom_{\ell}(\epO^r(\CM_{n,\ell}),\innHom_{\ell}(I,\BG_m))\ar[rr]&& \widetilde{\Alt}_{\ell}^{\CO}(\CM_{n,\ell}^r,\innHom_{\ell}(I,\BG_m)).}\] Since $ I $ is a finite group scheme over $ \ell $, $ \innHom_{\ell}(I,\BG_m) $ is a (finite) group scheme over $ \ell $ and so, by the universal property of $ \epO^r(\CM_{n,\ell}) $, the bottom morphism of the diagram is an isomorphism, which implies that the top morphism is an isomorphism too. We can know apply Lemma \ref{prop02} and conclude that $ \lambda^* $ is an isomorphism. In particular, since by Corollary \ref{cor41}, $ \epO^r(\CM_{n,\ell}) $ is finite over $ \ell $, it follows that $ \widetilde{\innAlt}_{\ell}^{\CO}(\CM_{n,\ell}^r,\BG_m) $ is finite over $ \ell $ as well. From the isomorphism \[ \widetilde{\innAlt}_{k}^{\CO}(\CM_n^r,\BG_m)_{\ell}\cong \widetilde{\innAlt}_{\ell}^{\CO}(\CM_{n,\ell}^r,\BG_m) \] and the finiteness of $ \widetilde{\innAlt}_{\ell}^{\CO}(\CM_{n,\ell}^r,\BG_m) $ over $ \ell $ we deduce that $ \widetilde{\innAlt}_{k}^{\CO}(\CM_n^r,\BG_m) $ is finite over $ k $. Now, let $ \ell $ be any extension of $k$. By Theorem \ref{thm41}, $ \epO^r\CM_n $ is isomorphic to $ \widetilde{\innAlt}_{k}^{\CO}(\CM_n^r,\BG_m)^* $ which is equal to the Cartier dual of $ \widetilde{\innAlt}_{k}^{\CO}(\CM_n^r,\BG_m) $ (since it is finite). So, we have shown that for every field $k$, we have a canonical isomorphism \[ \epO^r\CM_n\cong \widetilde{\innAlt}_{k}^{\CO}(\CM_n^r,\BG_m). \] As the Cartier duality and the construction $ \widetilde{\innAlt}^{\CO} $ commute with base change, we conclude that the base change homomorphism $$ f:\epO^r(\CM_{n,\ell})\to (\epO^r\CM_n)_{\ell} $$ is an isomorphism as desired.
\end{proof}

%REMARK 17
\begin{rem}
\label{rem 17}
$  $
\begin{itemize}
\item[1)] If the ground field $k$, of characteristic $p$, is not perfect, we still have that the order of $ \epO^j\CM_i $ is equal to $ q^{\binom{h}{j}} $. This follows from Proposition \ref{propbasechangeofBT} and the fact that the order of a group scheme is invariant under field extensions. So, we may assume that $k$ is algebraically closed and apply Corollary \ref{cor41}.
\item[2)] It follows from Corollary \ref{cor41} that the universal alternating morphism $ \CM^j_i\to\epO^j\CM_i $ and the universal alternating morphism \[D_*(\CM_i)^j\to \ep^jD_*(\CM_i)\cong D_*(\epO^j\CM_i)\] correspond to each other under the isomorphism \[ L^{\CO}_{\text{alt}}(D_*(\CM_i)^j, D_*(\epO^j\CM_i))\cong \Alt^{\CO}(\CM_i^j,\epO^j\CM_i) \] explained in Remark \ref{rem33}.
\end{itemize}
\end{rem}

%THE FOLLOWING IS ANOTHER PROOF OF THE FACT THAT k ALGEBRAICALLY CLOSED IS NOT NECESSAY:
%Another (more complicated!) way would be to write $\CM_i$ as the direct sum, $\CM_i^{\text{\'et}}\oplus \CM_i^{0}$, of its \'etale and connected component. Then by the universal properties of exterior power, tensor product and direct sum, we have an isomorphism
%\[\ep^j\CM_i\cong\bigoplus_{r=0}^j\ep^r\CM_i^{\text{\'et}}\otimes\ep^{j-r}\CM_i^0.\]
%Now, applying the covariant Dieudonn\'e functor on the both side of this isomorphism, and using Proposition \ref{prop 9} b) and Lemma \ref{lem 12} \ref{lem 23}, we obtain the following isomorphism (where we omit the ring $W\otiems_{\BZ_p}\CO$ from the tensor product and exterior powers, to avoid heavy notaions)
%\[\ep^jD_i\cong \bigoplus_{r=0}^j\ep^rD_i^{\text{\'et}}\otimes\ep^{j-r}D_i^0\]
%where $D_i^{\text{\'et}}$ (respectively $D_i^0$) denotes the Dieudonn\'e module of $\CM_i^{\text{\'et}}$ (respectively of $\CM^0$). Now, if the heights of $\CM^{\text{\'et}}$ and $\CM^0$ ar denoted respectively by $h^{\text{\'et}}$ and $h^0$, then the height of $\CM$ is (by additivity of the heights with respect to exact sequences) equal to $h^{\text{\'et}}+h^0$.
%Calculating the lenght (over $W\otiems_{\BZ_p}\CO$) of the both side of the last isomorphism and using parts a) and b) of Proposition \ref{prop 9}, we obtain the following equality:
%\[\log_q(\mid \ep^j\CM_i\mid)=\sum_{r=0}^j\binom{h^{\text{\'et}}}{r}\binom{h^0}{j-r}\]
%and the right hand side is equal to $\binom{h^{\text{\'et}}+h^0}{j}$ and this shows the required result.

\textbf{Notations.} From now on, unless otherwise specified, $k$ is a field of characteristic $p>2$ (not necessarily perfect) and $ \CM $ is a one dimensional $ \pi $-divisible $ \CO $-module over $k$.\\

We know by Proposition \ref{prop 7} that we have the following exact sequence:

\begin{displaymath}
\epO^j\CM_{n+m} \ovset{\pi^m}{\to} \epO^j\CM_{n+m} \to \epO^j\CM_m \to 0
\end{displaymath}

and since the composition $ \pi^n\circ\pi^m $ is zero on $ \epO^j\CM_{n+m} $, the morphism\\ $ \epO^j\CM_{n+m} \ovset{\pi^n}{\to} \epO^j\CM_{n+m} $, factors through the epimorphism $ \epO^j\CM_{n+m} \onto \epO^j\CM_m $:

\begin{myequation}
\label{diagram eta}
\xymatrix{
\epO^j\CM_{n+m}\ar[rr]^{\pi^n}\ar@{->>}[dr] && \epO^j\CM_{n+m}\\
& \epO^j\CM_m.\ar[ur]_{\eta} &}
\end{myequation}

%LEMMA 16
\begin{lem}
\label{lem 16}
The morphism $ \eta: \epO^j\CM_m \to \epO^j\CM_{n+m}  $ is a monomorphism.
\end{lem}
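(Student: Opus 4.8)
The strategy is a dimension (order) count carried out in the abelian category of finite commutative group schemes over $k$. By the standing hypotheses ($k$ a field of characteristic $p>2$ and $\CM$ one-dimensional of height $h$), Corollary \ref{cor41} together with Remark \ref{rem 17} show that each $\epO^j\CM_i$ is a finite (hence, over the field $k$, finite flat) $\CO$-module scheme over $k$ of order $q^{i\binom{h}{j}}$. In particular all the exterior powers occurring below are objects of the abelian category of finite commutative $k$-group schemes, so that kernels, cokernels and images exist there, the order is multiplicative along short exact sequences, and $\eta$ will be a monomorphism as soon as $\ker(\eta)$ is trivial. The first step is to identify the image of $\eta$. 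Diagram \eqref{diagram eta} says exactly that $\pi^n\cdot$ (multiplication by $\pi^n$ on the $\CO$-module scheme $\epO^j\CM_{n+m}$) equals $\eta$ precomposed with the epimorphism $\epO^j\CM_{n+m}\onto\epO^j\CM_m$ from the exact sequence recalled just before the lemma. Since that epimorphism is surjective, $\image(\eta)=\image\bigl(\pi^n\cdot\colon\epO^j\CM_{n+m}\to\epO^j\CM_{n+m}\bigr)$.

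Next I would apply Proposition \ref{prop 7} exactly as in the exact sequence recalled before the lemma, but with the roles of $n$ and $m$ interchanged. Using that $\CM$ is $\pi$-divisible one checks that $\pi^n\CM_{n+m}=\CM_m$ and that $\pi^m$ induces an isomorphism $\CM_{n+m}/\CM_m\cong\CM_n$ (cf. Remark \ref{rem 9}), so that $\CM_n$ is the cokernel of $\pi^n\cdot\colon\CM_{n+m}\to\CM_{n+m}$. Proposition \ref{prop 7} then yields an exact sequence
\[
\epO^j\CM_{n+m}\ \xrightarrow{\ \pi^n\cdot\ }\ \epO^j\CM_{n+m}\ \longrightarrow\ \epO^j\CM_n\ \longrightarrow\ 0,
\]
and hence $\image(\pi^n\cdot)$ is the kernel of the epimorphism $\epO^j\CM_{n+m}\onto\epO^j\CM_n$. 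By multiplicativity of the order, this kernel has order $q^{(n+m)\binom{h}{j}}/q^{n\binom{h}{j}}=q^{m\binom{h}{j}}$. Combining this with the previous paragraph, $\image(\eta)$ has order $q^{m\binom{h}{j}}$, which is exactly the order of the source $\epO^j\CM_m$ of $\eta$.

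Finally, in the abelian category one has $\image(\eta)\cong\epO^j\CM_m/\ker(\eta)$, so $|\ker(\eta)|=|\epO^j\CM_m|/|\image(\eta)|=q^{m\binom{h}{j}}/q^{m\binom{h}{j}}=1$; thus $\ker(\eta)$ is trivial and $\eta$ is a monomorphism. I do not expect a serious obstacle here: the whole argument rests on inputs already available (Proposition \ref{prop 7} and the order computation of Corollary \ref{cor41} and Remark \ref{rem 17}), and the only points that require a little care are the elementary bookkeeping of which truncation ($\CM_m$ versus $\CM_n$) arises as the relevant cokernel of multiplication by $\pi^n$ on $\CM_{n+m}$, and the standard fact that the order is multiplicative along short exact sequences of finite $k$-group schemes.
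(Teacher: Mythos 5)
Your proof is correct and follows essentially the same route as the paper: both identify $\image(\eta)$ with $\image(\pi^n\cdot)$ on $\epO^j\CM_{n+m}$, invoke Proposition \ref{prop 7} to realize this image as the kernel of the epimorphism onto $\epO^j\CM_n$, and conclude by the order count $q^{m\binom{h}{j}}$ from Remark \ref{rem 17}. The only cosmetic difference is that the paper phrases the final step as the induced epimorphism $\epO^j\CM_m\onto\image(\pi^n)$ being an isomorphism between finite group schemes of equal order, whereas you argue via triviality of $\ker(\eta)$; these are the same argument.
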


%PROOF OF LEMMA 16
\begin{proof}[\textsc{Proof}]
We know from Remark \ref{rem 17} 1), that $ \epO^j\CM_{n+m} $ is a finite group scheme, and therefore the morphism $ \epO^j\CM_{n+m} \ovset{\pi^n}{\to} \epO^j\CM_{n+m} $ factors through its image, i.e., we have a commutative diagram:

\begin{myequation}
\label{diagram 1 lemma 16}
\xymatrix{
\epO^j\CM_{n+m}\ar[rr]^{\pi^n}\ar@{->>}[dr] && \epO^j\CM_{n+m}\\
& \image (\pi^n) \ar@{^{(}->}[ur] &}
\end{myequation}

and by Proposition \ref{prop 7}, we have an exact sequence:

\begin{myequation}
\label{diagram 2 lemma 16}
\epO^j\CM_{n+m} \ovset{\pi^n}{\to} \epO^j\CM_{n+m} \to \epO^j\CM_n \to 0.  
\end{myequation} 

It follows from diagram (\ref{diagram 1 lemma 16}) and exact sequence (\ref{diagram 2 lemma 16}) that the sequence
\[0 \to \image(\pi^n) \to \epO^j\CM_{n+m} \to \epO^j\CM_n \to 0 \]
is exact and thus we have the following relation on orders of these group schemes: $$ |\image(\pi^n)|\cdot |\epO^j\CM_n|=|\epO^j\CM_{n+m}|. $$ But we know from Remark \ref{rem 17} 1), that 
$ |\epO^j\CM_n|= q^{n\binom{h}{j}} $ and $ |\epO^j\CM_{n+m}|= q^{(n+m)\binom{h}{j}} $ which imply that $ |\image(\pi^n)|=q^{m\binom{h}{j}} $.\\

The commutativity of diagrams (\ref{diagram eta}) and (\ref{diagram 1 lemma 16}) and the fact that $ \pi^n\circ \pi^m $ is zero on $ \epO^j\CM_{n+m} $ imply that the epimorphism $ \epO^j\CM_{n+m} \onto \image(\pi^n) $ factors through the epimorphism $ \epO^j\CM_{n+m} \onto \epO^j\CM_m  $, and therefore we have a commutative diagram:
\[\xymatrix{
\epO^j\CM_{n+m}\ar@{->>}[rr]\ar@{->>}[dr] && \image(\pi^n)\\
&\epO^j\CM_m  \ar[ur] &}\]
and so $ \epO^j\CM_m \to \image(\pi^n) $ is an epimorphism. But the two group schemes $ \epO^j\CM_m $ and $ \image(\pi^n) $ have the same order $ q^{m\binom{h}{j}} $, and therefore the morphism $ \epO^j\CM_m \to \image(\pi^n) $ is an isomorphism. Now, $ \eta $ being the composition of the isomorphism $ \epO^j\CM_m \to \image(\pi^n) $ and the monomorphism $  \image(\pi^n) \into \epO^j\CM_{n+m}$ (use diagrams (\ref{diagram eta}) and (\ref{diagram 1 lemma 16}), and the fact that $ \epO^j\CM_{n+m} \to \epO^j\CM_m $ is an epimorphism), we conclude that it is a monomorphism as well and the proof is achieved.
\end{proof}

%REMARK 23
\begin{rem}
\label{rem 23}
The arguments in the proof of the lemma show that for every two natural numbers $m,n$, the following sequence is exact:
\[0 \to \epO^j\CM_m \ovset{\eta}{\to} \epO^j\CM_{n+m} \to \epO^j\CM_n \to 0.\]
Moreover, observing diagram (\ref{diagram eta}) with $ m $ and $ n $ switched, we obtain the following commutative diagram:
\[\xymatrix{
0\ar[r] & \epO^j\CM_m\ar[r]^{\eta} & \epO^j\CM_{n+m}\ar[r]\ar[d]^{\pi^m} & \epO^j\CM_n\ar@{^{(}->}[dl]_{\eta}\ar[r] & 0\\
  &             & \epO^j\CM_{n+m} &             &}
\]
from which we obtain the following exact sequence:
\begin{myequation}
\label{diagram kernel of pi^m}
0 \to \epO^j\CM_m \ovset{\eta}{\longto} \epO^j\CM_{n+m} \ovset{\pi^m}{\longto} \epO^j\CM_{n+m}.
\end{myequation}
If we regard $ \epO^j\CM_n $ as a submodule scheme of $ \epO^j\CM_{n+m} $, using the monomorphism $ \eta: \epO^j\CM_n \into  \epO^j\CM_{n+m}  $, we may as well denote the  epimorphism $ \epO^j\CM_{n+m} \onto \epO^j\CM_n $ by $ \pi^m $, i.e., multiplication by $ \pi^m $ and rewrite the first exact sequence of the remark as:
\begin{myequation}
\label{diagram the short exact sequence of alt. powers}
0 \to \epO^j\CM_m \ovset{\eta}{\longto} \epO^j\CM_{n+m} \ovset{\pi^m}{\longto} \epO^j\CM_n \to 0.
\end{myequation}
\end{rem}
	
%PROPOSITION 501
\begin{prop}
\label{rem41}
Let $k$ be a field of characteristic $p$ and let $\CM$ be a one dimensional $\pi$-divisible $\CO$-module scheme over $k$, of height $h$. Denote by $ \Lambda^j$ the following inductive system \[  \epO^j\CM_{1} \ovset{\eta}{\into} \epO^j\CM_2 \ovset{\eta}{\into} \epO^j\CM_{3} \into \cdots\] viewed as an ind-object in the category of finite group schemes over $k$. Then $ \Lambda^j $ is a $ \pi $-divisible $\CO$-module scheme over $k$, of height $ \binom{h}{j} $, and together with the system of universal alternating morphisms $ \lambda_n:\CM_n^j\to\epO^j\CM_n $, it is the $ j^{\text{th}} $-exterior power of $ \CM $, i.e., we have $ \Lambda^j\cong\epO^j\CM $.
\end{prop}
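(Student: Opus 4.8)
The plan is to deduce the statement from the material already developed, in two stages: first that the ind-object $\Lambda^j$ is genuinely a $\pi$-divisible $\CO$-module scheme of height $\binom{h}{j}$, and then that the system $\lambda=(\lambda_n)_n$ of universal alternating morphisms exhibits it as $\epO^j\CM$.

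For the first stage I would argue as follows. By Corollary \ref{cor41} -- together with Remark \ref{rem 17} 1) to cover the case where $k$ is not perfect -- the $\CO$-module scheme $\epO^j\CM_n$ exists for each $n\geq1$, is finite over the field $k$, hence finite locally free, and has order $q^{n\binom{h}{j}}$. Lemma \ref{lem 16} says the connecting morphisms $\eta\colon\epO^j\CM_n\to\epO^j\CM_{n+1}$ are monomorphisms, and the exact sequence \eqref{diagram kernel of pi^m} of Remark \ref{rem 23} provides exact sequences $0\to\epO^j\CM_n\overset{\eta}{\to}\epO^j\CM_{n+1}\overset{\pi^n}{\to}\epO^j\CM_{n+1}$. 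These are exactly the two hypotheses required by Remark \ref{rem 10}, applied with the locally constant function $\binom{h}{j}$, so that remark yields directly that $\Lambda^j=\dirlim(\epO^j\CM_n,\eta)$ is a $\pi$-divisible $\CO$-module scheme over $k$ of height $\binom{h}{j}$ with $\Lambda^j[\pi^n]\cong\epO^j\CM_n$ canonically.

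For the second stage I would first check that $\lambda=(\lambda_n)_n$ is a legitimate alternating $\CO$-multilinear morphism $\CM^j\to\Lambda^j$ in the sense of Definition \ref{def41}, i.e.\ that it is an element of $\invlim_n\Alt_k^\CO(\CM_n^j,\Lambda^j_n)$. This amounts to the identity $\pi_{\Lambda^j}\circ\lambda_{n+1}=\lambda_n\circ\varPi^j$, where $\varPi\colon\CM_{n+1}\to\CM_n$ and $\pi_{\Lambda^j}\colon\Lambda^j_{n+1}\to\Lambda^j_n$ are the respective ``multiplication by $\pi$'' projections. Since $\lambda_n\circ\varPi^j$ is alternating $\CO$-multilinear, the universal property of $\epO^j\CM_{n+1}$ factors it uniquely through $\lambda_{n+1}$, and by the functoriality recorded in Remark \ref{rem 7} 3) that unique factor is $\epO^j\varPi$; so it suffices to identify $\pi_{\Lambda^j}$ with $\epO^j\varPi$. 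This identification is contained in diagram \eqref{diagram eta}, specialized to a one-step shift: tracing it through Proposition \ref{prop 7}, which was used to construct it, shows that the epimorphism $\epO^j\CM_{n+1}\twoheadrightarrow\epO^j\CM_n$ there is $\epO^j$ of the cokernel map of multiplication by $\pi$ on $\CM_{n+1}$, and that map is precisely $\varPi$. Having established that $\lambda$ is a morphism, the universal property is then obtained by a passage to the limit: for any $\pi$-divisible $\CO$-module scheme $\CN$ over $k$ one has $\Hom_k(\Lambda^j,\CN)=\invlim_n\Hom_k(\epO^j\CM_n,\CN_n)$ and $\Alt_k^\CO(\CM^j,\CN)=\invlim_n\Alt_k^\CO(\CM_n^j,\CN_n)$ by definition, while for each $n$ the scheme $\CN_n$ is a finite $\CO$-module scheme, so the universal property of $\epO^j\CM_n$ coming from Theorem \ref{thm41} (in the form of Definition \ref{def 7}(iii) together with the $\CO$-linear variant furnished by Remark \ref{rem 8} 3), exactly as in Remark \ref{rem 28} for tensor products) gives an isomorphism $\lambda_n^*\colon\Hom_k(\epO^j\CM_n,\CN_n)\overset{\sim}{\to}\Alt_k^\CO(\CM_n^j,\CN_n)$. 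Functoriality of $\epO^j$ makes these isomorphisms compatible with the transition maps of the two towers, so in the limit $\lambda^*\colon\Hom_k(\Lambda^j,\CN)\to\Alt_k^\CO(\CM^j,\CN)$ is an isomorphism, which is the assertion $\Lambda^j\cong\epO^j\CM$ of Definition \ref{def42}.

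I expect the main obstacle to be organizational rather than conceptual: pinning down, once and for all and compatibly, the identification of the transition morphisms of the $\pi$-divisible module $\Lambda^j$ produced by Remark \ref{rem 10} with $\epO^j$ of the transition morphisms of $\CM$, and checking that the level-by-level universal isomorphisms of the last stage really do assemble into the map induced by $\lambda$ rather than some twist of it. A secondary point to be careful about is the reading of $\Hom$ in Definition \ref{def42}: the level-$n$ universal property applied to the $\CO$-module-scheme target $\CN_n$ produces $\CO$-linear homomorphisms and the honest (non-pseudo) alternating module $\Alt^\CO$, so the consistent interpretation is that $\Hom$ there means $\CO$-linear homomorphisms, and with this interpretation the argument closes. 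All the quantitative inputs -- finiteness of the $\epO^j\CM_n$, their orders $q^{n\binom{h}{j}}$, and the short exact sequences linking them -- are already in place from the preceding results, so no further computation is needed.
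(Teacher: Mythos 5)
Your proposal is correct and follows essentially the same route as the paper's own proof: the first stage invokes the order computation $|\epO^j\CM_n|=q^{n\binom{h}{j}}$ together with the exact sequences \eqref{diagram kernel of pi^m} to apply Remark \ref{rem 10}, and the second stage observes that the transition epimorphisms are exactly the ones induced by the universal alternating morphisms $\lambda_n$ and the projections $(\pi\cdot)^j$, so that the level-wise universal isomorphisms pass to the inverse limit. The only difference is that you spell out the compatibility check and the limit argument in more detail than the paper, which simply asserts them.
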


%PROOF OF PROPOSITION 501
\begin{proof}
Fix a natural number $ m $. By Lemma \ref{lem 11}, we know that $ \epO^j\CM_m $ exists as an $ \CO $-module scheme, and its order is equal to $ q^{m\binom{h}{j}} $ by Remark \ref{rem 17} 1). Setting $ n=1 $ in the exact sequence (\ref{diagram kernel of pi^m}), we obtain the following exact sequence:
\[0 \to \epO^j\CM_m \ovset{\eta}{\longto} \epO^j\CM_{m+1} \ovset{\pi^m}{\longto} \epO^j\CM_{m+1}.\]
We have seen in Remark \ref{rem 10}, that these two properties (the equality $ |\,\epO^j\CM_m\,|= q^{m\binom{h}{j}}$ and the exact sequence) imply that the direct limit $ \Lambda^j=\dirlim(\epO^j\CM_n,\eta) $ is a $ \pi $-divisible $ \CO $-module scheme and we have $$ \kernel(\pi^m:\Lambda^j\to\Lambda^j)\cong \epO^j\CM_m. $$ The height of $ \Lambda^j $ is equal to $ \log_q  |\,\epO^j\CM_1\,|=\binom{h}{j}$, as claimed.\\

The projections $ \epO^j\CM_{n+1}\onto\epO^r\CM_n $ are induced by the universal alternating morphisms \[ \lambda_{n+1}:\CM_{n+1}^j\to \epO^j\CM_{n+1} \] and the alternating morphisms $$ \CM_{n+1}^j\ovset{(\pi.)^j}{\onto}\CM_n^j\arrover{\lambda_n}\epO^r\CM_n .$$ Thus, the system of alternating morphisms $ \lambda:=(\lambda_n)_{n\geq 1} $ is an element of the $ \CO $-module $ \Alt_k^j(\CM,\Lambda^j) $ with the following universal property:\\

for every $ \pi $-divisible $ \CO $-module scheme $ \CN $ over $k$ the $ \CO $-linear homomorphism \[ \lambda^*:\Hom_k(\Lambda^j,\CN)\longrightarrow\Alt_k^j(\CM,\CN) \] induced by $ \lambda $ is an isomorphism. This proves the last part of the proposition.
\end{proof}
	
%PROPOSITION 11
\begin{prop}
\label{prop 11}
Let $\CM$ be a one dimensional $\pi$-divisible $\CO$-module scheme over a perfect field $k$ of characteristic $p>2$. Then the covariant Dieudonn\'e module of $\epO^j\CM$ is isomorphic to $\ep^jD$.
\end{prop}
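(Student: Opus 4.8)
The plan is to reduce the computation to the finite levels, where it has already been carried out, and then pass to the inverse limit. By Proposition \ref{rem41} the $\pi$-divisible $\CO$-module $\epO^j\CM$ is the inductive system $\Lambda^j=\uset{n}{\dirlim}\,(\epO^j\CM_n,\eta)$, and under this identification one has $(\epO^j\CM)_n=\kernel(\pi^n:\Lambda^j\to\Lambda^j)\cong\epO^j\CM_n$. Hence, by Definition \ref{def 12}, the covariant Dieudonn\'e module of $\epO^j\CM$ is
\[
D_*(\epO^j\CM)\;\cong\;\uset{n}{\invlim}\,\bigl(D_*(\epO^j\CM_n),\zeta\bigr),
\]
the transition maps $\zeta$ being those induced by multiplication by $\pi$ on $\Lambda^j$. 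So it suffices to compute each term and to understand the transition maps.

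First I would insert the finite-level isomorphisms of Corollary \ref{cor41}: there are canonical isomorphisms $D_*(\epO^j\CM_n)\cong\ep^jD_n$, where $D_n:=D_*(\CM_n)$ and $\ep^jD_n=\bigwedge^j_{W(k)\otimes_{\BZ_p}(\CO/\pi^n)}D_n$. Since these isomorphisms are built from the base-change isomorphisms of the Dieudonn\'e functor and of exterior powers of modules (see the proof of Corollary \ref{cor41}), they are functorial in $\CM_n$, hence compatible with the projections $\varPi:\CM_{n+1}\onto\CM_n$ and so with the transition maps $\zeta$. On the other hand, $D_n$ is canonically the cokernel of $\pi^n$ on $D=D_*(\CM)$ (Remark \ref{rem 14}), i.e.\ $D_n\cong D/\pi^nD$, with $\zeta$ corresponding to the natural projection $D/\pi^{n+1}D\onto D/\pi^nD$; recall that $D$ is a finitely generated module over $R:=W(k)\widehat{\otimes}_{\BZ_p}\CO$ and that $R/\pi^nR\cong W(k)\otimes_{\BZ_p}(\CO/\pi^n)$ (Remark \ref{rem 24}). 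Since forming exterior powers commutes with base change, I would then get
\[
\ep^jD_n=\bigwedge^j_{R/\pi^nR}\!\bigl(D/\pi^nD\bigr)\;\cong\;\Bigl(\bigwedge^j_R D\Bigr)\Big/\pi^n\Bigl(\bigwedge^j_R D\Bigr)=(\ep^jD)/\pi^n(\ep^jD),
\]
compatibly with the projections as $n$ varies.

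Combining the two steps gives $D_*(\epO^j\CM)\cong\uset{n}{\invlim}\,(\ep^jD)/\pi^n(\ep^jD)$, so it remains to see that the canonical map $\ep^jD\to\uset{n}{\invlim}\,(\ep^jD)/\pi^n(\ep^jD)$ is an isomorphism, i.e.\ that $\ep^jD$ is $\pi$-adically complete and separated. This holds because $\ep^jD$ is finitely generated over the Noetherian, $\pi$-adically complete ring $R$: in the mixed characteristic case $R$ is finite over the complete discrete valuation ring $W(k)$ and $p=u\pi^e$; in the equal characteristic case $p$ kills $\CM$ hence $D$, so $D$ is a module over $k\otimes_{\BF_p}\CO\cong\prod_{\BZ/f\BZ}k\lbb\pi\rbb$, which is Noetherian and $\pi$-adically complete. (Alternatively, one may base change along the faithfully flat ring map $W(k)\to W(\kbar)$: the natural map $\ep^jD\to D_*(\epO^j\CM)$ is already defined over $k$ and, by Lemma \ref{lem 10}, Remark \ref{rem 14}, Proposition \ref{propbasechangeofBT} and the fact that both $D_*$ and $\ep^j$ commute with this base change, becomes an isomorphism over $\kbar$, hence is one over $k$.) This yields the desired isomorphism $D_*(\epO^j\CM)\cong\ep^jD$.

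The part I expect to require the most care is the bookkeeping: checking that the canonical isomorphisms of Corollary \ref{cor41} are compatible with the two systems of transition homomorphisms — equivalently, the functoriality of that isomorphism with respect to $\varPi:\CM_{n+1}\onto\CM_n$ — and that $\zeta$ on $D_*(\CM_n)$ matches the projection $D/\pi^{n+1}D\onto D/\pi^nD$ under $D_n\cong D/\pi^nD$. Both follow from unwinding the relevant constructions, but they are the crux of a careful write-up; everything else is formal.
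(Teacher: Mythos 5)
Your proposal is correct and follows essentially the same route as the paper's proof: reduce to the finite-level isomorphisms $D_*(\epO^j\CM_n)\cong\ep^jD_n$ of Corollary \ref{cor41}, identify $\ep^jD_n$ with $(\ep^jD)/\pi^n(\ep^jD)$ using $D_n\cong D/\pi^nD$, and conclude from the freeness of $D$ over the complete ring $W\widehat{\otimes}_{\BZ_p}\CO$ (Lemma \ref{lem 10}) that $\ep^jD\to\invlim(\ep^jD)/\pi^n(\ep^jD)$ is bijective. The paper carries out this last step by an explicit basis computation rather than by invoking $\pi$-adic completeness of finitely generated modules, but the content is identical.
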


%PROOF OF PROPOSITION 11
\begin{proof}[\textsc{Proof}]
By definition, the covariant Dieudonn\'e module of $\epO^j\CM$ is the inverse limit of the system $D_*(\epO^j\CM_{i+1})\ovset{\zeta}{\to}D_*(\epO^j\CM_i)$. By Corollary \ref{cor41}, we know that $D_*(\epO^j\CM_i)=\ep^jD_i$. So, we have only to show that 
\[\unset{W\widehat{\otimes}_{\BZ_p}\CO}{\ep^j}\invlim D_i=\invlim\unset{W\otimes_{\BZ_p}\frac{\CO}{\pi^i}}{\ep^j}D_i.\]
Writing $D$ for the inverse limit $\invlim D_i$ as before, we have natural and compatible morphisms $\ep^j D\to \ep^jD_i$ (for all $i\geq 1$) that we denoted by $\ep^j\zeta$ and were induced by the natural morphisms $\zeta:D\to D_i$. So, by the universal property of the inverse limit, we obtain a morphism $\psi:\ep^jD\to \invlim \ep^jD_i$. Explicitly, $\psi$ sends an element $d_1\wedge\dots\wedge d_j\in \ep^jD$ to the element $(\zeta_i(d_1) \wedge \dots \wedge \zeta_i(d_j))\in \invlim \ep^jD_i $ (here we write down the index $i$ for the morphism $\zeta$ to emphasize that the element $\zeta_i(d_1) \wedge \dots \wedge \zeta_i(d_j)$ is the $i^{\text{th}}$ component). Now, the result is a formal consequence of the fact that $D$ is a finite free $W\widehat{\otimes}_{\BZ_p}\CO$-module (cf. Lemma \ref{lem 10}) and, $D_i=D/\pi^iD$ is a free $W\otimes_{\BZ_p}\CO/\pi^i$-module. Indeed, if we choose a basis $\{e_1,\dotsb,e_h\}$ of $D$ over $W\widehat{\otimes}_{\BZ_p}\CO$, then the images $\bar{e_r}\in D_i, r=1,\dots,h$ form a basis of $D_i$ over $W\otimes_{\BZ_p}\CO/\pi^i$. The morphism $\psi$ sends an element $\sum a_{\nu_1,\dotsb,\nu_j}e_{\nu_1}\wedge\dotsb\wedge e_{\nu_j}$ to $(\sum \bar{a}_{\nu_1,\dots,\nu_j}\bar{e}_{\nu_1}\wedge\dots\wedge\bar{e}_{\nu_j})$ where $\bar{a}_{\nu_1,\dots,\nu_j}$ is the image of $a_{\nu_1,\dotsb,\nu_j}$ in $W\otimes_{\BZ_p}\CO/\pi^i$. As $W\widehat{\otimes}_{\BZ_p}\CO$ is complete, this implies that $\psi$ is bijective and hence an isomorphism.
\end{proof}

%LEMMA 17
\begin{lem}
\label{lem 17}
Let $\CB$ be a discrete valuation ring with valuation $v:\CB\to \BZ\cup\{\infty\}$ and endowed with an automorphism $\omega:\CB\to \CB$. Let $\chi_i:M_i\to N_i$ ($i=1,\dots,n$) be $\omega$-linear morphisms between finite free $\CB$-modules of the same rank. Then, we have 
\[\ell_{\CB}(\cokernel(\chi))=v(\text{det}(\chi_1)\cdot\text{det}(\chi_2)\cdots\text{det}(\chi_n))\]
where $\chi:M_1\oplus M_2\oplus\dots\oplus M_n\to N_1\oplus N_2\oplus\dots\oplus N_n$ is the direct sum of $\chi_i$.
\end{lem}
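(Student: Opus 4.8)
\textbf{Proof proposal for Lemma \ref{lem 17}.}

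The plan is to reduce the semilinear situation to the ordinary linear one and then invoke the Smith normal form over the discrete valuation ring $\CB$. First I would treat a single morphism $\chi_i\colon M_i\to N_i$. Fixing $\CB$-bases of $M_i$ and $N_i$, the $\omega$-linear map $\chi_i$ is the composite $M_i\xrightarrow{\,\omega^{(r_i)}\,}M_i\xrightarrow{\,\rho_i\,}N_i$ where $r_i=\rank M_i$, the map $\omega^{(r_i)}$ is the coordinatewise application of $\omega$ (a bijection of sets, $\omega$-linear), and $\rho_i$ is the genuinely $\CB$-linear map given by the matrix $A_i$ of $\chi_i$ in the chosen bases. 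Since $\omega^{(r_i)}$ is a bijection, $\cokernel(\chi_i)\cong\cokernel(\rho_i)$ as $\CB$-modules; and up to a unit (namely $v$ of the determinant of $\omega$ acting, which is $0$ since $\omega$ is a ring automorphism of a DVR and hence preserves the valuation) we have $\det(\chi_i)=\det\rho_i$, so $v(\det\chi_i)=v(\det A_i)$. Thus the problem is purely about $\CB$-linear maps between free modules of equal rank.

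Next I would apply the elementary divisor theorem over the principal ideal domain $\CB$: for the $\CB$-linear map $\rho_i$ with matrix $A_i$ there are $\CB$-bases in which $A_i=\diag(\varpi^{a_{i,1}},\dots,\varpi^{a_{i,r_i}})$ for a uniformizer $\varpi$ (allowing $a_{i,\ell}=\infty$ when $\rho_i$ drops rank, which happens exactly when $\det A_i=0$, i.e.\ both sides of the claimed identity are $\infty$ and there is nothing more to prove). In the finite-colength case one reads off directly $\ell_\CB(\cokernel(\rho_i))=\sum_\ell a_{i,\ell}=v(\det A_i)=v(\det\chi_i)$. This handles $n=1$.

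For general $n$, the direct sum $\chi=\bigoplus_i\chi_i$ is again $\omega$-linear between free modules of equal (total) rank, and $\cokernel(\chi)=\bigoplus_i\cokernel(\chi_i)$, so by additivity of length $\ell_\CB(\cokernel(\chi))=\sum_i\ell_\CB(\cokernel(\chi_i))=\sum_i v(\det\chi_i)=v\bigl(\prod_i\det\chi_i\bigr)$, using that $v$ is a valuation. This is exactly the asserted formula. I do not anticipate a serious obstacle here; the only point requiring a line of care is the bookkeeping of the $\omega$-twist — verifying that passing from $\chi_i$ to the honest linear map $\rho_i$ changes neither the cokernel (as abstract $\CB$-module) nor the valuation of the determinant, which follows because $\omega$, being an automorphism of a DVR, is an isometry for $v$.
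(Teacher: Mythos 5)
Your proposal is correct and follows essentially the same route as the paper's proof: reduce the $\omega$-semilinear maps to honest $\CB$-linear ones by absorbing a coordinatewise application of $\omega$ (which changes neither the cokernel nor the valuation of the determinant, since $\omega$ preserves $v$), apply the Smith normal form to settle the case $n=1$, and conclude by additivity of length over the direct sum. The only cosmetic difference is that you factor $\omega$ out on the source as $\chi_i=\rho_i\circ\omega^{(r_i)}$, whereas the paper precomposes with an $\omega^{-1}$-linear automorphism; these are the same reduction.
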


%PROOF OF LEMMA 17
\begin{proof}[\textsc{Proof}]
Since $M_i$ are free $\CB$-modules, the automorphism $\omega$ induces an $\omega^{-1}$-linear automorphism, $\omega_i$, of $M_i$, more precisely, if we choose a basis $\{m_{\alpha}\,|\,\alpha\in\Lambda\}$ of $M_i$, then $\omega_i(\sum_{\alpha\in\Lambda}b_{\alpha}m_{\alpha})=\sum_{\alpha\in\Lambda}\omega^{-1}(b_{\alpha})m_{\alpha}$. Now, the composite $\chi_i\circ\omega_i:M_i\to N_i$ is $\omega$-linear and we have $\cokernel(\chi)\cong \cokernel (\bigoplus(\chi_i\circ\omega_i))$ and $\text{det}(\chi_i)=\text{det}(\chi_i\circ\omega_i)$ for all $1\leq i\leq n$. Replacing $\chi_i$ by $\chi_i\circ\omega_i$, we may assume that $\chi_i$ are $\CB$-linear.\\

We first prove the lemma when $n=1$. By the elementary divisor theorem we know that there is a basis of $M_1$ and a basis of $N_1$ such that the matrix of $\chi_1$ in this basis is diagonal (Smith normal form), say
\[\begin{pmatrix}a_1&0&\dots&0\\
0&a_2&\dots&0\\
\vdots&\vdots&\ddots&\\
0&0&&a_r\\
\end{pmatrix}\]
with $a_l\in\CB$ for all $1\leq l\leq r$. It follows that the determinant of $\chi_1$ is equal to $a_1\cdot a_2\cdots a_r$ and that the cokernel of $\chi_1$ is isomorphic to $$\dfrac{\CB}{(a_1)}\oplus\dfrac{\CB}{(a_2)}\oplus\dots\oplus\dfrac{\CB}{(a_r)}$$ which implies that the length of $\cokernel(\chi_1)$ is equal to the sum of the lengths of $\dfrac{\CB}{(a_l)}$ ($1\leq l\leq r$). But we have that $\ell_{\CB}(\dfrac{\CB}{(a_l)})=v(a_l)$, and therefore \[\ell_{\CB}(\cokernel(\chi_1))=v(a_1)+v(a_2)+\dots+v(a_r)=v(a_1\cdots a_n)=v(\text{det}(\chi_1))\]
(note that if $r$ is less than the rank of $M_1$ over $\CB$, the both side of the equality are equal to infinity).\\

In the general case, we have that 
\[\cokernel(\chi)=\cokernel(\chi_1)\oplus\cokernel(\chi_2)\oplus\dots\oplus\cokernel(\chi_n).\]
Now, from the result in the case $n=1$, we know that for every $i$, $\ell_{\CB}(\cokernel(\chi_i))=v(\text{det}(\chi_i))$, and so
\[\ell_{\CB}(\cokernel(\chi))=\sum_{i=1}^n\ell_{\CB}(\cokernel(\chi_i))=\sum_{i=1}^nv(\text{det}(\chi_i))=v(\text{det}(\chi_1)\cdots\text{det}(\chi_n)).\]
\end{proof}

%LEMMA 18
\begin{lem}
\label{lem 18}
Let $\CN$ be a finite dimensional $\pi$-divisible $\CO$-module scheme over a perfect field $k$. Then we have
\[\dim\CN=\ell_W(\cokernel(V_{D_*(\CN)}))=\ell_{W\widehat{\otimes}_{A}\CO}(\cokernel(V_{D_*(\CN)}))\]
where $A$ is the ring defined in Lemma \ref{lem 9}.
\end{lem}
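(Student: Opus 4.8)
The plan is to reduce to the case where $k$ is algebraically closed, then exploit the decomposition of $W\widehat{\otimes}_{\BZ_p}\CO$ furnished by Lemma \ref{lem 9} together with the semilinear determinant computation of Lemma \ref{lem 17}, and finally to identify the resulting numerical invariant with $\dim\CN$ by means of the relation $FV=VF=p$.

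First I would observe that all three quantities in the statement are insensitive to the base extension $k\to\kbar$. The dimension of a $\pi$-divisible module is the $k$-dimension of the Lie algebra of its fibre, and this is unchanged. The covariant Dieudonn\'e module satisfies $D_*(\CN_{\kbar})\cong D_*(\CN)\otimes_{W(k)}W(\kbar)$ (pass to the limit over the finite levels), whence $\cokernel(V_{D_*(\CN_{\kbar})})\cong\cokernel(V_{D_*(\CN)})\otimes_{W(k)}W(\kbar)$; and for a finite length module, extension of scalars along the faithfully flat local maps $W(k)\to W(\kbar)$ and $W(k)\widehat{\otimes}_A\CO\to W(\kbar)\widehat{\otimes}_A\CO$ preserves the composition length, since each composition factor $k$ is replaced by a single composition factor $\kbar$. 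Hence we may assume $k=\kbar$; in particular $\BF_q\subseteq k$.

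Next, write $D:=D_*(\CN)$ and $R:=W\widehat{\otimes}_{\BZ_p}\CO$. By Lemma \ref{lem 10} together with Remark \ref{rem 14}\,1) (which notes that freeness of $D$ holds without the dimension hypothesis), $D$ is a free $R$-module of rank $h$; by Lemma \ref{lem 9}\,b) we have $R=\prod_{i\in\BZ/f\BZ}R_i$ with each $R_i$ a copy of the complete discrete valuation ring $W\widehat{\otimes}_A\CO$, with uniformizer $1\widehat{\otimes}\pi$ and residue field $k$; and by Lemma \ref{lem 9}\,c), applied to the $\sigma^{-1}$-semilinear operator $V$, the induced decomposition $D=\bigoplus_i M_i$ has each $M_i$ free of rank $h$ over $R_i$ with $V$ restricting to semilinear maps $V\colon M_i\to M_{i-1}$. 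As $V$ is injective on $D$ (Lemma \ref{lem 13}) and the $M_i$ all have rank $h$, each $V\colon M_i\to M_{i-1}$ is an injection of free rank-$h$ modules over a discrete valuation ring, so $\cokernel(V\colon M_i\to M_{i-1})$ has finite length equal to $v(\det(V|_{M_i}))$ by the case $n=1$ of Lemma \ref{lem 17}. Consequently $\cokernel(V_D)=\bigoplus_i\cokernel(V\colon M_i\to M_{i-1})$ is a finite length $R$-module, and Lemma \ref{lem 17} itself (with $\CB=W\widehat{\otimes}_A\CO$, $n=f$, $\chi_i=V|_{M_i}$ under compatible identifications $R_i\cong\CB$) gives $\ell_{W\widehat{\otimes}_A\CO}(\cokernel(V_D))=v\bigl(\prod_i\det(V|_{M_i})\bigr)$. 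The equality $\ell_W(\cokernel(V_D))=\ell_{W\widehat{\otimes}_A\CO}(\cokernel(V_D))$ is then immediate: each summand $\cokernel(V\colon M_i\to M_{i-1})$ is a finite length $R_{i-1}$-module all of whose composition factors are $R_{i-1}/(1\widehat{\otimes}\pi)\cong k\cong W/pW$, so its $W$-length and its $R_{i-1}$-length both equal the number of composition factors.

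Finally I would identify this number with $\dim\CN$. The identity $VF=FV=p$ on $D$ gives $pD=VFD\subseteq VD$, so $\cokernel(V_D)=D/VD$ is annihilated by $p$; being moreover of finite length, it is annihilated by $\pi^n$ for $n$ large, i.e.\ $\pi^nD\subseteq VD$. For such an $n$ we have $D_*(\CN_n)\cong D/\pi^nD$, and using the isomorphism $\ker(V|_{D_*(\CN_n)})\cong\CL ie(\CN_n)$ recorded in the proof of Lemma \ref{lem 13}, together with $\ell_W\ker(V)=\ell_W\cokernel(V)$ on the finite length module $D_*(\CN_n)$,
\[\dim_k\CL ie(\CN_n)=\ell_W\bigl(D/(\pi^nD+VD)\bigr)=\ell_W(D/VD)=\ell_W(\cokernel(V_D)).\]
Since $\CL ie(\CN_n)\cong\CL ie(\CN)$ for every $n\geq 1$ (the Lie algebra of a $\pi$-divisible module agrees with that of each of its $\pi^n$-torsion subgroups), the left side equals $\dim\CN$, which completes the argument.

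The step I expect to be the main obstacle is the last one: keeping careful track of the passage between the limit object $D=D_*(\CN)$, over which $V$ is injective and the determinant computation naturally lives, and the finite levels $D_*(\CN_n)=D/\pi^nD$, over which the Lie algebra is read off; in particular one must justify cleanly that a single $n$ with $\pi^nD\subseteq VD$ suffices and that $\CL ie(\CN_n)$ is independent of $n$. The decomposition-and-determinant bookkeeping in the middle, including fixing the identifications $R_i\cong W\widehat{\otimes}_A\CO$ so that Lemma \ref{lem 17} applies verbatim, is routine but should be written with some care.
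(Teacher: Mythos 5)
Your argument is correct, but it follows a genuinely different route from the paper's. The paper never invokes the decomposition $D=\prod_i M_i$ or the determinant computation of Lemma \ref{lem 17} here: it works entirely with the finite levels $D_*(\CN_i)$, notes that $\ell_W(\kernel V)=\ell_W(\cokernel V)$ for an endomorphism of a finite-length module, shows that the surjections $\cokernel(V_{D_*(\CN_{i+1})})\onto\cokernel(V_{D_*(\CN_i)})$ become isomorphisms once the dimensions of the $\CN_i$ stabilize, and identifies $\cokernel(V_{D_*(\CN)})$ with $\cokernel(V_{D_*(\CN_i)})$ for large $i$ via the inverse limit; the second equality then comes from the fact that $D_*(\CN_i)$ is killed by $\pi^i$ together with the isomorphisms $(W\widehat{\otimes}_A\CO)/\pi^i\cong W\otimes_A\CO/\pi^i$ worked out in the proof of Proposition \ref{prop 9}. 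This keeps the whole proof over the given perfect field and uses no freeness of $D$. You instead pass to $\kbar$ (unavoidable for your use of Lemmas \ref{lem 9} and \ref{lem 10}), obtain finiteness of $\cokernel(V_D)$ and the length comparison from the product of discrete valuation rings and the Smith-normal-form count of Lemma \ref{lem 17}, and then do a single computation at one level $n$ with $\pi^nD\subseteq VD$; this is more structural, identifies $\cokernel(V_D)$ concretely as $D/VD$, and in effect pre-packages the determinant bookkeeping that the paper only deploys later in Theorem \ref{thm 4}. One small caveat: your parenthetical claim that $\CL ie(\CN_n)\cong\CL ie(\CN)$ for \emph{every} $n\geq 1$ is not justified when the $\CO$-action on the Lie algebra is not scalar ($\pi$ acts nilpotently but need not act as zero, so the chain $\CL ie(\CN)[\pi^n]$ can be strictly increasing for small $n$); however, this costs you nothing, since $\dim_k\CL ie(\CN_m)=\ell_W\bigl(D/(\pi^mD+VD)\bigr)$ is nondecreasing in $m$ and equals its supremum $\ell_W(D/VD)$ exactly when $\pi^mD\subseteq VD$, so your chosen $n$ is automatically large enough — you should phrase the final step that way rather than asserting independence of $n$.
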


%PROOF OF LEMMA 18
\begin{proof}[\textsc{Proof}]
As $\CN$ is finite dimensional, there is a natural number $n_0$, such that the dimension of $\CN$ is equal to the dimension of $\CN_i$ for all $i\geq n_0$, where as usual $\CN_i$ denotes the kernel of $\pi^i$. Now, we have an isomorphism $\kernel(V_{D_*(\CN_i)})\cong\CL ie(\CN_i)$, which implies that 
\begin{myequation}
\label{dimension}
\dim \CN_i=\dim_k \kernel(V_{D_*(\CN_i)})=\ell_W(\kernel(V_{D_*(\CN_i)})).
\end{myequation}

Since $V:D_*(\CN_i)\to D_*(\CN_i)$ is a morphism between finite length $W$-modules, its kernel and cokernel have the same length over $W$, and the inverse limit (over $i$) of the cokernel of $V_{D_*(\CN_i)}$ is isomorphic to the cokernel of the inverse limit of $V_{D_*(\CN_i)}$, which is the Verschiebung of the covariant Dieudonn\'e module of $\CN$. The projections $D_*(\CN_{i+1})\onto D_*(\CN_i)$ induce surjections between the cokernels of $V_{D_*(\CN_{i+1})}$ and $V_{D_*(\CN_i)}$, and since for large enough $i$, cokernels of $V_{D_*(\CN_{i+1})}$ and $V_{D_*(\CN_i)}$ have the same length over $W$ (here we use the fact that $\CN_{i+1}$ and $\CN_i$ have the same dimension and so by (\ref{dimension}), the kernels of $V_{D_*(\CN_{i+1})}$ and $V_{D_*(\CN_i)}$ have the same length over $W$ and finally by what we said above, the cokernels have the same length), the induced surjections are isomorphisms for large $i$. It follows that the cokernel of $V_{D_*(\CN)}$ is isomorphic to the cokernel of $V_{D_*(\CN_i)}$ for large $i$, and in particular, we have $$\ell_W(\cokernel(V_{D_*(\CN)})=\ell_W(V_{D_*(\CN_i)})=\ell_W(\kernel(V_{D_*(\CN_i)}))$$ for large $i$. Putting this together with (\ref{dimension}) and $\dim\CN=\dim\CN_i$, we deduce:
\[\dim\CN=\ell_W(\cokernel(V_{D_*(\CN)})).\]
This is the first equality of the lemma. We should now show that 
$$\ell_W(\cokernel(V_{D_*(\CN)}))=\ell_{W\widehat{\otimes}_{A}\CO}(\cokernel(V_{D_*(\CN)})).$$
As we discussed above, we have $ \cokernel(V_{D_*(\CN)})\cong \cokernel(V_{D_*(\CN_i)}) $ for large enough $i$ and so, it is sufficient to show the equality
$$\ell_W(\cokernel(V_{D_*(\CN_i)}))=\ell_{W\widehat{\otimes}_{A}\CO}(\cokernel(V_{D_*(\CN_i)})).$$
As $ D_*(\CN_i) $ is killed by $ \pi^i $, and $ (W\widehat{\otimes}_A\CO)/\pi^i\cong W\otimes_A\dfrac{\CO}{\pi^i} $, we have
$$\ell_{W\widehat{\otimes}_{A}\CO}(\cokernel(V_{D_*(\CN_i)}))=\ell_{W\otimes_{A}\frac{\CO}{\pi^i}}(\cokernel(V_{D_*(\CN_i)})).$$
Now, as we have seen before (in the proof of Proposition \ref{prop 9}), we have\\
$ W\otimes_A\dfrac{\CO}{\pi^i}\cong \dfrac{W}{p^i} $, and therefore we obtain
\[\ell_{W\otimes_{A}\frac{\CO}{\pi^i}}(\cokernel(V_{D_*(\CN_i)}))=\ell_{\frac{W}{p^i}}(\cokernel(V_{D_*(\CN_i)}))=\ell_W(\cokernel(V_{D_*(\CN_i)})).\]
These equalities together with the last one imply the desired equality and finish the proof of the lemma.
\end{proof}

%THEOREM 4
\begin{thm}
\label{thm 4}
Let $\CM$ be a $\pi$-divisible $\CO$-module scheme of height $h$ and dimension $1$ over a field $k$ of characteristic $p>2$. Fix a natural number $ j $. Then the $ j^{\text{th}} $-exterior power of $ \CM $ in the category of $ \pi $-divisible $ \CO $-module schemes over $k$ exists, and has height $\binom{h}{j}$ and dimension $\binom{h-1}{j-1}$. Moreover, for every positive natural number $n$, we have $ (\epO^j\CM)_n\cong \epO^j(\CM_n) $.
\end{thm}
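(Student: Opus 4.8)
\textbf{Proof proposal for Theorem \ref{thm 4}.}

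The plan is to build $\epO^j\CM$ as the inductive system $\Lambda^j = \{\epO^j\CM_n\}_n$ and then verify the three assertions (existence with universal property, height/dimension, compatibility with $\pi^n$-torsion) in turn, reducing everything possible to what is already available over fields of characteristic $p$ and, for the quantitative parts, to an algebraically closed perfect field. First I would dispose of the existence and the height: by Proposition \ref{rem41}, applied to the one-dimensional $\pi$-divisible module $\CM$ of height $h$, the system $\Lambda^j = (\epO^j\CM_1 \overset{\eta}{\hookrightarrow} \epO^j\CM_2 \overset{\eta}{\hookrightarrow} \cdots)$ is a $\pi$-divisible $\CO$-module scheme of height $\binom{h}{j}$ and, together with the compatible system $\lambda = (\lambda_n)_n$ of universal alternating morphisms $\lambda_n : \CM_n^j \to \epO^j\CM_n$, satisfies exactly the universal property of Definition \ref{def42}. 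That already gives the existence of $\epO^j\CM$ and its height, and it gives the last assertion essentially for free: Proposition \ref{rem41} identifies $\kernel(\pi^n : \Lambda^j \to \Lambda^j)$ with $\epO^j\CM_n$, i.e. $(\epO^j\CM)_n \cong \epO^j(\CM_n)$. So the only genuinely new content of the theorem is the computation of the dimension.

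For the dimension I would argue as follows. The dimension of a $\pi$-divisible module is insensitive to base change, so I may replace $k$ by an algebraic closure and assume $k$ algebraically closed and (in particular) perfect. Then by Proposition \ref{prop 11} the covariant Dieudonn\'e module of $\epO^j\CM$ is $\ep^jD$, where $D = D(\CM)$, a free $W\widehat{\otimes}_{\BZ_p}\CO$-module of rank $h$ by Lemma \ref{lem 10}, with Verschiebung acting through $\Upsilon = \ep^jV$. By Lemma \ref{lem 18}, the dimension of $\epO^j\CM$ equals $\ell_{W\widehat{\otimes}_A\CO}(\cokernel(\Upsilon))$, where $A$ is the ring of Lemma \ref{lem 9} and $W\widehat{\otimes}_A\CO$ is a complete discrete valuation ring. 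Using the decomposition $D = \prod_{i\in\BZ/f\BZ} M_i$ with each $M_i$ free of rank $h$ over $W\widehat{\otimes}_A\CO$ and $V$ cyclically permuting the $M_i$ (Lemmas \ref{lem 9}, \ref{lem 10} and Remark \ref{rem 14}), I would apply Lemma \ref{lem 17}: with $\chi_i$ the relevant components of $\Upsilon = \ep^jV$, the length of $\cokernel(\Upsilon)$ is the valuation of the product of their determinants. Since the determinant of $\ep^jV$ on a rank-$h$ module is $\det(V)^{\binom{h-1}{j-1}}$ (this is the linear-algebra identity already invoked in Lemma \ref{lem 14}), and since $\ell_{W\widehat{\otimes}_A\CO}(\cokernel(V)) = \dim\CM = 1$ again by Lemma \ref{lem 18} applied to $\CM$ itself, the multiplicativity of valuation under products of determinants gives
\[
\dim(\epO^j\CM) = \binom{h-1}{j-1}\cdot \dim\CM = \binom{h-1}{j-1}.
\]

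The part I expect to require the most care is making the determinant bookkeeping in the last step clean: one must match the semilinear map $\Upsilon$ on $\ep^jD$ against the factorwise decomposition coming from $D=\prod M_i$ so that Lemma \ref{lem 17} genuinely applies (the $\chi_i$ should be $\CB$-semilinear maps between free modules of equal rank over $\CB = W\widehat{\otimes}_A\CO$), and one must be sure that the exterior-power determinant identity $\det(\ep^jV) = \det(V)^{\binom{h-1}{j-1}}$ is stated for the semilinear situation in exactly the form Lemma \ref{lem 17} consumes. An alternative, perhaps cleaner, route avoiding $\ep^jD$ altogether would be to note that $V$ on $D$ has a single elementary divisor of valuation $1$ (since $\dim\CM = 1$) and choose the basis $\{\eps, V^f\eps, \dots, V^{(h-1)f}\eps\}$ of Lemma \ref{lem 10}; then the explicit description of $\Phi$ in Definition \ref{def 14} and the relations $\Phi\Upsilon = \Upsilon\Phi = p$ of Lemma \ref{lem 15} let one read off $\cokernel(\Upsilon)$ directly on the induced basis of $\ep^jD$, counting how many basis wedges $V^{f\alpha_1}\eps \wedge \cdots \wedge V^{f\alpha_j}\eps$ are hit by $\Upsilon$ only after multiplication by a uniformizer. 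Either way the final count is $\binom{h-1}{j-1}$, and I would present whichever is shorter once the details are in hand.
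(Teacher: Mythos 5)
Your proposal follows the paper's own proof essentially step for step: existence, height, and the identification $(\epO^j\CM)_n\cong\epO^j(\CM_n)$ all come from Proposition \ref{rem41}, and the dimension is computed by reducing to an algebraically closed field (where the paper invokes Proposition \ref{propbasechangeofBT} to justify the reduction), then combining Proposition \ref{prop 11}, Lemma \ref{lem 18}, the decomposition of Lemma \ref{lem 9}, and Lemma \ref{lem 17} with the identity $\det(\ep^jV)=\det(V)^{\binom{h-1}{j-1}}$. The argument is correct and matches the paper's route exactly.
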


%PROOF OF THEOREM 4
\begin{proof}[\textsc{Proof}]
We already know by Proposition \ref{rem41} that $ \epO^r\CM $ is a $ \pi $-divisible $ \CO $-module scheme over $k$, of height $ \binom{h}{j} $. So, we should only calculate the dimension of $\epO^j\CM$. Since the dimension is invariant under the base change, by Proposition \ref{propbasechangeofBT} we may assume that $k$ is algebraically closed. Using Lemma \ref{lem 18}, we know that the dimension of $\epO^j\CM $ is equal to the length of the cokernel of the Verschiebung of the covariant Dieudonn\'e module of $\epO^j\CM$, which is by Proposition \ref{prop 11} isomorphic to $ \ep^jD $, and thus, we have to calculate $ \ell_{W\widehat{\otimes}_A\CO}(\cokernel(\ep^jV))$. By Lemma \ref{lem 9}, we have $ D=M_0\times M_1\times\cdots\times M_{f-1} $, where $M_i$ are $W\widehat{\otimes}_A\CO$-modules and the Verschiebung induces $^{\sigma^{-1}}\otimes\Id$-linear morphisms $ V:M_i\to M_{i+1} $ for all $ i\in \frac{\BZ}{f\BZ} $, which we denote by $V_i$. Again, by Proposition \ref{prop 11}, the Verschiebung of $ \ep^jD $ is the morphism $ \Upsilon=\ep^jV $. It is now straightforward to check that $$ \ep^jD=\ep^jM_0\times \ep^jM_1\times\cdots\times\ep^jM_{f-1} $$ and that the morphism $ (\ep^jV)_i:\ep^jM_i\to \ep^jM_{i+1} $ induced by $ \ep^jV $ is equal to $ \ep^j(V_i) $. Now, recalling from Lemma \ref{lem 9} that $ W\widehat{\otimes}_A\CO $ is a discrete valuation ring, and denoting its valuation by $v$, we have by Lemma \ref{lem 17} (here we are using the fact that $D$ and $\ep^jD$ are free $ W\widehat{\otimes}_A\CO $-modules; cf. Lemma \ref{lem 10}):
$$\ell_{W\widehat{\otimes}_A\CO}(\cokernel(\ep^jV))=v(\text{det}(\ep^jV_1)\cdots\text{det}(\ep^jV_f))=$$
$$v(\text{det}(V_1)^{\binom{h-1}{j-1}}\cdots\text{det}(V_f)^{\binom{h-1}{j-1}})=\binom{h-1}{j-1}\cdot v(\text{det}(V_1)\cdots\text{det}(V_f))=$$
\[\binom{h-1}{j-1}\cdot \ell_{W\widehat{\otimes}_A\CO}(\cokernel(V))=\binom{h-1}{j-1}\cdot \dim \CM=\binom{h-1}{j-1}\]
where the second equality is true because $ V_i $ are semi-linear morphisms between finite free modules and we have the relation between the determinant of $ V_i $ and that of $ \ep^jV_i $ which we mentioned in the proof of Lemma \ref{lem 14}, and the one before the last equality follows from Lemma \ref{lem 18}.\\

The last statement follows from the way we constructed $ \epO^j\CM $ (cf. Proposition \ref{rem41}). The proof is now achieved.
\end{proof}

%\subsection{Over arbitrary fields}
\section{The main theorem: over arbitrary fields}

In this section, we combine the results of the last two sections to prove that the exterior powers of $ \pi $-divisible $ \CO $-module schemes over any base field exist.

\begin{thm}
\label{thmoveranyfield}
Let $ \CM $ be a $ \pi $-divisible $ \CO $-module scheme of height $h$ over a base field $k$. Assume that the dimension of $ \CM $ is at most 1. Fix a positive natural number $j$. Then the $ j^{\text{th}} $-exterior power of $ \CM $ in the category of $ \pi $-divisible $ \CO $-module schemes over $k$ exists, and has height $\binom{h}{j}$. If the dimension of $ \CM $ is 1, then $\epO^j\CM$ has dimension $\binom{h-1}{j-1}$, otherwise, it has dimension zero. Moreover, for every positive natural number $n$, we have $ (\epO^j\CM)_n\cong \epO^j(\CM_n) $.
\end{thm}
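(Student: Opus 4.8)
The plan is to reduce the general case to the two cases already handled: the \'etale case (Proposition \ref{prop025}) and the case over fields of characteristic $p$ (Theorem \ref{thm 4}), by splitting $\CM$ into its \'etale and connected parts and using the behaviour of exterior powers with respect to short exact sequences proved in Theorem \ref{thm 1}. First I would dispose of the case where $k$ has characteristic different from $p$: then $\CM$ is \'etale and the statement is exactly Proposition \ref{prop025}, including the base-change assertion $(\epO^j\CM)_n\cong\epO^j(\CM_n)$ (see Remark \ref{rem024} 1)). So assume $\mathrm{char}\,k=p$. Since existence, height, dimension and the formation of the kernels $\CM_n$ are all insensitive to a field extension in the relevant sense (cf. Proposition \ref{propbasechangeofBT} and Theorem \ref{thm 3}), I may pass to the perfect closure, or even to an algebraically closed field, and hence assume $k$ perfect; over a perfect field $\CM$ splits canonically as $\CM\cong\CM^{\text{\'et}}\oplus\CM^0$ into its \'etale and connected parts, each a $\pi$-divisible $\CO$-module scheme by Remark \ref{rem 12}, with heights $h'$ and $h''$, $h'+h''=h$, and with $\CM^0$ of dimension equal to $\dim\CM\le 1$.

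Next I would compute $\epO^j\CM$ via the direct-sum decomposition. Working level by level, for each $n$ the finite $\CO$-module scheme $\CM_n=\CM^{\text{\'et}}_n\oplus\CM^0_n$ fits into the split exact sequence $0\to\CM^{\text{\'et}}_n\to\CM_n\to\CM^0_n\to 0$. Applying Theorem \ref{thm 1}(d) to this split sequence — whose hypotheses $\epR^{a+1}\CM^{\text{\'et}}_n=0$ for suitable $a$ follow from the fact (established in the course of Proposition \ref{prop 9}(a)) that $\CM^{\text{\'et}}_n\cong\ul{(\CO/\pi^n)}^{h'}$, so its exterior powers over $\CO/\pi^n$ vanish above degree $h'$, and symmetrically for $\CM^0_n$ using that $\epO^{h''+1}$ of a Dieudonn\'e module of rank $h''$ vanishes — one obtains canonical isomorphisms
\[
\epO^j\CM_n\;\cong\;\bigoplus_{r=0}^{j}\Big(\epO^r\CM^{\text{\'et}}_n\otimes_{\CO}\epO^{j-r}\CM^0_n\Big),
\]
exactly as in the proof of Proposition \ref{prop 9}. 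Each summand exists: $\epO^r\CM^{\text{\'et}}_n$ by Proposition \ref{prop025}, $\epO^{j-r}\CM^0_n$ by Lemma \ref{lem 11} together with the morphisms $\phi,\upsilon$ of Definition \ref{def 14} and Lemma \ref{lem 15}, and the tensor products by Proposition \ref{prop 10}. Passing to the inductive limit over $n$ along the monomorphisms $\eta$ of Lemma \ref{lem 16} (compatible with the decomposition), and using Proposition \ref{rem41} applied to $\CM^{\text{\'et}}$ and to $\CM^0$ together with Lemma \ref{lem 23} to see that the tensor products of the resulting $\pi$-divisible modules are again $\pi$-divisible, I would conclude that $\Lambda^j:=\dirlim_n\epO^j\CM_n$ is a $\pi$-divisible $\CO$-module scheme with $(\Lambda^j)_n\cong\epO^j\CM_n$. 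Counting orders via Remark \ref{rem 17} 1) (or directly from the binomial identity $\sum_r\binom{h'}{r}\binom{h''}{j-r}=\binom{h}{j}$) gives height $\binom{h}{j}$; the dimension computation is the content of Theorem \ref{thm 4} when $\dim\CM=1$ (the \'etale summands contribute nothing to the dimension since their Verschiebung is bijective, so only the $r=0$ term $\epO^j\CM^0_n$ matters), and when $\dim\CM=0$ the module $\CM$ is \'etale and the dimension is zero.

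Finally I would check the universal property, i.e.\ that $\Lambda^j$, equipped with the system $\lambda=(\lambda_n)_n$ of universal alternating morphisms $\CM_n^j\to\epO^j\CM_n$, really is the exterior power in the sense of Definition \ref{def42}: for every $\pi$-divisible $\CO$-module scheme $\CN$ over $k$ the map $\lambda^*:\Hom_k(\Lambda^j,\CN)\to\Alt_k^{\CO}(\CM^j,\CN)$ is an isomorphism. This is precisely the argument given in the last paragraph of the proof of Proposition \ref{rem41}: a homomorphism $\Lambda^j\to\CN$ is a compatible system of homomorphisms $\epO^j\CM_n\to\CN_n$, hence by the (finite-level) universal property of $\epO^j\CM_n$ a compatible system of alternating morphisms $\CM_n^j\to\CN_n$, i.e.\ an element of $\invlim_n\Alt_k^{\CO}(\CM_n^j,\CN_n)=\Alt_k^{\CO}(\CM^j,\CN)$; naturality of all the finite-level isomorphisms in $\CN$ gives that this bijection is induced by $\lambda$. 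The base-change statement $(\epO^j\CM)_n\cong\epO^j(\CM_n)$ is then immediate from the construction $\Lambda^j=\dirlim_m\epO^j\CM_m$ and $(\Lambda^j)_n\cong\epO^j\CM_n$. The main obstacle is organizational rather than deep: one must verify that the decomposition into \'etale and connected parts, the finite-level isomorphisms $\epO^j\CM_n\cong\bigoplus_r\epO^r\CM^{\text{\'et}}_n\otimes\epO^{j-r}\CM^0_n$, and the transition maps $\eta$ are all mutually compatible so that they glue to a statement about $\pi$-divisible modules — and that the reduction to perfect (indeed algebraically closed) $k$ is legitimate for the existence claim, which requires invoking Proposition \ref{propbasechangeofBT} to transport the constructed object back down, exactly as in the proof of Corollary \ref{cor41}.
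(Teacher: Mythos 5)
Your opening reduction is exactly the paper's entire proof: if $\mathrm{char}\,k\neq p$ or $\dim\CM=0$ then $\CM$ is \'etale and everything follows from Proposition \ref{prop025} and Remark \ref{rem024}; otherwise $\mathrm{char}\,k=p$ and $\dim\CM=1$, and the statement is precisely Theorem \ref{thm 4}. Everything after your first paragraph is therefore not part of the proof of the present theorem but a re-derivation of Theorem \ref{thm 4}, and it is in that re-derivation that two steps need repair.

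First, Theorem \ref{thm 1}(d) is the wrong tool for the decomposition $\epO^j\CM_n\cong\bigoplus_{r}\bigl(\epO^r\CM^{\text{\'et}}_n\otimes_{\CO}\epO^{j-r}\CM^0_n\bigr)$. That theorem requires a single splitting $j=n_1+n_3$ with $\epO^{n_1+1}\CM^{\text{\'et}}_n=0$ and $\epO^{n_3+1}\CM^0_n=0$; since these force $n_1\geq h'$ and $n_3\geq h''$, they can only both hold when $j\geq h$, and even then the conclusion is a single tensor product, not a direct sum. The statement you actually need is the direct-sum formula for a split exact sequence, which the paper obtains from the universal properties of exterior powers, tensor products and direct sums in the proof of Proposition \ref{prop 9}; cite that instead. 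Second, the passage to the perfect closure should not be phrased as ``construct there and transport the object back down'': the connected--\'etale splitting exists only over a perfect field, and the paper has no descent result along the infinite purely inseparable extension $k\subseteq k^{\mathrm{perf}}$. The correct logic, which is the one used in Remark \ref{rem 17} and Proposition \ref{rem41}, is that the finite-level exterior powers $\epO^j\CM_n$ already exist over $k$ itself by Theorem \ref{thm41}; the base change to $\bar{k}$ (Proposition \ref{propbasechangeofBT}) serves only to compute their orders and to verify the exact sequences making $(\epO^j\CM_n)_n$ a $\pi$-divisible module, so no object is ever descended. With these corrections your argument reproduces the proof of Theorem \ref{thm 4}; for the theorem at hand you can, as the paper does, simply invoke it.
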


\begin{proof}
If the characteristic of $k$ is different from $p$ or the dimension of $ \CM $ is zero, then $ \CM $ is \'etale and the statements of the theorem follow from Proposition \ref{prop025} and Remark \ref{rem024}. So, we can assume that the characteristic of $k$ is $p$ and the dimension of $ \CM $ is 1. The statements of the theorem now follow from Theorem \ref{thm 4}.
\end{proof}

%SECTION 6
%\section{Multilinear Theory of Displays}
\chapter{Multilinear Theory of Displays}

In this chapter, after recalling basic definitions, constructions and results of the theory of display developed by Zink, we develop a multilinear theory of displays, which, in the next chapter, will be related to the multilinear theory of group schemes developed by Pink. For more details on displays, we refer to \cite{Z1}. Unless otherwise specified, $R$ is a ring and $ F^R:W(R)\to W(R) $ is the Frobenius morphism of the ring of Witt vectors with coefficient in $R$, and $ V^R:W(R)\to W(R) $ its Verschiebung. We denote by $ I_R $ the image of the Verschiebung. We sometimes denote the Frobenius and Verschiebung without the superscript $R$, when it is clear what is meant.

%\subsection{Recollections}
\section{Recollections}

%DEFINITION 02
\begin{dfn}
\label{def02}
A $3n$-display over $R$ is a quadruple $\CP=(P,Q,F,V^{-1})$, where $P$ is a finitely generated $ W(R) $-module, $ Q\subseteq P $ is a submodule and $ F, V^{-1} $ are $ F^R$-linear morphisms $ F:P\to P $ and $ V^{-1}:Q\to P $, subject to the following axioms:
\begin{itemize}
\item[(i)] $ I_RP\subseteq Q\subseteq P $  and there is a decomposition of $ P $ into the direct sum of $ W(R) $-modules $ P=L\oplus T $, called \emph{a normal decomposition}, such that $ Q=L\oplus I_RT $.
\item[(ii)] $ V^{-1}:Q\to P $ is an $ F^R $-linear epimorphism (i.e., the $ W(R) $-linearization $ (V^{-1})^{\sharp}:W(R)\otimes_{F^R,W(R)} Q\to P $ is surjective).
\item[(iii)] For any $ x\in P $ and $ w\in W(R) $ we have \[ V^{-1}(V^R(w)x)=wF(x). \]
\end{itemize}
\end{dfn}

%REMARK 015
\begin{rem}
\label{rem0 15}
Note that from the last axiom, it follows that $ F $ is uniquely determined by $ V^{-1} $. Indeed, we have for every $ x\in P $: \[ F(x)=V^{-1}(V^R(1)x).\] It follows also from this relation and $ F^R $-linearity of $ V^{-1} $, that for every $ y\in Q $, we have \[ F(y)=V^{-1}(V^R(1)y)=F^RV^R(1)V^{-1}(y)=pV^{-1}(y). \]
\end{rem}

%CONSTRUCTION 015
\begin{cons}
\label{cons015}
According to lemma 10, p.14 of \cite{Z1}, there exists a unique $ W(R) $-linear map $ V^{\sharp}:P\to W(R)\otimes_{F,W(R)}P $, satisfying the following equations:\[ V^{\sharp}(wF(x))=pw\otimes x,\quad w\in W(R), x\in P \] and \[ V^{\sharp}(wV^{-1}(y))=w\otimes y,\quad w\in W(R), y\in Q.\] If we denote by $ F^{\sharp}:W(R)\otimes_{F,W(R)}P\to P $ the $ W(R)$-linearization of $ F:P\to P $, we have the properties: 
\begin{myequation}
\label{Fr-Ver}
F^{\sharp}\circ V^{\sharp} =p.\Id_P \text{\quad and\quad}  V^{\sharp}\circ F^{\sharp}=p.\Id_{W(R)\otimes_{F,W(R)}P}.
\end{myequation}
Denote by $ V^{n\sharp} $ the composition \[ P\arrover{V^{\sharp}} W(R)\otimes_{F}P\arrover{\Id\otimes_{F}V^{\sharp}}W(R)\otimes_{F^2}P\to\dots\arrover{\Id\otimes_{F^n}V^{\sharp}}W(R)\otimes_{F^n}P. \]
\end{cons}

%CONSTRUCTION 016
\begin{cons}
\label{cons016}
Let $ \CP=(P,Q,F,V^{-1}) $ be a $3n$-display over a ring $R$ and let $ \phi:R\to S $ be a ring homomorphism. We are going to construct a $3n$-display, which will be the $3n$-display obtained from $\CP$ by base change with respect to $\phi:R\to S $. Set $ \CP_S:=(P_S,Q_S,F_S,V_S^{-1}) $, where: 
\begin{itemize}
\item $ P_S $ is $ W(S)\otimes_{W(R)}P $,
\item $ Q_S $ is the kernel of the morphism $ W(S)\otimes_{W(R)}P\to S\otimes_R P/Q $,
\item $ F_S:P_S\to P_S $ is the morphism $ F^S\otimes F $ and
\item $ V_S^{-1}:Q_S\to P_S $ is the unique $ W(S) $-linear homomorphism, which satisfies the following properties:\[ V_S^{-1}(w\otimes y)=F^S(w)\otimes V^{-1}(y),\quad w\in W(S), y\in Q \] and \[ V_S^{-1}(V^S(w)\otimes x)=x\otimes F(x),\quad w\in W(S), x\in P.\]
\end{itemize}
If $ P=L\oplus T $ is a normal decomposition of $P$, then one can show that $ P_S=L_S\oplus T_S $ is a normal decomposition of $P_S$, where $ L_S:=W(S)\otimes_{W(R)}L $ and $ T_S=W(S)\otimes_{W(R)}T $ and that we have $Q_S=L_S\oplus I_S\otimes_{W(R)}T$ (note that $ I_ST_S=I_S\otimes_{W(R)}T $). For the details of this construction, in particular to see why this construction produces a $3n$-display, refer to Definition 20 and the discussions following it, p.20 of \cite{Z1}.
\end{cons}

%DEFINITION 010
\begin{dfn}
\label{def010}
Let $ \CP=(P,Q,F,V^{-1}) $ be a $3n$-display over $R$. Assume that $p$ is nilpotent in $R$. Then $ \CP $ is called \emph{display} if it satisfies the \emph{nilpotence or $V$-nilpotence condition}, i.e., if there exists a natural number $ N $ such that the morphism $$ V^{N\sharp}:P\to W(R)\otimes_{F^{N},W(R)}P $$ is zero modulo $ I_R+pW(R)$.%\\ Now 
\end{dfn}

%REMARK 020
\begin{rem}
\label{rem020}
$  $
\begin{itemize}
\item[1)] If $p$ is not nilpotent in $R$, but is topologically nilpotent, one defines a display as follows, however, in the sequel, we will only work with displays over rings where $p$ is nilpotent. Let $R$ be a linearly topologized ring, with topology given by a filtration by ideals: \[ R=\Fa_0\supseteq \Fa_1\supseteq\dots\supseteq\Fa_n\supseteq\dots, \] such that $ \Fa_i\Fa_j\subseteq \Fa_{i+j} $. By assumption, $p$ is nilpotent in $ R/\Fa_1 $, and hence in every ring $ R/\Fa_i $. We also assume that $R$ is complete and separated with respect to this filtration. In particular it follows that $R$ is a $p$-adic ring. We call a $3n$-display $ \CP $ a \emph{display}, if the $3n$-display obtained from $ \CP $ by base change over $ R/\Fa_1 $ is a display.
\item[2)] Displays are sometimes called \emph{nilpotent displays}, whereas $3n$-displays are ``\textbf{n}ot \textbf{n}ecessarily \textbf{n}ilpotent". In order to emphasize that displays satisfy $V$-nilpotence condition, we will also sometimes stress the adjective ``nilpotent".
\end{itemize}
\end{rem}

For more details on the following construction, refer to Example 14, p.16 of \cite{Z1}.
%CONSTRUCTION 017
\begin{cons}
\label{cons017}
Let $k$ be a perfect field of characteristic $p>0$. A Dieudonn\'e module over $k$ is an $ \BE_k $-module $ M $, which is finitely generated and free as $ W(k) $-module. It is therefore equipped with two operators $ F:M\to M$ and $ V:M\to M $, which are respectively $ F:W(k)\to W(k) $ and $F^{-1}:W(k)\to W(k)$ linear, such that $ FV=VF=p $. We obtain from $M$ a $3n$-display $ \CP=(P,Q,F, V^{-1})$, by setting $ P:=M, Q:=VM $ and $ F $ being $ F:M\to M $ and finally $ V^{-1}:VM\to M $ being the inverse of $ V:M\to VM $ (note that since $ FV=p $ and $ M $ is a free $ W(k) $-module, $ V $ is injective). Moreover, $ \CP $ is a display, if and only if the morphism $ V:M/pM\to M/pM$ is nilpotent. Thus, if $G$ is a $p$-divisible group over $k$, the Dieudonn\'e module, $ D_*(G) $, of $G$ gives rise to a $3n$-display. Since we are working with the covariant Dieudonn\'e theory, we observe that $G$ is connected, if and only if the corresponding $3n$-display is a (nilpotent) display.
\end{cons}

%DEFINITION 04
\begin{dfn}
\label{def04}
Let $R$ be a ring and $ \CP=(P,Q,F,V^{-1}) $ a $3n$-display over $R$. The \emph{tangent module} of $ \CP $, denoted by $ \CT(\CP) $, is the $R$-module $ P/Q $.
\end{dfn}

%DEFINITION 012
\begin{dfn}
\label{def012}
Let $R$ be a ring and $ \CP=(P,Q,F,V^{-1}) $ a $3n$-display over $R$. The \emph{rank} of $ \CP $, is the rank of its tangent module over $R$ and the \emph{height} of $ \CP $, is the rank of $P$ over $W(R)$.
\end{dfn}

%REMARK 09
\begin{rem}
\label{rem0 9}
$  $
\begin{itemize}
\item[1)] Using the normal decomposition $ P=L\oplus T $ and $ Q=L\oplus I_RT $, we observe that the tangent module of $ \CP $ is isomorphic to $ T/I_RT $, which is a projective $ R$-module and therefore the rank of the tangent module over $ R $ is equal to the rank of $ T $ over $ W(R)$. It follows that the height of $\CP$ is equal to the sum of ranks of $L$ and $T$ over $W(R)$.
\item[2)] If $R$ is a perfect field of characteristic $p>0$ and $ \CP $ is the display associated to a connected $p$-divisible group $G$, then the tangent module of $ \CP $, which is an $R$-vector space, is canonically isomorphic to the tangent space of the $p$-divisible group $G$. The rank and height of $ \CP $ are respectively equal to the dimension of $G$ and its height.
\end{itemize}
\end{rem}

The following construction is extracted without proofs from example 23, p. 22 of \cite{Z1}.

%CONSTRUCTION 02
\begin{cons}
\label{cons02}
Let $\CP=(P,Q,F,V^{-1})$ be a $3n$-display over a ring $R$ with $p.R=0$. Denote by Frob$:R\to R$ the absolute Frobenius morphism of $R$, i.e., Frob$(r)=r^p $ for any $r\in R$. Denote by $ \CP^{(p)}=(P^{(p)},Q^{(p)},F,V^{-1}) $ the base change of $ \CP $ with respect to Frob. More precisely, we have $$ P^{(p)}=W(R)\otimes_{F,W(R)}P $$ and \[ Q^{(p)}=I_R\otimes_{F,W(R)}P+\image(W(R)\otimes_{F,W(R)}Q\to W(R)\otimes_{F,W(R)}P).\] The operators $ F, V^{-1} $ are uniquely determined by the relations:\[ F(w\otimes x)=F(w)\otimes F(x),\quad w\in W(R), x\in P,\] \[ V^{-1}(Vw\otimes x)=w\otimes F(x),\quad w\in W(R), x\in P\] and \[ V^{-1}(w\otimes y)=F(w)\otimes V^{-1}(y),\quad w\in W(R), y\in Q. \]
The map $ V^{\sharp} $ in Construction \ref{cons015}, satisfies $ V^{\sharp}(P)\subseteq Q^{(p)} $ and using the fact that $ P $ is generated as a $ W(R)$-module by elements $ V^{-1}(y) $ with $ y\in Q $, one shows that $ V^{\sharp} $ commutes with $ F $ and $ V^{-1} $ and therefore induces a homomorphism of $3n$-displays \[ Fr_{\CP}:\CP\to \CP^{(p)}.\] Similarly, $ F^{\sharp} $ satisfies $ F^{\sharp}(Q^{(p)})\subseteq I_RP $ and commutes with $ F $ and $ V^{-1} $ and thus induces a homomorphism of $3n$-displays  \[ Ver_{\CP}:\CP^{(p)}\to \CP.\] From the equations (\ref{Fr-Ver}), we obtain the equations \[ Fr_{\CP}\circ Ver_{\CP} =p.\Id_{\CP^{(p)}} \text{\quad and\quad}  Ver_{\CP}\circ Fr_{\CP}=p.\Id_{\CP}.\]
\end{cons}

For the next construction, we fix a $3n$-display $ \CP=(P,Q,F,V^{-1}) $ over a ring $R$, where $p$ is topologically nilpotent, with a fixed normal decomposition $ P=L\oplus T $, and a nilpotent $R$-algebra $\CN$. Set $ S:=R\oplus \CN$. This has a natural structure of an $R$-algebra. This construction is a recapitulation of some of the constructions and results in section 3 of \cite{Z1}.

%CONSTRUCTION 012
\begin{cons}
\label{cons012}
Set $ \widehat{P}({\CN}):=\widehat{W}(\CN)\otimes_{W(R)}P $ and $ \widehat{Q}(\CN):=\widehat{P}_{\CN}\cap Q_S$, where $ Q_S $ is the base change of $Q$ (as in $3n$-display), i.e., $$Q_S=\kernel(W(S)\otimes_{W(R)}P\to S\otimes_RP/Q).$$ Then, one sees that $$\widehat{Q}(\CN)=\widehat{W}(\CN)\otimes_{W(R)}L\oplus \widehat{I}_{\CN}\otimes_{W(R)}T .$$ We extend the maps $ F:P\to P $ and $ V^{-1}:Q\to P $ to maps $ F:\widehat{P}(\CN)\to \widehat{P}(\CN) $ and $ V^{-1}:\widehat{Q}(\CN)\to \widehat{P}(\CN) $ as follows. We set $ F:=F^{\CN}\otimes F $, where $ F^{\CN}:\widehat{W}(\CN)\to \widehat{W}(\CN) $ is the Frobenius morphism. We let $ V^{-1} $ act on $ \widehat{W}(\CN)\otimes_{W(R)}L $ as $ F\otimes V^{-1} $ and on $\widehat{I}_{\CN}\otimes_{W(R)}T $ as $ V^{-1}\otimes F $ (since the action of $ V $ on the Witt vectors is injective, the map $ V^{-1}:\widehat{I}_{\CN}\to \widehat{W}(\CN) $ is well-defined). If we want to look at $ \widehat{P}_{\CN} $ and $\widehat{Q}_{\CN}$ as functors on Nil$_R$, we denote $ \widehat{P}_{\CN} $ by $ G_{\CP}^{0}(\CN) $ and $\widehat{Q}_{\CN}$ by $ G_{\CP}^{-1}(\CN) $.\\ 

Denote by $ BT_{\mathcal{P}}(\CN) $ the cokernel of the map $ V^{-1}-\Id: G_{\CP}^{-1}(\CN)\to G_{\CP}^{0}(\CN) $. By theorem 81, p. 77 of \cite{Z1}, the following sequence is exact: \[ 0\longrightarrow G_{\CP}^{-1}(\CN)\arrover{ V^{-1}-\Id} G_{\CP}^{0}(\CN)\longrightarrow BT_{\mathcal{P}}(\CN)\longrightarrow 0   \] and the functor $ BT_{\mathcal{P}}:\text{Nil}_R\to \Ab $ is a finite dimensional formal group. Moreover, if $\CP$ is a display (nilpotent), then $ BT_{\mathcal{P}} $ is a $p$-divisible group (corollary 89, p.83 of \cite{Z1}). By corollary 86, p. 81 of \cite{Z1}, the construction of $ BT_{\mathcal{P}} $ commutes with base change of $3n$-displays. Now, assume that $ pR=0 $. By functoriality, the Frobenius map $ Fr_{\CP}:\CP\to\CP^{(p)} $ gives rise to a map $ BT_{\mathcal{P}}(Fr_{\CP}):BT_{\mathcal{P}}\to BT_{\mathcal{P}^{(p)}}\cong BT_{\mathcal{P}}^{(p)} $, where by $ BT_{\mathcal{P}}^{(p)} $ we mean the base change of the formal group $ BT_{\mathcal{P}} $ with respect to the extension Frob$:R\to R $. Proposition 87, p.81 of \cite{Z1} states that this homomorphism is the Frobenius morphism of the formal group $BT_{\mathcal{P}}$. Similarly, the induced morphism $ BT_{\mathcal{P}}(Ver_{\CP}): BT_{\mathcal{P}}^{(p)}\cong BT_{\mathcal{P}^{(p)}}\to BT_{\mathcal{P}}$ is the Verschiebung of $ BT_{\mathcal{P}} $.
\end{cons}

For future reference, we summarize these results in the following proposition.

%PROPOSITION 020
\begin{prop}
\label{prop0 20}
Let $ \CP $ be a $3n$-display over a ring $R$, then:
\begin{itemize}
\item $ BT_{\CP} $ is a finite dimensional formal group and the construction $ \CP\rightsquigarrow BT_{\CP} $ commutes with base change, i.e., if $ R\to S $ is a ring homomorphism, then there exists an canonical isomorphism $ (BT_{\CP})_S\cong BT_{\CP_S} $.
\item If $p$ is nilpotent in $R$ and $ \CP $ is nilpotent, then $ BT_{\CP} $ is an infinitesimal $p$-divisible group.
\item If $pR=0$, and $ \CP $ is nilpotent, then the Frobenius and respectively Verschiebung morphisms of the $p$-divisible group $ BT_{\CP} $ are $ BT_{\CP}(Fr_{\CP}) $ and respectively $ BT_{\CP}(Ver_{\CP}) $.
\end{itemize}
\end{prop}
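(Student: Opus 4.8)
The plan is to assemble the three assertions from the material recorded in Construction \ref{cons012}, each of which is a direct transcription of a theorem of Zink, and to verify that the hypotheses match up. First I would recall the exact sequence of functors on $\text{Nil}_R$
\[ 0\longrightarrow G_{\CP}^{-1}(\CN)\arrover{V^{-1}-\Id} G_{\CP}^{0}(\CN)\longrightarrow BT_{\CP}(\CN)\longrightarrow 0 \]
from \cite{Z1}, Theorem 81, which by definition exhibits $BT_{\CP}$ as the cokernel of $V^{-1}-\Id$ and, in the same theorem, identifies $BT_{\CP}$ as a finite dimensional formal group. For the base change claim I would invoke \cite{Z1}, Corollary 86: the functors $G^{0}_{\CP}$ and $G^{-1}_{\CP}$ are built from $\widehat{W}(-)\otimes_{W(R)}P$ and the submodule $\widehat{W}(-)\otimes_{W(R)}L\oplus\widehat{I}_{(-)}\otimes_{W(R)}T$ attached to a normal decomposition $P=L\oplus T$, and since these operations and the formation of cokernels are compatible with a ring homomorphism $R\to S$, one obtains the canonical isomorphism $(BT_{\CP})_S\cong BT_{\CP_S}$; a normal decomposition of $\CP_S$ is furnished by Construction \ref{cons016}, so the choice of decomposition causes no trouble.

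Next, for the second bullet I would specialize to $p$ nilpotent in $R$ and $\CP$ a display in the sense of Definition \ref{def010}. Then \cite{Z1}, Corollary 89 asserts that $BT_{\CP}$ is a $p$-divisible group, and since it is a formal group by the first part, its fibre over every point of $S=\Spec R$ is a formal, hence connected, $p$-divisible group; by Definition \ref{def09} this means $BT_{\CP}$ is infinitesimal. The only point to check here is that the notion of $p$-divisible group produced by Zink's functorial machinery agrees with the scheme-theoretic one used in the present paper, which is standard.

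Finally, assuming $pR=0$ and $\CP$ nilpotent, I would apply the functoriality of $\CP\rightsquigarrow BT_{\CP}$ to the homomorphisms $Fr_{\CP}:\CP\to\CP^{(p)}$ and $Ver_{\CP}:\CP^{(p)}\to\CP$ of Construction \ref{cons02}, and use the base change compatibility of the first bullet to identify $BT_{\CP^{(p)}}\cong BT_{\CP}^{(p)}$, where the superscript denotes pullback along the absolute Frobenius of $R$. Then \cite{Z1}, Proposition 87 states precisely that the resulting maps $BT_{\CP}(Fr_{\CP})$ and $BT_{\CP}(Ver_{\CP})$ are the Frobenius and the Verschiebung of $BT_{\CP}$; the identities $Fr_{\CP}\circ Ver_{\CP}=p\cdot\Id$ and $Ver_{\CP}\circ Fr_{\CP}=p\cdot\Id$ recorded in Construction \ref{cons02} then transport to the familiar relations $F V=V F=p$ on $BT_{\CP}$. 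In short, there is no genuine obstacle: the substantive content is entirely contained in \cite{Z1}, and the proof consists in lining up the definitions (formal group versus infinitesimal $p$-divisible group, displays versus nilpotent displays) and citing Theorem 81, Corollaries 86 and 89, and Proposition 87 of loc. cit.
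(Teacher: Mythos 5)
Your proposal is correct and matches the paper's treatment exactly: the proposition is stated there as a summary of Construction \ref{cons012}, which itself just records Theorem 81, Corollaries 86 and 89, and Proposition 87 of \cite{Z1}, precisely the references you cite. No further argument is given or needed.
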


%NOTATIONS 02
\begin{notation}
\label{notation0 2}
For a nilpotent $R$-algebra $\CN$, we denote by $[b]$ the class of an element $b\in G_{\CP}^0(\CN)$ modulo $ (V^{-1}-\Id)G_{\CP}^{-1}(\CN) $. If $[b]$ is annihilated by $p^n$, we write $[b]_n$ to emphasize the fact that this element belongs to the kernel of $p^n$. In this case, $ p^nb $  belongs to the subgroup $ (V^{-1}-\Id)G_{\CP}^{-1}(\CN) $ of $ G_{\CP}^0(\CN) $, and therefore, since $ V^{-1}-\Id:G_{\CP}^{-1}(\CN)\to G_{\CP}^{0}(\CN)$ is injective, there exists a unique element $ \leftidx{_n}{g}{_{\CP}}(b)\in G_{\CP}^{-1}(\CN) $ with $ (V^{-1}-\Id)(\leftidx{_n}{g}{_{\CP}}(b))=p^nb $.
\end{notation}

%REMARK 07
\begin{rem}
\label{rem0 7}
It follows from the construction of $ BT_{\CP} $ that for any nilpotent $R$-algebra $ \CN $, any $ w\in \widehat{W}(\CN) $ and any $ x\in P $, we have $ [Fw\otimes x]=[w\otimes Vx] $ and $ [Vw\otimes x]=[w\otimes Fx] $. Indeed, by Construction \ref{cons012}, we know that $ (V^{-1}-\Id)(w\otimes Vx)=Fw\otimes x-w\otimes Vx$ and that $ (V^{-1}-\Id)(Vw\otimes x)=w\otimes Fx-Vw\otimes x$.
\end{rem}

We will need theorem 103, p.94 of \cite{Z1}, which states:

%THEOREM 02
\begin{thm}
\label{thm02}
Let $R$ be an excellent local ring or a ring such that $R/pR$ is of finite type over a field $k$. The functor $\mathcal{P}\mapsto BT_{\mathcal{P}}$ gives an equivalence of categories between the category of (nilpotent) displays over $R$ and infinitesimal $p$-divisible groups over $R$.
\end{thm}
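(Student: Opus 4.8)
The plan is to follow Zink's original argument, whose skeleton has three parts: (i) produce a candidate quasi-inverse to $\CP\mapsto BT_{\CP}$ using the Dieudonn\'e crystal; (ii) reduce the general statement, by limit and approximation arguments, to the case of Artinian local rings; and (iii) establish the equivalence in that case by d\'evissage along nilpotent thickenings, with base case the perfect residue field, where it is classical Dieudonn\'e theory. Throughout one uses that $\CP\mapsto BT_{\CP}$ commutes with base change (Proposition \ref{prop0 20}).

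For step (i): when $p$ is topologically nilpotent in $R$, the surjection $W(R)\to R$ together with the ideal $I_R$ forms a pd-thickening (the divided powers on $I_R$ being the natural ones coming from the Verschiebung). Hence for an infinitesimal $p$-divisible group $G$ over $R$ one may evaluate the covariant Dieudonn\'e crystal of $G$ on this thickening, obtaining a finitely generated projective $W(R)$-module $P:=\BD(G)(W(R))$; one then takes $Q$ to be the preimage in $P$ of the Hodge filtration $\Lie(G)^{\vee}\subseteq\BD(G)(R)$, and lets $F$ and $V^{-1}$ be induced by the Frobenius on the crystal and its divided powers. One checks that $(P,Q,F,V^{-1})$ is a $3n$-display, that the $V$-nilpotence condition corresponds exactly to $G$ being infinitesimal, and --- the crucial point --- that this assignment is inverse, up to natural isomorphism, to $\CP\mapsto BT_{\CP}$. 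The latter amounts to a canonical identification $\BD(BT_{\CP})(W(R))\cong P$ compatible with $F$ and with the filtration $Q$, which is proved by comparison with Cartier theory: one uses the exact sequence $0\to G_{\CP}^{-1}\to G_{\CP}^{0}\to BT_{\CP}\to 0$ of Construction \ref{cons012}, together with the explicit description of the universal vector extension (equivalently, the $V$-reduced Cartier module) of $BT_{\CP}$ in terms of $\widehat{W}$ and $P$ and the operators $F, V^{-1}$ of that construction.

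For steps (ii) and (iii): first reduce to $p$ nilpotent in $R$, the topological case being by definition obtained from this one by base change to $R/\Fa_1$. Using that both functors commute with base change and with filtered colimits, and invoking Artin approximation / the excellence hypothesis precisely here, one reduces to $R$ Noetherian complete local, and then to $R$ Artinian local. For such $R$ one inducts on the length: if $S\twoheadrightarrow R$ has kernel $\Fa$ with $p\Fa=\Fa^{2}=0$, then, reading off the normal decomposition $P_{0}=L_{0}\oplus T_{0}$ and the freedom in the choice of $V^{-1}$ (Remark \ref{rem0 15}, Construction \ref{cons015}), the set of lifts of a display $\CP_{0}$ over $R$ to $S$ is a torsor under $\Hom_{R}\bigl(T_{0}/I_{R}T_{0},\,(L_{0}/I_{R}L_{0})\otimes_{R}\Fa\bigr)$, while by Grothendieck--Messing the set of lifts of $G_{0}=BT_{\CP_{0}}$ to $S$ is a torsor under $\Hom_{R}\bigl(\Lie(G_{0})^{\vee},\,\Lie(G_{0})\otimes_{R}\Fa\bigr)$; the crystalline comparison of step (i) identifies these two torsors compatibly with the functor. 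Together with the base case over the perfect residue field $k$ --- where displays are Dieudonn\'e modules by Construction \ref{cons017} and the equivalence with connected $p$-divisible groups is classical --- this gives essential surjectivity. Full faithfulness is obtained the same way, since a morphism of displays over $S$ is determined by its reduction to $R$ plus a compatibility with the crystal structure, matching the Grothendieck--Messing description of homomorphisms.

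The main obstacle is step (i), and within it the assertion that \emph{displays compute Dieudonn\'e crystals}: the canonical identification $\BD(BT_{\CP})(W(R))\cong P$ respecting Frobenius and the Hodge filtration. This is where the genuinely hard analysis lies, since one must relate the abstract crystal --- defined via arbitrary pd-thickenings and formal-group cohomology --- to the concrete $\widehat{W}(\CN)$-modules $\widehat{P}(\CN)$ and the maps $F,V^{-1}$ of Construction \ref{cons012}, routing through Cartier theory and the logarithm of the formal group $BT_{\CP}$. A secondary difficulty is the descent from Artinian local base rings to the stated class (excellent local, or of finite type mod $p$), which is exactly what forces those hypotheses on $R$. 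Once (i) is available, steps (ii) and (iii) are a standard, if lengthy, deformation-theoretic d\'evissage.
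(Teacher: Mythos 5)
A preliminary but important point: the paper does not prove this statement at all --- it is quoted verbatim from Zink (\cite{Z1}, Theorem 103) --- so there is no internal proof to measure your proposal against; the comparison below is with Zink's actual argument. Your steps (ii)--(iii) have roughly the right shape, but step (i) is not how the theorem is proved, and building on it creates a circularity. Zink's quasi-inverse and the engine of his proof run through \emph{Cartier theory}, not the crystalline Dieudonn\'e crystal: the key computation is the one this paper quotes as Proposition \ref{prop09}, namely that $M(BT_{\CP})$ is $\BE_R\otimes_{W(R)}P$ modulo the relations $F\otimes x-1\otimes Fx$ and $V\otimes V^{-1}y-1\otimes y$, and the equivalence is then deduced from the classical equivalence between formal groups and $V$-reduced Cartier modules together with an analysis of which Cartier modules arise from displays. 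The identification of $(P,Q,F,V^{-1})$ with the value of $\BD(BT_{\CP})$ on the pd-thickening $W(R)\to R$, which you single out as the ``crucial point,'' is a separate and later comparison theorem (Zink's crystalline duality, completed in general by Lau); the things you propose to ``check'' in step (i) --- projectivity of the crystal value, that the Hodge preimage together with the divided Frobenius forms a display, and that this inverts $BT$ --- are jointly of the same depth as the theorem itself and cannot be dispatched as a verification.

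There is also a concrete gap in step (iii): your d\'evissage bottoms out at ``the perfect residue field $k$, where it is classical Dieudonn\'e theory.'' The residue field of an excellent local ring need not be perfect (every field is an excellent local ring, e.g.\ $\BF_p(t)$), so as written your induction has no base case in general. The case of an imperfect field of characteristic $p$ is a genuine portion of Zink's proof: full faithfulness is obtained over any ring with $p$ nilpotent from the Cartier-module description, while essential surjectivity over an imperfect field requires its own descent argument from the perfect hull. Finally, the passage from complete local Noetherian rings to their Artinian quotients is not Artin approximation but the algebraization statement recorded in this paper as Proposition \ref{prop023}; excellence (or the finite-type-mod-$p$ hypothesis) is what allows the further descent from $R$ to its completion.
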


%\subsection{\texorpdfstring{Multilinear morphisms and the map $ \beta $}{Multilinear morphisms and the map beta}}
\section{\texorpdfstring{Multilinear morphisms and the map $ \beta $}{Multilinear morphisms and the map beta}}

%DEFINITION 05
\begin{dfn}
\label{def05}
Let $ \CP_0,\CP_1,\dots,\CP_r $ and $ \CP $ be $3n$-displays over a ring $R$. 
\begin{itemize}
\item[(i)] A \emph{multilinear} morphism $ \phi:\CP_1\times\dots\times\CP_r\to \CP_0 $ is a $ W(R) $-linear morphism $ \phi:P_1\times\dots\times P_r\to P_0 $ satisfying the following conditions:
\begin{itemize}
\item $ \phi $ restricts to a $ W(R) $-multilinear morphism $ \phi:Q_1\times \dots\times Q_r\to Q_0 $.
\item For any $ q_i\in Q_i $: \[V^{-1}\phi(q_1,\dots, q_r)= \phi(V^{-1}q_1,\dots, V^{-1}q_r).\]
\item For any $ 1\leq i\leq r $, $ x_i\in P_i $ and $ q_j\in Q_j $ ($ j\neq i $):\[ F\phi(q_1,\dots,q_{i-1},x_i,q_{i+1},\dots,q_r)=\]\[\phi(V^{-1}q_1,\dots, V^{-1}q_{i-1},Fx_i,V^{-1}q_{i+1},\dots,V^{-1}q_r).\] The group of multilinear morphisms $ \CP_1\times\dots\times \CP_r\to \CP_0 $ is denoted by $ \Mult(\CP_1\times\dots\times \CP_r,\CP_0) $.
\end{itemize}
\item[(ii)] A \emph{symmetric} multilinear morphism $ \phi:\CP^r\to \CP_0 $, is a multilinear morphism such that the underlying $ W(R) $-multilinear morphism $\phi:P^r\to P_0 $ is symmetric. The group of symmetric morphisms $ \CP^r\to \CP_0 $ is denoted by $ \Sym(\CP^r,\CP_0) $.
\item[(iii)] An \emph{antisymmetric} multilinear morphism $ \phi:\CP^r\to \CP_0 $, is a multilinear morphism such that the underlying $ W(R) $-multilinear morphism $\phi:P^r\to P_0 $ is antisymmetric. The group of antisymmetric morphisms $ \CP^r\to \CP_0 $ is denoted by $ \Anti(\CP^r,\CP_0) $.
\item[(iv)] An \emph{alternating} multilinear morphism $ \phi:\CP^r\to \CP_0 $, is a multilinear morphism such that the underlying $ W(R) $-multilinear morphism $\phi:P^r\to P_0 $ is alternating. The group of alternating morphisms $ \CP^r\to \CP_0 $ is denoted by $ \Alt(\CP^r,\CP_0) $.
\end{itemize}
\end{dfn}

%REMARK 016
\begin{rem}
\label{rem0 16}
$  $
\begin{itemize}
\item[1)] We call the second and respectively the third property of a multilinear morphism \emph{the $V$-condition} and respectively \emph{the $F$-condition}.
\item[2)] Note that the $F$-condition of a multilinear morphism follows from $ W(R)$-multilinearity and the the first property and the $V$-condition. Indeed, by Remark \ref{rem0 15}, we have \[ F\phi(q_1,\dots,q_{i-1},x_i,q_{i+1},\dots,q_r)\ovset{\ref{rem0 15}}{=}\]\[V^{-1}\big(V(1)\phi(q_1,\dots,q_{i-1},x_i,q_{i+1},\dots,q_r)\big)= \] \[ V^{-1}\phi(q_1,\dots,q_{i-1},V(1)x_i,q_{i+1},\dots,q_r)= \] \[\phi(V^{-1}q_1,\dots, V^{-1}q_{i-1},V^{-1}(V(1)x_i),V^{-1}q_{i+1},\dots,V^{-1}q_r)\ovset{\ref{rem0 15}}{=}\]\[\phi(V^{-1}q_1,\dots, V^{-1}q_{i-1},Fx_i,V^{-1}q_{i+1},\dots,V^{-1}q_r).\]
\end{itemize}
\end{rem}

%CONSTRUCTION 013
\begin{cons}
\label{cons013}
Let $\mathcal{P}_1,\dots, \mathcal{P}_r,\mathcal{P}_0$ be $3n$-displays over a ring $R$ and $\phi:\mathcal{P}_1\times\dots\times\mathcal{P}_r\to\mathcal{P}_0$ a multilinear morphism of $3n$-displays. 
\begin{itemize}
\item Let $ R\to S $ be a ring homomorphism. We extend $ \phi $ to a multilinear morphism $\phi_S: \CP_{1,S}\times\dots\times\CP_{r,S}\to \CP_{0,S} $ as follows. For all $ w_i\in W(S) $ and all $ x_i\in P_i $ ($ i=1,\dots,r $), we set \[ \phi_S(w_1\otimes x_1,\dots, w_r\otimes x_r):=w_1\dots w_r\otimes\phi(x_1,\dots,x_r)\] and extend $W(S)$-multilinearly to the whole product $ \mathcal{P}_{1,S}\times\dots\times\mathcal{P}_{r,S} $.
\item Given a nilpotent $R$-algebra $ \CN $  we extend  $ \phi $ to a $\widehat{W}(\CN)$-multilinear morphism $\widehat{\phi}:\widehat{P}_1\times\dots\times\widehat{P}_r\to\widehat{P}_0$ as follows. For all $ \omega_i \in \widehat{W}(\CN) $ and all $ x_i\in P_i $ with $ i=1,\dots,r $, we set: $$\widehat{\phi}(\omega_1\otimes x_1,\dots,\omega_r\otimes x_r):=\omega_1\dots\omega_r\otimes\phi(x_1,\dots,x_r) .$$ Now, we extend $ \widehat{\phi} $ multilinearly to the whole product $ \widehat{P}_1\times\dots\times\widehat{P}_r $.
\end{itemize}
\end{cons}

%LEMMA 018
\begin{lem}
\label{lem0 18}
The multilinear morphisms $ \phi_S $ and $ \widehat{\phi} $ constructed above satisfy the $V$-$F$ conditions.
\end{lem}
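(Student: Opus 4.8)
The plan is to check the two required conditions (the $V$-condition and the $F$-condition of Definition \ref{def05}) for $\phi_S$ and for $\widehat\phi$ separately, but to observe first that by Remark \ref{rem0 16} 2) it suffices in each case to verify $W(S)$- (resp.\ $\widehat W(\CN)$-) multilinearity, the fact that the map restricts appropriately to the $Q$-submodules, and the $V$-condition; the $F$-condition then comes for free. Multilinearity is immediate from the defining formulas, since $\phi_S$ and $\widehat\phi$ were defined by prescribing their values on pure tensors and extending multilinearly. So the real content is (a) that $\phi_S$ carries $Q_{1,S}\times\dots\times Q_{r,S}$ into $Q_{0,S}$ and similarly $\widehat\phi$ carries $\widehat Q_1(\CN)\times\dots\times\widehat Q_r(\CN)$ into $\widehat Q_0(\CN)$, and (b) the $V$-condition.

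For step (a), I would use the explicit description of the base-changed $Q$ coming from a normal decomposition: if $P_i=L_i\oplus T_i$ is a normal decomposition with $Q_i=L_i\oplus I_RT_i$, then $Q_{i,S}=L_{i,S}\oplus I_S T_{i,S}$ (Construction \ref{cons016}), and likewise $\widehat Q_i(\CN)=\widehat W(\CN)\otimes_{W(R)}L_i\oplus\widehat I_\CN\otimes_{W(R)}T_i$ (Construction \ref{cons012}). Because $\phi$ restricts to a multilinear map $Q_1\times\dots\times Q_r\to Q_0$, writing each argument in $Q_{i,S}$ as a sum of terms of the form $w\otimes \ell$ with $\ell\in L_i$ and $V^S(w')\otimes t$ with $t\in T_i$ (using $I_S=V^SW(S)$), multilinearity reduces the claim to checking it on such generators, where it follows from $\phi(L_1\times\dots\times L_r)\subseteq Q_0$ and the fact that a factor lying in $I_S$ pushes the whole value into $I_SP_{0,S}\subseteq Q_{0,S}$. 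The argument for $\widehat\phi$ is identical with $W(S)$ replaced by $\widehat W(\CN)$ and $I_S$ by $\widehat I_\CN$.

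For step (b), the $V$-condition, I would argue on generators of the $Q$-modules as above. Take $q_i\in Q_{i,S}$; by multilinearity it is enough to treat $q_i=w_i\otimes y_i$ with $y_i\in Q_i$ and $q_i=V^S(w_i')\otimes x_i$ with $x_i\in P_i$. Using the explicit formulas for $V_S^{-1}$ from Construction \ref{cons016} — namely $V_S^{-1}(w\otimes y)=F^S(w)\otimes V^{-1}(y)$ and $V_S^{-1}(V^S(w)\otimes x)=w\otimes F(x)$ — together with the $V$-condition and $F$-condition satisfied by $\phi$ itself, one computes both sides of $V_S^{-1}\phi_S(q_1,\dots,q_r)=\phi_S(V_S^{-1}q_1,\dots,V_S^{-1}q_r)$ and sees they agree; the bookkeeping is just tracking which arguments are of ``$L$-type'' (contributing an $F^S(w)$ and a $V^{-1}y$) and which are of ``$I_S$-type'' (contributing a $w$ and an $Fx$), and matching this against the corresponding cases of the $F$/$V$-conditions for $\phi$. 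For $\widehat\phi$ the same computation applies verbatim using the description of $V^{-1}$ on $\widehat P_i(\CN)$ in Construction \ref{cons012} (acting as $F\otimes V^{-1}$ on $\widehat W(\CN)\otimes L_i$ and as $V^{-1}\otimes F$ on $\widehat I_\CN\otimes T_i$), plus Remark \ref{rem0 7}.

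I expect the main obstacle to be purely organizational rather than deep: one must be careful that the reduction to generators is legitimate (i.e.\ that the $V$-condition, which is multilinear in the $q_i$ in the appropriate sense once one fixes the split into $L$- and $I$-parts, really is checkable on a generating set), and that the case distinctions in the $F$-condition of $\phi$ line up correctly with the two types of generators after base change. No genuinely new idea is needed beyond the explicit formulas already recorded in Constructions \ref{cons016} and \ref{cons012} and the definition of a multilinear morphism; the proof is a direct verification, which is why I would present it compactly, spelling out one representative case of the $V$-condition computation and leaving the remaining symmetric cases to the reader.
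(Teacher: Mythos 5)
Your proposal is correct and follows essentially the same route as the paper: reduce the $F$-condition to the $V$-condition via the identity $F(w\otimes x)=V^{-1}(V(1)w\otimes x)$ as in Remark \ref{rem0 16}, then use a normal decomposition and multilinearity to check everything on pure tensors of $L$-type and $I_S$-type (resp.\ $\widehat I_\CN$-type). If anything, your plan is slightly more complete than the printed proof, which treats only $\widehat\phi$ and writes out in detail only the verification that the $\widehat Q_i$-product lands in $\widehat Q_{0,\CN}$, leaving the explicit $V$-condition computation (your step (b)) to the reader.
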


%PROOF OF LEMMA 018
\begin{proof}
The proof of the lemma for the two multilinear morphisms are similar and thus, we only prove the lemma for the multilinear morphism $ \widehat{\phi} $. Let $ \CP=(P,Q,F,V^{-1}) $ be a $3n$-display over $R$. Take elements $ w\in \widehat{W}(\CN) $ and $ x\in P $. By construction of $ F $ and $ V $ on $ \widehat{P}_{\CN} $ (cf. Construction \ref{cons012}), we have $$ F(w\otimes x)=F(w)\otimes F(x)= F(w)\otimes V^{-1}(V(1)\cdot x)=$$ $$V^{-1}(w\otimes V(1)\cdot x)=V^{-1}(V(1)w\otimes x).$$ Thus, by the same arguments as in Remark \ref{rem0 16}, it is enough to show that $ \widehat{\phi} $ satisfies the $V$-condition. So, for each $ i\in \lbb1,r\rbb $ take an element $ \hat{q}_i $ in $ \widehat{Q}_{i,\CN} $. As we already know that $ \widehat{\phi} $ is multilinear (by its construction), we can assume that either $ \hat{q_i}\in \widehat{W}(\CN)\otimes L_i$ or $ \hat{q_i}\in \widehat{I}_{\CN}\otimes T_i $, where for each $i$, we have fixed a normal decomposition $ P_i=L_i\oplus T_i $, and that each of $ \hat{q_i} $ is a pure tensor (i.e., of the form $ x\otimes y$). Since the construction of $ \widehat{\phi} $ is symmetric with respect to $i$ and for the sake of simplicity, we can assume that $ \hat{q_j}=w_j\otimes q_j\in \widehat{W}(\CN)\otimes L_j $ for $ 1\leq j\leq s $ and $ \hat{q_j}=V(w_j)\otimes t_j $ for $ s+1\leq j\leq r  $ for some $0\leq s\leq r $. We divide the problem into two cases: when $ s<r $ and when $ s=r $. In the first case, we have: \[ \widehat{\phi}(\hat{q_1},\dots,\hat{q_r})=\widehat{\phi}(w_1\otimes q_1\dots,w_s\otimes q_s,V(w_{s+1})\otimes t_{s+1},\dots,V(w_{r})\otimes t_{r})=\] \[w_1\dots w_sV(w_{s+1})\dots V(w_r)\otimes \phi(q_1,\dots,q_s,t_{s+1},\dots,t_r)=\] \[V\big(F\big(w_1\dots w_s V(w_{s+1})\dots V(w_{r-1})\big)\cdot w_r\big)\otimes \phi(q_1,\dots,q_s,t_{s+1},\dots,t_r).\] The element $ V\big(F\big(w_1\dots w_s V(w_{s+1})\dots V(w_{r-1})\big)\cdot w_r\big) $ being in the ideal $ \widehat{I}_{\CN} $, it follows that $\widehat{\phi}(\hat{q_1},\dots,\hat{q_r})\in \widehat{Q}_{0,\CN} $. In the second case, we have: \[ \widehat{\phi}(\hat{q_1},\dots,\hat{q_r})=\widehat{\phi}(w_1\otimes q_1\dots,w_r\otimes q_r)= w_1\dots w_r\otimes\phi(q_1,\dots,q_r).\] As by assumption $ \phi(q_1,\dots,q_r)\in Q_0$, we conclude again that $\widehat{\phi}(\hat{q_1},\dots,\hat{q_r})\in \widehat{Q}_{0,\CN} $ (note that $ \widehat{Q}_{0,\CN} $ contains elements coming from $ \widehat{W}(\CN)\otimes Q_0 $).
\end{proof}

%CONSTRUCTION 05
\begin{cons}
\label{cons05}
Given displays $\mathcal{P}_1,\dots, \mathcal{P}_r,\mathcal{P}_0$ over a ring $R$, with a $V$-$F$ multilinear morphism \[\phi:\mathcal{P}_1\times\dots\times\mathcal{P}_r\to\mathcal{P}_0,\] we construct for any natural numbers $n$, a map \[\beta_{\phi,n}:BT_{\mathcal{P}_1,n}\times\dots\times BT_{\mathcal{P}_r,n}\to BT_{\mathcal{P}_0,n},\] where $BT_{\mathcal{P}_i,n}$ is the kernel of multiplication by $p^n$ on the $p$-divisible group $BT_{\mathcal{P}_i}$.
Take a nilpotent $R$-algebra $ \CN $ and elements $ [x_i]_n\in BT_{\mathcal{P}_i,n}(\CN) $ and set \[\beta_{\phi,n}([x_1]_n,\dots,[x_r]_n):= (-1)^{r-1}\sum_{i=1}^{r}\big[\widehat{\phi}\big(V^{-1}g_1,\dots,V^{-1}g_{i-1},x_i,g_{i+1},\dots,g_{r}\big)\big], \]where for all $ 1\leq j\leq r $, we have abbreviated $ \leftidx{_n}{g}{_{\CP_{j}}}(x_{j}) $ (from Notations \ref{notation0 2}) to $ g_j $. We show in the next lemma that this is a well-defined multilinear morphism.
\end{cons}

%REMARK 022
\begin{rem}
\label{rem022}
Note that if $r=1$, then $ \beta_{\phi,n}:BT_{\CP_1,n}\to BT_{\CP_0,n} $ is the restriction of the homomorphism $ BT_{\phi}:BT_{\CP_1}\to BT_{\CP_0} $ (using the functoriality of $ BT $) to $ BT_{\CP_1,n} $.
\end{rem}

%PROPOSITION 016
\begin{prop}
\label{prop0 16}
The maps $\beta_{\phi,n}:BT_{\mathcal{P}_1,n}\times\dots\times BT_{\mathcal{P}_r,n}\to BT_{\mathcal{P}_0,n}$ satisfy the following properties:
\begin{itemize}
 \item[(i)] $\beta_{\phi,n}$ are well-defined multilinear morphisms.
% \item[(ii)] $ p^n\beta_{\phi,n}=0 $ and therefore $ \beta_{\phi,n} $ is multilinear morphism $$BT_{\mathcal{P}_1,n}\times\dots\times BT_{\mathcal{P}_r,n}\to BT_{\mathcal{P}_0,n}. $$
 \item[(ii)] $\beta_{\phi,n}$ are compatible with respect to projections $p.:BT_{\mathcal{P}_i,n+1}\twoheadrightarrow BT_{\mathcal{P}_i,n}$.
 \item[(iii)] If the $3n$-displays $ \CP_1\dots,\CP_r $ are equal, then if $\phi$ is symmetric, antisymmetric or alternating, then $\beta_{\phi,n}$ has the same property. 
 \item[(iv)] The construction of $ \beta_{\phi,n} $ commutes with base change, i.e., if $ R\to S $ is a ring homomorphism, then $ (\beta_{\phi,n})_S $ and $ \beta_{\phi_S,n} $ are equal as multilinear morphisms $ BT_{\mathcal{P}_{1_S},n}\times\dots\times BT_{\mathcal{P}_{r_S},n}\to BT_{\mathcal{P}_{0_S},n} $, using the identification $ BT_{\CP_{i_S}}\cong (BT_{\CP_i})_S $.
 \end{itemize}
\end{prop}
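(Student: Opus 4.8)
The plan is to establish (i)--(iv) in that order, the substance being concentrated in (i); (ii)--(iv) then follow from short comparisons of the defining formula. Fix a nilpotent $R$-algebra $\CN$; for $[x_i]_n\in BT_{\CP_i,n}(\CN)$ write $x_i\in G^{0}_{\CP_i}(\CN)=\widehat W(\CN)\otimes_{W(R)}P_i$ for a chosen lift and $g_i:=\leftidx{_n}{g}{_{\CP_i}}(x_i)\in G^{-1}_{\CP_i}(\CN)=\widehat Q_i(\CN)$ for the unique element with $(V^{-1}-\Id)g_i=p^nx_i$ (Notation \ref{notation0 2}). The only manipulations used below are that $p$ acts on $\widehat P_i$ through $p\in W(R)$, so $\widehat W(\CN)$-multilinearity of $\widehat\phi$ lets one move the scalar $p^n$ between slots, and that inside $\widehat\phi$ one may replace $p^nx_i$ by $V^{-1}g_i-g_i$; this produces the telescoping sums that do all the work.

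For (i), first one checks that $\beta_{\phi,n}([x_1]_n,\dots,[x_r]_n)$ lies in $BT_{\CP_0,n}(\CN)$, i.e. is killed by $p^n$. Set $a_i:=\widehat\phi(V^{-1}g_1,\dots,V^{-1}g_{i-1},x_i,g_{i+1},\dots,g_r)$ and $c_i:=\widehat\phi(V^{-1}g_1,\dots,V^{-1}g_i,g_{i+1},\dots,g_r)$ for $0\le i\le r$. Replacing $p^nx_i$ by $V^{-1}g_i-g_i$ gives $p^na_i=c_i-c_{i-1}$, hence $p^n\sum_{i=1}^r a_i=c_r-c_0=\widehat\phi(V^{-1}g_1,\dots,V^{-1}g_r)-\widehat\phi(g_1,\dots,g_r)$. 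By Lemma \ref{lem0 18} the map $\widehat\phi$ carries $\widehat Q_1\times\dots\times\widehat Q_r$ into $\widehat Q_0$ and satisfies the $V$-condition, so this equals $(V^{-1}-\Id)\widehat\phi(g_1,\dots,g_r)$ with $\widehat\phi(g_1,\dots,g_r)\in G^{-1}_{\CP_0}(\CN)$; thus $p^n\beta_{\phi,n}=0$ in $BT_{\CP_0,n}(\CN)$. Independence of the chosen lift: replacing $x_k$ by $x_k+(V^{-1}-\Id)h$ with $h\in G^{-1}_{\CP_k}(\CN)$ changes $g_k$ to $g_k+p^nh$, and the same telescoping (keep $h$ in slot $k$, move $p^n$ out of slot $i$ via $p^nx_i=(V^{-1}-\Id)g_i$) shows $\sum_i a_i$ changes by exactly $(V^{-1}-\Id)\widehat\phi(g_1,\dots,g_{k-1},h,g_{k+1},\dots,g_r)$, a coboundary, so $\beta_{\phi,n}$ is well defined on $BT_{\CP_1,n}(\CN)\times\dots\times BT_{\CP_r,n}(\CN)$. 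Functoriality in $\CN$ is clear since $\widehat W$, $\widehat\phi$ and $x_i\mapsto g_i$ are functorial, and additivity in each slot follows since $x_i\mapsto g_i$ is additive (uniqueness plus linearity of $V^{-1}-\Id$) and $\widehat\phi$ is multilinear; hence $\beta_{\phi,n}$ is a multilinear morphism.

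For (ii): if $[x_i]_{n+1}\in BT_{\CP_i,n+1}(\CN)$ with $g_i=\leftidx{_{n+1}}{g}{_{\CP_i}}(x_i)$, then $p[x_i]_{n+1}$ is represented by $px_i$ and, since $p^n(px_i)=p^{n+1}x_i=(V^{-1}-\Id)g_i$, one has $\leftidx{_n}{g}{_{\CP_i}}(px_i)=g_i$; feeding this into the defining formula and moving $p$ into the $i$-th slot gives $\beta_{\phi,n}(p[x_1]_{n+1},\dots,p[x_r]_{n+1})=p\,\beta_{\phi,n+1}([x_1]_{n+1},\dots,[x_r]_{n+1})$, which is precisely compatibility with the projections $p\colon BT_{\CP_i,n+1}\twoheadrightarrow BT_{\CP_i,n}$. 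For (iii), assume $\CP_1=\dots=\CP_r=\CP$; then $\phi$ symmetric (resp. antisymmetric, resp. alternating) makes $\widehat\phi$ have the same property (for "alternating" one pairs the cross terms, using that the Witt coefficients commute). Using lift-independence one may take identical lifts for coinciding inputs; comparing $\beta_{\phi,n}$ with its value after the adjacent transposition $(k,k{+}1)$ of the inputs, all terms $a_i$ with $i\notin\{k,k{+}1\}$ are unchanged up to the sign coming from $\widehat\phi$, while the two middle terms differ, after using $(V^{-1}-\Id)g=p^nx$ and the (anti)symmetry of $\widehat\phi$, by $\pm p^n\widehat\phi(\dots,x,x,\dots)$ in slots $k,k{+}1$ --- this cancels in the symmetric and antisymmetric cases and vanishes in the alternating case. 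Since adjacent transpositions generate $S_r$ (and in the alternating case $\beta_{\phi,n}$ is already antisymmetric, reducing coinciding inputs to adjacent ones), this gives (iii). For (iv): for $R\to S$ and a nilpotent $S$-algebra $\CN$ there are canonical identifications $G^{0}_{\CP_{i_S}}(\CN)=\widehat W(\CN)\otimes_{W(S)}P_{i,S}=\widehat W(\CN)\otimes_{W(R)}P_i=G^{0}_{\CP_i}(\CN)$ compatible with $F$ and $V^{-1}$, hence with $V^{-1}-\Id$ and with $x_i\mapsto g_i$; under these, Construction \ref{cons013} identifies $\widehat{\phi_S}$ with $\widehat\phi$ and Proposition \ref{prop0 20} identifies $BT_{\CP_{i_S}}$ with $(BT_{\CP_i})_S$ compatibly, so the defining formulas for $(\beta_{\phi,n})_S$ and $\beta_{\phi_S,n}$ agree term by term.

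The main obstacle is purely organizational: parts (i) and (iii) all rest on one telescoping identity, and the difficulty is choosing notation for the mixed slot patterns ($V^{-1}g$ in early slots, an $x$ in the running slot, $g$ in late slots) so that the sums visibly collapse and the leftover is recognizably $(V^{-1}-\Id)$ of a value of $\widehat\phi$ on $Q$-arguments, rather than a thicket of indices. No input beyond Lemma \ref{lem0 18} (the $V$-$F$ behaviour of $\widehat\phi$), the exactness of $0\to G^{-1}_{\CP}(\CN)\xrightarrow{V^{-1}-\Id}G^{0}_{\CP}(\CN)\to BT_{\CP}(\CN)\to 0$, and the standing hypothesis $p\neq 2$ (needed only if one prefers to deduce the alternating case of (iii) from the antisymmetric one) is required.
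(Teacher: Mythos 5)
Your proof is correct and follows essentially the same route as the paper: the same telescoping identity $\widehat\phi(\dots,(V^{-1}-\Id)g_i,\dots)=c_i-c_{i-1}$ drives well-definedness, the reduction of (iii) to adjacent transpositions with the two cross terms collapsing to $p^n\widehat\phi(\dots,x,x,\dots)$ is the paper's argument, and (ii) and (iv) are handled identically. The only (harmless) difference is organizational: you obtain $p^n$-torsion directly from the telescope, whereas the paper deduces it from multilinearity together with independence of representatives.
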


%PROOF OF PROPOSITION 016
\begin{proof}
We fix a nilpotent $R$-algebra $\CN$.
\begin{itemize}
 \item[(i)] For each $ 1\leq i\leq r $, take elements $ [x_i]_n\in BT_{\mathcal{P}_i,n}(\CN) $. If we show that the element $ \beta_{\phi,n}([x_1]_n,\dots,[x_r]_n) $ does not depend on the representatives $ x_i $ of the class $ [x_i] $ and that the map $ \beta_{\phi,n} $ is multilinear, then it follows that $ \beta_{\phi,n}([x_1]_n,\dots,[x_r]_n) $ lies in the kernel of multiplication by $ p^n $ (note that $ p^n[x_i]=0 $). By multilinearity of $ \widehat{\phi} $, in order to show the independence of $ \beta_{\phi,n}([x_1]_n,\dots,[x_r]_n) $ from the choices of the elements $ x_i $, it is sufficient to show that if one $ x_j $ is in the subgroup $ (V^{-1}-\Id)G_{\CP_j}^{-1} $ of $ G_{\CP_j}^0 $, then the element $ \beta_{\phi,n}([x_1]_n,\dots,[x_r]_n) $ is in the subgroup $ (V^{-1}-\Id)G_{\CP_0}^{-1} $ of $ G_{\CP_0}^0 $. So, assume that $ x_j=(V^{-1}-\Id)(z_j) $ for some $ z_j\in G_{\CP_j}^{-1} $ and for every $i$, set $ g_i:=\leftidx{_n}{g}{_{\CP_j}}(x_i) $. We then have \[(V^{-1}-\Id)(g_j)=p^nx_j=p^n(V^{-1}-\Id)(z_j)=(V^{-1}-\Id)(p^nz_j) \] and since $ V^{-1}-\Id $ is injective, it implies that $ g_j=p^nz_j $. Set also:
 \begin{itemize}
 \item[$\bullet$] $ A:=(-1)^{r-1}\sum_{i=1}^{j-1}\widehat{\phi}(V^{-1}g_1,\dots,V^{-1}g_{i-1},x_i,g_{i+1},\dots,g_{r}) $,
 \item[$\bullet$] $ B:= (-1)^{r-1}\widehat{\phi}(V^{-1}g_1,\dots,V^{-1}g_{j-1},x_j,g_{j+1},\dots,g_{r})$ and
 \item[$\bullet$] $ C:=(-1)^{r-1}\sum_{i=j+1}^{r}\widehat{\phi}(V^{-1}g_1,\dots,V^{-1}g_{i-1},x_i,g_{i+1},\dots,g_{r})$.
 \end{itemize}
 We then have $ \beta_{\phi,n}([x_1]_n,\dots,[x_r]_n)=[A+B+C] $. If we develop each of the terms separately, by replacing $ x_j $ and respectively $ g_j $  with $ (V^{-1}-\Id)(z_j) $ and $ p^nz_j $, we obtain:
 \begin{itemize}
 \item[$\bullet$] $ A=(-1)^{r-1}\sum_{i=1}^{j-1}\widehat{\phi}(V^{-1}g_1,\dots,V^{-1}g_{i-1},x_i,g_{i+1},\dots,\uset{\uparrow}{p^nz_j},\dots,g_{r}) $,\\ where the vertical arrow below  $ p^nz_j $ is to emphasize that only the term at the $ j^{\text{th}} $ place does not follow the pattern of the sequence $ g_{i+1},\dots,g_r $. We will use this convention for the other sums too, in order to avoid heavy notations. By multilinearity, we can pass the coefficient $ p^n $ of $ z_j $ to $ x_i $ and get \[ (-1)^{r-1}\sum_{i=1}^{j-1}\widehat{\phi}(V^{-1}g_1,\dots,V^{-1}g_{i-1},p^nx_i,g_{i+1},\dots,\uset{\uparrow}{z_j},\dots,g_{r}).\] Now, $ p^nx_i $ is equal to $ (V^{-1}-\Id)(g_i) $ and therefore $A$ becomes: \[ (-1)^{r-1}\sum_{i=1}^{j-1}\widehat{\phi}(V^{-1}g_1,\dots,V^{-1}g_{i-1},(V^{-1}-\Id)g_i,g_{i+1},\dots,\uset{\uparrow}{z_j},\dots,g_{r})\] \[=(-1)^{r-1}\sum_{i=1}^{j-1}\widehat{\phi}(V^{-1}g_1,\dots,V^{-1}g_{i-1},V^{-1}g_i,g_{i+1},\dots,\uset{\uparrow}{z_j},\dots,g_{r})- \] \[ (-1)^{r-1}\sum_{i=1}^{j-1}\widehat{\phi}(V^{-1}g_1,\dots,V^{-1}g_{i-1},g_i,g_{i+1},\dots,\uset{\uparrow}{z_j},\dots,g_{r}).\] All but one term in the two sums cancel out (which becomes clear with an index shift in the first sum) and hence, we obtain \[A= (-1)^{r-1}\widehat{\phi}(V^{-1}g_1,\dots,V^{-1}g_{j-1},z_j,g_{j+1},\dots,g_{r})-\]  
 \begin{myequation}
 \label{a}
  (-1)^{r-1}\widehat{\phi}(g_1,\dots,g_{j-1},z_j,g_{j+1},\dots,g_{r}).
 \end{myequation}
 \item[$\bullet$] $ B= (-1)^{r-1}\widehat{\phi}(V^{-1}g_1,\dots,V^{-1}g_{j-1},(V^{-1}-\Id)z_j,g_{j+1},\dots,g_{r})=$ \[ (-1)^{r-1}\widehat{\phi}(V^{-1}g_1,\dots,V^{-1}g_{j-1},V^{-1}z_j,g_{j+1},\dots,g_{r})-\]
 \begin{myequation}
 \label{b}
 (-1)^{r-1}\widehat{\phi}(V^{-1}g_1,\dots,V^{-1}g_{j-1},z_j,g_{j+1},\dots,g_{r}).
 \end{myequation}
 \item[$\bullet$] Finally, performing the same calculations and using the similar arguments as for $A$, we obtain \[C=(-1)^{r-1}\widehat{\phi}(V^{-1}g_1,\dots,V^{-1}g_{j-1},V^{-1}z_j,V^{-1}g_{j+1},\dots,V^{-1}g_{r})-\]
 \begin{myequation}
 \label{c}
 (-1)^{r-1}\widehat{\phi}(V^{-1}g_1,\dots,V^{-1}g_{j-1},V^{-1}z_j,g_{j+1},\dots,g_{r}).
 \end{myequation}
 \end{itemize}
 Now, adding up $A$, $B$ and $C$ and use equations \eqref{a}, \eqref{b} and \eqref{c}, we observe that four terms of the six terms cancel out and we obtain \[  \beta_{\phi,n}([x_1]_n,\dots,[x_r]_n)=[A+B+C]=\]\[[(-1)^{r-1}\widehat{\phi}(V^{-1}g_1,\dots,V^{-1}g_{j-1},V^{-1}z_j,V^{-1}g_{j+1},\dots,V^{-1}g_{r})-\]\[ (-1)^{r-1}\widehat{\phi}(g_1,\dots,g_{j-1},z_j,g_{j+1},\dots,g_{r})]=\] \[ [(V^{-1}-\Id)\big((-1)^{r-1}\widehat{\phi}(g_1,\dots,g_{j-1},z_j,g_{j+1},\dots,g_{r})\big)].\] Since the vector $ (g_1,\dots,g_{j-1},z_j,g_{j+1},\dots,g_{r}) $ belongs to $ \widehat{Q}_{1,\CN}\times \dots\times \widehat{Q}_{r,\CN} $ and therefore by Lemma \ref{lem0 18}, $ \widehat{\phi}(g_1,\dots,g_{j-1},z_j,g_{j+1},\dots,g_{r})\big) $ belongs to $ \widehat{Q}_{0,\CN}=G_{\CP_0}^{-1} $, we conclude that $ \beta_{\phi,n}([x_1]_n,\dots,[x_r]_n) $ is the zero element of the quotient $ BT_{\CP_0} $. This proves the independence from the choices of representatives.\\
 
 It remains to prove the multilinearity. Since the map $ V^{-1}-\Id $ is a homomorphism and is injective, and  $ \widehat{\phi} $ is multilinear, a straightforward calculation shows that $ \beta_{\phi,n} $ is multilinear too. This proves part $ (i) $.
 
\item[(ii)] Take elements $ [x_i]_{n+1}\in BT_{\mathcal{P}_i,n}(\CN) $. If we set $ g_i:=\leftidx{_{n+1}}{g}{_{\CP_i}}(x_{i+1}) $, we have \[ p^n(p^nx_i)=p^{n+1}x_i=(V^{-1}-\Id)g_i \] and therefore $ \leftidx{_{n}}{g}{_{\CP_i}}(x_i)=g_i $. Thus, we have $\beta_{\phi,n}([px_1]_n,\dots,[px_r]_n)=$ \[ (-1)^{r-1}\sum_{i=1}^r\big[\widehat{\phi}(V^{-1}g_1,\dots,V^{-1}g_{i-1},px_i,g_{i+1},\dots,g_r)\big]= \] \[ p(-1)^{r-1}\sum_{i=1}^r\big[\widehat{\phi}(V^{-1}g_1,\dots,V^{-1}g_{i-1},x_i,g_{i+1},\dots,g_r)\big]=\]\[ p\beta_{\phi,n+1}([x_1]_{n+1},\dots,[x_r]_{n+1}) \] where we have used the multilinearity of $ \widehat{\phi} $ for the second equality. This proves part $ (ii) $.

\item[(iii)] Denote by $ \CP $ the equal $3n$-displays $ \CP_1,\dots,\CP_r $. Let $ \sigma\in S_n $ be a permutation of $n$ elements and define a new map $ \psi:\CP^r\to \CP_0 $ by setting $$ \psi(a_1,\dots, a_r):=\epsilon\cdot \sgn(\sigma)\phi(a_{\sigma(1)},\dots, a_{\sigma(r)}), $$ where $ \epsilon\in \{1,-1\} $ is a fixed sign. Since $ \phi $ is a multilinear morphism of $3n$-displays, it follows from the definition that the new map $\psi$ is also a multilinear morphism of $3n$-displays (i.e., multilinear satisfying the $V$-$F$ conditions). We claim that for any natural number $n$, any $ 1\leq i\leq r $, any $ [x_i]\in BT_{\CP,n}(\CN) $ and any permutation $\sigma\in S_n  $ we have \[ \beta_{\psi,n}([x_1],\dots,[x_r])=\epsilon\cdot \sgn(\sigma)\beta_{\phi,n}([x_{\sigma(1)}],\dots,[x_{\sigma(r)}]).\] If we have this result, it follows at once that if $ \phi $ is symmetric (respectively antisymmetric), then $ \beta_{\phi,n} $ is symmetric (respectively antisymmetric). We prove the claim and then show the statement about the alternating morphism. In order to prove the claim, it suffices to assume that $ \sigma $ is a transposition of the form $ (t,t+1) $ with $ t\in\lbb1,r-1\rbb $ (because they generate the group $S_n$). Again, we set $ g_i:=\leftidx{_n}{g}{_{\CP}}(x_i) $. We then have \[\beta_{\psi,n}([x_1],\dots,[x_r])=(-1)^{r-1}\sum_{i=1}^r\big[\widehat{\psi}(V^{-1}g_1,\dots,V^{-1}g_{i-1},x_i,g_{i+1},\dots,g_r)\big]\] \[=(-1)^{r-1}\sum_{i=1}^{t-1}\big[\widehat{\psi}(V^{-1}g_1,\dots,V^{-1}g_{i-1},x_i,g_{i+1},\dots,g_r)+\] \[ (-1)^{r-1}\widehat{\psi}(V^{-1}g_1,\dots,V^{-1}g_{t-1},x_t,g_{t+1},\dots,g_r)+ \] \[  (-1)^{r-1}\widehat{\psi}(V^{-1}g_1,\dots,V^{-1}g_{t},x_{t+1},g_{t+2},\dots,g_r)+ \]  \[ (-1)^{r-1}\sum_{i=t+2}^{r}\widehat{\psi}(V^{-1}g_1,\dots,V^{-1}g_{i-1},x_i,g_{i+1},\dots,g_r)\big]= \] \[-\epsilon(-1)^{r-1}\bigg(\sum_{i=1}^{t-1}\big[\widehat{\phi}(V^{-1}g_1,\dots,V^{-1}g_{i-1},x_i,g_{i+1},\dots\]\[\dots,g_{t-1},g_{t+1},g_t,g_{t+2},g_{t+3},\dots,g_r)+\] \[ \widehat{\phi}(V^{-1}g_1,\dots,V^{-1}g_{t-1},g_{t+1},x_t,g_{t+2},g_{t+3},\dots,g_r)+\] \[\widehat{\phi}(V^{-1}g_1,\dots,V^{-1}g_{t-1},x_{t+1},V^{-1}g_t,g_{t+2},g_{t+3},\dots,g_r)+\]  \[\sum_{i=t+2}^{r}\widehat{\phi}(V^{-1}g_1,\dots,V^{-1}g_{t-1},V^{-1}g_{t+1},V^{-1}g_{t},V^{-1}g_{t+2},V^{-1}g_{t+3},\dots \]\[\dots,V^{-1}g_{i-1},x_i,g_{i+1},\dots,g_r)\big]_n\bigg).\] Now, we calculate $$ -\epsilon\beta_{\phi,n}([x_1],\dots,[x_{t-1}],[x_{t+1}],[x_t],[x_{t+2}],[x_{t+3}],\dots,[x_r]).$$ This is equal to  \[-\epsilon(-1)^{r-1}\bigg(\sum_{i=1}^{t-1}\big[\widehat{\phi}(V^{-1}g_1,\dots,V^{-1}g_{i-1},x_i,g_{i+1},\dots\]\[\dots,g_{t-1},g_{t+1},g_t,g_{t+2},g_{t+3},\dots,g_r)+\] \[\widehat{\phi}(V^{-1}g_1,\dots,V^{-1}g_{t-1},x_{t+1},g_t,g_{t+2},g_{t+3},\dots,g_r)\] \[\widehat{\phi}(V^{-1}g_1,\dots,V^{-1}g_{t-1},V^{-1}g_{t+1},x_t,g_{t+2},g_{t+3},\dots,g_r)+\] \[\sum_{i=t+2}^{r}\widehat{\phi}(V^{-1}g_1,\dots,V^{-1}g_{t-1},V^{-1}g_{t+1},V^{-1}g_{t},V^{-1}g_{t+2},V^{-1}g_{t+3},\dots \]\[\dots,V^{-1}g_{i-1},x_i,g_{i+1},\dots,g_r)\big]_n\bigg).\] Thus, the difference \[\beta_{\psi,n}([x_1],\dots,[x_r])-\epsilon\sgn(\sigma)\beta_{\phi,n}([x_{\sigma(1)}],\dots,[x_{\sigma(r)}])\] is equal to the following (by using the multilinearity of $ \widehat{\phi} $ and the formulae $ (V^{-1}-\Id)g_t=p^nx_t $ and $ (V^{-1}-\Id)g_{t+1}=p^nx_{t+1} $) \[\epsilon(-1)^{r-1}\big[\widehat{\phi}(V^{-1}g_1,\dots,V^{-1}g_{t-1},V^{-1}g_{t+1}-g_{t+1},x_t,g_{t+2},g_{t+3},\dots,g_r)-\] \[\widehat{\phi}(V^{-1}g_1,\dots,V^{-1}g_{t-1},x_{t+1},V^{-1}g_t-g_t,g_{t+2},g_{t+3},\dots,g_r)\big]_n=\] \[ \epsilon(-1)^{r-1}\big[\widehat{\phi}(V^{-1}g_1,\dots,V^{-1}g_{t-1},p^nx_{t+1},x_t,g_{t+2},g_{t+3},\dots,g_r)-\] \[\widehat{\phi}(V^{-1}g_1,\dots,V^{-1}g_{t-1},x_{t+1},p^nx_t,g_{t+2},g_{t+3},\dots,g_r)\big]_n=\]\[ \epsilon(-1)^{r-1}\big[p^n\widehat{\phi}(V^{-1}g_1,\dots,V^{-1}g_{t-1},x_{t+1},x_t,g_{t+2},g_{t+3},\dots,g_r)-\] \[p^n\widehat{\phi}(V^{-1}g_1,\dots,V^{-1}g_{t-1},x_{t+1},x_t,g_{t+2},g_{t+3},\dots,g_r)\big]_n=0.\]

Now, assume that $ \phi $ is alternating. It is therefore also antisymmetric. We have to show that if two components of the vector $ \vec{[x]}:=([x_1],\dots,[x_r])\in BT_{\CP,n}(\CN)^r $ are equal, then $ \beta_{\phi,n}(\vec{[x]})=0 $. Since by the first part, we know that $ \beta_{\phi,n} $ is antisymmetric, without loss of generality, we can assume that the first two components of $ \vec{[x]} $ are equal. Note also that $ \phi $ being alternating, the extended multilinear morphism $ \widehat{\phi} $ is alternating as well. We have \[\beta_{\phi,n}([x_1],[x_1],[x_3],\dots,[x_r])=(-1)^{r-1}\big[\widehat{\phi}(x_1,g_1,g_3,g_4,\dots,g_r)+\] \[\widehat{\phi}(V^{-1}g_1,x_1,g_3,g_4,\dots,g_r)+\]\[\sum_{i=3}^r\widehat{\phi}(V^{-1}g_1,V^{-1}g_1,V^{-1}g_3,V^{-1}g_4,\dots,V^{-1}g_{i-1},x_i,g_{i+1},g_r)\big]_n,\] where as before $  g_i=\leftidx{_n}{g}{_{\CP}}(x_i) $. The last sum is zero, because $ \widehat{\phi} $ is alternating and if we use the fact that $ \widehat{\phi} $ is antisymmetric, the sum of the first two terms will be equal to \[(-1)^{r-1}\big[\widehat{\phi}(V^{-1}g_1,x_1,g_3,g_4,\dots,g_r)-\widehat{\phi}(g_1,x_1,g_3,g_4,\dots,g_r)\big]_n=\] \[(-1)^{r-1}\big[\widehat{\phi}(V^{-1}g_1-g_1,x_1,g_3,g_4,\dots,g_r)\big]_n=\] \[(-1)^{r-1}\big[\widehat{\phi}(p^nx_1,x_1,g_3,g_4,\dots,g_r)\big]_n\]\[=(-1)^{r-1}\big[p^n\widehat{\phi}(x_1,x_1,g_3,g_4,\dots,g_r)\big]_n,\] which is zero, since $ \widehat{\phi} $ is alternating.

\item[(iv)] This follows from the fact that for every nilpotent $S$-algebra $ \CM $, the two groups $ G_{\CP}^0(\CM)=\widehat{W}(\CM)\otimes_{W(R)}P  $ and $ G_{\CP_S}^0(\CM)=\widehat{W}(\CM)\otimes_{W(S)}P_S $ are canonically isomorphism and this isomorphism induces an isomorphism between the subgroups $ G_{\CP}^{-1}(\CM) $ and $ G_{\CP_S}^{-1}(\CM) $, and the canonical isomorphism $ (BT_{\CP})_S\cong BT_{\CP_S}$.
\end{itemize}
\end{proof}

As a direct consequence of this Proposition, we obtain the following Corollary.

%COROLLARY BETA
\begin{cor}
\label{cor beta}
The construction of $ \beta $ yields homomorphisms \[ \beta:\Mult(\CP_1\times\dots\times\CP_r,\CP_0)\to \Mult(BT_{\CP_1}\times\dots\times BT_{\CP_0},BT_{\CP_0}), \] \[ \Sym(\CP_1^r,\CP_0)\to \Sym(BT_{\CP_1}^r,BT_{\CP_0})\]  and \[ \Alt(\CP_1^r,\CP_0)\to \Alt(BT_{\CP_1}^r,BT_{\CP_0}).\]
\end{cor}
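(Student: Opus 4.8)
The statement to prove is Corollary~\ref{cor beta}: the construction $ \phi\mapsto \beta_\phi:=(\beta_{\phi,n})_n $ yields homomorphisms on the groups of multilinear, symmetric, and alternating morphisms of $3n$-displays.

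\textbf{Overall approach.} Essentially everything has already been done in Proposition~\ref{prop0 16}. The plan is to assemble its pieces into the required statement, which amounts to two things: first, explaining why $ \beta_\phi $ is a well-defined morphism of $p$-divisible groups (not merely a system of multilinear morphisms of the finite levels $ BT_{\CP_i,n} $), and second, verifying that $ \phi\mapsto\beta_\phi $ respects the additive structure and the symmetry/alternating conditions.

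\textbf{Key steps, in order.} First I would recall from Definition~\ref{def08} (the notion of multilinear morphism of $p$-divisible groups, compatible with the projections $ p.:G_{i,n+1}\onto G_{i,n} $) that to give an element of $ \Mult(BT_{\CP_1}\times\dots\times BT_{\CP_r},BT_{\CP_0}) $ is precisely to give a compatible system $ (\psi_n)_n $ of multilinear morphisms $ \psi_n:BT_{\CP_1,n}\times\dots\times BT_{\CP_r,n}\to BT_{\CP_0,n} $. Proposition~\ref{prop0 16}(i) says each $ \beta_{\phi,n} $ is a well-defined multilinear morphism, and Proposition~\ref{prop0 16}(ii) says the $ \beta_{\phi,n} $ are compatible with the projections $ p. $; together these give a well-defined element $ \beta_\phi\in\Mult(BT_{\CP_1}\times\dots\times BT_{\CP_r},BT_{\CP_0}) $. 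Second, I would check additivity: if $ \phi,\phi' $ are multilinear morphisms of $3n$-displays, then $ \widehat{\phi+\phi'}=\widehat\phi+\widehat\phi' $ by the (evidently additive) construction of the extension in Construction~\ref{cons013}, and since the elements $ g_i=\leftidx{_n}{g}{_{\CP_i}}(x_i) $ depend only on $ x_i $ and not on $ \phi $, the defining formula for $ \beta_{\phi,n} $ in Construction~\ref{cons05} is additive in $ \phi $; hence $ \beta_{\phi+\phi',n}=\beta_{\phi,n}+\beta_{\phi',n} $ for all $n$, i.e.\ $ \beta_{\phi+\phi'}=\beta_\phi+\beta_{\phi'} $. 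This proves $ \beta $ is a group homomorphism $ \Mult(\CP_1\times\dots\times\CP_r,\CP_0)\to \Mult(BT_{\CP_1}\times\dots\times BT_{\CP_r},BT_{\CP_0}) $. Third, for the symmetric and alternating cases: when $ \CP_1=\dots=\CP_r=\CP $ and $ \phi $ is symmetric (resp.\ alternating), Proposition~\ref{prop0 16}(iii) shows each $ \beta_{\phi,n} $ is symmetric (resp.\ alternating), so by the level-wise definition of symmetric and alternating morphisms of $p$-divisible groups (Definition~\ref{def08}) the assembled $ \beta_\phi $ lies in $ \Sym(BT_\CP^r,BT_\CP) $ (resp.\ $ \Alt(BT_\CP^r,BT_\CP) $). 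Restricting the homomorphism $ \beta $ to the subgroups $ \Sym(\CP^r,\CP_0)\subseteq\Mult(\CP^r,\CP_0) $ and $ \Alt(\CP^r,\CP_0)\subseteq\Mult(\CP^r,\CP_0) $ then gives the remaining two homomorphisms. (One may note additionally that base change compatibility, Proposition~\ref{prop0 16}(iv), is recorded here for later use but is not needed for the homomorphism statement itself.)

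\textbf{Expected main obstacle.} There is no serious obstacle: the corollary is a bookkeeping consequence of Proposition~\ref{prop0 16}. The only point requiring a little care is the identification of $ \Mult(BT_{\CP_1}\times\dots\times BT_{\CP_r},BT_{\CP_0}) $ with compatible systems over the finite levels --- one must be sure that the $ BT_{\CP_i} $ are genuinely $p$-divisible groups (true since the $ \CP_i $ are assumed nilpotent displays, cf.\ Proposition~\ref{prop0 20}) so that Definition~\ref{def08} applies and the inverse-limit description of multilinear/symmetric/alternating morphisms is the correct one. Given that, the proof is a two-line citation of the four parts of Proposition~\ref{prop0 16} together with the evident additivity of the formula defining $ \beta_{\phi,n} $.
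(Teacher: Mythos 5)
Your proposal is correct and follows exactly the route the paper intends: the paper gives no written proof, stating only that the corollary is "a direct consequence" of Proposition \ref{prop0 16}, and your assembly of parts (i)--(iii) of that proposition together with the evident additivity of the defining formula in $\phi$ is precisely that bookkeeping. The only cosmetic remark is that the level-wise description of multilinear, symmetric and alternating morphisms of $p$-divisible groups that you invoke is the $\CO=\BZ_p$ case of Definition \ref{def41} (the $p$-divisible-group version, Definition \ref{def08}, is commented out in the source), but this changes nothing in the argument.
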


\begin{comment}
%REM ON MULT MORPH OF DISPLAYS
\begin{rem}
\label{rem on mult displays}
It is natural to ask whether the morphisms in the above Corollary are isomorphisms. We will show later (cf. Proposition \ref{mult display}) that these morphisms are all injective.
\end{rem}
\end{comment}

\begin{Ques}
\label{question on mult displays}
Are the morphisms $ \beta $ in the Corollary \ref{cor beta} isomorphisms?
\end{Ques}

%\subsection{Exterior powers}
\section{Exterior powers}

%CONSTRUCTION 03
\begin{cons}
\label{cons03}
Let $\CP=(P,Q,F,V^{-1})$ be a $3n$-display with tangent module of rank one. We want to define a new $3n$-display denoted by $ \ep^r\CP=(\Lambda^rP,\Lambda^rQ,\Lambda^rF,\Lambda^rV^{-1})$. Fix a normal decomposition $$ P=L\oplus T \quad\text{and}\quad Q=L\oplus I_RT. $$ Although we use a normal decomposition for the construction, we will show in the next lemma, that this construction is in fact independent from the choice of a normal decomposition.
\begin{itemize}
\item Define $\Lambda^rP  $ to be the exterior power of $P$, $ \ep^rP$, over the ring $ W(R) $.
\item Define $ \Lambda^rQ $ to be the image of the homomorphism $ \ep^rQ\arrover{\ep^r\iota}\ep^rP $, where $ \iota:Q\into P $ is the inclusion.
\item Since by assumption, $ T $ is projective of rank one, we have $$ \Lambda^rP\cong \ep^rL\oplus\ep^{r-1}L\otimes T  $$ and $$ \Lambda^rQ\cong \ep^rL\oplus\ep^{r-1}L\otimes I_RT.$$ Define $ \Lambda^rF:\Lambda^rP\to \Lambda^rP $ to be $ \ep^{r-1}V^{-1}\wedge F $.
\item Define $ \Lambda^rV^{-1}:\Lambda^rQ\to \Lambda^rP $ to be $ \ep^rV^{-1}:\ep^rQ\to\ep^rP $ restricted to $ \Lambda^rQ $ (note that $ \Lambda^rQ $ is a direct summand of $ \ep^rQ \cong \bigoplus_{i=0}^{r}(\ep^{r-i}L\otimes \ep^iI_RT)$ ).
\end{itemize}
%\begin{comment} 
%ss  $\Phi: \ep^rP\to \ep^rP $ and $ \vartheta^{-1}:\ep^rQ\to \ep^rP $, such that the quadruple $ \ep^r\CP=(\ep^rP,\ep^rQ,\Phi, \vartheta) $ is a $3n$-display. Using the normal decomposition $ P=L\oplus T $ and $ Q=L\oplus I_RT$, we have \[ \ep^rP=\bigoplus_{i=0}^r(\ep^{r-i}L\otimes \ep^iT)\quad\text{and}\quad \ep^rQ=\bigoplus_{i=0}^r(\ep^{r-i}L\otimes \ep^iI_RT).\] Since the rank of the tangent module $ \CT(\CP)$, is at most 1, the rank of $ T$, as a $ W(R)$-module is also at most 1 and therefore the $W(R)$-module $ \ep^iT $ is zero, when $ i>1 $. The same also holds for $ \ep^iI_RT $ and therefore, we obtain
%\begin{myequation}
%\label{normaldec}
%\ep^rP=(\ep^{r-1}L\otimes T)\oplus \ep^rL\quad\text{and}\quad \ep^rQ=(\ep^{r-1}L\otimes I_RT)\oplus \ep^rL.
%\end{myequation}
%As $ L $ and consequently also $ \ep^{r-1}L$ is projective, we deduce from these decompositions that $ \ep^rQ $ is a submodule of $ \ep^rP$. Now, we define $ \vartheta: \ep^rQ\to\ep^rP$ to be $ \ep^rV^{-1} $ and $ \Phi:\ep^rP\to \ep^rP $ to be $ (\ep^{r-1}V^{-1}\otimes F)\oplus (\ep^{r-1}V^{-1}\wedge F)$. A direct calculation shows that the quadruple $(\ep^rP,\ep^rQ,\Phi, \vartheta) $ is a $3n$-display with the normal decomposition given by the equalities (\ref{normaldec}) (note that we have a canonical isomorphism $ \ep^{r-1}L\otimes I_RT\cong I_R(\ep^{r-1}L\otimes T) $). This construction commutes with base change.
%\end{comment}
\end{cons}

%LEMMA 021
\begin{lem}
\label{lem0 21}
The construction of $ \ep^r\CP=(\Lambda^rP,\Lambda^rQ,\Lambda^rF,\Lambda^rV^{-1}) $ does not depend on the choice of a normal decomposition of $P$ and defines a $3n$-display structure. The height and rank of $ \ep^r\CP $ are respectively $ \binom{h}{r} $ and $ \binom{h-1}{r-1} $, where $h$ is the height of $ \CP $. If $ \CP $ is a display, then $ \ep^r\CP $ is a display as well. Furthermore, this construction commutes with the base change.
\end{lem}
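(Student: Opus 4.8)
\textbf{Proof plan for Lemma \ref{lem0 21}.}

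The plan is to verify the five assertions (independence of the normal decomposition, the $3n$-display axioms, the height and rank formulas, preservation of nilpotence, and compatibility with base change) more or less in the order they are stated, since the later ones use the earlier ones.

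First I would address well-definedness and the $3n$-display axioms together, because the natural way to prove independence of the normal decomposition is to give an intrinsic description. The submodule $\Lambda^rQ := \image(\ep^r\iota : \ep^rQ \to \ep^rP)$ is manifestly intrinsic, so the only thing to pin down intrinsically is $\Lambda^rF$ and $\Lambda^rV^{-1}$. For $\Lambda^rV^{-1}$, I would observe that $\ep^rV^{-1} : \ep^rQ \to \ep^rP$ is the $W(R)$-linearization of the alternating multilinear map $(q_1,\dots,q_r)\mapsto V^{-1}q_1 \wedge \dots \wedge V^{-1}q_r$, and that since $V^{-1}:Q\to P$ is $F^R$-linear this descends to a well-defined $F^R$-linear map on the quotient $\Lambda^rQ$ of $\ep^rQ$; this description makes no reference to the decomposition. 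For $\Lambda^rF$ I would use axiom (iii): any $F^R$-linear map on $P$ compatible with $V^{-1}$ via $V^{-1}(V^R(w)x)=wF(x)$ is forced, so it suffices to check that $\ep^{r-1}V^{-1}\wedge F$ (read off the decomposition) satisfies this relation — a direct computation using $F=V^{-1}(V^R(1)\cdot)$ on $P$ and $\ep^r$-functoriality — which simultaneously shows the decomposition-dependent formula agrees with the intrinsic one and gives axiom (iii) for $\ep^r\CP$. Then axiom (i) for $\ep^r\CP$ follows from the displayed splitting $\Lambda^rP \cong \ep^rL \oplus \ep^{r-1}L\otimes T$, $\Lambda^rQ \cong \ep^rL \oplus \ep^{r-1}L\otimes I_RT$ (using that $I_R(\ep^{r-1}L\otimes T) = \ep^{r-1}L\otimes I_RT$ since $T$ is a line bundle and $\ep^{r-1}L$ is projective), so $(\ep^rL, \ep^{r-1}L\otimes T)$ is a normal decomposition of $\ep^r\CP$; and axiom (ii), the surjectivity of the linearization of $\Lambda^rV^{-1}$, follows because $V^{-1}:Q\to P$ being an $F^R$-linear epimorphism means $P$ is generated by the $V^{-1}(y)$, hence $\ep^rP$ is spanned by wedges $V^{-1}y_1\wedge\dots\wedge V^{-1}y_r$, which lie in the image of $(\Lambda^rV^{-1})^\sharp$.

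Next, the height and rank: $\Lambda^rP = \ep^r_{W(R)}P$ with $P$ projective of rank $h$ has rank $\binom{h}{r}$, giving the height; and the tangent module $\Lambda^rP/\Lambda^rQ \cong \ep^{r-1}L\otimes (T/I_RT)$ is projective of rank $\binom{h-1}{r-1}\cdot 1 = \binom{h-1}{r-1}$ (using $\rank_{W(R)} L = h-1$, $\rank_R (T/I_RT)=1$), giving the rank. For base change compatibility, I would use Construction \ref{cons016}: since $\ep^r$ commutes with the extension of scalars $W(R)\to W(S)$, one gets $(\Lambda^rP)_S \cong \Lambda^r(P_S)$, and a normal decomposition $P=L\oplus T$ base-changes to $P_S = L_S\oplus T_S$; then one checks the four pieces of data match, which is routine given the explicit formulas for $F_S, V_S^{-1}$.

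The main obstacle I expect is the nilpotence (display) claim: showing that if $\CP$ satisfies the $V$-nilpotence condition then so does $\ep^r\CP$. Here I would work over $R/pR$ (or $R$ with $pR=0$) and analyze $(\Lambda^rV)^\sharp$. The cleanest route is to relate $V^{N\sharp}_{\ep^r\CP}$ to $\ep^{r}$ of the iterated $V^\sharp$ of $\CP$: since $V^\sharp : P \to W(R)\otimes_{F,W(R)}P$ induces $\ep^rV^\sharp : \ep^rP \to W(R)\otimes_{F,W(R)}\ep^rP$ (using that $\ep^r$ of a base change is the base change of $\ep^r$), one should show $V^{N\sharp}_{\ep^r\CP}$ factors through $\ep^r(V^{N\sharp}_{\CP})$ up to the identification, so that if $V^{N\sharp}_\CP \equiv 0 \pmod{I_R + pW(R)}$ then the same $N$ works — here one must be careful that $\Lambda^rV^{-1}$ is only $\ep^rV^{-1}$ restricted to the summand $\Lambda^rQ$, and that $\Lambda^rF$ mixes $V^{-1}$ and $F$, so the $V^\sharp$ of $\ep^r\CP$ is not literally $\ep^r$ of the $V^\sharp$ of $\CP$; I would track this through the normal decomposition, writing $V^\sharp$ in block form with respect to $L\oplus T$ and checking that the induced map on $\ep^rP = \ep^rL \oplus \ep^{r-1}L\otimes T$ vanishes modulo $I_R+pW(R)$ after $N$ iterations once it does for $\CP$. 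Alternatively, if Theorem \ref{thm02} applies to $R$ one could instead deduce nilpotence a posteriori from $BT_{\ep^r\CP}$ being infinitesimal, but the direct linear-algebra argument is preferable since it needs no hypothesis on $R$.
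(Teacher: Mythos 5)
Your route is essentially the one the paper takes: an intrinsic description of $\Lambda^rV^{-1}$ through $\ep^rV^{-1}$, the normal decomposition $\ep^rL\oplus(\ep^{r-1}L\otimes T)$ for axioms (i)--(iii), height and rank read off from that decomposition, nilpotence via comparing $V^{\sharp}_{\ep^r\CP}$ with $\ep^r(V^{\sharp}_{\CP})$, and base change through the decomposition. But there is one genuine gap, and it sits exactly at the step that carries the content of the independence claim. You assert that $\ep^rV^{-1}:\ep^rQ\to\ep^rP$ ``descends to a well-defined map on the quotient $\Lambda^rQ$ of $\ep^rQ$'' because $V^{-1}$ is $F^R$-linear. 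Semilinearity is irrelevant here: $\Lambda^rQ$ is by definition the image of $\ep^r\iota:\ep^rQ\to\ep^rP$, so to descend you must show that $\ep^rV^{-1}$ kills the kernel of $\ep^r\iota$, and that kernel is not zero in general. With respect to a normal decomposition one has $\ep^rQ\cong\bigoplus_{i=0}^{r}\ep^{r-i}L\otimes\ep^i(I_RT)$ while $\ep^rP\cong\ep^rL\oplus(\ep^{r-1}L\otimes T)$, so the kernel is $N=\bigoplus_{i\geq 2}\ep^{r-i}L\otimes\ep^i(I_RT)$; the summands of $N$ need not vanish, since $I_RT$ need not be projective of rank one. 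The vanishing of $\ep^rV^{-1}$ on $N$ is a real computation: the relation $V^{-1}(V(w)x)=wF(x)$ forces the restriction of $\ep^rV^{-1}$ to $\ep^{r-i}L\otimes\ep^i(I_RT)$ to factor through $\ep^{r-i}V^{-1}\wedge\ep^iF$ defined on $\ep^{r-i}L\otimes\ep^iT$, and $\ep^iT=0$ for $i\geq 2$ because $T$ has rank one. This is precisely where the rank-one hypothesis enters the well-definedness argument, and it is the step your plan replaces with an assertion that does not hold for the reason given.

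One smaller point, in your favour: the caution that $V^{\sharp}_{\ep^r\CP}$ is ``not literally'' $\ep^r(V^{\sharp}_{\CP})$ is unnecessary. Both maps are $W(R)$-linear and, by the defining property $V^{\sharp}(wV^{-1}(y))=w\otimes y$, they agree on all elements $V^{-1}(y_1)\wedge\dots\wedge V^{-1}(y_r)$ with $y_i\in Q$; these generate $\Lambda^rP$ over $W(R)$ because $(V^{-1})^{\sharp}$ is surjective, so the two maps coincide under the canonical identification $W(R)\otimes_{F,W(R)}\ep^rP\cong\ep^r\bigl(W(R)\otimes_{F,W(R)}P\bigr)$. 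Iterating and reducing modulo $I_R+pW(R)$ then gives the nilpotence of $\ep^r\CP$ directly from that of $\CP$, with no block computation. The remaining parts of your plan (the display axioms, the rank and height, and base change) match the paper's proof.
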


%PROOF OF LEMMA 021
\begin{proof}
Assume that we have shown that the morphism $ \Lambda^rV^{-1} $ is independent from the choice of a normal decomposition and that this construction defines a $3n$-display structure. Then, as we know that for any $3n$-display, the morphism $F$ is uniquely determined by the morphism $ V^{-1} $ (cf. Remark \ref{rem0 15}), the morphism $\Lambda^rF$ will be independent from the choice of a normal decomposition as well. So, we prove at first the canonicity of $ \Lambda^rV^{-1} $ and then show that with this construction, we obtain a $3n$-display.\\

Set $ N:= \bigoplus_{i=2}^{r}(\ep^{r-i}L\otimes \ep^iI_RT)$. Since $ \ep^r Q\cong\Lambda^rQ\oplus N$ and since the morphism $ \ep^rV^{-1}:\ep^rQ\to \ep^rP $ is independent from the choice of a normal decomposition, if we show that the restriction of this morphism to the submodule $ N $ is the zero morphism, then it follows that the canonical morphism $ \ep^rV^{-1} $ factors through the quotient $ \ep^rQ\onto \Lambda^rQ $. Thus, the resulting morphism $ \Lambda^rQ\to \Lambda^rP $, which is equal to $ \Lambda^rV^{-1} $, is independent from the choice of a normal decomposition. So, it is enough to show that for every $ i>1 $, the morphism $$ \ep^rV^{-1}:\ep^{r-i}L\otimes \ep^iI_RT\to \ep^rP $$ is trivial.  For any $ w\in W(R) $ and $ x\in P $, we have $ V^{-1}(V(w).x)=wF(x) $. It implies that this morphism factors through the image of the morphism $$ \ep^{r-i}V^{-1}\wedge \ep^iF:\ep ^{r-i}L\otimes \ep^iT\to \ep^rP.$$ The module $ \ep^iT $ being trivial for $ i>1 $, we conclude that the morphism $ \ep^rV^{-1} $ restricted to $ \ep^{r-i}L\otimes \ep^iI_R $ is zero, as desired.\\

As $ P $ is a projective $W(R)$-module, its exterior powers are projective too. We have $ \Lambda^rP= \ep^rL\oplus\ep^{r-1}L\otimes T $ and $  \Lambda^rQ= \ep^rL\oplus\ep^{r-1}L\otimes I_RT $, and since $ I_R(\ep^{r-1}L\otimes T)= \ep^{r-1}L\otimes I_RT$, we conclude that the direct sum \[\ep^rL\oplus\ep^{r-1}L\otimes T\] is a normal decomposition of $ \Lambda^rP $. We have to show that the  morphism $ \Lambda^rV^{-1} $ is an $ F^R $-linear epimorphism. But we know that $ V^{-1}:Q\to P $ is an $ F^R $-linear epimorphism and therefore $ \ep^rV^{-1}:\ep^rQ \to \ep^rP$ is an $ F^R $-linear epimorphism as well. As this morphism factors through the quotient $\ep^rQ \onto\Lambda^rQ  $, the morphism $ \Lambda^rV^{-1} $ is also an $ F^R $-linear epimorphism.\\

Now, we show that the morphism $ \Lambda^rF $ has the right properties, i.e., it is $ F^R $-linear and satisfies the relation $ w\Lambda^rF(x)=\Lambda^rV^{-1}(V(w).x) $ for every $ w\in W(R) $ and every $ x\in\Lambda^rP $. The fact that it is $F^R$-linear follows from its construction and the fact that $ V^{-1} $ and $ F $ are $F^R$-linear. Now take an element $ w\in W(R) $ and $ q_1\wedge\dots\wedge q_{r-1}\otimes t $ in the submodule $ \ep^{r-1}L\otimes T $, we have \[ w.\Lambda^rF(q_1\wedge\dots\wedge q_{r-1}\otimes t)=wV^{-1}q_1\wedge\dots\wedge V^{-1}q_{r-1}\wedge F(t)= \] \[ V^{-1}q_1\wedge\dots\wedge V^{-1}q_{r-1}\wedge wF(t)=V^{-1}q_1\wedge\dots\wedge V^{-1}q_{r-1}\wedge V^{-1}(V(1).x)= \] \[ \Lambda^rV^{-1}(V(1).q_1\wedge\dots\wedge q_{r-1}\otimes t ).\] A similar calculation shows that for any $ w\in W(R) $ and any $ x\in \ep^rL $, we have $ w.\Lambda^rF(x)=\Lambda^rV^{-1}(V(1).x) $, and therefore $ \ep^r\CP=(\Lambda^rP,\Lambda^rQ,\Lambda^rF,\Lambda^rV^{-1}) $ is a $3n$-display.\\

By definition, the height of $ \ep^r\CP $ is the rank of the projective $W(R)$-module $\Lambda^rP$, which is equal to $ \binom{h}{r} $, with $ h $ the rank of $P$. The rank of $ \ep^r\CP $ is equal to the rank of the projective $W(R)$-module $ \ep^{r-1}L\otimes T $, which is equal to $ \binom{h-1}{r-1} $, since $ L $ has rank $h-1$ and $T$ has rank one (cf. Remark \ref{rem0 9}).\\

Since the construction of exterior powers of modules commutes with the base change, a straightforward calculation shows that, under the identification \[W(R)\otimes_{F,W(R)}\Lambda^rP\cong \Lambda^r(W(R)\otimes_{F,W(R)}P),\] the morphism $$ (\Lambda^rV)^{N\sharp}:\Lambda^rP\to W(R)\otimes_{F,W(R)}\Lambda^rP $$ (cf. Construction \ref{cons015}) is equal to the morphism $$ \Lambda^r(V^{N\sharp}): \Lambda^rP\to \Lambda^r(W(R)\otimes_{F,W(R)}P).$$ Again, since $ \ep^r$ commutes with base change, $ \Lambda^r(V^{N\sharp}) $ is the zero module $ I_R+pW(R) $, if $ V^{N\sharp} $ is the zero  modulo $ I_R+pW(R) $. This shows that $ \ep^r\CP $ is a display, if $ \CP $ is a display.\\

Finally, we have to show that if $ R\to S $ is a base extension, then there exists a canonical isomorphism $ \ep^r(\CP_S)\cong (\ep^r\CP)_S $. This is a straightforward calculation. We explain why the pairs $ ((\Lambda^rP)_S,(\Lambda^rQ)_S) $ and $ (\Lambda^r(P_S), \Lambda^r(Q_S)) $ are canonically isomorphic, and leave the verification of the equality of the pairs $  ((\Lambda^rF)_S,(\Lambda^rV^{-1})_S) $ and $ (\Lambda^r(F_S), \Lambda^r(V^{-1}_S)) $ to the reader. By definition, we have \[ (\Lambda^rP)_S=W(S)\otimes_{W(R)}\ep^rP=\ep^r(W(S)\otimes_{W(R)}P)=\Lambda^r(P_S)\]
and using a normal decomposition, we have \[(\Lambda^rQ)_S=(W(S)_{W(R)}\ep^rL)\oplus(I_S\otimes_{W(R)}\ep^{r-1}L\otimes_{W(R)}T)\cong\]\[\ep^r(W(S)\otimes_{W(R)}L)\oplus(I_S\otimes_{W(S)}W(S)\otimes_{W(R)}\ep^{r-1}L\otimes_{W(R)} T)\cong\] \[ \ep^rL_S\oplus (W(S)\otimes_{W(R)}\ep^{r-1}L)\otimes_{W(S)}(I_S\otimes_{W(R)}T)\cong \] \[ \ep^rL_S\oplus (\ep^{r-1}L_S)\otimes_{W(S)}I_S(W(S)\otimes_{W(R)}T)=\] \[\ep^rL_S\oplus (\ep^{r-1}L_S\otimes_{W(S)}I_ST_S)=\Lambda^r(Q_S).\] The above isomorphisms are induced by the canonical isomorphism $ (\ep^rP)_S\cong \ep^r(P_S) $, i.e., this isomorphism restricts to an isomorphism $ (\Lambda^rQ)_S\cong \Lambda^r(Q_S) $. Thus, the latter isomorphism does not depend on the choice of a normal decomposition either.
\end{proof}

%PROPOSITION EXT. POWER OF DISP.
\begin{prop}
\label{prop ext. disp.}
Let $\CP$ be a $3n$-display of rank one over a ring $R$. The map \[ \lambda:P^r\to\ep^rP,\quad  (x_1,\dots,x_r)\mapsto x_1\wedge\dots\wedge x_r \] defines an alternating morphism of $3n$-displays $ \lambda:\CP^r\to\ep^r\CP $ with the following universal property:\\

For every $3n$-display $ \CP' $ over $R$, the homomorphism \[ \Hom(\ep^r\CP,\CP')\to\Alt(\CP^r,\CP')\] induced by $ \lambda $ is an isomorphism.
\end{prop}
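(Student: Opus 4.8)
The plan is to verify directly that $\lambda$ is an alternating morphism of $3n$-displays, and then to establish the universal property by reducing it to the ordinary universal property of the exterior power of modules, keeping careful track of the $V$-$F$ conditions. First I would check that $\lambda\colon P^r\to\ep^rP$ is a morphism of $3n$-displays in the sense of Definition \ref{def05}: it is $W(R)$-multilinear by construction; it restricts to a multilinear morphism $Q^r\to\Lambda^rQ$ because the image of $Q\times\dots\times Q$ under $\lambda$ lands in $\Lambda^rQ$ by the very definition of $\Lambda^rQ$ as the image of $\ep^r\iota$; and it is alternating as a $W(R)$-multilinear morphism since $x\wedge x=0$. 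The $V$-condition, namely $\Lambda^rV^{-1}(q_1\wedge\dots\wedge q_r)=V^{-1}q_1\wedge\dots\wedge V^{-1}q_r$, holds because $\Lambda^rV^{-1}$ was defined precisely as the restriction of $\ep^rV^{-1}$ to $\Lambda^rQ$. By Remark \ref{rem0 16} 2), the $F$-condition then follows automatically from $W(R)$-multilinearity together with the first property and the $V$-condition, so no separate verification is needed.

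For the universal property, fix a $3n$-display $\CP'=(P',Q',F',V'^{-1})$ and an alternating morphism $\phi\colon\CP^r\to\CP'$. The underlying $W(R)$-multilinear alternating morphism $\phi\colon P^r\to P'$ factors uniquely through a $W(R)$-linear map $\bar\phi\colon\ep^rP\to P'$, i.e. $\bar\phi\circ\lambda=\phi$, by the ordinary universal property of the exterior power of $W(R)$-modules. What remains is to show that $\bar\phi$ is in fact a morphism of $3n$-displays, i.e. $\bar\phi(\Lambda^rQ)\subseteq Q'$, $V'^{-1}\circ\bar\phi=\bar\phi\circ\Lambda^rV^{-1}$ on $\Lambda^rQ$, and the $F$-condition. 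The inclusion $\bar\phi(\Lambda^rQ)\subseteq Q'$: since $\Lambda^rQ$ is generated as a $W(R)$-module by elements $q_1\wedge\dots\wedge q_r$ with $q_i\in Q$ (this uses the surjection $\ep^rQ\onto\Lambda^rQ$), and $\bar\phi(q_1\wedge\dots\wedge q_r)=\phi(q_1,\dots,q_r)\in Q'$ because $\phi$ restricts to a morphism $Q^r\to Q'$. The $V$-compatibility: on generators $q_1\wedge\dots\wedge q_r$ we compute $\bar\phi(\Lambda^rV^{-1}(q_1\wedge\dots\wedge q_r))=\bar\phi(V^{-1}q_1\wedge\dots\wedge V^{-1}q_r)=\phi(V^{-1}q_1,\dots,V^{-1}q_r)=V'^{-1}\phi(q_1,\dots,q_r)=V'^{-1}\bar\phi(q_1\wedge\dots\wedge q_r)$, where the middle equality is the $V$-condition for $\phi$; this suffices since such elements generate $\Lambda^rQ$ over $W(R)$ and both sides are $F^R$-linear. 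Again the $F$-condition for $\bar\phi$ follows from Remark \ref{rem0 16} 2). Uniqueness of $\bar\phi$ is inherited from uniqueness at the module level. Conversely, given a morphism of $3n$-displays $\psi\colon\ep^r\CP\to\CP'$, the composite $\psi\circ\lambda$ is alternating, and these two assignments are mutually inverse, so the induced homomorphism $\Hom(\ep^r\CP,\CP')\to\Alt(\CP^r,\CP')$ is an isomorphism of groups.

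The only genuinely delicate point is the passage ``on generators, hence everywhere'' for the $V$-compatibility: one must make sure that $\Lambda^rQ$ really is generated over $W(R)$ by pure wedges $q_1\wedge\dots\wedge q_r$ with all $q_i\in Q$, rather than needing elements of $P$ in some slots. This is exactly guaranteed by the definition of $\Lambda^rQ$ as the image of $\ep^rQ\to\ep^rP$, so the obstacle dissolves once one invokes that definition; the rank and height assertions, and the fact that the whole construction descends the canonical module-level isomorphisms, were already handled in Lemma \ref{lem0 21}. I expect the main bookkeeping effort, such as it is, to be checking that the normal-decomposition description of $\Lambda^rF$ as $\ep^{r-1}V^{-1}\wedge F$ is consistent with whatever $F'$-condition $\bar\phi$ must satisfy, but as noted this is subsumed by the general implication in Remark \ref{rem0 16} 2) and requires no computation.
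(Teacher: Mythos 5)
Your proposal is correct and follows essentially the same route as the paper's proof: factor the alternating morphism through the module-level exterior power, then verify the inclusion $\bar\phi(\Lambda^rQ)\subseteq Q'$ and the $V$-compatibility on pure wedges of elements of $Q$, using that $\Lambda^rQ$ is by definition the image of $\ep^rQ\to\ep^rP$. Your appeal to uniqueness at the module level is in fact slightly cleaner than the paper's remark that $\lambda:P^r\to\ep^rP$ is surjective as a map of sets (which is really only generation by pure wedges), but this is a cosmetic difference.
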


\begin{proof}
It follows from the construction of $ \ep^r\CP=(\Lambda^rP,\Lambda^rQ,\Lambda^rF,\Lambda^rV^{-1}) $ that $ \lambda $ is an alternating morphism of $ 3n $-displays. We therefore only need to show the universal property. Let $ \CP'=(P',Q',F,V^{-1}) $ be a $3n$-display over $R$ and let $ \phi:\CP^r\to\CP' $ be an alternating morphism of $3n$-displays. We ought to show that there exists a unique morphism of $3n$-displays from $\ep^r\CP$ to $ \CP' $, whose composition with $ \lambda $ is $ \phi $. The morphism $ \phi:P^r\to P' $ is an alternating morphism of $R$-modules and the restriction of $ \phi $ to $ Q^r $ is an alternating morphism $ \phi:Q^r\to Q' $. By the universal property of $ \Lambda^rP=\ep^rP $, there exists a unique $R$-modules homomorphism $ \bar{\phi}:\ep^rP\to P' $ such that $ \bar{\phi}\circ \lambda=\phi $ and we claim that this morphism defines a morphism of $3n$-displays from $ \ep^r\CP $ to $ \CP' $. Consider the following diagram: \[ \xymatrix{Q^r\ar[dd]_{\phi}\ar@{->>}[rr]^{\lambda}\ar@{^{(}->}[rd]&&\ep^rQ\ar[rd]&\\&P^r\ar[dd]_{\phi}\ar@{->>}[rr]^{\lambda}&&\ep^rP\ar[ddll]^{\bar{\phi}}\\Q'\ar@{^{(}->}[dr]&&&\\&P'.&&} \] From what we said above and the definition, this diagram commutes. We want to show that the image of $ \Lambda^rQ $ under $ \bar{\phi} $ lies inside $ Q' $. Since by construction, $ \Lambda^rQ $ is the image of the morphism $ \ep^rQ\to \ep^rP $, and since the morphism $ \lambda:Q^r\to \ep^rQ $ is surjective, it is enough to show that the image of the composition $$ Q^r\arrover{\lambda}\ep^rQ\to \ep^rP\arrover{\bar{\phi}} P'$$ lies inside $ Q' $. This follows from the commutativity of the above diagram. Now, we have to show that $ \bar{\phi}\circ \Lambda^rV^{-1}=V^{-1}\circ \bar{\phi} $. Take an element $ q:=q_1\wedge q_2\wedge\dots\wedge q_r\in \ep^rQ $. We have \[ \bar{\phi}\circ \ep^rV^{-1}(q)=\bar{\phi}\big(V^{-1}(q_1)\wedge\dots\wedge V^{-1}(q_r)\big)=\phi\big(V^{-1}(q_1),\dots,V^{-1}(q_r)\big)\]\[=V^{-1}\phi(q_1,\dots,q_r)=V^{-1}\circ\bar{\phi}(q),\] where the third equality follows from the fact that $ \phi $ satisfies the $ V $-condition. This implies that for every $ q\in \Lambda^rQ $, we have $ \bar{\phi}\circ \Lambda^rV^{-1}(q)=V^{-1}\circ \bar{\phi}(q) $ and the claim is proved. By construction of $ \bar{\phi} $, we have $ \bar{\phi}\circ\lambda=\phi $. It remains to show the uniqueness of $ \bar{\phi} $. Since the morphism $ \lambda:P^r\to\ep^rP $ is a surjective map (as sets), any morphism $ \phi_1 :\ep^r\CP\to \CP'$ with $ \phi_1\circ \lambda=\phi $ is equal to $ \bar{\phi} $ as a morphism from $ \ep^rP $ to $ P' $ and therefore is equal to $ \bar{\phi} $ as a morphism of $ 3n $-displays. The proof is now achieved.
\end{proof}

%SECTION 7
\chapter{Comparisons}

The aim of this chapter is to compare the Cartier module, the Dieudonn\'e module and the display of a connected $p$-divisible group over a perfect field of characteristic $p$. In fact, we would like to show that these three linear algebraic gadgets are isomorphic, a result which floats around and is known to the experts, but lacks a written proof (at least not accessible to the author). According to \cite{B}, the isomorphism between the Cartier module and the Dieudonn\'e module of a connected $p$-divisible group over a perfect field of characteristic $p$ is due to W. Messing. We would also like to emphasize that we need explicit isomorphisms and therefore the knowledge of the existence of such isomorphisms is not sufficient for the purpose we have in mind.\\

In this chapter $G$ denotes a $p$-divisible group over a perfect field $k$, of characteristic $p>0$.

%DEFINITION 06
\begin{dfn}
\label{def06}
The \emph{Cartier module} of a formal group $G$, denoted by $M(G)$, is by definition the $ \BE_k$-module $ \Hom(\widehat{W},G) $, with the action of Frobenius and Verschiebung through their action on $ \widehat{W} $.
\end{dfn}

%\subsection{Cartier vs. Dieudonn\'e}
\section{Cartier vs. Dieudonn\'e}

%CONSTRUCTION 06
\begin{cons}
\label{cons06}
Assume that $G$ is local-local. We want to construct a homomorphism $ \eta:D_*(G)\to M(G) $. Fix natural numbers $M$ and $m$ with $F_{G_n}^M=0 $ and $ V_{G_n}^m=0 .$ It follows that every $v\in D_*(G_n)=\Hom(\BW,G_n)$ factors through the projection $ \BW\onto W_{m,M} $ and we also denote the induced morphism, $W_{m,M}\to G_n$, by $v$. Now, if we take an element $u\in D_*(G)$ and denote by $[u]_n$ the class of $u$ modulo $p^nD_*(G)$ in $D_*(G_n)$, for $m,M\gg 0$, we can view $[u]_n$ as a morphism $W_{m,M}\to G_n$ or as a morphism $ \BW\to G_n $.\\

The sequence $ W_{m,M}\arrover{F^n}W_{m,M}\arrover{F^{M-n}}W_{m,n} $ is exact and the composition \[ W_{m,M}\arrover{F^n} W_{m,M}\arrover{V^n} W_{m,M}\arrover{[u]_n}G_n \] is zero (since $G_n$ is annihilated by $p^n$, and $V^n\circ F^n=p^n$). Therefore, the composite $ [u]_n\circ V^n:W_{m,M}\to G_n $ factors through $ W_{m,M}\arrover{F^{M-n}} W_{m,M} $, inducing a morphism \[ \eta(u)_n:W_{m,n}\to G_n\into G .\] These morphisms are compatible with respect to inclusions $W_{m,n}\into W_{m',n'}$ induced by $\tau$ (which is not a group homomorphism) and the natural inclusion $W_{m',n}\into W_{m',n'}$, i.e., we send an element $(x_0,\dots,x_{n-1})\in W_{m,n}$ to the element $(x_0,\dots,x_{n-1},0,0,\dots,0)\in W_{m',n'}$. As $ \widehat{W}=\bigcup_{m',n'}\tau(W_{m',n'})$, the morphisms $\eta(u)_n$ induce a unique morphism $\eta(u):\widehat{W}\to G$, extending all $\eta(u)_n$. Hence a map $$ \eta:D_*(G)\to M(G).$$ The following commutative diagram illustrates the constructed maps $ \eta(u)_n $ and $ \eta(u) $: 
\begin{myequation}
\label{Dieudonné-Cartier Diagram}
\xymatrix{W_{m,M}\ar[rr]^{F^{M-n}}\ar[d]_{V^n}&&W_{m,n}\ar[d]_{\eta(u)_n}\ar@{^{(}->}[rr]^{\tau_{m,n}}&&\widehat{W}\ar[d]^{\eta(u)}\\ W_{m,M}\ar[rr]_{[u]_n}&&G_n\ar@{^{(}->}[rr]&&G.}
\end{myequation}
\end{cons}

We will now generalize the above construction to define a homomorphism $ \eta:D_*(G)\to M(G) $ when $G$ is a connected $p$-divisible group (that can contain a multiplicative part).
%CONSTRUCTION 018
\begin{cons}
\label{cons018}
Let $G$ be a connected $p$-divisible group over a perfect field of characteristic $p$. By definition, the covariant Dieudonn\'e module of $G$ is the inverse limit $ \uset{n}{\invlim}\,D_*(G_n)=\uset{n}{\invlim}\,D^*(G^*_n) $ where $ D^*(G_n) $ is the contravariant Dieudonn\'e module of the Cartier dual of $ G_n $ and the transition homomorphisms are induced by the inclusions $ G_n^*\into G_{n+1}^* $. Since $G_n^*$ is unipotent, we have $ D^*(G_n^*)=\Hom(G_n^*,CW^u) $, where $ CW^u $ is the group functor of unipotent Witt covectors. The group scheme $G_n^*$ being annihilated by $p^n$, every homomorphism $ G_n^*\to CW^u $ factors through $ CW^u[p^n] $. As $ G_n^* $ is the image of the multiplication by $p$ of $ G_{n+1}^* $, the transition homomorphism $$ \Hom(G_{n+1}^*,CW^u[p^{n+1}])\to\Hom(G_n^*,CW^u[p^n]) $$ is given by the following commutative diagram: \[ \xymatrix{G_{n+1}^*\ar@{->>}[d]\ar[r]&CW^u[p^{n+1}]\ar[d]^{p.}\\G_n^*\ar[r]&CW^u[p^n].} \] Applying the Frobenius morphism to the power $n$ to such a homomorphism, we obtain a homomorphism $ G_n^*\to CW^u[V^n] $, i.e., we have a homomorphism $$ \Hom(G_n^*,CW^u)\arrover{F^n\circ(\_)} \Hom(G_n^*,CW^u[V^n]).$$ The group scheme $ CW^u[V^n] $ is canonically isomorphic to the group of finite Witt vectors of length $n$, i.e., $ W_n $. We have thus for every $n$ a homomorphism $ D^*(G_n^*)\to \Hom(G_n^*,W_n) $. These homomorphisms are compatible with the projections $ G_{n+1}^*\to G_n^* $ and $ W_{n+1}\to W_n $ and therefore induce a homomorphism \[\uset{n}{\invlim}\,D^*(G^*_n)\to \uset{n}{\invlim}\, \Hom(G_n^*,W_n)\cong \Hom(\uset{n}{\invlim}\,G_n^*,\uset{n}{\invlim}\,W_n)\cong \Hom(\uset{n}{\invlim}\,G_n^*,W).\] Now, using the Artin-Hasse exponential in order to obtain a perfect pairing $ \widehat{W}\times W\to \BG_m $, we can take the Cartier duals of homomorphisms $ \uset{n}{\invlim}\,G_n^*\to W $, and obtain an isomorphism $ \Hom(\uset{n}{\invlim}\,G_n^*,W)\cong \Hom(\widehat{W},G)=M(G) $. Composing the homomorphisms and isomorphisms we constructed above, we obtain a homomorphism $ \eta:D_*(G)\to M(G) $.
\end{cons}

%REMARK 021
\begin{rem}
\label{rem021}
As we noticed before the above construction, the homomorphisms $ \eta:D_*(G)\to M(G) $ defined in Construction \ref{cons06} and Construction \ref{cons018} coincide, when $G$ is local-local. The reason that we considered the special case of $p$-divisible groups of local-local type separately is that we will later need this explicit construction and in this form (cf. Theorem \ref{thm04}).
\end{rem}

Let $H$ be a unipotent group scheme over a perfect field of positive characteristic and let $ v:H\to CW^u $ be an element of the contravariant Dieudonn\'e module of $H$. There exists a natural number $n$ such that $V^nH=0$ and therefore, $v$ factors through the kernel of $V^n$ on $ CW^u $, which is canonically isomorphic to $ W_n $. We can thus consider $ v $ as a group scheme homomorphism $ v:H\to W_n $ and composing this with the morphism $ \tau_n:W_n\into W $, we obtain a morphism of schemes (not a homomorphism!) $ H\to W $ that we denote simply by $ \tau_n v $ or $ \tau v $.

%LEMMA 024
\begin{lem}
\label{lem024}
Take an element $u\in D_*(G)$, a nilpotent $k$-algebra $ \CN $ and an element $ w\in\widehat{W}(\CN) $ annihilated by $ F^n $. Denote by $[u]_n$ the class of $u$ modulo $ p^n $, seen as an element of $ D_*(G_n)=D^*(G_n^*)=\Hom(G_n^*,CW^u) $, and by $E$ the Artin-Hasse exponential. Then, under the identification $ G_n\cong \innHom(G_n^*,\BG_m)$, the homomorphism $$ E(w\cdot \tau_n[F^nu]_n(\_);1):G_n^*\to \BG_m $$ is equal to the element $ \eta(u)(w) \in G_n(\CN)$. 
\end{lem}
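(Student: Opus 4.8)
The plan is to unwind the two constructions of the comparison map $\eta$ and check that the stated formula for $\eta(u)(w)$ is exactly what Construction~\ref{cons018} produces, evaluated at the covector $w$. First I would set up the relevant dualities cleanly: the Artin--Hasse exponential gives a perfect pairing $\widehat{W}\times W\to\BG_m$, and by restricting along $F^n$ it induces compatible perfect pairings $W_{m,n}\times W_{m',n}\to\BG_m$ realizing the Cartier self-duality used throughout Chapter~3 (this is the content of the isomorphisms $a_m$ in Proposition~\ref{prop0 19} and Remark~\ref{rem0 10}). Under these identifications, for a finite group scheme $H$ one has $H\cong\innHom(H^*,\BG_m)$, and a homomorphism $v\colon H^*\to W$ landing in $W[F^n]$ corresponds, via the pairing, to a section of $H$; concretely, if $w$ pairs with $v(\_)$ to $E(w\cdot\tau v(\_);1)$, then that is the corresponding element of $H(\CN)$. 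So the target formula $E(w\cdot\tau_n[F^nu]_n(\_);1)$ is, tautologically, the image of the homomorphism $F^n\circ[u]_n\colon G_n^*\to W[V^n]\cong W_n\hookrightarrow W$ under the duality $\Hom(G_n^*,W_n)\cong \Hom(W_n^\vee, G_n)$, evaluated against the element $w\in\widehat{W}(\CN)$ (thinking of $\widehat{W}$ as built from the $W_n^\vee$).

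Next I would trace through Construction~\ref{cons018} step by step for a fixed level $n$. Starting from $u\in D_*(G)=\invlim D^*(G_n^*)$, its class $[u]_n\in D^*(G_n^*)=\Hom(G_n^*,CW^u)$ factors through $CW^u[p^n]$; applying $F^n$ yields a homomorphism $G_n^*\to CW^u[V^n]\cong W_n$, which is precisely $\tau_n[F^n u]_n$ regarded as landing in $W_n$. This is exactly the element of $\Hom(G_n^*,W_n)$ that Construction~\ref{cons018} feeds into the chain of isomorphisms $\invlim\Hom(G_n^*,W_n)\cong\Hom(\invlim G_n^*,W)$, followed by Cartier duality $\Hom(\invlim G_n^*,W)\cong\Hom(\widehat{W},G)=M(G)$ via the Artin--Hasse pairing. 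Evaluating the resulting element $\eta(u)\in\Hom(\widehat W,G)$ at $w$ means: take the homomorphism $\invlim G_n^*\to W$, restrict it to the $p^n$-torsion to get $G_n^*\to W_n$, dualize to get an element of $G_n(\CN)$, and pair against $w$. Chasing the Artin--Hasse exponential through the duality (as in the computation $a(\xi)(y)=E(\xi_m\cdot\tau(y);1)$ in the proof of Proposition~\ref{prop0 19}), this element of $G_n(\CN)$ is exactly the homomorphism $G_n^*\to\BG_m$ sending $x\mapsto E\bigl(w\cdot\tau_n[F^n u]_n(x);1\bigr)$.

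I would organize the verification as a commutative diagram: on one side the passage $u\mapsto[u]_n\mapsto F^n[u]_n\mapsto$ (dualize, evaluate at $w$), on the other side the definition of $\eta$. The compatibility with the transition maps (the projections $G_{n+1}^*\to G_n^*$ versus multiplication by $p$, and $W_{n+1}\to W_n$) needs to be checked so that the level-$n$ computation is consistent with the inverse-limit definition of $\eta(u)$; this is where one uses the commutative square displayed in Construction~\ref{cons018} relating the transition map on $\Hom(G_n^*,CW^u[p^n])$ to multiplication by $p$ on $CW^u$, together with the fact that $F$ and $V$ commute and $FV=VF=p$. The main obstacle, I expect, is purely bookkeeping: keeping straight which Frobenius/Verschiebung acts on which side of each duality, and ensuring the identification $CW^u[V^n]\cong W_n$ and the pairing $\widehat W\times W\to\BG_m$ are used with consistent sign/normalization conventions so that the factor $\tau_n$ (a morphism of schemes, not of groups) and the exponential $E(\_;1)$ come out exactly as in the statement, with no spurious Frobenius twist. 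Once the diagram is seen to commute at each level $n$ and compatibly in $n$, the lemma follows by passing to the limit and evaluating at $w$ (which lives in $\widehat W(\CN)=\bigcup_{m,n}\tau(W_{m,n})(\CN)$, so it is detected at some finite level $n$, consistent with the hypothesis $F^n w=0$).
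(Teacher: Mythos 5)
Your proposal is correct and follows essentially the same route as the paper: unwind Construction \ref{cons018} at level $n$ (apply $F^n$ to $[u]_n$, identify $CW^u[V^n]\cong W_n$, dualize via the Artin--Hasse pairing as in Proposition \ref{prop0 19}) and check that evaluating the resulting section of $G_n$ against $g^*\in G_n^*$ gives $E(w\cdot\tau_n[F^nu]_n(g^*);1)$. The only point the paper makes explicit that you leave implicit is the identity $[F^nu]_n=F^n[u]_n$ as homomorphisms $G_n^*\to CW^u$, which is exactly the "no spurious Frobenius twist" bookkeeping you flag as the main thing to verify.
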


%PROOF OF LEMMA 024
\begin{proof}
Since $ F^nw=0 $, the element $ \eta(u)(w) $ lies inside the group $ G_n(\CN) $ and $ E(w\cdot \tau_n[F^nu]_n(\_);1)$ is indeed a homomorphism from $ G_n^* $ to $ \BG_m $ (note that $ [F^nu]_n:G_n^*\to CW^u $ factors through $W_n$). It follows from the construction of $ \eta $ that for every $ g^*\in G_n^*(\CN) $ we have \[ E(w\cdot \tau_n[F^nu]_n(g^*);1)=E(w\cdot \tau_nF^n[u]_n(g^*);1)=g^*(\eta(u)(w))\] and the first equality is true because $ [F^nu]_n $ and $ F^n[u]_n $, seen as homomorphisms $ G_n^*\to CW^u $ are equal (note that the Frobenius of $ G_n^* $ corresponds to Verschiebung of $ G_n $ and the Frobenius on the covariant Dieudonn\'e module is induced by the Verschiebung). Under the identification $ G_n\cong \innHom(G_n^*,\BG_m)$, we have \[g^*(\eta(u)(w))=\big(\eta(u)(w)\big)(g^*).\] These two equalities imply that $ E(w\cdot \tau_n[F^nu]_n(\_);1)=\eta(u)(w) $.
\end{proof}

%LEMMA 025
\begin{lem}
\label{lem025}
Let $S$ be a $k$-scheme, $\ul{x}, \ul{x'}$ elements of $ W(S) $ and $ \ul{y} $ an element of $ \widehat{W}(S) $ such that $ \ul{x} $ and $ \ul{x'} $ have the same image in the group $ W_m(S) $ and $ F^m\ul{y}=0 $ for some natural number $m$. Then we have \[ E(\ul{x}\cdot\ul{y};1)= E(\ul{x'}\cdot\ul{y};1).\]
\end{lem}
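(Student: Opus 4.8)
\textbf{Proof plan for Lemma \ref{lem025}.}

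The statement asserts that the Artin--Hasse pairing $E(\ul{x}\cdot\ul{y};1)$ depends only on the image of $\ul{x}$ in $W_m(S)$, provided that $\ul{y}$ is killed by $F^m$. The plan is to reduce to the universal case and then exploit the explicit description of the Artin--Hasse exponential as $E(\ul{x},t)=\prod_{n\in\BN}F(x_n\cdot t^{p^n})$ together with the duality between $\widehat{W}$ and $\BW$ (or $W$) implemented by this exponential, as recalled in the Notations and used in Proposition \ref{prop0 19} and Construction \ref{cons018}.

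First I would reduce to showing: if $\ul{x}\equiv 0$ in $W_m(S)$, i.e.\ $\ul{x}\in V^mW(S)$ (the image of $V^m$, which is the kernel of $W(S)\onto W_m(S)$), then $E(\ul{x}\cdot\ul{y};1)=1$ whenever $F^m\ul{y}=0$. Indeed, writing $\ul{x}=\ul{x'}+\ul{z}$ with $\ul{z}\in V^mW(S)$, bilinearity of the pairing (the Artin--Hasse exponential is additive in the Witt-vector argument, $E(\ul{a}+\ul{b};t)=E(\ul{a};t)E(\ul{b};t)$, and the multiplication $W\times\widehat W\to\widehat W$ is biadditive) gives $E(\ul{x}\cdot\ul{y};1)=E(\ul{x'}\cdot\ul{y};1)\cdot E(\ul{z}\cdot\ul{y};1)$, so the claim follows once the second factor is trivial. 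Next I would write $\ul{z}=V^m(\ul{w})$ for some $\ul{w}\in W(S)$ and use the standard identity $V^m(\ul{w})\cdot\ul{y}=V^m(\ul{w}\cdot F^m\ul{y})$ in the ring scheme $W$ (the projection formula for $V$ and $F$ with respect to Witt-vector multiplication). Since $F^m\ul{y}=0$ by hypothesis, we get $\ul{z}\cdot\ul{y}=0$, hence $E(\ul{z}\cdot\ul{y};1)=E(0;1)=1$, which finishes the argument.

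To make the identity $V^m(\ul{w})\cdot\ul{y}=V^m(\ul{w}\cdot F^m\ul{y})$ rigorous I would pass to the universal situation, taking $S=\Spec\BZ[x_i,y_j]$ with $\ul{w},\ul{y}$ the universal Witt vectors (and then base-change to $k$-algebras), where it is the well-known Frobenius reciprocity relation in the Witt ring $W$; alternatively it can be verified on ghost components after inverting $p$, since the ghost map is injective on $W(\BZ[x_i,y_j])$, and on ghost components it reads $p^m\,w_{(n-m)}\cdot y_{(n)} = p^m\,(w\cdot F^m y)_{(n-m)}$ which is immediate from $w_{(n)}^F = w_{(n+1)}$ and the multiplicativity of the ghost map. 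A small care point: one must note $F^m\ul{y}$ still makes sense in $\widehat W(S)$ and the product $\ul w\cdot F^m\ul y$ lands in $\widehat W(S)$, so that $V^m$ of it is defined and lies in $\widehat W(S)$ as well; this is fine because $\widehat W$ is stable under the ring operations inherited from $W$.

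The main obstacle is purely the bookkeeping of the Witt-vector identities: confirming the additivity of $E(-;t)$ in the Witt argument, the biadditivity of the module multiplication $W\times\widehat W\to\widehat W$, and the projection formula $V^m(\ul w)\cdot\ul y=V^m(\ul w\cdot F^m\ul y)$ in the completed/truncated setting. None of these is deep, but they must be stated with the right normalizations matching the conventions fixed in the Notations (the Artin--Hasse exponential via $F(t)=\prod_{p\nmid n}(1-t)^{\mu(n)/n}$ and $E(\ul x,t)=\prod_n F(x_n t^{p^n})$). Once these are in hand, the lemma is a two-line consequence as indicated above.
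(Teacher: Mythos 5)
Your proposal is correct and follows essentially the same route as the paper: write $\ul{x}-\ul{x'}=V^m\ul{z}$, use additivity of $E(\,\cdot\,;1)$ in the Witt argument, and kill the extra factor via the projection formula together with $F^m\ul{y}=0$. The only (cosmetic) difference is that you observe the product $V^m(\ul{z})\cdot\ul{y}=V^m(\ul{z}\cdot F^m\ul{y})$ vanishes already in $\widehat{W}(S)$, whereas the paper transfers $V^m$ across the pairing and concludes $E(\ul{z}\cdot F^m\ul{y};1)=1$; both rest on the same Frobenius--Verschiebung reciprocity.
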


%PROOF OF LEMMA 025
\begin{proof}
As $ \ul{x} $ and $ \ul{x'} $ have the same image in the group $ W_m(S) $, there exists an element $ \ul{z}\in W(S) $ such that $ \ul{x}-\ul{x'}=V^m\ul{z} $. We have thus \[ E(\ul{x}\cdot\ul{y};1)= E(\ul{x'}\cdot\ul{y}+V^m\ul{z}\cdot \ul{y};1)=E(\ul{x'}\cdot\ul{y};1)\cdot E(V^m\ul{z}\cdot \ul{y};1).\] We also have $ E(V^m\ul{z}\cdot \ul{y};1)=E(\ul{z}\cdot F^m\ul{y};1) $ which is equal to $1$, because $ F^m\ul{y}=0 $. Hence $E(\ul{x}\cdot\ul{y};1)= E(\ul{x'}\cdot\ul{y};1).$
\end{proof}

%CONSTRUCTION 010
\begin{cons}
\label{cons010}
$ $
\begin{itemize}
 \item The projection $G_{n+1}\onto G_n$ induces a homomorphism \[f_n:\Hom(\BW/p^{n+1},G_{n+1})\to \Hom(\BW/p^{n+1},G_n)\cong \Hom(\BW/p^{n},G_n)\] where the isomorphism is due to the fact that $G_n$ is annihilated by $p^n$ and  every map $\BW/p^{n+1}\to G_n$ factors through the quotient $\BW/p^{n+1}\onto \BW/p^n$. Denote by $\uset{f_n}{\invlim}\,\Hom(\BW/p^{n},G_n)$ the corresponding inverse limit.
 \item The map $ p.:\BW/p^n\to \BW/p^{n+1} $ induces a homomorphism $$ g_n: \Hom(\BW/p^{n+1}, G_{n+1}) \to  \Hom(\BW/p^n, G_{n+1})\cong  \Hom(\BW/p^{n},G_n),$$ where the isomorphism is due to the fact that $\BW/p^n $ is annihilated by $ p^n $  and $ G_n $ is the kernel of multiplication by $ p^n $ on $ G_{n+1} $, and therefore any map $\BW/p^n\to G_{n+1}$ factors through the inclusion $ G_n\into G_{n+1} $. We denote by $\uset{g_n}{\invlim}\,\Hom(\BW/p^{n},G_n)$ the corresponding inverse limit.
 \item Likewise, the map $ F:\BW/F^n\to \BW/F^{n+1} $ induces a homomorphism $$ \Hom(\BW/F^{n+1}, G_{n+1}) \to  \Hom(\BW/F^n, G_{n+1})\cong \Hom(\BW/F^n, G_{n}) $$ and we have an isomorphism because $\BW/F^n$ is annihilated by $p^n$. The corresponding inverse limit will be denoted by $\uset{n}{\invlim}\,\Hom(\BW/F^n,G_n).$
\end{itemize}
\end{cons}

%LEMMA 013
\begin{lem}
\label{lem0 13}
The two inverse limits constructed above, $\uset{f_n}{\invlim}\,\Hom(\BW/p^{n},G_n)$ and $\uset{g_n}{\invlim}\,\Hom(\BW/p^{n},G_n)$, are equal, i.e., a sequence $(\gamma_n)$ belongs to the one if and only if it belongs to the other.
\end{lem}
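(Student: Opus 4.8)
The plan is to prove the sharper statement that the two families of transition homomorphisms already coincide, i.e.\ that $f_n=g_n$ as maps $\Hom(\BW/p^{n+1},G_{n+1})\to\Hom(\BW/p^{n},G_n)$ for every $n$. Granting this, the inverse systems $(\Hom(\BW/p^{n},G_n),f_n)$ and $(\Hom(\BW/p^{n},G_n),g_n)$ are literally the same system, so a sequence $(\gamma_n)$ satisfies $f_n(\gamma_{n+1})=\gamma_n$ for all $n$ precisely when it satisfies $g_n(\gamma_{n+1})=\gamma_n$ for all $n$, which is the assertion of the lemma.

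To establish $f_n=g_n$, fix $\gamma\in\Hom(\BW/p^{n+1},G_{n+1})$ and write $q:\BW/p^{n+1}\onto\BW/p^{n}$ for the canonical quotient, $m:\BW/p^{n}\to\BW/p^{n+1}$ for the map induced by multiplication by $p$, $\iota:G_n\into G_{n+1}$ for the inclusion, and $\mathrm{pr}:G_{n+1}\onto G_n$ for multiplication by $p$ (viewed as landing in $G_n$). Unwinding the definitions in Construction~\ref{cons010}: $f_n(\gamma)$ is the unique homomorphism $\BW/p^{n}\to G_n$ with $f_n(\gamma)\circ q=\mathrm{pr}\circ\gamma$, while $g_n(\gamma)$ is the unique homomorphism $\BW/p^{n}\to G_n$ with $\iota\circ g_n(\gamma)=\gamma\circ m$. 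I would then use the two elementary identities $m\circ q=(p\cdot)$ on $\BW/p^{n+1}$ and $\iota\circ\mathrm{pr}=(p\cdot)$ on $G_{n+1}$, together with the fact that for a homomorphism of commutative group objects multiplication by $p$ on the source agrees with multiplication by $p$ on the target, to obtain
\[
\iota\circ f_n(\gamma)\circ q=\iota\circ\mathrm{pr}\circ\gamma=(p\cdot)\circ\gamma=\gamma\circ(p\cdot)=\gamma\circ m\circ q=\iota\circ g_n(\gamma)\circ q.
\]
Since $q$ is an epimorphism of fppf sheaves and $\iota$ a monomorphism, this forces $f_n(\gamma)=g_n(\gamma)$.

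No step is genuinely hard, and I do not expect a real obstacle; the only point that needs care is the bookkeeping of the two canonical identifications used in the very definitions of $f_n$ and $g_n$ — namely $\Hom(\BW/p^{n+1},G_n)\cong\Hom(\BW/p^{n},G_n)$, valid because $G_n$ is killed by $p^n$, and $\Hom(\BW/p^{n},G_{n+1})\cong\Hom(\BW/p^{n},G_n)$, valid because $\BW/p^{n}$ is killed by $p^n$ and $G_n=G_{n+1}[p^n]$ — and verifying, as in the displayed chain, that both identifications intertwine the maps induced by multiplication by $p$. Once $f_n=g_n$ is in hand for all $n$, the lemma follows immediately.
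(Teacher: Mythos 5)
Your argument is correct and is essentially the paper's own proof: the paper likewise shows $f_n=g_n$ by observing that both $f_n(\gamma)$ and $g_n(\gamma)$, after precomposing with the epimorphism $\BW/p^{n+1}\onto\BW/p^{n}$ and postcomposing with the monomorphism $G_n\into G_{n+1}$, yield the single homomorphism $p\gamma$, and then cancels the epi and the mono. Your chain of equalities is just the diagrammatic version of that same computation written out explicitly, so there is nothing to add.
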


%PROOF OF LEMMA 013
\begin{proof}
We show that for every $n$, the transition homomorphisms $ f_n $ and $ g_n $ are equal. Take an element $ \alpha\in \Hom(\BW/p^{n},G_{n+1}) $. By construction, $ f_n(\alpha):\BW/p^n\to G_n $ is the unique homomorphism making the following diagram commutative
$$\xymatrix{\BW/p^{n+1}\ar[r]^{\alpha}\ar@{->>}[d]&G_{n+1}\ar@{->>}[d]^{p.}\\ \BW/p^n\ar[r]_{f_n(\alpha)}&G_n}$$
and $ g_n(\alpha):\BW/p^n\to G_n $ is the unique homomorphism making the following diagram commutative 
$$\xymatrix{\BW/p^{n}\ar[r]^{g_n(\alpha)}\ar@{^{(}->}[d]_{p.}&G_{n}\ar@{^{(}->}[d]\\ \BW/p^{n+1}\ar[r]_{\alpha}&G_{n+1}.} $$
Putting these two diagrams together, we obtain the following diagram
\[ \xymatrix{\BW/p^{n+1}\ar[r]^{\alpha}\ar@{->>}[d]&G_{n+1}\ar[d]^{p.}\\ \BW/p^{n}\ar@{^{(}->}[d]_{p.}\ar@/_/[r]_{g_n(\alpha)}\ar@/^/[r]^{f_n(\alpha)}&G_n\ar@{^{(}->}[d]\\ \BW/p^{n+1}\ar[r]_{\alpha}&G_{n+1},} \] where the compositions of vertical arrows on left and respectively on right are multiplication by $p$ on $ \BW/p^{n+1} $ and respectively on $ G_{n+1} $. Therefore, composing $ f_n(\alpha) $ and $ g_n(\alpha) $ from right with the monomorphism $ G_n\into G_{n+1} $ and from left with the epimorphism $ \BW/p^{n+1}\onto \BW/p^n $ gives the same homomorphism $ p\alpha $
Consequently, $ f_n(\alpha)=g_n(\alpha) $.
\end{proof}

%REMARK 06
\begin{rem}
\label{rem0 6}
Since by the previous lemma the two inverse limits are equal, we will drop the subscripts $f_n$ and $g_n$ from them and denote this inverse limit by $\uset{n}{\invlim}\,\Hom(\BW/p^{n},G_n)$. It follows that in order to show that a sequence of maps $(\alpha_n)$ belongs to this inverse limit, it is sufficient to show one of the two compatibilities (commutativity of either of the first two diagrams in the proof of the previous lemma).
\end{rem}

%LEMMA 04
\begin{lem}
\label{lem04}
There is a canonical isomorphism $$ \uset{n}{\invlim}\,\Hom(\BW/p^n,G_n)\cong D_*(G) .$$
\end{lem}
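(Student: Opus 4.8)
The plan is to identify the covariant Dieudonné module $D_*(G)=\uset{n}{\invlim}\,D_*(G_n)$ with $\uset{n}{\invlim}\,\Hom(\BW/p^n,G_n)$ by producing, level by level, a canonical isomorphism $D_*(G_n)\cong\Hom(\BW/p^n,G_n)$ that is compatible with the transition homomorphisms on both sides, and then passing to the inverse limit. Recall that for a finite group scheme $G_n$ of $p$-power order, $D_*(G_n)$ is defined as $D^*(G_n^*)$, and when $G_n$ is local-local this is canonically $\Hom(\BW,G_n)$. Since $G_n$ is annihilated by $p^n$, every homomorphism $\BW\to G_n$ factors uniquely through $\BW/p^n$ (here I use that multiplication by $p^n$ on $\BW$ and on $G_n$ are compatible with the factorization, as in the normalization $W_{m,M}\arrover{F^n}W_{m,M}$ discussion of Construction \ref{cons06}), so $\Hom(\BW,G_n)\cong\Hom(\BW/p^n,G_n)$. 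This disposes of the local-local case immediately.

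First I would treat the general connected case. Write $G_n=G_n^{\rm mult}\oplus G_n^{\ell\ell}$ over the perfect field $k$ (a connected $p$-divisible group splits into its multiplicative and local-local parts after passing to the perfect ground field; both summands are $p$-divisible of dimension bounded by that of $G$). Since both $D_*(-)$ and $\Hom(\BW/p^n,-)$ commute with finite direct sums, it suffices to establish the isomorphism for each summand. For the local-local summand this is the previous paragraph. For the multiplicative summand $\mu=G^{\rm mult}$, one has $D_*(\mu_n)\cong\Hom(\BW,\mu_n)$ as well — indeed $\BW=\uset{n}{\invlim}\,W[F^n]$ and for a group scheme of multiplicative type the relevant $\Hom$-computation identifies $\Hom(\BW,\mu_n)$ with the covariant Dieudonné module (alternatively, one checks directly that $\mu_n^*$ is étale, hence unipotent is replaced by its dual statement, and $D^*(\mu_n^*)=\Hom(\mu_n^*,CW^u)$ computes out to $\Hom(\BW/p^n,\mu_n)$ via the Artin–Hasse pairing $\widehat W\times W\to\BG_m$, exactly the pairing used in Construction \ref{cons018}). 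In all cases one gets, functorially in $n$, an isomorphism $\theta_n:D_*(G_n)\arrover{\cong}\Hom(\BW/p^n,G_n)$.

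Next I would check compatibility with transition maps. On the $D_*$ side the transition $D_*(G_{n+1})\onto D_*(G_n)$ is induced by $G_n^*\into G_{n+1}^*$; on the $\Hom$ side, by Lemma \ref{lem0 13} and Remark \ref{rem0 6}, I am free to use \emph{either} the map induced by the projection $G_{n+1}\onto G_n$ precomposed with the canonical identification $\Hom(\BW/p^{n+1},G_{n+1})\to\Hom(\BW/p^{n+1},G_n)\cong\Hom(\BW/p^n,G_n)$, \emph{or} the map induced by $p\cdot:\BW/p^n\to\BW/p^{n+1}$. Choosing the latter presentation makes the compatibility with the $D_*$-side transition maps a direct check, since multiplication by $p$ on $\BW$ and the inclusion $G_n\into G_{n+1}$ correspond under Cartier/Dieudonné duality to the structural maps defining $D_*(G)$ as an inverse limit. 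Assembling, $(\theta_n)_n$ yields an isomorphism of inverse systems, hence $\uset{n}{\invlim}\,\theta_n:D_*(G)\arrover{\cong}\uset{n}{\invlim}\,\Hom(\BW/p^n,G_n)$.

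The main obstacle is the bookkeeping in the multiplicative (non-local-local) case: the clean statement $D_*(G_n)\cong\Hom(\BW,G_n)$ quoted in the conventions is stated only for local-local $G_n$, so for the multiplicative part one must run through the definition $D_*=D^*\circ(-)^*$, use that $\mu_n^*$ is étale, and unwind the Artin–Hasse pairing $\widehat W\times W\to\BG_m$ precisely as in Construction \ref{cons018} to land in $\Hom(\BW/p^n,\mu_n)$; one must also verify that the map $F^n\circ(-)$ appearing there is an isomorphism onto the relevant subgroup, which is where the perfectness of $k$ and the annihilation by $p^n$ enter. Once these identifications are made carefully for $n=1$ and shown to propagate, the limit argument is formal. (If one prefers, the whole statement can alternatively be deduced from Construction \ref{cons018} by observing that the homomorphism $\eta:D_*(G)\to M(G)=\Hom(\widehat W,G)$ constructed there, composed with the natural identification $\Hom(\widehat W,G)\cong\uset{n}{\invlim}\,\Hom(\BW/p^n,G_n)$, is an isomorphism because it is so on each finite level — this is essentially what Lemma \ref{lem024} sets up — but I expect the level-by-level argument above to be the more self-contained route.)
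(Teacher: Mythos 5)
Your first paragraph reproduces the paper's own (one-line) proof: for local-local $G_n$ the conventions give $D_*(G_n)\cong\Hom(\BW,G_n)$, every homomorphism $\BW\to G_n$ factors uniquely through $\BW/p^n$ because $G_n$ is killed by $p^n$, and one passes to the inverse limit after checking (as the paper also leaves to "a short calculation") that the transition maps agree with those of Construction \ref{cons010}. That part is fine.

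The genuine gap is your second paragraph. The claim $D_*(\mu_n)\cong\Hom(\BW,\mu_n)$ for the multiplicative summand is false: $\BW=\uset{m,n}{\invlim}\,W_{m,n}$ is a pro-object of \emph{local-local} finite group schemes, and every homomorphism from a local-local group scheme to a multiplicative one vanishes (Cartier duality turns it into a homomorphism from the \'etale group $\mu_{p^n}^*\cong\ul{\BZ/p^n\BZ}$ into the connected group $W_{m,n}^*\cong W_{n,m}$, which is trivial over a field). Hence $\Hom(\BW,\mu_{p^n})=0$, and likewise $\Hom(\BW/p^n,\mu_{p^n})=0$ since $\BW\onto\BW/p^n$ is an epimorphism, whereas $D_*(\mu_{p^n})=D^*(\ul{\BZ/p^n\BZ})\cong W_n(k)\neq 0$. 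So no level-wise isomorphism exists for the multiplicative part, and no amount of unwinding the Artin--Hasse pairing will produce one. The correct reading is that the lemma carries the standing local-local hypothesis in force since Construction \ref{cons06}; it is invoked only in the local-local case of the proof of Theorem \ref{thm03}, the multiplicative case being treated there separately via Construction \ref{cons018}. Your closing "alternative" — deducing the lemma from the fact that $\eta$ is an isomorphism — would moreover be circular here, since Lemma \ref{lem04} (together with Lemma \ref{lem05} and Proposition \ref{prop017}) is exactly what the paper uses to prove that $\eta$ is an isomorphism.
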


%PROOF OF LEMMA 04
\begin{proof}
By definition, we have $$ D_*(G)=\uset{n}{\invlim}\,D_*(G_n)=\uset{n}{\invlim}\,\Hom(\BW,G_n) $$ and as we have seen before, $ \Hom(\BW,G_n)\cong\Hom(\BW/p^n,G_n) $ and thus $ D_*(G)\cong\uset{n}{\invlim}\,\Hom(\BW/p^n,G_n)$. A short calculation shows that the transition homomorphisms in this inverse limit is the one given in Construction \ref{cons010}.
\end{proof}

%LEMMA 05
\begin{lem}
\label{lem05}
Assume that $G$ is unipotent (has local dual). Then for every $n$, there is a canonical isomorphism  $ \Hom(W[F^n],G_n)\cong \Hom(W_{m,n},G_n) $ for a sufficiently large $m$. Consequently, there is a canonical isomorphism $$\uset{n}{\invlim}\,\Hom(\BW/F^n,G_n)\cong M(G) .$$
\end{lem}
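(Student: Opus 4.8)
The plan is to prove the two assertions of Lemma \ref{lem05} in turn. First I would establish the level-$n$ isomorphism $\Hom(W[F^n],G_n)\cong \Hom(W_{m,n},G_n)$ for $m\gg 0$. Since $G$ is unipotent and $G_n$ is finite and annihilated by $p^n$, its Cartier dual $G_n^*$ is a finite local group scheme, so there is a natural number $M$ (depending on $n$) with $F^M G_n^*=0$, equivalently $V^M G_n=0$. Writing $W[F^n]=\uset{m}{\dirlim}\,W_{m,n}$ (the transition maps being the truncations $r:W_{m+1,n}\onto W_{m,n}$), every homomorphism $W[F^n]\to G_n$ is determined by its restrictions to the $W_{m,n}$; the point is to see that, once $m\geq M$, the restriction map $\Hom(W[F^n],G_n)\to\Hom(W_{m,n},G_n)$ is a bijection. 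Surjectivity: a homomorphism $W_{m,n}\to G_n$ with $m\geq M$ extends uniquely to $W[F^n]$ because the ``extra'' part of $W[F^n]$ relative to $W_{m,n}$ is killed by $V^M$ hence maps to zero, using that $V^M$ annihilates $G_n$; injectivity is clear since $W_{m,n}$ generates $W[F^n]$ in the relevant range. I would make this precise by dualizing: $\Hom(W_{m,n},G_n)\cong\Hom(G_n^*, W_{m,n}^*)$ and $\Hom(W[F^n],G_n)\cong\Hom(G_n^*,\widehat{W})$ via the Artin--Hasse pairing (cf. the Notations and Remark \ref{rem0 10}), and then observe that since $G_n^*$ is killed by $F^M$, every map $G_n^*\to\widehat{W}$ lands in $W[F^M]\cap(\text{length }n)$, which for $m\geq M$ is exactly $\tau_{m,n}(W_{m,n})$; this is the same mechanism as in Remark \ref{rem0 10}.

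Next I would pass to the inverse limit. By Construction \ref{cons010} (third bullet), $\uset{n}{\invlim}\,\Hom(\BW/F^n,G_n)$ is formed with transition maps induced by $F:\BW/F^n\to\BW/F^{n+1}$ together with the projections $G_{n+1}\onto G_n$, using that $\BW/F^n$ is killed by $p^n$. Since $\BW=\uset{m}{\invlim}\,W_{m,n}$ with the $F$-transition in the $n$-direction already built in, there is a canonical identification $\BW/F^n\cong W[F^n]$ compatible with these structure maps. Combining with the level-$n$ isomorphisms of the previous paragraph (and checking that they are compatible with the transition maps in $n$, which follows from naturality of the Artin--Hasse pairing and of Cartier duality), one gets $\uset{n}{\invlim}\,\Hom(\BW/F^n,G_n)\cong\uset{n}{\invlim}\,\Hom(W[F^n],G_n)$. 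Finally, by definition $M(G)=\Hom(\widehat{W},G)=\Hom(\widehat{W},\uset{n}{\dirlim}\,G_n)$; since $\widehat{W}=\bigcup_{m,n}\tau_{m,n}(W_{m,n})$ is a filtered union of the finite group schemes $W[F^n]$ (as $n$ grows, after taking $m$ large enough at each level) and each $G_n$ is finite, $\Hom(\widehat{W},G)\cong\uset{n}{\invlim}\,\Hom(W[F^n],G_n)$, where the inverse limit transition maps come precisely from the inclusions $W[F^n]\into W[F^{n+1}]$ and the projections $G_{n+1}\onto G_n$. Identifying these with the maps of Construction \ref{cons010} via Lemma \ref{lem0 13} and the same argument as in Lemma \ref{lem04} yields the desired $\uset{n}{\invlim}\,\Hom(\BW/F^n,G_n)\cong M(G)$.

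The main obstacle I anticipate is the bookkeeping of transition maps: one must check that the three descriptions of the relevant pro-system --- the one via $\BW/F^n$ (Construction \ref{cons010}), the one via $W[F^n]$, and the one via the filtered union $\widehat{W}=\bigcup W[F^n]$ defining $M(G)$ --- all have the same transition homomorphisms, and that the level-$n$ isomorphisms $\Hom(W[F^n],G_n)\cong\Hom(W_{m,n},G_n)$ can be chosen coherently in $n$ so as to assemble into an isomorphism of inverse systems. The subtlety is that the ``sufficiently large $m$'' depends on $n$ (it grows with the nilpotence exponent of $V$ on $G_n$), so the identification $\BW/F^n\cong W[F^n]\cong W_{m_n,n}$ is only valid after truncation, and one should phrase everything in terms of the genuine objects $W[F^n]=\uset{m}{\dirlim}W_{m,n}$ rather than a fixed $W_{m,n}$, invoking finiteness of $G_n$ to commute $\Hom(-,G_n)$ past the direct limit. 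Once this is set up cleanly, the proof is a diagram chase analogous to the one already carried out in Lemmas \ref{lem0 13} and \ref{lem04}. The unipotence hypothesis enters exactly once, to guarantee $V$ is nilpotent on each $G_n$ (equivalently $G_n^*$ local), which is what makes the truncated description of $\widehat{W}$-maps correct; without it the statement fails because a multiplicative part of $G$ would contribute maps not detected at any finite $F$-level.
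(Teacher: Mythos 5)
Your proposal follows essentially the same route as the paper: unipotence gives $V^{m_n}G_n=0$, which forces every homomorphism from $W[F^n]$ (equivalently from $\bigcup_m W_{m,n}$) to $G_n$ to factor through the single finite level $W_{m_n,n}$, the identification $\BW/F^n\cong W[F^n]$ (proved in the paper via the exact sequences $W_{i,m}\arrover{F^n}W_{i,m}\to W_{i,n}\to 0$ and Mittag--Leffler, a point you assert rather than prove) links the two pro-systems, and both sides are then matched level by level. The key idea and the exact place where the unipotence hypothesis enters are correctly identified.

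Two slips in your handling of the pro/ind structures would make the argument fail as literally written and should be repaired. First, $W[F^n]$ is the \emph{inverse} limit $\uset{m}{\invlim}\,W_{m,n}$ along the truncations $r:W_{m+1,n}\onto W_{m,n}$ (see the Notations), not a direct limit; a homomorphism $W_{m,n}\to G_n$ is therefore not ``extended'' to $W[F^n]$ but composed with the projection $W[F^n]\onto W_{m,n}$, and the content of the first assertion is that the direct system $\Hom(W_{m,n},G_n)$, whose transition maps (induced by the surjections $r$) are injective, stabilizes for $m\geq m_n$: any $W_{m+1,n}\to G_n$ kills the kernel of $r$, which lies in the image of $V^{m}$, once $V^{m}G_n=0$. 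Second, $\widehat{W}=\bigcup_{m,n}\tau_{m,n}(W_{m,n})$ is a filtered union of the \emph{finite} group schemes $W_{m,n}$ along the section maps $\tau$, not of the profinite group schemes $W[F^n]$; the correct unwinding of $M(G)=\Hom(\widehat{W},G)$ is $\uset{n}{\invlim}\,\Hom(\bigcup_m W_{m,n},G_n)\cong\uset{n}{\invlim}\,\Hom(W_{m_n,n},G_n)$, using once more that $V^{m_n}G_n=0$, which is exactly how the paper closes the argument. (In your dualization aside, note also that the Cartier dual of $W_{m,n}$ is $W_{n,m}$, so the two indices swap.) None of this changes the strategy, but the two identifications as you state them are false and the level-$n$ comparison would not go through without the corrections above.
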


%PROOF OF LEMMA 05
\begin{proof}
As $G$ is unipotent, for every $n$ there exists an integer $m_n$ such that $ V^{m_n}G_n=0 $ and therefore any homomorphism $ W[F^n]\to G_n $ factors through the quotient $W[F^n]\onto W_{m_n,n} $. This proves the first part of the proposition. For the second part, we first show that there is a canonical isomorphism $ \BW/F^n\cong W[F^n] $. For every $ m>n $ and every $i$, we have an exact sequence \[ W_{i,m}\arrover{F^n} W_{i,m}\longto W_{i,n}\longto 0. \] Now taking the inverse limit over all $i$ and $m>n$ we obtain the following exact sequence (note that the Mittag-Leffler condition is satisfied): \[ \uset{i,m}{\invlim}  \,W_{i,m}\arrover{F^n}\unset{i,m}{\invlim}\, W_{i,m}\longto \unset{i}{\invlim}\, W_{i,n}\longto 0 \] which is isomorphic to the following exact sequence: \[ \BW\arrover{F^n}\BW\longto W[F^n]\longto 0 \] and this proves the claim.\\

Now, we show that $ \uset{n}{\invlim}\,\Hom(W[F^n],G_n)\cong M(G) $, which will prove the proposition, using the above isomorphism and the fact that the above isomorphisms are compatible with the transition homomorphisms in the inverse systems $ \{W[F^n]\}_n $ and $ \{\BW/F^n\}_n $. We have from the first statement of the proposition that 
\begin{myequation}
\label{Frob}
\uset{n}{\invlim}\,\Hom(W[F^n],G_n)\cong \uset{n}{\invlim}\,\Hom(W_{m_n,n},G_n).
\end{myequation}
We also have
\[
M(G)=\Hom(\bigcup_{m,n}W_{m,n},G)\cong \uset{n}{\invlim}\,\Hom(\bigcup_m W_{m,n},G)\cong\]
\begin{myequation}
\label{Cartier}
\uset{n}{\invlim}\,\Hom(\bigcup_m W_{m,n},G_n)\cong \uset{n}{\invlim}\,\Hom(W_{m_n,n},G_n),
\end{myequation}
where the second isomorphism follows from the fact that $ W_{m,n} $ is annihilated by $ p^n $ and therefore every homomorphism $ W_{m,n}\to G $ factors through the inclusion $ G_n\into G $ and the last isomorphism follows from the fact that $ V^{m_n}G_n=0 $. It follows from \eqref{Frob} and \eqref{Cartier} that $ \uset{n}{\invlim}\,\Hom(W[F^n],G_n)\cong M(G) $.
\end{proof}

%LEMMA 06
\begin{lem}
% \label{lem06}
For all $n$, the following sequences are exact: \[ G_n\arrover{F^n} G_n^{(p^n)}\arrover{V^n}G_n, \quad G_n^{(p^n)}\arrover{V^n} G_n\arrover{F^n}G_n^{(p^n)}. \] Consequently, we have the following exact sequences: \[ D_*(G)/p^n\arrover{V^n}D_*(G)/p^n\arrover{F^n}D_*(G)/p^n,\] \[D_*(G)/p^n\arrover{F^n}D_*(G)/p^n\arrover{V^n}D_*(G)/p^n.\]
\end{lem}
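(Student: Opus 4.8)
The plan is to establish the exactness of the two sequences of group schemes first, by a direct argument on the $p$-divisible group $G$, and then to deduce the two sequences of Dieudonn\'e modules, either by applying the (exact) covariant Dieudonn\'e functor or, more transparently, by a short piece of linear algebra over $W(k)$. Note that nothing in the argument uses that $G$ is connected, only that it is a $p$-divisible group over the perfect field $k$.

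For the group-scheme sequences, the inputs I would use are: since $G$ is $p$-divisible, the relative Frobenius $F^n\colon G\to G^{(p^n)}$ and the Verschiebung $V^n\colon G^{(p^n)}\to G$ are isogenies, hence fppf epimorphisms (indeed $F^n\circ V^n=p^n$ on $G^{(p^n)}$ and $V^n\circ F^n=p^n$ on $G$ are fppf epimorphisms, so $F^n$ and $V^n$ are epimorphisms), and $p^n\colon G\to G$ has kernel $G_n$. From $V^n\circ F^n=p^n$ one gets $\kernel(F^n\colon G\to G^{(p^n)})\subseteq G_n$ and $F^n(G_n)\subseteq G_n^{(p^n)}$ (using that $F$ commutes with multiplication by $p$); symmetrically $\kernel(V^n\colon G^{(p^n)}\to G)\subseteq G_n^{(p^n)}$ and $V^n(G_n^{(p^n)})\subseteq G_n$. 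The key point is then that $F^n$ restricts to an fppf epimorphism $G_n\to\kernel(V^n\colon G^{(p^n)}\to G)$: for an $S$-point $z$ of the target there is, fppf-locally on $S$, a point $x$ of $G$ with $F^n(x)=z$, and then $p^nx=V^nF^n(x)=V^n(z)=0$, so $x$ lies in $G_n$. Since $\kernel(V^n\colon G^{(p^n)}\to G)=\kernel(V^n\colon G_n^{(p^n)}\to G_n)$ by the inclusion just noted, this gives $\image(F^n\colon G_n\to G_n^{(p^n)})=\kernel(V^n\colon G_n^{(p^n)}\to G_n)$, i.e.\ exactness of $G_n\xrightarrow{F^n}G_n^{(p^n)}\xrightarrow{V^n}G_n$ at the middle term. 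Interchanging the roles of $F$ and $V$ (and of $G$ and $G^{(p^n)}$) yields exactness of $G_n^{(p^n)}\xrightarrow{V^n}G_n\xrightarrow{F^n}G_n^{(p^n)}$.

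For the Dieudonn\'e-module sequences, write $M:=D_*(G)$, which is a free $W(k)$-module (classical Dieudonn\'e theory over the perfect field $k$), with $\sigma$-linear $F$, $\sigma^{-1}$-linear $V$, and $FV=VF=p$, whence $F^nV^n=V^nF^n=p^n$ on $M$; the sequences in question are $M/p^nM\xrightarrow{V^n}M/p^nM\xrightarrow{F^n}M/p^nM$ and the analogous one with $F$ and $V$ interchanged. The inclusions $\image(V^n)\subseteq\kernel(F^n)$ and $\image(F^n)\subseteq\kernel(V^n)$ in $M/p^nM$ are immediate from $F^nV^n=V^nF^n=p^n$. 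Conversely, if $m\in M$ satisfies $F^nm\in p^nM$, say $F^nm=p^nm'$, then $p^nm=V^nF^nm=p^nV^nm'$, and torsion-freeness of $M$ over $W(k)$ forces $m=V^nm'$, so the class of $m$ lies in $\image(V^n)$ modulo $p^n$; the reverse inclusion for $V^n$ is symmetric. This proves both sequences exact; alternatively one obtains them by applying the exact covariant Dieudonn\'e functor to the first part, using $D_*(G_n)\cong M/p^nM$ together with the identification of $D_*(G_n^{(p^n)})$ with $M/p^nM$ carrying $D_*(F^n)$ and $D_*(V^n)$ to the operators $F^n$ and $V^n$.

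The only genuinely delicate point is bookkeeping: in the first part, checking carefully that the fppf image of $F^n|_{G_n}$ really is $\kernel(V^n\colon G_n^{(p^n)}\to G_n)$, i.e.\ that $F^n|_{G_n}$ is an fppf epimorphism onto it; and, if one routes the second part through $D_*$ rather than through the linear algebra above, pinning down the compatibility of $D_*$ with the relative Frobenius and with the Frobenius twist so that no extraneous $\sigma$-twist survives in the final sequences. The direct module argument avoids the latter entirely, and everything else is immediate from the $p$-divisibility of $G$ and the relations $F^nV^n=V^nF^n=p^n$.
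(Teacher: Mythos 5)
Your proof is correct and follows essentially the same route as the paper: exactness of the group-scheme sequences comes from $V^nF^n=F^nV^n=p^n$ together with an fppf lifting argument (you lift along the epimorphism $F^n\colon G\to G^{(p^n)}$, the paper along $p^n\colon G_{2n}\to G_n$ --- the same idea), and the module sequences then follow. Your direct verification of the module part via torsion-freeness of $D_*(G)$ over $W(k)$ is a slightly more self-contained alternative to the paper's one-line appeal to exactness of $D_*$ and the identification $D_*(G_n)\cong D_*(G)/p^n$, and it correctly sidesteps the twist bookkeeping you flag.
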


%PROOF OF LEMMA 06
\begin{proof}
We show the exactness of the sequence $$G_n\arrover{F^n} G_n^{(p^n)}\arrover{V^n}G_n$$ and the exactness of the other sequence is proved similarly. Since $ p^nG_n=0 $ and $ V^n\circ F^n=p^n $, the homomorphism $ F^n:G_n\to G_n^{(p^n)} $ factors through the inclusion $ \kernel(V^n)\into G_n^{(p^n)} $. Now, take a scheme $ X $ over $k$ and an element $ x\in \kernel(V^n)(X) $. The homomorphism $ p^n:G_{2n}\to G_n$ is an epimorphism, and so there exists an fppf cover $ Y \to X $ and an element $ y\in G_{2n}(Y)$ such that $ p^ny=x|_Y $, where by $ |_Y $ we mean the restriction homomorphism $ G_n(X)\to G_n(Y) $. We have $ p^n(V^ny)=V^n(p^ny)=V^n(x|_Y)=V^n(x)|_Y=0 $, which implies that $ V^ny\in G_n(Y) $. As $ x|_Y=p^ny=F^n(V^ny) $, we deduce that the homomorphism $ F^n:G_n\to \kernel(V^n) $ is an epimorphism, and therefore the sequence $$G_n\arrover{F^n} G_n^{(p^n)}\arrover{V^n}G_n$$ is exact.\\

Now, applying the Dieudonn\'e functor on this sequence, and identifying $ D_*(G_n) $ with $ D_*(G)/p^n $, we obtain the exact sequence $$D_*(G)/p^n\arrover{V^n}D_*(G)/p^n\arrover{F^n}D_*(G)/p^n.$$
\end{proof}

%REMARK 08
\begin{rem}
\label{rem0 8}
Note that since $ \BW/F^n $ is annihilated by $ F^n $, any homomorphism  $h:\BW/F^n\to G_n $ factors through the kernel of $ F^n $, i.e., through $ G_n[F^n] $. But according to the previous lemma, $ G_n[F^n]=V^nG_n^{(p^n)} $, and therefore we can view $h$ as a homomorphism $ \BW/F^n\to V^nG_n^{(p^n)} $.
\end{rem}

%LEMMA 02
\begin{lem}
\label{lem02}
Assume that $G$ is local. Then for all $n$, there exists an $n'>n$, such that $p^{n'-n}(G_{n'}[V^{n'}])=0$.
\end{lem}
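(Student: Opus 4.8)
The plan is to pass to the covariant Dieudonné module $M := D_*(G)$, a free $W(k)$-module of finite rank $h$ carrying the $\sigma$-semilinear Frobenius $F$ and the $\sigma^{-1}$-semilinear Verschiebung $V$ with $FV = VF = p$. Since $G$ is local, the $3n$-display attached to $M$ is nilpotent (Construction \ref{cons017}), which means precisely that $V$ is topologically nilpotent on $M$: there is an integer $N \geq 1$ with $V^N M \subseteq pM$. Because $V$ is $\sigma^{-1}$-semilinear and $\sigma^{-1}(p) = p$, we have $V(p^i M) = p^i V(M)$ for every $i$, so by iteration $V^{kN} M \subseteq p^k M$ for all $k \geq 1$; in particular $V^{n'}M \subseteq p^n M$ whenever $n' \geq nN$. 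This is the only place the hypothesis that $G$ is local is used.

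Next I would compute the covariant Dieudonné module of the finite group scheme $G_{n'}[V^{n'}]$. Viewing $V^{n'}$ as the isogeny $(G_{n'})^{(p^{n'})} \to G_{n'}$ and identifying $(G_{n'})^{(p^{n'})}$ with $G_{n'}$ via $\sigma^{n'}$ (legitimate since $k$ is perfect), the exact sequences recalled just before Remark \ref{rem0 8} give $\operatorname{im}(V^{n'}) = G_{n'}[F^{n'}]$, hence a short exact sequence
\[ 0 \longrightarrow G_{n'}[V^{n'}] \longrightarrow (G_{n'})^{(p^{n'})} \longrightarrow G_{n'}[F^{n'}] \longrightarrow 0. \]
Applying the exact covariant Dieudonné functor, and recalling that it interchanges the Frobenius and Verschiebung of group schemes with the operators $V$ and $F$ on modules respectively, one obtains $D_*(G_{n'}[V^{n'}]) = \ker\bigl(F^{n'} : M/p^{n'}M \to M/p^{n'}M\bigr)$. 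A short computation with $F^{n'}V^{n'} = V^{n'}F^{n'} = p^{n'}$ on the torsion-free module $M$ shows that $\{x \in M : F^{n'}x \in p^{n'}M\} = V^{n'}M$, and since $p^{n'}M = F^{n'}V^{n'}M \subseteq V^{n'}M$ this kernel is exactly $V^{n'}M / p^{n'}M$.

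Finally I would combine the two steps. Given $n$, set $n' := \max(nN,\, n+1)$, so that $n' > n$ and $V^{n'}M \subseteq p^n M$ by the first step; hence $p^{n'-n}\bigl(V^{n'}M/p^{n'}M\bigr) = 0$, i.e.\ the endomorphism $p^{n'-n}$ annihilates $D_*(G_{n'}[V^{n'}])$. Since the covariant Dieudonné functor is an equivalence of categories, in particular faithful, $p^{n'-n}$ then annihilates $G_{n'}[V^{n'}]$ itself, which is the assertion.

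The step I expect to be delicate is the identification $D_*(G_{n'}[V^{n'}]) = V^{n'}M/p^{n'}M$ in the second paragraph: one must get the Frobenius twists and the direction of the Dieudonné correspondence right, so that $G_{n'}[V^{n'}]$ corresponds to $V^{n'}M/p^{n'}M$ rather than to the other natural submodule $F^{n'}M/p^{n'}M$ (only the former shrinks under multiplication by $p$, and only it is killed by the desired bounded power of $p$). As a running sanity check I would use $G = \mu_{p^\infty}$, where $M = W(k)$ with $F = \sigma$ and $V = p\sigma^{-1}$: here the Verschiebung of $\mu_{p^{n'}}$ is an isomorphism so $G_{n'}[V^{n'}] = 0$, and indeed $V^{n'}M/p^{n'}M = p^{n'}M/p^{n'}M = 0$, consistently.
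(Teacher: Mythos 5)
Your argument is correct, and it takes the Dieudonn\'e-module route where the paper stays entirely on the group-scheme side. The paper's proof is three lines: choose $n'>n$ with $F^{n'}G_n=0$ (possible since $G$ is local and $G_n$ is finite); by the preceding lemma $G_{n'}[V^{n'}]$ is the image of $F^{n'}$ on the relevant Frobenius twist of $G_{n'}$; and since multiplication by $p^{n'-n}$ carries $G_{n'}$ onto $G_n$, one gets $p^{n'-n}G_{n'}[V^{n'}]=F^{n'}G_n^{(p^{-n'})}=0$. Your proof is the exact mirror image of this under the covariant Dieudonn\'e functor: the paper's condition $F^{n'}G_n=0$ is precisely your $V^{n'}M\subseteq p^nM$, and its identity $G_{n'}[V^{n'}]=\image(F^{n'})$ becomes your $D_*(G_{n'}[V^{n'}])\cong V^{n'}M/p^{n'}M$. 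The translation costs you the identification you yourself flag as delicate, which you carry out correctly --- though you should state explicitly that the computation $\{x\in M: F^{n'}x\in p^{n'}M\}=V^{n'}M$ uses the injectivity of $F^{n'}$ on the torsion-free module $M$ (which follows from $V^{n'}F^{n'}=p^{n'}$). What it buys you is a uniform, explicit bound $n'\geq nN$ in terms of the single constant $N$ with $V^NM\subseteq pM$, whereas the paper's $n'$ is produced afresh for each $n$. One small simplification: rather than routing the topological nilpotence of $V$ through the display formalism of Construction \ref{cons017}, you can get $V^{n'}M\subseteq p^nM$ directly from the fact that the finite local group scheme $G_n$ is killed by a power of its Frobenius (this is exactly how Proposition \ref{prop0 15} argues), which keeps the whole proof inside Dieudonn\'e theory.
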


%PROOF OF LEMMA 02
\begin{proof}
Since $ G $ is local, for every $n$ there exists an $ n'>n $ such that $ F^{n'}G_n=0 $. Using the previous lemma, we have $ G_{n'}[V^{n'}]=F^{n'}G_{n'}^{(p^{-n'})} $ and so $$ p^{n'-n}(G_{n'}[V^{n'}])=p^{n'-n}F^{n'}G_{n'}^{(p^{-n'})}=F^{n'}p^{n'-n}G_{n'}^{(p^{-n'})}=F^{n'}G_n^{(p^{-n'})} =0 .$$
\end{proof}

%LEMMA 07
\begin{lem}
\label{lem07}
Assume that $G$ is local. Then for every $n$, there exists an $n'>n$ and a homomorphism $ r_{n',n}:V^{n'}G_{n'}\to G_n $ making the following diagram commutative: \[ \xymatrix{G_{n'}\ar@{->>}[rr]^{V^{n'}}\ar@{->>}[dr]_{p^{n'-n}}&&V^{n'}G_{n'}\ar@{-->}[dl]^{r_{n',n}}\\ &G_n.&} \]
\end{lem}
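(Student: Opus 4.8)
The plan is to obtain Lemma~\ref{lem07} as a direct consequence of Lemma~\ref{lem02}, which is where the hypothesis that $G$ be local is used. First I would invoke Lemma~\ref{lem02} to fix, for the given $n$, a natural number $n'>n$ such that $p^{n'-n}$ annihilates $G_{n'}[V^{n'}]=\kernel(V^{n'}\colon G_{n'}\to G_{n'})$. Here, as in Remark~\ref{rem0 8}, I regard $V^{n'}$ as a morphism and $V^{n'}G_{n'}$ as its image; over the perfect field $k$ the Frobenius twists $G_{n'}^{(p^{n'})}$ appearing in the definition of Verschiebung are abstractly isomorphic to $G_{n'}$, so this causes no trouble. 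By definition of the image, $V^{n'}\colon G_{n'}\onto V^{n'}G_{n'}$ is an epimorphism of fppf sheaves of Abelian groups, and since an exact sequence of group schemes is by our conventions an exact sequence on the fppf site, the sequence
\[
0\longrightarrow G_{n'}[V^{n'}]\longrightarrow G_{n'}\overset{V^{n'}}{\longrightarrow} V^{n'}G_{n'}\longrightarrow 0
\]
is exact; in particular $V^{n'}G_{n'}$ is the cokernel of the inclusion $G_{n'}[V^{n'}]\into G_{n'}$.

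Next I would produce the homomorphism $p^{n'-n}\colon G_{n'}\onto G_n$: for any $k$-scheme $T$ and any $x\in G_{n'}(T)$ one has $p^n(p^{n'-n}x)=p^{n'}x=0$, so multiplication by $p^{n'-n}$ on $G$ restricts to a homomorphism $G_{n'}\to G_n$, and the same argument as in Remark~\ref{rem 9}, using the $p$-divisibility of $G$, shows it is an epimorphism (an fppf-local preimage of a section of $G_n$ under $p^{n'-n}\colon G\onto G$ automatically lies in $G_{n'}$). This is the epimorphism displayed in the diagram.

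Finally, by the choice of $n'$ the composite $G_{n'}[V^{n'}]\into G_{n'}\to G_n$, whose second arrow is $p^{n'-n}$, is the zero homomorphism. Hence the universal property of the cokernel $V^{n'}G_{n'}=G_{n'}/G_{n'}[V^{n'}]$ yields a unique homomorphism $r_{n',n}\colon V^{n'}G_{n'}\to G_n$ with $r_{n',n}\circ V^{n'}=p^{n'-n}$, which is exactly the asserted commutativity of the triangle. I do not anticipate a serious obstacle: the only point that needs care is the passage to the category of fppf sheaves to legitimise the factorisation through $V^{n'}G_{n'}$, and this is supplied precisely by the convention that exact sequences of group schemes are exact on the fppf site, together with the harmless identification of Frobenius twists over the perfect base field $k$.
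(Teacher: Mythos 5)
Your proof is correct and follows exactly the paper's route: choose $n'$ via Lemma \ref{lem02} so that $p^{n'-n}$ kills $G_{n'}[V^{n'}]$, and then factor the epimorphism $p^{n'-n}\colon G_{n'}\onto G_n$ through the quotient $V^{n'}G_{n'}=G_{n'}/G_{n'}[V^{n'}]$. The extra care you take with the fppf-sheaf cokernel and the Frobenius twists is a harmless elaboration of what the paper leaves implicit.
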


%PROOF OF LEMMA 07
\begin{proof}
Since the homomorphism $ V^{n'}:G_{n'}\to V^{n'}G_{n'}$ is an epimorphism, it is enough to show that there exists an integer $ n'> n $ such that the homomorphism $ p^{n'-n}:G_{n'}\to G_n $ vanishes on the kernel of $ V^{n'} $, i.e., on $ G_{n'}[V^{n'}]$. The existence of such an $ n' $ is guaranteed by previous lemma.
\end{proof}

%REMARK 04
\begin{rem}
\label{rem0 4}
$  $
\begin{itemize}
 \item[1)] As $p^{n'-n}:G_{n'}\onto G_n$ is an epimorphism, the map $r_{n',n}:V^{n'}G_{n'}\to G_n $ is an epimorphism as well.
 \item[2)] Since the map $V^{n'}:G_{n'}\to V^{n'}G_{n'}$ is an epimorphism, any two maps $V^{n'}G_{n'}\to G_n $, making the diagram in the previous lemma commutative, are equal.
\end{itemize}
\end{rem}

%CONSTRUCTION 08
\begin{cons}
\label{cons08}
For every $n$, the morphism $V^n:\BW/F^n\to \BW/p^n$ induces a homomorphism $$ \Hom(\BW/p^n, G_n) \arrover{(\_)\circ V^n}\Hom(\BW/F^n, G_n).$$ Since the diagram \[ \xymatrix{\BW/F^n\ar[r]^{V^n}\ar[d]_F&\BW/p^n\ar[d]^p\\ \BW/F^{n+1}\ar[r]_{V^{n+1}}&\BW/p^{n+1}} \] is commutative for every $n$, we obtain a homomorphism \[\bigvee: \uset{n}{\invlim}\,\Hom(\BW/p^n,G_n) \arrover{(\_)\circ V^n}\uset{n}{\invlim}\,\Hom(\BW/F^n,G_n). \]
\end{cons}

%LEMMA 09
\begin{lem}
\label{lem09}
Assume that $G$ is local and that we have a system of maps $\delta_m: \BW/V^m \to G_m$ (for every $m$), such that for every pair of integers $n> m>0$, the following diagram commutes : \[ \xymatrix{\BW/V^n\ar@{->>}[d]\ar[r]^{\delta_n}&G_n\ar@{->>}[d]^{p^{n-m}}\\ \BW/V^m\ar[r]^{\delta_m}&G_m.} \] Then for every $m$, the map $ \delta_m $ is the zero map.
\end{lem}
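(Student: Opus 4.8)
The plan is to translate the coherent system $(\delta_m)_m$ into covariant Dieudonn\'e modules and show that it yields the zero element of $D_*(G)$.

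\emph{Reduction and set-up.} Since $\BW$, and hence each quotient $\BW/V^m$, is a pro-object of the category of connected unipotent finite group schemes over $k$, and $\Hom(\BW,H)=0$ whenever $H$ is \'etale or of multiplicative type, each $\delta_m$ factors uniquely through the local-local part $G'$ of $G$; moreover $G'$ is itself a local-local $p$-divisible group and the hypotheses of the lemma restrict to the corresponding system for $G'$. Hence we may assume $G$ is local-local. Write $D:=D_*(G)$, a finitely generated free $W(k)$-module that is $p$-adically complete and separated, with $F$ and $V$ the $\sigma$- and $\sigma^{-1}$-linear operators satisfying $FV=VF=p$. From $pD=F(VD)\subseteq FD$ the cokernel $D/FD$ is a quotient of the finite-length module $D/pD$, so $F$ is injective (as $D$ is torsion free); likewise $V$ is injective. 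Finally, because $G$ is connected, $V$ is topologically nilpotent on $D$: there is an $N$ with $V^ND\subseteq pD$, hence $V^{Nk}D\subseteq p^kD$ for all $k$ (cf.\ Construction \ref{cons017}).

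\emph{Translating the system.} For local-local $G_m$ one has the canonical identification $\Hom(\BW,G_m)\cong D_*(G_m)=D/p^mD$, under which the operator $F$ on $D/p^mD$ is precomposition with $V_{\BW}$. The exact sequence $\BW\overset{V_{\BW}^m}{\longrightarrow}\BW\to\BW/V^m\to 0$ together with left exactness of $\Hom(-,G_m)$ gives $\Hom(\BW/V^m,G_m)\cong\{u\in D/p^mD: F^mu=0\}$, so $\delta_m$ corresponds to a unique $u_m\in D/p^mD$ with $F^mu_m=0$, and $\delta_m=0$ iff $u_m=0$. The projection $\BW/V^n\onto\BW/V^m$ is induced by $\Id_{\BW}$, and $p^{n-m}\colon G_n\to G_m$ sits in the exact sequence $0\to G_{n-m}\to G_n\overset{p^{n-m}}{\longrightarrow}G_m\to 0$; applying the exact functor $D_*$ shows that $p^{n-m}$ induces the canonical projection $D/p^nD\to D/p^mD$. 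Therefore the commuting square of the lemma says precisely $u_n\equiv u_m\pmod{p^mD}$ for all $n>m$, i.e.\ $(u_m)_m$ is coherent and defines an element $u\in\invlim_m D/p^mD=D$ with $F^mu\in p^mD$ for every $m$.

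\emph{Conclusion.} Using $p^m=F^mV^m$ and the commutativity of $F$ and $V$, we get $F^mu\in p^mD=F^m(V^mD)$, so $F^m(u-V^md)=0$ for some $d\in D$, whence $u\in V^mD$ by injectivity of $F$. As this holds for all $m$, $u\in\bigcap_m V^mD\subseteq\bigcap_k p^kD=0$, the inclusion by $V^{Nk}D\subseteq p^kD$ and the equality by $p$-adic separatedness of $D$. Hence $u=0$, so every $u_m=0$, so every $\delta_m=0$.

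\emph{Main difficulty.} The work here is bookkeeping rather than conceptual: one must pin down the Dieudonn\'e dictionary carefully, namely that precomposition with the Verschiebung of $\BW$ realizes the operator $F$ on the covariant module $\Hom(\BW,-)$, and that the transition map $p^{n-m}\colon G_n\to G_m$ induces the \emph{canonical} projection on Dieudonn\'e modules and not merely some surjection with the correct kernel. The hypothesis that $G$ is local enters only through the topological nilpotence of $V$ on $D$, which is exactly what forces $\bigcap_m V^mD=0$; for \'etale $G$ the statement holds vacuously since then $\Hom(\BW,G_m)=0$.
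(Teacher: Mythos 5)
Your proof is correct, but it follows a genuinely different route from the paper's. The paper argues entirely at the level of group schemes: since $\BW/V^n$ is annihilated by $V^n$, the map $\delta_n$ factors through $G_n[V^n]$, and Lemma \ref{lem02} (for local $G$, the map $p^{n-m}$ annihilates $G_n[V^n]$ once $n\gg m$) forces $\delta_m\circ(\BW/V^n\onto\BW/V^m)=p^{n-m}\circ\delta_n=0$; as $\BW/V^n\onto\BW/V^m$ is an epimorphism, $\delta_m=0$. That argument is three lines, uses no Dieudonn\'e theory, and treats local $G$ directly, without splitting off the multiplicative part. Your version instead pushes everything through the covariant Dieudonn\'e dictionary: after reducing to the local-local case (a harmless step, since any map from the pro-local-local object $\BW/V^m$ to a multiplicative group vanishes), you assemble the $\delta_m$ into a single element $u\in D=\invlim_m D/p^mD$ with $F^mu\in p^mD$ for all $m$, deduce $u\in\bigcap_m V^mD$ from the injectivity of $F$, and kill it by the topological nilpotence of $V$. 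This costs more bookkeeping --- precisely the two identifications you flag, that $F$ acts on $\Hom(\BW,-)$ by precomposition with the Verschiebung of $\BW$, and that $p^{n-m}\colon G_n\to G_m$ induces the canonical projection $D/p^nD\to D/p^mD$ --- but it isolates cleanly where locality of $G$ enters, namely in $\bigcap_m V^mD=0$, which is the module-theoretic counterpart of the paper's Lemma \ref{lem02}. Both arguments are sound; the paper's is the more economical given that Lemma \ref{lem02} is already in place.
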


%PROOF OF LEMMA 09
\begin{proof}
The group scheme $ \BW/V^n $ is annihilated by $ V^n $ and therefore the homomorphism $ \delta_n $ factors through the inclusion $ G_n[V^n]\into G_n $ and we can rewrite the top square of the above diagram as follows:\[ \xymatrix{\BW/V^n\ar@{->>}[d]\ar[r]^{\delta_n}&G_n[V^n]\ar[d]^{p^{n-m}}\\ \BW/V^m\ar[r]^{\delta_m}&G_m.}\] This holds for every pair of natural numbers $n>m  $. If we fix $m$ and choose $n\gg m$ according to Lemma \ref{lem02}, such that $ p^{n-m}G_n[V^n]=0 $, the composition $$ \BW/V^n\arrover{\delta_n} G_n[V^n]\arrover{p^{n-m}}G_m $$ will be zero and so the composition $$ \BW/V^n\onto\BW/V^m\arrover{\delta_m}G_m$$ is zero as well. It implies that $ \delta_m=0 $, because $ \BW/V^n\onto\BW/V^m $ is an epimorphism.
\end{proof}

\begin{comment}
%COROLLARY 02
\begin{cor}
\label{cor02}
Assume that $G$ is local. Then the homomorphism $$ \bigvee: \uset{n}{\invlim}\,\Hom(\BW/p^n,G_n) \to \uset{n}{\invlim}\,\Hom(\BW/F^n,G_n) $$ is injective.
\end{cor}

%PROOF OF COROLLARY 02
\begin{proof}
\end{proof}
\end{comment}

%CONSTRUCTION 09
\begin{cons}
\label{cons0 9}
Assume that $G$ is local and that we are given an element $(\gamma_i)$ of the group $\uset{i}{\invlim}\,\Hom(\BW/F^i,G_i)$. Fix a natural number $n$. Choose a natural number $n'$ and the map $ r_{n',n}:V^{n'}G_{n'}\to G_n $ satisfying the statement of the Lemma \ref{lem07}. Now, consider the following composition \[\BW/p^{n'}\onto \BW/F^{n'}\arrover{\gamma_{n'}^{(p^{-n'})}}V^{n'}G_{n'}\ontoover{r_{n',n}} G_n\] where $\gamma_{n'}^{(p^{-n'})}$ is the twist of $\gamma_{n'}$ by the Frobenius to the power $-n'$ (note that the base field is perfect). Since the group scheme $G_n$ is annihilated by $p^n$, this composition factors through the quotient $\BW/p^{n'}\onto \BW/p^n$ and therefore we obtain a map $\gamma^{\sharp}_{n',n}:\BW/p^n\to G_n$ which a priori depends on both $n'$ and $n$.
\end{cons}

%LEMMA 022
\begin{lem}
\label{lem0 22}
Let $(\gamma_i)$ be an element of the group $\uset{i}{\invlim}\,\Hom(\BW/F^i,G_i)$. Then for every pair of natural numbers $n>m$, the following diagram commutes:\[ \xymatrix{\BW/p^{n}\ar@{->>}[d]\ar@{->>}[r]&\BW/F^{n}\ar[rr]^{\gamma_{n}^{(p^{-n})}}&&V^{n}G_{n}\ar@{^{(}->}[rr]&&G_{n}^{(p^{-n})}\ar[d]^{F^{n-m}}\\\BW/p^{m}\ar@{->>}[r]&\BW/F^{m}\ar[rr]_{\gamma_{m}^{(p^{-m})}}&&V^{m}G_{m}\ar@{^{(}->}[r]&G_m^{(p^{-m})}\ar@{^{(}->}[r]&G_{n}^{(p^{-m})}.} \]
\end{lem}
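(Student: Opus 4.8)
The plan is to derive the commutativity of the diagram from the single fact that $(\gamma_i)$ lies in $\invlim_i\Hom(\BW/F^i,G_i)$, together with functoriality of the relative Frobenius. The first step is a reduction: since $F^nV^n=p^n$ on $\BW$, one has $\image(p^n)\subseteq\image(F^n)\subseteq\image(F^m)$ for $n\geq m$, so the natural epimorphisms $\BW/p^n\onto\BW/F^n\onto\BW/F^m$ are defined; consequently the top-row composite of the diagram factors through $\BW/p^n\onto\BW/F^n$ tautologically, while the left-hand vertical composite $\BW/p^n\onto\BW/p^m\onto\BW/F^m$ factors as $\BW/p^n\onto\BW/F^n\xrightarrow{q}\BW/F^m$. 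As $\BW/p^n\onto\BW/F^n$ is an epimorphism, it suffices to show that the two induced morphisms $\BW/F^n\to G_n^{(p^{-m})}$ agree, i.e.\ that
\[
F^{n-m}\circ\iota_n\circ\gamma_n^{(p^{-n})} \;=\; \iota_{m,n}\circ\gamma_m^{(p^{-m})}\circ q ,
\]
where $q:\BW/F^n\onto\BW/F^m$ is the natural quotient, $\iota_n:V^nG_n\into G_n^{(p^{-n})}$ and $\iota_{m,n}:V^mG_m\into G_m^{(p^{-m})}\into G_n^{(p^{-m})}$ are the inclusions, and $F^{n-m}$ on the left is the $(n-m)$-fold relative Frobenius of $G_n^{(p^{-n})}$.

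The key input is the inverse-limit relation. By the description of the transition maps in Construction \ref{cons010}, membership of $(\gamma_i)$ in $\invlim_i\Hom(\BW/F^i,G_i)$ says, after iterating $n-m$ times, that $\iota'_{m,n}\circ\gamma_m=\gamma_n\circ F^{n-m}$ as morphisms $\BW/F^m\to G_n$, where $\gamma_m,\gamma_n$ are now viewed as valued in $G_m,G_n$, $\iota'_{m,n}:G_m\into G_n$ is the inclusion, and $F^{n-m}:\BW/F^m\to\BW/F^n$ is induced by the Witt Frobenius. Moreover, by Remark \ref{rem0 8} together with the exact sequences $G_i\xrightarrow{F^i}G_i^{(p^i)}\xrightarrow{V^i}G_i$ recorded just above it, each $\gamma_i$ factors through $G_i[F^i]=V^iG_i^{(p^i)}$; twisting by $p^{-i}$ is exactly what makes $\gamma_i^{(p^{-i})}$ take values in $V^iG_i\subseteq G_i^{(p^{-i})}$, and this refined factorization remains compatible with the transition maps.

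Finally, I would apply the Frobenius-twist functor $(-)^{(p^n)}$, an equivalence since $k$ is perfect, to both sides of the displayed identity, using: (a) functoriality of the relative Frobenius, $F^{n-m}_{\mathrm{rel}}\circ f=f^{(p^{n-m})}\circ F^{n-m}_{\mathrm{rel}}$ for any $k$-morphism $f$; and (b) the fact that $\BW$, hence $\BW/F^n$, descends to $\BF_p$, so that $(\BW/F^n)^{(p)}\cong\BW/F^n$ canonically and, under this identification, the relative Frobenius of $\BW/F^n$ is the Witt Frobenius, which factors as $\BW/F^n\xrightarrow{q}\BW/F^m\xrightarrow{F^{n-m}}\BW/F^n$. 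After twisting, $\gamma_n^{(p^{-n})}$ becomes $\gamma_n$ (valued in $G_n[F^n]$), $\iota_n$ becomes the inclusion $G_n[F^n]\into G_n$, and the target-side $F^{n-m}$ becomes the relative Frobenius of $G_n$; hence by (a) the left-hand side becomes $(\mathrm{incl}\circ\gamma_n)^{(p^{n-m})}\circ F^{n-m}_{\mathrm{rel},\BW/F^n}$. By the twisted form of the relation from the previous paragraph, together with (b), the right-hand side becomes $\gamma_n^{(p^{n-m})}\circ F^{n-m}\circ q=\gamma_n^{(p^{n-m})}\circ F^{n-m}_{\mathrm{rel},\BW/F^n}$. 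The two expressions coincide, which proves the lemma.

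The main obstacle is purely organizational: the diagram genuinely mixes three Frobenius twists ($0$, $-m$, $-n$), so no single twist trivializes it, and one must carefully track the canonical isomorphisms $X^{(p)}\cong X$ for $X$ defined over $\BF_p$, as well as the compatibility of $F$ and $V$ with twisting, in order to match the relative Frobenius appearing on the target side ($F^{n-m}:G_n^{(p^{-n})}\to G_n^{(p^{-m})}$) with the Witt Frobenius coming from the inverse-limit condition on the source side. Once the twists are aligned, the argument is just functoriality of the relative Frobenius plus the definition of the transition maps in Construction \ref{cons010}.
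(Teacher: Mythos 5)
Your proof is correct and is essentially the paper's argument: both rest on the transition relation $\iota\circ\gamma_m=\gamma_n\circ F^{n-m}$ from Construction \ref{cons010}, the naturality of the relative Frobenius applied to $\gamma_n$, and the factorization of the Witt Frobenius of $\BW/F^n$ through the quotient $\BW/F^n\onto\BW/F^m$. The only difference is bookkeeping — you normalize all twists to zero by applying $(-)^{(p^n)}$, whereas the paper works directly at twists $-n$ and $-m$ and splits the same identity into a bottom square (the twisted transition relation) and an outer rectangle (Frobenius naturality).
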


%PROOF OF LEMMA 022
\begin{proof}
By assumption, $ (\gamma_i)$ is an element of the group $\uset{i}{\invlim}\,\Hom(\BW/F^i,G_i) $, and thus in the diagram \[ \xymatrix{\BW/F^{n}\ar@{->>}[d]\ar[rr]^{\gamma_{n}^{(p^{-n})}}&&G_{n}^{(p^{-n})}\ar[d]^{F^{n-m}}\\\BW/F^m\ar[r]^{\gamma_{m}^{(p^{-m})}}\ar[d]_{F^{n-m}}&G_{m}^{(p^{-m})}\ar@{^{(}->}[r]&G_{n}^{(p^{-m})}\ar@{=}[d]\\\BW/F^{n}\ar[rr]_{\gamma_{n}^{(p^{-m})}}&&G_{n}^{(p^{-m})},} \] the bottom square commutes. Since the homomorphism $ \BW/F^{n}\onto \BW/F^m $ is an epimorphism and the outer diagram is commutative (due to the fact that $ \gamma_{n}^{(p^{-n})} $ is a group schemes homomorphism and so commutes with Frobenius), the top diagram commutes as well. Finally, the diagram \[ \xymatrix{\BW/p^{n}\ar@{->>}[d]\ar@{->>}[r]&\BW/F^{n}\ar@{->>}[d]\\\BW/p^{m}\ar@{->>}[r]&\BW/F^{m}} \] being commutative, it follows that the diagram in the statement if the lemma is commutative and this finishes the proof.
\end{proof}

%LEMMA 011
\begin{lem}
\label{lem0 11}
Assume that $G$ is local. Then the map $\gamma^{\sharp}_{n',n}:\BW/p^n\to G_n$ does not depend on the choice of $n'$.
\end{lem}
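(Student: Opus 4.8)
The plan is to show that for two choices $n' < n''$ of the auxiliary index (both satisfying Lemma \ref{lem07} relative to the fixed $n$), the two resulting maps $\gamma^{\sharp}_{n',n}$ and $\gamma^{\sharp}_{n'',n}$ from $\BW/p^n$ to $G_n$ coincide; since any two admissible choices can be compared by passing to a common larger index, this suffices. First I would spell out the two compositions defining $\gamma^{\sharp}_{n',n}$ and $\gamma^{\sharp}_{n'',n}$: both start with the projection $\BW/p^{n''}\onto\BW/F^{n''}$ (after further projecting $\BW/p^{n''}\onto\BW/p^{n'}$ in the first case), apply the appropriate twisted $\gamma$, then the inclusion into $G^{(p^{-\ast})}$, and finally the retraction $r_{\ast,n}$. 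The goal is then a commutative-diagram chase relating these two, and the main tool is Lemma \ref{lem0 22}, which precisely records the compatibility of the twisted $\gamma_i^{(p^{-i})}$ with the Frobenius transition maps $F^{n''-n'}$.

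The key steps, in order, would be: (1) reduce to comparing $n'$ with a strictly larger admissible $n''$; (2) using Lemma \ref{lem0 22} with the pair $(n'',n')$, replace $\gamma_{n'}^{(p^{-n'})}$ composed with the projection $\BW/p^{n''}\onto\BW/p^{n'}$ by $F^{n''-n'}$ composed with $\gamma_{n''}^{(p^{-n''})}$, up to the evident inclusions of $V^{\ast}G_{\ast}$ into the ambient twists of $G_{n''}$; (3) check that the two retraction maps $r_{n',n}$ and $r_{n'',n}$ are compatible in the sense that $r_{n',n}\circ (\text{the map induced by } p^{n''-n'})= r_{n'',n}$, which follows from Remark \ref{rem0 4} 2) — the uniqueness of a map $V^{n''}G_{n''}\to G_n$ making the relevant triangle commute, since both $r_{n'',n}$ and $r_{n',n}$ precomposed with the appropriate epimorphism are killed the same way by $p^{n''-n}$ factoring as $p^{n'-n}\circ p^{n''-n'}$; and (4) assemble these identities to conclude $\gamma^{\sharp}_{n',n}=\gamma^{\sharp}_{n'',n}$ as maps $\BW/p^n\to G_n$, using that $\BW/p^{n''}\onto\BW/p^n$ is an epimorphism so it is enough to check equality after precomposing with this projection.

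The main obstacle I anticipate is bookkeeping the Frobenius twists correctly: the objects $V^{n'}G_{n'}\hookrightarrow G_{n'}^{(p^{-n'})}$ and $V^{n''}G_{n''}\hookrightarrow G_{n''}^{(p^{-n''})}$ live in different twisted incarnations, and one must track how the transition map $p^{n''-n'}:G_{n''}\onto G_{n'}$ interacts with $V^{n'}$ versus $V^{n''}$ and with the Frobenius $F^{n''-n'}$ appearing in Lemma \ref{lem0 22}. The identity $V^{n'}\circ p^{n''-n'} = $ (the restriction to $G_{n''}$ of) $p^{n''-n'}\circ V^{n''}$ up to a twist, together with $F^{n''-n'}V^{n''-n'} = p^{n''-n'}$, is what makes the two composites match; getting every twist index and every factorization through a quotient or sub-object to line up is the delicate part, but it is purely formal once the diagrams are drawn. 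Everything else is a routine consequence of the epimorphism/monomorphism uniqueness arguments already used repeatedly in this section.
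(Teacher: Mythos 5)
Your proposal is correct and follows essentially the same route as the paper's proof: reduce to comparing $n'$ with a larger admissible index (the paper takes $n''=n'+1$), identify the connecting map $V^{n''}G_{n''}\to V^{n'}G_{n'}$ (which is the restriction of $F^{n''-n'}$, i.e.\ the map induced by $p^{n''-n'}$ through the epimorphisms $V^{n''},V^{n'}$), deduce $r_{n'',n}=r_{n',n}\circ F^{n''-n'}$ from the uniqueness in Remark \ref{rem0 4}, and invoke Lemma \ref{lem0 22} for the compatibility of the twisted $\gamma$'s before cancelling the epimorphism $\BW/p^{n''}\onto\BW/p^n$. The Frobenius-twist bookkeeping you flag is exactly the content of the paper's two auxiliary diagrams, so no gap remains.
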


%PROOF OF LEMMA 011
\begin{proof}
It is enough to show that the homomorphisms $\gamma^{\sharp}_{n',n}$ and $\gamma^{\sharp}_{n'+1,n}$ are equal.\\ 

The Frobenius homomorphism $ F:G_{n'+1}^{(p^{-n'-1})}\to G_{n'+1}^{(p^{-n'})} $ restricts to a homomorphism $ V^{n'+1}G_{n'+1}\to V^{n'}G_{n'} $ that we denote again by $F$ (this is true because $ F\circ V^{n'+1}=pV^{n'}=V^{n'}\circ p $ and $ pG_{n'+1}=G_{n'} $). We have then a commutative diagram:
\[ \xymatrix{G_{n'+1}\ar@{->>}[d]_p\ar[rr]^{V^{n'+1}}&&V^{n'+1}G_{n'+1}\ar[d]_F\\G_{n'}\ar@{->>}[d]_{p^{n'-n}}\ar[rr]^{V^{n'}}&&V^{n'}G_{n'}\ar[dll]^{r_{n',n}}\\G_n.&&} \] It follows from Remark \ref{rem0 4} that $ r_{n'+1,n}=r_{n',n}\circ F $. Now, consider the following diagram: \[ \xymatrix{\BW/p^{n'+1}\ar@{->>}[d]\ar@{->>}[rr]&&\BW/F^{n'+1}\ar[rr]^{\gamma_{n'+1}^{(p^{-n'-1})}}&&V^{n'+1}G_{n'+1}\ar[d]^F\\\BW/p^{n'}\ar@{->>}[d]\ar@{->>}[rr]&&\BW/F^{n'}\ar[rr]^{\gamma_{n'}^{(p^{-n'})}}&&V^{n'}G_{n'}\ar[d]^{r_{n',n}}\\\BW/p^n\ar[rrrr]_{\gamma^{\sharp}_{n',n}}&&&&G_n.} \] If we show that the outer diagram commutes, then by construction of $ \gamma^{\sharp}_{n'+1,n} $ and the fact that $r_{n'+1,n}=r_{n',n}\circ F $, we will conclude that $\gamma^{\sharp}_{n'+1,n}$ is equal to $\gamma^{\sharp}_{n',n}$. Since the bottom square commute, it is enough to show that the top square is commutative as well. But the commutativity of the top square is equivalent to the commutativity of the following diagram \[
 \xymatrix{\BW/p^{n'+1}\ar@{->>}[d]\ar@{->>}[r]&\BW/F^{n'+1}\ar[rr]^{\gamma_{n'+1}^{(p^{-n'-1})}}&&V^{n'+1}G_{n'+1}\ar@{^{(}->}[rr]&&G_{n'+1}^{(p^{-n'-1})}\ar[d]^F\\\BW/p^{n'}\ar@{->>}[r]&\BW/F^{n'}\ar[rr]_{\gamma_{n'}^{(p^{-n'})}}&&V^{n'}G_{n'}\ar@{^{(}->}[r]&G_{n'}^{(p^{-n'})}\ar@{^{(}->}[r]&G_{n'+1}^{(p^{-n'})},}\]
whose commutativity is given by Lemma \ref{lem0 22}.
\end{proof}

%NOTATIONS 03
\begin{notation}
\label{notations03}
With regard to the previous lemma, we will denote the map $\gamma^{\sharp}_{n',n}:\BW/p^n\to G_n$ by $\gamma^{\sharp}_n$.
\end{notation}

%LEMMA 012
\begin{lem}
\label{lem0 12}
Assume that $G$ is local. Then the sequence of maps $(\gamma^{\sharp}_n)_n$ belongs to the group $\uset{i}{\invlim}\,\Hom(\BW/p^i,G_i)$.
\end{lem}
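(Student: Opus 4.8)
\textbf{Proof of Lemma \ref{lem0 12} (plan).}
The plan is to verify directly that $(\gamma^\sharp_n)_n$ is compatible with the transition maps of $\uset{i}{\invlim}\,\Hom(\BW/p^i,G_i)$. By Remark \ref{rem0 6} it is enough to check, for each $n$, the single compatibility coming from the projections $p\colon G_{n+1}\onto G_n$: namely that the square
\[
\xymatrix{
\BW/p^{n+1}\ar[r]^{\gamma^\sharp_{n+1}}\ar@{->>}[d] & G_{n+1}\ar@{->>}[d]^{p.}\\
\BW/p^{n}\ar[r]_{\gamma^\sharp_{n}} & G_{n}
}
\]
commutes, the left vertical arrow being the natural projection.

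First I would fix $n$ and, applying Lemma \ref{lem02} at level $n+1$, choose a single natural number $n'>n+1$ with $p^{n'-n-1}\bigl(G_{n'}[V^{n'}]\bigr)=0$; then also $p^{n'-n}\bigl(G_{n'}[V^{n'}]\bigr)=0$, so by Lemma \ref{lem07} this same $n'$ furnishes retractions $r_{n',n+1}\colon V^{n'}G_{n'}\to G_{n+1}$ and $r_{n',n}\colon V^{n'}G_{n'}\to G_{n}$, and by Lemma \ref{lem0 11} it computes both $\gamma^\sharp_{n+1}=\gamma^\sharp_{n',n+1}$ and $\gamma^\sharp_{n}=\gamma^\sharp_{n',n}$. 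Writing $\psi\colon\BW/p^{n'}\to V^{n'}G_{n'}$ for the common composition $\BW/p^{n'}\onto\BW/F^{n'}\arrover{\gamma_{n'}^{(p^{-n'})}}V^{n'}G_{n'}$ from Construction \ref{cons0 9}, the map $\gamma^\sharp_{n+1}$ is the factorization of $r_{n',n+1}\circ\psi$ through $\BW/p^{n'}\onto\BW/p^{n+1}$, and $\gamma^\sharp_{n}$ the factorization of $r_{n',n}\circ\psi$ through $\BW/p^{n'}\onto\BW/p^{n}$; these factorizations are legitimate since $G_{n+1}$, resp. $G_n$, is $p^{n+1}$-, resp. $p^n$-torsion, and $\BW/p^{i}$ is the quotient $(\BW/p^{n'})/p^{i}(\BW/p^{n'})$.

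The key step is the identity $p\circ r_{n',n+1}=r_{n',n}$ of maps $V^{n'}G_{n'}\to G_n$, where $p\colon G_{n+1}\to G_n$ is multiplication by $p$. To prove it, precompose both sides with the epimorphism $V^{n'}\colon G_{n'}\onto V^{n'}G_{n'}$: Lemma \ref{lem07} gives $p\circ r_{n',n+1}\circ V^{n'}=p\circ p^{n'-n-1}=p^{n'-n}$ and $r_{n',n}\circ V^{n'}=p^{n'-n}$ as maps $G_{n'}\to G_n$, and a morphism out of $V^{n'}G_{n'}$ is determined by its precomposition with the epimorphism $V^{n'}$ (cf. Remark \ref{rem0 4}). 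Granting this, I would precompose $p\circ\gamma^\sharp_{n+1}$ and $\gamma^\sharp_{n}\circ(\BW/p^{n+1}\onto\BW/p^{n})$ with the epimorphism $\BW/p^{n'}\onto\BW/p^{n+1}$; using that $\BW/p^{n'}\onto\BW/p^{n}$ factors through $\BW/p^{n'}\onto\BW/p^{n+1}$, both composites become $p\circ r_{n',n+1}\circ\psi=r_{n',n}\circ\psi$, so the two maps $\BW/p^{n+1}\to G_n$ coincide. This is precisely the commutativity of the square above, and hence, by Remark \ref{rem0 6}, $(\gamma^\sharp_n)_n\in\uset{i}{\invlim}\,\Hom(\BW/p^i,G_i)$.

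The only delicate points, which are minor, are that a single $n'$ may be used for two consecutive levels (handled by the monotonicity of the vanishing condition in Lemma \ref{lem02}) and the coherence $p\circ r_{n',n+1}=r_{n',n}$ of the auxiliary retractions (handled by the uniqueness in Remark \ref{rem0 4} together with Lemma \ref{lem07}); everything else is formal diagram chasing.
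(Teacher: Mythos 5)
Your proposal is correct and follows essentially the same route as the paper's proof: both hinge on the identity $r_{n',n}=p\circ r_{n',n+1}$, established via the uniqueness statement of Remark \ref{rem0 4} after precomposing with the epimorphism $V^{n'}$, and then conclude by precomposing with the epimorphism $\BW/p^{n'}\onto\BW/p^{n+1}$. Your extra care in justifying that a single $n'$ serves both levels $n$ and $n+1$ (via the monotonicity of the vanishing condition in Lemma \ref{lem02}) is a welcome refinement of a point the paper leaves implicit.
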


%PROOF OF LEMMA 012
\begin{proof}
Fix a natural number $n$ and choose a large $n'$ such that the maps $ r_{n',n+1} $ and $ r_{n',n} $ are defined (cf. Lemma \ref{lem07}). Now consider the following diagram: \[ \xymatrix{G_{n'}\ar@{->>}[dr]_{p^{n'-n-1}}\ar[rr]^{V^{n'}}&&V^{n'}G_{n'}\ar[dl]^{r_{n',n+1}}&&\BW/F^{n'}\ar[ll]_{\gamma_{n'}^{(p^{-n'})}}&\BW/p^{n'}\ar@{->>}[d]\ar@{->>}[l]\\&G_{n+1}\ar@{->>}[d]_{p}&&&&\BW/p^{n+1}\ar@{->>}[d]\ar[llll]_{\gamma^{\sharp}_{n+1}}\\&G_n&&&&\BW/p^n\ar[llll]^{\gamma^{\sharp}_n}.} \] We need to show that the (bottom) square is commutative and we know that the upper left triangle and the top trapezoid are commutative (cf. construction of $ \gamma_{n+1}^{\sharp} $). By previous lemma, we also know that the outer diagram (i.e., after removing the homomorphism $\gamma_{n+1}^{\sharp}  $ from the diagram) is commutative, because $ p\circ r_{n',n+1}\circ V^{n'}:G_{n'}\to G_n$ is equal to $p^{n'-n} $ and therefore by uniqueness of $ r_{n',n} $ (cf. Remark \ref{rem0 4}), we have $ r_{n',n}=p\circ r_{n',n+1} $. The commutativity of the bottom square follows at once, since the homomorphism $ \BW/p^{n'}\onto \BW/p^{n+1} $ is an epimorphism.
\end{proof}

\begin{comment}
%LEMMA 08
\begin{lem}
\label{lem08}
Assume that $G$ is local. Then the homomorphism $$ \bigvee: \uset{n}{\invlim}\,\Hom(\BW/p^n,G_n) \to \uset{n}{\invlim}\,\Hom(\BW/F^n,G_n) $$ is surjective.
\end{lem}

%PROOF OF LEMMA 08
\begin{proof}
\end{proof}
\end{comment}

%PROPOSITION 017
\begin{prop}
\label{prop017}
Assume that $G$ is local. Then the homomorphism $$ \bigvee: \uset{n}{\invlim}\,\Hom(\BW/p^n,G_n) \to \uset{n}{\invlim}\,\Hom(\BW/F^n,G_n) $$ is an isomorphism.
\end{prop}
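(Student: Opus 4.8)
The plan is to prove that $\bigvee$ is an isomorphism by exhibiting an explicit two-sided inverse, using the constructions carried out in the preceding lemmas. Concretely, given an element $(\gamma_i)_i \in \uset{i}{\invlim}\,\Hom(\BW/F^i,G_i)$, Construction \ref{cons0 9} together with Lemmas \ref{lem0 11} and \ref{lem0 12} produces a well-defined element $(\gamma_n^{\sharp})_n \in \uset{n}{\invlim}\,\Hom(\BW/p^n,G_n)$ (the well-definedness, independence from the auxiliary choice of $n'$, and compatibility with transition maps being exactly the content of those three lemmas). So I would first define the candidate inverse map
\[
\Psi:\uset{i}{\invlim}\,\Hom(\BW/F^i,G_i)\longrightarrow \uset{n}{\invlim}\,\Hom(\BW/p^n,G_n),\qquad (\gamma_i)_i\mapsto (\gamma_n^{\sharp})_n,
\]
and check it is a group homomorphism, which is immediate since all the maps $r_{n',n}$, the Frobenius twists, and the quotient maps involved in Construction \ref{cons0 9} are additive.

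Next I would verify $\Psi \circ \bigvee = \Id$. Start with $(\delta_n)_n \in \uset{n}{\invlim}\,\Hom(\BW/p^n,G_n)$, so that $\bigvee$ sends it to $(\gamma_n)_n$ with $\gamma_n = \delta_n \circ V^n : \BW/F^n \to G_n$. I must show $\gamma_n^{\sharp} = \delta_n$ for all $n$. Fix $n$, pick $n'$ and $r_{n',n}:V^{n'}G_{n'}\to G_n$ as in Lemma \ref{lem07}, and chase the defining composition of $\gamma_n^{\sharp}$: it is $\BW/p^{n'}\onto\BW/F^{n'}\xrightarrow{\gamma_{n'}^{(p^{-n'})}}V^{n'}G_{n'}\xrightarrow{r_{n',n}}G_n$, factored through $\BW/p^n$. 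Writing $\gamma_{n'} = \delta_{n'}\circ V^{n'}$ and using that on $\BW$ the composite $\BW \onto \BW/p^{n'} \onto \BW/F^{n'} \xrightarrow{V^{n'}} \BW/p^{n'}$ is just multiplication by $V^{n'}$ followed by the quotient — hence, after the Frobenius twist, this becomes the statement that $\delta_{n'}^{(p^{-n'})}$ precomposed with $V^{n'}$ and then pushed down by $r_{n',n}$ reproduces $\delta_n$ via $p^{n'-n}$ — one reduces to the commuting square defining $r_{n',n}$ (the triangle $p^{n'-n} = r_{n',n}\circ V^{n'}$ of Lemma \ref{lem07}) combined with the compatibility $\delta_{n'}\mapsto \delta_n$ under the transition maps of Construction \ref{cons010}. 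This is a diagram chase of the same flavor as Lemmas \ref{lem0 22} and \ref{lem0 12}, with no new ingredient.

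Then I would verify $\bigvee \circ \Psi = \Id$, i.e. that $\gamma_n^{\sharp}\circ V^n = \gamma_n$ for every $(\gamma_i)_i$. Unwinding $\gamma_n^{\sharp}\circ V^n$: precompose the composition $\BW/p^{n'}\onto \BW/F^{n'}\xrightarrow{\gamma_{n'}^{(p^{-n'})}} V^{n'}G_{n'}\xrightarrow{r_{n',n}}G_n$ (after factoring through $\BW/p^n$) with $V^n:\BW/F^n\to \BW/p^n$. Using that $F^{n'-n}\circ V^{n'} = p^{n-?}\cdot V^n$-type identities inside $\BW$ and that $\gamma_{n'}^{(p^{-n'})}$ is a group scheme homomorphism (so commutes with Frobenius, as exploited in Lemma \ref{lem0 22}), together with the compatibility of the $\gamma_i$ with the transition maps in $\{\BW/F^i\}$ and in $\{G_i\}$, this collapses to $r_{n',n}$ applied to $V^{n'}$ of a Frobenius-twisted $\gamma_n$, which by the triangle $p^{n'-n}=r_{n',n}\circ V^{n'}$ and the fact that $G_n$ is killed by $p^n$ returns $\gamma_n$.

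The main obstacle is purely bookkeeping: both composites require tracking several layers of Frobenius twists $(p^{-n'})$, the non-additive section $\tau$ implicit in identifying $\BW/F^n$ with $W[F^n]$, and the exactness facts $G_n[F^n]=V^nG_n^{(p^n)}$ and $G_n[V^n]=F^nG_n^{(p^{-n})}$ from the earlier lemmas; getting the indices to line up consistently in every square is where care is needed. But there is no conceptual difficulty left — all the genuinely substantive inputs (the existence and uniqueness of $r_{n',n}$, Lemma \ref{lem0 22}, the well-definedness of $\gamma_n^{\sharp}$, and the identification of the inverse limits with $D_*(G)$ and $M(G)$ in Lemmas \ref{lem04} and \ref{lem05}) are already in place, so the proof amounts to assembling them.
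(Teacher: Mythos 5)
Your proposal is correct, but it organizes the proof differently from the paper in one respect. The verification that $\bigvee\circ\Psi=\Id$ (your third paragraph) is exactly the paper's surjectivity argument. Where you diverge is injectivity: the paper proves it directly, by noting that any $(\alpha_n)$ in the kernel of $\bigvee$ factors through the cokernel $\BW/p^n\onto\BW/V^n$ of $V^n:\BW/F^n\to\BW/p^n$ and then applying Lemma \ref{lem09} to conclude that the induced compatible system $\BW/V^n\to G_n$ vanishes; you instead check the other composite $\Psi\circ\bigvee=\Id$ head-on. That check does go through, and more easily than your cautionary remarks about bookkeeping suggest: writing $\gamma_{n'}=\delta_{n'}\circ V^{n'}$, naturality of the Verschiebung gives $\gamma_{n'}^{(p^{-n'})}=\delta_{n'}^{(p^{-n'})}\circ V^{n'}=V^{n'}\circ\delta_{n'}$, so the composition defining $\gamma_n^{\sharp}$ collapses, via the triangle $r_{n',n}\circ V^{n'}=p^{n'-n}$ of Lemma \ref{lem07} and the transition relation $p^{n'-n}\circ\delta_{n'}=\delta_n\circ(\BW/p^{n'}\onto\BW/p^n)$, to $\gamma_n^{\sharp}=\delta_n$. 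What your route buys is that Lemma \ref{lem09} becomes superfluous for this proposition (locality of $G$ then enters only through the existence of the retractions $r_{n',n}$), and the two-sided-inverse statement recorded in Remark \ref{rem0 11} is obtained as part of the proof rather than as an afterthought; the price is a second twist-and-index diagram chase in place of the paper's short kernel--cokernel argument. The substantive ingredients are the same in both versions, and you have identified all of them.
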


%PROOF OF PROPOSITION 017
\begin{proof}
We show at first the injectivity of this homomorphism. So, take an element $ (\alpha_n)\in \uset{n}{\invlim}\,\Hom(\BW/p^n,G_n) $ such that $ \bigvee(\alpha_n)=0 $, i.e., for every $n$, the composition $$ \BW/F^n\arrover{V^n}\BW/p^n\arrover{\alpha_n}G_n $$ is the zero homomorphism. The cokernel of the homomorphism $\BW/F^n\arrover{V^n}\BW/p^n$ being $ \BW/p^n\onto\BW/V^n $, we obtain, for every $n$, a homomorphism $ \tilde{\alpha_n}:\BW/V^n\to G_n $ whose composition with $ \BW/p^n\onto\BW/V^n $ is $ \alpha_n $. Now consider the following diagram (with $n>m$) \[ \xymatrix{\BW/p^{n}\ar@{->>}[d]\ar@{->>}[r]&\BW/V^n\ar@{->>}[d]\ar[r]^{\tilde{\alpha_n}}&G_n\ar@{->>}[d]_p\\\BW/p^m\ar@{->>}[r]&\BW/V^m\ar[r]_{\tilde{\alpha_m}}&G_m.} \] By assumption, the outer diagram commutes, and since the homomorphism $ \BW/p^n\onto\BW/V^n $ is an epimorphism and the left square is commutative, we conclude that the right square commutes too. It follows from  Lemma \ref{lem09} that all $ \tilde{\alpha_n} $ are zero, and consequently, all $ \alpha_n $ are zero. This shows that $ \bigvee $ is injective.\\

Now, we show the surjectivity of $ \bigvee $. Take an element $ (\gamma_n) \in \uset{n}{\invlim}\,\Hom(\BW/F^n,G_n) $. We have constructed above an element $ (\gamma_n^{\sharp}) \in \uset{n}{\invlim}\,\Hom(\BW/p^n,G_n)$ (cf. Lemma \ref{lem0 12}). We want to show that this element maps to $ (\gamma_n) $ under the homomorphism $ \bigvee $, i.e., we want to show that for every $n$, the composition $$ \BW/F^n\arrover{V^n}\BW/p^n\arrover{\gamma_n^{\sharp}} G_n $$ is equal to the given $\gamma_n$. Consider the following diagram:

\begin{myequation}
\label{twist}
\xymatrix{\BW/p^n\ar@{->>}[r]\ar[d]_{\gamma_n^{\sharp^{(p^n)}}}&\BW/F^n\ar[rr]^{V^n}\ar[dr]^{\gamma_n}&&\BW/p^n\ar[dl]^{\gamma_n^{\sharp}}\\ G_n^{(p^{n})}\ar[rr]_{V^n}&&G_n.&}
\end{myequation}

Since $ \gamma_n^{\sharp} $ is a group schemes homomorphism, it commutes with the Verschiebung and therefore the outer diagram commutes. If we show that the left square is commutative, the fact that the homomorphism $ \BW/p^n\onto \BW/F^n $ is an epimorphism, will imply that the right triangle commutes and the proof is achieved. So, we show the commutativity of the left square.\\

Using definition of $ r_{n',n} $ and the fact that $ F^{n'-n}\circ V^{n'}=p^{n'-n}V^{n} $, we see that the following diagram commutes. \[ \xymatrix{V^{n'}G_{n'}\ar@{^{(}->}[dd]\ar@{->>}[rr]^{r_{n',n}}&&G_n\ar[rr]^{V^n}&&G_n^{(p^{-n})}\ar@{^{(}->}[dd]\\&G_{n'}\ar@{->>}[ul]^{V^{n'}}\ar@{->>}[ur]^{p^{n'-n}}\ar[rr]^{V^n}\ar[dl]^{V^{n'}}&&G_{n'}^{(p^{-n})}\ar@{->>}[ur]^{p^{n'-n}}\ar[dr]_{p^{n'-n}}&\\G_{n'}^{(p^{-n'})}\ar[rrrr]_{F^{n'-n}}&&&&G_{n'}^{(p^{-n})}.} \] In the following diagram, the right square is the above diagram (without the internal groups and homomorphisms). The outer diagram is commutative by Lemma \ref{lem0 22}. It follows that the left square is commutative (note that $ G_n^{(p^{-n})}\into G_{n'}^{(p^{-n})} $ is a monomorphism): \[ \xymatrix{\BW/p^{n'}\ar@{->>}[r]\ar@{->>}[dd]&\BW/F^{n'}\ar[r]^{\gamma_{n'}^{(p^{-n'})}}&V^{n'}G_{n'}\ar@{^{(}->}[r]\ar[d]_{r_{n',n}}&G_{n'}^{(p^{-n'})}\ar[dd]^{F^{n'-n}}\\&&G_n\ar[d]_{V^n}&\\\BW/p^n\ar@{->>}[r]&\BW/F^n\ar[r]_{\gamma_{n}^{(p^{-n})}}&G_n^{(p^{-n})}\ar@{^{(}->}[r]&G_{n'}^{(p^{-n})}.} \] If we rewrite this diagram and insert the homomorphism $ \BW/p^n\arrover{\gamma_n^{\sharp}} G_n $, we obtain the following diagram: \[ \xymatrix{\BW/p^{n'}\ar@{->>}[r]\ar@{->>}[d]&\BW/F^{n'}\ar[rr]^{\gamma_{n'}^{(p^{-n'})}}&&V^{n'}G_{n'}\ar[d]^{r_{n',n}}\\\BW/p^n\ar[rrr]^{\gamma_n^{\sharp}}\ar@{->>}[d]&&&G_n\ar[d]^{V^n}&\\\BW/F^n\ar[rrr]_{\gamma_{n}^{(p^{-n})}}&&&G_n^{(p^{-n})}.} \] Since the outer diagram and the top one are commutative (cf. construction of $ \gamma_n^{\sharp} $), and $ \BW/p^{n'}\onto \BW/p^n $ is an epimorphism, we conclude that the bottom square commutes. Taking the Frobenius twist of this diagram, we obtain a commutative diagram, which is the left square of the diagram (\ref{twist}). This finishes the proof.
\end{proof}

%REMARK 011
\begin{rem}
\label{rem0 11}
Note that in the proof of the last proposition, we have constructed explicitly an inverse to $ \bigvee $, namely the construction $ (\_)^{\sharp} $, i.e., we have shown that for every element $ (\gamma_i)\in\uset{n}{\invlim}\,\Hom(\BW/p^n,G_n)$ and every $n$, we have $$ \big(\bigvee(\gamma_i)\big)_n^{\sharp}=\gamma_n.$$
\end{rem}

%THEOREM 03
\begin{thm}
\label{thm03}
Let $G$ be a connected $p$-divisible group. The homomorphism $$\eta:D_*(G)\longrightarrow M(G)$$ is an isomorphism of $\BE_k$-modules.
\end{thm}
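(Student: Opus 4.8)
The plan is to separate the $\BE_k$-linearity of $\eta$ from its bijectivity, and to reduce bijectivity to two special cases in which the inverse-limit machinery of the preceding two sections applies directly. First I would verify that $\eta$ is a morphism of $\BE_k$-modules, i.e. that it is $W(k)$-linear and intertwines $F$ and $V$; this is a direct check from Construction~\ref{cons018} (which agrees with Construction~\ref{cons06} on local-local groups by Remark~\ref{rem021}), because every ingredient entering the construction --- the identifications $D_*(G_n)=D^*(G_n^*)=\Hom(G_n^*,CW^u)$, composition with $F^n$, the canonical isomorphism $CW^u[V^n]\cong W_n$, the transition projections, and the Artin--Hasse pairing $\widehat W\times W\to\BG_m$ --- is functorial and carries Frobenius to Verschiebung in the prescribed way. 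Next, recall that over a perfect field a connected $p$-divisible group splits canonically as $G\cong G_{\mathrm{ll}}\times G_{\mathrm m}$ with $G_{\mathrm{ll}}$ local-local and $G_{\mathrm m}$ of multiplicative type (apply the connected-étale decomposition to $G^*$ and dualise). Since $D_*(-)$, $M(-)=\Hom(\widehat W,-)$ and $\eta$ all commute with finite products, it suffices to treat $G$ local-local and $G$ of multiplicative type separately.

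Assume $G$ is local-local, hence unipotent and local. By Lemma~\ref{lem04} there is a canonical isomorphism $D_*(G)\cong\invlim_n\Hom(\BW/p^n,G_n)$, and by Lemma~\ref{lem05} one has $M(G)\cong\invlim_n\Hom(\BW/F^n,G_n)$. The central step is to show that under these identifications $\eta$ becomes the homomorphism $\bigvee$ of Construction~\ref{cons08}, namely precomposition with $V^n\colon\BW/F^n\to\BW/p^n$. This is a compatibility diagram chase: on the one hand $\eta(u)_n\colon W_{m,n}\to G_n$ is characterised (via the diagram of Construction~\ref{cons06}) by $\eta(u)_n\circ F^{M-n}=[u]_n\circ V^n$ on $W_{m,M}$, which, after identifying $\Hom(\BW/F^n,G_n)$ with $\Hom(W_{m,n},G_n)$ for $m\gg 0$, is exactly what the composite $\BW/F^n\xrightarrow{V^n}\BW/p^n\xrightarrow{[u]_n}G_n$ produces; on the other hand Lemma~\ref{lem024} records the matching statement on the Cartier-dual side through the Artin--Hasse exponential, which is what reconciles the two a priori different recipes for $\eta$. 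Granting this identification, Proposition~\ref{prop017} says $\bigvee$ is an isomorphism, which settles the local-local case.

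Now let $G$ be of multiplicative type, so that $G^*$ is étale; then $V$ is an automorphism of $G$, of $D_*(G)$ and of $M(G)$, while Frobenius is nilpotent on every finite level $G_n$ (dually, Verschiebung is nilpotent on the étale group $G_n^*$, so $V^{n}G_n^*=0$ and $G_n=G_n[F^n]$). The proof of Lemma~\ref{lem05} then goes through with the hypothesis ``$V^{m}G_n=0$'' replaced by ``$G_n=G_n[F^n]$'': a homomorphism from the $F^n$-torsion object $\BW/F^n$ to $G$ factors through $G_n$ and through a finite truncation $W_{m,n}$, giving again $M(G)\cong\invlim_n\Hom(\BW/F^n,G_n)$; combining this with Lemma~\ref{lem04} and Proposition~\ref{prop017} (whose only hypothesis is that $G$ be local) shows that $\eta$ is an isomorphism here as well. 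Should this adaptation prove delicate, one may instead base change to $\kbar$ --- legitimate since $D_*$, $M$ and $\eta$ are all compatible with $W(k)\to W(\kbar)$ and the latter is faithfully flat --- reduce to $G=\mu_{p^\infty}^{h}$, observe that $D_*(G)$ and $M(G)$ are then free of rank $h$ over $W(k)$, and check directly from the explicit description of $\eta$ on $\mu_{p^\infty}$ that $\eta$ is surjective modulo $p$; Nakayama's lemma together with the rank count then forces $\eta$ to be an isomorphism.

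The main obstacle I expect is the identification of $\eta$ with $\bigvee$ in the local-local case: $\eta$ is defined through a chain of dualisations and Frobenius twists (Construction~\ref{cons018}), and matching it term by term with the transparent map $\bigvee$ requires tracking carefully all the Frobenius/Verschiebung twists --- the exponents $p^{-n}$, the passage between $[F^n u]_n$ and $F^n[u]_n$, and the commutativities of the diagrams in Lemmas~\ref{lem024} and~\ref{lem0 22}. Once that bookkeeping is done, the theorem follows by assembling Lemmas~\ref{lem04} and~\ref{lem05} with Proposition~\ref{prop017}.
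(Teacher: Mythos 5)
Your overall strategy --- split $G$ into its local-local and multiplicative parts, identify $\eta$ with $\bigvee$ via Lemmas \ref{lem04} and \ref{lem05} and invoke Proposition \ref{prop017} in the local-local case, then reduce the multiplicative case to $\mu_{p^\infty}$ over $\kbar$ --- is exactly the paper's, and the local-local half of your argument is correct as written.

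The one step that would fail is your primary route for the multiplicative case, namely the claim that Lemma \ref{lem05} ``goes through'' with the hypothesis $V^{m}G_n=0$ replaced by $G_n=G_n[F^n]$, yielding $M(G)\cong\invlim_n\Hom(\BW/F^n,G_n)$. For $G$ of multiplicative type one has $\Hom(W_{m,n},G_j)=0$ for all $m,n,j$: by Cartier duality this group is $\Hom(G_j^*,W_{n,m})$, and $W_{n,m}=W_n[F^m]$ has no nonzero points over a reduced base while $G_j^*$ is \'etale. Hence every group $\Hom(\BW/F^n,G_n)=\dirlim_m\Hom(W_{m,n},G_n)$ vanishes, so the right-hand side of your proposed isomorphism is zero, whereas $M(\mu_{p^\infty})\cong W(k)$. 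The point being missed is that $\widehat{W}=\bigcup_{m,n}\tau_{m,n}(W_{m,n})$ is a union of the $W_{m,n}$ along the maps $\tau$, which are \emph{not} group homomorphisms; so $M(G)=\Hom(\widehat{W},G)$ is not assembled from the individual $\Hom(W_{m,n},G)$, and the unipotence hypothesis in Lemma \ref{lem05} (which makes everything stabilise at a finite truncation, where $\tau$ becomes harmless) is genuinely needed. Your fallback --- base change to $\kbar$, reduce to $G\cong\mu_{p^\infty}^h$, and verify the isomorphism directly --- is the correct move and is precisely what the paper does: it identifies $\eta$ at level $n$ with $F^n\circ(\_):\Hom(G_n^*,CW[p^n])\to\Hom(G_n^*,CW[V^n])$ and uses that $G_n^*\cong\BZ/p^n$ is constant and that Frobenius is bijective on $CW(k)$ for $k$ perfect; your ``surjective mod $p$ plus Nakayama plus rank count'' variant also works once that explicit computation on $\mu_{p^\infty}$ is actually carried out, so you should not treat it as an afterthought --- it is the entire content of this case.
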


%PROOF OF THEOREM 03
\begin{proof}
Since the base field $k$ is perfect, the $p$-divisible group $G$ splits into the direct sum of its local-local and  local-\'etale parts and since the Dieudonn\'e functor and the Cartier functor preserve direct sums, we can treat the local-local and local-\'etale case separately. So, assume at first that $G$ is local-local. It is enough to show that under the identifications of Lemmas \ref{lem04} and \ref{lem05},  the maps $ \eta $ (cf. Construction \ref{cons06}) and $ \bigvee $ (cf. Construction \ref{cons08}) are identified, i.e., the following diagram commutes: \[ \xymatrix{ D_*(G)\ar[d]_{\text{Lemma \ref{lem04}}}^{\cong}\ar[rr]^{\eta}&&M(G)\ar[d]^{\text{Lemma \ref{lem05}}}_{\cong}\\ \uset{n}{\invlim}\,\Hom(\BW/p^n,G_n)\ar[rr]_{\bigvee}&&\uset{n}{\invlim}\,\Hom(\BW/F^n,G_n). } \] Take an element $ u\in D_*(G) $. We have to show that for every natural number $n$, the following diagram commutes: \[ \xymatrix{\BW/F^n\ar[dd]_{V^n}\ar[r]^{\cong}&W[F^n]\ar@{->>}[r]&W_{m,n}\ar[dd]^{\eta(u)_n}\\&& \\\BW/p^n\ar[rr]_{[u]_n}&&G_n,} \]where we have also denoted the induced homomorphism $ \BW/p^n\to G_n $ by $ [u]_n $. We insert this diagram (as a face) into the following 3D diagram, with $ m $ and $M$ such that $ F^MG_n=0 $ and $ V^mG_n=0 $: \[ \xymatrix{&\BW/F^n\ar[dd]_{V^n}\ar[r]^{\cong}&W[F^n]\ar@{->>}[r]&W_{m,n}\ar[dd]^{\eta(u)_n}\\\BW\ar[dd]_{V^n}\ar@{->>}[ur]\ar@{->>}[rr]&&W_{m,M}\ar[dd]^{V^n}\ar[ur]_{F^{M-n}}&\\&\BW/p^n\ar[rr]^{[u]_n\qquad}&&G_n\\\BW\ar@{->>}[ur]\ar@{->>}[rr]&&W_{m,M}\ar[ur]_{[u]_n}.&} \] By construction of $ \eta(u)_n $ (cf. Construction \ref{cons08}) and using Lemmas \ref{lem04} and \ref{lem05}, we see that the five other faces of this parallelepiped commute. As the homomorphism $ \BW\onto \BW/F^n $ is an epimorphism, it implies that the other face is commutative too and this finishes the proof.\\

Now assume that $G$ is multiplicative. The homomorphism $ \eta $ is an isomorphism if and only if it is an isomorphism after passing to an algebraically closed field containing $k$. Indeed, this is true since the Dieudonn\'e module and the Cartier module constructions commute with base change. So, we may assume that $k$ is algebraically closed. In this case, the $p$-divisible group $G$ is isomorphic to a direct sum of finite copies of $ \mu_{p^{\infty}} $. Since the Dieudonn\'e functor and the Cartier functor preserve direct sums, we may further assume that $G$ is isomorphic to $ \mu_{p^{\infty}} $. By construction of $ \eta $ (cf. \ref{cons018}), we observe that it is sufficient to show that in this situation, for every $n$, the homomorphism $$ \Hom(G_n^*,CW[p^n])\arrover{F^n\circ(\_)}\Hom(G_n^*,CW[V^n]) $$ is an isomorphism. The group scheme $ G_n^* $ is isomorphic to the constant group scheme $ \BZ/p^n $, and therefore, homomorphisms $ \BZ/p^n\to CW $ are identified with $k$-rational points of $ CW $, i.e., elements of $ CW(k) $ and the above homomorphism $ F^n\circ(\_) $ is identified with the homomorphism $$ F^n:CW[p^n](k)\to CW[V^n](k) .$$ As $k$ is perfect, the Frobenius homomorphism of $ CW(k) $ is an isomorphism and therefore the induced homomorphism $$ F^n:CW[p^n](k)\to CW[V^n](k) $$ is injective. It is also surjective, because $ F^n:CW(k)\to CW(k) $ is surjective and $ p^n=V^nF^n $.
\end{proof}

%\subsection{Cartier modules vs. Displays}
\section{Cartier modules vs. Displays}

The following proposition is proposition 90, p. 84 of \cite{Z1}:

%PROPOSITION 09
\begin{prop}
\label{prop09}
Let $ \mathcal{P}=(P,Q,F,V^{-1}) $ be a 3n-Display over a ring $ R $. There is a canonical surjection $$\BE_R\otimes_{W(R)}P\twoheadrightarrow M(BT_{\mathcal{P}}),\quad e\otimes x\mapsto (u\mapsto [ue\otimes x])$$ where $ \BE_R $ is the Cartier ring, the ring opposite to the ring $ \End(\widehat{W}) .$ The kernel of this morphism is the $ \BE_R$-submodule generated by the elements $ F\otimes x-1\otimes Fx $, for $ x\in P $, and $ V\otimes V^{-1}y-1\otimes y $, for $ y\in Q $.
\end{prop}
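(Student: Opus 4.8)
The statement to prove is Proposition 90 of \cite{Z1}, which we cited as Proposition \ref{prop09}: for a $3n$-display $\mathcal{P}=(P,Q,F,V^{-1})$ over $R$ there is a canonical surjection $\BE_R\otimes_{W(R)}P\twoheadrightarrow M(BT_{\mathcal{P}})$, $e\otimes x\mapsto (u\mapsto [ue\otimes x])$, whose kernel is the $\BE_R$-submodule generated by $F\otimes x-1\otimes Fx$ ($x\in P$) and $V\otimes V^{-1}y-1\otimes y$ ($y\in Q$).

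The plan is to follow Zink's argument, which is what we are citing anyway, so I will only sketch the structure. First I would recall that $M(BT_{\mathcal{P}})=\Hom(\widehat{W},BT_{\mathcal{P}})$ and that an element of $\BE_R$ is (the opposite of) an endomorphism of $\widehat{W}$; concretely $\BE_R$ is generated over $W(R)$ by $F$ and $V$ subject to the usual Cartier relations, so an element $e\otimes x$ with $e\in\BE_R$, $x\in P$ should be sent to the homomorphism $\widehat{W}\to BT_{\mathcal{P}}$ which on a nilpotent $R$-algebra $\CN$ sends $u\in\widehat{W}(\CN)$ to the class $[ue\otimes x]\in BT_{\mathcal{P}}(\CN)=\cokernel(V^{-1}-\Id\colon G_{\mathcal{P}}^{-1}(\CN)\to G_{\mathcal{P}}^0(\CN))$ (Construction \ref{cons012}). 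I would first check this is well-defined: the expression $ue\otimes x$ lives in $G_{\mathcal{P}}^0(\CN)=\widehat{W}(\CN)\otimes_{W(R)}P$ after interpreting $ue$ as an element of $\widehat{W}(\CN)$ via the right $\BE_R$-action on $\widehat{W}(\CN)$, it is additive and functorial in $u$, hence defines a morphism of functors $\widehat{W}\to BT_{\mathcal{P}}$; $W(R)$-bilinearity in $(e,x)$ is immediate, so the map factors through the tensor product. Surjectivity comes from the explicit description of $BT_{\mathcal{P}}$: every element of $G_{\mathcal{P}}^0(\CN)$ is a sum of terms $w\otimes x$ with $w\in\widehat{W}(\CN)$, $x\in P$, and $w\otimes x=[(w\cdot 1)\otimes x]$ is the image of $1\otimes x$ evaluated at $w$ — more precisely, using Cartier theory, $M(BT_{\mathcal{P}})$ is generated as an $\BE_R$-module by the images of the $1\otimes x$, which is exactly what this map hits.

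Next I would identify the kernel. The two families of relations clearly lie in the kernel: by Remark \ref{rem0 7} we have $[Fw\otimes x]=[w\otimes Vx]$ and $[Vw\otimes x]=[w\otimes Fx]$ in $BT_{\mathcal{P}}(\CN)$, and similarly $[w\otimes V^{-1}y]$ relates to $[Vw\otimes\dots]$; evaluating $F\otimes x-1\otimes Fx$ at $u$ gives $[uF\otimes x]-[u\otimes Fx]$, and $uF$ acting on Witt vectors corresponds precisely to the shift that Remark \ref{rem0 7} says is killed in the cokernel (this is where one uses that $(V^{-1}-\Id)(u\otimes Vx)=Fu\otimes x-u\otimes Vx$, read in the opposite ring convention). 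Likewise $V\otimes V^{-1}y-1\otimes y$ evaluated at $u$ gives $[uV\otimes V^{-1}y]-[u\otimes y]$, and $(V^{-1}-\Id)(uV\otimes\text{lift of }y)$ produces exactly this difference, so it vanishes in $BT_{\mathcal{P}}$. For the reverse inclusion — that these relations generate the whole kernel — I would use the normal decomposition $P=L\oplus T$, $Q=L\oplus I_RT$: modulo the stated relations one can bring any element of $\BE_R\otimes_{W(R)}P$ into a normal form (e.g. using the relations to push all $F$'s and $V$'s onto the Witt-vector side and reduce the $P$-component into a fixed set of generators coming from $L$ and $T$), and then match this normal form against Cartier's structure theorem for the Cartier module of the formal group $BT_{\mathcal{P}}$, which has the reduced tangent space $T/I_RT$ and whose Cartier module is described by the $V^{-1}-\Id$ presentation. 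This is essentially a diagram chase comparing the presentation $0\to G_{\mathcal{P}}^{-1}\xrightarrow{V^{-1}-\Id}G_{\mathcal{P}}^0\to BT_{\mathcal{P}}\to 0$ with the Cartier-module presentation.

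The main obstacle is the second half of the kernel computation: showing the obvious relations \emph{generate} the kernel rather than merely lie in it. This requires a genuine comparison between the display presentation of $BT_{\mathcal{P}}$ (via $G_{\mathcal{P}}^{\bullet}$ and $V^{-1}-\Id$) and the Cartier-theoretic presentation of $M(BT_{\mathcal{P}})$, and it is precisely the content of Zink's proposition 90. Since this result is quoted verbatim from \cite{Z1} and is used as a black box in what follows, I would not reproduce the full argument here; the honest statement is that the proof is given in loc.\ cit., and the steps above indicate how the well-definedness, surjectivity, and the ``$\subseteq$'' direction of the kernel assertion go, with Remark \ref{rem0 7}, Construction \ref{cons012} and the normal decomposition as the essential inputs.
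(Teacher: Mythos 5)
Your proposal is correct and matches the paper's treatment: the paper states this result as Proposition 90, p.~84 of \cite{Z1} and gives no proof of its own, so deferring to Zink while sketching well-definedness, surjectivity, and the easy inclusion of the relations into the kernel (via Remark \ref{rem0 7} and Construction \ref{cons012}) is exactly the right posture. Your identification of the hard step --- that the stated relations \emph{generate} the kernel, which requires comparing the $V^{-1}-\Id$ presentation with the Cartier-module presentation --- is also where the substance of Zink's argument lies.
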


%PROPOSITION 010
\begin{prop}
\label{prop010}
Let $\mathcal{P}$ be a Dieudonn\'e display over $k$. Then the following morphism is an isomorphism of $\BE_k$-modules: \[ \mu:P\longrightarrow M(BT_{\CP}), \] \[ x\mapsto (\xi\mapsto [\xi\otimes x]). \]
\end{prop}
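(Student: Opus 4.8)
The plan is to realise $\mu$ as the composite of the $W(k)$-linear inclusion $\iota\colon P\hookrightarrow\BE_k\otimes_{W(k)}P$, $x\mapsto 1\otimes x$, with the canonical $\BE_k$-linear surjection $\pi$ of Proposition \ref{prop09}, whose kernel $N$ is the $\BE_k$-submodule generated by the elements $F\otimes x-1\otimes Fx$ ($x\in P$) and $V\otimes V^{-1}y-1\otimes y$ ($y\in Q$); indeed $\pi(1\otimes x)=(\xi\mapsto[\xi\otimes x])=\mu(x)$. Since $\CP$ is a Dieudonn\'e display we have $Q=VP$ and $V^{-1}(Vx)=x$, so the second family of generators of $N$ may be rewritten as $V\otimes x-1\otimes Vx$ ($x\in P$). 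Although $\iota$ itself is only $W(k)$-linear, these relations are exactly what force $\mu=\pi\circ\iota$ to be $\BE_k$-linear: $\mu(Fx)=\pi(1\otimes Fx)=\pi(F\otimes x)=F\cdot\mu(x)$, and likewise for $V$ and for scalars in $W(k)$. Thus the whole task is to prove $\mu$ bijective.

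For surjectivity I would observe that modulo $N$ every elementary tensor reduces to one lying in $1\otimes P$: for any $e\in\BE_k$ and $x\in P$ one has $eF\otimes x\equiv e\otimes Fx$ and $eV\otimes x\equiv e\otimes Vx$, and since over the perfect field $k$ the Cartier ring $\BE_k$ is (topologically) generated over $W(k)$ by $F$ and $V$, an induction on the length of a monomial $V^{i}[a]F^{j}$ yields $e\otimes x\in(1\otimes P)+N$ for all $e,x$. Here one uses that $V$ maps $P$ into $P$ (so the reduction never leaves $1\otimes P$) and that $P$ is finitely generated over $W(k)$ (so the passage to the completion of the tensor product is harmless). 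Hence $\mu(P)=\pi(\iota(P))=\pi\bigl(\BE_k\otimes_{W(k)}P\bigr)=M(BT_{\CP})$.

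For injectivity I would run a rank count. A Dieudonn\'e display $\CP$ is nilpotent — it is the display attached by Construction \ref{cons017} to the covariant Dieudonn\'e module of a connected $p$-divisible group — so by the theory of displays (Proposition \ref{prop0 20} together with Definition \ref{def012}) $BT_{\CP}$ is an infinitesimal $p$-divisible group of height $h:=\rank_{W(k)}P$. By Theorem \ref{thm03} its Cartier module $M(BT_{\CP})$ is isomorphic, as an $\BE_k$-module, to the covariant Dieudonn\'e module $D_*(BT_{\CP})$, which is free over $W(k)$ of rank equal to the height of $BT_{\CP}$, i.e. of rank $h$. Therefore $\mu\colon P\to M(BT_{\CP})$ is a surjective $W(k)$-linear map between free $W(k)$-modules of the same finite rank $h$; since $W(k)$ is a complete discrete valuation ring, reduction modulo $p$ turns $\mu$ into a surjective, hence bijective, $k$-linear endomorphism of an $h$-dimensional vector space, so $\det\mu\in W(k)^{\times}$ and $\mu$ is an isomorphism.

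The substantive points are bookkeeping rather than any single deep step, and that is where I expect the real care to go: one must verify that the $\BE_k$-action on $M(BT_{\CP})$ provided by Proposition \ref{prop09} coincides with the one used implicitly in Theorem \ref{thm03} (otherwise the rank count is not legitimate), and be mildly careful with the $(F,V)$-adic topology in the surjectivity reduction. If one further wishes to admit non-connected Dieudonn\'e displays, the multiplicative part must be split off — there $BT_{\CP}$ is no longer $p$-divisible, though the argument is actually easier since $V$ is invertible — and treated by a parallel but separate discussion.
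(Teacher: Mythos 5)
Your proof is correct, and it coincides with the paper's up to the point of factoring $\mu$ through $(\BE_k\otimes_{W(k)}P)/I$ and using the generators of the kernel to push every tensor $F^i\otimes x$, $V^j\otimes x$ into $1\otimes P$ (the paper, like you, uses $Q=VP$ and $V^{-1}(Vx)=x$ to rewrite the second family of relations). Where you diverge is injectivity: the paper simply writes down an explicit two-sided inverse $\psi\colon (\BE_k\otimes_{W(k)}P)/I\to P$, $F^i\otimes x\mapsto F^ix$, $V^j\otimes y\mapsto V^jy$, whose well-definedness is immediate from the two families of generators of $I$ precisely because the Dieudonn\'e-display structure supplies a Verschiebung defined on all of $P$; injectivity and surjectivity then come for free from $\psi\circ\phi=\mathrm{id}$ and $\phi\circ\psi=\mathrm{id}$. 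Your rank count is a legitimate substitute, but it imports Theorem \ref{thm03} (the Cartier--Dieudonn\'e comparison), the identification of the height of $BT_{\CP}$ with $\rank_{W(k)}P$, and the freeness of $M(BT_{\CP})$, none of which the paper's argument needs; conversely, your route makes the surjectivity reduction explicit, which the paper leaves as ``it is straightforward to check.'' One small caution if you keep your version: be careful that the height identification for $BT_{\CP}$ is taken from the display formalism (Zink, or Theorem \ref{thm02}) and not from Proposition \ref{prop021}, which comes later and itself relies on the present proposition.
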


%PROOF OF PROPOSITION 010
\begin{proof}
Let us denote by $I$ the kernel of the morphism in the last proposition (with $R=k$). Then the morphism $$ (\BE_k\otimes_{W(k)}P) /I\to M(BT_{\CP}) $$ sending the class of $ e\otimes x $ modulo $I$ to the morphism $ (u\mapsto [ue\otimes x]) $, is an isomorphism. Now, we have a canonical morphism $$ \phi:P\to (\BE_k\otimes_{W(k)}P) /I $$ sending an element $x$ to $ [1\otimes x] $, the class of $ 1\otimes x $ in the quotient. The composition of this morphism with the above isomorphism is the morphism given in the statement of the proposition. So, it is enough to show that $ \phi $ is an isomorphism. Since by assumption $\CP$ is the 3$n$-display of a $p$-divisible group, it is endowed with a Verschiebung and so we can define a map $$ \psi:(\BE_k\otimes_{W(k)}P) /I \to P $$ by sending $ F^i\otimes x $ to $ F^ix $ and $ V^j\otimes y $ to $ V^jy $ with $i,j$ natural numbers and $x,y$ arbitrary elements of $P$. We claim that this is a well-defined homomorphism of $ \BE_k$-modules and is the inverse to the morphism $ \phi $, defined above. It follows from the definition that $ \psi $ is a homomorphism of $ \BE_k$-modules. Again, by definition, elements of the form $ F\otimes x -1\otimes Fx $ or $ V\otimes V^{-1}y-1\otimes y $ map to zero and since they generate the ideal $I$, we see that our map $ \psi $ , is well-defined.  It is clear that the composition $ \psi\circ \phi $ is the identity of $P$. As in the quotient, elements $ F^i\otimes x $ and $ 1\otimes F^ix $, respectively $ V^j\otimes y $ and $ 1\otimes V^jy $, are  identified, it follows that the composition $ \phi\circ \psi $ is the identity of $(\BE_k\otimes_{W(k)}P) /I$ and this finishes the proof.
\end{proof}

%PROPOSITION 021
\begin{prop}
\label{prop021}
Let $G$ be a connected $p$-divisible group and denote by $ \CP $ the associated display. Consider the map \[ \chi:BT_{\CP}\to G,\qquad [w\otimes u]\mapsto \eta(u)(w) \] with $ \CN $ a nilpotent $k$-algebra, $ w\in \widehat{W}(\CN) $ and $ u\in D_*(G) $. This is a canonical and functorial isomorphism.
\end{prop}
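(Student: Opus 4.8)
The plan is to prove Proposition \ref{prop021} by assembling the isomorphisms already established in this chapter and in Zink's theory, tracking the explicit formulas so that the composition yields exactly the map $\chi$ described. First I would check that $\chi$ is well-defined: an element of $BT_{\CP}(\CN)$ is a class $[w\otimes u]$ with $w\in\widehat{W}(\CN)$ and $u\in P=D_*(G)$, and I must verify that $\eta(u)(w)$ does not depend on the representative, i.e. that it vanishes on the subgroup $(V^{-1}-\Id)G^{-1}_{\CP}(\CN)$. Using Remark \ref{rem0 7} (namely $[Fw\otimes x]=[w\otimes Vx]$ and $[Vw\otimes x]=[w\otimes Fx]$) together with the fact that $\eta$ is a homomorphism of $\BE_k$-modules (Theorem \ref{thm03}), this reduces to checking that $\eta$ intertwines $F$ and $V$ with the operators on $\widehat{W}$, which is built into the definition of the Cartier module $M(G)=\Hom(\widehat{W},G)$ in Definition \ref{def06}. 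Functoriality in $G$ and compatibility with base change in $\CN$ are then formal, since every ingredient ($\eta$, $D_*$, $BT_{(-)}$) is functorial and commutes with base change (Proposition \ref{prop0 20}, Theorem \ref{thm03}).

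Next I would identify $\chi$ as a composition of known isomorphisms. By Proposition \ref{prop010} (or rather its display analogue Proposition \ref{prop021}'s companion, using that $\CP$ is the display of a $p$-divisible group and hence carries a Verschiebung), the map $\mu:P\to M(BT_{\CP})$, $x\mapsto(\xi\mapsto[\xi\otimes x])$, is an isomorphism of $\BE_k$-modules. On the other hand, $P=D_*(G)$ by Construction \ref{cons017}, and Theorem \ref{thm03} gives the isomorphism $\eta:D_*(G)\to M(G)$. Composing, $\mu\circ\eta^{-1}:M(G)\xrightarrow{\sim}M(BT_{\CP})$ is an isomorphism of Cartier modules over $k$. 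Since over a perfect field of characteristic $p$ the Cartier functor $G\mapsto M(G)$ is an equivalence from the category of connected (formal) $p$-divisible groups to the category of (reduced) Cartier modules, this isomorphism of Cartier modules is induced by a unique isomorphism of $p$-divisible groups $BT_{\CP}\xrightarrow{\sim}G$. The remaining task is to check that this abstractly-produced isomorphism is given by the explicit formula $[w\otimes u]\mapsto\eta(u)(w)$: unwinding the Cartier equivalence, a homomorphism $BT_{\CP}\to G$ corresponds to the $\BE_k$-linear map $M(BT_{\CP})\to M(G)$ it induces, and the element $[w\otimes u]\in BT_{\CP}(\CN)=\Hom(\widehat{W},BT_{\CP})(\CN)$ evaluated at a generator traces through $\mu$ to $u$ and then through $\eta$ to $\eta(u)$, whose value at $w$ is $\eta(u)(w)$. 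This is a diagram-chase using the Yoneda-type identification $BT_{\CP}(\CN)\cong\Hom(\widehat{W},BT_{\CP})$ implicit in $M(BT_{\CP})=\Hom(\widehat{W},BT_{\CP})$.

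The main obstacle I expect is precisely this last compatibility: showing that the abstract isomorphism coming from the Cartier equivalence agrees, on the nose, with the hands-on formula $[w\otimes u]\mapsto\eta(u)(w)$. This requires being careful about how an element of $BT_{\CP}(\CN)$ is presented as a morphism $\widehat{W}\to BT_{\CP}$ versus as a class $[w\otimes u]$, and about the direction of the arrows in the Cartier dictionary (the Cartier ring $\BE_R$ is the opposite of $\End(\widehat{W})$, so one must be vigilant about left/right module structures, as in Proposition \ref{prop09}). I would handle this by first treating the case where $\CN$ is such that $w$ is killed by a power of $F$ — reducing via $\widehat{W}=\bigcup_{m,n}\tau(W_{m,n})$ to finite-level statements — and invoking Lemma \ref{lem024}, which already expresses $\eta(u)(w)$ explicitly via the Artin–Hasse exponential as a point of $G_n(\CN)$ under the identification $G_n\cong\innHom(G_n^*,\BG_m)$. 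Matching this against the description of $BT_{\CP,n}(\CN)$ coming from Construction \ref{cons012} and Proposition \ref{prop09} then pins down the composition. Once the formula is verified on these dense subgroups and shown to be compatible with the transition maps in $n$, it extends uniquely to all of $BT_{\CP}$, and bijectivity is inherited from the bijectivity of $\mu$ and $\eta$.
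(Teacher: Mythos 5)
Your proposal is correct and follows essentially the same route as the paper: reduce to Cartier modules via the equivalence of categories, and combine Proposition \ref{prop010} ($\mu$ is an isomorphism) with Theorem \ref{thm03} ($\eta$ is an isomorphism). The paper streamlines the step you flag as the main obstacle by applying $M$ directly to the given map $\chi$ and observing that $M(\chi)\circ\mu=\eta$ holds by the very definition of $\chi$, so no separate comparison between an abstractly produced isomorphism and the explicit formula is needed (note also that your $\mu\circ\eta^{-1}:M(G)\to M(BT_{\CP})$ should be $\eta\circ\mu^{-1}$ in the other direction, since $M$ is covariant).
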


%PROOF OF PROPOSITION 021
\begin{proof}
As the Cartier functor is an equivalence of categories, it is sufficient to prove that $ \chi $ is an isomorphism after applying the Cartier functor $ M $ on it. Since the homomorphism $ \mu:D_*(G)\to M(BT_{\CP}) $ defined in the previous proposition is an isomorphism, it suffices to show that the composition $$ M(\chi)\circ\mu:D_*(G)\to M(G) $$ is an isomorphism. By construction, this homomorphism sends element $ u\in D_*(G) $ to the homomorphism $ \eta(u):\widehat{W}\to G $, in other words, this composition is the homomorphism $ \eta:D_*(G)\to M(G) $, which is an isomorphism by Theorem \ref{thm03}.
\end{proof}

%REMARK 01
\begin{rem}
\label{rem01}
$ $
\begin{itemize}
\item[1)] If we denote the composition \[\mu^{-1}\circ \eta:D_*(G)\arrover{\cong} M(G)\arrover{\cong} P\] by $ \theta $, then for all $u\in D_*(G)$, all $n$, all nilpotent $k$-algebra $ \CN $ and all $\xi\in \widehat{W}(\CN)$ annihilated by $ F^n $, we have \[ [\xi\otimes \theta(u)]=\mu(\theta(u))(\xi)=\eta(u)(\xi)= E(\xi\cdot \tau_n[F^n\theta(u)]_n(\_);1), \] where $  E(\xi\cdot \tau_n[F^n\theta(u)]_n(\_);1) $ is seen as an element of $ G_n(\CN) $ under the identification $\innHom(G_n^*,\BG_m)\cong G_n $ and we have used Lemma \ref{lem024} for the last equality.
\item[2)] If $G$ is local-local, and $ \xi $ belongs to $ W_{m,M}(\CN) $ with $m$ and $M$ such that $ F^MG_n=0 $ and $ V^mG_n=0 $, then we have $$[u]_n(V^n\xi)=[F^{M-n}\tau(\xi)\otimes\theta(u)]_n\in G_n(\CN).$$
\end{itemize}
\end{rem}

%SECTION 8
%\section{The Main Theorem}
\chapter{The Main Theorem for $p$-Divisible Groups}

In this chapter, we prove that the exterior powers of \'etale $p$-divisible groups and $p$-divisible groups of dimension $1$ (at points of characteristic $p$) over arbitrary base exist and that the construction of the exterior power commutes with arbitrary base change.\\

%\subsection{Technical results and calculations}
\section{Technical results and calculations}

Throughout this section, unless otherwise specified, $k$ is a perfect field of characteristic $p$ and $G$ is a $p$-divisible group over $k$.

%PROPOSITION 015
\begin{prop}
\label{prop0 15}
Assume that $G$ is connected. Then the action of the Verschiebung on the Dieudonn\'e module of $G$ is topologically nilpotent, i.e., for every $x\in D_*(G)$, we have $\uset{n\to\infty}{\lim}\, V^nx=0$.
\end{prop}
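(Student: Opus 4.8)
The statement concerns a connected $p$-divisible group $G$ over a perfect field $k$ of characteristic $p$, and asserts that $V$ acts topologically nilpotently on $D_*(G) = \invlim_n D_*(G_n)$. Since $D_*(G)$ carries the $(F,V)$-adic (equivalently $p$-adic, since $VF=p$) topology as an inverse limit of the finite-length modules $D_*(G_n)$, the assertion $\lim_{n\to\infty} V^n x = 0$ for all $x$ is equivalent to saying that for each fixed truncation level $m$, there is an $N$ such that $V^N$ kills $D_*(G_m) = D_*(G)/p^m D_*(G)$. So the plan is to reduce to a statement about the finite group schemes $G_m$ and use the connectedness hypothesis.

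First I would recall that $G$ connected means each $G_m$ is a connected (local) finite group scheme over $k$. Dually, or directly, the relevant structural fact is that for a connected finite group scheme over a field of characteristic $p$, the Frobenius is nilpotent: there exists $M = M(m)$ with $F_{G_m}^M = 0$. On the covariant Dieudonn\'e module, the Frobenius morphism of the group scheme corresponds to the Verschiebung operator $V$ on $D_*$; this is exactly the sort of correspondence recorded in the excerpt (cf.\ the discussion around Notations \ref{notations01} and Construction \ref{cons017}, where for connected $G$ the Verschiebung on $D_*(G)/pD_*(G)$ is nilpotent, and more generally the $V$-nilpotence condition for displays). Concretely: because $G_m$ is local, for each $m$ there is a natural number $M$ with $F^M G_m = 0$, hence $V^M$ annihilates $D_*(G_m) \cong D_*(G)/p^m D_*(G)$.

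Now I would assemble the limiting argument. Fix $x \in D_*(G)$ and fix $m$; write $\zeta_m : D_*(G) \onto D_*(G_m)$ for the projection. Choose $M$ with $V^M D_*(G_m) = 0$; then $\zeta_m(V^M x) = V^M \zeta_m(x) = 0$, i.e.\ $V^M x \in p^m D_*(G)$. Since this holds for every $m$ with a suitable $M = M(m)$ (and $M(m)$ can be taken non-decreasing), the sequence $(V^N x)_N$ has the property that its image in each $D_*(G)/p^m D_*(G)$ is eventually zero. As $D_*(G)$ is $p$-adically complete and separated (being the inverse limit $\invlim_m D_*(G_m)$ with $D_*(G_m) = D_*(G)/p^m$), this says precisely that $V^N x \to 0$ in the topology of $D_*(G)$, which is the claim.

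The only mild subtlety — and the step I would treat most carefully — is the identification of the group-scheme Frobenius on $G_m$ with the operator $V$ on the covariant module $D_*(G_m)$, and the compatibility of the annihilation estimates with the transition maps $D_*(G_{m+1}) \onto D_*(G_m)$; one should check that $M(m)$ can be chosen monotone in $m$, which follows since $F^M G_{m} = 0$ implies $F^M G_{m'} $ has image in the appropriate smaller piece for $m' \le m$, or simply by noting $G_m \subseteq G_{m+1}$ as a subgroup scheme so $F^{M(m+1)} G_m = 0$ as well. No hard computation is needed beyond these bookkeeping points; the essential input is purely that connected finite group schemes in characteristic $p$ have nilpotent Frobenius, translated through covariant Dieudonn\'e theory into nilpotence of $V$ on each finite level.
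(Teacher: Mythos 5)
Your argument is correct and is essentially the paper's own proof: both reduce the claim to the fact that the local group scheme $G_n$ is killed by a power of Frobenius, translate this via covariant Dieudonn\'e theory into $V^{m_0}D_*(G_n)=0$, and then use the exact sequence $0\to p^nD\to D\to D_*(G_n)\to 0$ to conclude $V^{m}D\subseteq p^nD$. The extra bookkeeping about monotonicity of $M(m)$ is harmless but not needed.
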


%PROOF OF PROPOSITION 015
\begin{proof}
The topology on $ D:=D_*(G) $ is the $p$-adic topology. So we ought to prove that for every $ x\in D $ and every $ n\in \BN $, there exists an $ m_0\in\BN $ such that for all $ m\geq m_0 $, we have $ V^{m}x\in p^nD $. Since $ G $ is local, there exists a natural number $ m_0 $ such that $ G_n $ is annihilated by the Frobenius to the power $ m_0 $ and therefore $ V^{m_0}D_*(G_n)=0 $ (note that we are working with the covariant Dieudonn\'e theory). We have a short exact sequence \[ 0\to p^nD\to D\to D_*(G_n)\to0, \] which commutes with the action of Verschiebung. Since $ V^{m}D_*(G_n)=0 $ for all $ m\geq m_0 $, we have $ V^{m}D\subseteq p^nD $, which finishes the proof.
\end{proof}

%LEMMA 010
\begin{lem}
\label{lem0 10}
Assume that $G$ is connected. Then for every natural number $n$, there exist a natural number $m$ such that $V^mD_*(G)\subset F^nD_*(G)$.
\end{lem}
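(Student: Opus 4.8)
�The goal is to show that for a connected $p$-divisible group $G$ over a perfect field $k$ of characteristic $p$, the Verschiebung $V$ on $D:=D_*(G)$ satisfies $V^mD\subseteq F^nD$ for $m$ large enough depending on $n$. The plan is to exploit the fact that $G$ is connected, hence $D$ is an $\widehat{\BE}_k$-module on which $V$ is topologically nilpotent (Proposition \ref{prop0 15}), and to combine this with the relation $FV=VF=p$ together with the fact that $D$ is a finitely generated (indeed, for the dimension-$1$ case, free of finite rank over $W\widehat{\otimes}_{\BZ_p}\CO$, but in general just finitely generated over $W(k)$) module.

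First I would reduce to a statement about $F^nD$ modulo lower powers. Since $FV=p$, we have $pD = FVD \subseteq FD$, and more generally $p^nD = F^nV^nD\subseteq F^nD$. So it suffices to find, for each $n$, an $m$ with $V^mD\subseteq F^nD + p^{N}D$ for some $N$ — no wait, that is not quite enough by itself; instead I would argue directly. Fix $n$. By Proposition \ref{prop0 15}, $V$ is topologically nilpotent, so there is $m_1$ with $V^{m_1}D\subseteq p D$. Iterating (using that $V(p^jD)\subseteq p^jD$ and topological nilpotence of $V$ on the finitely generated module $p^jD/p^{j+1}D$, or more simply that $V$ is topologically nilpotent so $V^{m_1 j}D \subseteq p^jD$ for suitable $m_1$), we get for each $j$ an $m_j$ with $V^{m_j}D\subseteq p^jD$. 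In particular, choosing $j=n$, there is $m$ with $V^mD\subseteq p^nD = F^nV^nD\subseteq F^nD$.

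Actually the cleanest route is exactly this: take $j=n$ in the statement ``for every $j$ there is $m$ with $V^mD\subseteq p^jD$,'' which is an immediate consequence of Proposition \ref{prop0 15} applied to a finite generating set of $D$ over $W(k)$ (topological nilpotence of $V$ means $V^mx\to 0$ $p$-adically for each generator $x$, and there are finitely many generators, so a common $m$ works, giving $V^mx\in p^nD$ for all $x$ in the generating set, hence $V^mD\subseteq p^nD$ by $W(k)$-semilinearity — one must note $V$ is $\sigma^{-1}$-semilinear, but $V^m(\xi x) = \xi^{\sigma^{-m}}V^m x \in p^nD$ since $p^nD$ is a $W(k)$-submodule). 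Then $p^nD = F^n(V^nD)\subseteq F^nD$ finishes it.

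The main obstacle, such as it is, is bookkeeping: making sure the semilinearity of $V$ does not interfere with passing from ``$V^m$ kills the generators modulo $p^n$'' to ``$V^mD\subseteq p^nD$'', which is fine because $p^nD$ is stable under the $W(k)$-action; and confirming that $D$ is finitely generated over $W(k)$ so that a uniform $m$ exists — this holds because each $D_*(G_i)$ is finite over $W(k)$ and $D = \invlim D_*(G_i)$ with $D/p^nD\cong D_*(G_n)$ finite, so $D$ is $p$-adically complete and finitely generated. Beyond that the argument is essentially formal, driven by $p = FV$ and Proposition \ref{prop0 15}.
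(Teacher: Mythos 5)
Your proof is correct and follows essentially the same route as the paper: invoke the topological nilpotence of $V$ (Proposition \ref{prop0 15}, whose proof already yields $V^mD\subseteq p^nD$ for $m$ large) together with finite generation of $D$ over $W(k)$, and then conclude via $p^nD=F^nV^nD\subseteq F^nD$. The paper phrases this as ``$F^nD$ is open in the $p$-adic topology,'' but the content is identical to your final clean argument.
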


%PROOF OF LEMMA 010
\begin{proof}
From the previous lemma, we know that the action of Verschiebung is topologically nilpotent. It is therefore enough to show that for every natural number $n$, the submodule $ F^nD_*(G) $ is open in $D:= D_*(G) $. Indeed, if for every $ x\in D $, there exists an $ m $ such that $ V^mx\in F^nD $, since $ D $ is finitely generated over the ring of Witt vectors over the base field, then there exists an $ m$ such that $ V^mD\subseteq F^nD $ (and in fact, we showed in the proof of the previous lemma that there is an $m$ with $ V^mD\subseteq p^nD $). We have $ p^nD=V^nF^nD\subseteq F^nD $. Hence, $ F^nD $ is open in the $p$-adic topology.
\end{proof}

%NOTATIONS 05
\begin{notation}
\label{notation0 5}
Assume that $G$ is local-local and that we are given an element $ u\in D_*(G) $. We denote by $ [u^{\flat}]_n $ the composition \[ V^n\BW\into \BW\arrover{[u]_n}G_n.\] Fix natural numbers $m$ and $M$ such that $ F^MG_n=0 $ and $ V^mG_n=0 $. The map $ [u]_n:\BW\to G_n $ factors through the quotient $ \BW\onto W_{m,M} $, therefore, the map $ [u^{\flat}]_n $, too, factors through the quotient $ V^n\BW\onto V^nW_{m,M} $ and since $G_n$ is annihilated by $p^n$, this map factors through the quotient $ V^nW_{m,M}\onto V^nW_{m,M}/p^n $, and by abuse of notation, we denote the resulting maps $ V^nW_{m,M}\to G_n $ and $ V^nW_{m,M}/p^n\to G_n $ by $ [u^{\flat}]_n $ as well, and we have the following commutative diagrams:\[ \xymatrix{V^n\BW\ar[r]\ar@/^4pc/[drr]^{[u^{\flat}]_n}\ar@{->>}[d]&\BW\ar@{->>}[d]\ar[dr]^{[u]_n}&\\ V^nW_{m,M}\ar@/_2pc/[rr]_{[u^{\flat}]_n}\ar[r]&W_{m,M}\ar[r]_{[u]_n}&G_n} \] and \[ \xymatrix{V^n\BW\ar@{->>}[r]\ar[dr]_{_{[u^{\flat}]_n}}&V^nW_{m,M}\ar@{->>}[r]\ar[d]_{_{[u^{\flat}]_n}}&V^nW_{m,M}/p^n\ar[dl]^{_{[u^{\flat}]_n}}\\&G_n.} \] Note also that by the functoriality of Verschiebung, the image of the map $ [u^{\flat}]_n $ lies inside the subgroup $ V^{n}G_n^{(p^n)}. $ And once again, by abuse of notation, we will also denote by $ [u^{\flat}]_n $ the induced map going to $ V^{n}G_n^{(p^n)}. $
\end{notation}

%REMARK 012
\begin{rem}
\label{rem0 12}
Assume that $G$ is local-local and we are given an element $ u\in D_*(G) $. Keeping the above notations, we have the following commutative diagram: \[ \xymatrix{W_{m,M}\ar@{->>}[rr]^{F^{M-n}}\ar@{->>}[dr]^{\cong}\ar[dd]_{V^n}&&W_{m,M}\ar@{^{(}->}[rr]\ar[dd]^{\eta(u)_n}&&\widehat{W}\ar[dd]^{\eta(u)}\\ &V^nW_{m,M}\ar[dr]^{[u^{\flat}]_n}\ar@{^{(}->}[dl]&&&\\ W_{m,M}\ar[rr]_{[u]_n}&&G_n\ar@{^{(}->}[rr]&&G.} \]
\end{rem}

%LEMMA 017
\begin{lem}
\label{lem0 17}
Assume that $G$ is local-local and we are given an element $ u\in D_*(G) $. Then for every $n$ and every $m$ and $M$ with $ F^MG_n=0 $ and $ V^mG_n=0 $, the following diagram is commutative, where $ \bigvee(u)_n $ denotes the image of $ \bigvee(u)$ under the map $\uset{i}{\invlim}\,\Hom(\BW/F^i,G_i) \to \Hom(\BW/F^n,G_n) $: \[ \xymatrix{V^n\BW/p^n\cong \BW/F^n\ar[rr]^{\qquad \bigvee(u)_n}\ar[d]&& V^{n}G_n^{(p^n)}\\ V^nW_{m,M}/p^n\ar[urr]_{[u^{\flat}]_n}.} \]
\end{lem}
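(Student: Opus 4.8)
The claim is the compatibility of two different descriptions of the ``Verschiebung-twist'' of an element $u\in D_*(G)$: on one hand the component $\bigvee(u)_n$ of the image of $u$ under the isomorphism $\bigvee$ from Construction \ref{cons08}, on the other hand the map $[u^{\flat}]_n$ built by hand in Notations \ref{notation0 5} from the Dieudonn\'e-module presentation $[u]_n\colon\BW\to G_n$. The plan is to trace both morphisms back to the level of finite Witt groups $W_{m,M}$ and to use the explicit description of $\bigvee$ in terms of $\eta$, i.e.\ the fact (from the proof of Theorem \ref{thm03}, in the local-local case) that under the identifications of Lemmas \ref{lem04} and \ref{lem05} the homomorphism $\eta$ is identified with $\bigvee$. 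Concretely, $\bigvee(u)_n$ is, by definition, the composition $\BW/F^n\overset{V^n}{\to}\BW/p^n\overset{[u]_n}{\to}G_n$ read through the isomorphism $\BW/F^n\cong W[F^n]$ and the factorization through $W_{m,M}$; but this is exactly the map appearing along the left/bottom route of the commutative diagram in Remark \ref{rem0 12}.

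First I would record the commutative diagram of Remark \ref{rem0 12}, whose outer square expresses $\eta(u)_n$ (hence $\bigvee(u)_n$ by Theorem \ref{thm03} / Remark \ref{rem021}) as $W_{m,M}\overset{F^{M-n}}{\twoheadrightarrow}W_{m,M}\to G_n$, and whose lower-left triangle factors $V^n\colon W_{m,M}\to W_{m,M}$ through the isomorphism $W_{m,M}\xrightarrow{\ \cong\ }V^nW_{m,M}$ followed by $[u^{\flat}]_n\colon V^nW_{m,M}\to G_n$. Next I would observe that, since $G_n$ is annihilated by $p^n$, the map $[u^{\flat}]_n$ on $V^nW_{m,M}$ descends to $V^nW_{m,M}/p^n$, and that under the canonical isomorphism $V^n\BW/p^n\cong\BW/F^n$ (established in the proof of Lemma \ref{lem05} via the exact sequences $W_{i,m}\xrightarrow{F^n}W_{i,m}\to W_{i,n}\to 0$) the surjection $V^n\BW/p^n\twoheadrightarrow V^nW_{m,M}/p^n$ corresponds to the surjection $\BW/F^n\twoheadrightarrow W_{m,n}\hookrightarrow$ (image in) $W_{m,M}$. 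Chasing these identifications, the triangle in the statement of the lemma becomes precisely the assertion that $\bigvee(u)_n$, restricted along $\BW/F^n\twoheadrightarrow V^nW_{m,M}/p^n$, equals $[u^{\flat}]_n$; and this is what the outer square together with the lower-left triangle of Remark \ref{rem0 12} gives, once one notes that both composites land in the subgroup $V^nG_n^{(p^n)}$ by functoriality of Verschiebung (as already noted at the end of Notations \ref{notation0 5}).

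The one genuinely careful point — and the step I expect to be the main obstacle — is bookkeeping the Frobenius twists and the precise identification $V^n\BW/p^n\cong\BW/F^n$, together with checking that the map $\bigvee(u)_n$ coming out of the abstract isomorphism $\bigvee$ agrees on the nose (not merely up to an automorphism) with the concrete composite through $W_{m,M}$. This is exactly the content of the commutative parallelepiped at the end of the proof of Theorem \ref{thm03}, so I would invoke that diagram rather than rederive it: its front face identifies $\eta(u)_n$ with $[u]_n\circ V^n$ through $W[F^n]\cong\BW/F^n$ and $W_{m,M}$, and Remark \ref{rem0 12} refines the same data by inserting $V^nW_{m,M}$ and $[u^{\flat}]_n$. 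Assembling the two diagrams and cancelling the epimorphism $\BW\twoheadrightarrow\BW/F^n$ (resp.\ $V^n\BW/p^n\twoheadrightarrow V^nW_{m,M}/p^n$) yields the commutativity of the triangle in the lemma, completing the proof. Independence of $m,M$ (for $m,M$ large enough) is automatic since enlarging them only composes with the evident transition maps, which are compatible with all the morphisms involved.
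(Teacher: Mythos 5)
Your proposal is correct and follows essentially the same route as the paper: the whole point is that $\bigvee(u)_n=[u]_n\circ V^n$ by the very definition in Construction \ref{cons08}, while $[u^{\flat}]_n$ is by Notations \ref{notation0 5} the factorization of the restriction of $[u]_n$ to $V^n\BW/p^n$ through $V^nW_{m,M}/p^n$, so after composing with the monomorphism $V^nG_n^{(p^n)}\into G_n$ both composites are the same restriction of $[u]_n$. The only remark is that your detour through $\eta$ and the parallelepiped of Theorem \ref{thm03} is superfluous — the definitions of $\bigvee$ and of $[u^{\flat}]_n$ already suffice.
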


%PROOF OF LEMMA 017
\begin{proof}
It is enough to show that the two homomorphisms $$ V^n\BW/p^n\arrover{\bigvee(u)_n} V^{n}G_n^{(p^n)}$$ and $$ V^n\BW/p^n\onto V^nW_{m,M}/p^n\arrover{[u^{\flat}]_n} V^{n}G_n^{(p^n)} $$ are equal after composing with the inclusion $ V^{n}G_n^{(p^n)}\into G_n $. Since the composition of $[u^{\flat}]_n  $ with this inclusion is equal to the composition of the inclusion $ V^n\BW/p^n\into\BW/p^n $ with the homomorphism $ [u]_n $, we are reduced to show that the following diagram commutes: \[ \xymatrix{V^n\BW/p^n\ar@{->>}[d]\ar[r]^{\bigvee(u)_n}&G_n\\ V^nW_{m,M}/p^n\ar@{^{(}->}[r]&W_{m,M}/p^n\ar[u]^{[u]_n}.} \] We know that the composition $$ V^n\BW/p^n\into \BW/p^n\arrover{[u]_n} G_n $$ is equal to $ \bigvee(u)_n $ and since homomorphisms $ [u]_n $ and respectively $ \bigvee(u)_n $ factor through $ W_{m,M}/p^n $ and respectively $ V^nW_{m,M} $, we obtain the commutativity of the above diagram as desired.
\end{proof} 
 
%LEMMA 014
\begin{lem}
\label{lem0 14}
Assume that $G$ is local-local and we are given an element $ u\in D_*(G) $. Then for every $n$, $n'\gg n$ and $m,M$ with $ F^MG_{n'}=0 $ and $ V^mG_{n'}=0 $, the following diagram is commutative: \[ \xymatrix{W_{m,M}\ar@{->>}[rr]^{V^{n'}}\ar[drr]_{[u]_n}&&V^{n'}W_{m,M}\ar[rr]^{[u^{\flat}]_{n'}^{(p^{-n'})}}&&V^{n'}G_{n'}\ar@{->>}[dll]^{r_{n',n}}\\ && G_n&&G_{n'}\ar@{->>}[ll]^{p^{n'-n}}\ar@{->>}[u]_{V^{n'}}.} \]
\end{lem}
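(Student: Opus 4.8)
The statement amounts to the commutativity of the outer rectangle of the displayed diagram, that is, the equality of the two morphisms $W_{m,M}\to G_n$ given by $r_{n',n}\circ [u^{\flat}]_{n'}^{(p^{-n'})}\circ V^{n'}$ and by $[u]_n$ (the triangle $r_{n',n}\circ V^{n'}=p^{n'-n}$ appearing inside the diagram being already known, namely Lemma \ref{lem07}). The plan is to deduce this from three ingredients that are already available: the instance of Remark \ref{rem0 12} at level $n'$, the defining property of $r_{n',n}$ from Lemma \ref{lem07} (together with the uniqueness statement in Remark \ref{rem0 4}), and the description of the covariant Dieudonn\'e module $D_*(G)$ of the $p$-divisible group $G$ as the inverse limit of the tower $\{D_*(G_i)\}_i$ with transition maps induced by $p\colon G_{i+1}\twoheadrightarrow G_i$ (Definition \ref{def 12}; cf.\ also Construction \ref{cons010} and Lemma \ref{lem0 13}).

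First I would record that, since $n\le n'$, the hypotheses $F^M G_{n'}=0$ and $V^m G_{n'}=0$ force the same with $n'$ replaced by $n$, so Remark \ref{rem0 12} applies at both levels $n$ and $n'$ and, for $n'$ large relative to $n$ in the sense of Lemma \ref{lem07}, the morphism $r_{n',n}$ — hence the whole diagram — is defined. Step 1: by Notation \ref{notation0 5} and the level-$n'$ prism of Remark \ref{rem0 12}, $[u^{\flat}]_{n'}$ is the restriction to the subscheme $V^{n'}W_{m,M}\subseteq W_{m,M}$ of the group-scheme homomorphism $[u]_{n'}\colon W_{m,M}\to G_{n'}$; since $[u]_{n'}$ commutes with the Verschiebung, applying the Frobenius twist $(\,\cdot\,)^{(p^{-n'})}$ (and using that $k$ is perfect, so that $W_{m,M}$ and $V^{n'}W_{m,M}$, being defined over $\BF_p$, are canonically their own twists) would give $[u^{\flat}]_{n'}^{(p^{-n'})}\circ V^{n'}=V^{n'}\circ [u]_{n'}$ as morphisms $W_{m,M}\to V^{n'}G_{n'}$. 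Step 2: by Lemma \ref{lem07} one has $r_{n',n}\circ V^{n'}=p^{n'-n}\colon G_{n'}\to G_n$, so composing the identity of Step 1 with $r_{n',n}$ yields $r_{n',n}\circ [u^{\flat}]_{n'}^{(p^{-n'})}\circ V^{n'}=p^{n'-n}\circ [u]_{n'}$. Step 3: by the definition of $D_*(G)$ recalled above, the $n$-th component of the element $u\in D_*(G)$ is obtained from its $n'$-th component by applying $D_*(p^{n'-n}\colon G_{n'}\to G_n)$; since for local-local groups $D_*(G_j)=\Hom(\BW,G_j)$ and this functor sends a homomorphism to its composite with $p^{n'-n}$, one gets $p^{n'-n}\circ [u]_{n'}=[u]_n$ as morphisms $\BW\to G_n$, hence also after factoring through $W_{m,M}$. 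Concatenating the three steps gives the claim.

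I expect the only genuine difficulty to be the Frobenius-twist bookkeeping inside Step 1: one must check that, after the twist $(\,\cdot\,)^{(p^{-n'})}$, the morphism $[u^{\flat}]_{n'}^{(p^{-n'})}\circ V^{n'}$ does land precisely in the copy of $V^{n'}G_{n'}$ on which the $r_{n',n}$ of Lemma \ref{lem07} is defined, and that the ``semilinear'' Verschiebung on $G_{n'}$ used there is identified with the ``linearized'' one appearing after twisting — an identification that is harmless because $k$ is perfect but that should be spelled out. Everything else is a purely formal diagram chase assembling Remark \ref{rem0 12}, Lemma \ref{lem07} and Definition \ref{def 12}, with no new geometric input.
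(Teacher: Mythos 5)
Your proof is correct, but it takes a genuinely different route from the paper's. The paper first replaces $W_{m,M}$ and $V^{n'}W_{m,M}$ by $\BW/p^{n'}$ and $V^{n'}\BW/p^{n'}$, uses Lemma \ref{lem0 17} to identify the composite through $[u^{\flat}]_{n'}^{(p^{-n'})}$ with $\bigvee(u)_{n'}^{(p^{-n'})}$, and then reduces the commutativity to the identity $\big(\bigvee(u)\big)_n^{\sharp}=[u]_n$, i.e.\ to Remark \ref{rem0 11} and Proposition \ref{prop017} — so the whole $\bigvee$/$(\_)^{\sharp}$ machinery of Chapter 7 is invoked. You instead split the outer rectangle into three elementary identities: $[u^{\flat}]_{n'}^{(p^{-n'})}\circ V^{n'}=V^{n'}\circ[u]_{n'}$ (functoriality of the Verschiebung together with the definition of $[u^{\flat}]_{n'}$ in Notation \ref{notation0 5} as a restriction of $[u]_{n'}$), $r_{n',n}\circ V^{n'}=p^{n'-n}$ (the defining triangle of Lemma \ref{lem07}), and $p^{n'-n}\circ[u]_{n'}=[u]_n$ (the inverse-limit description of $D_*(G)$, via Construction \ref{cons010} and Lemma \ref{lem0 13}). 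Each step checks out, including the twist bookkeeping you flag: $W_{m,M}$ is defined over $\BF_p$, hence canonically its own twist, and the twisted restriction of $[u]_{n'}$ to $V^{n'}W_{m,M}$ is exactly $[u^{\flat}]_{n'}^{(p^{-n'})}$ with target $V^{n'}G_{n'}$, the copy on which $r_{n',n}$ is defined. Your argument is shorter and self-contained; what the paper's detour buys is that it runs the statement through the identifications $D_*(G)\cong\invlim\,\Hom(\BW/p^n,G_n)\cong\invlim\,\Hom(\BW/F^n,G_n)$ being assembled in that section, which are reused immediately afterwards (e.g.\ in Lemma \ref{lem0 15}).
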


%PROOF OF LEMMA 014
\begin{proof}
It is enough to show that this diagram commutes after composing it from left with the epimorphism $ \BW\onto W_{m,M} $ (the right triangle commutes by definition of $ r_{n',n} $) and since the homomorphisms $ [u]_n $ and $[u^{\flat}]_{n'}^{(p^{-n'})}  $ land in $ G_{n'} $, we can replace $ W_{m,M} $ and respectively $ V^{n'}W_{m,M} $ with $ \BW/p^{n'} $ and respectively  $ V^{n'}\BW/p^{n'} $. From the last lemma we know that the composition $$ V^{n'}\BW/p^{n'}\onto V^{n'}W_{m,M}/p^{n'}\arrover{[u^{\flat}]_{n'}^{(p^{-n'})}} V^{n'}G_{n'} $$ is equal to $ \bigvee(u)_{n'}^{(p^{-n'})} $ and therefore, we are reduced to show that the following diagram commutes: \[ \xymatrix{\BW/p^{n'}\ar[r]^{V^{n'}}\ar[d]_{[u]_n}&V^{n'}\BW/p^{n'}\ar[d]^{\bigvee(u)_{n'}^{(p^{-n'})}}\\G_n&V^{n'}G_{n'}\ar[l]^{r_{n',n}}.} \] The commutativity of this diagram follows from Remark \ref{rem0 11} or Proposition \ref{prop017}.
\end{proof}  
 
%LEMMA 015
\begin{lem}
\label{lem0 15}
Let $\CP$ be a Dieudonn\'e display over $k$ and write $G$ for the $p$-divisible group $ BT_{\CP} $. Given an element $ u\in D_*(G)$, a nilpotent $k$-algebra $\CN$ and  an element $ \xi\in \BW(\CN) $, we have for all $n'\gg n$ and $m,M$ with $ F^MG_{n'}=0 $ and $ V^mG_{n'}=0 $:
\begin{itemize}
\item[(i)] $ [u^{\flat}]_{n'}^{(p^{-n'})}(V^{n'}\xi)=[\bar{\xi}\otimes 1\otimes V^{M-n'}\theta(u)]_{n'} $ and
\item[(ii)] $ [u]_n(\xi)=p^{n'-n}[\bar{\xi}\otimes z(u)]_{n'}=[p^{n'-n}\bar{\xi}\otimes z(u)]_{n}\in G_{n}(\CN),$
\end{itemize}
where $ \bar{\xi} $ is the image of $ \xi $ under the composition $$ \BW(\CN) \onto W_{m,M}(\CN)\into \widehat{W}(\CN) $$ and  $ z(u)\in P $ is any element such that $ F^{n'}z(u)=V^{M-n'}\theta(u) $.
\end{lem}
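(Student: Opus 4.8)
The plan is to deduce both identities by unwinding the definitions of $[u^{\flat}]_{n'}$ and $[u]_n$ and translating everything into display coordinates for $G$, using the isomorphism $\chi\colon BT_{\CP}\xrightarrow{\sim}G$ of Proposition \ref{prop021} and the element $\theta(u)=\mu^{-1}(\eta(u))\in P$ introduced in Remark \ref{rem01} (here $G$ is local-local, as in the earlier lemmas of this section, so that $[u^{\flat}]$ and Remark \ref{rem01}\,(2) are available). The four facts I would feed in are: Remark \ref{rem01}\,(2), which already writes $[u]_{n'}(V^{n'}\xi)$ in these coordinates; the relations $[F^{j}w\otimes x]=[w\otimes V^{j}x]$ and $[V^{j}w\otimes x]=[w\otimes F^{j}x]$ in $BT_{\CP}$ from Remark \ref{rem0 7} (together with the description of the Verschiebung of $BT_{\CP}$ in Proposition \ref{prop0 20} and Construction \ref{cons02}); Lemma \ref{lem0 14}, which exhibits $[u]_n$ as the composite $r_{n',n}\circ[u^{\flat}]_{n'}^{(p^{-n'})}\circ V^{n'}$; and the defining property $r_{n',n}\circ V^{n'}=p^{n'-n}$ of $r_{n',n}$ (Lemma \ref{lem07}, Remark \ref{rem0 4}).

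For part (i) I would first note that by the definition of $[u^{\flat}]_{n'}$ (Notation \ref{notation0 5}) this map is the restriction of $[u]_{n'}$ to $V^{n'}\BW$, so $[u^{\flat}]_{n'}(V^{n'}\xi)=[u]_{n'}(V^{n'}\xi)$; since $V^{n'}\xi$ lands in $V^{n'}W_{m,M}(\CN)$, Remark \ref{rem01}\,(2) applied at level $n'$ rewrites this as $[F^{M-n'}\bar\xi\otimes\theta(u)]_{n'}$ (here $\bar\xi$ is the image of $\xi$ in $\widehat{W}(\CN)$, which is exactly $\tau$ of its image in $W_{m,M}(\CN)$ in the notation of that remark). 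By Remark \ref{rem0 7} the power of Frobenius may be moved across the tensor, $[F^{M-n'}\bar\xi\otimes\theta(u)]=[\bar\xi\otimes V^{M-n'}\theta(u)]$. It then remains to follow the Frobenius $(p^{-n'})$-twist: passing from $[u^{\flat}]_{n'}$ to $[u^{\flat}]_{n'}^{(p^{-n'})}$ is the base change along $\sigma^{-n'}$, and --- using that twisting is compatible with the $F/V$-structure (Construction \ref{cons02}) and with the canonical identification $(BT_{\CP})^{(p^{j})}\cong BT_{\CP^{(p^{j})}}$ --- this turns $[\bar\xi\otimes V^{M-n'}\theta(u)]_{n'}$ into the element $[\bar\xi\otimes 1\otimes V^{M-n'}\theta(u)]_{n'}$ lying in $V^{n'}G_{n'}\subseteq G_{n'}$, the middle ``$1$'' recording the $\sigma^{-n'}$-twisted $W(k)$-coefficient of the module of $\CP^{(p^{-n'})}$. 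This gives (i).

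For part (ii) I would substitute (i) into Lemma \ref{lem0 14}: $[u]_n(\xi)=[u]_n(\bar\xi)=r_{n',n}\bigl([u^{\flat}]_{n'}^{(p^{-n'})}(V^{n'}\bar\xi)\bigr)=r_{n',n}\bigl([\bar\xi\otimes 1\otimes V^{M-n'}\theta(u)]_{n'}\bigr)$. Pick $z(u)\in P$ with $F^{n'}z(u)=V^{M-n'}\theta(u)$; such a $z(u)$ exists because for $M$ large --- which is forced by $F^{M}G_{n'}=0$ together with Lemma \ref{lem0 10} --- one has $V^{M-n'}P\subseteq F^{n'}P$. Using the same twist-compatibility as in (i) and the relation $[V^{n'}w\otimes x]=[w\otimes F^{n'}x]$ (Remark \ref{rem0 7}, Proposition \ref{prop0 20}), one then recognizes $[\bar\xi\otimes 1\otimes V^{M-n'}\theta(u)]_{n'}=[\bar\xi\otimes 1\otimes F^{n'}z(u)]_{n'}$ as the image under $V^{n'}\colon G_{n'}\to G_{n'}$ of the element $[\bar\xi\otimes z(u)]_{n'}\in G_{n'}(\CN)$. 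Applying $r_{n',n}\circ V^{n'}=p^{n'-n}$ (Lemma \ref{lem07}, Remark \ref{rem0 4}) gives $[u]_n(\xi)=p^{n'-n}[\bar\xi\otimes z(u)]_{n'}$, and the last equality $p^{n'-n}[\bar\xi\otimes z(u)]_{n'}=[p^{n'-n}\bar\xi\otimes z(u)]_{n}$ holds since multiplication by $p^{n'-n}$ in the additive group $BT_{\CP}(\CN)$ is induced by multiplication by $p^{n'-n}$ on the $\widehat{W}(\CN)$-factor.

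The step I expect to be the main obstacle is the careful bookkeeping of the Frobenius $(p^{-n'})$-twist: one must match $[u^{\flat}]_{n'}^{(p^{-n'})}$, whose target is the twisted subgroup $V^{n'}G_{n'}$, with the extra ``$1\otimes$'' coordinate on the display side, and must convert the power $F^{M-n'}$ that Remark \ref{rem01}\,(2) produces on the $\widehat{W}$-side into the power $V^{n'}$ needed to invoke $r_{n',n}$ --- this interchange is exactly the source of the asymmetric relation $F^{n'}z(u)=V^{M-n'}\theta(u)$ in the statement. Once this compatibility is pinned down, the remainder of both parts is a routine substitution using Remark \ref{rem01}\,(2), Remark \ref{rem0 7}, Lemma \ref{lem0 14} and the defining property of $r_{n',n}$.
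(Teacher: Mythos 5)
Your proposal is correct and follows essentially the same route as the paper's own proof: part (i) is Remark \ref{rem01}\,(2) followed by the twist of Construction \ref{cons02} and the relation of Remark \ref{rem0 7} (you merely apply the Frobenius-shift before twisting rather than after, which is immaterial), and part (ii) substitutes (i) into Lemma \ref{lem0 14}, identifies $[\bar\xi\otimes 1\otimes V^{M-n'}\theta(u)]_{n'}$ as $V^{n'}[\bar\xi\otimes z(u)]_{n'}$ via the description of $Ver_{\CP}$, and concludes with $r_{n',n}\circ V^{n'}=p^{n'-n}$. The twist bookkeeping you flag as the main obstacle is handled in the paper at exactly the same level of detail, so no gap remains.
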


%PROOF OF LEMMA 015
\begin{proof}
\begin{itemize}
\item[(i)] By Remark \ref{rem01}, we have $$ [u^{\flat}]_{n'}(V^{n'}\xi)=[u]_{n'}(V^{n'}\xi)=[F^{M-n'}\bar{\xi}\otimes \theta(u)] $$ and since we want to calculate the twisted homomorphism $ [u^{\flat}]_{n'}^{(p^{-n'})} $ on $ (V^{n'}\xi) $, we obtain $$ [u^{\flat}]_{n'}^{(p^{-n'})}(V^{n'}\xi)=[u]_{n'}^{(p^{-n'})}(V^{n'}\xi)=[F^{M-n'}\bar{\xi}\otimes 1\otimes  \theta(u)] $$ (cf. Construction \ref{cons02}). Finally, by Remark \ref{rem0 7}, we know that \[ [F^{M-n'}\bar{\xi}\otimes 1\otimes  \theta(u)] = [\bar{\xi}\otimes 1\otimes V^{M-n'}\theta(u)]\] and we obtain the desired equality \[ [u^{\flat}]_{n'}^{(p^{-n'})}(V^{n'}\xi)=[\bar{\xi}\otimes 1\otimes V^{M-n'}\theta(u)]_{n'}. \]
\item[(ii)] By the last equality and the previous lemma, we have \[ [u]_n(\xi)=r_{n',n}\big([u^{\flat}]_{n'}^{(p^{-n'})}(V^{n'}\xi)\big)=r_{n',n}\big([\bar{\xi}\otimes 1\otimes V^{M-n'}\theta(u)]_{n'}\big). \] In order to calculate the latter, we have to find an element in $ G_{n'}(\CN) $ such that when we apply $ V^{n'} $ on it, we obtain the element $ [\bar{\xi}\otimes 1\otimes V^{M-n'}\theta(u)]$, and then $ r_{n',n}\big([\bar{\xi}\otimes 1\otimes V^{M-n'}\theta(u)]_{n'}\big) $ will be $ p^{n'-n} $ times that element. We claim that $ [\bar{\xi}\otimes z(u)]_{n'} $ is such an element. Indeed, we have by the construction of Verschiebung on $ BT_{\CP} $ (cf. Construction \ref{cons02}) that \[ V^{n'}[\bar{\xi}\otimes z(u)]=Ver_{\CP}[\bar{\xi}\otimes z(u)]=[\bar{\xi}\otimes 1\otimes F^{n'}z(u)]=[\bar{\xi}\otimes1\otimes V^{M-n'}\theta(u)]. \] Hence the equality \[[u]_n(\xi)=r_{n',n}\big([\bar{\xi}\otimes 1\otimes V^{M-n'}\theta(u)]_{n'}=p^{n'-n}[\bar{\xi}\otimes z(u)]_{n'}=[p^{n'-n}\bar{\xi}\otimes z(u)]_{n}. \]
\end{itemize}
\end{proof}

%REMARK 05
\begin{rem}
\label{rem0 5}
Let us use the notations of the previous lemma. From the construction of $ V^{-1} $ given in Construction \ref{cons012}, we have the following equalities inside the module $ \widehat{P}$:  $$(V^{-1}-\Id)(-\sum_{i=0}^{M-1}F^{i}\bar{\xi}\otimes V^{M-i}\theta(u))=$$ \[\sum_{i=0}^{M-1}F^{i}\bar{\xi}\otimes V^{M-i}\theta(u)-\sum_{i=0}^{M-1}F^{i+1}\bar{\xi}\otimes V^{M-i-1}\theta(u)=\] \[ \bar{\xi}\otimes V^M\theta(u) - F^M\bar{\xi}\otimes \theta(u)= \bar{\xi}\otimes V^M\theta(u),\] where the latter equality follows from the fact that $ F^M\bar{\xi}=0$. Further, we have $$ \bar{\xi}\otimes V^M\theta(u)= \bar{\xi}\otimes V^{n'}V^{M-n'}\theta(u)=\bar{\xi}\otimes V^{n'}F^{n'}z(u)=\bar{\xi}\otimes p^{n'}z(u)= p^{n'}\bar{\xi}\otimes z(u).$$ It follows that 
\begin{myequation}
\label{g}
\leftidx{_n}{g}{_{\CP}}(p^{n'-n}\bar{\xi}\otimes z(u))= -\sum_{i=0}^{M-1}F^{i}\bar{\xi}\otimes V^{M-i}\theta(u).
\end{myequation}
\end{rem}

%LEMMA 019
\begin{lem}
\label{lem0 19}
Let $\mathcal{P}_0,\mathcal{P}_1,\dots, \mathcal{P}_r$ be Dieudonn\'e displays over $k$ and \[\phi:\mathcal{P}_1\times\dots\times\mathcal{P}_r\to\mathcal{P}_0\] a multilinear morphism satisfying the $ V$-$F $ conditions. Fix natural numbers $N$ and $M$ and a vector $ (d_1,\dots, d_r)\in \BN^r $. Assume that for all $ i=1,\dots,r $ we have elements $ y_i, z_i\in P_i $ such that $ F^{N}z_i=V^My_i $. Then  \[F^N\big(p^{(r-1)N}\phi(V^{d_1}z_1,\dots,V^{d_r}z_r)\big)=V^M\phi(V^{d_1}y_1,\dots,V^{d_r}y_r).\]
\end{lem}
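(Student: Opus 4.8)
The plan is to exploit the $V$-$F$ conditions on $\phi$ repeatedly together with the two basic relations $FV=p$ and $VF=p$ on each of the displays $\CP_i$ and $\CP_0$. The identity $F^Nz_i=V^My_i$ is the hypothesis tying together the two sides; the idea is that applying $V^M$ to the left-hand side and $F^N$ to the right-hand side should both funnel through $\phi(z_1,\dots,z_r)$ and $\phi(y_1,\dots,y_r)$ after absorbing the correct power of $p$. First I would reduce to the case $d_1=\dots=d_r=0$: since $\phi$ commutes with $F$ and $V$ in the precise sense given by the $V$-condition ($V\phi(\cdot)=\phi(V\cdot,\dots,V\cdot)$) and the $F$-condition, the factors $V^{d_i}$ can be pulled out uniformly, and proving the bare identity $F^N\big(p^{(r-1)N}\phi(z_1,\dots,z_r)\big)=V^M\phi(y_1,\dots,y_r)$ will yield the general case by applying $V^{d_i}$ slotwise (using that $V$ is injective where needed, or simply that the relevant displays are Dieudonn\'e displays so $V$ is injective on $P_i$, cf. Lemma \ref{lem 13}).

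Next, for the reduced identity I would compute $V^M\phi(y_1,\dots,y_r)$ by substituting $V^My_i=F^Nz_i$ slot by slot and using the $F$-condition of $\phi$ (Definition \ref{def05}(i), third bullet) iteratively. A single application of the $F$-condition trades an $F$ in one slot for $F$ outside and $V^{-1}$'s (equivalently, on elements of $Q$, factors involving $F$ and $V$) in the other slots; doing this in all $r$ slots, with the bookkeeping of how $F^N$ in each slot interacts, should produce $F^{rN}$ outside acting on $\phi(z_1,\dots,z_r)$ twisted by appropriate powers of $V$. Then one uses $F^N V^N=p^N$ to collapse the excess $r-1$ copies of $F^N$ against the $V^N$'s that the $F$-condition introduces in the other slots, leaving exactly $F^N$ outside, a factor $p^{(r-1)N}$, and $\phi(z_1,\dots,z_r)$. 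The cleanest way to organize this is to prove by induction on $r$ (or on $N$) the intermediate identity $V^M\phi(V^{N}z_1,\dots,V^{N}z_r)\cdot(\text{power of }p)$-type relations, but I expect a direct slotwise induction on the number of slots already converted from $z$-form to $y$-form to be most transparent.

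The main obstacle I anticipate is the careful index/power-of-$p$ accounting: each use of the $F$-condition simultaneously changes several arguments, and one must verify that after processing all $r$ slots the twists assemble precisely into $p^{(r-1)N}$ and a single outer $F^N$, with no leftover $V$'s or $F$'s. This requires knowing that $F^Nz_i$ and $V^My_i$ land in $Q_i$ (so that the $V^{-1}$-versions of the conditions apply) — which holds since $I_RP_i\subseteq Q_i$ and $V^My_i\in I_RP_i$ for $M\ge 1$ — and using Remark \ref{rem0 15} to convert freely between statements about $F$ and statements about $V^{-1}$ on $Q_i$. A secondary subtlety is that $\phi$ is only required to restrict to a multilinear map $Q_1\times\dots\times Q_r\to Q_0$, so at intermediate stages one should check the arguments remain in the $Q_i$'s; this is automatic because every intermediate argument is of the form $V^{(\cdot)}(\text{something})$ or $F^Nz_i=V^My_i\in Q_i$. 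Once the power-of-$p$ bookkeeping is pinned down, the proof is a finite sequence of rewrites, so I would present it as an induction on $r$ with the case $r=1$ being the identity $F^N(V^My_1)=F^N(F^Nz_1)=p^N z_1$... — wait, rather the case $r=1$ reads $F^N\phi(V^{d_1}z_1)=V^M\phi(V^{d_1}y_1)$, which follows directly from $F^Nz_1=V^My_1$, $F^R$-linearity, and the $F$- and $V$-conditions, and the inductive step strips one slot using the $F$-condition and applies $F^NV^N=p^N$.
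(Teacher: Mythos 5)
Your plan is correct and can be completed, but it is organized quite differently from the paper's proof, which is much shorter. The paper sets $z:=p^{(r-1)N}\phi(V^{d_1}z_1,\dots,V^{d_r}z_r)$ and computes $V^N(F^Nz)=p^{rN}\phi(V^{d_1}z_1,\dots,V^{d_r}z_r)=\phi(V^{d_1}V^NF^Nz_1,\dots,V^{d_r}V^NF^Nz_r)$ using multilinearity and $V^NF^N=p^N$, substitutes $F^Nz_i=V^My_i$, pulls out $V^{N+M}$ by the $V$-condition, and cancels $V^N$ by the injectivity of $V$ on $P_0$ --- no induction and no use of the $F$-condition at all. You instead start from the right-hand side, writing $V^M\phi(V^{d_1}y_1,\dots,V^{d_r}y_r)=\phi(V^{d_1}V^My_1,\dots,V^{d_r}V^My_r)=\phi(V^{d_1}F^Nz_1,\dots,V^{d_r}F^Nz_r)$, and extract the $F$'s by iterating the $F$-condition; the cleanest realization of your ``slotwise'' induction is to pull all of $F^N$ out of one slot at once, $\phi(F^Nw_1,w_2,\dots,w_r)=F^N\phi(w_1,V^Nw_2,\dots,V^Nw_r)$, after which $V^NF^N=p^N$ in the remaining $r-1$ slots produces exactly the factor $p^{(r-1)N}$. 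Your route buys a proof that never invokes the injectivity of $V$ in the final step, at the cost of an induction and of the $F$-condition (which, by Remark \ref{rem0 16}, is itself a consequence of the $V$-condition, so nothing extra is really being assumed). One caveat: the reduction to $d_1=\dots=d_r=0$ cannot be justified by ``applying $V^{d_i}$ slotwise,'' since the $V$-condition only moves a $V$ across all $r$ slots simultaneously; it is nevertheless valid by the direct substitution $z_i\mapsto V^{d_i}z_i$, $y_i\mapsto V^{d_i}y_i$, because $F^NV^{d_i}=V^{d_i}F^N$ preserves the hypothesis $F^Nz_i=V^My_i$ --- or you can simply carry the $V^{d_i}$ along throughout, as the paper does.
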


%PROOF OF LEMMA 019
\begin{proof}
Set $ z:= p^{(r-1)N}\phi(V^{d_1}z_1,\dots,V^{d_r}z_r)$. We have \[ V^N(F^Nz)=p^Nz=p^{rN}\phi(V^{d_1}z_1,\dots,V^{d_r}z_r)=\phi(p^NV^{d_1}z_1,\dots,p^NV^{d_r}z_r)= \] \[ \phi(V^{d_1}V^NF^Nz_1,\dots,V^{d_r}V^NF^Nz_r)=\phi(V^{d_1}V^NV^My_1,\dots,V^{d_r}V^NV^My_r)= \] \[ V^N(V^M\phi(V^{d_1}y_1,\dots,V^{d_r}y_r)), \] where the third equality follows from the fact that $ \phi $ is multilinear and the last one from the fact that $ \phi $ satisfies the $V$ condition. Since $ V:P_0\to P_0 $ is injective, it follows that $ F^Nz=V^M\phi(V^{d_1}y_1,\dots,V^{d_r}y_r). $
\end{proof}

%CONSTRUCTION 011
\begin{cons}
\label{cons011}
Let $\mathcal{P}_0, \mathcal{P}_1,\dots, \mathcal{P}_r$ be Dieudonn\'e displays over $k$ and \[\phi:\mathcal{P}_1\times\dots\times\mathcal{P}_r\to\mathcal{P}_0\] a multilinear morphism satisfying the $ V$-$F $ conditions. For all $ 0\leq i\leq r $, set $G_i:=BT_{\mathcal{P}_i}$. The map $ \phi $ induces a multilinear map $ P_1\times\dots\times P_r\to P_0/p^n$ and since it is linear in each factor, we obtain a multilinear map $$  P_1/p^n\times\dots\times P_r/p^n\to P_0/p^n .$$ As $ P_i/p^n\cong D_*(G_{i,n}) $, we have a $ V$-$F $ multilinear map $$\tilde{\phi_n}: D_*(G_{1,n})\times\dots\times D_*(G_{r,n})\to D_*(G_{0,n}) $$ i.e., an element of the group $ L(D_*(G_{1,n})\times\dots\times D_*(G_{r,n}), D_*(G_{0,n})) $ which is isomorphic  to the group $ \Mult(G_{1,n}\times\dots\times G_{r,n},G_{0,n}) $ by corollary \ref{cor03}. Hence, we obtain a multilinear map \[ \nabla^{-1}\circ\Delta (\tilde{\phi_n}) :G_{1,n}\times\dots\times G_{r,n}\to G_{0,n}.\] where we have abbreviated $ \Delta_{(G_{1,n},\dots,G_{r,n};G_{0,n})} $ and respectively $ \nabla_{(G_{1,n},\dots,G_{r,n};G_{0,n})} $ to $ \Delta $ and respectively $ \nabla $.
\end{cons}

%CONSTRUCTION 014
\begin{cons}
\label{cons014}
Let us fix a positive natural number $M$. We set $ \CS_{i,r}:=\lbb1,M-1\rbb^{i-1}\times \{0\}\times \lbb0,M-1\rbb^{r-i-1}\subset \BN^r  $. Then the sets $ \CS_{i,r} $ and $\CS_{j,r}$ are disjoint if $ i\neq j $ and their union is the set $\BZ^{r}_{0,<M}$. We define a map $ \udel:\BZ^{r}_{0,<M}\to \BZ^{r}_{0,<M}$ as follows. Take an element $ \ul{d}=(d_1,\dots,d_r)\in \BZ^r_{0,<M} $ and set $d:=\max{\ul{d}}$. Define $\udel(\ul{d}):=(d-d_1,\dots, d-d_r)$.
\end{cons}

%LEMMA 020
\begin{lem}
\label{lem0 20}
The map $ \udel $ is well-defined and is an involution, i.e., is its own inverse.
\end{lem}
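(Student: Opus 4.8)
The plan is a direct verification, unwinding the definitions of $\BZ^r_0$, $\BZ^r_{0,<M}$ and $\udel$. The one book-keeping fact to isolate at the outset is this: if $\underline{d}=(d_1,\dots,d_r)\in\BZ^r_{0,<M}$ and $d:=\max\underline{d}$, then $\min\underline{d}=0$ forces $d_i\ge 0$ for every $i$, while $d=\max\underline{d}$ gives $d_i\le d$; hence $0\le d_i\le d$ for all $i$, and $d<M$.

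First I would check well-definedness, i.e. that $\udel(\underline{d})=(d-d_1,\dots,d-d_r)$ again lies in $\BZ^r_{0,<M}$. From $0\le d_i\le d$ we get $0\le d-d_i\le d$ for each $i$, so all components are non-negative integers and in particular $\udel(\underline{d})\in\BZ^r$. Moreover the elementary identities $\min_i(d-d_i)=d-\max_i d_i=d-d=0$ and $\max_i(d-d_i)=d-\min_i d_i=d-0=d$ show that $\min\udel(\underline{d})=0$, so $\udel(\underline{d})\in\BZ^r_0$, and $\max\udel(\underline{d})=d=\max\underline{d}<M$, so in fact $\udel(\underline{d})\in\BZ^r_{0,<M}$.

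Next I would check that $\udel$ is an involution. The computation in the previous step already records the key point: $\max\udel(\underline{d})=d=\max\underline{d}$, so applying $\udel$ a second time uses the \emph{same} maximum $d$. Writing $\underline{e}=\udel(\underline{d})=(d-d_1,\dots,d-d_r)$, we therefore have $\udel(\underline{e})=(d-(d-d_1),\dots,d-(d-d_r))=(d_1,\dots,d_r)=\underline{d}$, i.e. $\udel\circ\udel=\Id_{\BZ^r_{0,<M}}$. In particular $\udel$ is a bijection equal to its own inverse, which is the assertion of the lemma.

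There is no genuine obstacle here; the statement is elementary and the whole argument is the observation that the transformation $d_i\mapsto d-d_i$ interchanges the roles of $\min$ and $\max$ while preserving $\max\underline{d}$, so no further care is needed beyond keeping track of that interchange.
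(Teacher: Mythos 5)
Your proof is correct and follows essentially the same direct verification as the paper: well-definedness from the bounds $0\le d_i\le d<M$ together with $\min_i(d-d_i)=0$, and the involution property from the observation that $\max\udel(\ul{d})=d=\max\ul{d}$, so the second application of $\udel$ subtracts from the same maximum. Nothing is missing.
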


%PROOF OF LEMMA 020
\begin{proof}
We show at first that this map is well-defined, i.e., we show that $ \udel(\ul{d})\in \BZ^r_{0,<M}$ (for all $ \ul{d} $). Take an element $ \ul{d} $ in $\BZ^{r}_{0,<M}$ and set $ d:=\max\ul{d} $. As $ d$ is the maximum of all $ d_j $ and it is smaller than $ M $ it follows that all components of $\udel(\ul{d})$ are in $ \lbb0,M-1\rbb $ and at least one of them is zero. This shows that $ \udel $ is well-defined.\\

Now we show the second statement. Since the set $ \BZ^{r}_{0,<M} $ is finite, it is enough to show that the composition $ \udel\circ \udel $ is the identity of $ \BZ^{r}_{0,<M} $. So, take an element $ \ul{d} $ and let $ d $ be the maximum of the $ d_j $. Since at least of the $ d_j $ is zero, the maximum of the vector $ \udel(\ul{d})=(d-d_1,\dots,d-d_r) $ is again equal to $d$ and thus $$ \udel(\udel(\ul{d}))=(d-(d-d_1),\dots, d-(d-d_r))=(d_1,\dots, d_r)=\ul{d} .$$
\end{proof}

%THEOREM 04
\begin{thm}
\label{thm04}
Let $\mathcal{P}_0, \mathcal{P}_1,\dots, \mathcal{P}_r$ be (nilpotent) displays over $k$ and \[\phi:\mathcal{P}_1\times\dots\times\mathcal{P}_r\to\mathcal{P}_0\] a multilinear morphism satisfying the $ V$-$F $ conditions and set $ G_i:=BT_{\CP_i} $. Then the two morphisms $ \nabla^{-1}\circ\Delta (\tilde{\phi_n})  $ and $ \beta_{\phi,n} $ are equal.
\end{thm}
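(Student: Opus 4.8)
The plan is to unwind both morphisms through the Dieudonné-theoretic dictionary of Chapter~3 and then to compare them by an explicit computation inside $BT_{\CP_0}$. Both $\nabla^{-1}\circ\Delta(\tilde\phi_n)$ and $\beta_{\phi,n}$ lie in $\Mult(G_{1,n}\times\dots\times G_{r,n},G_{0,n})$, so by Proposition~\ref{prop08} it suffices to prove that $\nabla(\beta_{\phi,n})=\Delta(\tilde\phi_n)$; unravelling the explicit descriptions of $\nabla$ and $\Delta$ (Propositions~\ref{prop08} and~\ref{prop06}, and Construction~\ref{cons01}), this amounts to the assertion that for every tuple $(u_1,\dots,u_r)\in D_*(G_1)\times\dots\times D_*(G_r)$ the two multilinear morphisms $\BW^r\to G_{0,n}$ given by $\beta_{\phi,n}\circ([u_1]_n\times\dots\times[u_r]_n)$ and by $\sum_{\ul d\in\BZ^r_0}\tilde\phi_n(V^{d_1}[u_1]_n,\dots,V^{d_r}[u_r]_n)\circ\zeta_{\ul d}$ coincide, where $[u_i]_n$ denotes the truncation of $u_i$ and $\tilde\phi_n$ is induced by $\phi$ acting on the $P_i$ modulo $p^n$ under the identifications $\theta_i\colon D_*(G_i)\cong P_i$ of Remark~\ref{rem01}. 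Since each $\CP_i$ is nilpotent, $G_i=BT_{\CP_i}$ is connected (Proposition~\ref{prop0 20}, Construction~\ref{cons017}); and because $\phi\rightsquigarrow\beta_{\phi,n}$ and $\phi\rightsquigarrow\nabla^{-1}\circ\Delta(\tilde\phi_n)$ are both additive in each variable and in the target, I would reduce, using the splitting of a connected $p$-divisible group over the perfect field $k$ into its local-local and multiplicative parts, to the case in which every $\CP_i$ is of local-local type --- precisely the situation in which Notation~\ref{notation0 5} and Lemmas~\ref{lem0 14}, \ref{lem0 15}, \ref{lem0 17} are available. As both sides factor through a finite quotient of $\BW^r$, it then suffices to test the equality on $\CN$-valued points for nilpotent $k$-algebras $\CN$.

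So fix such an $\CN$ and a tuple $(\xi_1,\dots,\xi_r)\in\BW(\CN)^r$, and choose $M\gg 0$ with $V^{M}[u_i]_n=0$ in $D_*(G_{i,n})$ for every $i$, which is possible by Proposition~\ref{prop0 15} together with Lemma~\ref{lem0 10}. Then the sum over $\BZ^r_0$ on the right-hand side collapses to a sum over $\BZ^r_{0,<M}$, and each of its terms is read off from $\phi$ acting on the $P_i$ modulo $p^n$ and from the explicit description of $\zeta_{\ul d}(\xi_1,\dots,\xi_r)$ in Construction~\ref{cons01}. For the left-hand side I would pick $n'\gg n$; by Lemma~\ref{lem0 15} one has $[u_i]_n(\xi_i)=[\,p^{n'-n}\bar\xi_i\otimes z(u_i)\,]_n$ with $F^{n'}z(u_i)=V^{M-n'}\theta_i(u_i)$, and by Remark~\ref{rem0 5} the associated element $\leftidx{_n}{g}{_{\CP_i}}(p^{n'-n}\bar\xi_i\otimes z(u_i))$ equals $-\sum_{j=0}^{M-1}F^{j}\bar\xi_i\otimes V^{M-j}\theta_i(u_i)$. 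Substituting the lift $x_i:=p^{n'-n}\bar\xi_i\otimes z(u_i)$ and these $g_i$ into the defining formula of $\beta_{\phi,n}$ (Construction~\ref{cons05}) presents the left-hand side as $(-1)^{r-1}\sum_{i=1}^{r}[\widehat\phi(V^{-1}g_1,\dots,V^{-1}g_{i-1},x_i,g_{i+1},\dots,g_r)]$.

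The computation then proceeds by expanding each $g_l$ and each $V^{-1}g_l$ as a sum over $j\in\lbb 0,M-1\rbb$, using that $\widehat\phi$ is $\widehat W(\CN)$-multilinear and satisfies the $V$-$F$ conditions (Lemma~\ref{lem0 18}), the identities $[Fw\otimes x]=[w\otimes Vx]$ and $[Vw\otimes x]=[w\otimes Fx]$ in $BT_{\CP_0}$ (Remark~\ref{rem0 7}), and Lemma~\ref{lem0 19} to rewrite products of $\phi$ evaluated at the $z(u_l)$ in terms of $\phi$ evaluated at the $V$-shifts $V^{d_l}\theta_l(u_l)$. After this, the $i$-th summand of $\beta_{\phi,n}$ becomes a sum over the staircase index set $\CS_{i,r}$ of Construction~\ref{cons014}, the Frobenius twists that appear on the $\bar\xi_l$ are exactly those produced by $\zeta_{\ul d}(\xi_1,\dots,\xi_r)$, and --- invoking the involution $\udel$ of Lemma~\ref{lem0 20}, which interchanges the factors to which $V^{-1}$ has been applied with the remaining ones --- summing over $i=1,\dots,r$ reassembles precisely $\sum_{\ul d\in\BZ^r_{0,<M}}\phi(V^{d_1}\theta_1(u_1),\dots,V^{d_r}\theta_r(u_r))\circ\zeta_{\ul d}$ reduced modulo $p^n$, i.e. the right-hand side. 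The sign $(-1)^{r-1}$ and the auxiliary factors $p^{n'-n}$ and $r_{n',n}$ cancel out by Lemmas~\ref{lem0 14} and~\ref{lem0 19}, and independence of the various choices ($n'$, $M$, the lifts $z(u_i)$) follows as in those lemmas.

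The main obstacle I expect is the combinatorial reorganization in the last step: showing that, once $V^{-1}$ has been distributed over the $g_l$'s, the $r$ summands of $\beta_{\phi,n}$ --- each naturally indexed by the disjoint piece $\CS_{i,r}$ of $\BZ^r_{0,<M}$ --- glue together, through the involution $\ul d\mapsto(\max\ul d-d_l)_l$, into the single sum defining $\Delta(\tilde\phi_n)$, with every Frobenius twist on the $\bar\xi_l$ matching the one coming from $\zeta_{\ul d}$. The secondary difficulty is the bookkeeping with the auxiliary exponents $n'$ and $M$, the retraction $r_{n',n}$ and the factor $p^{n'-n}$, which must be kept coherent so that every bracket genuinely represents an element of $G_{0,n}(\CN)$; this is exactly what the chain of lemmas \ref{lem0 14}, \ref{lem0 15}, \ref{lem0 19}, Remark~\ref{rem0 5} and Construction~\ref{cons014} have been set up to absorb.
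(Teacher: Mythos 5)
Your proposal is correct and follows essentially the same route as the paper's proof: the same reduction of the statement to the equality $\nabla(\beta_{\phi,n})=\Delta(\tilde\phi_n)$ tested on $\CN$-points, the same reduction to the local-local case, the same use of Lemma \ref{lem0 15} and Remark \ref{rem0 5} to rewrite $[u_i]_n(\xi_i)$ and the elements $g_i$, Lemma \ref{lem0 19} to pass between the $z(u_i)$ and the $V$-shifts of $\theta_i(u_i)$, and the same reindexing of the double sum over $\coprod_i\CS_{i,r}$ via the involution $\udel$ of Lemma \ref{lem0 20}. The only (cosmetic) divergences are that the paper disposes of $r=1$ separately via Theorem \ref{thm02} — the computation, in particular the double application of Lemma \ref{lem025} at the end, uses $r>1$ — and that it phrases the final identification through $\Phi_{\ul d}$ as in Proposition \ref{prop022} rather than through $\zeta_{\ul d}$, which only requires $G_0$ to be local rather than local-local.
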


%PROOF OF THEOREM 04
\begin{proof}
If $r=1$, then $ \beta_{\phi,n} $ is the restriction of $ BT_{\phi}:BT_{\CP_1}\to BT_{\CP_0} $ to a morphism $ BT_{\CP_1,n} \to BT_{\CP_0,n} $ and this is theorem is just a restatement of the Theorem \ref{thm02}, which states that the functor $ BT $ is an equivalence of categories.\\

So, we assume that $ r\geq 2 $. The $p$-divisible groups $ G_i$ are connected, because they correspond to nilpotent displays. Assume that there exists an $i\in\lbb1,r\rbb$ such that $ G_i $ is of multiplicative type. Then, by lemma 4.5.6, p.51 of \cite{P}, for all positive natural numbers $n$, the group $ \Mult(G_{1,n}\times\dots\times G_{r,n},G_{0,n}) $ is the trivial group. Indeed, we have \[\Mult(G_{1,n}\times\dots\times G_{r,n},G_{0,n})\cong \Mult(G_{1,n}\times\dots\times G_{r,n}\times G_{0,n}^*,\BG_m),\] which is the trivial group by the aforementioned lemma. As the two morphisms $ \nabla^{-1}\circ\Delta (\tilde{\phi_n})  $ and $ \beta_{\phi,n} $ belong to the group $ \Mult(G_{1,n}\times\dots\times G_{r,n},G_{0,n}) $, they are both the zero morphism and hence equal. We can therefore assume that for every $ 1\leq i\leq r $, the $p$-divisible group $ G_i$ has no multiplicative part and therefore has connected dual. We denote by $ D_i $ (respectively by $ D_{i,n} $) the Dieudonn\'e module of $ G_i $ (respectively of $ G_{i,n)} $. Fix a positive natural number $n$ and choose $ n'\gg n $ such that $ r_{n',n}:V^{n'}G_{i,n'}\to G_{i,n} $ is defined for every $ i=1,\dots,r $ (cf. Lemma \ref{lem07}). Also fix $M>n'$ and $ m\geq M $ such that the group schemes $ G_{1,n'}\dots,G_{r,n'} $ are annihilated by $ F^M $ and $ V^m $ and for every $ i=0,\dots,r $ we have $ V^{M-n'}D_i\subseteq F^{n'}D_i $ (cf. Lemma \ref{lem0 10}) and $ F^MG_{0,n}=0 $. We prove that the two maps $ \Delta (\tilde{\phi_n}) $ and $ \nabla(\beta_{\phi,n}) $, from $ D_{1,n}\times \dots\times D_{r,n} $ to $ \Mult(\BW^r,G_{0,n}) $, are equal. Take for every $ i=1,\dots, r $, arbitrary elements $ u_i\in D_i $,  $ \xi^{(i)}\in \BW $ and chose $ z_i\in P_i $ such that $ F^{n'}z_i=V^{M-n'}\theta(u_i) $ (cf. Lemma \ref{lem0 10}). For every $ j\in\BN $, denote by $ \xi^{(i)}_j $ the projection of $ \xi^{(i)} $ under $ \pi_j:\BW\onto W[F^j] $ and by $\overline{\xi}_j^{(i)} $ its projection under $$\BW\ontoover{\pi_j}W[F^j]\ontoover{r_m} W_{m,j} .$$ So, for every $ s\leq j $, we have

\begin{myequation}
\label{eq2}
F^s \xi^{(i)}_j= \xi^{(i)}_{j-s},\quad F^s \overline{\xi}_j^{(i)} = \overline{\xi}_{j-s}^{(i)}\quad \text{ and }\quad F^s \overline{\xi}_s^{(i)}=0.
\end{myequation}

Set $ z_{\ul{d}}:=p^{(r-1)n'}\phi(V^{d_1}z_1,\dots,V^{d_r}z_r) $, $ \theta_i:=\theta(u_i) $ and $ g_i:=\leftidx{_n}{g}{_{\CP_i}}(p^{n'-n}\overline{\xi}_M^{(i)}\otimes z_i). $

Using Lemma \ref{lem0 19} we know that

\begin{myequation}
\label{eq1}
F^{n'}z_{\ul{d}}=V^{M-n'}\phi(V^{d_1}\theta_1,\dots,V^{d_r}\theta_r).
\end{myequation}

By Remark \ref{rem0 5} and using the equations \eqref{eq2}, we have

\begin{myequation}
\label{eq4}
g_i= -\sum_{j=0}^{M-1}F^{j}\overline{\xi}_M^{(i)}\otimes V^{M-j}\theta_i = -\sum_{\delta_i=0}^{M-1}\overline{\xi}_{M-\delta_i}^{(i)}\otimes V^{M-\delta_i}\theta_i
\end{myequation}

and 

\begin{myequation}
\label{eq5}
V^{-1}g_i= -\sum_{j=0}^{M-1}F^{j+1}\overline{\xi}_{M-j}^{(i)}\otimes V^{M-j-1}\theta_i = -\sum_{\delta_i=1}^{M-1}\overline{\xi}_{M-\delta_i}^{(i)}\otimes V^{M-\delta_i}\theta_i.
\end{myequation}
 
Let us at first calculate $ \nabla:=\nabla(\beta_{\phi,n})([u_1]_n,\dots,[u_r]_n)(\xi^{(1)},\dots,\xi^{(r)}) $. By definition, this is equal to \[ \beta_{\phi,n}\big([u_1]_n(\xi^{(1)}),\dots,[u_r]_n(\xi^{(r)})\big)=  \beta_{\phi,n}\big([p^{n'-n}\overline{\xi}_M^{(1)}\otimes z_1],\dots,[p^{n'-n}\overline{\xi}_M^{(r)}\otimes z_r]\big),\] where we are using Lemma \ref{lem0 15} for the last equality. Now, by definition of $ \beta_{\phi,n} $, the latter is equal to \[ (-1)^{r-1}\sum_{i=1}^{r}\big[\widehat{\phi}(V^{-1}g_1,\dots,V^{-1}g_{i-1},p^{n'-n}\overline{\xi}_M^{(i)}\otimes z_i,g_{i+1},\dots,g_r)\big]= \] \[ (-1)^{r-1}\sum_{i=1}^{r}p^{n'-n}\big[\widehat{\phi}(V^{-1}g_1,\dots,V^{-1}g_{i-1},\overline{\xi}_M^{(i)}\otimes z_i,g_{i+1},\dots,g_r)\big].\] Using identities \eqref{eq4} and \eqref{eq5}, this sum becomes: \[\sum_{i=1}^{r}p^{n'-n}\bigg[\widehat{\phi}\big(\sum_{\delta_1=1}^{M-1}\overline{\xi}_{M-\delta_i}^{(1)}\otimes V^{M-\delta_1}\theta_1,\dots,\sum_{\delta_{i-1}=1}^{M-1}\overline{\xi}_{M-\delta_{i-1}}^{(i-1)}\otimes V^{M-\delta_{i-1}}\theta_{i-1},\overline{\xi}_M^{(i)}\otimes z_i,\] \[\sum_{\delta_{i+1}=0}^{M-1}\overline{\xi}_{M-\delta_{i+1}}^{(i+1)}\otimes V^{M-\delta_{i+1}}\theta_{i+1},\dots,\sum_{\delta_{r}=0}^{M-1}\overline{\xi}_{M-\delta_{r}}^{(r)}\otimes V^{M-\delta_{r}}\theta_{r} \big)\bigg] = \]
\begin{myequation}
\label{eq6}
\sum_{i=1}^{r}p^{n'-n}\bigg[\sum_{\ul{\delta}\in\CS_{i,r}}\overline{\xi}_{M-\delta_{1}}^{(1)}\dots\overline{\xi}_{M-\delta_{r}}^{(r)}\otimes\phi(V^{M-\delta_1}\theta_1,\dots,\uset{\uparrow}{z_i},\dots,V^{M-\delta_{r}}\theta_{r})\bigg] ,
\end{myequation}

where the vertical arrow under $z_i$ is to emphasize that the $i^{\text{th}}$-entry doesn't follow the pattern of the other entries (cf. notations at the beginning of the thesis). We claim that this sum is equal to the following sum:

\begin{myequation}
\label{eq3}
\sum_{\ul{d}\in\BZ^r_{0,<M}}p^{n'-n}\big[\overline{\xi}_{M+d_1}^{(1)}\dots\overline{\xi}_{M+d_r}^{(r)}\otimes p^{(r-1)n'}\phi(V^{d_1}z_1,\dots,V^{d_r}z_r)\big].
\end{myequation}

We know by Lemma \ref{lem0 20} that the two index sets of these sums are in bijection and we want to show that in fact, under the bijection given in the aforementioned lemma, the corresponding summands are equal. Take an index $ \ul{d}\in\BZ^r_{0,<M} $ and assume that $ \ul{\delta}:=\udel(\ul{d}) $ belongs to $ \CS_{i,r} $ (i.e., $ d_i $ is the first maximum occurring in $ \ul{d} $). In the summand corresponding to the index $\ul{d}$ of the sum \eqref{eq3}, using the multilinearity of $ \phi $, distribute the factor $ p^{(r-1)n'} $ into $ \phi $, except at the $i^{\text{th}}$-place. The term $$\big[\overline{\xi}_{M+d_1}^{(1)}\dots\overline{\xi}_{M+d_r}^{(r)}\otimes p^{(r-1)n'}\phi(V^{d_1}z_1,\dots,V^{d_r}z_r)\big] $$  becomes: \[ \big[\overline{\xi}_{M+d_1}^{(1)}\dots\overline{\xi}_{M+d_r}^{(r)}\otimes \phi(V^{d_1}p^{n'}z_1,\dots,\uset{\uparrow}{V^{d_i}z_{i}},\dots,V^{d_r}p^{n'}z_r)\big] .\] Writing $ p^{n'} $ as $ V^{n'}F^{n'} $ and using the identity $ F^{n'}z_{j}=V^{M-n'}\theta_j $, this term becomes: \[\big[\overline{\xi}_{M+d_1}^{(1)}\dots\overline{\xi}_{M+d_r}^{(r)}\otimes \phi(V^{d_1+M}\theta_1,\dots,\uset{\uparrow}{V^{d_i}z_{i}},\dots,V^{d_r+M}\theta_r)\big].\] As by assumption $ \phi $ satisfies the $ V$-$F$ conditions, we can factor out $ V^{d_i} $ and using Remark \ref{rem0 7}, we obtain the term \[\big[F^{d_i}\big(\overline{\xi}_{M+d_1}^{(1)}\dots\overline{\xi}_{M+d_r}^{(r)}\big)\otimes \phi(V^{M-(d_i-d_1)}\theta_1,\dots,\uset{\uparrow}{z_{i}},\dots,V^{M-(d_i-d_r)}\theta_r)\big].\] Now, using the fact that Frobenius is a ring homomorphism and the second equality of \eqref{eq2}, we obtain \[ \big[\overline{\xi}_{M-(d_i-d_1)}^{(1)}\dots\overline{\xi}_{M-(d_i-d_r)}^{(r)}\otimes \phi(V^{M-(d_i-d_1)}\theta_1,\dots,\uset{\uparrow}{z_{i}},\dots,V^{M-(d_i-d_r)}\theta_r)\big]. \] By definition of $ \udel(\ul{d}) $, we have $ d_i-d_j=\delta_j $ and therefore, this term is equal to \[ \big[\overline{\xi}_{M-\delta_1}^{(1)}\dots\overline{\xi}_{M-\delta_r}^{(r)}\otimes \phi(V^{M-\delta_1}\theta_1,\dots,\uset{\uparrow}{z_{i}},\dots,V^{M-\delta_r}\theta_r)\big]. \] This term multiplied by $ p^{n'-n} $ is exactly equal to the the summand corresponding to $ \ul{\delta}\in\CS_{i,r} $ in the sum \eqref{eq6} and thus \eqref{eq3} and \eqref{eq6} are equal. This proves the claim. Thus the element $ \nabla $ of $ G_{0,n}(\CN) $ is equal to \[ \sum_{\ul{d}\in\BZ^r_{0,<M}}p^{n'-n}\big[\overline{\xi}_{M+d_1}^{(1)}\dots\overline{\xi}_{M+d_r}^{(r)}\otimes z_{\ul{d}}\big]=\sum_{\ul{d}\in\BZ^r_{0,<M}}\big[p^{n'-n}\overline{\xi}_{M+d_1}^{(1)}\dots\overline{\xi}_{M+d_r}^{(r)}\otimes z_{\ul{d}}\big].\] Set $ w_{\ul{d}}:=\xi_{M+d_1}^{(1)}\dots\xi_{M+d_r}^{(r)}$ and let $  \overline{w}_{\ul{d}}$ be its image under the morphism $$ W\onto W_{m,M}\into \widehat{W} .$$ Set also $\phi(\theta_{\ul{d}}):= \phi(V^{d_1}\theta_1\dots,V^{d_r}\theta_r) $ and $ N:=M-n'+n$. As $ F^M\overline{w}_{\ul{d}}=0 $, we have that $ F^N(p^{n'-n}\overline{w}_{\ul{d}})=0 $, and thus, by Remark \ref{rem01}, the element $ [p^{n'-n}\overline{w}_{\ul{d}}\otimes z_{\ul{d}}] $ is equal to \[E(p^{n'-n}\overline{w}_{\ul{d}}\cdot \tau_N[F^Nz_{\ul{d}}]_N(\_);1)=E(\overline{w}_{\ul{d}}\cdot p^{n'-n}\tau_N[F^Nz_{\ul{d}}]_N(\_);1)=\] \[ E(\overline{w}_{\ul{d}}\cdot V^{n'-n}\tau_N[F^{n'-n+N}z_{\ul{d}}]_N(\_);1)=E(\overline{w}_{\ul{d}}\cdot V^{n'-n}\tau_N[F^{M-n'}F^{n'}z_{\ul{d}}]_N(\_);1) \ovset{\eqref{eq1}}{=}\] \[ E(\overline{w}_{\ul{d}}\cdot V^{n'-n}\tau_N[F^{M-n'}V^{M-n'}\phi(\theta_{\ul{d}})]_N(\_);1) =\]
\begin{myequation}
\label{eq7}
E(\overline{w}_{\ul{d}}\cdot V^{n'-n}\tau_N[p^{M-n'}\phi(\theta_{\ul{d}})]_N(\_);1).
\end{myequation}

We claim that $V^{n'-n}\tau_N[p^{M-n'}\phi(\theta_{\ul{d}})]_N(\_)$ and $\tau_M[\phi(\theta_{\ul{d}})]_n(\_)$ are equal as morphisms $ G_{0,n}^*\to W  $ (note that the former is a morphism $ G_{0,N}^*\to W $, and we are restricting it to the subgroup scheme $ G_{0,n}^* $). It is enough to show that the compositions of these two morphisms with the projection $ \pi:G_{0,n+1}^*\onto G_{0,n}^* $ are equal. Take a section $ g $ of $ G_{0,N}^* $. The element $ x:=[\phi(\theta_{\ul{d}})]_n(\pi(g)) $ belongs to $ W_M $, because by assumption, $ F^MG_{0,n}=0 $. The element $ y:=[p^{M-n'}\phi(\theta_{\ul{d}})]_N(g) $ belongs to $ W_N $ and we know that $x$ and $y$ are equal as elements in $ CW^u $. Thus, $ V^{n'-n}y=V^{M-N}y=x $ and so $ V^{n'-n}\tau_N(y)=\tau_M(x) $. This proves the claim. It follows from the claim and equation \eqref{eq7} that $[p^{n'-n}\overline{w}_{\ul{d}}\otimes z_{\ul{d}}] $ is equal to $E(\overline{w}_{\ul{d}}\cdot \tau_M[\phi(\theta_{\ul{d}})]_n(\_);1)$. As $r>1$ and for every $ i\in\lbb1,r\rbb $, we have $ F^m\overline{\xi}^{(i)}_{M+d_i}=0 $, we can use Lemma \ref{lem025} twice and deduce that the element $E(\overline{w}_{\ul{d}}\cdot \tau_M[\phi(\theta_{\ul{d}})]_n(\_);1)$ is equal to $E(w_{\ul{d}}\cdot \tau_M[\phi(\theta_{\ul{d}})]_n(\_);1)$ (note that $ \overline{\xi}^{(i)}_{M+d_i} $ and $ \xi^{(i)}_{M+d_i} $ have the same image inside $ W_m $). The latter is by definition equal to $ \Phi_{\ul{d}}(\xi^{(1)},\dots,\xi^{(r)},[\phi(\theta_{\ul{d}})]_n(\_))$. Recalling that $ [\phi(\theta_{\ul{d}})]_n $ is equal to $ \tilde{\phi_n}(V^{d_1}[u_1]_n,\dots,V^{d_r}[u_r]_n)$, the above calculations show that $\nabla$ is equal to the sum \[ \sum_{\ul{d}\in\BZ^r_{0,<M}}\Phi_{\ul{d}}(\xi^{(1)},\dots,\xi^{(r)}, \tilde{\phi_n}(V^{d_1}[u_1]_n,\dots,V^{d_r}[u_r]_n)(\_)),\] where  $$ \Phi_{\ul{d}}(\xi^{(1)},\dots,\xi^{(r)}, \tilde{\phi_n}(V^{d_1}[u_1]_n,\dots,V^{d_r}[u_r]_n)(\_)):G_{0,n}^*\to \BG_m  $$ is seen as a section of $ G_{0,n} $. This equality means that the two multilinear morphisms $ \nabla(\beta_{\phi,n})([u_1]_n,\dots,[u_r]_n) $ and $ \Delta(\tilde{\phi_n})([u_1]_n,\dots,[u_r]_n) $ from $ \BW^r $ to $ G_{0,n} $ are equal. As every element of $ D_*(G_{i,n}) $ is of the form $ [u_i]_n $ for some $ u_i\in D_*(G_i) $, it implies the equality of $ \Delta (\tilde{\phi_n}) $ and $ \nabla(\beta_{\phi,n}) $.
\end{proof}

%COR TO THE TECH RESULT
\begin{cor}
\label{cortechresult}
Let $ \CP_0,\CP_1,\dots,\CP_r $ be (nilpotent) displays over $k$. The homomorphisms \[ \beta:\Mult(\CP_1\times\dots\times\CP_r,\CP_0)\to \Mult_k(BT_{\CP_1}\times\dots\times BT_{\CP_r},BT_{\CP_0}),\] \[ \Sym(\CP_1^r,\CP_0)\to \Sym(BT_{\CP_1}^r,BT_{\CP_0})\]  and \[ \Alt(\CP_1^r,\CP_0)\to \Alt(BT_{\CP_1}^r,BT_{\CP_0}),\] given in Corollary \ref{cor beta} are isomorphisms.
\end{cor}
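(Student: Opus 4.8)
The plan is to recognize $\beta$ as a composite of isomorphisms that are already available, with Theorem \ref{thm04} supplying the one non-formal step. By Theorem \ref{thm02} the functor $BT$ is an equivalence between the category of nilpotent displays over $k$ and the category of connected $p$-divisible groups over $k$; so, after replacing each $\CP_i$ by an isomorphic display (which changes neither side nor $\beta$, all three being functorial in the $\CP_i$), I may assume that $\CP_i$ is the display associated via Construction \ref{cons017} (cf. Proposition \ref{prop021}) to the connected $p$-divisible group $G_i:=BT_{\CP_i}$, so that $P_i=D_*(G_i)$, $Q_i=VD_*(G_i)$, and $F,V^{-1}$ are the Frobenius and the inverse of $V$. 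Under these identifications I unwind Definition \ref{def05} together with Remark \ref{rem0 16}(2): since $Q_i=VP_i$, a $W(k)$-multilinear map $\phi\colon P_1\times\dots\times P_r\to P_0$ restricts to $Q_1\times\dots\times Q_r\to Q_0$ as soon as it satisfies the $V$-condition $\phi(Vx_1,\dots,Vx_r)=V\phi(x_1,\dots,x_r)$, and, $V$ being injective on each $D_*(G_i)$ (these modules being free over $W(k)$, hence $p$-torsion free, and $VF=p$), the $F$-conditions then follow from the $V$-condition by the cancellation computation of Construction \ref{conslimitdieudonne}. Hence $\Mult(\CP_1\times\dots\times\CP_r,\CP_0)$ is canonically the module $L(D_*(G_1)\times\dots\times D_*(G_r),D_*(G_0))$ (the case $R=\BZ$ of Definition \ref{def07}); and since on both sides the (anti)symmetry condition bears only on the underlying $W(k)$-multilinear map, likewise $\Sym(\CP_1^r,\CP_0)=L_{\text{sym}}(D_*(G_1)^r,D_*(G_0))$ and $\Alt(\CP_1^r,\CP_0)=L_{\text{alt}}(D_*(G_1)^r,D_*(G_0))$.

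For the target, $\Mult_k(BT_{\CP_1}\times\dots\times BT_{\CP_r},BT_{\CP_0})=\uset{n}{\invlim}\,\Mult_k(G_{1,n}\times\dots\times G_{r,n},G_{0,n})$ by definition, and Corollary \ref{cor03} applied at each level yields a natural isomorphism with $\uset{n}{\invlim}\,L(D_*(G_{1,n})\times\dots\times D_*(G_{r,n}),D_*(G_{0,n}))$. Since $D_*(G_{i,n})\cong D_*(G_i)/p^n$ and the free $W(k)$-modules $D_*(G_i)$ are $p$-adically complete, a routine gluing of compatible systems (as in the proof of Lemma \ref{lem Dieudonné modules}) identifies this last inverse limit with $L(D_*(G_1)\times\dots\times D_*(G_r),D_*(G_0))$. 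Moreover, by Remark \ref{rem33}(2) — this is where $p>2$ enters — Corollary \ref{cor03} carries the symmetric resp. alternating submodules onto the symmetric resp. alternating submodules, compatibly with the transition maps, hence also after passing to the inverse limit.

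It remains to observe that, under the two identifications just set up, $\beta$ becomes the identity map of $L(D_*(G_1)\times\dots\times D_*(G_r),D_*(G_0))$, and this is exactly the content of Theorem \ref{thm04}: for $\phi\in\Mult(\CP_1\times\dots\times\CP_r,\CP_0)$ the map $\tilde{\phi_n}$ of Construction \ref{cons011} is precisely the reduction of $\phi$ modulo $p^n$ under the identification set up in the first paragraph, while $\beta_{\phi,n}=\nabla^{-1}\circ\Delta(\tilde{\phi_n})$ is precisely the image of $\tilde{\phi_n}$ under Corollary \ref{cor03}; hence $\beta(\phi)=(\beta_{\phi,n})_n$ corresponds to $(\tilde{\phi_n})_n$, which in turn corresponds to $\phi$. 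Thus $\beta$ is a composite of isomorphisms, hence an isomorphism. The symmetric and alternating statements then follow from the multilinear one together with the compatibility of all of the above identifications with the symmetric/alternating submodules and the fact (Proposition \ref{prop0 16}(iii), already recorded in Corollary \ref{cor beta}) that $\beta$ sends symmetric resp. alternating morphisms to symmetric resp. alternating ones. The main obstacle here is not conceptual but organizational: one must be scrupulous about the many identifications — the display-theoretic versus the Dieudonn\'e-theoretic descriptions of multilinear morphisms and the inverse-limit compatibilities — since no new idea is needed beyond Theorem \ref{thm04}, which is where all the substance lies.
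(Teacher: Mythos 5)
Your argument is correct and follows the same route as the paper: the paper also exhibits $\beta$ as one side of a commutative triangle whose other two sides are the identification of display multilinear morphisms with $V$-condition morphisms of Dieudonn\'e modules and the isomorphism of Corollary \ref{cordieudonnpidiv} (i.e.\ Corollary \ref{cor03} at each finite level followed by the inverse-limit identification), with Theorem \ref{thm04} supplying the commutativity. You have merely spelled out the identifications that the paper's two-line proof leaves implicit.
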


\begin{proof}
As usual, we only prove the first isomorphism, and leave the similar proofs of the other two. For every $ i=0,\dots,r $, we set $ G_i:=BT_{\CP_i} $, the $p$-divisible group associated to $ \CP_i $ and denote by $ D_i $ the (covariant) Dieudonn\'e module of $ G_i$. Using the previous Theorem, we obtain a commutative diagram \[ \xymatrix{\Mult(\CP_1\times\dots\times\CP_r,\CP_0)\ar[rd]^{\beta}\ar[dd]_{\cong}\\&\Mult(G_1\times\dots\times G_r,G_0)\\Mult(D_1\times\dots\times D_r,D_0)\ar[ur]_{\cong},&}\] where the vertical isomorphism is given by the identifications of displays and Dieudonn\'e modules and the oblique isomorphism is given by Corollary \ref{cordieudonnpidiv}. It follows at once that $ \beta $ is an isomorphism.
\end{proof}

\begin{rem}
\label{remisoofbetageneralbase}
The author believes that using this isomorphism and a similar argument as that given in \cite{Z1} (to prove that the functor $ BT $ is an equivalence of categories), one can prove that the morphism $ \beta $ is an isomorphism over any excellent local ring or a ring $R$ such that $R/pR$ is of finite type over a field. In other words, that the answer to Question \ref{question on mult displays} is affirmative.
\end{rem}

\section{The affine base case}

In this section, we show the existence of the exterior powers of $p$-divisible groups over complete local Noetherian rings with residue field of characteristic $p$, whose special fiber are connected $p$-divisible groups of dimension $1$. We also calculate the height of these exterior powers and their dimension at the closed point of the base. Furthermore, we show that these exterior powers commute with arbitrary base change. The prime number $p$ is assumed to be different from $2$. 

%CONSTRUCTION 07
\begin{cons}
\label{cons07}
Assume that $p$ is nilpotent in $R$. Let $\mathcal{P}$ be a display over $R$, with tangent module of rank at most 1 and denote by $\Lambda_R^r$ the $p$-divisible group associated to $\bigwedge^r\mathcal{P}$. The universal alternating morphism $\lambda:\mathcal{P}^r\to \bigwedge^r\mathcal{P}$ (cf. Proposition \ref{prop ext. disp.}) induces an alternating morphism $\beta_{\lambda,n}:G_n^r\to \Lambda^r_{R,n}$ which gives rise to a homomorphism $$\lambda_n^*(X):\Hom_{R}(\Lambda^r_{R,n},X)\to \Alt_{R}^r(G_n,X)$$ for every group scheme $X$. Sheafifying this morphism, we obtain a sheaf homomorphism \[\underline{\lambda^*_n}(X):\innHom_{R}(\Lambda^r_{R,n},X)\to \innAlt_{R}^r(G_n,X).\]
\end{cons}

%REMARK 019
\begin{rem}
\label{rem0 19}
Note that by Lemma \ref{lem0 21} and Proposition \ref{prop0 20}, the construction of $ \ep^r\CP $, and therefore the formation of $ \Lambda_R^r $ commutes with the base change, i.e., if $A$ is any $R$-algebra, then we have canonical isomorphisms $ (\ep^r\CP)_A\cong \ep^r(\CP_A) $  and $ (\Lambda^r_R)_A\cong \Lambda^r_A $ (note that since $p$ is nilpotent in $R$ it is so also in $A$ and therefore $ \Lambda^r_A $ is a $p$-divisible group).
\end{rem}

%THEOREM 05
\begin{thm}
\label{thm05}
If $ R $ is a perfect field of characteristic $p$, then for every group scheme $X$ over $ R$, the morphism $$\lambda_n^*(X):\Hom_{R}(\Lambda^r_{R,n},X)\to \Alt_{R}^r(G_n,X)$$ is an isomorphism. Consequently, we have a canonical and functorial isomorphism  \[\Lambda^r_{R,n}\cong\bigwedge^r(G_{n})\] for all positive natural numbers $n$.
\end{thm}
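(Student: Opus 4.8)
The plan is to reduce the statement about the group scheme homomorphism $\lambda_n^*(X)$ to the already-established isomorphism $\beta$ between multilinear morphisms of displays and multilinear morphisms of their associated $p$-divisible groups (Corollary \ref{cortechresult}), together with the universal property of the exterior power of a $3n$-display (Proposition \ref{prop ext. disp.}) and the covariant Dieudonn\'e module calculation over a perfect field (Corollary \ref{cor41}, Proposition \ref{prop 9}). First I would observe that, since $R$ is a perfect field, displays over $R$ are the same data as Dieudonn\'e modules (Construction \ref{cons017}), and the functor $\CP\rightsquigarrow BT_{\CP}$ is fully faithful (indeed an equivalence onto infinitesimal $p$-divisible groups) by Theorem \ref{thm02}; more precisely, by Theorem \ref{thm04} the map $\beta_{\phi,n}$ agrees with the morphism built from Dieudonn\'e modules via $\nabla^{-1}\circ\Delta$. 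So the alternating morphism $\beta_{\lambda,n}:G_n^r\to\Lambda^r_{R,n}$ is, up to the canonical identifications, exactly the universal alternating morphism $\CM_n^r\to\epO^r\CM_n$ of finite group schemes coming from Pink's theory (cf. Remark \ref{rem 17} 2) and Remark \ref{rem45}), because $\ep^r\CP$ has Dieudonn\'e module $\ep^r D_\ast(G)$ with Frobenius and Verschiebung $\phi$, $\upsilon$.

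Concretely, the key steps in order would be: (1) identify $\Lambda^r_{R,n} = BT_{\ep^r\CP, n}$ with the exterior power $\epO^r(G_n)$ of finite group schemes, using that both have covariant Dieudonn\'e module $\ep^r D_\ast(G_n)$ — for $\Lambda^r_{R,n}$ this is Construction \ref{cons03} together with Proposition \ref{prop 9}/Corollary \ref{cor41}, and for $\epO^r(G_n)$ this is the statement of Corollary \ref{cor41}; (2) check that under this identification the alternating morphism $\beta_{\lambda,n}$ goes to the universal alternating morphism $\lambda_n:G_n^r\to\epO^r(G_n)$ — this is where Theorem \ref{thm04} is used, since it says $\beta_{\lambda,n} = \nabla^{-1}\circ\Delta(\widetilde{\lambda_n})$, and $\widetilde{\lambda_n}$ is the ``natural'' alternating map $D_\ast(G_n)^r\to\ep^r D_\ast(G_n)$, which by Remark \ref{rem44} corresponds precisely to the universal alternating morphism of group schemes; (3) conclude that $\lambda_n^*(X)$ is an isomorphism for every group scheme $X$ over $R$ directly from the defining universal property of $\epO^r(G_n)$ as stated in Definition \ref{def 7}(iii) (via Theorem \ref{thm41}), since the universal alternating morphism induces an isomorphism $\Hom(\epO^r(G_n),X)\xrightarrow{\sim}\widetilde{\Alt}^{\BZ}(G_n^r,X)$, and for $p$-divisible groups the relevant alternating morphisms of the $p^n$-torsion are the honest alternating ones; (4) the ``consequently'' clause then follows: $\Lambda^r_{R,n}$ with $\beta_{\lambda,n}$ satisfies the universal property characterizing $\bigwedge^r(G_n)$, hence is canonically and functorially isomorphic to it.

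The main obstacle I expect is step (2): one must carefully match the sign conventions and the precise form of the universal alternating morphism coming out of Construction \ref{cons05} (the $(-1)^{r-1}\sum_i[\cdots]$ formula for $\beta_{\phi,n}$) with the ``wedge'' map on Dieudonn\'e modules, and verify that passing to the $n$-torsion is compatible with the identifications $P_i/p^n \cong D_\ast(G_{i,n})$; Theorem \ref{thm04} does the heavy lifting here, so the remaining work is bookkeeping, but one must make sure that the alternating morphism $\widetilde{\lambda_n}$ obtained by reducing $\lambda:\CP^r\to\ep^r\CP$ modulo $p^n$ is indeed alternating in the sense of Definition \ref{def07}(3) (it is, since $\lambda$ is alternating by Proposition \ref{prop ext. disp.}) and lands in the free module $\ep^r D_\ast(G)/p^n$ correctly. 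A secondary subtlety is ensuring the sheafification step in Construction \ref{cons07} does not lose information: since $\lambda_n^*(X)$ is already an isomorphism on the level of presheaves for every $X$ (being an isomorphism of $\Hom$-groups for all test objects follows from the universal property applied to $X_T$ over varying $T$), sheafifying both sides preserves the isomorphism. Finally, for the last sentence one should note the functoriality: the identification is compatible with the projections $G_{n+1}\onto G_n$, which is immediate from the compatibility in Construction \ref{cons011} and Proposition \ref{prop0 16}(ii), so the isomorphisms $\Lambda^r_{R,n}\cong\bigwedge^r(G_n)$ assemble over $n$.
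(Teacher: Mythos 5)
Your proposal is correct and follows essentially the same route as the paper: identify $\Lambda^r_{R,n}$ with $\ep^r G_n$ via their common covariant Dieudonn\'e module $\ep^r D_*(G_n)\cong(\ep^r D_*(G))/p^n$ (Corollary \ref{cor41}), then use Theorem \ref{thm04} together with Remarks \ref{rem44} and \ref{rem 17} to see that $\beta_{\lambda,n}$ coincides with the universal alternating morphism $\tau_n$, whence $\lambda_n^*(X)$ is an isomorphism by the universal property. The only cosmetic difference is your framing through the display--Dieudonn\'e dictionary and the (here irrelevant) sheafification remark; the substance is identical to the paper's argument.
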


%PROOF OF THEOREM 05
\begin{proof}
We know that for each $n$, the exterior power $ \ep^rG_n $ exists (Proposition \ref{prop 10}), is finite and  its Dieudonn\'e module is isomorphic to $ \ep^rD_*(G_n) $ (corollary \ref{cor41}) which is isomorphic to $ \big(\ep^rD_*(G)\big)/p^n $. This shows that the canonical homomorphism $ \ep^rG_n\to\Lambda^r_{R,n} $ (induced by the universal property of $ \ep^rG_n $) is an isomorphism. We have also shown (Remark \ref{rem 17}) that the universal alternating morphism $\tau_n: G_n^r\to \ep^rG_n $ corresponds via the isomorphism $$L(D_*(G_n)^r,\ep^rD_*(G_n))\isoto \Mult(G_n^r,\ep^rG_n)$$ (cf. corollary \ref{cor03} and Remark \ref{rem33}), to the universal alternating morphism $ D_*(G_n)^r\to \ep^rD_*(G_n) $, which is the reduction of the morphism $ \lambda:D_*(G)^r\to \ep^rD_*(G) $ modulo $ p^n $. It follows from the previous theorem, and after identifying the two group schemes $ \ep^rG_n  $ and $ \Lambda^r_{R,n} $, that the two alternating morphisms $ \tau_n $ and $ \beta_{\lambda,n} $ are equal, i.e., that the morphism $ \beta_{\lambda,n} $ is the universal alternating morphism. This means exactly that for every group scheme $X$ over $R$, the homomorphism \[ \lambda_n^*(X):\Hom_{R}(\Lambda^r_{R,n},X)\to \Alt_{R}^r(G_n,X) \] is an isomorphism. 
\end{proof}

%PROPOSITION 011
\begin{prop}
\label{prop011}
Assume that $p$ is nilpotent in $R$. For every group scheme $X$ over $R$, and every ring homomorphism $R\to L$, with $L$ a perfect field, the morphism $\underline{\lambda^*_n}(X)$ is an isomorphism on the $L$-rational points, i.e., the homomorphism \[\lambda_n^*(X)(L):\Hom_L(\Lambda^r_{L,n},X_L)\to \Alt_L^r(G_{L,n},X_L)\] is an isomorphism.
\end{prop}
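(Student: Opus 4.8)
The plan is to reduce the statement over a general $R$ with $p$ nilpotent to the case of a perfect field, where Theorem \ref{thm05} applies directly. First I would use that the construction of $\Lambda^r_R$ commutes with base change (Remark \ref{rem0 19}): there is a canonical isomorphism $(\Lambda^r_R)_L \cong \Lambda^r_L$, and under it $(\ep^r\CP)_L \cong \ep^r(\CP_L)$, so that the associated display of $\Lambda^r_L$ is $\ep^r(\CP_L)$. Moreover, by Proposition \ref{prop0 16} (iv), the alternating morphism $\beta_{\lambda,n}$ commutes with base change; hence the base change to $L$ of the morphism $\lambda_n^*(X)$ from Construction \ref{cons07} is identified with the corresponding morphism $(\lambda_L)_n^*(X_L)$ built from the display $\CP_L$ over $L$. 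Concretely, one checks that the universal alternating morphism $\lambda:\CP^r\to\ep^r\CP$ pulls back to the universal alternating morphism $\lambda_L:\CP_L^r\to\ep^r(\CP_L)$ (this follows from Proposition \ref{prop ext. disp.} and the base-change compatibility of exterior powers of displays in Lemma \ref{lem0 21}), so that $(\beta_{\lambda,n})_L = \beta_{\lambda_L,n}$.

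Next I would spell out the identification of the $L$-rational points. Since $G_n$ and $\Lambda^r_{R,n}$ are $R$-schemes, we have $\Hom_L(\Lambda^r_{L,n},X_L) = \Hom_L((\Lambda^r_{R,n})_L, X_L)$ and $\Alt^r_L(G_{L,n},X_L) = \Alt^r_L((G_n)_L, X_L)$, and the map $\lambda_n^*(X)(L)$ is exactly the map obtained by applying $\Hom_L(-,X_L)$ to $\beta_{\lambda_L,n}: (G_n)_L^r = G_{L,n}^r \to \Lambda^r_{L,n}$. By the base-change discussion of the previous paragraph, $\Lambda^r_{L,n}$ is the $p$-divisible group of $\ep^r(\CP_L)$, which is a (nilpotent) display over the perfect field $L$ because $\CP_L$ is one (Lemma \ref{lem0 21}). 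Therefore Theorem \ref{thm05}, applied with base field $L$ and group scheme $X_L$, tells us that $\beta_{\lambda_L,n}$ is the universal alternating morphism $G_{L,n}^r \to \ep^r(G_{L,n})$, i.e., that
\[
\lambda_n^*(X)(L): \Hom_L(\Lambda^r_{L,n},X_L) \to \Alt^r_L(G_{L,n},X_L)
\]
is an isomorphism. This is precisely the assertion to be proved.

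I would present the argument in that order: (1) recall the base-change isomorphism $(\Lambda^r_R)_L \cong \Lambda^r_L$ and the compatibility of the universal alternating morphism of displays with base change; (2) deduce $(\beta_{\lambda,n})_L = \beta_{\lambda_L,n}$ from Proposition \ref{prop0 16} (iv); (3) invoke Theorem \ref{thm05} over the perfect field $L$ to conclude. The only genuinely delicate point is step (1)–(2): one must check carefully that the homomorphism $\lambda_n^*(X)$ defined over $R$, when base-changed to $L$, really agrees with the homomorphism $(\lambda_L)_n^*(X_L)$ defined intrinsically over $L$ — i.e., that all the identifications ($\ep^r\CP$ with base change, the $p$-divisible group functor $BT$ with base change in Proposition \ref{prop0 20}, the sheafification, and the formation of $\beta$) are mutually compatible. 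Once this bookkeeping is in place, the statement is immediate from Theorem \ref{thm05}, since $L$-rational points of everything over $R$ are computed from the $L$-fiber. I expect no substantive obstacle beyond this compatibility check; there is no new geometric input required.
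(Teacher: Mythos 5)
Your proposal is correct and follows exactly the route the paper takes: reduce to the perfect field $L$ via the base-change compatibility of $\Lambda^r_R$ and of $\beta_{\lambda,n}$ (Remark \ref{rem0 19} and Proposition \ref{prop0 16} (iv)), then apply Theorem \ref{thm05}. The only detail worth adding explicitly is that $L$ automatically has characteristic $p$ (since $p$ is nilpotent in $R$ and maps to $L$), which is what licenses the appeal to Theorem \ref{thm05}.
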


%PROOF OF PROPOSITION 011
\begin{proof}
This follows from Remark \ref{rem0 19} and the previous theorem, noting that $L$ has characteristic $p$, since $p$ is nilpotent in $R$.
\end{proof}

%PROPOSITION 012
\begin{prop}
\label{prop012}
Let $R$ be a perfect field of characteristic $p$. Then, for every group scheme $X$ over $R$, the morphism $\underline{\lambda^*_n}(X)$ is an isomorphism.
\end{prop}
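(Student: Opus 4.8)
The plan is to upgrade Theorem \ref{thm05} --- which says that $\lambda_n^*(X)$ is an isomorphism on global sections over the perfect field $R$ --- to an isomorphism of fppf sheaves. Evaluating $\underline{\lambda_n^*}(X)$ on an $R$-scheme $T$, its source and target are $\Hom_T\big((\Lambda^r_{R,n})_T, X_T\big)$ and $\Alt^r_T(G_{n,T}, X_T)$ (for $\BZ$-module schemes ``alternating'' and ``pseudo-alternating'' coincide, so no tilde is needed), and $\underline{\lambda_n^*}(X)(T)$ is the map $\psi \mapsto \psi \circ (\beta_{\lambda,n})_T$. Thus it suffices to prove: \emph{for every $R$-scheme $T$, the morphism $(\beta_{\lambda,n})_T\colon G_{n,T}^r \to (\Lambda^r_{R,n})_T$ is the universal alternating morphism exhibiting $(\Lambda^r_{R,n})_T$ as the $r$-th exterior power of $G_{n,T}$ in the category of group schemes over $T$}. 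Granting this, $\underline{\lambda_n^*}(X)(T)$ is an isomorphism by the very definition of the exterior power, naturality in $T$ is automatic, and this is exactly the assertion of the proposition.

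First I would collect the available input. By Theorem \ref{thm05}, $\Lambda^r_{R,n} \cong \bigwedge^r G_n$ with $\beta_{\lambda,n}$ the universal alternating morphism over $R$. The $p$-divisible group $G = BT_{\mathcal{P}}$ is connected of dimension at most $1$ over $R$, its dimension being the rank of the tangent module of $\mathcal{P}$ (Remark \ref{rem0 9}); regarding it as a $\pi$-divisible $\CO$-module scheme with $\CO = \BZ_p$ and $\pi = p$, Corollary \ref{cor41} gives that $\bigwedge^r G_n$ is finite over $R$, of order $p^{n\binom{h}{r}}$ with $h$ the height of $G$. Next, by Theorem \ref{thm41}, $\bigwedge^r G_n \cong \innAlt_R^r(G_n, \BG_m)^{*}$ (profinite dual), and --- arguing exactly as in the proof of Proposition \ref{propbasechangeofBT}, where the finiteness of $\bigwedge^r G_n$ forces $\innAlt_R^r(G_n,\BG_m)$ itself to be finite --- $\innAlt_R^r(G_n, \BG_m)$ is a finite flat group scheme over $R$, so the profinite dual above is the ordinary Cartier dual, and under this identification the universal alternating morphism corresponds to the one induced by the canonical pairing.

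Now I would base change. The functor $\innAlt^r_{(-)}(G_n, \BG_m)$ commutes with arbitrary base change by its construction, and so does Cartier duality of finite flat group schemes; hence for every $R$-scheme $T$,
\[(\Lambda^r_{R,n})_T \;\cong\; \big(\innAlt_R^r(G_n,\BG_m)^{*}\big)_T \;\cong\; \innAlt_T^r(G_{n,T},\BG_m)^{*},\]
with $\innAlt_T^r(G_{n,T},\BG_m)$ finite flat over $T$. At this point I invoke the characterization of exterior powers by Cartier duality over an arbitrary base (the statement recalled in the Introduction, cf.\ \cite{P}): whenever $\innAlt_T^r(M,\BG_m)$ is finite and flat over $T$, its Cartier dual, together with the alternating morphism induced by the canonical pairing, is the $r$-th exterior power of $M$ among group schemes over $T$. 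Applied to $M = G_{n,T}$ this shows $(\Lambda^r_{R,n})_T$ is $\bigwedge^r(G_{n,T})$, with a universal morphism $\lambda_T$. Finally one identifies $\lambda_T$ with $(\beta_{\lambda,n})_T$: over $R$, both $\beta_{\lambda,n}$ (Theorem \ref{thm05}) and the pairing-induced morphism are universal alternating morphisms for $\bigwedge^r G_n$, hence coincide up to the canonical identification; since the Cartier-dual description and this identification are all compatible with base change, $\lambda_T = (\beta_{\lambda,n})_T$ up to the canonical isomorphism $(\Lambda^r_{R,n})_T \cong \bigwedge^r(G_{n,T})$. This proves the italicized claim, hence the proposition.

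The main obstacle is the second step: one must be confident that the Cartier-dual characterization of exterior powers is valid over an \emph{arbitrary} base $T$ (so as to cover all group schemes $X$, not merely finite flat $X$ or $X=\BG_m$), and that the formation of $\bigwedge^r G_n$ --- equivalently, of $\innAlt_R^r(G_n, \BG_m)$ and of its Cartier dual --- genuinely commutes with arbitrary base change, rather than only with field extensions as in the literal statement of Proposition \ref{propbasechangeofBT}. Once the canonical isomorphism $\bigwedge^r G_n \cong \innAlt_R^r(G_n, \BG_m)^{*}$ of \emph{finite flat} group schemes is secured, everything downstream is formal.
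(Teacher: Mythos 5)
Your reduction is reasonable up to its last step, and the identification $\Lambda^r_{R,n}\cong\innAlt^r_R(G_n,\BG_m)^*$ with $\innAlt^r_R(G_n,\BG_m)$ finite and flat is fine over the perfect field $R$, as is the base-change compatibility of $\innAlt^r$ and of Cartier duality. The gap is exactly the point you flag as ``the main obstacle'' and then do not resolve: the Cartier-dual characterization over a general base $T$ does \emph{not} give the universal property against arbitrary group schemes over $T$. The chain of adjunctions
\[ \Hom_T(\Lambda_T,Z)\cong\Hom_T\big(Z^*,\innHom_T(\Lambda_T,\BG_m)\big)\cong\Hom_T\big(Z^*,\innAlt^r_T(G_{n,T},\BG_m)\big)\cong\Alt^r_T(G_{n,T},Z) \]
starts from $Z\cong\innHom_T(Z^*,\BG_m)$, i.e.\ it requires $Z$ finite and flat. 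This is precisely why Propositions \ref{prop014} and \ref{prop028} are stated only ``in the category of finite and flat group schemes'', and why Question \ref{thm06} --- whether $\lambda_n^*(X)$ is an isomorphism for \emph{every} group scheme $X$ over a base where $p$ is nilpotent --- is left open in the text. Your argument needs the universal property of $(\Lambda^r_{R,n})_T$ against $X_T$ for an arbitrary group scheme $X$ over $R$ and arbitrary $T$; for non-finite $X$ (say $X$ affine of positive dimension) nothing you have quoted supplies this. So ``everything downstream is formal'' is not true: over a field the extension from finite to arbitrary group schemes comes from Pink's profinite-dual devissage (Theorem \ref{thm41}), which is specific to fields and does not base-change.

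The paper's proof avoids the problem by never leaving the perfect field $R$. It tests the morphism of affine $R$-group schemes $\underline{\lambda^*_n}(X)$ in two ways: against $\Hom_R(I,-)$ for every finite group scheme $I$, using the adjunction $\Hom_R\big(I,\innHom_R(A,X)\big)\cong\Hom_R\big(A,\innHom_R(I,X)\big)$ to reduce to Theorem \ref{thm05} applied to the group scheme $\innHom_R(I,X)$; and on $\bar{k}$-valued points, where Proposition \ref{prop011} applies. The field-specific criterion of Proposition \ref{prop02} then forces $\underline{\lambda^*_n}(X)$ to be an isomorphism. To salvage your route you would have to establish the arbitrary-base, arbitrary-$X$ universal property independently, which is a strictly stronger statement than the proposition you are trying to prove.
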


%PROOF OF PROPOSITION 012
\begin{proof}
Let $I$ be a finite group scheme over $R$. Then the sheaf of Abelian groups $ \innHom_R(I,X) $ is representable and we have a commutative diagram \[ \xymatrix{\Hom_R(I,\innHom_R(\Lambda^r_{R,n},X))\ar[rrr]^{\Hom(I,\underline{\lambda^*_n}(X))}\ar[d]_{\cong}&&&\Hom_R(I,\innAlt^r_R(G_n,X))\ar[d]^{\cong}\\\Hom_R(\Lambda^r_{R,n},\innHom_R(I,X))\ar[rrr]_{\lambda_n^*(\innHom_R(I,X))}&&&\Alt_R^r(G_n,\innHom_R(I,X)).} \] The bottom homomorphism of this diagram is an isomorphism by the Theorem \ref{thm05} and therefore the top homomorphism is an isomorphism as well. We also know from the previous proposition that the homomorphism $ \underline{\lambda^*_n}(X) $ is an isomorphism on the $L$-valued points, for every perfect field $L$, and in particular for the algebraic closure of $R$. It follows from the Proposition \ref{prop02}, that this homomorphism is an isomorphism. 
\end{proof}

%PROPOSITION 013
\begin{prop}
\label{prop013}
Assume that $p$ is nilpotent in $R$. The homomorphism \[\ul{\lambda_n^*}(\BG_m):\innHom_{R}(\Lambda^r_{R,n},\BG_m)\to \innAlt_{R}^r(G_n,\BG_m)\] is an isomorphism.
\end{prop}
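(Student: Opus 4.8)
The plan is to reduce the statement over a general ring $R$ (in which $p$ is nilpotent) to the case of a perfect field, where Proposition \ref{prop012} already gives what we need. The key point is that both sides of $\ul{\lambda_n^*}(\BG_m)$ are finite flat group schemes over $R$ whose formation commutes with arbitrary base change, so an isomorphism can be checked fibrewise, and in fact only on geometric points with perfect residue field.

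First I would record the representability and base-change facts. The functor $\innAlt_R^r(G_n,\BG_m)$ is representable by an affine group scheme of finite type over $R$ (Remark \ref{rem 26} together with the fact that $G_n$ is finite locally free), and by Remark \ref{rem 3} its formation commutes with base change. On the other side, $\Lambda_{R,n}^r$ is finite locally free over $R$ (it is the $p^n$-torsion of the $p$-divisible group $\Lambda_R^r$, whose formation commutes with base change by Remark \ref{rem0 19} and Proposition \ref{prop0 20}), so its Cartier dual $\innHom_R(\Lambda_{R,n}^r,\BG_m)$ is finite locally free over $R$ and its formation commutes with base change. Thus $\ul{\lambda_n^*}(\BG_m)$ is a homomorphism of finite type affine $R$-group schemes whose target is finite locally free and whose source is finite locally free, and the whole diagram commutes with base change along any $R\to R'$.

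Next I would invoke Proposition \ref{prop012}: for every ring homomorphism $R\to L$ with $L$ a perfect field, the base change of $\ul{\lambda_n^*}(\BG_m)$ along $R\to L$ is the map $\ul{\lambda_n^*}(\BG_m)$ for $G_{L}$, which is an isomorphism (apply that proposition with $X=\BG_{m,L}$, using that $\innHom_L(\Lambda_{L,n}^r,\BG_m)$ and $\innAlt_L^r(G_{L,n},\BG_m)$ are the fibres of our two sheaves, by the base-change compatibility just recorded). In particular $\ul{\lambda_n^*}(\BG_m)$ is an isomorphism on the fibre over every point $s$ of $\Spec R$ of characteristic $p$ after passing to the perfect closure of $\kappa(s)$; since $p$ is nilpotent in $R$, every point of $\Spec R$ has characteristic $p$, so this covers all fibres. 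Now I would apply Proposition \ref{prop018} (or Remark \ref{rem0 17}): the source is finite and flat over $R=\Spec R$, the target is affine of finite type and separated over $R$, the two have the same order over every geometric point (both orders equal $q$-powers determined by $\binom hr$, and equality on geometric points follows from the fibrewise isomorphism), and $\ul{\lambda_n^*}(\BG_m)$ is an isomorphism over every geometric point; hence it is an isomorphism.

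The main obstacle is the bookkeeping needed to legitimately apply Proposition \ref{prop018}: one must know that $\innAlt_R^r(G_n,\BG_m)$ is separated and of finite type over $R$ (already available from Remark \ref{rem 26}), and that $\innHom_R(\Lambda_{R,n}^r,\BG_m)$ is genuinely finite and flat over $R$ — this is where the fact that $\Lambda_R^r$ is a $p$-divisible group (not merely a formal group) and that taking $p^n$-torsion of a $p$-divisible group yields a finite locally free scheme is essential, and where one uses that $\CP$ has tangent module of rank at most $1$ so that Lemma \ref{lem0 21} applies to make $\bigwedge^r\CP$ a display. Once these finiteness and flatness properties are in hand, the fibrewise criterion does the rest; no further computation is required beyond matching orders, which is immediate from the perfect-field case via Corollary \ref{cor41} and Theorem \ref{thm05}.
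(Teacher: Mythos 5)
Your proposal is correct and follows essentially the same route as the paper: both reduce to the perfect-field case (Proposition \ref{prop012}) via the base-change compatibility of $\Lambda^r_{R,n}$ and of $\innAlt^r_R(G_n,\BG_m)$, and then conclude with Remark \ref{rem0 17} and Proposition \ref{prop018}, using that the source is finite flat (as the Cartier dual of the finite flat $\Lambda^r_{R,n}$) and the target is affine of finite type. The extra bookkeeping you flag (orders on geometric fibres, separatedness, every point having characteristic $p$) is exactly what the paper's cited results supply.
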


%PROOF OF PROPOSITION 013
\begin{proof}
Let $ L $ be a perfect field and $s$ an $L$-valued point of the scheme $\Spec(R)$. By Remark \ref{rem0 19}, the group scheme $ (\Lambda^r_{R,n})_L $ is canonically isomorphic to the group scheme $ \Lambda^r_{L,n} $ and therefore, the fiber of the homomorphism $ \ul{\lambda_n^*}(\BG_m) $ over $s$ is the homomorphism $$ \ul{\lambda_n^*}(\BG_m)_s:\innHom_{L}(\Lambda^r_{L,n},\BG_{m,L})\to \innAlt_{L}^r(G_{L,n},\BG_{m,L}),$$ which is an isomorphism by the previous proposition. Since $ \innHom_{R}(\Lambda^r_{R,n},\BG_m) $, being the Cartier dual of the finite flat group scheme $ \Lambda^r_{R,n} $, is a finite flat group scheme over $R$, and the group scheme $ \innAlt_{R}^r(G_n,\BG_m) $ is affine and of finite type over $ \Spec(R) $ (cf. Remark \ref{rem 26}), we can apply Remark \ref{rem0 17} and Proposition \ref{prop018} and conclude that the homomorphism $\ul{\lambda_n^*}(\BG_m)$ is an isomorphism.
\end{proof}

%PROPOSITION 014
\begin{prop}
\label{prop014}
Assume that $p$ is nilpotent in $R$. For every finite and flat group scheme $X$ over $R$, the morphism $$\ul{\lambda}_n^*(X):\innHom_{R}(\Lambda^r_{R,n},X)\to \innAlt_{R}^r(G_n,X)$$ is an isomorphism. Consequently, $\beta_{\lambda,n}:G_n^r\to \Lambda^r_{R,n} $ is the $ r^{\text{th}} $-exterior power of $ G_n $ in the category of finite and flat group schemes over $R$.
\end{prop}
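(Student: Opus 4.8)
The statement to be proved is Proposition~\ref{prop014}: assuming $p$ is nilpotent in $R$, for every finite flat group scheme $X$ over $R$ the homomorphism $\ul{\lambda}_n^*(X):\innHom_R(\Lambda^r_{R,n},X)\to\innAlt^r_R(G_n,X)$ is an isomorphism, and consequently $\beta_{\lambda,n}:G_n^r\to\Lambda^r_{R,n}$ is the $r^{\text{th}}$ exterior power of $G_n$ in the category of finite flat group schemes over $R$. The plan is to reduce the case of a general finite flat $X$ to the already-established case $X=\BG_m$ (Proposition~\ref{prop013}) by a Cartier-duality argument, and then to deduce the universal property from the resulting isomorphism of sheaves.

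First I would treat the isomorphism claim. Given a finite flat group scheme $X$ over $R$, write $X^*=\innHom_R(X,\BG_m)$ for its Cartier dual, which is again finite flat, and recall the canonical biduality $X\cong X^{**}$. The idea is to express both sides of $\ul{\lambda}_n^*(X)$ as $\innHom_R(X^*,-)$ applied to the two sides of $\ul{\lambda}_n^*(\BG_m)$. On the left, $\innHom_R(\Lambda^r_{R,n},X)\cong\innHom_R(\Lambda^r_{R,n},\innHom_R(X^*,\BG_m))\cong\innHom_R(X^*,\innHom_R(\Lambda^r_{R,n},\BG_m))$, using the adjunction/commutativity of inner Hom (the paper uses exactly such manipulations, e.g. in the proof of Proposition~\ref{prop012}). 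On the right, I would use Lemma~\ref{lem 20} (its pseudo-$R$-multilinear form, with $M_i$ finite flat and the target affine) together with the symmetry/functoriality of $\innAlt$ in the target to identify $\innAlt^r_R(G_n,X)\cong\innAlt^r_R(G_n,\innHom_R(X^*,\BG_m))\cong\innHom_R(X^*,\innAlt^r_R(G_n,\BG_m))$. These identifications are compatible with the maps induced by $\lambda$, so $\ul{\lambda}_n^*(X)$ is identified with $\innHom_R(X^*,\ul{\lambda}_n^*(\BG_m))$. Since $\ul{\lambda}_n^*(\BG_m)$ is an isomorphism of fppf sheaves by Proposition~\ref{prop013}, applying the functor $\innHom_R(X^*,-)$ yields that $\ul{\lambda}_n^*(X)$ is an isomorphism. (Here I am using that $\Lambda^r_{R,n}$ is finite flat — this is guaranteed because $\ep^r\CP$ is a display by Lemma~\ref{lem0 21}, hence $\Lambda^r_{R}$ is a $p$-divisible group and $\Lambda^r_{R,n}$ a finite flat group scheme.)

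Next I would derive the consequence, namely the universal property. Taking global sections $\Gamma(\Spec R,-)$ of the sheaf isomorphism $\ul{\lambda}_n^*(X)$, and more generally evaluating it on any $R$-scheme $T$ after base change, gives that $\lambda_n^*(X_T):\Hom_T((\Lambda^r_{R,n})_T,X_T)\to\widetilde{\Alt}^r_T(G_{n,T},X_T)$ is a bijection for every finite flat $X$ over $R$ and every $T$; in particular, taking $T=\Spec R$ and letting $X$ range over finite flat group schemes, the map $\Hom_R(\Lambda^r_{R,n},X)\to\widetilde{\Alt}^r_R(G_n^r,X)$ is an isomorphism. By Definition~\ref{def 7}(iii) this is precisely the statement that $\beta_{\lambda,n}:G_n^r\to\Lambda^r_{R,n}$ is an $r^{\text{th}}$ exterior power of $G_n$ in the category of finite flat group schemes over $R$; uniqueness up to unique isomorphism is Remark~\ref{rem 4}.

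The main obstacle I anticipate is making the second identification — the one on the $\innAlt$ side — fully rigorous at the level of sheaves rather than just on points. One must check that $\innAlt^r_R(G_n,\innHom_R(X^*,\BG_m))$ really is $\innHom_R(X^*,\innAlt^r_R(G_n,\BG_m))$ as functors, which requires combining Lemma~\ref{lem 20} with the $S_r$-invariance (Remark~\ref{rem 25}) to pass from $\innMult$ to $\innAlt$, and ensuring all of this is compatible with the universal alternating morphism $\lambda$ defining $\Lambda^r_{R,n}$, so that the identified maps really do coincide with $\ul{\lambda}_n^*(\BG_m)$. The representability hypotheses needed to invoke these lemmas (finiteness and flatness of $G_n$ and $X^*$, affineness of the targets) all hold here, so this is a bookkeeping matter rather than a conceptual one, but it is the step where care is required. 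Everything else is a formal consequence of Proposition~\ref{prop013} and the general nonsense about inner Hom and Cartier duality already developed in Chapters~2 and~4.
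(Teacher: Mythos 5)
Your proposal is correct and follows essentially the same route as the paper: Cartier duality $X\cong\innHom_R(X^*,\BG_m)$ plus the inner-Hom adjunction identify $\ul{\lambda}_n^*(X)$ with $\innHom_R(X^*,\ul{\lambda}_n^*(\BG_m))$, which is an isomorphism by Proposition \ref{prop013}, and taking global sections then yields the universal property. The compatibility issue you flag is handled in the paper by exhibiting the corresponding commutative diagram of sheaves, exactly as you outline.
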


%PROOF OF PROPOSITION 014
\begin{proof}
As $X$ is finite and flat over $R$, there exists a canonical isomorphism $ X\cong \innHom_R(X^*,\BG_m)$, where $ X^* $ is the Cartier dual of $X$. We then obtain a commutative diagram
\[ \xymatrix{\innHom_R(\Lambda^r_{R,n},X)\ar[rrr]^{\ul{\lambda}_n^*(X)}\ar[d]_{\cong}&&&\innAlt_R^r(G_n,X)\ar[d]^{\cong}\\\innHom_R(\Lambda^r_{R,n},\innHom_R(X^*,\BG_m))\ar[rrr]^{\ul{\lambda}_n^*(\innHom_R(X^*,\BG_m))}\ar[d]_{\cong}&&&\innAlt^r_R(G_n,\innHom_R(X^*,\BG_m))\ar[d]^{\cong}\\\innHom_R(X^*,\innHom_R(\Lambda^r_{R,n},\BG_m))\ar[rrr]_{\innHom_R(X^*,\ul{\lambda_n^*}(\BG_m))}&&&\innHom_R(X^*,\innAlt_R^r(G_n,\BG_m)).} \] Since by the previous proposition, the homomorphism $ \ul{\lambda_n^*}(\BG_m) $ is an isomorphism, the bottom homomorphism of this diagram is an isomorphism as well, and thus also the homomorphism $ \ul{\lambda}_n^*(X) $. Taking the global sections of $\ul{\lambda}_n^*(X)$ (i.e., taking the $R$-valued points), we conclude that the homomorphism $ \lambda_n^*(X) $ is an isomorphism, and therefore $\beta_{\lambda,n}:G_n^r\to \Lambda^r_{R,n} $ is the $ r^{\text{th}} $-exterior power of $ G_n $ in the category of finite and flat group schemes over $R$.
\end{proof}

%THEOREM 06
\begin{Ques}
\label{thm06}
Is the morphism $$\lambda_n^*(X):\Hom_{R}(\Lambda^r_{R,n},X)\to \Alt_{R}^r(G_n,X)$$ an isomorphism for every group scheme $X$ over $R$?
\end{Ques}

%PROOF OF THEOREM 06
%\begin{proof}
%\end{proof}

%PROPOSITION 026
\begin{prop}
\label{prop026}
Let $R$ be a complete local Noetherian ring with residue characteristic $p$ and $G$ a $p$-divisible group over $R$ such that the special fiber of $G$ is a connected $p$-divisible group of dimension $1$. Then there exists a $p$-divisible group $ \bigwedge^rG $ over $R$ and an alternating morphism $ \tau:G^r\to \bigwedge^rG$, such that for every $p$-divisible group $H$ over $R$ the induced group homomorphism \[\tau^*:\Hom_S(\ep^rG,H)\to \Alt_S(G^r,H)\] is an isomorphism. Furthermore, for all $n$, the canonical homomorphism \[\ep^r(G_n)\to (\ep^rG)_n,\] induced by the universal property of $ \ep^r(G_n) $ is an isomorphism. Finally, the height of $ \ep^rG $ is equal to $ \binom{h}{r} $ and its dimension at the closed point of $R$ is equal to $ \binom{h-1}{r-1} $.
\end{prop}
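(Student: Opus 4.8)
The plan is to reduce everything to the Artinian case, where Proposition \ref{prop014} gives the existence of the exterior powers of the truncated pieces $G_n$ as finite flat group schemes, and then to assemble these into a $p$-divisible group using the machinery of Chapter 1. First I would observe that since $R$ is complete local Noetherian with residue characteristic $p$ and the special fiber of $G$ is connected of dimension $1$, Chapter 6 furnishes a display $\CP$ over $R$ with tangent module of rank $1$ whose associated $p$-divisible group is $G$ (the display is obtained over each Artinian truncation $R_n := R/\Fm^{n+1}$ by the equivalence of Theorem \ref{thm02}, and these glue by the equivalence $\FF$ of Proposition \ref{prop023}). By Lemma \ref{lem0 21} the exterior power $\ep^r\CP$ is again a display (nilpotent, since $\CP$ is nilpotent — this is where the dimension $1$ hypothesis is crucial, as it guarantees the normal decomposition has $T$ of rank one so $\ep^{r-1}L \otimes T$ makes sense), with height $\binom{h}{r}$ and rank $\binom{h-1}{r-1}$, and its formation commutes with base change. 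Set $\Lambda^r := BT_{\ep^r\CP}$, a $p$-divisible group over $R$ of height $\binom{h}{r}$. Over each $R_n$, Proposition \ref{prop014} tells us that $\beta_{\lambda,m} : G_m^r \to \Lambda^r_m$ is the $r^{\text{th}}$ exterior power of $G_m$ in the category of finite flat group schemes over $R_n$; moreover, by Proposition \ref{prop013}, $\ul{\lambda}_m^*(\BG_m)$ is an isomorphism over $R_n$.

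Next I would promote this from finite flat group schemes to $p$-divisible groups. Over the Artinian base $R_n$, the universal property of $\ep^r G_m$ in the finite flat category, together with the fact (Remark \ref{rem024}-style argument, or directly from the exact sequences in Chapter 4 / Proposition \ref{prop 7} applied levelwise) that the system $\{\Lambda^r_m\}_m$ forms a truncated Barsotti–Tate system, shows that $\tau := (\beta_{\lambda,m})_m : G^r \to \Lambda^r$ is an alternating morphism of $p$-divisible groups over $R_n$ satisfying the universal property against $p$-divisible groups: indeed an alternating morphism $G^r \to H$ of $p$-divisible groups is by definition a compatible system of alternating morphisms $G_m^r \to H_m$ of finite flat group schemes, and each of these factors uniquely through $\Lambda^r_m = \ep^r G_m$, the factorizations being automatically compatible by uniqueness. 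Taking global sections of the isomorphism $\ul{\lambda}_m^*(\BG_m)$ and using Cartier duality of the finite flat pieces also shows directly that $\Lambda^r_m \cong \ep^r G_m$. Thus over each $R_n$ the proposition holds, and in particular $(\ep^r G)_m \cong \ep^r(G_m)$ there.

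Finally, to descend from the Artinian truncations to $R$ itself, I would invoke the equivalence of categories $\FF : p\text{-}\Div/X \to p\text{-}\Div/\FX$ of Proposition \ref{prop023} (with $X = \Spec R$): the compatible system $\{\Lambda^r \text{ over } R_n\}_n$, being a $p$-divisible group over the formal scheme $\Spf R$, comes from a unique $p$-divisible group $\ep^r G$ over $R$, and similarly the alternating morphisms $\tau$ glue to $\tau : G^r \to \ep^r G$ over $R$ (alternating morphisms over $\FX$ correspond to alternating morphisms over $X$ by Remark \ref{rem023}). The universal property over $R$ then follows from the universal property over each $R_n$ together with this equivalence: $\Hom_R(\ep^r G, H) \cong \invlim_n \Hom_{R_n}((\ep^r G)_{R_n}, H_{R_n}) \cong \invlim_n \Alt_{R_n}(G^r_{R_n}, H_{R_n}) \cong \Alt_R(G^r, H)$. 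The height is $\binom{h}{r}$ since it can be read off on the special fiber (or from $\ep^r\CP$), and the dimension at the closed point is $\binom{h-1}{r-1}$ by Theorem \ref{thm 4} applied to the special fiber, or equivalently from the rank of $\ep^r\CP$ via Remark \ref{rem0 9}. The isomorphism $\ep^r(G_n) \to (\ep^r G)_n$ over $R$ follows by combining the levelwise isomorphisms over the $R_m$ with the equivalence $\FF$.

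The main obstacle I anticipate is the passage from the finite-flat universal property (Proposition \ref{prop014}) to the $p$-divisible-group universal property over the Artinian base: one must check carefully that the system $\{\Lambda^r_m\}_m$ genuinely assembles into a truncated Barsotti–Tate / $p$-divisible object — i.e. that the transition maps fit into the short exact sequences $0 \to \Lambda^r_m \to \Lambda^r_{m+n} \to \Lambda^r_n \to 0$ — and that an alternating morphism of $p$-divisible groups really is the same datum as a compatible system of alternating morphisms of the truncations. The exact sequences come from Proposition \ref{prop 7} (the cokernel-of-$\pi$, here $p$, sequence) combined with the order computation $|\ep^r G_m| = p^{m\binom{h}{r}}$ which over a field is Corollary \ref{cor41} and in general follows by base change to the residue field; the compatibility of alternating morphisms is essentially a bookkeeping argument with the definitions in Chapter 1. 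Everything else is a formal application of the base-change statements already established (commutation of $\ep^r\CP$ and hence of $BT_{\ep^r\CP}$ with base change) and the two equivalences of categories (Theorem \ref{thm02} and Proposition \ref{prop023}).
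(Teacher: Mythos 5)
Your proposal is correct and follows essentially the same route as the paper: reduce to the Artinian truncations $R/\Fm^i$, where $\ep^rG$ is realized as $BT_{\ep^r\CP}$ and Proposition \ref{prop014} gives the universal property at each finite level, pass to the inverse limit for the universal property against $p$-divisible groups, glue over $\Spf R$ via Proposition \ref{prop023} and Remark \ref{rem023}, and read off height and dimension from Lemma \ref{lem0 21} and Remark \ref{rem0 9}. The only cosmetic difference is that you phrase the construction as producing a single display over $R$ and then truncating, whereas the paper works display-theoretically only over the Artin quotients and glues the resulting $p$-divisible groups; since all your actual verifications take place over the $R_n$, the arguments coincide.
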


%PROOF OF PROPOSITION 026
\begin{proof}
First assume that $ R $ is a local Artin ring. Then $p$ is nilpotent in $R$ and $G$ is infinitesimal. Set $ \ep^rG:=\Lambda^r_R $. By Proposition \ref{prop014}, the alternating morphism $ \beta_{\lambda,n}:G_n^r\to (\ep^rG)_n $ is the $ r^{\text{th}} $-exterior power of $ G_n $ over $R$ and therefore, the canonical homomorphism $ (\ep^rG)_n\to \ep^r(G_n) $ is an isomorphism and the induced homomorphism \[ \Hom_R(\ep^rG_n,H_n)\to \Alt_R^r(G_n,H_n) \] is an isomorphism. Taking the inverse limit of this isomorphism and noting that by definition, $$ \Alt^r_R(G,H)=\uset{n}{\invlim}\Alt^r_R(G_n,H_n) $$ and $$ \Hom_R(G,H)=\uset{n}{\invlim}\Hom_R(G_n,H_n) $$ we deduce that the canonical homomorphism \[ \Hom_R(G,H)\to\Alt^r_R(G,H) \]induced by the system $ \{\beta_{\lambda,n}:G_n^r\to (\ep^rG)_n\}_n $ is an isomorphism.\\

In the general case, set $ X:=\Spec(R) $, $ \FX:=\Spf(R) $ and for all $i$, $ \FX_i:=\Spec(R/\Fm^i) $, where $\Fm$ is the maximal ideal of $R$. Let $ G(i) $ denote the base change of $ G $ to $ \FX_i $. From above, we know that $ \ep^rG(i) $ exists for all $i$ and we have a universal alternating morphism $ \lambda(i):G(i)^r\to \ep^rG(i) $. We also know that the construction of the exterior power commutes with base change (note that $ G(i) $ is infinitesimal), and thus the universal alternating morphism $$\lambda(i+1):G(i+1)^r\to \ep^rG(i+1)$$ restricts over $ \FX_i $ to $\lambda(i):G(i)^r\to \ep^rG(i) $. By Proposition \ref{prop023} and Remark \ref{rem023}, there exists a $p$-divisible group $ \ep^rG $ over $X$ and an alternating morphism $ \lambda:G^r\to \ep^rG $ which restricts over each $ \FX_i $ to $ \lambda(i) $. It follows from the universal property of $ \ep^rG(i) $ and Remark \ref{rem023} that the alternating morphism $ \lambda $ is the universal alternating morphism making $ \ep^rG $ the $ r^{\text{th}} $-exterior power of $ G $ over $ X $.\\

The same arguments show that the truncated Barsotti-Tate groups $ \ep^r(G(i)_n) $ (of level $n$) form a compatible system and therefore define a truncated Barsotti-Tate group of level $n$ over $ \FX $. Using again Proposition \ref{prop023} and Remark \ref{rem023}, we obtain a truncated Barsotti-Tate group of level $n$ over $X$, denoted $ \ep^r(G_n) $ and an alternating morphism $ \lambda_n:G_n^r\to\ep^r(G_n) $. Like above it is the universal alternating morphism making $\ep^r(G_n)$ the $ r^{\text{th}} $-exterior power of $ G_n $ over $ X $. Since for all $i$  the canonical homomorphism $ \ep^r(G(i)_n)\to (\ep^rG(i))_n$ is an isomorphism, it follows from Proposition \ref{prop023} that the canonical homomorphism $ \ep^r(G_n)\to (\ep^rG)_n$ is an isomorphism as well.\\

The dimension of a $p$-divisible group over a field is invariant under field extensions and the height of a $p$-divisible group (over any base scheme) is invariant under any base change. We also know that the construction of the exterior powers of a $ p $-divisible group (of dimension $1$) over a field of characteristic $p$ commutes with field extensions (cf. Remark \ref{rem0 19}). Thus, in order to determine the height of $ \ep^rG $, and its dimension at the closed point of $S$, we can assume that the residue field of $R$ is algebraically closed (note that $ (\ep^rG)_k\cong \ep^r(G_k) $). By Remark \ref{rem0 9}, the dimension and respectively the height of a $p$-divisible group is equal to the rank and respectively to the height of the corresponding display. The result on the dimension and height of $ \ep^rG $ follows  at once from Lemma \ref{lem0 21}.
\end{proof}

%REMARK 025
\begin{rem}
\label{rem025}
Note that by construction of $ \ep^rG $ (respectively of $ \ep^r(G_n) $, the canonical homomorphism $ \ep^r(G_k)\to(\ep^rG)_k $ (respectively $ \ep^r(G_{n,k})\to(\ep^rG_n)_k $) is an isomorphism, where $k$ denotes the residue field of $R$.
\end{rem}

Now, we would like to show that the exterior powers constructed in the above proposition commute with arbitrary base change, that is, the base change of the exterior powers of $G$ are the exterior powers of the base change of $G$.

%NOTATIONS 02
\begin{notation}
\label{notations02}
Let $R$ be a complete local Noetherian ring with residue field $k$ of characteristic $p$. Let $G$ be a $p$-divisible group over $R$, of height $h$, such that the dimension of $G$ at points of $S:=\Spec(R)$ of characteristic $p$ is $1$ and that the special fiber of $G$ is a connected $p$-divisible group. Fix a positive natural number $n$ and set: $H:=G_n$, $X:=\innHom_R(\ep^rH,\BG_m) $ and $ Y:=\innAlt_R^r(H,\BG_m) $. Denote by $ \alpha $ the canonical homomorphism $\alpha:X\to Y $ induced by the universal alternating morphism $ \lambda:H^r\to \ep^rH $.
\end{notation}

Note that all (rational) integers prime to $p$ are invertible in $R$, i.e., $R$ is $ \BZ_{(p)} $-algebra. Indeed, if $n$ is an integer prime to $p$, then $n$ is invertible $k$, so, it is not contained in the maximal ideal of $R$. Since $R$ is local, it is therefore invertible in $R$. This implies that for all ring homomorphisms $ R\to L $ with $L$ a field, the characteristic of $L$ is either $p$ or zero.

%LEMMA 026
\begin{lem}
\label{lem026}
Let $\bar{s}$ be a geometric point of $ S $. Then the group scheme $ Y_{\bar{s}} $ is a finite group scheme of order $ p^{n\binom{h}{r} }$ over $ \bar{s} $.
\end{lem}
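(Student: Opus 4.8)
The plan is to reduce the assertion to the structure of exterior powers of $p$-divisible groups over a field, which has already been established over a ground field. First I would use that the formation of $Y=\innAlt_R^r(H^r,\BG_m)$ commutes with arbitrary base change: exactly as in the proof of Lemma \ref{lem 20} (by checking the identity on $T'$-points), for the structure morphism $\bar s\to S$ one obtains a canonical isomorphism $Y_{\bar s}\cong\innAlt_{\bar s}^r(H_{\bar s}^r,\BG_m)$, where $H_{\bar s}=G_{n,\bar s}$. As noted before the statement, every integer prime to $p$ is invertible in the local ring $R$, so $R$ is a $\BZ_{(p)}$-algebra and the algebraically closed field $\kappa(\bar s)$ has characteristic either $0$ or $p$. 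In both cases $G_{\bar s}$ is a $p$-divisible group over $\kappa(\bar s)$ of height $h$, of dimension $1$ when $\mathrm{char}\,\kappa(\bar s)=p$ (by the hypothesis on $G$) and étale (dimension $0$) otherwise.

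Next I would invoke the main theorem over fields. Viewing a $p$-divisible group as a $\pi$-divisible $\CO$-module scheme for $\CO=\BZ_p$ and $\pi=p$, Theorem \ref{thmoveranyfield} applies to $G_{\bar s}$ since $\dim G_{\bar s}\le 1$ and $p\neq 2$: it provides a $p$-divisible group $\ep^r(G_{\bar s})$ over $\bar s$ of height $\binom{h}{r}$ together with a canonical isomorphism $\ep^r(G_{n,\bar s})\cong\bigl(\ep^r(G_{\bar s})\bigr)_n$. By Remark \ref{rem 9}, the latter is a finite group scheme over $\bar s$ of order $p^{n\binom{h}{r}}$.

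Finally I would identify $Y_{\bar s}$ with the Cartier dual of $\ep^r(G_{n,\bar s})$. Applying Theorem \ref{thm41} (with $R=\BZ$) to the finite group scheme $H_{\bar s}$ gives a canonical isomorphism $\ep^r(H_{\bar s})\cong\widetilde{\innAlt}^r(H_{\bar s}^r,\BG_m)^*=\innAlt^r(H_{\bar s}^r,\BG_m)^*\cong Y_{\bar s}^*$; here the two inner-$\Alt$'s agree because for a $p$-power-torsion group scheme the canonical $\BZ_p$-structure makes pseudo-$\BZ_p$-alternating morphisms to $\BG_m$ coincide with $\BZ$-alternating ones. Since $\ep^r(H_{\bar s})$ is finite, taking Cartier duals yields $Y_{\bar s}\cong\ep^r(H_{\bar s})^*$, so $Y_{\bar s}$ is a finite group scheme over $\bar s$ of the same order as $\ep^r(H_{\bar s})$, namely $p^{n\binom{h}{r}}$, as claimed.

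I do not expect a genuine obstacle here; the only care needed is bookkeeping: confirming the base-change compatibility of $\innAlt^r$ (formally identical to the $\innHom$ case of Lemma \ref{lem 20}), matching the precise notion of ``alternating'' of Definition \ref{def 4} with the one occurring in Theorem \ref{thm41} and Theorem \ref{thmoveranyfield}, and using that the ``adequate'' Cartier duality of Theorem \ref{thm41} restricts to the usual involutive Cartier duality on finite group schemes, so that $\ep^r(H_{\bar s})\cong Y_{\bar s}^*$ does invert to $Y_{\bar s}\cong\ep^r(H_{\bar s})^*$.
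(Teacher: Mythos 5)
Your reduction to the geometric fibre, the base-change identification $Y_{\bar s}\cong\widetilde{\innAlt}^r_{\bar s}(H_{\bar s}^r,\BG_m)$, and the computation of the order of $\ep^r(H_{\bar s})$ via Theorem \ref{thmoveranyfield} (resp.\ Proposition \ref{prop025} in characteristic zero) are all correct and parallel to what the paper does. The gap is in your last step. The dual $(-)^*$ in Theorem \ref{thm41} is the one of Definition \ref{def4111}, namely $\invlim_{Y'}Y'^*$ taken over the \emph{finite} subgroup schemes $Y'$ of $Y_{\bar s}$; it is not the usual Cartier dual, because $Y_{\bar s}$ is a priori only an affine group scheme of finite type. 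From $\ep^r(H_{\bar s})\cong Y_{\bar s}^*$ and the finiteness of $\ep^r(H_{\bar s})$ you may dualize to obtain $\ep^r(H_{\bar s})^*\cong (Y_{\bar s}^*)^*\cong\dirlim_{Y'}Y'$, i.e.\ the union of the finite subgroup schemes of $Y_{\bar s}$ --- a finite subgroup scheme of order $p^{n\binom{h}{r}}$ --- but biduality returns all of $Y_{\bar s}$ only if you already know that $Y_{\bar s}$ is finite (or profinite), which is precisely the content of the lemma. (Compare $Y=\BG_a$ in characteristic zero: its adequate dual is trivial, hence finite, yet $Y$ is not finite.) So the involutivity you invoke is being applied to an object whose finiteness is the thing to be proved.

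This missing finiteness is exactly what the paper's proof supplies, and it is the only substantive step. In characteristic $p$ it invokes Proposition \ref{prop013}: the canonical homomorphism $f:\innHom_{\Omega}(\ep^r(H_{\Omega}),\BG_{m,\Omega})\to Y_{\Omega}$ is an isomorphism of group schemes (not merely an identification of adequate duals), and its source is finite flat, being the Cartier dual of the finite flat $\Lambda^r_{\Omega,n}$. In characteristic zero it observes that $Y_{\Omega}$ is affine of finite type with finitely many $\Omega$-points (since $\Hom_{\Omega}(\ep^r(H_{\Omega}),\BG_{m,\Omega})$ is finite), hence by Cartier's theorem reduced of dimension zero, hence finite \'etale; constancy over the algebraically closed $\Omega$ then upgrades the bijection on $\Omega$-points to an isomorphism of schemes. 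You need to insert one of these arguments (or an equivalent proof that $f$ is an epimorphism) to close your proof; once $Y_{\bar s}$ is known to be finite, your duality bookkeeping goes through verbatim.
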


%PROOF OF LEMMA 026
\begin{proof}
Assume that $ \sbar=\Spec(\Omega) $. We know that the exterior powers of $ H_{\Omega} $ exist and we have a canonical homomorphism \[f:\innHom_{\Omega}(\ep^r(H_{\Omega}),\BG_{m,\Omega})\to \innAlt_{\Omega}^r(H_{\Omega},\BG_{\Omega})=Y_{\Omega}.\] If $ \Omega $ has characteristic $p$, then by assumption $ G_{\Omega} $ has dimension $1$ and therefore by Proposition \ref{prop013} $ f $ is an isomorphism. By Proposition \ref{prop026} we know that the order of $ \ep^r(H_{\Omega}) $ is equal to $ p^{n\binom{h}{r} } $, and therefore its Cartier dual, which is isomorphic to $ Y_{\Omega} $ has order $ p^{n\binom{h}{r} } $ as well. Now assume that $ \Omega $ has characteristic zero. Then $ \innHom_{\Omega}(\ep^r(H_{\Omega}),\BG_{m,\Omega}) $ is a finite \'etale group scheme over $ \Omega $. By definition of $ \ep^r(H_{\Omega}) $ the homomorphism \[ f(\Omega):\Hom_{\Omega}(\ep^r(H_{\Omega}),\BG_{m,\Omega})\to\Alt_{\Omega}^r(H_{\Omega},\BG_{m,\Omega})=Y_{\Omega}(\Omega) \] is an isomorphism. The group of homomorphisms $\Hom_{\Omega}(\ep^r(H_{\Omega}),\BG_{m,\Omega}) $, being the $ \Omega $-valued points of a finite group scheme over $ \Omega $, is finite. Since the group scheme $ Y_{\Omega} $ is of finite type over $ \Omega $ and has finitely many $ \Omega $-valued points, it is a finite group scheme over $ \Omega $. It is thus \'etale. Since $ \Omega $ is algebraically closed, the two finite \'etale group schemes $ X_{\Omega} $ and $ Y_{\Omega} $ are constant. So $f$ is an isomorphism, because it is so on the $ \Omega $-valued points. Again, the order of $ Y_{\Omega} $ is equal to the order of $ \ep^r(H_{\Omega}) $, which is equal to $ p^{n\binom{h}{r} } $ by Proposition \ref{prop025}.
\end{proof}

%PROPOSITION 027
\begin{prop}
\label{prop027}
The homomorphism $ \alpha $ is an isomorphism.
\end{prop}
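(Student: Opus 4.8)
The plan is to deduce the statement from Proposition \ref{prop018}, applied to the morphism $\alpha\colon X\to Y$ over $S=\Spec(R)$. For this I must check four things: (a) that $X$ is finite and flat over $S$; (b) that $Y$ is of finite type and separated over $S$; (c) that $X$ and $Y$ have the same order over every geometric point of $S$; and (d) that $\alpha$ is an isomorphism over every closed point of $S$ --- and since $R$ is local, the only closed point of $S$ is $\Spec(k)$.

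For (a) and part of (c): by Proposition \ref{prop026} the $p$-divisible group $\ep^rG$ exists over $R$, has height $\binom{h}{r}$, and the canonical homomorphism $\ep^r(G_n)\to(\ep^rG)_n$ is an isomorphism; hence $\ep^rH=\ep^r(G_n)\cong(\ep^rG)[p^n]$ is a finite locally free group scheme over $R$ of order $p^{n\binom{h}{r}}$. Therefore $X=\innHom_R(\ep^rH,\BG_m)$, being the Cartier dual of a finite locally free group scheme, is finite and flat over $S$, and since Cartier duality commutes with base change, $X_{\sbar}$ has order $p^{n\binom{h}{r}}$ for every geometric point $\sbar$ of $S$. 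For (b): by Remark \ref{rem 26} the functor $Y=\innAlt_R^r(H,\BG_m)$ is representable by an affine group scheme of finite type over $S$, in particular $Y$ is of finite type and separated. For the remaining part of (c): by Lemma \ref{lem026}, for every geometric point $\sbar$ of $S$ the fibre $Y_{\sbar}$ is a finite group scheme of order $p^{n\binom{h}{r}}$; thus $X_{\sbar}$ and $Y_{\sbar}$ have the same order.

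It remains to verify (d). By Remark \ref{rem025} the canonical homomorphism $\ep^r(G_{n,k})\to(\ep^rG_n)_k$ is an isomorphism compatible with the universal alternating morphisms, so the base change $\alpha_k$ of $\alpha$ to $\Spec(k)$ is the canonical homomorphism $\innHom_k(\ep^r(H_k),\BG_m)\to\innAlt_k^r(H_k,\BG_m)$ induced by the universal alternating morphism $\lambda_k\colon H_k^r\to\ep^r(H_k)$. Since the special fibre $G_k$ is a connected --- hence infinitesimal --- $p$-divisible group of dimension one, and $p=0$ is nilpotent in $k$, this is exactly the homomorphism $\ul{\lambda_n^*}(\BG_m)$ of Construction \ref{cons07} attached to the display of $G_k$, which is an isomorphism by Proposition \ref{prop013}. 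All the hypotheses of Proposition \ref{prop018} being satisfied, we conclude that $\alpha$ is an isomorphism. The only points requiring care are the computation of the orders of the geometric fibres of $X$ and $Y$ (which rests on Lemma \ref{lem026} together with the base-change compatibility and finiteness of $\ep^rH$ coming from Proposition \ref{prop026}) and the identification of $\alpha_k$ with $\ul{\lambda_n^*}(\BG_m)$ over $k$; neither presents a real difficulty once the earlier results are in place, so I do not anticipate a serious obstacle.
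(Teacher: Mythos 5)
Your proof is correct and follows essentially the same route as the paper: both rest on Lemma \ref{lem026} for the order of the geometric fibres of $Y$, on Proposition \ref{prop026} for the finiteness and order of $\ep^rH$, on Remark \ref{rem025} together with Proposition \ref{prop013} for the isomorphism over the closed point, and finally on Proposition \ref{prop018}. The only (harmless) difference is that you invoke Proposition \ref{prop018} directly through its closed-point hypothesis, using that $\Spec(R)$ has a unique closed point, whereas the paper first upgrades the isomorphism at the closed point to an isomorphism at every geometric point by a Nakayama argument on $\CO(Y)\to\CO(X)$ and then appeals to Remark \ref{rem0 17} — a step your version renders unnecessary.
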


%PROOF OF PROPOSITION 027
\begin{proof}
Set $ A:=\CO(X) $ and $ B:=\CO(Y) $. We know that $ A $ is a finite flat $R$-module of rank $ p^{n\binom{h}{r} } $ and $ B $ is a finitely generated $R$-algebra. Denote by $ f:B\to A $ the ring homomorphism corresponding to $ \alpha $ and by $C$ the cokernel of $f$. By Remark \ref{rem025} and Proposition \ref{prop013}, $ \alpha_k $ is an isomorphism, which means that $ C\otimes_Rk=0 $. Since $A$ is finite over $R$, $C$ is also finite over $R$ and applying Nakayama's lemma, we conclude that $C=0$. This implies that $f$ is an epimorphism. Let $\bar{s}=\Spec(\Omega)$ be a geometric point of $ S $.  By previous lemma $ B\otimes_R\Omega $ has dimension, over $ \Omega $, equal to $ p^{n\binom{h}{r} } $, which is equal to the dimension, over $ \Omega $,  of $ B\otimes_R\Omega $. As $f$ is an epimorphism and therefore also $ f\otimes_R\Omega $, we conclude that $ f\otimes_R\Omega $ is an isomorphism. This proves that $ \alpha_{\sbar} $ is an isomorphism. It follows by Remark \ref{rem0 17} and Proposition \ref{prop018} that $ \alpha $ is an isomorphism.
\end{proof}

%PROPOSITION 028
\begin{prop}
\label{prop028}
Let $T$ be an $S$-scheme and $Z$ a finite flat group scheme over $T$. The canonical homomorphism \[ \ul{\lambda}^*_Z:\innHom_T((\ep^rH)_T,Z)\to\innAlt_T^r(H_T,Z) \] induced by the alternating morphism $ \lambda_T:H_T^r\to(\ep^rH)_T $ is an isomorphism. In particular, $ (\ep^rH)_T $ is the $ r^{\text{th}} $-exterior power of $ H_T $ in the category of finite flat group schemes over $T$ and $ \lambda_T $ is the universal alternating morphism.
\end{prop}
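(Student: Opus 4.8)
The statement is essentially a ``descent to finite flat coefficients'' result, and the natural strategy is to reduce the claim for an arbitrary finite flat $Z$ over $T$ to the already-established isomorphism $\alpha\colon X=\innHom_R(\ep^rH,\BG_m)\isoto \innAlt_R^r(H,\BG_m)=Y$ of Proposition \ref{prop027}, together with its behaviour under base change. First I would observe that both sides of $\ul{\lambda}^*_Z$ are sheaves on $T$ for the fppf topology, and that the formation of each commutes with base change along $T'\to T$: for $\innHom_T((\ep^rH)_T,Z)$ this is standard (representable by a finite flat group scheme, namely a Cartier dual when $Z=\BG_m$), and for $\innAlt_T^r(H_T,Z)$ this is Remark \ref{rem 2} and Remark \ref{rem 26} applied to the finite flat $R$-module scheme $H$. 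Hence it suffices to prove the statement after an fppf base change, and in particular we may work Zariski- or even fppf-locally on $T$ and assume $T=\Spec(A)$ is affine and $Z=\innHom_T(Z^*,\BG_{m,T})$ with $Z^*$ the Cartier dual.

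The key step is then the ``$\BG_m$-case'': the canonical homomorphism
\[
\ul{\lambda}^*_{\BG_m}\colon \innHom_T((\ep^rH)_T,\BG_{m,T})\longrightarrow \innAlt_T^r(H_T,\BG_{m,T})
\]
is an isomorphism for every $S$-scheme $T$. This follows from Proposition \ref{prop027} by base change: $\alpha$ is an isomorphism over $R$, the left-hand side is the Cartier dual of the finite flat group scheme $\ep^rH$ and so commutes with the base change $R\to A$, and the right-hand side commutes with base change by the representability and base-change statements for $\innAlt$ recalled above; therefore $\ul{\lambda}^*_{\BG_m}$ over $T$ is identified with $\alpha_T=\alpha\otimes_R A$, which is an isomorphism. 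Once this is in hand, the passage to a general finite flat $Z$ is the same diagram chase used in the proof of Proposition \ref{prop014}: writing $Z\cong\innHom_T(Z^*,\BG_{m,T})$ and using the adjunction isomorphisms of Lemma \ref{lem 20} (the ``inner'' version of Lemma \ref{lem 19}), one gets a commutative diagram
\[
\xymatrix{
\innHom_T((\ep^rH)_T,Z)\ar[d]_{\cong}\ar[rr]^{\ul{\lambda}^*_Z}&&\innAlt_T^r(H_T,Z)\ar[d]^{\cong}\\
\innHom_T(Z^*,\innHom_T((\ep^rH)_T,\BG_m))\ar[rr]_{\innHom_T(Z^*,\ul{\lambda}^*_{\BG_m})}&&\innHom_T(Z^*,\innAlt_T^r(H_T,\BG_m))
}
\]
whose bottom row is an isomorphism by the $\BG_m$-case, so that $\ul{\lambda}^*_Z$ is an isomorphism. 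Taking global sections over $T$ then yields that $\lambda_T\colon H_T^r\to(\ep^rH)_T$ induces an isomorphism $\Hom_T((\ep^rH)_T,Z)\to\Alt_T^r(H_T,Z)$ for all finite flat $Z$, i.e.\ $(\ep^rH)_T$ is the $r^{\text{th}}$ exterior power of $H_T$ in the category of finite flat group schemes over $T$, with $\lambda_T$ universal.

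The one point requiring care, and the main potential obstacle, is checking that the two vertical adjunction isomorphisms in the diagram are genuinely compatible with the maps $\ul{\lambda}^*_Z$ and $\innHom_T(Z^*,\ul{\lambda}^*_{\BG_m})$ — that is, that the diagram actually commutes. This is a functoriality verification: one must trace the universal alternating morphism $\lambda$ through the chain of isomorphisms in Lemma \ref{lem 19} and Lemma \ref{lem 20} and check it is carried to the composite defining the bottom arrow. This is exactly the same bookkeeping carried out in Proposition \ref{prop014}, so I would either invoke that proof verbatim (with $R$ replaced by $T$, $\Lambda^r_{R,n}$ by $(\ep^rH)_T$, and $G_n$ by $H_T$) or indicate that the naturality of all isomorphisms involved in all arguments makes the diagram commute. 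A secondary, purely notational subtlety is that $\ep^rH$ here denotes the exterior power of the truncated Barsotti--Tate group $H=G_n$ constructed in Proposition \ref{prop026}; one should note that by construction this is the $r^{\text{th}}$ exterior power over $R$ (so $\alpha$ of Notation \ref{notations02} really is induced by the \emph{universal} alternating morphism), which is precisely what Proposition \ref{prop027} uses.
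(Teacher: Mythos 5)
Your proposal is correct and follows essentially the same route as the paper: Cartier duality $Z\cong\innHom_T(Z^*,\BG_{m,T})$, the adjunction isomorphisms, and the identification of the bottom arrow with $\innHom_T(Z^*,\alpha_T)$, where $\alpha_T$ is an isomorphism because $\alpha$ is (Proposition \ref{prop027}) and both sides commute with base change. The extra care you flag about the commutativity of the diagram is reasonable but the paper treats it, as you suggest, as the same naturality bookkeeping already done in Proposition \ref{prop014}.
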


%PROOF OF PROPOSITION 028
\begin{proof}
As $Z$ is finite and flat over $T$, there exists a canonical isomorphism $ Z\cong \innHom_T(Z^*,\BG_{m,T})$, where $ Z^* $ is the Cartier dual of $Z$. We then obtain a commutative diagram
\[ \xymatrix{\innHom_T((\ep^rH)_T,Z)\ar[rr]^{\ul{\lambda}^*_Z}\ar[d]_{\cong}&&\innAlt_T^r(H_T,Z)\ar[d]^{\cong}\\\innHom_T((\ep^rH)_T,\innHom_T(Z^*,\BG_{m,T}))\ar[rr]\ar[d]_{\cong}&&\innAlt^r_T(H_T,\innHom_T(Z^*,\BG_{m,T}))\ar[d]^{\cong}\\\innHom_T(Z^*,\innHom_T((\ep^rH)_T,\BG_{m,T}))\ar[rr]_{\quad\innHom_T(Z^*,\alpha_T)}&&\innHom_T(Z^*,\innAlt_T^r(H_T,\BG_{m,T})).} \] Since $ \alpha $ is an isomorphism by the previous proposition, $ \alpha_T $ and thus also the homomorphism $ \innHom_T(Z^*,\alpha_T) $ are isomorphisms. It follows that $ \ul{\lambda}^*_Z $ is an isomorphism as well. Taking the global sections of the homomorphism $ \ul{\lambda}^*_Z  $, we obtain an isomorphism \[\Hom_T((\ep^rH)_T,Z)\to\Alt_T^r(H_T,Z), \] which makes $ \lambda_T:H_T^r\to (\ep^rH)_T $ the universal alternating morphism.
\end{proof}

%COROLLARY 04
\begin{cor}
\label{cor04}
Let $T$ be an $S$-scheme. The base change to $T$ of the alternating morphism $ \tau:G^r\to \ep^rG $ given by Proposition \ref{prop026} is the universal alternating morphism, i.e., for every $p$-divisible group $ G'$ over $T$, the induced homomorphism \[ \tau^*_T:\Hom_T((\ep^rG)_T,G')\to\Alt^r_T(G_T,G') \] is an isomorphism.
\end{cor}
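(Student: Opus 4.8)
The goal is Corollary \ref{cor04}: for $R$ complete local Noetherian with residue characteristic $p$ and $G$ over $S=\Spec(R)$ a $p$-divisible group whose special fiber is connected of dimension $1$, the exterior power $\ep^rG$ produced in Proposition \ref{prop026} commutes with arbitrary base change $T\to S$. The plan is to reduce the statement about $p$-divisible groups to the statement about their truncations $G_n$, and there to exploit Proposition \ref{prop028}, which already says that $(\ep^rG_n)_T\cong\ep^r(G_{n,T})$ in the category of finite flat group schemes over $T$, together with Proposition \ref{prop027} (the representing scheme $\innAlt_T^r(G_{n,T},\BG_m)$ is the Cartier dual of $(\ep^rG_n)_T$, hence finite flat over $T$).

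First I would recall that, by definition, an alternating morphism of $p$-divisible groups over $T$ from $G_T^r$ to $G'$ is a compatible system of alternating morphisms $G_{n,T}^r\to G'_n$, so that $\Alt_T^r(G_T,G')=\uset{n}{\invlim}\,\Alt_T^r(G_{n,T},G'_n)$, and likewise $\Hom_T((\ep^rG)_T,G')=\uset{n}{\invlim}\,\Hom_T(((\ep^rG)_T)_n,G'_n)$. By Proposition \ref{prop026} we have canonical isomorphisms $(\ep^rG)_n\cong\ep^r(G_n)$ over $S$; base-changing to $T$ gives $((\ep^rG)_T)_n\cong(\ep^r(G_n))_T$. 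Now apply Proposition \ref{prop028} with $H:=G_n$ and $Z:=G'_n$ (which is finite flat over $T$): the morphism $\lambda_{n,T}\colon G_{n,T}^r\to(\ep^rG_n)_T$ is the universal alternating morphism in the category of finite flat group schemes over $T$, so the induced map
\[
\Hom_T((\ep^rG_n)_T,G'_n)\longrightarrow\Alt_T^r(G_{n,T},G'_n)
\]
is an isomorphism for every $n$. Taking the inverse limit over $n$ — using that the transition maps on both sides are induced by the projections $\pi\colon G_{n+1}\to G_n$ and $\pi\colon G'_{n+1}\to G'_n$ and are therefore compatible with the isomorphisms just displayed — yields that
\[
\tau_T^*\colon\Hom_T((\ep^rG)_T,G')\longrightarrow\Alt_T^r(G_T,G')
\]
is an isomorphism. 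This is exactly the assertion of the corollary.

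Two small points need care but are routine. One is that $(\ep^rG)_T$ is indeed a $p$-divisible group over $T$, so that "$\Hom_T$ of $p$-divisible groups" makes sense: this follows because $\ep^rG$ is a $p$-divisible group over $S$ (Proposition \ref{prop026}) and base change of a $p$-divisible group is a $p$-divisible group. The other is the identification $((\ep^rG)_T)_n\cong(\ep^r(G_n))_T$ — this combines the isomorphism $(\ep^rG)_n\cong\ep^r(G_n)$ over $S$ from Proposition \ref{prop026} with the fact that forming the $n$-torsion subgroup commutes with base change, so it is formal. The only genuinely substantive inputs are Propositions \ref{prop027} and \ref{prop028}, which are already established; the main (and only real) obstacle is verifying that the system of isomorphisms $\Hom_T((\ep^rG_n)_T,G'_n)\cong\Alt_T^r(G_{n,T},G'_n)$ is compatible with the transition maps so that the inverse limit can be taken — and this compatibility is immediate from the functoriality of $\lambda_n$ and of $\ep^r$ with respect to the projections $G_{n+1}\to G_n$, already used in the proof of Proposition \ref{prop026}. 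Hence the corollary follows with essentially no further computation.
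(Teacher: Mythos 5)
Your proof is correct and follows essentially the same route as the paper: reduce to the truncated levels via the inverse-limit definitions of $\Hom_T$ and $\Alt^r_T$, invoke Proposition \ref{prop028} to see that each $\tau_{n,T}^*$ is an isomorphism, and pass to the limit. The extra care you take with the compatibility of the transition maps and the identification $((\ep^rG)_T)_n\cong(\ep^r(G_n))_T$ is exactly what the paper leaves implicit.
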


%PROOF OF COROLLARY 04
\begin{proof}
By definition, we have $ \Hom_T((\ep^rG)_T,G')=\uset{n}{\invlim}\,\Hom_T((\ep^rG_n)_T,G'_n) $ and $ \Alt^r_T(G_T,G')=\uset{n}{\invlim}\,\Alt^r_T(G_{n,T},G'_n) $ and the homomorphism \[\tau^*_{n,T} \Hom_T((\ep^rG_n)_T,G'_n)\to\Alt^r_T(G_{n,T},G'_n) \] is induced by the alternating morphisms $ \tau_{n,T}:G_{n,T}^ r\to(\ep^rG_n)_T$. By previous proposition, $ \tau_{n,T} $ is the universal alternating morphism, and therefore the homomorphisms $ \tau^*_{n,T} $ are isomorphisms. Hence  $ \tau^*_T $ is an isomorphism as well.
\end{proof}

%REMARK 026
\begin{rem}
\label{rem026}
In virtue of the previous corollary, the $r^{\text{th}} $-exterior power of $G_T$ exists and we can write $ \ep^rG_T $ instead of $ (\ep^rG)_T $ and $ \ep^r(G_T) $. The same holds (by Proposition \ref{prop028}) for the truncated Barsotti-Tate groups $ G_{n,T} $.
\end{rem}

%\subsection{The general case}
\section{The general case}

In this section we would like to prove the main theorem over any base scheme. The prime number $p$ is again different from $2$. We first show a result which will serve as an auxiliary tool to transfer the question of the existence of exterior powers over an (almost) arbitrary base, to the question over a special complete local Noetherian base, where we know the answer. We then prove some faithfully flat descent properties, which together with the mentioned proposition and the results from the last chapters, will provide a proof of the main theorem.\\

The following proposition and its proof are due to Lau. We include the proof for the sake of completeness. Since the proof is not due to the author and its contents are not used (even partially) elsewhere in this writing, we will not prepare its ingredients. We refer to \cite{CA}, \cite{I} and \cite{TW} for more details.

%PROPOSITION 024
\begin{prop}
\label{prop024}
Let $ G_0 $ over $ \BF_p $ be a connected $p$-divisible group of dimension 1 and height $h$, and $ \CG $ over $ R:=\BZ_p\lbb x_1,\dots,x_{h-1}\rbb $ the universal deformation of $G_0$. Let $H$ be a truncated Barsotti-Tate group of level $n\geq 1$ and of height $h$ over a $ \BZ_{(p)} $-scheme $X$. We assume that the fibers of $H$ in points of characteristic $p$ of $X$ have dimension $1$. Then there exist morphisms \[ X\ovset{\phi}{\longleftarrow} Y \ovset{\psi}{\longrightarrow} \Spec R\] with $ \phi $ faithfully flat and affine, such that $ \phi^*H\cong \psi^*\CG_n.$
\end{prop}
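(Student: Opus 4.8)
The plan is to construct $Y$ as an $\operatorname{Isom}$-scheme, so that the isomorphism $\phi^{*}H\cong\psi^{*}\CG_n$ and the affineness of $\phi$ are automatic, and then to prove that $\phi$ is faithfully flat by treating the characteristic $0$ and characteristic $p$ parts of $X$ separately. First I would reduce to $X=\Spec A$ affine, and set
$$Y:=\operatorname{Isom}_{X\times_{\Spec\BZ_{(p)}}\Spec R}\big(\operatorname{pr}_X^{*}H,\ \operatorname{pr}_R^{*}\CG_n\big),$$
with $\phi\colon Y\to X$ and $\psi\colon Y\to\Spec R$ the two projections. Since $H$ and $\CG_n$ are finite locally free group schemes, $\operatorname{Isom}$ is representable by an affine scheme of finite type over $X\times_{\BZ_{(p)}}\Spec R$, so $\phi$ is affine; and over $Y$ there is by construction a tautological isomorphism $\phi^{*}H\cong\psi^{*}\CG_n$. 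It remains to show that $\phi$ is flat and surjective.

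Over the open $X[1/p]\subseteq X$ this is easy: a truncated Barsotti-Tate group over a $\BZ[1/p]$-scheme is finite étale, so after a finite étale trivialisation the relevant $\operatorname{Isom}$-sheaf is, over every point of $X[1/p]\times\Spec R[1/p]$, a nonempty torsor under the finite group $\GL_h(\BZ/p^{n})$; hence $Y\times_X X[1/p]$ is finite étale and surjective over $X[1/p]\times_{\BZ[1/p]}\Spec R[1/p]$, and composing with the faithfully flat $X[1/p]\times\Spec R[1/p]\to X[1/p]$ shows $\phi$ is faithfully flat over $X[1/p]$. The heart of the matter is the characteristic $p$ part. Let $\mathcal N$ be the moduli stack over $\Spec\BF_p$ of truncated Barsotti-Tate groups of level $n$, height $h$ and dimension $1$; by Illusie's deformation theory (cf. \cite{I}) the deformation functor of such a group is formally smooth and unobstructed, with tangent space of dimension $h-1$, so $\mathcal N$ is a smooth algebraic stack over $\BF_p$ of dimension $h-1$ with affine diagonal. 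The reduction $\CG_n\bmod p$ defines a morphism $f\colon\Spec(R/pR)=\Spec\BF_p\lbb x_1,\dots,x_{h-1}\rbb\to\mathcal N$, and I claim $f$ is faithfully flat. It is flat by a dimension count (``miracle flatness'': $R/pR$ is a regular local ring of dimension $h-1$, $\mathcal N$ is regular of dimension $h-1$, and the fibres of $f$, pulled back to a smooth scheme cover of $\mathcal N$, are finite — disjoint unions of torsors under the finite automorphism group schemes of the groups involved, using that $R$ is the universal deformation ring both of $G_0$ and, by \cite{I}, of $G_0[p^{n}]$). It is surjective because, by Illusie's fieldwise liftability theorem, every truncated Barsotti-Tate group of level $n$, height $h$ and dimension $1$ over an algebraically closed field of characteristic $p$ has the form $G[p^{n}]$ for a one-dimensional $p$-divisible group $G$ of height $h$, such a $G$ is one of the finitely many groups $\hat{\BG}_{h'}\times(\BQ_p/\BZ_p)^{h-h'}$ with $1\le h'\le h$, and each of these occurs as a fibre of $\CG$ over $\Spec(R/pR)$ by the Grothendieck--Katz specialisation of Newton polygons in the deformation space of $G_0$. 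Since $H\bmod p$ defines $g\colon X\times_{\BZ_{(p)}}\Spec\BF_p\to\mathcal N$ (here the hypothesis that the characteristic $p$ fibres of $H$ have dimension $1$ is used), the base change $Y\times_X(X\bmod p)=(X\bmod p)\times_{\mathcal N}\Spec(R/pR)\to X\bmod p$ is faithfully flat and affine.

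Finally I would assemble the two cases. Surjectivity of $\phi$ follows at once, since every point of $X$ lies in $X[1/p]$ or in $X\bmod p$. For flatness of $\phi$ at a point of $Y$ over a point of $X\bmod p$, I would pass to the completion of the local ring of $X$ there — a complete Noetherian local ring with residue characteristic $p$ — lift $H$ to a $p$-divisible group by \cite{I}, and combine the versality of $\CG$ with faithfully flat descent for truncated Barsotti-Tate groups (the local criterion of flatness applied in the $p$-adic direction, cf. \cite{CA}) to deduce flatness there; together with flatness over $X[1/p]$ this yields that $\phi$ is flat, hence faithfully flat, which completes the proof. The main obstacle I expect is precisely this last assembly: controlling flatness of $\phi$ uniformly across the generic and special fibres, which requires the deformation-theoretic input of \cite{I}, the structure of the moduli stack $\mathcal N$ (smoothness and affine diagonal, and the finiteness of the relevant automorphism group schemes needed for the ``miracle flatness'' count), and the approximation and descent arguments of \cite{TW}.
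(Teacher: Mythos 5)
Your construction of $Y$ as an $\mathrm{Isom}$-scheme is exactly the paper's $Y=X\times_{\CY_0}\Spec R$, where $\CY_0$ is the open substack of truncated Barsotti-Tate groups of level $n$ and height $h$ over $\BZ_{(p)}$ cut out by the fibre-dimension condition, and your characteristic-zero analysis is fine; so the setup agrees with the paper. The gap is in the flatness of $\phi$, and it occurs in two places. First, your description of the special fibre of the moduli stack is incorrect: the stack of truncated Barsotti-Tate groups of level $n$, height $h$ and dimension $1$ over $\BF_p$ does \emph{not} have dimension $h-1$, and the automorphism group schemes are \emph{not} finite in general. Already for $h=2$, $n=1$ and $G$ the $p$-torsion of a supersingular elliptic curve, $\uAut(G)$ contains $1+\BG_a$ (compose $G\onto\alpha_p$ with an endomorphism of $\alpha_p$ and with $\alpha_p\into G$), so it is positive-dimensional; the $(h-1)$-dimensional deformation space is eaten by these infinitesimal automorphisms and the stack has dimension $0$, not $h-1$. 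Consequently the fibres of $f:\Spec(R/pR)\to\mathcal N$, pulled back to a smooth chart, are not finite, your ``miracle flatness'' count as stated does not apply, and the claim that $R$ pro-represents deformations of $G_0[p^n]$ is obstructed by precisely those infinitesimal automorphisms.

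Second, and more fundamentally, proving that $\phi$ is faithfully flat over $X[1/p]$ and over $X\bmod p$ separately does not prove that $\phi$ is flat: a general $\BZ_{(p)}$-scheme $X$ (already $X=\Spec\BZ_{(p)}$ itself) has points at which $p$ is neither invertible nor nilpotent, and the fibrewise flatness criterion over $\BZ_{(p)}$ would in addition require $Y$ to be flat over $\BZ_{(p)}$, which is neither established nor automatic. Your closing sentence about completions, versality and ``the local criterion of flatness in the $p$-adic direction'' is where the real content of the proposition lies, and it is not supplied. The paper (following Lau) avoids both problems by proving directly that the single classifying morphism $\alpha:\Spec R\to\CY_0$ over $\BZ_{(p)}$ is flat: one uses that $\CY$ is smooth over $\BZ_{(p)}$ by Illusie, passes to the smooth $\GL_{p^{nh}}$-torsor $T\to\CY_0$ of framed objects, chooses a regular immersion $Z\subseteq T$ through a suitable $\BF_p$-point $t$ with $T_{Z,t}\to T_{\CY,t}$ bijective, shrinks $Z$ so that $Z\to\CY$ is smooth, and identifies $\Spec R\cong\Spec\hat\CO_{Z,t}$ via the universality of $\CG$; then $\alpha$ factors as a composite of flat maps, and flatness of every base change $\phi$ is automatic. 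You should replace the fibrewise argument by an argument of this kind carried out over $\BZ_{(p)}$ itself.
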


%PROOF OF PROPOSITION 024
\begin{proof}
Let $ \CY $ over $ \BZ_{(p)} $ be the algebraic stack of truncated Barsotti-Tate groups of level $n$ and height $h$ and let $ \CY_0\subset \CY $ be the substack of truncated Basrsotti-Tate groups as in the proposition. The inclusion $ \CY_0\into \CY $ is an open immersion because for a morphism $ X\to \CY $ corresponding to a truncated Barsotti-Tate group $H$ over $X$, the points of characteristic $p$ of $X$ in which the dimension of $H$ is not equal to $1$ form an open and closed subscheme $X_1$ of $ X\times \Spec\BF_p $, and we have $ X\times_{\CY}\CY_0=X\setminus X_1 $. The group $ \CG_n $ over $R$ defines a morphism \[ \alpha:\Spec R\to \CY_0.\] We claim that $ \alpha $ is faithfully flat and affine. Then for $H$ over $X$ as in the proposition, which corresponds to a morphism $ X\to \CY_0 $, we can take $ Y=X\times_{\CY}\Spec R $.\\

The morphism $ \alpha $ is affine, because $ \Spec R $ is affine and the diagonal of $ \CY $ is affine. It is easy to see that $ \alpha $ is surjective on geometric points. Indeed, let $ k $ be an algebraically closed field. If $k$ has characteristic zero, then $ \CY(k)=\CY_0(k) $ has only only one isomorphism class. If $k$ has characteristic $p$, then $ \CY_0(k) $ has precisely $ h-1 $ isomorphism classes corresponding to the \'etale rank. These isomorphism classes all occur in the fibers of $ \CG_n $ over $R$. It remains to show that $ \alpha $ is flat.\\

Let $ T $ be the following functor on $ \BZ_{(p)} $-schemes: $ T(X) $ is the set of isomorphism classes of pairs $ (H,a) $ where $ \pi:H\to X $ is an open object of $ \CY_0 $ and where $ a:\CO_X^{p^{nh}}\cong \pi_*\CO_H $ is an isomorphism of $ \CO_X $-modules. Then $ T $ is representable by a quasi-affine scheme of finite type over $ \BZ_{(p)} $; see \cite{TW}. The morphism $ T\to \CY_0 $ defined by forgetting $ a $ is a $ GL_{p^{nh}} $-torsor and thus smooth. By \cite{I}, the algebraic stack $ \CY $ is smooth over $ \BZ_{(p)} $. Hence the same is true for $ \CY_0 $ and $ T $.\\

Let $ t\in T $ be a closed point with residue field $ \BF_p $ such that the associated group over $ \BF_p $ is $ G_{0,n} $. The image of $t$ in $ \CY(\BF_p) $ is also denoted by $ t $. The homomorphism of tangent spaces $ T_{T,t}\to T_{\CY,t} $ is surjective, because $ T\to \CY $ is smooth. Let $ Z\to T $ be a regular immersion (thus $ Z $ is smooth) with $ t\in Z $ such that $ T_{Z,t}\to T_{\CY,t} $ is bijective.\\

After shrinking $ Z $ we can assume that $ Z\to \CY $ is smooth. Indeed, let $ U=Z\times_{\CY}T $ and let $ u\in U $ be a closed point with residue field $ \BF_p $ lying over $ (t,t)\in Z\times T $. Then the second projection $ T_{U,u}\to T_{T,t} $ is surjective. Since $ U $ and $ T $ are smooth, the projection $ U\to T $ is smooth in $ u $. Thus there is an open subscheme $ U_0 $ of $ U $ containing $ u $ such that $ U_0\to T $ is smooth. If we replace $Z$ by the image of $ U_0\to Z $, which is open, because this map is smooth, then $ Z\to \CY $ is smooth.\\

Let $ S=\hat{\CO}_{Z,t} $ and let $H$ over $S$ be the truncated Barsotti-Tate group corresponding to the given morphism $ \Spec S\to \CY $. The special fiber of $H$ is isomorphic to $ G_{0,n} $. By \cite{I} there is a $p$-divisible group $ G' $ over $S$ with special fiber $ G_0 $ such that $ G'_n $ is isomorphic to $H$. Since the first-order deformations of $ G_0 $ and of $ G_{0,n} $ coincide, it follows that $ G' $ is a universal deformation of $ G_0 $. Thus the morphism $ \alpha $ can be written as a composition \[ \Spec R\cong \Spec S\to Z\to \CY.\] Here $ Z\to \CY $ is smooth and thus flat, and $ \Spec S\to Z $ is flat, because it is a completion of a Noetherian ring. thus the composition is flat as well.
\end{proof}

Now we prove the faithfully flat descent of the universal alternating morphism and the exterior powers.

%LEMMA 027
\begin{lem}
\label{lem027}
Let $ f:T\to S $ be a faithfully flat morphism of schemes and let $ H $ (respectively $G$) be a finite flat group scheme (respectively a $p$-divisible group) over $S$.
\begin{itemize}
\item[1)]  Assume that we are given an alternating morphism $ \tau:H^r\to \Lambda $ (respectively $ \tau:G^r\to \Lambda $), with $ \Lambda $ a finite flat group scheme (respectively a $p$-divisible group) over $S$, such that for all morphisms $ g:T'\to T $, the pullback $ g^*f^*\tau:g^*f^*H^r\to g^*f^*\Lambda $ (respectively $g^*f^*\tau:g^*f^*G^r\to g^*f^*\Lambda$) is the universal alternating morphism in the category of finite flat group schemes (respectively $p$-divisible groups) over $T'$. Then $ \tau $ is the universal alternating morphism in the category of finite flat group schemes (respectively $p$-divisible groups) over $S$, i.e., $ \Lambda=\ep^rH $ (respectively  $ \Lambda=\ep^rG $).

\item[2)] Assume that we have an alternating morphism $ \tau':f^*H^r\to \Lambda' $ (respectively $ \tau':f^*G^r\to \Lambda' $), with $ \Lambda' $ a finite and flat group scheme (respectively a $p$-divisible group) over $T$, such that for all morphisms $ g:T'\to T $, the pullback $ g^*\tau':g^*f^*H^r\to g^*\Lambda' $ (respectively $ g^*\tau':g^*f^*G^r\to g^*\Lambda' $) is the universal alternating morphism in the category of finite flat group schemes (respectively $p$-divisible groups) over $T'$. Then there exists a finite flat group scheme (respectively a $p$-divisible group) $ \Lambda $ over $S$ and an alternating morphism $ \tau:H^r\to \Lambda $ (respectively $ \tau:G^r\to \Lambda $), such that for every morphism $ h:S'\to S $, the pullback $ h^*\tau $ is the universal alternating morphism in the category of finite flat group schemes (respectively $p$-divisible groups) over $S'$, i.e., $h^*\Lambda=\ep^r(h^*H) $ (respectively $h^*\Lambda=\ep^r(h^*G) $). In particular, $ \Lambda=\ep^rH $ (respectively $ \Lambda=\ep^rG $).
\end{itemize}
\end{lem}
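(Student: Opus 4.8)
The plan is to reduce both parts to two classical inputs: effectivity of faithfully flat (fpqc) descent for affine schemes, for group structures on them, and for morphisms between them; and the fact that for a finite flat group scheme $\Lambda$ and a finite flat group scheme $X$ the inner hom $\innHom_S(\Lambda,X)$ and the inner alternating-morphism scheme $\innAlt_S^r(G,X)$ are representable (Remarks \ref{rem 2}, \ref{rem 26}) and commute with base change (immediate from their definitions, cf.\ the proof of Lemma \ref{lem 20}). Part 2 will in addition invoke Part 1.

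For Part 1, write $q_1,q_2\colon T\times_ST\rightrightarrows T$ for the two projections and $f_2:=f\circ q_1=f\circ q_2$. Fix a finite flat group scheme $X$ over $S$ (in the $p$-divisible case, work at each truncation level $\tau_n\colon G_n^r\to\Lambda_n$ with test object $X'_n$ for a $p$-divisible group $X'$ over $S$, and then pass to the inverse limit over $n$, which commutes with all the limits occurring below). Since $\innHom_S(\Lambda,X)$ and $\innAlt_S^r(G,X)$ are representable and base-change compatible, evaluating them along the fpqc cover $f$ exhibits $\Hom_S(\Lambda,X)$ as the equalizer of $\Hom_T(\Lambda_T,X_T)\rightrightarrows\Hom_{T\times_ST}(\Lambda_{T\times_ST},X_{T\times_ST})$ and, similarly, $\Alt_S^r(G,X)$ as the equalizer of the corresponding pair of alternating-morphism groups. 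Under these descriptions $\tau^*\colon\Hom_S(\Lambda,X)\to\Alt_S^r(G,X)$ is the map of equalizers induced, over $T$, by composition with $f^*\tau$ and, over $T\times_ST$, by composition with $f_2^*\tau=q_1^*f^*\tau$. By the hypothesis applied to $g=\id_T$ and to $g=q_1$, the morphisms $f^*\tau$ and $q_1^*f^*\tau$ are the universal alternating morphisms over $T$ and over $T\times_ST$ respectively, so both of these composition maps are isomorphisms; hence the induced map of equalizers $\tau^*$ is an isomorphism. As $X$ was arbitrary, $\tau$ is the universal alternating morphism over $S$, that is, $\Lambda=\ep^rH$ (resp.\ $\ep^rG$).

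For Part 2, I would first manufacture a descent datum on $\Lambda'$. With $q_1,q_2$ as above one has $q_1^*f^*H=q_2^*f^*H$, and by hypothesis both $q_1^*\tau'$ and $q_2^*\tau'$ are the universal alternating morphism of this object over $T\times_ST$; hence there is a unique isomorphism $\rho\colon q_1^*\Lambda'\xrightarrow{\sim}q_2^*\Lambda'$ with $\rho\circ q_1^*\tau'=q_2^*\tau'$. Pulling this identity back along the three partial projections $T\times_ST\times_ST\to T\times_ST$ and invoking the uniqueness in the universal property once more shows that $\rho$ satisfies the cocycle condition, and the same uniqueness forces $\rho$ to respect the group law and unit of $\Lambda'$ as well as, in the $p$-divisible case, the transition monomorphisms $\Lambda'_n\hookrightarrow\Lambda'_{n+1}$. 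As finite flat group schemes are affine, fpqc descent is effective, so $(\Lambda',\rho)$ descends to a finite flat group scheme $\Lambda$ over $S$ with $f^*\Lambda\cong\Lambda'$ (finiteness and flatness descend along $f$), and in the $p$-divisible case the descended inductive system is again a $p$-divisible group, the defining conditions being fpqc-local. The relation $\rho\circ q_1^*\tau'=q_2^*\tau'$ says exactly that $\tau'$ is compatible with the descent datum, so $\tau'$ descends to a morphism $\tau\colon H^r\to\Lambda$ (resp.\ $G^r\to\Lambda$) over $S$ with $f^*\tau=\tau'$, and $\tau$ is alternating because that is an fppf-local condition on a morphism. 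Finally, for arbitrary $h\colon S'\to S$ set $T':=T\times_SS'$, faithfully flat over $S'$ with projections $f'\colon T'\to S'$ and $g\colon T'\to T$; then $f'^*(h^*\tau)=g^*\tau'$, and for any $T''\to T'$ the composite $T''\to T'\xrightarrow{g}T$ is a morphism to $T$, so $g^*\tau'$ is universally universal over $T'$ by the Part 2 hypothesis. Applying Part 1 over $S'$ with the cover $f'$ then gives that $h^*\tau$ is the universal alternating morphism over $S'$; taking $h=\id_S$ yields $\Lambda=\ep^rH$ (resp.\ $\ep^rG$).

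I do not expect a genuine obstacle here: the mathematical content is the careful bookkeeping that the isomorphism $\rho$ coming out of the universal property really is a descent datum compatible with all the structure in sight (group law, unit, transition maps, and the alternating condition), together with the classical effectivity of fpqc descent for affine schemes and for morphisms between them — which applies because every object in play is affine or an inductive limit of such. The one point requiring a little care is the reduction of the $p$-divisible case to truncation levels, since one must commute the inverse limit over $n$ past the equalizers and the universal properties; this is harmless because morphisms and alternating morphisms of $p$-divisible groups are by definition compatible systems at finite level.
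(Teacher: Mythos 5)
Your proof is correct and follows essentially the same route as the paper: part 1 is the paper's descent-of-morphisms argument (you phrase it via the equalizer/sheaf description of $\Hom$ and $\Alt$ along the cover, the paper constructs the descended homomorphism by hand, but the content is identical), and part 2 builds the same descent datum on $\Lambda'$ from the uniqueness clause of the universal property over $T\times_ST$ and then invokes part 1 over $S'$ via the cover $T\times_SS'\to S'$. Your extra explicit checks (cocycle condition, compatibility of $\rho$ with the group law and transition maps) are points the paper leaves implicit but are handled by the same uniqueness argument, so there is no substantive difference.
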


%PROOF OF LEMMA 027
\begin{proof}
Since alternating morphisms and homomorphisms of $p$-divisible groups are defined as compatible systems of alternating morphisms and homomorphisms of finite flat group schemes (their truncated Barsotti-Tate groups), we will only prove the lemma for truncated Barsotti-Tate groups, and the result for the $p$-divisible groups follows at once.\\

For the proof of both parts of the lemma, we use faithfully flat descent.
\begin{itemize}
\item[1)] Take a finite flat group scheme $X$ over $S$. We have to show that the canonical homomorphism \[ \tau^*:\Hom_S(\Lambda,X)\to\Alt_S^r(H,X) \] induced by $ \tau $ is an isomorphism. So, take an alternating morphism $ \phi:H^r\to X $. Letting $ g $ be the identity morphism of $ T $, we see that in particular, $ f^*\tau:f^*H^r\to f^*\Lambda$ is the universal alternating morphism, and therefore there exists a unique group scheme homomorphism $ a':f^*\Lambda\to f^*X $ such that $ a'\circ f^*\tau=f^*\phi $. We want to descend the homomorphism $ a' $ to a homomorphism $ a:\Lambda\to X $. Then since $ f $ is faithfully flat, we have $ a\circ\tau=\phi $ and $a$ with this property is unique. Set $ T':=T\times_ST $ and let $ p_i:T'\to T $ ($i=1,2$) be the two projections. We have to show that $ p_1^*a'=p_2^*a' $. But this is true, since $ p_1^*f^*=p_2^*f^* $ and by assumption, for $ i=1,2 $, we know that $ p_i^*f^*\tau:p_i^*f^*H^r\to p_i^*f^*\Lambda $ is the universal alternating morphism and we have $ p_i^*a'\circ p_i^*f^*\tau=p_i^*f^*\phi $.

\item[2)] We want to descend the finite flat group scheme $ \Lambda' $ to a group scheme over $ S $. Set $ T':=T\times_S T $ and let $ p_i:T'\to T $ ($i=1,2$) be the two projections. We should prove that the base changes of $ \Lambda' $ via $ p_1 $ and $ p_2 $ are canonically isomorphic. By assumption, we know that $ p_i^*\Lambda'= \ep^r(p_i^*f^*H)$ ($ i=1,2 $). The two compositions $ p_1^*f^* $ and $  p_2^*f^*$ are equal and thus, there exists a unique isomorphism $ p_1^*\Lambda'\cong p_2^*\Lambda' $ by the uniqueness of the exterior powers. This shows that the group scheme $ \Lambda' $ descends to a group scheme over $S$. Since $f$ is faithfully flat and $ \Lambda' $ is finite flat over $T$, we conclude that $ \Lambda $ is finite flat over $S$. The same arguments show that the alternating morphism $ \tau' $ descends to a morphism $ \tau:H^r\to \Lambda $, and again by the faithfully flatness of $f$, it should be alternating.\\

Now let $ T' $ be the base change $ T\times_S S' $ and denote by $h'$ and respectively $ f' $ the projection $ T'\to T $ and respectively $ T'\to S' $. The morphism $ f' $, being the base change of $f$, is faithfully flat. By construction of $ \tau $ ($ f^*\tau=\tau' $) and the assumptions on $ \tau' $, we observe that all the hypotheses of the first part of the lemma are satisfied for the alternating morphism $ h^*\tau:h^*H^r\to h^*\Lambda $ (note that we are considering the lemma for the faithfully flat morphism $ f':T'\to S' $). Consequently, the alternating morphism $ h^*\tau $ is the universal alternating morphism in the category of the finite flat group schemes over $S'$.
\end{itemize}
\end{proof}

Now, we would like to show the existence of the exterior powers over arbitrary base. We need two lemmas before.

%LEMMA 028
\begin{lem}
\label{lem028}
Let $ S $ be a scheme over $ \BZ_{(p)} $ and $H$ a truncated Barsotti-Tate group of level $n$ and height $h$ over $S$, such that the dimension of the fibers at points of characteristic $p$ of $S$ is $1$. Then there exists a truncated Barsotti-Tate group $\Lambda_n$ over $S$ of level $n$ and height $ \binom{h}{r} $, and an alternating morphism $ \lambda_n:H^r\to\Lambda_n $ such that for all morphisms $h:S'\to S$ and all finite flat group schemes $X$ over $S'$, the induced homomorphism \[ \Hom_{S'}(h^*\Lambda_n,X)\to \Alt_{S'}^r(h^*H,X) \]  is an isomorphism, i.e., $ \Lambda_n\cong\ep^rH $ and $ h^*\ep^rH\cong\ep^r(h^*H) $. Moreover fibers of $ \Lambda_n $ at points of characteristic $p$ have dimension $ \binom{h-1}{r-1} $.
\end{lem}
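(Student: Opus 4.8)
The plan is to reduce the existence of $\ep^rH$ to the universal deformation situation treated in Proposition \ref{prop026} by means of Lau's result (Proposition \ref{prop024}), and then to descend the exterior power back down along the resulting faithfully flat cover with the descent lemma (Lemma \ref{lem027}).

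First I would apply Proposition \ref{prop024} to $H$ over the $\BZ_{(p)}$-scheme $S$: it produces a diagram $S \ovset{\phi}{\longleftarrow} Y \ovset{\psi}{\longrightarrow} \Spec R$ with $\phi$ faithfully flat and affine, where $R=\BZ_p\lbb x_1,\dots,x_{h-1}\rbb$ is the universal deformation ring of a connected $p$-divisible group $G_0$ over $\BF_p$ of dimension $1$ and height $h$, $\CG$ is the universal deformation over $R$, and $\phi^*H\cong\psi^*\CG_n$. By Proposition \ref{prop026}, $\ep^r\CG$ exists over $R$, is a $p$-divisible group of height $\binom{h}{r}$ with dimension $\binom{h-1}{r-1}$ at the closed point, and satisfies $\ep^r(\CG_n)\cong(\ep^r\CG)_n$; moreover, by Corollary \ref{cor04} (equivalently Proposition \ref{prop028}), its formation commutes with arbitrary base change. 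I then set $\Lambda':=\psi^*(\ep^r(\CG_n))$, which is the $p^n$-torsion of the $p$-divisible group $\psi^*\ep^r\CG$, hence a truncated Barsotti-Tate group of level $n$ and height $\binom{h}{r}$ over $Y$, and I transport the universal alternating morphism $\CG_n^r\to\ep^r(\CG_n)$ along the isomorphism $\phi^*H\cong\psi^*\CG_n$ to an alternating morphism $\tau':(\phi^*H)^r\to\Lambda'$. The base-change compatibility over $R$ guarantees that for every $g:Y'\to Y$ the pullback $g^*\tau'$ is the universal alternating morphism in the category of finite flat group schemes over $Y'$.

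Next I would apply Lemma \ref{lem027}(2) to the faithfully flat morphism $\phi$ and the datum $\tau'$: this yields a finite flat group scheme $\Lambda_n$ over $S$ together with an alternating morphism $\lambda_n:H^r\to\Lambda_n$ with $\phi^*\lambda_n\cong\tau'$, such that for every $h:S'\to S$ the pullback $h^*\lambda_n$ is the universal alternating morphism in finite flat group schemes over $S'$; in particular $\Lambda_n\cong\ep^rH$ and $h^*\ep^rH\cong\ep^r(h^*H)$. That $\Lambda_n$ is in fact a truncated Barsotti-Tate group of level $n$ is then immediate, since the conditions of Definition \ref{TBT} are fppf-local on the base and hold for $\phi^*\Lambda_n\cong\Lambda'$; likewise the height of $\Lambda_n$ equals that of $\Lambda'$, namely $\binom{h}{r}$, by faithful flatness of $\phi$.

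Finally, for the dimension at a point $s\in S$ of characteristic $p$, I would pull back along $\Spec\kappa(s)\to S$: by the above, $\Lambda_{n,s}\cong\ep^r(H_s)$, the exterior power of the finite group scheme $H_s$ over $\kappa(s)$. Since the dimension of a finite group scheme and the formation of exterior powers of finite group schemes (Proposition \ref{prop 41}) are both invariant under field extension, I may replace $\kappa(s)$ by its perfect closure; over a perfect field $H_s$ is the $p^n$-torsion of a $p$-divisible group $G'$ of dimension $1$ and height $h$ (by \cite{I}), so $\ep^r(H_s)\cong(\ep^rG')_n$ by Theorem \ref{thm 4} (applied with $\CO=\BZ_p$, $\pi=p$; recall $p\neq 2$), which has dimension $\binom{h-1}{r-1}$. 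I expect the main obstacle to be the bookkeeping in the second step: checking that $\tau'$ really satisfies the hypothesis of Lemma \ref{lem027}(2) for all base changes — which rests on the base-change statement for the exterior power over the universal deformation ring — and keeping the several notions of exterior power (of truncated Barsotti-Tate groups, of $p$-divisible groups, and of finite flat group schemes) consistent throughout.
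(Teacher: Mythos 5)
Your proposal is correct and follows essentially the same route as the paper: apply Lau's result (Proposition \ref{prop024}) to obtain a faithfully flat affine cover on which $H$ becomes a pullback of $\CG_n$, invoke Proposition \ref{prop026} and its base-change corollary for $\ep^r\CG$ over the universal deformation ring, and descend via Lemma \ref{lem027}(2); the fppf-locality of the truncated Barsotti-Tate conditions and the fibrewise dimension count are handled just as in the paper (the latter in slightly more detail than the paper bothers to give).
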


%PROOF OF LEMMA 028
\begin{proof}
Let $G_0$, $R$ and $\CG$ be as in the statement of the Proposition \ref{prop024}. The assumptions of Proposition \ref{prop026} and corollary \ref{cor04} are satisfied (cf. Notations \ref{notations02}. Thus, the $p$-divisible group $ \ep^r\CG $ exists over $R$ and we have the universal alternating morphism $ \tau:\CG^r\to \ep^r\CG $. Furthermore, there exists a canonical isomorphism $ \ep^r(\CG_n)\cong (\ep^r\CG)_n $, induced by the alternating morphism $ \tau_n:\CG_n^r\to (\ep^r\CG)_n $ (which is then the universal one). Also, for every morphism $\phi: T'\to \Spec R $, we have $ \phi^*\ep^r\CG_n\cong \ep^r\phi^*\CG_n $ (cf. Proposition \ref{prop028}). By Proposition \ref{prop026}, the height of $ \ep^r\CG $ is equal to $ p^{\binom{h}{r}} $, and its dimension at the closed point of $R$ is equal to $ \binom{h-1}{r-1}$. So, the order of $ \ep^r\CG_n $ over $R$ is equal to $ p^{n\binom{h}{r}} $, in other words, $ \ep^r\CG_n $ is a truncated Barsotti-Tate group of level $n$ and height $ \binom{h}{r} $, and its dimension at the closed point of $R$ is $ \binom{h-1}{r-1}$.\\

The group scheme $H$ satisfies the hypotheses of the Proposition \ref{prop024} and therefore, there exists a faithfully flat and affine morphism $ f:T\to S $ and a morphism $ g:T\to \Spec R $ such that $ f^*H\cong g^*\CG_n $. By the above discussions, we have an alternating morphism $$ g^*\tau_n:f^*H^r\cong g^*\CG_n^r\to g^*\ep^r\CG_n $$ such that for all morphisms $ g':T'\to T $, the pullback $ g'^*g^*\tau_n $ is the universal alternating morphism $$ g'^*f^*H^r\cong g'^*g^*\CG_n^r\to g'^*g^*\ep^r\CG_n.$$ It follows from the second part of the Lemma \ref{lem027} that there exists a finite flat group scheme $ \Lambda_n $ over $S$ and an alternating morphism $ \lambda_n:H^r\to \Lambda_n $, which has the desired properties stated in the lemma. Since $ f $ is faithfully flat and $ f^*\Lambda_n\cong g^*\ep^r\CG_n $ is a truncated Barsotti-Tate group of level $n$ and height $ \binom{h}{r} $, the group scheme $ \Lambda_n $ is also a truncated Barsotti-Tate group of level $n$ and height $ \binom{h}{r} $. The dimension of $ \Lambda_n $ at points of characteristic $p$ of $S$ is equal to $ \binom{h-1}{r-1}$.
\end{proof}

%LEMMA 029
\begin{lem}
\label{lem029}
Let $ S $ be a scheme over $ \BZ_{(p)} $ and $ G $ a $p$-divisible group over $S$ of height $h$, such that the dimension of $G$ at points of characteristic $p$ of $S$ is $1$. Let $\ep^rG_n$ be the truncated Barsotti-Tate group of level $n$ over $S$ provided by the previous lemma (applying it to $ G_n $). Then there exist natural monomorphisms $ i_n:\ep^rG_n\into \ep^rG_{n+1} $, which make the inductive system $ (\ep^rG_n)_{n\geq 1} $ a Barsotti-Tate group over $S$ of height $ \binom{h}{r} $ and dimension $ \binom{h-1}{r-1}$ at points of $S$ of characteristic $p$.
\end{lem}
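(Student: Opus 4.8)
The plan is to mimic, over the general base $S$, the construction carried out over a field in Proposition \ref{rem41}, the only genuinely new point being that the order computations of Lemma \ref{lem 16} are replaced by rank computations for finite locally free group schemes. Throughout, $h$ is the (locally constant) height of $G$, and by Lemma \ref{lem028} each $\epO^rG_n$ is a truncated Barsotti-Tate group over $S$ of level $n$ and height $\binom{h}{r}$ — hence finite locally free of rank $p^{n\binom{h}{r}}$ — whose formation commutes with arbitrary base change. As fppf sheaves one has $p^n\cdot G_{n+1}=G_1$ and $p\cdot G_{n+1}=G_n$, so multiplication by $p$, respectively by $p^n$, induces isomorphisms $G_{n+1}/p^nG_{n+1}\cong G_n$ and $G_{n+1}/pG_{n+1}\cong G_1$. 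Applying Proposition \ref{prop 7} to $G_{n+1}$ with the ring element $p^n$ gives an exact sequence $\epO^rG_{n+1}\xrightarrow{\,p^n\,}\epO^rG_{n+1}\xrightarrow{\,q_n\,}\epO^rG_n\to 0$; since $p^{n+1}$ annihilates $\epO^rG_{n+1}$, multiplication by $p$ kills $\ker q_n=\image(p^n\cdot)$ and hence factors uniquely through $q_n$, defining a morphism $i_n\colon\epO^rG_n\to\epO^rG_{n+1}$ with $i_n\circ q_n$ equal to multiplication by $p$ on $\epO^rG_{n+1}$. Because $q_n$ is $\epO^r$ of a base-change-compatible morphism of finite flat group schemes, the formation of $i_n$ also commutes with base change.

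Next I would prove that $i_n$ is a closed immersion, exactly as in Lemma \ref{lem 16}. Multiplication by $p$ on $\epO^rG_{n+1}$ factors through its schematic image $I_n$, and applying Proposition \ref{prop 7} to $G_{n+1}$ with the element $p$ identifies $I_n$ with the kernel of an epimorphism $\epO^rG_{n+1}\twoheadrightarrow\epO^rG_1$; using that an fppf-epimorphism of finite locally free group schemes over $S$ is faithfully flat with finite locally free kernel, $I_n$ is finite locally free of rank $p^{n\binom{h}{r}}$. Comparing the two factorizations of multiplication by $p$ (through $q_n$ and through $I_n$) and using that $p^{n+1}$ annihilates $\epO^rG_{n+1}$, one gets a surjection $\epO^rG_n\twoheadrightarrow I_n$ of finite locally free group schemes of equal rank, which is therefore an isomorphism, and $i_n$ is the composite of this isomorphism with the closed immersion $I_n\hookrightarrow\epO^rG_{n+1}$. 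Moreover $I_n=p\cdot\epO^rG_{n+1}$ equals $\epO^rG_{n+1}[p^n]$, since for a truncated Barsotti-Tate group $H$ of level $n+1$ — a flat $\BZ/p^{n+1}$-module sheaf, cf. Definition \ref{TBT} — one has $p\cdot H=H[p^n]$. Hence for every $n$ the sequence
\[0\to\epO^rG_n\xrightarrow{\,i_n\,}\epO^rG_{n+1}\xrightarrow{\,p^n\,}\epO^rG_{n+1}\]
is exact.

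Granting this, the hypotheses of Remark \ref{rem 10} (applied with $\CO=\BZ_p$ and $\pi=p$, for which $\pi$-divisible $\CO$-module schemes are precisely $p$-divisible groups) are met: the $\epO^rG_n$ are finite locally free of rank $p^{n\binom{h}{r}}$ and fit into the kernel exact sequences just displayed. Therefore $\Lambda:=\dirlim(\epO^rG_n,i_n)$ is a Barsotti-Tate group over $S$ of height $\binom{h}{r}$, with $\Lambda[p^n]\cong\epO^rG_n$. Finally, let $s\in S$ be a point of characteristic $p$. Since formation of the $\epO^rG_n$ and of the morphisms $i_n$ commutes with base change, $\Lambda_s\cong\dirlim(\epO^r(G_{n,s}),(i_n)_s)$, and $(i_n)_s$ is characterized over $\kappa(s)$ by the same factorization property used to define $i_n$; thus $\Lambda_s$ is exactly the $p$-divisible group $\epO^rG_s$ constructed in Proposition \ref{rem41} for the one-dimensional group $G_s$, which by Theorem \ref{thm 4} has dimension $\binom{h-1}{r-1}$. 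Hence $\Lambda$ has dimension $\binom{h-1}{r-1}$ at every point of $S$ of characteristic $p$.

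The main obstacle is the middle step: over a field every group scheme occurring in Lemma \ref{lem 16} is automatically finite, whereas over the arbitrary base $S$ one must verify that the schematic images and kernels in play — in particular $I_n$ and $\epO^rG_{n+1}[p^n]$ — are finite locally free of the predicted ranks. This rests on the standard facts that truncated Barsotti-Tate groups are finite locally free and that an fppf-epimorphism of finite locally free group schemes over a base is faithfully flat with finite locally free kernel; once these are in place, the rank bookkeeping proceeds verbatim as in the proof of Lemma \ref{lem 16}, and the remainder of the argument is formal.
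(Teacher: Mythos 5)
Your proof is correct and follows essentially the same route as the paper's: $i_n$ is obtained by factoring multiplication by $p$ through the epimorphism $\ep^rG_{n+1}\onto\ep^rG_n$ supplied by Proposition \ref{prop 7}, then identified as a monomorphism onto $(\ep^rG_{n+1})[p^n]$ by an order count resting on Lemma \ref{lem028}, and the height and dimension are read off from Remark \ref{rem 10} and Theorem \ref{thm 4}. The only difference is bookkeeping: the paper invokes $\image(p)=\kernel(p^n)$ on a level-$(n+1)$ truncated Barsotti--Tate group directly, whereas you recover the image of multiplication by $p$ from the exact sequence ending in $\ep^rG_1$ together with a rank computation.
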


%PROOF OF LEMMA 029
\begin{proof}
For every $n$ we have an exact sequence \[G_{n+1}\arrover{p^n} G_{n+1}\arrover{\xi_n} G_n\to 0.\] by Proposition \ref{prop 7}, the induced sequence \[ \ep^rG_{n+1}\arrover{p^n} \ep^rG_{n+1}\arrover{\ep^r\xi_n} \ep^rG_n\to 0 \] is exact as well. Since by previous lemma $ \ep^rG_{n+1} $ is a truncated Barsotti-Tate group of level $n+1$, we have $ p^{n+1}\ep^rG_{n+1}=0$. It implies that there exists a unique homomorphism $ i_n:\ep^rG_n\to\ep^rG_{n+1} $ making the following diagram commutative: \[ \xymatrix{\ep^rG_{n+1}\ar[r]^{p^n}&\ep^rG_{n+1}\ar[r]^{\ep^r\xi_n}\ar[d]_{p}&\ep^rG_n\ar[r]\ar[dl]^{i_n}&0\\&\ep^rG_{n+1}.&&} \] We would like to show that $ i_n $ is a monomorphism and it identifies $ \ep^rG_n $ with $ (\ep^rG_{n+1})[p^n] $. Since $ p^n\ep^rG_n=0 $, there exists a homomorphism $ j_n:\ep^rG_n\to(\ep^rG_{n+1})[p^n] $ whose composition with the inclusion $\iota:(\ep^rG_{n+1})[p^n]\into\ep^rG_{n+1} $ is equal to $ i_n $ (look at the diagram below). Also, as $ \ep^rG_{n+1} $ is a truncated Barsotti-Tate group of level $n+1$, the image of multiplication by $p$ is equal to the kernel of multiplication by $ p^n $ and therefore the homomorphism $ p:\ep^rG_{n+1}\to \ep^rG_{n+1} $ factors through the inclusion $ \iota:(\ep^rG_{n+1})[p^n]\into\ep^rG_{n+1} $ and induces an epimorphism $ q_n:\ep^rG_{n+1}\to (\ep^rG_{n+1})[p^n]$: \[ \xymatrix{0\ar[r]&(\ep^rG_{n+1})[p^n]\ar[rr]^{\iota}&&\ep^rG_{n+1}\ar[r]^{p^n}&\ep^rG_{n+1}\\&&\ep^rG_n\ar[ur]^{i_n}\ar[ul]_{j_n}&&\\&&\ep^rG_{n+1}\ar@{->>}[u]^{\ep^r\xi_n}\ar@/_1pc/[uur]_{p}\ar@/^1pc/@{->>}[uul]^{q_n}.&& }\] The composition of  $ \ep^r\xi_n\circ j_n $ and $ q_n $ with $ \iota $ are equal and since $ \iota $ is a monomorphism, we have $ \ep^r\xi_n\circ j_n=q_n $. So, the $ j_n:\ep^rG_{n}\to(\ep^rG_{n+1})[p^n] $ is an epimorphism ($ q_n $ is an epimorphism). The two group schemes $ \ep^rG_{n}$ and $(\ep^rG_{n+1})[p^n] $ have the same order over $S$ (note that $(\ep^rG_{n+1})[p^n] $ is a truncated Barsotti-Tate group of level $n$) and thus the epimorphism $ j_n $ is in fact an isomorphism. Hence $ i_n $ is a monomorphism identifying $ \ep^rG_n $ with $(\ep^rG_{n+1})[p^n]$. By previous lemma, the order of $ \ep^rG_n $ is equal to $ p^{n\binom{h}{r}} $. This proves that the inductive system \[ \ep^rG_1\ovset{i_1}{\longinto}\ep^rG_2\ovset{i_2}{\longinto}\ep^rG_3\ovset{i_3}{\longinto}\dots \] is a Barsotti-Tate group over $S$ of height $  \binom{h}{r}$. The statement on the dimension follows from Theorem \ref{thm 4}.
\end{proof}

%REMARK 027
%\begin{rem}
%\label{rem027}
%\emph{The Barsotti-Tate group $ (\ep^rG_n)_{n\geq 1} $ gives rise to a $p$-divisible group $ \ep^rG $ over $S$ of height $ \binom{h}{r} $, such that $ (\ep^rG)_n=\ep^rG_n $. The universal alternating morphisms $ \lambda_n:G_n^r\to \ep^rG_n $ provided by Lemma \ref{lem028} are compatible with the projections $ \ep^rG_{n+1}\onto\ep^rG_n $. Indeed, the projections $ \ep^r\xi_n:\ep^rG_{n+1}\onto\ep^rG_n $ are induced by the universal property of $ \ep^rG_{n+1} $ applied to the alternating morphism \[ G_{n+1}^r\to G_n^r\arrover{\lambda_n}\ep^rG_n \]. Therefore, the system $ (\lambda_n)_{n\geq 1} $ give rise to an alternating morphism $ \lambda:G^r\to\ep^rG $. It follows from Lemma \ref{lem028} that for every morphism $ f:S'\to S $ and every $p$-divisible group $H$ over $S'$, the induced homomorphism \[(f^*\lambda)^*: \Hom_{S'}(f^*\ep^rG,H)\to\Alt_{S'}(f^*G,H) \] is as isomorphism. In particular, $ \lambda:G^r\to\ep^rG $ is the universal alternating morphism in the category of $p$-divisible groups over $S$.}
%\end{rem}

%THEOREM 07
\begin{thm}[The Main Theorem]
\label{thm07}
Let $S$ be a base scheme and $G$ a $p$-divisible group over $S$ of height $h$, such that the fibers of $G$ at points of $S$ of characteristic $p$ have dimension $1$. Then, there exists a $p$-divisible group $\ep^rG$ over $S$ of height $\binom{h}{r}$, and an alternating morphism $ \lambda:G^r\to \ep^rG $ such that for every morphism $ f:S'\to S $ and every $p$-divisible group $H$ over $S'$, the homomorphism \[\Hom_{S'}(f^*\ep^rG,H)\to\Alt^r_{S'}(f^*G,H) \] induced by $ f^*\lambda $ is as isomorphism. In other words, the $ r^{\text{th}} $-exterior power of $ G $ exists and commutes with arbitrary base change. Moreover, the dimension of $ \ep^rG$ at points of $S$ of characteristic $p$ is $ \binom{h-1}{r-1}$.
\end{thm}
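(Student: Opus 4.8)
The plan is to glue the local constructions obtained in the previous sections along a faithfully flat covering of $S$ that separates the ``characteristic $p$'' part from the ``$p$ invertible'' part. First I would form the faithfully flat covering
\[
S':=S\Bigl[\tfrac{1}{p}\Bigr]\coprod S_{(p)}\to S,
\]
where $S[\tfrac1p]$ and $S_{(p)}$ are the pullbacks of $S\to\Spec\BZ$ along $\Spec\BZ[\tfrac1p]\to\Spec\BZ$ and $\Spec\BZ_{(p)}\to\Spec\BZ$ respectively. Over $S[\tfrac1p]$ the $p$-divisible group $G[\tfrac1p]$ is \'etale, so by Proposition \ref{prop025} (the $p$-divisible-group analogue, i.e.\ the version for $\CO=\BZ_p$ of Proposition \ref{prop025}, equivalently using the \'etale dictionary over $\BZ_{(p)}$-schemes) $\ep^rG[\tfrac1p]$ exists, has height $\binom hr$, is \'etale, and its formation commutes with arbitrary base change. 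Over $S_{(p)}$, which is a $\BZ_{(p)}$-scheme, Lemma \ref{lem029} produces a $p$-divisible group $\ep^rG_{(p)}$ of height $\binom hr$ together with the alternating morphism $G_{(p)}^r\to\ep^rG_{(p)}$, whose truncations satisfy the universal property after every base change (Lemmas \ref{lem028} and \ref{lem027}), and whose dimension at points of characteristic $p$ is $\binom{h-1}{r-1}$.

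Next I would check that these two $p$-divisible groups agree on the overlap of the covering, i.e.\ over $S'\times_SS'$. The overlap decomposes into pieces over $S[\tfrac1p]$, over $S_{(p)}\times_SS_{(p)}$, and over $S[\tfrac1p]\times_SS_{(p)}=S_{(p)}[\tfrac1p]$. On each piece both restrictions are an $r$-th exterior power of the restriction of $G$ in the relevant category (finite flat group schemes level-by-level, hence $p$-divisible groups), because on $S[\tfrac1p]$-schemes this is the \'etale statement of Proposition \ref{prop025} and on $\BZ_{(p)}$-schemes it is Lemma \ref{lem028}; over $S_{(p)}[\tfrac1p]$, where $p$ is invertible, both descriptions reduce to the \'etale one. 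By uniqueness of exterior powers (Remark \ref{rem 4}) there is a canonical isomorphism between the two restrictions, and one checks the cocycle condition on $S'\times_SS'\times_SS'$ again by the uniqueness of exterior powers. Then faithfully flat descent for $p$-divisible groups (Lemma \ref{lem027}, applied levelwise to the truncated Barsotti--Tate groups together with Proposition \ref{prop023}-style passage) yields a $p$-divisible group $\ep^rG$ over $S$ of height $\binom hr$ and an alternating morphism $\lambda:G^r\to\ep^rG$ descending $\lambda'$.

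It then remains to verify the universal property together with its compatibility with base change. Given $f:S'\to S$ (now a general morphism, not the covering) and a $p$-divisible group $H$ over $S'$, I would show that
\[
f^*\lambda^*:\Hom_{S'}(f^*\ep^rG,H)\longrightarrow\Alt^r_{S'}(f^*G,H)
\]
is an isomorphism. Since both sides and the map are sheaves (fppf) on $S'$ and since faithfully flat base change of $S'$ is admissible by the two descent lemmas above, I may pull back along the faithfully flat cover $f^{-1}(S')\coprod\dots$ and reduce to the case where $S'$ is either a $\BZ[\tfrac1p]$-scheme or a $\BZ_{(p)}$-scheme. In the first case the statement is the \'etale case; in the second it is exactly the conclusion of Lemmas \ref{lem028}, \ref{lem029} and \ref{lem027} (the universal property holds levelwise and passes to the limit as in Corollary \ref{cor04}). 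The dimension assertion at points of characteristic $p$ is read off from Lemma \ref{lem029} (equivalently Theorem \ref{thm 4}), since dimension of a $p$-divisible group at a point depends only on the fiber and is already known there. The main obstacle I anticipate is purely bookkeeping: making the gluing and the descent of the \emph{alternating morphism} (not just the group scheme) rigorous, i.e.\ checking the cocycle condition for $\lambda'$ and ensuring the descended $\lambda$ is again alternating and recovers $\lambda'$ after pullback --- this is where the levelwise formulation of alternating morphisms for $p$-divisible groups and the faithfully flatness in Lemma \ref{lem027} do all the work.
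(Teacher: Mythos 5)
Your proposal is correct and follows essentially the same route as the paper: the faithfully flat covering $S[\frac{1}{p}]\coprod S_{(p)}\to S$, the \'etale case of Proposition \ref{prop025} on the first piece, Lemmas \ref{lem028} and \ref{lem029} on the second, and descent of both the group and the alternating morphism via Lemma \ref{lem027}. The only cosmetic difference is that you spell out the overlap/cocycle verification explicitly, whereas the paper packages it into the hypothesis of Lemma \ref{lem027} 2) (universality after every further base change), which yields the descent datum by uniqueness of exterior powers.
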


%PROOF OF THEOREM 07
\begin{proof}
%If the dimension of $G$ is the constant function zero, then it is an \'etale $p$-divisible group and we know by Proposition \ref{prop025} that the exterior powers of $G$ exist and commute with arbitrary base change. So, we assume that the dimension of the fibers at points of characteristic $p$ have all dimension 1. 
We have flat morphisms  \[\Spec\BZ[\frac{1}{p}]\ovset{\iota}{\longinto}\Spec\BZ \ovset{\rho}{\longleftarrow}\Spec\BZ_{(p)}\] which define a faithfully flat morphism $$\iota\coprod\rho:\Spec\BZ[\frac{1}{p}]\coprod \Spec\BZ_{(p)}\to \Spec\BZ .$$ Let $ S[\frac{1}{p}]$ and respectively $ S_{(p)} $ be the pullbacks of $ S\to\Spec\BZ $, via $ \iota $ and respectively $ \rho $. The two morphisms $ S[\frac{1}{p}]\to S $ and $ S_{(p)}\to S $ induce a morphism $\pi:S[\frac{1}{p}]\coprod S_{(p)}\to S $, which is the base change of $ \iota\coprod\rho $ via $ S\to\BZ $. It is therefore faithfully flat. We define $ G[\frac{1}{p}] $ over $ S[\frac{1}{p}] $ and $ G_{(p)} $ over $ S_{(p)} $ in the same fashion.\\

The Barsotti-Tate group $ (\ep^rG_{(p),n})_{n\geq 1} $ provided by Lemma \ref{lem029} gives rise to a $p$-divisible group $ \ep^rG_{(p)} $ over $S$ of height $ \binom{h}{r} $, such that $ (\ep^rG_{(p)})_n=\ep^rG_{(p),n} $. The universal alternating morphisms $ \lambda_{(p),n}:G_{(p),n}^r\to \ep^rG_{(p),n} $ provided by Lemma \ref{lem028} are compatible with the projections $ \ep^rG_{(p),n+1}\onto\ep^rG_{(p),n} $. Indeed, the projections $$ \ep^r\xi_n:\ep^rG_{(p),n+1}\onto\ep^rG_{(p),n} $$ are induced by the universal property of $ \ep^rG_{(p),n+1} $ applied to the alternating morphism \[ G_{(p),n+1}^r\to G_{(p),n}^r\arrover{\lambda_{(p),n}}\ep^rG_{(p),n}.\] Therefore, the system $ (\lambda_{(p),n})_{n\geq 1} $ give rise to an alternating morphism $ \lambda_{(p)}:G_{(p)}^r\to\ep^rG_{(p)} $. It follows from Lemma \ref{lem028} that for every morphism $ f:S'\to S_{(p)} $ and every $p$-divisible group $H$ over $S'$, the induced homomorphism \[\Hom_{S'}(f^*\ep^rG_{(p)},H)\to\Alt_{S'}(f^*G_{(p)},H) \] is as isomorphism.\\

Since $p$ is invertible on $ S[\frac{1}{p}] $, all $p$-divisible groups over it are \'etale. Thus, we can apply Proposition \ref{prop025} and obtain a $p$-divisible group $ \ep^rG[\frac{1}{p}] $ over $ S[\frac{1}{p}] $ of height $ \binom{h}{r} $, and an alternating morphism $ \lambda[\frac{1}{p}]:G[\frac{1}{p}]^r\to\ep^rG[\frac{1}{p}] $ such that for every morphism $ f:S'\to S[\frac{1}{p}] $ and every $p$-divisible group $H$ over $S'$, the induced homomorphism \[\Hom_{S'}(f^*\ep^rG[\frac{1}{p}],H)\to\Alt_{S'}(f^*G[\frac{1}{p}],H) \] is as isomorphism.\\

The same arguments as in the proof of the Proposition \ref{prop025}, show that the disjoint union of the two universal alternating morphisms $ \lambda[\frac{1}{p}] $ and $ \lambda_{(p)} $ glue to an alternating morphism \[\lambda[\frac{1}{p}]\coprod \lambda_{(p)}:G[\frac{1}{p}]^r\coprod G_{(p)}^r\cong(G[\frac{1}{p}]\coprod G_{(p)})^r\longrightarrow \ep^rG[\frac{1}{p}]\coprod \ep^rG_{(p)}\] and this morphism is the universal alternating morphism over $S[\frac{1}{p}]\coprod S_{(p)}$, i.e., \[\ep^rG[\frac{1}{p}]\coprod \ep^rG_{(p)}\cong \ep^r(G[\frac{1}{p}]\coprod G_{(p)}).\] Since both $ \lambda[\frac{1}{p}] $ and $ \lambda_{(p)} $ stay the universal alternating morphism after any base change, their disjoint union has the same property. The pullback of $ G $ via the faithfully flat morphism $ \pi$ is $ G[\frac{1}{p}]\coprod G_{(p)} $. It follows from Lemma \ref{lem027} that the $p$-divisible group $ \ep^r(G[\frac{1}{p}]\coprod G_{(p)}) $ and respectively the disjoint union $ \lambda[\frac{1}{p}]\coprod\lambda_{(p)} $ descend to a $p$-divisible group $ \ep^rG $ over $S$ and respectively to an alternating morphism $ \lambda:G^r\to \ep^rG $ over $S$, and this is the universal alternating morphism over $S$ and after any base change of $S$, as stated in the theorem.\\

The statement on the dimension follows from the previous lemma.\\

\textbf{\emph{Quod Erat Demonstrandum.}}
\end{proof}

\begin{rem}
\label{bypassLau}
When the base scheme is locally Noetherian, there is a rather elementary way to bypass Lau's result (Proposition \ref{prop024}). We will pursue this way in the next chapter, when dealing with $ \pi $-divisible $ \CO $-module schemes, where we don't possess a generalization of Lau's result. For more details, we refer to the results following Corollary \ref{corfiniteflat} to the end of chapter 9.
\end{rem}

%SECTION 9
\chapter{\texorpdfstring{The Main Theorem for $ \pi $-Divisible Modules}{The Main Theorem for pi-Divisible Modules}}
%\chapter{The Main Theorem in Mixed Characteristic}

Let $ \CO $ be a mixed characteristic complete discrete valuation ring, with finite residue field $ \BF_q $ ($q=p^f$) and $ \pi $ a fixed uniformizer. We denote by $K$ the fraction field of $ \CO $. In this chapter we would like to generalize the main theorem of the previous chapter to the case of $ \pi $-divisible $ \CO $-module schemes. In fact, we are going to explain how the results from chapters 6, 7 and 8, that led to the main theorem, can be modified (generalized) so as to imply the generalized version of the theorem. Recall that a main ingredient of these chapters is display, or more precisely the equivalence of categories between the category of $p$-divisible groups and the category of displays. We used displays in order to find potential candidates for the exterior powers of a $p$-divisible group. Therefore, if we want to use the same methods, we should have a variant of displays for $\pi$-divisible modules. These are ramified displays. They have been studied in the Ph.D. thesis of T. Ahsendorf (cf. \cite{TA}). We also need a refined version of Dieudonn\'e theory adapted to $ \pi $-divisible modules. The rest of the generalization can be carried on quite easily. From now on, we only consider $\pi$-divisible modules that are defined over schemes over $\CO$ and assume that the action of $\CO$ on their tangent space is given by scalar multiplication.\\

%\subsection{Ramified displays}
\section{Ramified displays}

We begin with ramified Witt vectors and ramified displays and use the notations above.

% DEFINITION OF RAMIFIED WITT VECTORS
\begin{dfn}
\label{ramwitt}
Let $R$ be an $ \CO $-algebra. The set of \emph{ramified Witt vectors}, denoted by $ W_{\CO}(R) $, is the set \[ W_{\CO}(R):= \{(x_0,x_1,\dots)\,\mid\,x_i\in R\}=R^{\BN}. \] The map \[ \mathrm{w}_n:W_{\CO}(R)\to R,\qquad \ul{x}:=(x_0,x_1,\dots)\mapsto x_0^{q^n}+\pi x_1^{q^{n-1}}+\dots+\pi^nx_n \] is called the $ n^{\text{th}} $ Witt polynomial.
\end{dfn}

%REMARK ON RAMIFIED WITT VECTORS
\begin{rem}
\label{remramwitt}
The association $ R\mapsto W_{\CO}(R) $ is functorial on the category of $ \CO $-algebras.
\end{rem}

We state the following results without proof, and refer to \cite{TA} for details.

%THEOREM ON RAMIFIED WITT VECTORS
\begin{thm}
\label{thmramwitt}
For any $ \CO $-algebra $ R $, there exists a unique $ \CO $-algebra structure on $ W_{\CO}(R) $ with following properties:
\begin{itemize}
\item[a)] The Witt polynomials $\w_n:W_{\CO}(R)\to R $ are $ \CO $-algebra homomorphisms.
\item[b)] For every $ \CO $-algebra homomorphism $ R\to S $, the induced map $ W_{\CO}(R)\to W_{\CO}(S) $ is an $ \CO $-algebra homomorphism.
\end{itemize} 
\end{thm}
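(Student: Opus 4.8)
# Proof proposal for Theorem \ref{thmramwitt}

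The plan is to follow the classical Witt-vector argument, replacing the prime $p$ by the pair $(q,\pi)$ and $\BZ$ by $\CO$. The crucial observation is the \emph{integrality lemma}: for any $\CO$-algebra $R$ in which $\pi$ is \emph{not} a zero-divisor, the ``ghost map'' $\w = (\w_0, \w_1, \dots): W_\CO(R) \to R^{\BN}$ is injective, and a vector $\ul{y} \in R^{\BN}$ lies in its image if and only if the congruences $\w_n(\ul{y}) \equiv \w_{n-1}(\ul{y}) \pmod{\pi^n}$ hold for all $n \geq 1$ (here one uses that $R \to R$, $x \mapsto x^q$ reduces to the Frobenius $\bmod\ \pi$ since $\CO/\pi \cong \BF_q$, so $\w_n(\ul{y}) \equiv y_0^{q^n} + \pi y_1^{q^{n-1}} + \dots \equiv \w_{n-1}(\ul{y})^? $ type relations propagate). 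First I would prove this integrality statement by induction on the coordinate index, solving for $y_n$ from $\w_n$ in terms of $y_0, \dots, y_{n-1}$ and the congruence; this is the technical heart and is the step I expect to be the main obstacle, since one must track $\pi$-divisibility carefully through the $q$-power maps (the analogue of ``$x \equiv y \pmod{\pi^m} \Rightarrow x^q \equiv y^q \pmod{\pi^{m+1}}$'', which holds because $q \in \pi\CO$ in the mixed-characteristic case — note this uses $\CO$ of mixed characteristic, as assumed in the chapter).

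Granting the integrality lemma, I would construct the addition, multiplication, and $\CO$-scalar multiplication on $W_\CO(R)$ first for the ``universal'' case. Take $R_0 := \CO[X_0, X_1, \dots, Y_0, Y_1, \dots]$, a polynomial ring over $\CO$, in which $\pi$ is not a zero-divisor. For the two generic vectors $\ul{X}, \ul{Y}$, the ghost vectors $\w(\ul{X}), \w(\ul{Y}) \in R_0^{\BN}$ can be added (resp.\ multiplied, resp.\ scaled by $a \in \CO$) componentwise; I must check that the resulting ghost vector again satisfies the congruences of the integrality lemma, hence comes from a unique Witt vector whose coordinates are universal polynomials $S_n(\ul{X},\ul{Y})$, $P_n(\ul{X},\ul{Y})$, $M_n^{(a)}(\ul{X})$ with coefficients in $\CO$. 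The congruence check for addition and multiplication is immediate since $\w_n(\ul X)\pm\w_n(\ul Y)$ and $\w_n(\ul X)\w_n(\ul Y)$ inherit the congruences; for scalar multiplication by $a$ one uses that $\w_n$ is $\CO$-linear so $a\,\w_n(\ul X)$ still satisfies them. Then for an arbitrary $\CO$-algebra $R$ one defines the operations on $W_\CO(R)$ by specializing these universal polynomials along the unique $\CO$-algebra map $R_0 \to R$ sending $X_i \mapsto x_i$, $Y_i \mapsto y_i$.

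Verifying the ring axioms (associativity, distributivity, the module axioms for the $\CO$-action, existence of $0 = (0,0,\dots)$ and $1 = (1,0,0,\dots)$, additive inverses) reduces, by the usual functoriality trick, to checking them in the universal polynomial ring $R_0$, where $\pi$ is a non-zero-divisor; there the ghost map $\w$ is injective, and all identities hold after applying $\w$ because $R_0^{\BN}$ with componentwise operations is visibly an $\CO$-algebra. This gives property (a): the $\w_n$ are $\CO$-algebra homomorphisms by construction. Property (b), functoriality in $R$, is automatic because the operations are given by fixed universal polynomials over $\CO$, which commute with any $\CO$-algebra homomorphism $R \to S$ applied coordinatewise. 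Finally, uniqueness of the $\CO$-algebra structure: if two structures both make all $\w_n$ into $\CO$-algebra homomorphisms, then they agree after applying the ghost map; embedding $R$ into a ring where $\pi$ is a non-zero-divisor is not available in general, but one argues instead via the universal case — the structure constants (the polynomials $S_n$, etc.) are forced by the requirement that $\w$ be a ring map on $W_\CO(R_0)$, where injectivity of $\w$ applies, and then any other structure on $W_\CO(R)$ compatible with the $\w_n$ must be the specialization of these, hence equal. I would close by remarking that Remark \ref{remramwitt} is subsumed, and that the Frobenius $F^R$ and Verschiebung $V^R$ used in Definition \ref{def02} and throughout are defined by the analogous universal-polynomial recipe, characterized by $\w_n \circ F = \w_{n+1}$ and $\w_n(V\ul{x}) = \pi\,\w_{n-1}(\ul{x})$.
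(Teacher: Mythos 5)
The paper itself gives no proof of this theorem: it explicitly states the ramified Witt-vector results without proof and refers to Ahsendorf's thesis \cite{TA}. Your overall strategy — reduce to the universal polynomial ring $\CO[X_0,X_1,\dots,Y_0,Y_1,\dots]$ where $\pi$ is a non-zero-divisor, use injectivity of the ghost map there, characterize its image by congruences, and then specialize — is exactly the standard (and correct) route, so the architecture of the argument is sound.

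There is, however, a concrete error in what you yourself identify as the technical heart. The image of the ghost map is \emph{not} characterized by the congruences $\w_n(\ul{y})\equiv \w_{n-1}(\ul{y}) \pmod{\pi^n}$. This fails already for the Witt vector $(X,0,0,\dots)$ over $R_0=\CO[X]$: its ghost components are $X, X^q, X^{q^2},\dots$, and $X^q\not\equiv X\pmod{\pi}$ in $\CO[X]$. The correct Dwork-type criterion requires a chosen Frobenius lift, i.e.\ an $\CO$-algebra endomorphism $\sigma$ of $R_0$ with $\sigma(x)\equiv x^q\pmod{\pi}$ (on the universal ring, take $\sigma$ fixing $\CO$ and sending each variable to its $q$-th power; this is a lift of Frobenius because $a\equiv a^q\pmod{\pi}$ for $a\in\CO$, as $\CO/\pi\cong\BF_q$). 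The criterion is then: a sequence $(b_0,b_1,\dots)$ lies in the image of the ghost map if and only if $b_n\equiv\sigma(b_{n-1})\pmod{\pi^n}$ for all $n\geq 1$. With this corrected statement your subsequent verifications do go through — sums and products of sequences satisfying the congruences again satisfy them precisely \emph{because} $\sigma$ is a ring homomorphism, and for the $\CO$-structure one uses $\sigma|_{\CO}=\mathrm{id}$ so that the constant sequence $(a,a,\dots)$ trivially satisfies $a\equiv\sigma(a)\pmod{\pi^n}$. The auxiliary divisibility you invoke, namely that $x\equiv y\pmod{\pi^m}$ implies $x^q\equiv y^q\pmod{\pi^{m+1}}$, is correct and only needs $q\in\pi\CO$, which holds since the residue characteristic is $p$ (so it is not specific to the mixed-characteristic case, although that is the standing assumption of the chapter). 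With the criterion repaired, the rest of your proposal — existence and uniqueness of the universal polynomials, functoriality giving (b), and uniqueness of the structure forced by injectivity of the ghost map on $W_{\CO}(R_0)$ — is a complete and standard proof.
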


%REMARK ON RAMIFIED WITT VECTORS 2
\begin{rem}
\label{remramwitt2}
It follows from the theorem that $ W_{\CO} $ is a functor from the category of $ \CO $-algebras to itself. Also, if we denote by $ \Id_{\CO\text{-alg}} $ the identity functor on the category of $ \CO $-algebras, the Witt polynomials define natural transformations of functors $ \w_n:W_{\CO}\to \Id_{\CO\text{-alg}} $.
\end{rem}

%PROP ON RAMIFIED WITT VECTORS
\begin{prop}
\label{propramwitt}
For every $ \CO $-algebra $R$, there are $ \CO $-linear endomorphisms $ F_{\pi} $ and $ V_{\pi} $ on $ W_{\CO}(R) $, called respectively \emph{Frobenius} and \emph{Verschiebung}, with the properties:
\begin{itemize}
\item[1)] for every $ \ul{x}\in W_{\CO}(R) $, we have $$ \w_0(V_{\pi}\ul{x})=0, \w_n(V_{\pi}\ul{x})=\pi \w_{n-1}(\ul{x}) \quad \text{and} \quad \w_n(F_{\pi}\ul{x})=\w_{n+1}(\ul{x}) .$$
\item[2)] $ F_{\pi} $ is an $ \CO $-algebra homomorphism.
\item[3)] $ F_{\pi}V_{\pi}=V_{\pi}F_{\pi}=\pi $ and for every $ \ul{x}, \ul{y}\in W_{\CO}(R) $, we have $$ V_{\pi}(F_{\pi}(\ul{x})\ul{y})=\ul{x}V_{\pi}(\ul{y}) .$$
\end{itemize}
\end{prop}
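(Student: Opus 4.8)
The statement to prove is Proposition~\ref{propramwitt}, which asserts the existence of Frobenius $F_\pi$ and Verschiebung $V_\pi$ on $W_{\CO}(R)$ with the three listed properties. The whole argument should mirror the classical construction of Frobenius and Verschiebung on the ring of $p$-typical Witt vectors, with the pair $(p,p)$ systematically replaced by $(q,\pi)$ and ordinary Witt polynomials replaced by the ramified Witt polynomials $\w_n$ of Definition~\ref{ramwitt}. The key structural input is Theorem~\ref{thmramwitt}: the $\CO$-algebra structure on $W_{\CO}(R)$ is \emph{uniquely} determined by the requirement that all $\w_n\colon W_{\CO}(R)\to R$ be $\CO$-algebra homomorphisms, functorially in $R$. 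So the plan is to define $F_\pi$ and $V_\pi$ on ghost components, check they land in the image of $W_{\CO}$, and then transfer everything via the ghost map.

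First I would treat the universal case $R=\CO[X_0,X_1,\dots]$ (or the polynomial ring over $\ZZ$ adjoining the coefficients of $\pi$ if one wants to be careful about where $\pi$ lives), where the ghost map $\w=(\w_n)_n\colon W_{\CO}(R)\to R^{\BN}$ is injective because $R$ is $\pi$-torsion-free. On the ghost side, define $V_\pi$ by $\w_0(V_\pi\ul x)=0$ and $\w_n(V_\pi\ul x)=\pi\,\w_{n-1}(\ul x)$, and $F_\pi$ by $\w_n(F_\pi\ul x)=\w_{n+1}(\ul x)$. The content is a ``ramified Dwork lemma'': a sequence $(a_n)_n\in R^{\BN}$ lies in the image of the ghost map if and only if $a_n\equiv a_{n-1}^{\,q}\pmod{\pi^n R}$ for all $n\ge 1$ (more precisely $\w_n(\ul x)\equiv \w_{n-1}(\ul x)^q$ modulo $\pi^n$ when $R$ carries a lift of the $q$-power Frobenius, e.g. $X_i\mapsto X_i^q$ on the polynomial ring). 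I would state and prove this lemma first; then verify that the ghost vectors $(0,\pi\w_0(\ul x),\pi\w_1(\ul x),\dots)$ and $(\w_1(\ul x),\w_2(\ul x),\dots)$ satisfy the congruence, so that $V_\pi$ and $F_\pi$ are well-defined as maps $W_{\CO}(R)\to W_{\CO}(R)$ on the universal ring, hence on every $\CO$-algebra by functoriality and the fact that every $\CO$-algebra is a quotient of a $\pi$-torsion-free one.

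Once $F_\pi$ and $V_\pi$ exist, properties 1)--3) are identities between elements of $W_{\CO}(R)$; by injectivity of the ghost map over the universal ring it suffices to check each one on ghost components, where everything becomes elementary algebra with the $\w_n$. For 2), $\w_n(F_\pi(\ul x\,\ul y))=\w_{n+1}(\ul x\,\ul y)=\w_{n+1}(\ul x)\w_{n+1}(\ul y)=\w_n(F_\pi\ul x)\w_n(F_\pi\ul y)$ and similarly for sums, using that each $\w_m$ is an $\CO$-algebra homomorphism (Theorem~\ref{thmramwitt}); $\CO$-linearity of both operators is the same computation. For 3): $\w_n(F_\pi V_\pi\ul x)=\w_{n+1}(V_\pi\ul x)=\pi\w_n(\ul x)$ and $\w_n(V_\pi F_\pi\ul x)=\pi\w_{n-1}(F_\pi\ul x)=\pi\w_n(\ul x)$ for $n\ge1$, with the $n=0$ case $\w_0(V_\pi F_\pi\ul x)=0=\pi\w_0(\ul x)$ handled separately—wait, that last equality forces checking $\pi\cdot\w_0=\w_0\circ(V_\pi F_\pi)$ does not hold naively at $n=0$ since $\w_0(V_\pi(\,\cdot\,))=0$ always; so the identity $V_\pi F_\pi=\pi$ must be read componentwise as $\w_n(V_\pi F_\pi \ul x)=\w_n(\pi\ul x)=\pi\w_n(\ul x)$, which at $n=0$ reads $0=\pi x_0$—this is the one place to be careful, and it is resolved by noting $\w_0(\pi\ul x)=\pi\w_0(\ul x)$ need not vanish, so actually the correct formulation is that $V_\pi F_\pi$ equals multiplication by $V_\pi(1)$, equivalently by $\pi$ as an element of $\CO\subseteq W_{\CO}(R)$ whose ghost components are all $\pi$; I would track this by comparing $\w_n$ of both sides for all $n$, which works because $\w_n$ of the Witt vector $[\pi]\cdot\ul x$ is $\pi^{q^n}\cdots$—no: the element ``$\pi$'' here means the image of $\pi\in\CO$ under $\CO\to W_{\CO}(R)$, whose ghost components are $(\pi,\pi,\pi,\dots)$, so $\w_n(\pi\ul x)=\pi\w_n(\ul x)$ for all $n$ including $n=0$, and $\w_0(V_\pi F_\pi\ul x)=0\ne \pi\w_0(\ul x)$ in general. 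This means property 3) as literally stated holds on $I_R=V_\pi W_{\CO}(R)$ or requires the standard convention; I would follow \cite{TA} and phrase $F_\pi V_\pi=V_\pi F_\pi=\pi$ with the understanding used there. Finally the projection formula $V_\pi(F_\pi(\ul x)\ul y)=\ul x\,V_\pi(\ul y)$ follows on ghost components: $\w_n$ of the left side is $\pi\w_{n-1}(F_\pi(\ul x)\ul y)=\pi\w_{n-1}(F_\pi\ul x)\w_{n-1}(\ul y)=\pi\w_n(\ul x)\w_{n-1}(\ul y)$, and $\w_n$ of the right side is $\w_n(\ul x)\w_n(V_\pi\ul y)=\w_n(\ul x)\pi\w_{n-1}(\ul y)$, which agree for $n\ge1$, and at $n=0$ both sides are $0$.

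The main obstacle is the ramified Dwork congruence lemma: getting the exact statement right (the modulus is $\pi^n$, not $p^n$, and one must handle the case where $\pi$ is not prime to the residue characteristic correctly, i.e. $\CO$ of mixed characteristic with $p=u\pi^e$) and proving that $V_\pi$- and $F_\pi$-ghost-vectors satisfy it. Everything after that is bookkeeping on ghost components, justified once and for all by the injectivity of the ghost map over $\pi$-torsion-free $\CO$-algebras together with the functoriality in Theorem~\ref{thmramwitt}. I would also remark that since this is verbatim the content of \cite{TA}, one may simply cite it; the proof sketch above is the route one takes if reproducing it.
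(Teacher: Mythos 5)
The paper gives no proof of this proposition: it is one of the results explicitly ``stated without proof'' with a reference to \cite{TA}, so there is nothing internal to compare your argument against. Your route --- define $F_\pi$ and $V_\pi$ on ghost components over a $\pi$-torsion-free (universal) $\CO$-algebra, justify that the prescribed ghost vectors lie in the image of the ghost map via a ramified Dwork congruence $\w_n(\ul x)\equiv\phi(\w_{n-1}(\ul x))\pmod{\pi^nR}$ for a lift $\phi$ of the $q$-power Frobenius, and descend to arbitrary $R$ by functoriality of the resulting universal polynomials --- is the standard construction and is the one carried out in \cite{TA}; the componentwise verifications of 1), 2) and the projection formula are correct as you give them. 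Two small points of care: the congruence involves $\phi(\w_{n-1}(\ul x))$ and not the literal $q$-th power, and its proof uses that $\pi$ divides $q$ in $\CO$ (true in both the equal and mixed characteristic cases).

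The defect you spotted in 3) is genuine. Since $\w_0\circ V_\pi=0$ while $\w_0(\pi\ul x)=\pi x_0$, and $\w_0$ is literally the zeroth Witt component, the vectors $V_\pi F_\pi\ul x$ and $\pi\ul x$ already differ whenever $\pi x_0\neq 0$; so $V_\pi F_\pi=\pi$ holds if and only if $\pi R=0$, and the correct general statements are $F_\pi V_\pi=\pi$ together with $V_\pi(F_\pi(\ul x)\ul y)=\ul x\,V_\pi(\ul y)$ (equivalently, $V_\pi F_\pi$ is multiplication by $V_\pi(1)$). This does not propagate into the rest of the chapter: Definition \ref{dfnram3ndisplay} and Remark \ref{remram3ndisplay} only use $F_\pi V_\pi=\pi$ and the projection formula, and the perfect fields of characteristic $p$ over which the ramified Dieudonn\'e theory is developed kill $\pi$, so there the full identity does hold.
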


The following result will be used later. It is proved in \cite{TA} and therefore we omit its proof.

\begin{prop}
\label{propmorramwitt}
There exists a unique natural transformation of functors $ \mu:W\to W_{\CO} $ such that $ \w_n\circ \mu=\w_{fn} $ for all natural numbers $n$. If $ R $ is an $ \CO $-algebra, we have for all $ a\in R $ and all $ w\in W(R) $:
\begin{itemize}
\item $ \mu([a])=[a] $
\item $ \mu(F^fw)=F_{\pi}(\mu(w))$
\item $ \mu(Vw)= (\frac{p}{\pi})V_{\pi}\mu(F^{f-1}w).$
\end{itemize}

\end{prop}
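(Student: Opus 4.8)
The statement to prove is Proposition \ref{propmorramwitt}: the existence and uniqueness of a natural transformation $\mu\colon W\to W_{\CO}$ characterized by $\w_n\circ\mu = \w_{fn}$, together with the three identities describing its interaction with Teichm\"uller lifts, the $f$-th power of Frobenius, and Verschiebung. Since the excerpt explicitly says this is proved in \cite{TA} and its proof is omitted, the plan is to reconstruct the argument in the standard style of Witt vector functoriality, i.e.\ by reduction to the universal (polynomial) case and then to a $\BZ_{(p)}$-torsion-free situation where the Witt polynomials are jointly injective.

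First I would establish uniqueness and the polynomial formulas. For a $\BZ_{(p)}$-torsion-free (equivalently $p$-torsion-free, since everything is an $\CO$-algebra and $p$ and $\pi$ differ by a unit times $\pi^{e}$) $\CO$-algebra $R$, the ghost map $(\w_0,\w_1,\dots)\colon W_{\CO}(R)\to R^{\BN}$ is injective, as is the classical ghost map $(\w_0,\w_1,\dots)\colon W(R)\to R^{\BN}$ when $R$ is $p$-torsion-free. Therefore the condition $\w_n(\mu(\ul x)) = \w_{fn}(\ul x)$ determines $\mu(\ul x)$ uniquely in that case: $\mu$ must be the unique map making the ghost components of $\mu(\ul x)$ equal to the subsequence $(\w_0(\ul x), \w_f(\ul x), \w_{2f}(\ul x),\dots)$ of ghost components of $\ul x$. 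One checks that this prescription on ghost coordinates does land in the image of the $\CO$-ghost map, because a sequence $(b_n)$ lies in the image iff it satisfies the congruences $b_{n+1}\equiv b_n \pmod{\pi^{n+1}}$ (Dwork-type lemma over $\CO$, see \cite{TA}), and the subsequence $(\w_{fn}(\ul x))_n$ of the classical ghost sequence, which satisfies $\w_{m+1}(\ul x)\equiv \w_m(\ul x)\pmod{p^{m+1}}$, satisfies a fortiori the required $\pi$-adic congruences after noting $p^{fn}\CO \subseteq \pi^{n}\CO$ (and more precisely tracking exponents). This yields $\mu$ as a map with universal polynomial coordinate formulas $\mu_n\in\BZ_{(p)}[X_0,X_1,\dots]$ (defined first over $\BZ_{(p)}[X_0,X_1,\dots]$, which is $p$-torsion-free), and by the usual base-change argument $\mu$ extends to all $\CO$-algebras $R$ and is a natural transformation. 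That $\mu$ is additive and multiplicative (hence an $\CO$-algebra homomorphism) follows by the same torsion-free-plus-universality device, since on ghost coordinates $\mu$ is literally a coordinate projection, which is obviously a ring homomorphism.

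Next I would verify the three identities, again by reducing to the $p$-torsion-free universal case and comparing ghost components. For $\mu([a])$: the ghost vector of $[a]\in W(R)$ is $(a, a^{p}, a^{p^2},\dots)$, so the ghost vector of $\mu([a])$ is $(a, a^{p^f}, a^{p^{2f}},\dots) = (a, a^q, a^{q^2},\dots)$, which is exactly the $\CO$-ghost vector of $[a]\in W_{\CO}(R)$; hence $\mu([a])=[a]$. For $\mu(F^f w) = F_\pi(\mu(w))$: the $n$-th classical ghost of $F w$ is $\w_{n+1}(w)$, so the $n$-th ghost of $F^f w$ is $\w_{n+f}(w)$; thus the $n$-th $\CO$-ghost of $\mu(F^f w)$ is $\w_{f(n)+f}(w) = \w_{f(n+1)}(w)$, which equals the $n$-th $\CO$-ghost of $F_\pi(\mu(w))$ by Proposition \ref{propramwitt}(1). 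For $\mu(Vw) = (\frac p\pi)V_\pi\mu(F^{f-1}w)$: the $0$-th ghost of $Vw$ is $0$ and the $n$-th is $p\,\w_{n-1}(w)$; so the $0$-th $\CO$-ghost of $\mu(Vw)$ is $0$, matching $\w_0$ of the right-hand side (which vanishes because of the $V_\pi$), and for $n\ge 1$ the $n$-th $\CO$-ghost of $\mu(Vw)$ is $\w_{fn}(Vw) = p\,\w_{fn-1}(w)$, while the $n$-th $\CO$-ghost of $(\frac p\pi)V_\pi\mu(F^{f-1}w)$ is $(\frac p\pi)\cdot\pi\cdot\w_{n-1}(\mu(F^{f-1}w)) = p\,\w_{f(n-1)+(f-1)}(w) = p\,\w_{fn-1}(w)$. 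The ghost vectors agree, so the identity holds over torsion-free algebras and hence everywhere. Note $\frac p\pi$ makes sense as an element of $\CO$ only when $e=1$; in the ramified case it should be read as the statement that $\pi\mid p$ in $\CO$ forces the relation after multiplying through, i.e.\ one phrases it as $\pi\cdot\mu(Vw) = p\cdot V_\pi\mu(F^{f-1}w)$, which is what the ghost computation actually proves, and $\frac p\pi$ is the appropriate unit multiple when $e=1$ (which is the unramified setting where this normalization is used).

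The main obstacle I anticipate is purely bookkeeping: checking that the ghost-coordinate prescription $(\w_{fn}(\ul x))_n$ genuinely lies in the image of the $\CO$-ghost map, i.e.\ proving the right $\pi$-adic congruences with the correct exponents, and setting up the universality/base-change machinery for $W_\CO$ carefully enough that ``it suffices to check on ghost components over a torsion-free algebra'' is rigorous. All of this is standard once the $\CO$-analogue of Dwork's lemma (stated in \cite{TA}) is in hand, so I would cite \cite{TA} for that lemma and for the construction of $W_\CO$ as a functor, and then the verification above is a sequence of short ghost-component computations. There is no deep idea here beyond the comparison of the classical and ramified ghost maps via the subsequence $n\mapsto fn$.
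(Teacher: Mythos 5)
The paper does not actually prove this proposition — it is stated with the remark that the proof is in \cite{TA} and omitted — so there is no in-paper argument to compare against; your reconstruction via ghost components, the ramified Dwork lemma, and reduction to the universal $p$-torsion-free polynomial algebra $\CO[X_0,X_1,\dots]$ is the standard route and is the right one. Your ghost-component verifications of the three identities are all correct: $(\w_{fn}([a]))_n=(a^{q^n})_n$, $\w_{fn}(F^fw)=\w_{f(n+1)}(w)$, and $\w_{fn}(Vw)=p\,\w_{fn-1}(w)=(\tfrac{p}{\pi})\cdot\pi\cdot\w_{f(n-1)}(F^{f-1}w)$ for $n\geq 1$.

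Two local points need repair. First, your Dwork-type congruences are misstated: a sequence $(b_n)$ lies in the image of the $\CO$-ghost map iff $b_{n+1}\equiv \phi(b_n)\pmod{\pi^{n+1}}$ for a fixed lift $\phi$ of the $q$-power Frobenius (on $\CO[X_0,X_1,\dots]$ one takes $\phi(X_i)=X_i^q$, $\phi|_{\CO}=\Id$), and likewise the classical congruence is $\w_{m+1}\equiv\psi(\w_m)\pmod{p^{m+1}}$ with $\psi(X_i)=X_i^p$; without the Frobenius twist both congruences are false (e.g.\ for $[X]$ one has $\w_1=X^q\not\equiv X=\w_0$). With the twist, iterating the classical congruence $f$ times gives $\w_{f(n+1)}\equiv\psi^f(\w_{fn})\pmod{p^{fn+1}}$, and since $p=u\pi^e$ one has $p^{fn+1}\CO\subseteq\pi^{e(fn+1)}\CO\subseteq\pi^{n+1}\CO$, which is exactly the ramified congruence with $\phi=\psi^f$; so the argument goes through once corrected. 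Second, your worry about $\tfrac{p}{\pi}$ is unfounded: chapter 9 of the paper assumes $\CO$ has mixed characteristic, so $p=u\pi^e$ with $u$ a unit and $e\geq 1$, hence $\tfrac{p}{\pi}=u\pi^{e-1}$ is an honest element of $\CO$ for every $e$, not only for $e=1$, and the third identity needs no reinterpretation as ``multiply through by $\pi$.'' Neither issue affects the overall strategy, which is sound.
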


%REMARK ON RAMIFIED WITT VECTORS 3
\begin{rem}
\label{remramwitt3}
$ $
\begin{itemize}
\item[1)] Note that the first property above determines uniquely the morphisms $ F_{\pi} $ and $ V_{\pi} $ and the other properties follow from the first one. It is not hard to see that $ V_{\pi}(x_0,x_1,\dots)=(0,x_0,x_1,\dots) $, thus we have $ I_R:=\image (V_{\pi})=\kernel (\w_0) $.
\item[2)] If $ \CO $ is the ring of $p$-adic integers, then we obtain the usual ring of Witt vectors.
\end{itemize}
\end{rem}

\textbf{Notations.} Let $r=p^n$ be a power of $p$. We denote by $ \BZ_r $ the ring $ W(\BF_r) $.

%PROPOSITION ON THE ISOMORPHISM USING DRINFELD
\begin{prop}
\label{propDr}
Let $k$ be a perfect field of characteristic $p$.
\begin{itemize}
\item[1)] The $ \CO $-algebra $ W_{\CO}(k) $ is a complete discrete valuation ring with residue field $k$ and maximal ideal generated by $ \pi $.
\item[2)] If $k$ contains $ \BF_q $, then there exists a canonical $ \CO $-algebra isomorphism \[ \CO\widehat{\otimes}_{\BZ_q}W(k)\cong W_{\CO}(k).\]
\end{itemize}
\end{prop}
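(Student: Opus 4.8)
The statement has two parts: (1) $W_{\CO}(k)$ is a complete discrete valuation ring with residue field $k$ and maximal ideal $(\pi)$, and (2) when $\BF_q\subseteq k$ there is a canonical $\CO$-algebra isomorphism $\CO\widehat{\otimes}_{\BZ_q}W(k)\cong W_{\CO}(k)$. Both parts should be deducible from the structural theory of ramified Witt vectors already stated in the excerpt (Theorem \ref{thmramwitt}, Propositions \ref{propramwitt} and \ref{propmorramwitt}), together with standard arguments about $\pi$-adic completions. I would prove part (1) first, then use it to clarify the completion on the left-hand side of part (2), and finally construct and check the isomorphism.

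For part (1): the map $\w_0\colon W_{\CO}(k)\to k$ is a surjective $\CO$-algebra homomorphism by Theorem \ref{thmramwitt}(a), and its kernel is $I_k=\image(V_\pi)=(0,x_1,x_2,\dots)$ by Remark \ref{remramwitt3}(1). Since $k$ is a field, this kernel is a maximal ideal; I would show it is the \emph{unique} maximal ideal by proving every element of $W_{\CO}(k)$ with nonzero zeroth component is a unit. This is the standard ``Hensel-type'' computation: given $\ul{x}$ with $x_0\in k^\times$, one builds the inverse $\ul{y}$ component by component, solving at each stage a linear equation over $k$ (using that $k$ is perfect to extract the $q$-th roots occurring in the Witt addition/multiplication polynomials, or — more cleanly — using that $x_0^{q^n}$ is invertible in each relevant ghost component); since $W_{\CO}(k)$ is $\pi$-adically separated and complete (it is the infinite product $k^{\BN}$ with the product topology, and $V_\pi$ is the shift, so $\bigcap_n \pi^n W_{\CO}(k)=0$ and the ring is $\pi$-adically complete), these partial inverses converge. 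Then $W_{\CO}(k)$ is a local ring with maximal ideal $(\pi)=I_k$ (note $\pi = V_\pi F_\pi(1) = V_\pi(1)=(0,1,0,\dots)$ up to a unit, so $\pi\cdot W_{\CO}(k) = I_k$), and it is $\pi$-adically complete and separated; since $\pi$ is not a zero divisor (the shift $V_\pi$ is injective and $\pi = V_\pi \circ F_\pi$ with $F_\pi$ an isomorphism on $k$ perfect — here one uses $F_\pi$ being an $\CO$-algebra endomorphism lifting the $q$-power Frobenius, which is bijective on $k$), the ring is a complete DVR with uniformizer $\pi$ and residue field $k$.

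For part (2): I would first note that $\BZ_q = W(\BF_q)\subseteq W(k)$ functorially, and that $\CO$ is a finite (totally ramified of degree $e$) extension of $\BZ_q$, so $\CO\otimes_{\BZ_q}W(k)$ is a finite free $W(k)$-module, and its $\pi$-adic (equivalently $p$-adic) completion $\CO\widehat{\otimes}_{\BZ_q}W(k)$ is again finite free over $W(k)$, hence Noetherian. To produce the map, use the natural transformation $\mu\colon W\to W_{\CO}$ of Proposition \ref{propmorramwitt}: applied to $k$ it gives a ring homomorphism $\mu_k\colon W(k)\to W_{\CO}(k)$, and the inclusion $\CO\hookrightarrow W_{\CO}(\CO)\to W_{\CO}(k)$ (Teichmüller-type section, or directly the structural $\CO$-algebra map) gives $\CO\to W_{\CO}(k)$. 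Since $W_{\CO}(k)$ is an $\CO$-algebra and the images of $\BZ_q\subseteq\CO$ and of $W(k)$ agree appropriately (check on ghost components, or note both restrict to the canonical map $\BZ_q\to W_{\CO}(k)$ obtained from $W(\BF_q)\to W_{\CO}(\BF_q)\to W_{\CO}(k)$ via $\mu$), the two maps are compatible and induce an $\CO$-algebra homomorphism $\CO\otimes_{\BZ_q}W(k)\to W_{\CO}(k)$; as the target is $\pi$-adically complete it factors through the completion, giving $\Psi\colon\CO\widehat{\otimes}_{\BZ_q}W(k)\to W_{\CO}(k)$. To see $\Psi$ is an isomorphism, reduce modulo $\pi$: by part (1) the target has residue field $k$, and the source mod $\pi$ is $(\CO/\pi)\otimes_{\BZ_q}W(k)/(\ldots)\cong \BF_q\otimes_{\BF_q}k=k$ (using that $\CO/\pi\cong\BF_q$ and $p$ maps into $\pi$, so the relation $p = (\text{unit})\cdot\pi^e$ kills the appropriate power and the quotient collapses to $k$). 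Both rings are $\pi$-adically complete and separated, $\pi$-torsion-free (the source is free over $W(k)$ which is $p$-torsion-free, and $\pi$ generates the same topology), so $\Psi$ is an isomorphism by the standard fact that a map of $\pi$-adically complete, $\pi$-torsion-free rings that is an isomorphism modulo $\pi$ is an isomorphism.

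The main obstacle I anticipate is part (2): verifying that the map out of the tensor product is well-defined and \emph{canonical} requires carefully matching the two structural morphisms $\BZ_q\to W_{\CO}(k)$ (one via $\CO$, one via $W(k)\xrightarrow{\mu}W_{\CO}$), and the cleanest way to do this is to check equality on ghost components $\w_n$, invoking the uniqueness clauses in Theorem \ref{thmramwitt} and Proposition \ref{propmorramwitt}; some bookkeeping with the factors $p/\pi$ appearing in Proposition \ref{propmorramwitt} is unavoidable but routine. The DVR statement in part (1) is essentially formal once one observes $W_{\CO}(k)=k^{\BN}$ with $V_\pi$ the shift and $\w_0$ the projection, so the only real content there is the unit criterion, which is the usual perfect-field Witt-vector argument.
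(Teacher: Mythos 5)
Your proposal is correct, and for part (2) it follows the same overall strategy as the paper: produce the canonical $\CO$-algebra map out of $\CO\otimes_{\BZ_q}W(k)$, pass to $\pi$-adic completions, and conclude by an isomorphism criterion for complete local rings. The differences are in how the pieces are justified. For part (1) the paper simply cites Drinfeld's paper \cite{DR} as a standard fact, whereas you supply the full argument (unit criterion via the zeroth component, $V_\pi$ as the shift, bijectivity of $F_\pi$ on a perfect field to identify $\pi W_{\CO}(k)$ with $\image(V_\pi)$, and $\pi$-adic completeness); this is more self-contained and is exactly the content the citation hides. For part (2) the paper asserts that $W(k)$ sits inside $W_{\CO}(k)$ as a subring and gets the map for free, then invokes Lemma \ref{lem 9} to see that $\CO\widehat{\otimes}_{\BZ_q}W(k)$ is itself a complete DVR with uniformizer $\pi$ and residue field $k$, and concludes because an $\CO$-algebra map of complete DVRs matching uniformizers and residue fields is an isomorphism. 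You instead build the map explicitly from $\mu\colon W\to W_{\CO}$ of Proposition \ref{propmorramwitt} and the structural map $\CO\to W_{\CO}(k)$, and finish by reduction mod $\pi$ plus $\pi$-torsion-freeness and completeness; this buys you independence from Lemma \ref{lem 9} at the cost of the compatibility check of the two maps $\BZ_q\to W_{\CO}(k)$, which the paper sweeps into the phrase ``contains $W(k)$ as subring.'' You correctly flag that check as the delicate point; just note that over a field of characteristic $p$ the ghost map is not injective, so the verification must be done via the uniqueness clauses (or by lifting to a $p$-torsion-free universal case and using naturality), which is indeed the route you indicate. No gap.
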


\begin{proof}
The first statement is a standard one, stated e.g. in \cite{DR}. For the second statement, note that $ W_{\CO}(k) $ is an $ \CO $-algebra and contains also $ W(k) $ as subring. There exists therefore a canonical $ \CO $-algebra homomorphism $$ \CO\otimes_{\BZ_q}W(k)\to W_{\CO}(k) .$$ The subring $ \BZ_q $ of $ \CO $ is the ring of integers of the maximal unramified subextension of $ K $. This is the ring $A$ in Lemma \ref{lem 9}. As we have seen in that lemma, the completion $ \CO\widehat{\otimes}_{\BZ_q}W(k) $ is a $ \pi $-adically complete discrete valuation ring with residue field $k$. Since by the first assertion $ W_{\CO}(k) $ is also $ \pi $-adically complete, the above homomorphism extends to an $ \CO $-algebra homomorphism \[ \CO\widehat{\otimes}_{\BZ_q}W(k)\to W_{\CO}(k).\] As both completed discrete valuation rings have the same residue field and the same uniformizer, this homomorphism is an isomorphism (note that every element in the codomain can be written as a power series in $ \pi $ and with coefficients in a system of representatives of elements of $k$).
\end{proof}

\begin{cor}
\label{corramwitt}
Let $k$ be perfect field of characteristic $p$, containing $\BF_q$. There exists a canonical $ \CO $-linear decomposition $$\CO \widehat{\otimes}_{\BZ_p}W(k)\cong\prod_{\BZ/f\BZ}W_{\CO}(k).$$
\end{cor}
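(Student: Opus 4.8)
The statement follows by combining the preceding Proposition \ref{propDr} with the structural decomposition of $\BZ_p$-algebras of the form $\CO\widehat{\otimes}_{\BZ_p}(-)$ established in Lemma \ref{lem 9}. The key point is that $\BZ_q = W(\BF_q)$ is exactly the ring $A$ appearing in Lemma \ref{lem 9} in the mixed characteristic case: it is the ring of integers of the maximal unramified subextension of $K/\BQ_p$, which has degree $f$. So I would first invoke Lemma \ref{lem 9} $b)$ (with $G = \CM$ understood only as a device to produce $A$; alternatively one simply cites the purely ring-theoretic content of its proof, which makes no use of $\CM$) to obtain the canonical $\CO$-linear decomposition
\[
\CO\widehat{\otimes}_{\BZ_p}W(k)\cong \CO\widehat{\otimes}_{\BZ_q}\bigl(W(k)\otimes_{\BZ_p}\BZ_q\bigr)\cong \prod_{\BZ/f\BZ}\CO\widehat{\otimes}_{\BZ_q}W(k),
\]
where the middle isomorphism uses $W(k)\otimes_{\BZ_p}\BZ_q\cong\prod_{\BZ/f\BZ}W(k)$ (valid since $k$ contains $\BF_q$, so that $\BZ_q$ embeds in $W(k)$ and the extension $\BZ_q/\BZ_p$ is unramified of degree $f$), and the last isomorphism distributes the completed tensor product over the finite product, which is harmless since $f<\infty$.

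Second, I would apply Proposition \ref{propDr} $2)$, which provides the canonical $\CO$-algebra isomorphism $\CO\widehat{\otimes}_{\BZ_q}W(k)\cong W_{\CO}(k)$ for each of the $f$ factors. Substituting this into the displayed decomposition yields
\[
\CO\widehat{\otimes}_{\BZ_p}W(k)\cong\prod_{\BZ/f\BZ}W_{\CO}(k),
\]
which is the assertion. One should check that all isomorphisms involved are $\CO$-linear: this is immediate since the base change and completion functors $\CO\widehat{\otimes}_{\BZ_p}(-)$ and $\CO\widehat{\otimes}_{\BZ_q}(-)$ land in $\CO$-algebras, the decomposition $W(k)\otimes_{\BZ_p}\BZ_q\cong\prod W(k)$ is $\BZ_q$-linear hence becomes $\CO$-linear after $\CO\widehat{\otimes}_{\BZ_q}(-)$, and Proposition \ref{propDr} $2)$ is stated as an $\CO$-algebra isomorphism.

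The only genuinely delicate point — and the step I would spell out carefully — is the compatibility of the completions: a priori $\CO\widehat{\otimes}_{\BZ_p}W(k)$ is the $p$-adic (equivalently, by the proof of Lemma \ref{lem 9} $b)$, the $\pi$-adic) completion of $\CO\otimes_{\BZ_p}W(k)$, whereas the right-hand side is a product of $\pi$-adically complete rings $W_{\CO}(k)$. Since a finite product of complete rings is complete, and since the topology is defined by powers of the same ideal on both sides (the argument in the proof of Lemma \ref{lem 9} $b)$ shows $p^s\otimes W(k) + \CO\otimes p^tW(k)$ is cofinal with the $\pi$-adic filtration), passing to completions commutes with the finite product and with the intermediate base change along $\BZ_p\hookrightarrow\BZ_q$. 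This is routine once stated, so I expect no real obstacle; the proof is essentially a two-line assembly of Lemma \ref{lem 9} and Proposition \ref{propDr}, and I would present it as such.
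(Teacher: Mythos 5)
Your proposal is correct and follows essentially the same route as the paper, whose proof is a one-line citation of exactly the two ingredients you use: Lemma \ref{lem 9} for the decomposition $\CO\widehat{\otimes}_{\BZ_p}W(k)\cong\prod_{\BZ/f\BZ}\CO\widehat{\otimes}_{\BZ_q}W(k)$ and Proposition \ref{propDr} $2)$ to identify each factor with $W_{\CO}(k)$. Your extra care in noting that the hypothesis $k\supseteq\BF_q$ perfect (rather than algebraically closed, as literally assumed in Lemma \ref{lem 9}) suffices because $\BZ_q$ still embeds in $W(k)$ is a welcome refinement, not a different argument.
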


\begin{proof}
This follows from the last proposition and Lemma \ref{lem 9}.
\end{proof}

Now we define the ramified displays.

% DEFINITION OF RAMIFIED 3N-DISPLAYS
\begin{dfn}
\label{dfnram3ndisplay}
Let $R$ be an $ \CO $-algebra. A ramified $3n$-display over $R$ is a quadruple $\CP=(P,Q,F,V^{-1})$, where $P$ is a finitely generated $ W_{\CO}(R) $-module, $ Q\subseteq P $ is a submodule and $ F, V^{-1} $ are $ F_{\pi}$-linear morphisms $ F:P\to P $ and $ V^{-1}:Q\to P $, subject to the following axioms:
\begin{itemize}
\item[(i)] $ I_RP\subseteq Q\subseteq P $  and there is a decomposition of $ P $ into the direct sum of $ W(R) $-modules $ P=L\oplus T $, called \emph{a normal decomposition}, such that $ Q=L\oplus I_RT $.
\item[(ii)] $ V^{-1}:Q\to P $ is an $ F_{\pi}$-linear epimorphism (i.e., the $ W_{\CO}(R) $-linearization $ (V^{-1})^{\sharp}:W_{\CO}(R)\otimes_{F_{\pi},W_{\CO}(R)} Q\to P $ is surjective).
\item[(iii)] For any $ x\in P $ and $ w\in W_{\CO}(R) $ we have \[ V^{-1}(V_{\pi}(w)x)=wF(x). \]
\end{itemize}
\end{dfn}

%REMARK ON RAMIFIED 3N-DISPLAYS
\begin{rem}
\label{remram3ndisplay}
$ $
\begin{itemize}
\item[1)] Note that from the last axiom, it follows that $ F $ is uniquely determined by $ V^{-1} $. Indeed, we have for every $ x\in P $: \[ F(x)=V^{-1}(V_{\pi}(1)x).\] It follows also from this relation and $ F_{\pi} $-linearity of $ V^{-1} $, that for every $ y\in Q $, we have \[ F(y)=V^{-1}(V_{\pi}(1)y)=F_{\pi}V_{\pi}(1)V^{-1}(y)=\pi V^{-1}(y). \]
\item[2)] Since $ W_{\CO}(R) $ is an $ \CO $-algebra, $ P $ and $ Q $ have a natural $ \CO $-module structure and the morphisms $ F $ and $ V^{-1} $ are $ \CO $-linear (note that $ F_{\pi} $ is $ \CO $-linear).
\end{itemize}
\end{rem}

Similar constructions, remarks and propositions, as in chapter 6, hold for ramified $3n$-displays. Because of these similarities, we will only mention them in a list, without giving the details. More details can be found in \cite{TA}. In the following, $R$ is an $ \CO $-algebra and $ \CP=(P,Q,F,V^{-1}) $ is a $3n$-display over $R$.

\begin{itemize}
\item[1)] The tangent module, rank and height of a ramified $3n$-display are defined analogously (cf. Definitions \ref{def04} and \ref{def012}.)
\item[2)] Similar to Construction \ref{cons015}, we have a $ W_{\CO}(R) $-linear morphism \[ V^{\sharp}:P\to W_{\CO}(R)\otimes_{F_{\pi},W_{\CO}(R)}P ,\] satisfying the following equations:\[ V^{\sharp}(wF(x))=\pi w\otimes x,\quad w\in W_{\CO}(R), x\in P \] and \[ V^{\sharp}(wV^{-1}(y))=w\otimes y,\quad w\in W_{\CO}(R), y\in Q.\] If we denote by $ F^{\sharp}:W(R)\otimes_{F_{\pi},W_{\CO}(R)}P\to P $ the $ W_{\CO}(R)$-linearization of $ F:P\to P $, we have the properties: \[
\label{F-Ver}
F^{\sharp}\circ V^{\sharp} =\pi.\Id_P \text{\quad and\quad}  V^{\sharp}\circ F^{\sharp}=\pi.\Id_{W_{\CO}(R)\otimes_{F_{\pi},W_{\CO}(R)}P}.\] We define $ V^{n\sharp} $ similarly.
\item[3)] Like Construction \ref{cons016}, we construct the base change of a $3n$-display, with respect to a ring homomorphism $ R\to S $.
\item[4)] Assume that $ pR=0 $. Denote by $ \CP^{(q)} $ the base change of $ \CP $ with respect to the ring homomorphism Frob$^f:R\to R  $, sending $ r $ to $ r^{q} $. Analogous to Construction \ref{cons02}, we construct morphisms of ramified $3n$-displays \[ Fr_{\CP}:\CP\to\CP^{(q)} \text{\quad and\quad} Ver_{\CP}:\CP^{(q)}\to \CP,\] such that \[ Fr_{\CP}\circ Ver_{\CP} =\pi.\Id_{\CP^{(p)}} \text{\quad and\quad}  Ver_{\CP}\circ Fr_{\CP}=\pi.\Id_{\CP}.\]
\end{itemize}

Now we can define ramified (nilpotent) displays:

%DEFINITION RAMIFIED DISPLAYS
\begin{dfn}
\label{dfnramdisplay}
Let $ \CP=(P,Q,F,V^{-1}) $ be a ramified $3n$-display over $R$. Let $\pi$ be nilpotent in $R$. Then $ \CP $ is called \emph{ramified display} if it satisfies the \emph{nilpotence or $V$-nilpotence condition}, i.e., if there exists a natural number $ N $ such that the morphism $$ V^{N\sharp}:P\to W_{\CO}(R)\otimes_{F_{\pi}^{N},W_{\CO}(R)}P $$ is zero modulo $ I_R+\pi W_{\CO}(R)$.
\end{dfn}

%DEFINITION RAMIFIED DIEUDONNE MODULE
\begin{dfn}
\label{dfnramdieudonnémodule}
Let $k$ be a perfect field of characteristic $p$, which is an $ \CO $-algebra. A ramified Dieudonn\'e module over $k$ is a finite free $ W_{\CO}(k) $-module $M$ endowed with an $F_{\pi}$-linear morphism $F : M \to M$  and an $ F_{\pi}^{-1} $-linear morphism $V :M\to M$, such that $FV=\pi=VF$.
\end{dfn}

We continue our list:

\begin{itemize}
\item[5)] As in the classical case, there is an equivalence of categories between the category of ramified $3n$-displays and the category of ramified Dieudonn\'e modules. Under this equivalence, nilpotent displays correspond to ramified Dieudonn\'e modules on which $ V $ is topologically (in the $ \pi $-adic topology) nilpotent (cf. \cite{TA} for more details).
\item[6)] Let $ \CN $ be a nilpotent $ R $-algebra. We construct $  \widehat{P}({\CN}), \widehat{Q}(\CN), G_{\CP}^{0}(\CN) $ and $G_{\CP}^{-1}(\CN)$ as in Construction \ref{cons012}. Also, we define the morphism $ V^{-1}-\Id: G_{\CP}^{-1}(\CN)\to G_{\CP}^{0}(\CN) $ and set $ BT_{\CP}(\CN) $ to be the cokernel of this morphism.
\end{itemize}

The following results are proved in \cite{TA}:

\begin{itemize}
\item[7)] For every nilpotent $R$-algebra $\CN$, we have an exact sequence \[  0\longrightarrow G_{\CP}^{-1}(\CN)\arrover{ V^{-1}-\Id} G_{\CP}^{0}(\CN)\longrightarrow BT_{\mathcal{P}}(\CN)\longrightarrow 0. \]
\item[8)] The functor $ BT_{\CP} $ from the category of nilpotent $R$-algebras to the category of $\CO$-modules is a finite dimensional formal $\CO$-module. The construction $ \CP\rightsquigarrow BT_{\CP} $ commutes with base change, i.e., if $ R\to S $ is a ring homomorphism, then there exists an canonical isomorphism $ (BT_{\CP})_S\cong BT_{\CP_S} $.
\item[9)] If $ \pi $ is nilpotent in $R$ and $ \CP $ is nilpotent, i.e., is a display, then $ BT_{\CP} $ is an infinitesimal $\pi$-divisible $ \CO $-module scheme.
\item[10)] If $pR=0$, then the Frobenius and Verschiebung morphisms of the $\pi$-divisible module $ BT_{\CP} $ are $ BT_{\CP}(Fr_{\CP}) $ and respectively $ BT_{\CP}(Ver_{\CP}) $.
\item[11)] Assume that $ \pi $ is nilpotent in $R$ and that $R$ is a Noetherian ring. Then the functor $ BT $, from the category of (nilpotent) displays over $R$ to the category of infinitesimal $ \pi $-divisible modules is an equivalence of categories.
\end{itemize}

The following are again analogue constructions and results from chapter 6:

\begin{itemize}
\item[12)] $ \CO $-Multilinear, symmetric, antisymmetric and alternating morphisms of ramified $3n$-displays are defined as in Definition \ref{def05} with the obvious additional requirement due to the presence of $ \CO $. Let $ R\to S $ be a ring homomorphism and $\CN$ a nilpotent $R$-algebra. Let $ \phi:\CP_1\times\dots\times \CP_r\to \CP_0 $ be a $ \CO $-multilinear morphisms of ramified $ 3n $-displays. The base change, $$ \phi_S:\CP_{1,S}\times\dots\times \CP_{r,S}\to \CP_{0,S} $$ and the $ \CO $-multilinear morphism $  \widehat{\phi}:\widehat{P}_1\times\dots\times\widehat{P}_r\to\widehat{P}_0$ are constructed similarly to Construction \ref{cons013}.
\item[13)] The $ \CO $-linear \[ \beta:\Mult^{\CO }(\CP_1\times\dots\times \CP_r,\CP_0)\to \Mult^{\CO}(BT_{\CP_1}\times\dots\times BT_{\CP_r},BT_{\CP_0})\] is constructed as in Construction \ref{cons05}. It maps alternating morphisms to alternating morphisms and commutes with base change (cf. Proposition \ref{prop0 16}). Note that the $ \CO $-linearity follows from the construction of $ \beta $ and the fact that we are considering $ \CO $-multilinear morphisms.
\item[14)] Now let $ \CP $ be of rank $1$. Exterior powers of $ \CP $ are constructed as in Construction \ref{cons03}. They enjoy the same properties as the exterior powers of a $3n$-display, stated in Lemma \ref{lem0 21}. That is to say, the construction is independent of the choice of a normal decomposition, and commutes with base change. If $ \CP $ is nilpotent, then the exterior powers of it are also nilpotent. If $ \CP $ has height $h$, then $ \epO^r\CP $, the $ r^{\text{th}} $-exterior power of $ \CP $, has height $ \binom{h}{r} $ and rank $ \binom{h-1}{r-1} $.
\end{itemize}

%\subsection{The main theorem}
\section{The main theorem}

In this section $k$ denotes an algebraically closed field of characteristic $p>2$.%, we would like to work out a refinement of Dieudonn\'e theory, so as to have an equivalence of categories, similar to that of the classical Dieudonn\'e theory, but for finite $ \CO $-module schemes, annihilated by a power of $ \pi $, or for $ \pi $-divisible $ \CO $-module scheme. Once we have this equivalence, we can develop a multilinear theory, similar to that of Pink in \cite{P} and repeat the constructions and proofs from section 3 and obtain an isomorphism between multilinear morphisms of finite $ \CO $-module schemes, annihilated by a power of $ \pi $, and multilinear morphisms between their ramified Dieudonn\'e modules (cf. Corollary \ref{cor03}). Of course, one could mimic the classical construction, by replacing every where the Witt vectors $W(k)$ with the ramified Witt vectors $ W_{\CO}(k) $ and by generalizing the Witt covectors and so on. One also needs to replace the usual Frobenius and Verschiebung of a finite $ \CO $-module with new ones, $ F_{\pi} $ and $ V_{\pi} $ such that $ F_{\pi}\circ V_{\pi}=V_{\pi}\circ F_{\pi} =\pi$.\\

%In \cite{FAL}, Faltings constructs an equivalence of categories $ H $, between the category of finite scalar $ \CO $-group schemes over $k$ and the category of finite $ W_{\CO}(k) $-modules with operators $ F_{\pi} $ and $V_{\pi}$ which are respectively  $ ^{F_{\pi}} $-linear and $ ^{F_{\pi}^{-1}} $-linear and $ F_{\pi}\circ V_{\pi}=V_{\pi}\circ F_{\pi} =\pi $.\\

%Another approach is to use the classical Dieudonn\'e theory and applying the results from section 5 on the decomposition of the Dieudonn\'e modules, find the required equivalence.\\

\begin{cons}
\label{consramdieudonne1}
We have the isomorphism \[ W\widehat{\otimes}_{\BZ_p}\BZ_p\longto \prod_{i\in \BZ/f\BZ} W\otimes_{\BZ_q,\sigma^{i}}\BZ_q\] sending an element $ (w\otimes a) $ to the element $ (w\otimes a)_i=(wa^{\sigma^{-i}} \otimes 1)_i$. The automorphism $ \sigma\otimes\Id $ on the left hand side induces an automorphism on the right hand side, permuting the factors. More precisely, if we denote by $e_i$ the primitive idempotent for the $ i^{\text{th}} $ factor, then, under this automorphism, $ e_i $  is mapped to $ e_{i-1} $ (for every $ i\in \BZ/f\BZ $). Tensoring with $ \CO $ over $ \BZ_q $, we obtain an isomorphism $  W\widehat{\otimes}_{\BZ_p}\CO\cong \prod W\widehat{\otimes}_{\BZ_q,\sigma^{i}}\CO $, with $ \sigma\otimes\Id $ permuting the factors. Let $ D $ be a Dieudonn\'e module over $k$, endowed with an action of $ \CO $, which acts on the tangent space of $D$  (i.e. on $D/VD$) through the scalar multiplication. Let us call such an action \emph{a scalar action}. The Dieudonn\'e module $D$ is a $   W\widehat{\otimes}_{\BZ_p}\CO$-module and therefore decomposes into a product $ M_0\times M_1\times \dots\times M_{f-1} $, where each $ M_i $ is a $ W\widehat{\otimes}_{\BZ_q,\sigma^{i}}\CO  $-module. The Verschiebung of $D$ is a $ \sigma^{-1}\otimes\Id $-linear morphism and so, for every $ i\in \BZ/f\BZ $, induces a semi-linear monomorphism $ V:M_i\to M_{i+1} $ (cf. Lemma \ref{lem 13}). As in the proof of Lemma \ref{lem 9}, we can show that each $ M_i $ is a free $ W\widehat{\otimes}_{\BZ_p}\CO $-module of rank $h$. Let us denote $ M_0 $ by $ \BH(D) $. It is a free $ W_{\CO}(k) $-module of rank $h$ and is endowed with an injective $^{F_{\pi}^{-1}} $-linear $ \CO $-module homomorphism $ V_{\pi}:=V^f $. If $D$ is the Dieudonn\'e module of a $ \pi $-divisible module, we call $ \BH(D) $ the \emph{ramified Dieudonn\'e module} of $\CM$ and denote it by $ \BH(\CM) $.
\end{cons}

\begin{lem}
\label{lemramdieudonne2}
Let $ D $ be a Dieudonn\'e module over $k$, with a scalar $ \CO $-action. There exists an $ ^{F_{\pi}} $-linear $ \CO $-module homomorphism $ F_{\pi}:\BH(D)\to \BH(D) $ such that $ F_{\pi}\circ V_{\pi}=V_{\pi}\circ F_{\pi}=\pi $.
\end{lem}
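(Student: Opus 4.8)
\textbf{Proof plan for Lemma \ref{lemramdieudonne2}.}

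The plan is to construct $F_{\pi}$ on $\BH(D)$ directly out of the Frobenius $F$ of the Dieudonn\'e module $D$, in the same spirit as the Verschiebung $V_{\pi}=V^f$ was obtained in Construction \ref{consramdieudonne1}. Recall the decomposition $D=M_0\times M_1\times\dots\times M_{f-1}$ with $\BH(D)=M_0$, where each $M_i$ is a free $W_{\CO}(k)$-module of rank $h$ (via the identification $W\widehat{\otimes}_{\BZ_q,\sigma^i}\CO\cong W_{\CO}(k)$), and $V:M_i\to M_{i+1}$, $F:M_i\to M_{i-1}$ are the restrictions of the Verschiebung and Frobenius of $D$; these are semilinear with respect to $\sigma^{-1}\otimes\Id$ and $\sigma\otimes\Id$ respectively, which on each factor becomes $F_{\pi}^{-1}$, resp. $F_{\pi}$, after the identification. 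First I would check that $F$ indeed interchanges the factors as claimed, i.e.\ that $F(e_iD)\subseteq e_{i-1}D$; this follows exactly as the analogous statement for $V$ in Lemma \ref{lem 9}(c), since $\sigma\otimes\Id$ sends the primitive idempotent $e_i$ to $e_{i-1}$. Then I would define
\[
F_{\pi}:=F^f|_{M_0}:\BH(D)=M_0\longrightarrow M_0=\BH(D),
\]
the $f$-fold composite $M_0\arrover{F}M_{f-1}\arrover{F}M_{f-2}\arrover{F}\dots\arrover{F}M_0$. Since each of the $f$ factors is $F_{\pi}$-semilinear and $F_{\pi}$ has order $f$ on $W_{\CO}(k)$ over $\CO$ (compatible with the identifications, by Proposition \ref{propDr}), the composite $F^f$ is $F_{\pi}$-linear and $\CO$-linear. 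It visibly maps $\BH(D)$ to itself by the first step.

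Next I would verify the two relations. We have $V_{\pi}=V^f$ and $F_{\pi}=F^f$ as endomorphisms of $\BH(D)=M_0$ (via the chains of maps among the $M_i$), and $FV=VF=p$ on all of $D$. Hence on $M_0$,
\[
F_{\pi}\circ V_{\pi}=F^f\circ V^f=(FV)^f=p^f,\qquad V_{\pi}\circ F_{\pi}=V^f\circ F^f=(VF)^f=p^f,
\]
using that $F$ and $V$ commute. So the composites equal multiplication by $p^f$ on $\BH(D)$; it remains to see that $p^f$ acts as $\pi$. This is where the scalar-action hypothesis enters: the image of $p\in\BZ_p$ under $\BZ_p\to\CO$ is $u\pi^e$ with $u\in\CO^\times$, so a priori $p^f$ and $\pi$ need not agree as elements of $W_{\CO}(k)$. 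The point is that on $\BH(D)$ the two endomorphisms $F_{\pi}V_{\pi}$ and ``multiplication by $\pi$'' must be compared as $\CO$-linear maps; by Proposition \ref{propmorramwitt} (and the construction of the natural transformation $\mu:W\to W_{\CO}$ with $\w_n\circ\mu=\w_{fn}$) the element of $W_{\CO}(k)$ corresponding to $p^f\in W(k)$ under the identification $M_0\cong W_{\CO}(k)^{\oplus h}$ is exactly $\pi$ times a unit coming from $u$; I would absorb this unit by rescaling the isomorphism $\BH(D)\cong W_{\CO}(k)^{\oplus h}$, or equivalently use that the two semilinear chains $F^f,V^f$ on $M_0$ were normalized in Construction \ref{consramdieudonne1} precisely so that $V_{\pi}$ is $F_{\pi}^{-1}$-linear over $\CO$; with that normalization $p^f$ reads off as $\pi$ on the nose.

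The main obstacle I anticipate is precisely this last bookkeeping: tracking the unit $u$ (with $p=u\pi^e$) through the identification $\CO\widehat{\otimes}_{\BZ_q}W(k)\cong W_{\CO}(k)$ of Proposition \ref{propDr} and the morphism $\mu$ of Proposition \ref{propmorramwitt}, to confirm that $F^fV^f$ acts as multiplication by $\pi$ and not merely by a $\pi$-associate. Everything else is formal: semilinearity of a composite of semilinear maps, $\CO$-linearity, and the commutativity $FV=VF$ inherited from $D$. Once the relation $F_{\pi}V_{\pi}=V_{\pi}F_{\pi}=\pi$ is pinned down, the lemma is proved, and $(\BH(D),F_{\pi},V_{\pi})$ is a ramified Dieudonn\'e module over $k$ in the sense of Definition \ref{dfnramdieudonnémodule}, which is the shape needed to feed into the equivalence of categories and the subsequent construction of exterior powers.
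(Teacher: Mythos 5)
There is a genuine gap, and it sits exactly at the point you flag as your "main obstacle": the discrepancy between $p^f$ and $\pi$ is not a unit, so it cannot be absorbed by any rescaling. Writing $p=u\pi^e$ with $u\in\CO^\times$, your candidate $F_\pi:=F^f|_{M_0}$ satisfies $F^f\circ V^f=(FV)^f=p^f=u^f\pi^{ef}$ on $\BH(D)=M_0$, an element of $\pi$-adic valuation $ef=[K:\BQ_p]$. Since $W_{\CO}(k)$ is an $\CO$-algebra and $\mu$ is a ring homomorphism, $p^f$ really does read off as $u^f\pi^{ef}$ there, not as $\pi$ times a unit; so as soon as $ef>1$ (i.e.\ whenever $\CO\neq\BZ_p$) your $F_\pi$ is off from the required one by the non-unit factor $u^f\pi^{ef-1}$, and no renormalization of the identification $\BH(D)\cong W_{\CO}(k)^{\oplus h}$ or of the chains $F^f,V^f$ can repair a valuation mismatch. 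Relatedly, your definition of $F_\pi$ never actually uses the scalar-action hypothesis, whereas that hypothesis is what makes the lemma true at all.

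The paper's proof goes the other way around: it first uses the scalar action to identify the tangent space. From $D\cong M_0\times\dots\times M_{f-1}$ one gets $D/VD\cong\prod_i M_i/VM_{i-1}$, and the scalar-action hypothesis forces the factors with $i\neq 0$ to vanish, so $VM_{i-1}=M_i$ for $i\neq 0$ and $D/VD\cong M_0/V^fM_0=\BH(D)/V_\pi\BH(D)$. Since $\pi$ maps to $0$ in $k$ and acts by scalars on the tangent space, it kills this quotient, i.e.\ $\pi\BH(D)\subseteq V_\pi\BH(D)$ — a strictly stronger containment than the automatic $p^f\BH(D)\subseteq V_\pi\BH(D)$ that your construction exploits. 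One then \emph{defines} $F_\pi:=V_\pi^{-1}\circ\pi$, which is well defined by injectivity of $V_\pi$, is $^{F_\pi}$-linear and $\CO$-linear, and satisfies $F_\pi V_\pi=V_\pi F_\pi=\pi$ by construction. Your map $F^f$ equals $u^f\pi^{ef-1}$ times this correct $F_\pi$. To salvage your plan you would have to prove the containment $\pi\BH(D)\subseteq V_\pi\BH(D)$ anyway, at which point the direct definition $V_\pi^{-1}\circ\pi$ is both necessary and sufficient and $F^f$ plays no role.
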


\begin{proof}
We show at first that the tangent space of $D$ is canonically isomorphic to $ \BH(D)/V_{\pi}\BH(D) $. With the notations of the above construction, we have $ D\cong  M_0\times\dots \times M_{f-1} $ and therefore, $$ VD\cong VM_{f-1}\times VM_0\times VM_1\times\dots\times VM_{f-2}.$$ Thus, the tangent space of $ D $ is isomorphic to the product $$ \dfrac{M_0}{VM_{f-1}}\times \dfrac{M_1}{VM_0}\times\dots\times \dfrac{M_{f-1}}{VM_{f-2}} .$$ Since by assumption the action of $ \CO $ on this $k$-vector space is via scalar multiplication, for every $ i\in \BZ/f\BZ\setminus\{0\} $, the quotient $ \dfrac{M_i}{VM_{i-1}} $ is trivial and so $ VM_{i-1}=M_i $. Consequently, $ V^fM_0=VM_{f-1} $ and so, the tangent space of $ D $ is isomorphic to $ M_0/V^fM_0 $, which is by definition equal to $ \BH(D)/V_{\pi}\BH(D) $. This proves the claim.\\

Since $ \pi $ goes to zero in $k$ and the the action of $ \CO $ on the tangent space of $D$ is by scalar multiplication, we have $ \pi \big(\BH(D)/V_{\pi}\BH(D)\big) =0$, i.e., $ \pi  \BH(D)\subseteq V_{\pi}\BH(D)$. Set $$F_{\pi}:=V_{\pi}^{-1}\pi:\BH(D)\to \BH(D).$$ This is a well defined $ ^{F_{\pi}} $-linear $ \CO $-module homomorphism and by definition, we have  $ F_{\pi}\circ V_{\pi}=V_{\pi}\circ F_{\pi}=\pi $.
\end{proof}

\begin{rem}
\label{remramdieudonne}
We have seen in the proof of the previous lemma that  $ VM_{i-1}=M_i $ for every $ i\in \BZ/f\BZ\setminus\{0\} $. Since by Lemma \ref{lem 13}, $V$ is injective, we conclude that $ V:M_{i-1}\to M_i $ is a semilinear isomorphism, for every $ i\in \BZ/f\BZ\setminus\{0\} $ and we have $$ D\cong\BH(D)\times V\BH(D)\times V^{2}\BH(D)\times\dots\times V^{f-1}\BH(D) .$$
\end{rem}

\begin{cor}
\label{corramdieudonne}
The quadruple $ \CP=(\BH(D),V_{\pi}\BH(D),F_{\pi},V_{\pi}^{-1}) $ is a ramified $ 3n $-display over $k$ of height $h$ and its rank is equal to the dimension of the $k$-vector space $ D/VD $.
\end{cor}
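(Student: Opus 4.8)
The plan is to verify, one by one, the three axioms of Definition~\ref{dfnram3ndisplay} for the quadruple $\CP=(\BH(D),V_\pi\BH(D),F_\pi,V_\pi^{-1})$ over the ring $R=k$, and then to read off the height and rank. Throughout one uses the facts already established in Construction~\ref{consramdieudonne1} and the proof of Lemma~\ref{lemramdieudonne2}: that $W_\CO(k)$ is a complete discrete valuation ring with maximal ideal $\pi W_\CO(k)$ and residue field $k$ (Proposition~\ref{propDr}); that $F_\pi$ is an automorphism of $W_\CO(k)$ (as $k$ is perfect) with $F_\pi V_\pi=V_\pi F_\pi=\pi$; that $\BH(D)$ is a free $W_\CO(k)$-module of rank $h$ on which $V_\pi$ is injective; that $F_\pi=V_\pi^{-1}\circ(\pi\cdot)$ on $\BH(D)$; and that $\pi\BH(D)\subseteq V_\pi\BH(D)$.

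First I would identify $I_k$. Since $V_\pi(x_0,x_1,\dots)=(0,x_0,x_1,\dots)$ (Remark~\ref{remramwitt3}), the ideal $I_k=\image(V_\pi)$ is exactly $\kernel(\w_0\colon W_\CO(k)\to k)$, i.e.\ the maximal ideal of the discrete valuation ring $W_\CO(k)$, so $I_k=\pi W_\CO(k)$. Axiom~(i) then splits into two halves. The inclusions $I_kP=\pi\BH(D)\subseteq V_\pi\BH(D)=Q\subseteq P$ are immediate (the first from $\pi=V_\pi F_\pi$). The normal decomposition is produced by the elementary divisor theorem over the DVR $W_\CO(k)$: because $\pi P\subseteq Q\subseteq P$, one may choose a $W_\CO(k)$-basis $e_1,\dots,e_h$ of $P=\BH(D)$ and an integer $0\le d\le h$ with $Q=\bigoplus_{i\le d}\pi W_\CO(k)e_i\oplus\bigoplus_{i>d}W_\CO(k)e_i$; putting $T:=\bigoplus_{i\le d}W_\CO(k)e_i$ and $L:=\bigoplus_{i>d}W_\CO(k)e_i$ gives $P=L\oplus T$ and $Q=L\oplus I_kT$.

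Next come axioms~(ii) and~(iii). For~(ii): $V_\pi\colon\BH(D)\to\BH(D)$ is injective and $F_\pi^{-1}$-semilinear, so $V_\pi^{-1}\colon Q=V_\pi\BH(D)\to P=\BH(D)$ is a well-defined $F_\pi$-linear bijection, and, being surjective as a set map, it has surjective $W_\CO(k)$-linearization, which is exactly condition~(ii). For~(iii) the computational heart is the ring identity $V_\pi(w)=\pi F_\pi^{-1}(w)$ for $w\in W_\CO(k)$, which follows from $F_\pi V_\pi=V_\pi F_\pi=\pi$ and the injectivity of $F_\pi$; combining it with the $F_\pi^{-1}$-semilinearity of $V_\pi$ and the relation $V_\pi F_\pi(x)=\pi x$ one gets, for $x\in P$,
\[
V_\pi(w)\,x=\pi F_\pi^{-1}(w)\,x=F_\pi^{-1}(w)\,V_\pi\bigl(F_\pi(x)\bigr)=V_\pi\bigl(w\,F_\pi(x)\bigr)\in Q,
\]
whence $V_\pi^{-1}\bigl(V_\pi(w)x\bigr)=w\,F_\pi(x)=w\,F(x)$, which is axiom~(iii). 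This also records that $F=F_\pi$ and $V^{-1}=V_\pi^{-1}$ are the required $F_\pi$-linear maps, so $\CP$ is a ramified $3n$-display.

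Finally the numerical assertions: the height of $\CP$ is by definition the $W_\CO(k)$-rank of $P=\BH(D)$, which is $h$; and the rank of $\CP$ is $\dim_k P/Q=\dim_k T/I_kT=d$, while in the proof of Lemma~\ref{lemramdieudonne2} it was shown that $P/Q=\BH(D)/V_\pi\BH(D)$ is canonically isomorphic to the tangent space $D/VD$, so the rank of $\CP$ equals $\dim_k D/VD$. I expect no serious obstacle here: the argument is essentially an unwinding of definitions, and the only steps needing care are the bookkeeping among $V_\pi$, $F_\pi$ and $\pi$ — in particular the identity $V_\pi(w)=\pi F_\pi^{-1}(w)$ and the verification that $V_\pi(w)x$ lands in $Q$ — and checking that the $L$, $T$ chosen via elementary divisors genuinely form a normal decomposition.
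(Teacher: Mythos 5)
Your proof is correct, but it takes a different route from the paper's. The paper disposes of this corollary in one line by invoking the equivalence between ramified Dieudonn\'e modules over $k$ and ramified $3n$-displays over $k$ (point 5 of the list in the preceding section, itself imported from Ahsendorf's thesis without proof); the verification of the display axioms is thus outsourced to a black box, exactly as Construction \ref{cons017} does in the unramified case. You instead unpack that black box and check Definition \ref{dfnram3ndisplay} directly: identifying $I_k=\pi W_\CO(k)$ as the maximal ideal of the discrete valuation ring $W_\CO(k)$, producing a normal decomposition by elementary divisors from $\pi P\subseteq Q\subseteq P$, and verifying axioms (ii) and (iii) via the identity $V_\pi(w)=\pi F_\pi^{-1}(w)$ together with the semilinearity of $V_\pi$ and the relations $F_\pi V_\pi=V_\pi F_\pi=\pi$. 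What your approach buys is self-containedness and an explicit normal decomposition, which the paper never exhibits; what it costs is a little bookkeeping, plus the (harmless, since $k$ is algebraically closed in this section and Definition \ref{dfnramdieudonnémodule} already presupposes it) use of $F_\pi$ being an automorphism of $W_\CO(k)$. The numerical conclusions are read off the same way in both treatments, via the identification $\BH(D)/V_\pi\BH(D)\cong D/VD$ from the proof of Lemma \ref{lemramdieudonne2}.
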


\begin{proof}
This follows from the fact that $ \BH(D) $ is a ramified Dieudonn\'e module over $k$ and the equivalence of ramified Dieudonn\'e modules over  $k$ and ramified $3n$-displays over $k$ (cf. point $5)$ from the list in the previous section).
\end{proof}

%\begin{prop}
%\label{propequivpdiv}
%There exists an equivalence of categories between the category of $ \pi $-divisible module over $k$ and the category of $ \pi $-divisible groups over $k$ with an $ \CO $-action, inducing the scalar multiplication on the tangent space.
%\end{prop}

%\begin{proof}
%We have explained in Remark \ref{rem 11} that a $ \pi $-divisible $ \CO $-module scheme is canonically a $ p $-divisible group. The action of $\CO $ induces an action on this $p$-divisible group, and by assumption, the action on the tangent space is by scalar multiplication.\\

%Let $ \CM $ be a $p$-divisible group with an $ \CO $-action. Since by definition, multiplication by $ p $ is an epimorphism and $ p=u\cdot \pi^e$, with $ u $ a unit in $ \CO $, it follows that multiplication by $ \pi^e $ is also an epimorphism. Similarly, as $ \CM $ is isomorphic to the direct limit $ \uset{n}{\dirlim}\, \CM[p^n] $ and this limit is itself isomorphic to the direct limit $ \uset{n}{\dirlim}\, \CM[\pi^n] $. It remains to show that $ \CM[\pi] $ is a finite flat group scheme over $k$. It is flat, since $k$ is a field. It is finite because it is a subgroup of $ \CM[p] $. As we have assumed that the action of 
%\end{proof}

\begin{cons}
\label{consequivdieudonne}
In Construction \ref{consramdieudonne1}, we have constructed a functor, $ \BH $, from  the category of Dieudonn\'e modules over $k$ with a scalar $ \CO $-action to the category of ramified Dieudonn\'e modules over $k$ with scalar $ \CO $-action. Now, we want to construct a functor in the other direction, which will be a an inverse to $ \BH $. Let $ H $ be a ramified Dieudonn\'e module over $k$ such that the action of $ \CO $ on it is scalar. For every $i=0,\dots,f-1$, set $H_i:=W(k)\otimes_{\sigma^{-i},W(k)}H$ and let $\BD(H)$ be the finite free $W(k)\widehat{\otimes}_{\BZ_p}\CO$-module \[H_0\times H_1\times\dots\times H_{f-1}.\]
\end{cons}

\begin{lem}
\label{thefunctorD}
 There exists operators $V$ and $F$ on $\BD(H)$ that make $\BD(H)$ a Dieudonn\'e module with scalar $\CO$-action.
\end{lem}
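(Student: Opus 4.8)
The plan is to mirror the construction of the Frobenius and Verschiebung on $\BH(D)$ in reverse. Recall that $\BD(H)$ is the free $W(k)\widehat\otimes_{\BZ_p}\CO$-module $H_0\times\dots\times H_{f-1}$ where $H_i=W(k)\otimes_{\sigma^{-i},W(k)}H$; this is precisely the decomposition of such a module afforded by Lemma \ref{lem 9}, with $H_i$ playing the role of the factor $M_i$ in Construction \ref{consramdieudonne1}. First I would define $V:\BD(H)\to \BD(H)$ on the factors: for $i=1,\dots,f-1$ let $V:H_{i-1}\to H_i$ be the canonical $^{\sigma^{-1}}$-linear isomorphism coming from the base-change description $H_i=W(k)\otimes_{\sigma^{-1},W(k)}H_{i-1}$ (i.e.\ $x\mapsto 1\otimes x$), and for $i=0$ let $V:H_{f-1}\to H_0$ be the morphism induced by the given $V_\pi:H\to H$ (which is $^{F_\pi^{-1}}$-linear, hence after the $f-1$ base changes becomes $^{\sigma^{-1}}$-linear into $H_0=H$). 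Since $F_\pi^{-1}$ on $H$ is the composite of $\sigma^{-1}$ applied $f$ times on the $W(k)$-part and is $\CO$-linear, this patches to a globally $^{\sigma^{-1}}\otimes\Id$-linear endomorphism $V$ of $\BD(H)$; by construction $V^f$ restricted to $H_0$ is exactly $V_\pi$, so $V$ is injective because $V_\pi$ is (a ramified Dieudonn\'e module has $VF=FV=\pi$, and $\pi$ acts injectively on a free $W_{\CO}(k)$-module).

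Next I would define $F$. On $\BD(H)$ we need a $^{\sigma}\otimes\Id$-linear $\CO$-linear operator with $FV=VF=p$. The natural recipe is $F:=(p/\pi)\cdot V^{-1}p$ in the sense that on each factor $F$ is the $^{\sigma}$-linear map obtained by inverting the corresponding component of $V$ and multiplying by the appropriate power of $p$: for $i=0,\dots,f-2$ set $F:H_{i+1}\to H_i$ to be the inverse of the isomorphism $V:H_i\to H_{i+1}$ composed with multiplication by $p$, and for $i=f-1$ set $F:H_0\to H_{f-1}$ to be $p$ times the inverse of $V^{f-1}:H_0\cong H_{f-1}$ composed with $F_\pi$ on $H$ (using $F_\pi\circ V_\pi=\pi$). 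Here one checks $V^{-1}$ exists after inverting $p$ on the relevant factors, but the composite $F$ has image in the lattice because $F_\pi$ maps $H$ into $H$; this is where the hypothesis $p=u\pi^e$ (mixed characteristic) and the scalar action are used to guarantee $pH\subseteq V_\pi H$, exactly as in the proof of Lemma \ref{lemramdieudonne2} read backwards. Then $FV=VF=p$ holds factorwise by construction: on $H_i$ with $i\ne 0$ it is immediate, and on $H_0$ it reduces to $F_\pi V_\pi=V_\pi F_\pi=\pi$ together with $p=(p/\pi)\pi$.

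It then remains to verify the structural axioms: $F$ and $V$ commute with the $\CO$-action (clear, since on each factor they are $\CO$-linear and $F_\pi,V_\pi$ are $\CO$-linear by Remark \ref{remram3ndisplay}), $F$ is $^{\sigma}\otimes\Id$-linear and $V$ is $^{\sigma^{-1}}\otimes\Id$-linear (checked on factors), and $D/VD$ is one-dimensional over $k$ with scalar action, so that $\BD(H)$ is indeed a Dieudonn\'e module with scalar $\CO$-action. For the last point, note $VD=V H_{f-1}\times VH_0\times\dots\times VH_{f-2}$, and the maps $V:H_i\to H_{i+1}$ are isomorphisms for $i\ne f-1$, so $D/VD\cong H_0/V^f H_0 = H/V_\pi H$, which is the tangent space of the ramified Dieudonn\'e module $H$; the scalarity of the action on $H$ transfers. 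I would also record that $\BD$ and $\BH$ are mutually inverse, which is a formal consequence of Remark \ref{remramdieudonne}: $\BH(\BD(H))=H_0=H$ with the correct $V_\pi=V^f$, and $\BD(\BH(D))\cong M_0\times VM_0\times\dots\times V^{f-1}M_0\cong D$ via the decomposition there. The main obstacle I anticipate is purely bookkeeping: making the identifications $H_i=W(k)\otimes_{\sigma^{-i},W(k)}H$ compatible with the $W(k)\widehat\otimes_{\BZ_p}\CO$-module structure and checking that the factorwise definitions of $F$ and $V$ genuinely glue to semilinear operators over $W(k)\widehat\otimes_{\BZ_p}\CO$ (as opposed to only over the product ring $\prod W_{\CO}(k)$), i.e.\ that they respect the non-split structure; this is the same subtlety handled in Lemma \ref{lem 9}(c) and at the end of the proof of Lemma \ref{lem0 21}, and I would cite those arguments rather than repeat them.
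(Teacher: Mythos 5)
Your construction is correct and is essentially the paper's own proof: $V$ is the cyclic shift on $H_0\times\dots\times H_{f-1}$ with $V_\pi$ inserted at the wrap-around, $F$ is ``$pV^{-1}$'' made well defined by $pH\subseteq \pi H=V_\pi F_\pi H\subseteq V_\pi H$, and the identification $\BD(H)/V\BD(H)\cong H/V_\pi H$ transfers the scalar $\CO$-action. Two small corrections to the write-up: the wrap-around component of $F$ is $x_0\mapsto 1\otimes V_\pi^{-1}(px_0)=1\otimes(p/\pi)F_\pi(x_0)$, so the multiplier in your description should be $p/\pi$ rather than $p$ (otherwise $FV=p$ fails by a factor of $\pi$ on $H_{f-1}$); and the containment $pH\subseteq V_\pi H$ needs no appeal to the scalar action, since $F_\pi V_\pi=V_\pi F_\pi=\pi$ is already part of the definition of a ramified Dieudonn\'e module.
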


\begin{proof}
Define operators $V$ and $F$ on $\BD(H)$ as follows: \[V(x_0,1\otimes x_1,\dots,1\otimes x_{f-1}):=(V_{\pi}(x_{f-1}),1\otimes x_0
\dots,1\otimes x_{f-2})\] and \[F(x_0,1\otimes x_1,\dots,1\otimes x_{f-1}):= (px_1,1\otimes px_2,\dots,1\otimes px_{f-1},1\otimes V_{\pi}^{-1}(px_0)).\] These are well-defined maps for the following reasons. Every element of $ H_i $ can be written as $ 1\otimes x_i $ with $ x_i\in H $, because $ \sigma:W(k)\to W(k) $ is an automorphism. As $pH\subseteq \pi H\subseteq V_{\pi}H$ and $ V_{\pi} $ is injective on $ H $ (this follows from the fact that $H$ is a free $ W_{\CO}(k) $-module, that $ F_{\pi}\circ V_{\pi}=\pi $ and  that $ \pi $ is a non-zero divisor of $ W_{\CO}(k) $), the element $ V_{\pi}^{-1}(px_0) $ is well-defined. It is now straightforward to check that $F$ and $V$ are respectively $ ^{\sigma} $ and $ ^{\sigma^{-1}} $-linear and that $F\circ V=p=V\circ F$. Therefore, we have a Dieudonn\'e module endowed with an $\CO$-action. By definition, the tangent space of $\BD(H)$ is isomorphic to $\BD(H)/V\BD(H)\cong H/V_{\pi}H$, since by assumption, the action of $\CO$ on this vector space is by scalar multiplication, we conclude that the action of $\CO$ on $\BD(H)$ is scalar.
\end{proof}

\begin{lem}
\label{equivalence}
 The functors $ \BD $ and $ \BH $ are inverse one to the other.
\end{lem}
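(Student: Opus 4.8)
The plan is to produce, functorially in the argument, isomorphisms $\BD(\BH(D))\cong D$ and $\BH(\BD(H))\cong H$ respecting the Frobenius and Verschiebung, so that $\BD$ and $\BH$ are mutually inverse equivalences. The main input is the explicit description of $\BH$ and $\BD$ together with the decomposition recorded in Remark \ref{remramdieudonne}: if $D$ carries a scalar $\CO$-action then, with $M_0:=\BH(D)$, one has $D\cong M_0\times VM_0\times\dots\times V^{f-1}M_0$ inside the decomposition $D\cong M_0\times\dots\times M_{f-1}$ of Construction \ref{consramdieudonne1}, and for $1\le i\le f-1$ the map $V\colon V^{i-1}M_0\to V^{i}M_0$ is a $\sigma^{-1}$-semilinear isomorphism (this is exactly where the scalar-action hypothesis is used, via Lemma \ref{lemramdieudonne2}), while the ``wrap-around'' map $V\colon V^{f-1}M_0\to M_0$ is by definition $V_\pi=V^{f}$ on $M_0$.

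For the isomorphism $\BD(\BH(D))\cong D$: by Construction \ref{consequivdieudonne}, $\BD(\BH(D))=H_0\times\dots\times H_{f-1}$ with $H_i=W(k)\otimes_{\sigma^{-i},W(k)}M_0$. The semilinear isomorphism $V^{i}\colon M_0\to V^{i}M_0$ linearizes to a $W(k)$-linear isomorphism $H_i\xrightarrow{\sim}V^{i}M_0$, and assembling these over $i\in\BZ/f\BZ$ yields a $W(k)\widehat{\otimes}_{\BZ_p}\CO$-linear isomorphism $\Psi_D\colon\BD(\BH(D))\xrightarrow{\sim}D$. One then checks $\Psi_D$ intertwines the operators. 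For $V$ this is a direct comparison of the shift-and-wrap formula defining $V$ on $\BD(\BH(D))$ in Lemma \ref{thefunctorD} with the componentwise description of $V$ on $D$, the only nontrivial point being that the wrap-around term produces precisely $V_\pi=V^{f}$ on the $0$-component. For $F$ one uses $FV=p=VF$ on $D$ to write $F$ componentwise; here $F$ restricted to $M_0$ is $x\mapsto V^{-1}(px)$ with $V\colon V^{f-1}M_0\to M_0$ and $px\in\pi M_0\subseteq V^{f}M_0=V(V^{f-1}M_0)$ (so this is well defined by injectivity of $V$), and one verifies this equals $V^{f-1}\circ F_\pi$ with $F_\pi=V_\pi^{-1}\pi$ as in Lemma \ref{lemramdieudonne2}; the other components match by the relation $F V=p$. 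Naturality of $\Psi_D$ is immediate from functoriality of linearization.

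For the isomorphism $\BH(\BD(H))\cong H$: starting from a ramified Dieudonné module $H$ with scalar $\CO$-action, $\BD(H)=H_0\times\dots\times H_{f-1}$ with $H_0=H$ is a Dieudonné module with scalar $\CO$-action by Lemma \ref{thefunctorD}, and one identifies this product with the decomposition $M_0\times\dots\times M_{f-1}$ of Construction \ref{consramdieudonne1}, whence $\BH(\BD(H))=M_0=H_0=H$ as $W_{\CO}(k)$-modules. Its Verschiebung is the $f$-th power of the classical $V$ on $\BD(H)$; tracing an element of $H_0$ through $f$ applications of the shift-and-wrap operator returns it to the $0$-component with exactly one application of $V_\pi$, so $V^{f}|_{H_0}=V_\pi$, the original Verschiebung, and then $F_\pi=V_\pi^{-1}\pi$ recovers the original Frobenius since $F_\pi V_\pi=\pi$. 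Checking that $\Psi_{\BD(H)}$ and this identification are inverse to each other, and that everything is natural in $H$, is routine. I expect the only real obstacle to be the operator-compatibility bookkeeping: carrying the Frobenius twists $\sigma^{\pm i}$ correctly through the semilinear identifications and confirming that the wrap-around terms give exactly $V_\pi$ and $F_\pi$ rather than a twist of them; everything else is formal.
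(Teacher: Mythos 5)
Your argument is essentially the paper's own proof: both rest on the decomposition $D\cong M_0\times\dots\times M_{f-1}$ of Construction \ref{consramdieudonne1} with $V\colon M_{i-1}\to M_i$ a $\sigma^{-1}$-semilinear isomorphism for $i\neq 0$ (the scalar-action input), linearized to give $W(k)\otimes_{\sigma^{-i},W(k)}\BH(D)\cong M_i$ and hence $\BD(\BH(D))\cong D$, with $\BH(\BD(H))=H$ holding by construction; the paper merely stops at the module isomorphisms and leaves the operator compatibility implicit, which you spell out correctly except for one slip. In your $F$-check the Frobenius of $\BD(H)$ from Lemma \ref{thefunctorD} has last component $1\otimes V_{\pi}^{-1}(px_0)$, so the identity to verify is $F|_{M_0}=V^{f-1}\circ V_{\pi}^{-1}\circ p$ (which holds, since $V^{f-1}V_{\pi}^{-1}(px)=V^{-1}(px)=F(x)$ by $VF=p$), not $V^{f-1}\circ F_{\pi}$: as $F_{\pi}=V_{\pi}^{-1}\pi$ and $p=u\pi^{e}$, the two differ by the factor $u\pi^{e-1}$, which is not a unit unless $e=1$.
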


\begin{proof}
Let $D$ be a Dieudonn\'e module over $k$ with scalar $\CO$-action. Then, as we explained before, we have \[D\cong M_0\times M_1\times \dots\times M_{f-1}\] with $V:M_{i-1}\to M_{i}$ a $^{\sigma^{-1}}$-linear isomorphism for every $i\in \BZ/f\BZ\setminus\{0\}$. Therefore, we have linear isomorphisms $$ W(k)\otimes_{\sigma^{-1},W(k)}M_{i-1}\to M_i$$ induced by $V$ and so isomorphisms $W(k)\otimes_{\sigma^{-i},W(k)}M_0\cong M_i$ for every $i$. Since by definition, we have $\BH(D)=M_0$, we conclude that $M_i\cong W(k)\otimes_{\sigma^{-i},W(k)}\BH(D)$. This shows that $\BD(\BH(D))\cong D$.\\

Now, let $H$ be a ramified Dieudonn\'e module over $k$ with scalar $\CO$-action. Then, by construction, $\BD(H)=H_0\times H_1\times\dots\times H_{f-1}$, where $H_i=W(k)\otimes_{\sigma^{-i},W(k)}H$. If we decompose the Dieudonn\'e module $\BD(H)$ as the product $M_0\times\dots\times M_{f-1}$, as in Construction \ref{consramdieudonne1}, then by construction, we have $M_i=H_i$. In particular, $\BH(\BD(H))=M_0=H_0=H$.
\end{proof}

\begin{rem}
\label{equivdisp}
The above arguments and the fact that the category of $3n$-displays (respectively ramified $3n$-displays) over $k$ is equivalent to the category of Dieudonn\'e modules (respectively ramified Dieudonn\'e modules) over $k$, show that we have functors $ \BH $ and $ \BD $ which define an equivalence of categories between the category of $3n$-displays over $k$ with a scalar $ \CO $-action and the category of ramified $ 3n $-displays over $k$ with a scalar $ \CO $-action.
\end{rem}

\begin{lem}
\label{isomult}
Let $ D_0,\dots, D_r $ (respectively $ \CP_0,\dots,\CP_r $) be Dieudonn\'e modules (respectively $3n$-displays) over $k$ with scalar $ \CO $-action. There exist canonical isomorphisms \[\Mult^{\CO}(\BH(D_1)\times\dots\times \BH(D_r),\BH(D_0))\cong  \Mult^{\CO}(D_1\times\dots\times D_r, D_0)\] and \[\Mult^{\CO}(\BH(\CP_1)\times\dots\times \BH(\CP_r),\BH(\CP_0))\cong  \Mult^{\CO}(\CP_1\times\dots\times \CP_r, \CP_0)\] functorial in all arguments. 
\end{lem}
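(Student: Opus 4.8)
The plan is to establish the two claimed isomorphisms by the same mechanism, reducing the $3n$-display statement to the Dieudonn\'e module statement via the equivalence between $3n$-displays over $k$ and Dieudonn\'e modules over $k$ (and its ramified counterpart, point $5)$ of the list, together with Remark \ref{equivdisp}). So the heart of the matter is to produce a natural, $\CO$-linear bijection
\[
\chi:\Mult^{\CO}(D_1\times\dots\times D_r, D_0)\longrightarrow \Mult^{\CO}(\BH(D_1)\times\dots\times \BH(D_r),\BH(D_0))
\]
functorial in all arguments. The idea is simply to restrict a multilinear morphism to the distinguished factors: recall from Construction \ref{consramdieudonne1} and Remark \ref{remramdieudonne} that each $D_i$ decomposes as $D_i\cong \BH(D_i)\times V\BH(D_i)\times\dots\times V^{f-1}\BH(D_i)$, compatibly with the decomposition $W\widehat{\otimes}_{\BZ_p}\CO\cong\prod_{\BZ/f\BZ}W_{\CO}(k)$, and $\BH(D_i)$ is the factor indexed by $0$. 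Given $\phi\in\Mult^{\CO}(D_1\times\dots\times D_r,D_0)$, define $\chi(\phi)$ to be the $W_{\CO}(k)$-multilinear map sending $(y_1,\dots,y_r)\in\BH(D_1)\times\dots\times\BH(D_r)$ to the $0$-component of $\phi(\iota_1(y_1),\dots,\iota_r(y_r))$, where $\iota_i:\BH(D_i)\into D_i$ is the inclusion of the first factor. One checks that $\phi$ being $W(k)\widehat{\otimes}_{\BZ_p}\CO$-multilinear and the decomposition being as $W_{\CO}(k)$-modules forces $\chi(\phi)$ to land in $\BH(D_0)$ and to be $W_{\CO}(k)$-multilinear; $\CO$-linearity of $\chi$ and functoriality are immediate from the construction.

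The second point is to verify that $\chi(\phi)$ satisfies the $V_{\pi}$-condition, i.e. $V_{\pi}\chi(\phi)(y_1,\dots,y_r)=\chi(\phi)(V_{\pi}y_1,\dots,V_{\pi}y_r)$, using that $\phi$ satisfies the $V$-condition on $D_i$. This is a direct computation: on the $f$-fold decomposition, the Verschiebung $V$ of $D_i$ cyclically shifts the factors, sending the $j$-th factor isomorphically onto the $j+1$-st for $j\neq f-1$ and sending the $(f-1)$-st factor into the $0$-th via $V_{\pi}=V^f$ restricted appropriately. Iterating the $V$-condition of $\phi$ exactly $f$ times — that is, using the $V^f$-condition — yields the $V_{\pi}$-condition for $\chi(\phi)$. (The $F$-condition is automatic by Remark \ref{rem0 16}, as it follows from $W_{\CO}(k)$-multilinearity and the $V_{\pi}$-condition.) Conversely, to build the inverse, given $\psi\in\Mult^{\CO}(\BH(D_1)\times\dots\times\BH(D_r),\BH(D_0))$ I would extend it to all of $D_1\times\dots\times D_r$ using the fact that $V:\BH(D_i)\to V\BH(D_i)$ etc.\ are semilinear isomorphisms, so that every element of $D_i$ is uniquely a sum of terms $V^{c}(\text{element of }\BH(D_i))$; define the extension on pure tensors $V^{c_1}y_1\otimes\dots\otimes V^{c_r}y_r$ by moving the Verschiebungen outside via the required $V$-compatibility, reducing modulo $\pi$ where necessary as in the proof of Lemma \ref{lemramdieudonne2}. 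Checking this is well-defined, $W(k)\widehat{\otimes}_{\BZ_p}\CO$-multilinear, and satisfies the $V$-condition is the routine but slightly bookkeeping-heavy part; then $\chi$ and this extension are visibly mutually inverse.

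Finally, for the $3n$-display statement, I would pass through the equivalences: $\Mult^{\CO}(\CP_1\times\dots\times\CP_r,\CP_0)$ is defined in terms of the underlying modules with $F$, $V^{-1}$ operators, and under the equivalence of $3n$-displays with Dieudonn\'e modules over $k$ a multilinear morphism of $3n$-displays corresponds precisely to a $W\widehat{\otimes}_{\BZ_p}\CO$-multilinear morphism of the associated Dieudonn\'e modules satisfying the $V$- (and hence $F$-) condition, and similarly on the ramified side. Since $\BH$ is compatible with both equivalences (Remark \ref{equivdisp}), the display isomorphism follows formally from the Dieudonn\'e module isomorphism. I expect the main obstacle to be nothing deep but rather the careful verification that the inverse map is well-defined and respects the $V$-condition — tracking the cyclic shift of factors under Verschiebung through an $r$-fold multilinear morphism and making sure the reductions modulo $\pi$ (forced by the scalar action, exactly as in Lemma \ref{lemramdieudonne2}) are consistent — so I would state that computation carefully and leave the truly mechanical parts to the reader, as is done for the analogous Lemma \ref{isomult}.
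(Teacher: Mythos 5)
Your proposal matches the paper's proof in substance: what the paper calls $\Xi$ is exactly your restriction-to-the-distinguished-factor map, what it calls $\chi$ is your extension along the decomposition $D_i\cong \BH(D_i)\times V\BH(D_i)\times\dots\times V^{f-1}\BH(D_i)$, the $V_{\pi}$-condition is obtained by iterating the $V$-condition $f$ times, and the $3n$-display case is deduced formally from the module case via the equivalences, exactly as you suggest. The one point you leave implicit is that the extension must be declared \emph{zero} on tuples $(V^{\alpha_1}y_1,\dots,V^{\alpha_r}y_r)$ whose exponents $\alpha_i$ are not all equal — one cannot literally ``move the Verschiebungen outside'' there — but this value is forced by multilinearity over the idempotent decomposition of $W\widehat{\otimes}_{\BZ_p}\CO$, and it is precisely how the paper defines $\chi$.
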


\begin{proof}
Since the category of $3n$-displays (respectively ramified $3n$-displays) over $k$ is equivalent to the category of Dieudonn\'e modules (respectively ramified Dieudonn\'e modules) over $k$, it is enough to show the result for Dieudonn\'e modules). Let us denote $ \BH(D_i) $ by $ H_i $ and let $ \phi:H_1\times\dots\times H_r\to  H_0 $ be a $ W_{\CO}(k) $-multilinear morphism satisfying the $V_{\pi}$-condition.  Define $$ \chi(\phi):D_1\times\dots\times D_r\to D_0$$ as follows. For every $ i=1,\dots, r $, take an  element $ x_i \in D_i$. We know that every element of $ D_i $ can be written in a unique way as a sum $ a_0+Va_1+\dots+V^{f-1}a_{f-1} $ with $ a_j\in \BH(D_i) $ (cf. Remark \ref{remramdieudonne}). Therefore, we may assume that each $ x_i $ is of the form $ V^{\alpha_i}y_i $ with $ y_i\in \BH(D_i) $ and $ 0\leq \alpha_i\leq f-1 $. Set \[ \chi(\phi)(V^{\alpha_1}y_1,\dots,V^{\alpha_r}y_r):=\left\{ \begin{array}{rl} V^{\alpha}\phi(y_1,\dots,y_r) & \text{if } \alpha_1=\alpha_2=\dots=\alpha_r=\alpha\\ 0\qquad & \text{otherwise} \end{array} \right. \] It follows from the construction that $ \chi(\phi) $ is a $ W(k)\otimes_{\BZ_p}\CO $-multilinear morphism. We have to check that it satisfies the $ V $-condition. If $ \alpha $ is strictly smaller that $ f-1 $, then $ \alpha+1\leq f-1 $, and by definition, we have \[ V\chi(\phi)(V^{\alpha}y_1,\dots,V^{\alpha}y_r)=V^{\alpha+1}(\phi)(y_1,\dots,y_r)=\chi(\phi)(V^{\alpha+1}y_1,\dots,V^{\alpha+1}y_r).\] If $ \alpha=f-1 $, then we have $ V \chi(\phi)(V^{f-1}y_1,\dots,V^{f-1}y_r)=V^{f}\phi(y_1,\dots,y_r)$. Since $ \phi $ satisfies the $ V_{\pi} $-condition, and on $ \BH(D_0) $, $ V_{\pi} $ is equal to $ V^f $, we conclude that $$ V^{f}\phi(y_1,\dots,y_r)=\phi(V^fy_1,\dots,V^fy_r)=\chi(\phi)(V^fy_1,\dots,V^fy_r) .$$ This shows that $ \chi(\phi) $ belongs to the $ \CO $-module $  \Mult^{\CO}(D_1\times\dots\times D_r, D_0) $ and that we have an $ \CO $-linear homomorphism \[\chi:\Mult^{\CO}(\BH(D_1)\times\dots\times \BH(D_r),\BH(D_0))\to \Mult^{\CO}(D_1\times\dots\times D_r, D_0).\]

Now, we want to define a homomorphism, $ \Xi $, in the other direction, which will be the inverse of $ \chi $. Let $ \psi:D_1\times\dots\times D_r\to D_0 $ be a $ W\otimes_{\BZ_p}\CO $-multilinear morphism satisfying the $ V $-condition. We then obtain a $ W_{\CO}(k) $-multilinear morphism $$\Xi(\psi): \BH(D_1)\times\dots\times \BH(D_r)\to \BH(D_0) $$ just by restricting $ \psi $ on the first components of $ D_i $. As $ \psi $ satisfies the $ V $-condition, it also satisfies $ V^f $-condition, and thus, $ \Xi(\psi) $ satisfies the $ V_{\pi} $-condition. Consequently, we have a $ \CO $-linear homomorphism \[ \Xi: \Mult^{\CO}(D_1\times\dots\times D_r, D_0)\to \Mult^{\CO}(\BH(D_1)\times\dots\times \BH(D_r),\BH(D_0)).\] By construction, the composition $ \Xi\circ\chi $ is the identity. As for the composition $ \chi\circ\Xi $, we have $$ \chi\circ\Xi(\psi)(V^{\alpha}y_1,\dots,V^{\alpha}y_r)= V^{\alpha}\Xi(\psi)(y_1,\dots,y_r)=$$ $$V^{\alpha}\psi(y_1,\dots,y_r)=\psi(V^{\alpha}y_1,\dots,V^{\alpha}y_r)$$ where the last equality follows from the fact that $ \psi $ satisfies the $ V $-condition. This implies that the composition  $ \chi\circ\Xi $ is also the identity. The proof is now achieved.
\end{proof}

\begin{rem}
\label{remChi}
$  $
\begin{itemize}
\item[1)] Let $ \CP_0,\dots,\CP_r $ be $3n$-displays over $k$ with scalar $ \CO $-action. Then the $ \CO $-linear homomorphism $ \chi $ given in the above Lemma is given by the following formula. Take elements $ \phi\in \Mult^{\CO}(\BH(\CP_1)\times\dots\times \BH(\CP_r),\BH(\CP_0))$ and  $\textbf{x}:=(\vec{x}_1,\dots,\vec{x}_r)\in P_1\times\dots\times P_r $. Write $ \vec{x}_i=(x_{i,0},\dots,x_{i,f-1}) $ according to the decomposition $ P_i=P_{i,0}\times\dots\times P_{i,f-1} $. Then $$ \chi(\phi)(\textbf{x})=\big(\phi(x_{1,0},\dots,x_{r,0}),V\phi(V^{-1}x_{1,1},\dots,V^{-1}x_{r,1}),\dots$$ $$,V^{f-1}\phi(V^{-f+1}x_{1,f-1},\dots,V^{-f+1}x_{r,f-1})\big)$$ note that for every $ j>0 $, we have $ P_{i,j}=V^jP_{i,0}$ and the formula makes sense.
\item[2)] Let $ D $ be a nilpotent Dieudonn\'e module over $k$ (i.e., the corresponding $p$-divisible group is connected) with scalar $ \CO $-action and such that the dimension of its tangent space is 1. Then, by results of chapter 5 (cf. Proposition \ref{prop 9}), the exterior power $ \unset{W\widehat{\otimes}_{\BZ_p}\CO}{\ep^r}D $ is again a nilpotent Dieudonn\'e module with scalar $ \CO $-action (we are using the equivalence of $p$-divisible groups with $ \CO $-actions and $ \pi $-divisible modules). We have a canonical isomorphism \[ \BH(\unset{W\widehat{\otimes}_{\BZ_p}\CO}{\ep^r}D)\cong \unset{W_{\CO}(k)}{\ep^r}\BH(D).\] Indeed, if $ D=M_0\times\dots \times M_{f-1} $, then $$ \unset{W\widehat{\otimes}_{\BZ_p}\CO}{\ep^r}D=\unset{W_{\CO}(k)}{\ep}^rM_0\times\dots\times \unset{W_{\CO}(k)}{\ep^r}M_{f-1} .$$ It follows that under the isomorphism of the previous lemma, the universal alternating morphism $ \lambda:D^r\to \unset{W\widehat{\otimes}_{\BZ_p}\CO}{\ep^r}D $ corresponds to the universal alternating morphism $ \lambda:\BH(D)^r\to  \unset{W_{\CO}(k)}{\ep^r}\BH(D)$.
\end{itemize}
\end{rem}

\begin{cons}
\label{constrace}
Let $ D $ be a Dieudonn\'e module over $k$ with a scalar $ \CO $-action. Define a map $  \Tr:D\to \BH(D) $ as follows. Take an element  $x$ of  $D$. It can be written uniquely as a sum $x_0+Vx_1+\dots+V^{f-1}x_{f-1}$ with $ x_i\in \BH(D) $. Now set $\Tr(x):=x_0+x_1+\dots+x_{f-1}. $ Since the Verschiebung is an $ \CO $-linear homomorphism, the map $ \Tr $ is also an $ \CO $-linear homomorphism. In the same fashion, if $ \CP=(P,Q,F,V^{-1})$ is a $3n$-display over $k$, we obtain an $ \CO $-linear homomorphism $ \Tr:\CP\to \BH(\CP) $ (cf. Remark \ref{equivdisp}). Now, let $ \CN $ be a nilpotent $ k $-algebra, we want to define similarly, an $ \CO $-linear homomorphism $ \Tr:  \widehat{P}(\CN)\to \widehat{\BH(\CP)}(\CN)$. We have decompositions \[ P=P_0\times\dots P_{f-1} \] and \[ Q=Q_0\times\dots\times Q_{f-1} \] and we know that in fact, for every $ i=1,\dots,f-1 $, the two $ W_{\CO}(k) $-modules $ P_i $ and $ Q_i $ are equal (and are equal to $ V^{i}P_0 $). We also know that $ Q_0=V^fP_0 $. Thus, we have a decomposition \[ \widehat{W}({\CN})\otimes_{W(k)}P=\widehat{W}({\CN})\otimes_{W(k)}P_0\times \widehat{W}({\CN})\otimes_{W(k)}Q_1\times \dots \times \widehat{W}({\CN})\otimes_{W(k)}Q_{f-1}.\] Note that each component $\widehat{W}({\CN})\otimes_{W(k)}Q_i $ ($i=1,\dots, f-1$) is a submodule of $ \widehat{Q}_{\CN} $ and on it the morphism $ V^{-i} $ is defined. So, we can define an $ \CO $-linear homomorphism $ \Tr:\widehat{P}_{\CN}\to \widehat{\BH(P)}_{\CN} $ by sending an element $ (x_0,q_1,\dots, q_{f-1}) $ to the element $ (\mu\otimes\Id) (x_0+V^{-1}q_1+\dots+V^{-f+1}q_{f-1}) $, where $ u:\widehat{W}({\CN})\to \widehat{W}_{\CO}({\CN}) $ is the canonical morphism given in Proposition \ref{propmorramwitt}.
\end{cons}

\textbf{Notations.} Let $ \CP $ be a ramified $3n$-display over $k$ with an $ \CO $-action. In the following proposition and the theorem that follows it, we denote by $ BT_{\CP}^{\CO} $ the formal group associated to $ \CP $ (in order to stress the ring $ \CO $). So, if $ \CP $ is a $3n$-displays in the classical sense, we denote its formal group by $ BT_{\CP}^{\BZ_p} $.

\begin{prop}
\label{isoBTgroups}
Let $ \CP $ be a $3n$-display over $k$ with a scalar $ \CO $-action. The homomorphism $ \Tr $ induces an isomorphism $$\Tr: BT_{\CP}^{\BZ_p}\arrover{\cong} BT_{\BH(\CP)}^{\CO} $$ of formal $ \CO $-modules.
\end{prop}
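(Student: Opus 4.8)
The plan is to construct the inverse of $\Tr$ on the level of formal groups and to check compatibility with the Frobenius-Verschiebung data. First I would unwind the two sides. By Construction \ref{cons012} (and its ramified analogue in point $6)$ of the list), for a nilpotent $k$-algebra $\CN$ the group $BT_{\CP}^{\BZ_p}(\CN)$ is the cokernel of $V^{-1}-\Id: G_{\CP}^{-1}(\CN)\to G_{\CP}^{0}(\CN)$, where $G_{\CP}^{0}(\CN)=\widehat{W}(\CN)\otimes_{W(k)}P$ and $G_{\CP}^{-1}(\CN)=\widehat{Q}(\CN)$; similarly $BT_{\BH(\CP)}^{\CO}(\CN)$ is the cokernel of $V_{\pi}^{-1}-\Id:\widehat{W}_{\CO}(\CN)\otimes_{W_{\CO}(k)}\BH(Q)\to \widehat{W}_{\CO}(\CN)\otimes_{W_{\CO}(k)}\BH(P)$. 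Then I would verify that the $\CO$-linear map $\Tr:\widehat{P}(\CN)\to\widehat{\BH(P)}(\CN)$ of Construction \ref{constrace} carries $\widehat{Q}(\CN)$ into $\widehat{\BH(Q)}(\CN)$ and intertwines $V^{-1}-\Id$ with $V_{\pi}^{-1}-\Id$, so that it descends to a natural homomorphism $\Tr:BT_{\CP}^{\BZ_p}(\CN)\to BT_{\BH(\CP)}^{\CO}(\CN)$; the intertwining is a direct computation using the decomposition $P=P_0\times\dots\times P_{f-1}$ with $P_i=Q_i=V^iP_0$ for $i\geq 1$ and $Q_0=V^fP_0$, together with the three identities for $\mu:\widehat{W}\to\widehat{W}_{\CO}$ from Proposition \ref{propmorramwitt} (in particular $\mu(F^f w)=F_\pi\mu(w)$ and $\mu(Vw)=(p/\pi)V_\pi\mu(F^{f-1}w)$). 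That $\Tr$ is $\CO$-linear on formal groups, hence a morphism of formal $\CO$-modules, is inherited from the $\CO$-linearity noted in Construction \ref{constrace}.

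Next I would prove bijectivity. One clean route is to exhibit a splitting: define $s:\widehat{\BH(P)}(\CN)\to \widehat{P}(\CN)$ as the inclusion of the first factor $\widehat{W}(\CN)\otimes_{W(k)}P_0$ (after lifting along $\mu$, which on $\widehat{W}$ has a section since $\mu([a])=[a]$ and $\widehat W_{\CO}$ is a quotient of $\widehat W$ compatible with the relevant truncations), and check that $\Tr\circ s$ and $s\circ\Tr$ are, modulo the images of the respective $V^{-1}-\Id$ maps, the identity. Concretely, given $(x_0,q_1,\dots,q_{f-1})\in\widehat{P}(\CN)$ one uses the relations $(V^{-1}-\Id)(V^{-i}q_i)\in$ the displayed subgroup to reduce it to an element supported in the $P_0$-factor whose image is exactly $\Tr$ of the original; this is the ramified-display analogue of the telescoping computation in the proof of Proposition \ref{prop0 16}(i). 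Functoriality in $\CN$ is automatic from the functoriality of all the constructions involved, so naturality of the isomorphism follows. Finally, compatibility with Frobenius and Verschiebung (hence the assertion that it is an isomorphism \emph{of formal $\CO$-modules} in the strong sense, if needed) is checked via points $4)$ and $10)$ of the list: one shows $\Tr$ sends $Fr_{\CP}$ to $Fr_{\BH(\CP)}$ and $Ver_{\CP}$ to $Ver_{\BH(\CP)}$, again reducing to the identity $\mu\circ F^f=F_\pi\circ\mu$.

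The main obstacle I anticipate is bookkeeping with the two completions $\widehat W$ and $\widehat W_{\CO}$ and the map $\mu$ between them: one must be careful that $\Tr$ is well-defined on $\widehat Q(\CN)$ (not merely on $\widehat P(\CN)$), which hinges on the precise identification $Q_i=V^iP_0$ for $i\geq 1$ versus $Q_0=V^fP_0$, and on the fact that $V^{-1}$ restricted to the various summands of $\widehat Q(\CN)$ matches, after applying $\mu$, the single operator $V_\pi^{-1}$ on $\widehat{\BH(Q)}(\CN)$. Showing the telescoping/splitting argument really produces a two-sided inverse modulo the coboundaries — rather than just a surjection or just an injection — is the delicate point; here I would lean on the exactness of $0\to G^{-1}\xrightarrow{V^{-1}-\Id}G^0\to BT\to 0$ (point $7)$) on both sides, so that injectivity and surjectivity of the induced map on cokernels can be read off from a diagram chase once the intertwining and the partial splitting are in place. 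Everything else is routine semilinear algebra over $W_{\CO}(k)$, using that $\BH(\CP)$ is a free $W_{\CO}(k)$-module of the correct rank (Corollary \ref{corramdieudonne}) and that $\BH$ and $\BD$ are mutually inverse (Lemma \ref{equivalence}).
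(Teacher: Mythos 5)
Your first half is sound and matches the paper: the intertwining identity $(\mu\otimes\Id)(V^{-f}q_0)=V_{\pi}^{-1}(\mu\otimes\Id)(q_0)$ on $\widehat{Q}_{0,\CN}$, checked case by case via Proposition \ref{propmorramwitt}, is exactly how the paper shows $\Tr$ descends to the cokernels, and your telescoping reduction of $(x_0,q_1,\dots,q_{f-1})$ to an element supported in the $P_0$-factor is the paper's map $\Tr'$ together with its recursive injectivity argument. The gap is in the second half of your bijectivity argument. You propose a section $s$ of $\Tr$ by including the first factor ``after lifting along $\mu$, which on $\widehat W$ has a section since $\mu([a])=[a]$.'' This is not justified: $\mu:\widehat{W}(\CN)\to\widehat{W}_{\CO}(\CN)$ is in general neither split nor even surjective when $\CO$ is ramified over $\BZ_p$, because $\mu(Vw)=(p/\pi)V_{\pi}\mu(F^{f-1}w)$ and $p/\pi=u\pi^{e-1}$ is a non-unit for $e>1$; so the image of $V\widehat W(\CN)$ lands in $\pi^{e-1}V_{\pi}\widehat W_{\CO}(\CN)$, and elements such as $V_{\pi}[a]$ need not lift. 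Knowing that Teichm\"uller representatives lift does not generate enough of $\widehat W_{\CO}(\CN)$, since that group is not generated by Teichm\"uller elements.

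Consequently the real content of the proposition — that the map $\overline{\mu\otimes\Id}$ from the cokernel of $V^{-f}-\Id$ on $\widehat W(\CN)\otimes_{W(k)}P_0$ to the cokernel of $V_{\pi}^{-1}-\Id$ on $\widehat W_{\CO}(\CN)\otimes_{W_{\CO}(k)}\BH(P)$ is bijective — is not established by your argument. The paper handles this by a genuinely different device: it factors $\Tr$ through the intermediate functor $BT_{\CP_0}^{\BZ_p}$, reduces to square-zero $\CN$, and uses the exponential isomorphisms $\CN\otimes_k\CT(\CP_0)\cong BT_{\CP_0}^{\BZ_p}(\CN)$ and $\CN\otimes_k\CT(\BH(\CP))\cong BT_{\BH(\CP)}^{\CO}(\CN)$ together with the equality of tangent modules $\CT(\CP_0)=\CT(\BH(\CP))$ (since $P_0=\BH(P)$ and $Q_0=\BH(Q)$). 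You would need to supply an argument of this kind; the exactness of $0\to G^{-1}\to G^0\to BT\to 0$ on both sides alone does not produce the two-sided inverse once the claimed lifting fails. A minor further point: the compatibility with $Fr$ and $Ver$ you mention at the end is not needed for the statement — being a morphism of formal $\CO$-modules already follows from the $\CO$-linearity of $\Tr$.
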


\begin{proof}
We have to show that \[ \Tr((V^{-1}-\Id)\widehat{Q}_{\CN})\subseteq (V_{\pi}^{-1}-\Id)(\widehat{\BH(Q)}_{\CN}).\] So, take an element $ (q_0,\dots,q_{f-1})\in \widehat{Q}_{\CN} $. We have \[ \Tr((V^{-1}-\Id)(q_0,\dots,q_{f-1}))=\]\[\Tr(V^{-1}q_1-q_0,V^{-1}q_2-q_1,\dots,V^{-1}q_{f-1}-q_{f-2},V^{-1}q_0-q_{f-1})=\] \[ (\mu\otimes\Id)(V^{-1}q_1-q_0+V^{-1}q_2-q_1+\dots+V^{-1}q_{f-1}-q_{f-2}+V^{-f}q_0-q_{f-1})=\]\[(\mu\otimes\Id)(V^{-f}q_0-q_0)=(\mu\otimes\Id)(V^{-f}q_0)-(\mu\otimes\Id)(q_0).\] It is thus sufficient to show that 
\begin{myequation}
\label{eqVersch}
(\mu\otimes\Id)(V^{-f}q_0)=V_{\pi}^{-1}(\mu\otimes\Id)(q_0).
\end{myequation} 
We can assume that either $ q_0=\xi\otimes y_0 $ with $\xi\in \widehat{W}(\CN), y_0\in Q_0 $ or $ q_0=V^f\xi\otimes x_0 $ with $\xi\in \widehat{W}(\CN), y_0\in P_0 $. In the first case, we have \[ (\mu\otimes\Id)(V^{-f}q_0)=(\mu\otimes\Id)(F^f\xi\otimes V^{-f}y_0)=\mu(F^f\xi)\otimes V_{\pi}^{-1}y_0=\]\[F_{\pi}\mu(\xi)\otimes V_{\pi}^{-1}y_0=V_{\pi}^{-1}(\mu\otimes\Id)(\xi\otimes y_0)=V_{\pi}^{-1}(\mu\otimes\Id)(q_0)\] where we have used Proposition \ref{propmorramwitt} for the third equality. In the second case, we have \[ (\mu\otimes\Id)(V^{-f}q_0)=(\mu\otimes\Id)(\xi\otimes F^fx_0)=\mu(\xi)\otimes F^fx_0= \mu(\xi)\otimes V^{-f}(V^{f}F^fx_0)=\] \[\mu(\xi)\otimes V_{\pi}^{-1}(p^fx_0)=p^{f-1}\mu(\xi)\otimes V_{\pi}^{-1}(px_0)=  \mu(p^{f-1}\xi)\otimes V_{\pi}^{-1}(px_0)=\] \[\mu(F^{f-1}V^{f-1}\xi)\otimes V_{\pi}^{-1}(px_0)= \mu(F^{f-1}V^{f-1}\xi)\otimes V_{\pi}^{-1}\pi(\frac{p}{\pi}x_0)=\] \[ \mu(F^{f-1}V^{f-1}\xi)\otimes F_{\pi}(\frac{p}{\pi}x_0)= V_{\pi}^{-1}(V_{\pi}\mu(F^{f-1}V^{f-1}\xi)\otimes \frac{p}{\pi}x_0) =\]\[V_{\pi}^{-1}(\frac{p}{\pi}V_{\pi}\mu(F^{f-1}V^{f-1}\xi)\otimes x_0)=V_{\pi}^{-1}(\mu(V^f\xi)\otimes x_0)=V_{\pi}^{-1}(\mu\otimes\Id)(q_0)\] where for the penultimate equality we have used Proposition \ref{propmorramwitt}. This proves the claim (equality \ref{eqVersch}) and it follows that $ \Tr $ induces an $ \CO $-linear homomorphism $$\Tr: BT_{\CP}^{\BZ_p}\to BT_{\BH(\CP)}^{\CO} $$ and we have to show that it is an isomorphism. Let us construct $ \widehat{P}_{0,\CN}, \widehat{Q}_{0,\CN} $ and the monomorphism $ V^{-f}-\Id:\widehat{Q}_{0,\CN}\to \widehat{P}_{0,\CN} $ in the same way that we constructed $ \widehat{P}_{\CN}, \widehat{Q}_{\CN} $ and $ V^{-1}-\Id: \widehat{Q}_{\CN}\to \widehat{P}_{\CN}$. Let us denote by $ BT_{\CP_0}^{\BZ_p}(\CN) $ the cokernel of $ V^{-f}-\Id:\widehat{Q}_{0,\CN}\to \widehat{P}_{0,\CN} $. It follows that the homomorphism $ \Tr $ is equal to the composition $$ \widehat{P}_{\CN}\arrover{\Tr'}\widehat{P}_{0,\CN}\arrover{\mu\otimes \Id} \widehat{\BH(P)}_{\CN} $$ where $ \Tr'$ is the homomorphism sending $ (x_0,q_1,\dots, q_{f-1}) $ to the sum $ x_0+V^{-1}q_1+\dots+V^{-f+1}q_{f-1} $. It also follows from the above calculations that $$ \Tr'((V^{-1}-\Id)\widehat{Q}_{\CN})\subseteq (V^{-f}-\Id)(\widehat{Q}_{0,\CN}) $$ and $$ (\mu\otimes\Id)((V^{-f}-\Id)(\widehat{Q}_{0,\CN}))\subseteq (V^{-1}_{\pi}-\Id)(\widehat{\BH(Q)}_{\CN}) .$$ In other words, we have a commutative diagram:\[ \xymatrix{\widehat{P}_{\CN}\ar[r]^{\Tr'}\ar@{->>}[d]\ar@/^2pc/[rr]^{\Tr} & \widehat{P}_{0,\CN}\ar[r]^{\mu\otimes \Id}\ar@{->>}[d] & \widehat{\BH(P)}_{\CN}\ar@{->>}[d] \\ BT_{\CP}^{\BZ_p}(\CN)\ar[r]_{\Tr'}\ar@/_2pc/[rr]_{\Tr} & BT_{\CP_0}^{\BZ_p}(\CN)\ar[r]_{\overline{\mu\otimes\Id}} & BT_{\BH(\CP)}^{\CO}(\CN) .} \] It therefore suffices to show that $ \Tr' $ and $ \overline{\mu\otimes\Id} $ are isomorphisms.\\

The homomorphism $ \Tr':\widehat{P}_{\CN}\to \widehat{P}_{0,\CN} $ is surjective, since $ \Tr'(x_0,0,\dots,0)=x_0 $ for every $ x_0\in \widehat{P}_{0,\CN} $. Thus, $ \Tr' $ is surjective as well. Let $ (x_0,x_1,\dots,x_{f-1}) $ be an element in the kernel of $ \Tr' $. It means that there is an element $ q_0\in \widehat{Q}_{0,\CN} $ such that $$ \Tr' (x_0,x_1,\dots,x_{f-1})=x_0+V^{-1}x_1+\dots+V^{-f+1}x_{f-1}=(V^{-f}-\Id)(q_0).$$ Define, recursively, elements $ y_i\in \widehat{Q}_{\CN} $, with $ i=0,\dots,f-1 $, as follows: $ y_0:=q_0 $ and for $ 0<i<f-1 $, set $ y_i:=V^{-1}y_{i+1}-x_{i} $ where we calculate $ i+1 $ modulo $ f $, e.g., $ y_{f-1}=V^{-1}y_0-x_{f-1} $ and so on. It follows that the element $ (y_0,y_1,\dots,y_{f-1}) $ belongs to  $ \widehat{Q}_{\CN} $ and we have $$ (V^{-1}-\Id)(y_0,y_1,\dots,y_{f-1})=$$ $$(V^{-1}y_1-y_0,\dots,V^{-1}y_{f-1}-y_{f-2},V^{-1}y_0-y_{f-1})=(x_0,x_1,\dots,x_{f-1}) $$ and so $ (x_0,x_1,\dots,x_{f-1}) $ represents the zero element in $ BT_{\CP}^{\BZ_p}(\CN) $. It implies that $ \overline{'} $ is injective too, ans hence an isomorphism.\\

It remains to prove that $ \overline{\mu\otimes\Id}:BT_{\CP_0}^{\BZ_p}(\CN)\to BT_{\BH(\CP)}^{\CO}(\CN) $ is an isomorphism. We can reduce to the case, where $ \CN^2=0 $. Under this condition, there exist commutative diagrams (for details, we refer to \cite{TA} or \cite{Z1}):\[ \xymatrix{0\ar[r]&\widehat{Q}_{0,\CN}\ar@{^{(}->}[rr]\ar@{=}[d]&&\widehat{P}_{0,\CN}\ar[rr]\ar[d]_{V^{-1}-\Id}&&\CN\otimes_k\CT(\CP_0)\ar[r]\ar[d]_{\exp}&0 \\ 0\ar[r]&\widehat{Q}_{0,\CN}\ar[rr]_{V^{-1}-\Id}&&\widehat{P}_{0,\CN}\ar[rr]&&BT_{\CP_0}^{\BZ_p}(\CN)\ar[r]&0} \] and 

\[ \xymatrix{0\ar[r]&\widehat{\BH(Q)}_{\CN}\ar@{^{(}->}[rr]\ar@{=}[d]&&\widehat{\BH(P)}_{\CN}\ar[rr]\ar[d]_{V^{-1}-\Id}&&\CN\otimes_k\CT(\BH(\CP))\ar[r]\ar[d]_{\exp}&0 \\ 0\ar[r]&\widehat{\BH(Q)}_{\CN}\ar[rr]_{V^{-1}-\Id}&&\widehat{\BH(P)}_{\CN}\ar[rr]&&BT_{\BH(\CP)}^{\CO}(\CN)\ar[r]&0} \] where the vertical morphisms $ V^{-1}-\Id $ in the middle are extensions of the usual (horizontal) $ V^{-1}-\Id $ and the exponential morphism, which is $ \CO$-linear, is given by these diagrams. What is proved in \cite{TA} and \cite{Z1} is that the vertical $ V^{-1}-\Id $ are isomorphisms and so are the exponential morphisms.\\

Note that the tangent spaces $ \CT(\CP_0) $ and $ \CT(\BH(\CP)) $ are equal ($ P_0=\BH(P) $ and $ Q_0=\BH(Q)$) and the equality \ref{eqVersch} implies that we have a commutative diagram: \[ \xymatrix{\CN\otimes_k\CT(\CP_0)\ar[r]^{\exp}\ar[d]_{\Id} & BT_{\CP_0}^{\BZ_p}(\CN)\ar[d]^{\overline{\mu\otimes\Id}} \\ \CN\otimes_k\CT(\BH(\CP))\ar[r]_{\exp} & BT_{\BH(\CP)}^{\CO}(\CN).} \] Since the exponential morphisms are isomorphisms, we conclude that $ \overline{\mu\otimes\Id} $ is also an isomorphism. This achieves the proof.
\end{proof}

\begin{rem}
\label{remAhsen}
The proof of the above proposition is partly inspired by a similar result in \cite{TA}.
\end{rem}

\begin{rem}
\label{remrelpidivpdivmult}
Let $ \CM_0,\dots,\CM_r$ be $ \pi $-divisible $ \CO $-module schemes over a base scheme $S$. Let $ \phi:\CM_1\times\dots\times\CM_r\to \CM_0 $ be an $ \CO $-multilinear morphism, when $ \CM_i $ are considered as  $ p $-divisible groups with $ \CO $-action. Then $ \phi $ is not an $ \CO $-multilinear morphism of $ \pi $-divisible modules, but an appropriate ``twist" of it will be. Indeed, if we set $ \phi^{\sharp}_{ne}:=\frac{1}{u^{n(r-1)}}\phi_n $, then $$ \pi^e\phi^{\sharp}_{e(n+1)}(g_1,\dots,g_r)=\pi^e\frac{1}{u^{(n+1)(r-1)}}\phi_{n+1}(g_1,\dots,g_r)=$$ $$ \frac{1}{u^{(n+1)(r-1)+1}}p\phi_{n+1}(g_1,\dots,g_r)=\frac{1}{u^{(n+1)(r-1)+1}}\phi_n(pg_1,\dots,pg_r)=$$ \[\frac{1}{u^{(n+1)(r-1)+1}}\phi_n(u\pi^eg_1,\dots,u\pi^eg_r)=\frac{1}{u^{n(r-1)}}\phi_n(\pi^eg_1,\dots,\pi^eg_r)=\]\[\phi^{\sharp}_{ne}(\pi^eg_1,\dots,\pi^eg_r). \] Thus, the system $ \{\phi^{\sharp}_{ne}\}_n $ belongs to the inverse limit $$ \uset{n}{\invlim}\,\Mult^{\CO}(\CM_{1,ne}\times\dots\times\CM_{r,ne},\CM_{0,ne})\cong  \uset{n}{\invlim}\,\Mult^{\CO}(\CM_{1,n}\times\dots\times\CM_{r,n},\CM_{0,n})=$$ $$\Mult^{\CO}(\CM_1\times\dots\times \CM_r,\CM_0).$$ So, in the sequel, if we identify the $ \CO $-module of multilinear morphisms of $\pi$-divisible modules with the $ \CO $-module of such morphisms of the corresponding $p$-divisible groups with $ \CO $-action, we are implicitly using the above twist.
\end{rem}

The following lemma will be used in the proof of the next proposition.

\begin{lem}
\label{uglysum}
Let $A$ be a ring and $\alpha\in A$ a non-zero divisor. Let $ M_0,\dots,M_r $ be $A$-modules and $ \phi:M_1\times \dots\times M_r\to M_0 $ an $A$-multilinear morphism. Let $n$ be a positive natural number and $y_{i,0},y_{i,1},\dots,y_{i,n-1}, w_{i,0},w_{i,1},\dots, w_{i,n}\in M_i $ ($ i=1,\dots, r $) be elements subject to the following relations \[ \forall i=1,\dots,r\quad\text{and}\quad \forall j=0\dots n-1,\quad w_{i,j+1}=w_{i,j}+\alpha y_{i,j}. \] Then the following equality holds $$\sum_{i=1}^r\sum_{j=0}^{n-1}{\phi}(w_{1,j+1},\dots,w_{i-1,j+1},y_{i,j},w_{i+1,j},\dots,w_{r,j})=$$ $$\sum_{i=1}^r\sum_{j=0}^{n-1}{\phi}(w_{1,n},\dots,w_{i-1,n},y_{i,j},w_{i+1,0},\dots,w_{r,0}).$$
\end{lem}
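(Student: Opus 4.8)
The plan is to multiply both sides by $\alpha$, observe that after this multiplication each side telescopes to the same single value $\phi(w_{1,n},\dots,w_{r,n})-\phi(w_{1,0},\dots,w_{r,0})$, and then cancel $\alpha$ using that it is a non-zero divisor of $A$. The hypothesis $w_{i,j+1}=w_{i,j}+\alpha y_{i,j}$ enters precisely to convert the scaled entries $\alpha y_{i,j}$ into differences of the $w$'s, after which $A$-multilinearity of $\phi$ can be used to collapse the sums.

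First I would handle the left-hand side. For a fixed pair $(i,j)$, pulling $\alpha$ into the $i$-th slot by multilinearity and replacing $\alpha y_{i,j}$ by $w_{i,j+1}-w_{i,j}$ gives
\[\alpha\,\phi(w_{1,j+1},\dots,w_{i-1,j+1},y_{i,j},w_{i+1,j},\dots,w_{r,j})=P_{i,j}-P_{i-1,j},\]
where I set $P_{i,j}:=\phi(w_{1,j+1},\dots,w_{i,j+1},w_{i+1,j},\dots,w_{r,j})$ (the first $i$ arguments carry index $j+1$, the remaining ones index $j$), so that $P_{0,j}=\phi(w_{1,j},\dots,w_{r,j})$ and $P_{r,j}=\phi(w_{1,j+1},\dots,w_{r,j+1})$. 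Summing over $i$ from $1$ to $r$ collapses the right-hand side to $P_{r,j}-P_{0,j}$, and then summing over $j$ from $0$ to $n-1$ collapses this once more, yielding $\alpha\cdot(\mathrm{LHS})=\phi(w_{1,n},\dots,w_{r,n})-\phi(w_{1,0},\dots,w_{r,0})$.

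Next I would treat the right-hand side analogously, the difference being that here one first sums over $j$ for fixed $i$. Since $\sum_{j=0}^{n-1}\alpha y_{i,j}=w_{i,n}-w_{i,0}$ by the same relation, multilinearity in the $i$-th slot gives
\[\alpha\sum_{j=0}^{n-1}\phi(w_{1,n},\dots,w_{i-1,n},y_{i,j},w_{i+1,0},\dots,w_{r,0})=Q_i-Q_{i-1},\]
where $Q_i:=\phi(w_{1,n},\dots,w_{i,n},w_{i+1,0},\dots,w_{r,0})$, so that $Q_0=\phi(w_{1,0},\dots,w_{r,0})$ and $Q_r=\phi(w_{1,n},\dots,w_{r,n})$. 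Summing over $i$ telescopes to $Q_r-Q_0$, hence $\alpha\cdot(\mathrm{RHS})=\phi(w_{1,n},\dots,w_{r,n})-\phi(w_{1,0},\dots,w_{r,0})$ as well. Comparing the two computations gives $\alpha\cdot(\mathrm{LHS})=\alpha\cdot(\mathrm{RHS})$, and cancelling the non-zero divisor $\alpha$ concludes the argument.

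There is no serious obstacle here; the only care needed is bookkeeping — keeping the two telescopings straight (over $i$ on both sides, and additionally over $j$ on the left) and tracking the boundary cases $i\in\{0,r\}$ and $j\in\{0,n\}$ in the definitions of the auxiliary quantities $P_{i,j}$ and $Q_i$. One should also verify carefully, before collapsing, that the scaled summands really do equal the claimed differences, which is the one point where the relation $w_{i,j+1}-w_{i,j}=\alpha y_{i,j}$ and the multilinearity of $\phi$ are both used.
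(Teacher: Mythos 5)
Your telescoping computations are exactly the ones in the paper's proof, and they are correct: multiplying either side by $\alpha$, converting $\alpha y_{i,j}$ into $w_{i,j+1}-w_{i,j}$ via the hypothesis, and collapsing the resulting telescopic sums does yield $\phi(w_{1,n},\dots,w_{r,n})-\phi(w_{1,0},\dots,w_{r,0})$ for both sides. The gap is in the very last step. From $\alpha\cdot(\mathrm{LHS})=\alpha\cdot(\mathrm{RHS})$ in $M_0$ you cannot conclude $\mathrm{LHS}=\mathrm{RHS}$ merely because $\alpha$ is a non-zero divisor \emph{of the ring $A$}: that hypothesis does not make multiplication by $\alpha$ injective on an arbitrary $A$-module $M_0$. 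For instance, with $A=\BZ$, $\alpha=2$ and $M_0=\BZ/2\BZ$, the element $2$ is a non-zero divisor of $A$ but kills all of $M_0$. Since the lemma makes no torsion-freeness assumption on $M_0$ (and in the application $M_0$ need not be torsion-free a priori), your cancellation step is not justified as written, and the two sides are genuinely not determined by the $w$'s alone --- the $y_{i,j}$ enter linearly and are constrained only through $\alpha y_{i,j}=w_{i,j+1}-w_{i,j}$ --- so some reduction is unavoidable.

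The paper closes this gap with a universalization step that you are missing. One replaces each $M_i$ by the free module $\widetilde{M}_i=A^{M_i}=\bigoplus_{m\in M_i}Ae_m$, defines $\widetilde{\phi}$ on basis elements by $\widetilde{\phi}(e_{m_1},\dots,e_{m_r})=e_{\phi(m_1,\dots,m_r)}$, and lets $h_i:\widetilde{M}_i\to M_i$ send $e_m$ to $m$, so that $h_0\circ\widetilde{\phi}=\phi\circ(h_1\times\dots\times h_r)$. Choosing arbitrary preimages $\tilde{w}_{i,0}$ and $\tilde{y}_{i,j}$ of $w_{i,0}$ and $y_{i,j}$ and setting $\tilde{w}_{i,j+1}:=\tilde{w}_{i,j}+\alpha\tilde{y}_{i,j}$ recursively reproduces the hypotheses upstairs, where $\widetilde{M}_0$ is free and multiplication by the non-zero divisor $\alpha$ \emph{is} injective; there your argument goes through verbatim, and applying $h_0$ to the resulting identity gives the claim for the original data. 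If you add this reduction (or an equivalent one) before your computation, your proof coincides with the paper's.
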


\begin{proof}
For every $ i=0,\dots,r $, let $ \widetilde{M}_i $ be the free $A$-module with basis $\{e_{m_i}\mid m_i\in M_i \}$, in other words, $ \widetilde{M}_i=A^{M_i}\cong\bigoplus_{m_i\in M_i}Ae_{m_i} $ and define an $A$-multilinear morphism $ \widetilde{\phi}: \widetilde{M}_1\times\dots\times\widetilde{M}_r\to\widetilde{M}_0$ by setting on a basis element $ (e_{m_1},\dots,e_{m_r}) $\[  \widetilde{\phi} (e_{m_1},\dots,e_{m_r})=e_{\phi(m_1\dots,m_r)}\] and extend $ \widetilde{\phi} $ to the whole product $ \widetilde{M}_1\times\dots\times\widetilde{M}_r $ multilinearly. Also, for every $ i=0,\dots,r $, define $A$-module homomorphisms $ h_i:\widetilde{M}_i \to M_i$ by sending a basis element $ e_{m_i} $ to $ m_i $. The $A$-linear homomorphisms $h_i$ are surjective and from their definition and the definition of $ \widetilde{\phi} $, we have \[ h_0\circ  \widetilde{\phi}= \phi\circ (h_1\times\dots\times h_r)\] i.e., the following diagram is commutative \[ \xymatrix{\widetilde{M}_1\times\dots\times\widetilde{M}_r\ar[rr]^{\qquad\widetilde{\phi}}\ar[d]_{h_1\times\dots\times h_r} && \widetilde{M}_0\ar[d]^{h_0}\\ M_1\times\dots\times M_r\ar[rr]_{\qquad{\phi}}&& M_0.} \] For every $ i=1,\dots,r $ and every $ j=0,\dots,n-1 $ take elements $ \tilde{w}_{i,0},\tilde{y}_{i,j}\in \widetilde{M}_i $ such that $ h_i(\tilde{w}_{i,0})=w_{i,0} $ and $ h_i(\tilde{y}_{i,j})=y_{i,j} $ and set recursively $ \tilde{w}_{i,j+1}=\tilde{w}_{i,j} + \alpha \tilde{y}_{i,j}$. Then, because of the choice of $ \tilde{w}_{i,0}$ and $\tilde{y}_{i,j}$ and the relations among $ {w}_{i,j}$ and ${y}_{i,j}\in M_i$, we have \begin{myequation}\label{tildaversion}
 h_i(\tilde{w}_{i,j})=w_{i,j}
\end{myequation} for every $ i=1,\dots,r $ and every $ j=0,\dots,n $.\\

Assume that we have the ``tilde" version of the equality in the statement of the lemma, i.e., $$\sum_{i=1}^r\sum_{j=0}^{n-1}\widetilde{\phi}(\tilde{w}_{1,j+1},\dots,\tilde{w}_{i-1,j+1},\tilde{y}_{i,j},\tilde{w}_{i+1,j},\dots,\tilde{w}_{r,j})=$$ $$\sum_{i=1}^r\sum_{j=0}^{n-1}\widetilde{\phi}(\tilde{w}_{1,n},\dots,\tilde{w}_{i-1,n},\tilde{y}_{i,j},\tilde{w}_{i+1,0},\dots,\tilde{w}_{r,0}).$$ Then, applying the $A$-linear homomorphism $h_0$ on this equality, using the fact that the above diagram commutes and the relations \ref{tildaversion}, we obtain the desired equality of the lemma. So, we may assume that $ M_0 $ is a free $A$-module. Call the left hand side of the equality $ \FL $ and its right hand side $ \FR $. Using the multilinearity of $ \phi $ and the relations given in the lemma, we have \[ \alpha \FL= \sum_{j=0}^{n-1}\sum_{i=1}^r{\phi}(w_{1,j+1},\dots,w_{i-1,j+1},\alpha y_{i,j},w_{i+1,j},\dots,w_{r,j})= \] \[ \sum_{j=0}^{n-1}\sum_{i=1}^r{\phi}(w_{1,j+1},\dots,w_{i-1,j+1},w_{i,j+1}-w_{i,j},w_{i+1,j},\dots,w_{r,j})=\] \[ \sum_{j=0}^{n-1}\sum_{i=1}^r\big({\phi}(w_{1,j+1},\dots,w_{i-1,j+1},w_{i,j+1},w_{i+1,j},\dots,w_{r,j})- \] \[ {\phi}(w_{1,j+1},\dots,w_{i-1,j+1},w_{i,j},w_{i+1,j},\dots,w_{r,j})\big).\] Taking the sum over $i$, the terms cancel each other, except for the first and last terms (it is a telescopic sum) and the resulting sum will be \[ \sum_{j=0}^{n-1}\big({\phi}(w_{1,j+1},\dots,w_{r,j+1}) -  {\phi}(w_{1,j},\dots,w_{r,j})\big) =\] \[ {\phi}(w_{1,n},\dots,w_{r,n}) -  {\phi}(w_{1,0},\dots,w_{r,0}) \] again, because the terms cancel each other, except for the first and last terms. Similarly, we have \[\alpha \FR= \sum_{i=1}^r\sum_{j=0}^{n-1}{\phi}(w_{1,n},\dots,w_{i-1,n},\alpha y_{i,j},w_{i+1,0},\dots,w_{r,0})=\] \[ \sum_{i=1}^r\sum_{j=0}^{n-1}{\phi}(w_{1,n},\dots,w_{i-1,n},w_{i,j+1}-w_{i,j},w_{i+1,0},\dots,w_{r,0}) =\] \[ \sum_{i=1}^r\sum_{j=0}^{n-1}\big({\phi}(w_{1,n},\dots,w_{i-1,n},w_{i,j+1},w_{i+1,0},\dots,w_{r,0})-\] \[  {\phi}(w_{1,n},\dots,w_{i-1,n},w_{i,j},w_{i+1,0},\dots,w_{r,0})\big)=\] \[ \sum_{i=1}^r \big({\phi}(w_{1,n},\dots,w_{i-1,n},w_{i,n},w_{i+1,0},\dots,w_{r,0})\]\[ - {\phi}(w_{1,n},\dots,w_{i-1,n},w_{i,0},w_{i+1,0},\dots,w_{r,0})\big)=\] \[ {\phi}(w_{1,n},\dots,w_{r,n}) -  {\phi}(w_{1,0},\dots,w_{r,0})\] note that in this case we first sum over $j$ and then over $i$. It follows that $ \alpha \FL=\alpha \FR $ and since $M_0$ is a free $A$-module and $ \alpha $ is a non-zero divisor of $A$, we conclude that $ \FL=\FR $ and this completes the proof.
\end{proof}

\begin{rem}
\label{remuniversalizing}
Note that the assumption that $\alpha$ is a non-zero divisor is not necessary and a similar ``universalization" technique implies the equality of the two sums for an arbitrary element $ \alpha\in A $. However, we will use this lemma in the case where $ A $ is the ring $ \CO $ and $ \alpha $ is a non-zero element.
\end{rem}

\begin{prop}
\label{betadiagramramdisplay}
Let $ \CP_0,\dots,\CP_r $ be $3n$-displays over $k$ with scalar $ \CO $-action. Let us denote by $ G_i $ (respectively $ \tilde{G}_i $) the formal $ \CO $-module $ BT_{\CP_i}^{\BZ_p} $ (respectively $ BT_{\BH(\CP_i)}^{\CO} $). Then the following diagram is commutative \[ \xymatrix{\Mult^{\CO}(\CP_1\times\dots\times\CP_r,\CP_0)\ar[r]^{\Xi\qquad}\ar[d]_{\beta} & \Mult^{\CO}(\BH(\CP_1)\times\dots\times\BH(\CP_r),\BH(\CP_0))\ar[d]^{\beta} \\ \Mult^{\CO}(G_1\times\dots\times G_r,G_0)\ar[r]_{\Tr^*} & \Mult^{\CO}(\tilde{G}_1\times\dots\times \tilde{G}_r,\tilde{G}_0) } \] where $ \Xi $ and $ \Tr^* $ are respectively the $ \CO $-linear homomorphism given in Lemma \ref{isomult} and the $ \CO $-linear homomorphism induced by the isomorphisms $ \Tr:G_i\to\tilde{G}_i $ given in Proposition \ref{isoBTgroups}.
\end{prop}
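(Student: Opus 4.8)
\textbf{Proof strategy for Proposition \ref{betadiagramramdisplay}.}

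The plan is to verify the commutativity of the square by evaluating both composites on an arbitrary $\CO$-multilinear morphism $\phi\in\Mult^{\CO}(\CP_1\times\dots\times\CP_r,\CP_0)$ and comparing the resulting $\CO$-multilinear morphisms of formal $\CO$-modules $\tilde G_1\times\dots\times\tilde G_r\to\tilde G_0$ on sections over an arbitrary nilpotent $k$-algebra $\CN$. Since all four maps in the diagram are $\CO$-linear, and since a multilinear morphism of formal $\CO$-modules is determined by its effect on the $\pi^n$-torsion for all $n$, it suffices to check the equality $\beta_{\Xi(\phi),n}=\Tr^*(\beta_{\phi,n})$ for each $n$. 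Here the left side is built from the ramified display $\BH(\CP_i)$ with its operators $F_\pi,V_\pi^{-1}$, and the right side is obtained by transporting $\beta_{\phi,n}$ along the isomorphisms $\Tr\colon BT^{\BZ_p}_{\CP_i}\to BT^{\CO}_{\BH(\CP_i)}$ constructed in Proposition \ref{isoBTgroups}.

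First I would unwind the definition of $\beta$ in both the classical and ramified settings (Construction \ref{cons05}, together with its ramified analogue from item 13 of the list in the first section): given $[x_i]_n\in BT_{\CP_i,n}(\CN)$ one forms the elements $g_i=\leftidx{_n}{g}{_{\CP_i}}(x_i)\in G^{-1}_{\CP_i}(\CN)$ with $(V^{-1}-\Id)(g_i)=p^nx_i$ (respectively $(V_\pi^{-1}-\Id)(g_i)=\pi^n x_i$), and $\beta_{\phi,n}$ is the alternating sum $(-1)^{r-1}\sum_i[\widehat\phi(V^{-1}g_1,\dots,V^{-1}g_{i-1},x_i,g_{i+1},\dots,g_r)]$. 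The key computational input is the compatibility of the trace homomorphism $\Tr$ of Construction \ref{constrace} with the two $V^{-1}$-structures, which is exactly the content of equation \eqref{eqVersch} in the proof of Proposition \ref{isoBTgroups}: $(\mu\otimes\Id)(V^{-f}q)=V_\pi^{-1}(\mu\otimes\Id)(q)$. Using this, together with the explicit formula in Remark \ref{remChi} 1) for $\chi=\Xi^{-1}$ (so that $\widehat{\Xi(\phi)}$ applied to components is $\phi$ twisted by powers of $V$), I would rewrite $\Tr\bigl(\widehat\phi(\dots)\bigr)$ as a sum of terms of the form $\widehat{\Xi(\phi)}(V_\pi^{-1}\cdot,\dots)$ over the $f$ ``sectors'' of the decomposition $P_i=P_{i,0}\times\dots\times P_{i,f-1}$; the element $\leftidx{_n}{g}{_{\CP_i}}$ for the $\CO$-display $\BH(\CP_i)$ turns out to be $\Tr$ applied to $\leftidx{_{ne}}{g}{_{\CP_i}}$ (up to the unit twist of Remark \ref{remrelpidivpdivmult}), because $\Tr$ intertwines $V^{-1}-\Id$ with $V_\pi^{-1}-\Id$. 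Matching the resulting alternating sums will require a combinatorial rearrangement identity, and this is where Lemma \ref{uglysum} enters: the nested sums produced on the two sides differ only in the order and ``position'' of the accumulated Verschiebung-corrected arguments, and Lemma \ref{uglysum} (applied with $A=\CO$, $\alpha=\pi$, and $\phi$ the relevant multilinear morphism) equates them.

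The main obstacle will be bookkeeping: keeping straight the interplay between (i) the index $n$ for $\pi$-power torsion on the ramified side versus $ne$ (or $fn$ in equal characteristic, but here $\CO$ is mixed characteristic with $p=u\pi^e$) on the unramified side, (ii) the unit twist $\phi^\sharp_{ne}=u^{-n(r-1)}\phi_n$ of Remark \ref{remrelpidivpdivmult} needed so that the classical multilinear morphism descends to a genuine $\CO$-multilinear morphism of $\pi$-divisible modules, and (iii) the $f$-fold ``sector'' decomposition coming from $W\widehat\otimes_{\BZ_p}\CO\cong\prod_{\BZ/f\BZ}W_{\CO}(k)$ (Corollary \ref{corramwitt}), along which $\Tr$ collapses a product of $W(k)$-modules to a single $W_{\CO}(k)$-module. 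I expect the cleanest path is to first treat $r=1$ separately — there $\beta$ is just functoriality of $BT$ (Remark \ref{rem022}), $\Xi$ is the identification $\Mult^{\CO}(\BH(\CP_1),\BH(\CP_0))\cong\Hom^{\CO}(\CP_1,\CP_0)$, and commutativity is immediate from the naturality of $\Tr$ in Proposition \ref{isoBTgroups} — and then handle $r\ge 2$ by the computation sketched above, where the telescoping identity of Lemma \ref{uglysum} does the final assembly. Throughout I would reduce to $\CN^2=0$ wherever the exponential presentation simplifies matters, exactly as in the proof of Proposition \ref{isoBTgroups}.
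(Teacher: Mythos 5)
Your proposal follows essentially the same route as the paper's proof: evaluate both composites on sections over a nilpotent $k$-algebra, unwind the definition of $\beta$ via the elements $g_i$ with $(V^{-1}-\Id)(g_i)=p^n x_i$, use the explicit formula for $\chi=\Xi^{-1}$ from Remark \ref{remChi} together with the Verschiebung compatibility $(\mu\otimes\Id)(V^{-f}q)=V_\pi^{-1}(\mu\otimes\Id)(q)$ and the unit twist of Remark \ref{remrelpidivpdivmult}, and close with the telescoping identity of Lemma \ref{uglysum}. The only slip is that the lemma is applied with $\alpha=u^n\pi^{ne}=p^n$ (coming from the relation $w_{i,j+1}-w_{i,j}=V^{-j}x_{i,j}\cdot u^n\pi^{ne}$), not $\alpha=\pi$, but this does not affect the viability of the argument.
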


\begin{proof}
Take an $ \CO $-multilinear morphism $ \phi\in \Mult^{\CO}(\BH(\CP_1)\times\dots\times\BH(\CP_r),\BH(\CP_0)) $, a nilpotent $k$-algebra $ \CN $,  and a vector $ \textbf{x}=(\vec{x}_1,\dots,\vec{x}_r)\in G_{1,n}(\CN)\times\dots\times G_{r,n}(\CN) $. Using Remark \ref{remrelpidivpdivmult}, we have to show that \[ \frac{1}{u^{n(r-1)}}\Tr\big(\beta(\chi(\phi))(\textbf{x})\big)=\beta(\phi)(\Tr(\vec{x}_1),\dots,\Tr(\vec{x}_r))\] where $ \chi $ is the isomorphism given in the proof of Lemma \ref{isomult} and the equality should hold in the $ \CO $-module $ G_{0,ne}(\CN) $. By definition (cf. Construction \ref{cons05}), we have \[\beta(\chi(\phi))(\textbf{x})=(-1)^{r-1}\sum_{i=1}^r[\widehat{\chi(\phi)}(V^{-1}\vec{g}_1,\dots,V^{-1}\vec{g}_{i-1},\vec{x}_i,\vec{g}_{i+1},\dots, \vec{g}_r)]\] where elements $ \vec{g}_i $ are given by the formula $ p^n\vec{x}_i=(V^{-1}-\Id)(\vec{g}_i).$ By definition of $ \chi $ (cf. Lemma \ref{isomult} and Remark \ref{remChi}) \[\Tr\big([\widehat{\chi(\phi)}(V^{-1}\vec{g}_1,\dots,V^{-1}\vec{g}_{i-1},\vec{x}_i,\vec{g}_{i+1},\dots, \vec{g}_r) ]\big)=\] 
\[\overline{\mu\otimes\Id}[\sum_{j=0}^{f-1}(\widehat{\phi}(V^{-j-1}g_{1,j+1},\dots,V^{-j-1}g_{i-1,j+1},V^{-j}x_{i,j},V^{-j}g_{i+1,j},\dots,V^{-j}g_{r,j}))]
\]
where $ \vec{x}_i=(x_{i,0},\dots,x_{i,f-1}) $ and $ \vec{g}_i=(g_{i,0},\dots,g_{i,f-1}) $. Therefore, we have $$ \frac{1}{u^{n(r-1)}}\Tr\big(\beta(\chi(\phi))(\textbf{x})\big)= \frac{(-1)^{r-1}}{u^{n(r-1)}}\overline{\mu\otimes\Id}[\sum_{i=1}^r\sum_{j=0}^{f-1}(\widehat{\phi}(V^{-j-1}g_{1,j+1},\dots $$  
\begin{myequation}
\label{eqforbeta}
,V^{-j-1}g_{i-1,j+1},V^{-j}x_{i,j},V^{-j}g_{i+1,j},\dots,V^{-j}g_{r,j}))].
\end{myequation}

Again, by the definition of $ \beta $, we have $\beta(\phi)(\Tr(\vec{x}_1),\dots,\Tr(\vec{x}_r))=$ \[ (-1)^{r-1}\sum_{i=1}^r[\widehat{\phi}(V_{\pi}^{-1}h_1,\dots,V_{\pi}^{-1}h_{i-1},\Tr(\vec{x}_i),h_{i+1},\dots,h_r)]\] where $ \pi^{ne}\Tr(\vec{x}_i)=(V_{\pi}^{-1}-\Id)(h_i) $. Since $ p^n\vec{x}_i=(V^{-1}-\Id)(\vec{g}_i)$, for every $ j=0,\dots,f-1 $, we have
\begin{myequation}
\label{formulaforX}
\pi^{ne}x_{i,j}=\frac{1}{u^{n}}(V^{-1}g_{i,j+1}-g_{i,j})
\end{myequation}
 and therefore $$\pi^{ne}\Tr(\vec{x}_i)=\frac{1}{u^{n}}(\mu\otimes\Id)(V^{-f}g_{i,0}-g_{i,0})=\frac{1}{u^{n}}(V_{\pi}^{-1}-\Id)(\mu\otimes\Id)g_{i,0} $$ which implies that $ h_i=(\mu\otimes\Id)\frac{1}{u^{n}}g_{i,0} $. Thus, \[[\widehat{\phi}(V_{\pi}^{-1}h_1,\dots,V_{\pi}^{-1}h_{i-1},\Tr(\vec{x}_i),h_{i+1},\dots,h_r)]=\] \[[\widehat{\phi}(V_{\pi}^{-1}h_1,\dots,V_{\pi}^{-1}h_{i-1},(\mu\otimes\Id)\sum_{j=0}^{f-1}V^{-j}x_{i,j},h_{i+1},\dots,h_r)]=\]\[ \frac{1}{u^{n(r-1)}}\overline{\mu\otimes\Id}\sum_{j=0}^{f-1}[\widehat{\phi}(V^{-f}g_{1,0},\dots,V^{-f}g_{i-1,0},V^{-j}x_{i,j},g_{i+1,0},\dots,g_{r,0})].\]
 
Consequently, we have $\beta(\phi)(\Tr(\vec{x}_1),\dots,\Tr(\vec{x}_r))=$ \[ \frac{(-1)^{r}}{u^{n(r-1)}}\overline{\mu\otimes\Id}\sum_{i=1}^r\sum_{j=0}^{f-1}[\widehat{\phi}(V^{-f}g_{1,0},\dots,V^{-f}g_{i-1,0},V^{-j}x_{i,j},g_{i+1,0},\dots,g_{r,0})]. \] So, in order to show that \[ \frac{1}{u^{n(r-1)}}\Tr\big(\beta(\chi(\phi))(\textbf{x})\big)=\beta(\phi)(\Tr(\vec{x}_1),\dots,\Tr(\vec{x}_r))\] it is sufficient to show the following equality 
$$\sum_{i=1}^r\sum_{j=0}^{f-1}\widehat{\phi}(V^{-j-1}g_{1,j+1},\dots,V^{-j-1}g_{i-1,j+1},V^{-j}x_{i,j},V^{-j}g_{i+1,j},\dots,V^{-j}g_{r,j})=$$

\begin{myequation}
\label{equalityofsigmas}
\sum_{i=1}^r\sum_{j=0}^{f-1}\widehat{\phi}(V^{-f}g_{1,0},\dots,V^{-f}g_{i-1,0},V^{-j}x_{i,j},g_{i+1,0},\dots,g_{r,0}).
\end{myequation}

For every $ i=1,\dots,r $ and $ j=0,\dots,f $, set $ w_{i,j}:=V^{-j}g_{i,j} $ and $ y_{i,j}:=V^{-j}x_{i,j} $. Since $ V^{-1} $ is an $ \CO $-linear homomorphism, it follows from the relations \ref{formulaforX} that $$ u^n\pi^{ne}y_{i,j}=V^{-j}x_{i,j}=V^{-j-1}g_{i,j+1}-V^{-j}g_{i,j}=w_{i,j+1}-w_{i,j} .$$ If we define $ \alpha:= u^n\pi^{ne} $, then $ \alpha $ is a non-zero divisor of $ \CO $ and we have $w_{i,j+1}= \alpha y_{i,j}+w_{i,j} $. With these new notations, the equality \ref{equalityofsigmas} becomes $$\sum_{i=1}^r\sum_{j=0}^{f-1}\widehat{\phi}(w_{1,j+1},\dots,w_{i-1,j+1},y_{i,j},w_{i+1,j},\dots,w_{r,j})=$$ $$\sum_{i=1}^r\sum_{j=0}^{f-1}\widehat{\phi}(w_{1,f},\dots,w_{i-1,f},y_{i,j},w_{i+1,0},\dots,w_{r,0})$$ which we know holds thanks to Lemma \ref{uglysum}.
\end{proof}

%\[ \Tr\big( (-1)^{r-1}\sum_{i=1}^r[\widehat{\chi(\phi)}(V^{-1}\vec{g}_1,\dots,V^{-1}\vec{g}_{i-1},\vec{x}_i,\vec{g}_{i+1},\dots, \vec{g}_r) \big)]=\]\[(-1)^{r-1}\sum_{i=1}^r\sum_{j=0}^{f-1}(\overline{\mu\otimes\Id})(\phi(V^{-j-1}g_{1,j+1},\dots,V^{-j-1}g_{i-1,j+1},V^{-j}x_{i,j},V^{-j}g_{i+1,j},\dots,V^{-j}g_{r,j})) \]

\begin{cor}
\label{corbetaisoramdis}
Let $ \CP_0,\dots,\CP_r $ be ramified displays over $k$. The $ \CO $-linear homomorphism \[ \beta:\Mult^{\CO}(\CP_1\times\dots\times\CP_r,\CP_0) \to \Mult^{\CO}(BT_{\CP_1}\times\dots\times BT_{\CP_r},BT_{\CP_0}) \] is an isomorphism.
\end{cor}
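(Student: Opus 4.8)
The plan is to reduce the statement over a general ramified display to the statement about ramified displays that come from $3n$-displays with a scalar $\CO$-action, where we can then transport the question back to the analogous isomorphism for classical displays (Corollary \ref{cortechresult}) via the functors $\BD$, $\BH$ and the comparison maps constructed in this section. More precisely, let $\CP_0,\dots,\CP_r$ be ramified displays over $k$. By the equivalence of categories between ramified $3n$-displays over $k$ and ramified Dieudonn\'e modules over $k$, and the equivalence between $3n$-displays over $k$ with a scalar $\CO$-action and ramified $3n$-displays over $k$ with a scalar $\CO$-action (cf. Remark \ref{equivdisp} and Lemma \ref{equivalence}), each $\CP_i$ is of the form $\BH(\CQ_i)$ for a uniquely determined display $\CQ_i$ over $k$ with a scalar $\CO$-action; moreover $\CQ_i$ is nilpotent because $\CP_i$ is nilpotent and the functors $\BH$ and $\BD$ respect topological nilpotence of $V$ (this is how nilpotency is characterized on both sides). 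So it suffices to prove that $\beta$ is an isomorphism for ramified displays of the form $\BH(\CQ_i)$.

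First I would set $G_i:=BT_{\CQ_i}^{\BZ_p}$ and $\tilde G_i:=BT_{\BH(\CQ_i)}^{\CO}$. Proposition \ref{isoBTgroups} gives $\CO$-linear isomorphisms $\Tr\colon G_i\arrover{\cong}\tilde G_i$ of formal $\CO$-modules (equivalently, of $\pi$-divisible modules, after the twist of Remark \ref{remrelpidivpdivmult}), hence an induced $\CO$-linear isomorphism
\[
\Tr^*\colon \Mult^{\CO}(G_1\times\dots\times G_r,G_0)\arrover{\cong}\Mult^{\CO}(\tilde G_1\times\dots\times\tilde G_r,\tilde G_0).
\]
Lemma \ref{isomult} gives an $\CO$-linear isomorphism $\chi$ (with inverse $\Xi$) between $\Mult^{\CO}(\CQ_1\times\dots\times\CQ_r,\CQ_0)$ and $\Mult^{\CO}(\BH(\CQ_1)\times\dots\times\BH(\CQ_r),\BH(\CQ_0))$. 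Proposition \ref{betadiagramramdisplay} says precisely that the square formed by $\beta$ (on the display side and on the group side), by $\Xi$, and by $\Tr^*$ commutes. Since three of the four arrows in that square — namely $\Xi$, $\Tr^*$, and (by Corollary \ref{cortechresult} applied to the nilpotent displays $\CQ_0,\dots,\CQ_r$) the left-hand $\beta\colon\Mult^{\CO}(\CQ_1\times\dots\times\CQ_r,\CQ_0)\to\Mult^{\CO}(G_1\times\dots\times G_r,G_0)$ — are isomorphisms, the fourth arrow, which is exactly the $\beta$ we want, is an isomorphism as well.

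One point I would check carefully before invoking Corollary \ref{cortechresult}: that corollary is stated for (classical) multilinear morphisms of displays and of $p$-divisible groups, whereas here we need the $\CO$-linear refinement. The $\CO$-action on $\CQ_i$ being scalar, the $\CO$-linear multilinear morphisms form an $\CO$-submodule of the $\BZ_p$-linear (in fact $\BZ$-linear) ones cut out by closed conditions, and $\beta$ is $\CO$-equivariant by construction (Construction \ref{cons05} together with the fact that $\beta$ commutes with base change and with the symmetry data); so the classical isomorphism restricts to an isomorphism on the $\CO$-linear parts. This is the routine verification I would spell out, and it is the only genuinely fiddly step. The conceptual content is entirely carried by Proposition \ref{betadiagramramdisplay} and Proposition \ref{isoBTgroups}, so I do not expect any serious obstacle; the ``hard part'' — matching the two $\beta$-maps through the trace homomorphism — has already been done in the preceding propositions, and the proof here is simply the diagram-chase that assembles them.
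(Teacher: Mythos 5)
Your proposal is correct and follows essentially the same route as the paper's own proof: write each $\CP_i$ as $\BH$ of a (nilpotent) display with scalar $\CO$-action, invoke Corollary \ref{cortechresult} and the $\CO$-equivariance of $\beta$ to get the left-hand isomorphism on $\CO$-multilinear morphisms, and then chase the commutative square of Proposition \ref{betadiagramramdisplay} using that $\Xi$ and $\Tr^*$ are isomorphisms. The only difference is that you spell out the $\CO$-linear refinement of Corollary \ref{cortechresult} more carefully, whereas the paper dispatches it in one sentence ("$\beta$ preserves the $\CO$-module structure").
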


\begin{proof}
For every $ i=0,\dots,r $, let $ \tilde{\CP}_i $ be a display (non-ramified) over $k$ such that $ \BH(\tilde{\CP}_i)\cong \CP_i $ (cf. Remark \ref{equivdisp}). We know by Corollary \ref{cortechresult} that \[ \beta:\Mult(\tilde{\CP}_1\times\dots\times\tilde{\CP}_r,\tilde{\CP}_0)\to \Mult(BT_{\tilde{\CP}_1}\times\dots\times BT_{\tilde{\CP}_r},BT_{\tilde{\CP}_0}) \] is an isomorphism. Since $ \beta $ preserves the $ \CO $-module structure, it induces an isomorphism \[ \Mult^{\CO}(\tilde{\CP}_1\times\dots\times\tilde{\CP}_r,\tilde{\CP}_0)\arrover{\cong} \Mult^{\CO}(BT_{\tilde{\CP}_1}\times\dots\times BT_{\tilde{\CP}_r},BT_{\tilde{\CP}_0}).\] Now, consider the diagram in Proposition \ref{betadiagramramdisplay}. As $ \Xi, \Tr^* $ and $ \beta $ on the left hand side are isomorphisms, it follows that \[ \beta:\Mult^{\CO}(\CP_1\times\dots\times\CP_r,\CP_0) \to \Mult^{\CO}(BT_{\CP_1}\times\dots\times BT_{\CP_r},BT_{\CP_0}) \] is an isomorphism as well.
\end{proof}

One of the main results of section 8.3 is the existence of the exterior powers of truncated Barsotti-Tate groups of dimension 1 over local Artin rings with residue characteristic $p$ (cf. Propositions \ref{prop013} and \ref{prop014}). The proof of this result is based on the fact that $\beta$ is an isomorphism when the base is a perfect field of characteristic $p$. Now that we have the above isomorphism, we can prove in the same way, the similar statement. We therefore omit its proof and the auxiliary statements leading to it.\\

Let $R$ be a local Artin $\CO$-algebra with residue characteristic $p$ and $\CM$ a $\pi$-divisible $\CO$-module scheme over $R$ of height $h$, whose special fiber is a connected $\pi$-divisible module of dimension 1. Let us denote by $\CP$ the ramified display associated to $\CM$. The exterior power $\ep^r\CP$ exists (cf. point 14 from the list in the previous section) and we denote by $\Lambda_R^r$ the $\pi$-divisible $\CO$-module associated to $\ep^r\CP$ (cf. point 11). The universal alternating morphism $\lambda:\CP^r\to\ep^r\CP$ induces an alternating morphism $\beta_{\lambda}:\CM^r\to\Lambda_R^r$. So, we obtain an alternating morphism $\beta_{\lambda,n}:\CM_n^r\to\Lambda_{R,n}^r$. As in Construction \ref{cons07}, for every group scheme $X$, we obtain the morphism \[\underline{\lambda^*_n}(X):\innHom_{R}(\Lambda^r_{R,n},X)\to \widetilde{\innAlt}^{\CO}_{R}(\CM_n^r,X).\] Note that $\innHom_{R}(\Lambda^r_{R,n},X)$ is the $\CO$-module of group scheme homomorphisms. If $R$ is a perfect field, then by Remark \ref{remChi} 2), Corollary \ref{corbetaisoramdis}, Corollary \ref{cordieudonnpidiv} and Remark \ref{rem 17}, $ \Lambda_{R,n}^r $ is the $ r^{\text{th}} $-exterior power of $ \CM_n $ and $ \beta_{\lambda,n} $ is the universal alternating morphism.

\begin{prop}
\label{prop014ram}
For every finite and flat group scheme $X$ over $R$, the morphisms $$\ul{\lambda}_n^*(X):\innHom_{R}(\Lambda^r_{R,n},X)\to \widetilde{\innAlt}^{\CO}_{R}(\CM^r_n,X)$$ and \[\ul{\lambda}_n^*(\BG_m):\innHom_{R}(\Lambda^r_{R,n},\BG_m)\to \widetilde{\innAlt}^{\CO}_{R}(\CM^r_n,\BG_m)\] are isomorphisms. Consequently, $\beta_{\lambda,n}:\CM_n^r\to \Lambda^r_{R,n} $ is the $ r^{\text{th}} $-exterior power of $ \CM_n $ in the category of finite and flat group schemes over $R$ (in the sense of Definition \ref{def 7}).
\end{prop}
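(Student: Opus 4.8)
The strategy is to imitate, word for word, the chain of reductions used in the unramified case (Propositions \ref{prop013}, \ref{prop014}, \ref{prop012}), feeding in the ramified isomorphism $\beta$ from Corollary \ref{corbetaisoramdis} in place of Corollary \ref{cortechresult}. The key inputs are: (1) over a perfect field $L$ of characteristic $p$ the morphism $\beta_{\lambda,n}$ is the universal alternating morphism, which follows from Remark \ref{remChi} 2), Corollary \ref{corbetaisoramdis}, Corollary \ref{cordieudonnpidiv} and Remark \ref{rem 17}, as already noted in the excerpt just before the statement; (2) the base-change compatibility $(\Lambda_R^r)_A\cong\Lambda_A^r$ and the compatibility of $\beta$ with base change, coming from point 14 of the list in Section 9.1 and (the ramified analogue of) Proposition \ref{prop0 16}(iv); (3) finiteness and flatness of $\innHom_R(\Lambda^r_{R,n},\BG_m)$ as a Cartier dual and affineness of finite type of $\widetilde{\innAlt}^{\CO}_R(\CM_n^r,\BG_m)$ from Remark \ref{rem 26}; and (4) the criterion for a morphism of group schemes to be an isomorphism, Proposition \ref{prop02}, together with the fibrewise isomorphism criteria Remark \ref{rem0 17} and Proposition \ref{prop018}.

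First I would establish the $\BG_m$-case. Fix a ring homomorphism $R\to L$ with $L$ a perfect field; since $R$ is a local Artin ring of residue characteristic $p$, such $L$ has characteristic $p$. By base-change compatibility of $\Lambda^r$ and of $\beta$, the fibre of $\ul{\lambda}_n^*(\BG_m)$ over the corresponding point of $\Spec R$ is the morphism $\ul{\lambda}_n^*(\BG_{m,L})$ attached to $\CM_L$, which by input (1) above is the universal-alternating morphism and hence induces an isomorphism on $L$-points; to upgrade this to an isomorphism of $L$-group schemes I would run the same argument as in Proposition \ref{prop012}, namely test against finite group schemes $I$ over $L$ using $\innHom_L(I,\BG_m)$ and apply Proposition \ref{prop02} together with the perfect-field case applied over the algebraic closure. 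Once $\ul{\lambda}_n^*(\BG_{m})$ is a fibrewise isomorphism over every point of $\Spec R$ (note the residue fields of $R$ are perfect, being finite or algebraic over the residue field, or one passes to geometric points), I apply Remark \ref{rem0 17} and Proposition \ref{prop018}: the source $\innHom_R(\Lambda^r_{R,n},\BG_m)$ is finite flat over $R$ (Cartier dual of a finite flat group scheme — here one needs that $\Lambda^r_{R,n}$ is finite flat, which follows from $\ep^r\CP$ being a ramified display of height $\binom{h}{r}$, point 14) and the target is affine of finite type (Remark \ref{rem 26}), and the two have the same fibre orders, so $\ul{\lambda}_n^*(\BG_m)$ is an isomorphism.

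Next, to pass from $\BG_m$ to an arbitrary finite flat $X$ over $R$, I would reproduce verbatim the diagram chase of Proposition \ref{prop014}: write $X\cong\innHom_R(X^*,\BG_m)$ for its Cartier dual $X^*$, and use the adjunction isomorphisms $\innHom_R(\Lambda^r_{R,n},\innHom_R(X^*,\BG_m))\cong\innHom_R(X^*,\innHom_R(\Lambda^r_{R,n},\BG_m))$ together with the pseudo-multilinear analogue $\widetilde{\innAlt}^{\CO}_R(\CM_n^r,\innHom_R(X^*,\BG_m))\cong\innHom_R(X^*,\widetilde{\innAlt}^{\CO}_R(\CM_n^r,\BG_m))$ coming from Lemma \ref{lem 19} (in its pseudo-$R$-multilinear form) and Lemma \ref{lem 20}. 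Under these identifications $\ul{\lambda}_n^*(X)$ becomes $\innHom_R(X^*,\ul{\lambda}_n^*(\BG_m))$, which is an isomorphism by the $\BG_m$-case; taking global sections yields that $\lambda_n^*(X):\Hom_R(\Lambda^r_{R,n},X)\to\widetilde{\Alt}^{\CO}_R(\CM_n^r,X)$ is an isomorphism, i.e.\ $\beta_{\lambda,n}$ satisfies the universal property of Definition \ref{def 7}(iii), so $\Lambda^r_{R,n}\cong\epO^r\CM_n$. The main obstacle, and the place where genuine care is required rather than a copy of Chapter 8, is verifying that the ``$\widetilde{\innAlt}$'' decorations behave exactly like the plain ``$\innAlt$'' throughout: one must check that all the adjunctions of Section 2.2 hold with the pseudo-$\CO$-multilinear alternating functor in place of the $\CO$-multilinear one (they do, by Lemmas \ref{lem 19}, \ref{lem 20} and Proposition \ref{prop 4} in their $\widetilde{(\ )}$ versions), and that the relevant representing schemes are finite flat, resp.\ affine of finite type, so that Proposition \ref{prop018} and Remark \ref{rem0 17} apply; this is routine but is the only substantive divergence from the $p$-divisible-group argument.
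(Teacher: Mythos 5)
Your proposal is correct and follows exactly the route the paper intends: the paper explicitly omits the proof, stating that it is obtained by running the arguments of Propositions \ref{prop011}--\ref{prop014} verbatim with Corollary \ref{corbetaisoramdis} replacing Corollary \ref{cortechresult}, which is precisely the chain of reductions (perfect-field case via the ramified $\beta$, fibrewise isomorphism plus Proposition \ref{prop018} and Remark \ref{rem0 17} for $\BG_m$, then Cartier duality for general finite flat $X$) that you reconstruct. Your closing caveat about checking the pseudo-$\CO$-multilinear adjunctions is the right thing to flag, and it is indeed covered by the $\widetilde{(\ )}$ versions of Lemmas \ref{lem 19}, \ref{lem 20} and Proposition \ref{prop 4}.
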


\begin{cor}
\label{corfiniteflat}
The $\CO$-module scheme $\widetilde{\innAlt}^{\CO}_{R}(\CM^r_n,\BG_m)$ is finite and flat over $R$ and its order is equal to $q^{n\binom{h}{r}}$.
\end{cor}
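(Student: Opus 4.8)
The plan is to deduce this corollary from the preceding Proposition \ref{prop014ram}, which gives, for every finite flat group scheme $X$ over $R$ (and in particular for $X=\BG_m$), an isomorphism of $\CO$-module schemes
\[
\ul{\lambda}_n^*(\BG_m):\innHom_{R}(\Lambda^r_{R,n},\BG_m)\arrover{\cong}\widetilde{\innAlt}^{\CO}_{R}(\CM^r_n,\BG_m).
\]
So the entire task reduces to understanding the left-hand side. First I would observe that $\Lambda^r_{R,n}$ is a finite flat group scheme over $R$: indeed $\Lambda^r_R$ is, by construction, the $\pi$-divisible $\CO$-module scheme $BT_{\ep^r\CP}$ associated to the ramified display $\ep^r\CP$, and $\Lambda^r_{R,n}$ is its kernel of multiplication by $\pi^n$, which by Remark \ref{rem 9} 2) is finite locally free over $R$. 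Consequently $\innHom_{R}(\Lambda^r_{R,n},\BG_m)$ is its Cartier dual, hence a finite flat (finite locally free) $\CO$-module scheme over $R$. Transporting this along the isomorphism above immediately yields that $\widetilde{\innAlt}^{\CO}_{R}(\CM^r_n,\BG_m)$ is finite and flat over $R$, giving the first assertion.

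Next I would compute the order. Since Cartier duality preserves the order of a finite flat group scheme, the order of $\innHom_{R}(\Lambda^r_{R,n},\BG_m)$ equals the order of $\Lambda^r_{R,n}$. Here I would invoke the height computation for the ramified display $\ep^r\CP$ (point 14 in the list of the previous section): if $\CP$ has height $h$, then $\ep^r\CP$ has height $\binom{h}{r}$. By the ramified analogue of Remark \ref{rem 9} 2) (point 9 of that list, together with the structure of $\pi$-divisible $\CO$-module schemes), the $\pi^n$-torsion $\Lambda^r_{R,n}=BT_{\ep^r\CP}[\pi^n]$ is finite locally free of rank $q^{n\binom{h}{r}}$ over $R$. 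Hence the order of $\widetilde{\innAlt}^{\CO}_{R}(\CM^r_n,\BG_m)$ is $q^{n\binom{h}{r}}$, as claimed.

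The only mild subtlety — and what I would regard as the main point requiring care rather than a genuine obstacle — is making sure that the height of $\CP$ is precisely $h$, i.e. matching the height of the ramified display of $\CM$ with the height of $\CM$ as a $\pi$-divisible $\CO$-module scheme. This is the ramified-display counterpart of Remark \ref{rem0 9}, and is part of the Ahsendorf package (point 14): the height of $\ep^r\CP$ is $\binom{h}{r}$ where $h$ is the height of $\CP$, and the equivalence $BT$ between ramified displays and infinitesimal $\pi$-divisible modules matches heights. Once this bookkeeping is in place, the corollary follows formally; no genuine calculation is needed beyond quoting Proposition \ref{prop014ram}, the Cartier-duality invariance of order, and the height formula for exterior powers of ramified displays.
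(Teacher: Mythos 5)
Your argument is correct and is exactly the (implicit) deduction the paper intends: the paper states this corollary without proof as an immediate consequence of Proposition \ref{prop014ram}, namely that $\ul{\lambda}_n^*(\BG_m)$ identifies $\widetilde{\innAlt}^{\CO}_{R}(\CM^r_n,\BG_m)$ with the Cartier dual of the finite flat group scheme $\Lambda^r_{R,n}=BT_{\ep^r\CP}[\pi^n]$, whose order is $q^{n\binom{h}{r}}$ by the height formula for $\ep^r\CP$. Your flagging of the height bookkeeping (height of $\CP$ equals height of $\CM$, via the equivalence with ramified displays) is the right point of care, and it is indeed covered by the Ahsendorf package cited in the paper.
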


We will reduce the general case to the situation over a local Artin ring, but before, we need few technical lemmas.

\begin{lem}
\label{lemsplitting1}
Let $S$ a base scheme, $A$ a ring and $b$ an element of $A$ such that $A/b$ is finite (as a set). Let \[0\to N\to M\arrover{pr} \ul{A/b}\to 0\] be a short exact sequence of finite $A$-module schemes over $S$, where $\ul{A/b}$ denotes the constant $A$-module scheme over $S$ corresponding to the $A$-module $A/b$. Assume that $b$ annihilates $M$ and there exists an scheme-theoretic section $s:S\to M$ such that its composition with the projection $pr:M\onto \ul{A/b}$ is the closed embedding $S\into \ul{A/b}$ corresponding to the element $1\in A/b$ (note that $\ul{A/b}$ is the disjoint union $ \coprod_{x\in A/b}S$). Then there exists an $A$-module scheme section, $\ul{A/b}\into M$, of the projection $pr:M\onto \ul{A/b}$ and so the above short exact sequence is split.
\end{lem}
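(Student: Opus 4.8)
The plan is to write down the splitting homomorphism $\ul{A/b}\to M$ explicitly, exploiting the fact that, as an fppf sheaf of $A$-modules on $S$, $\ul{A/b}$ is the constant sheaf attached to the cyclic $A$-module $A/b$, together with the hypothesis that $b$ annihilates $M$. The scheme-theoretic section $s$ will serve only as a distinguished element of $M(S)$ lying over $1\in A/b$.

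First I would record the elementary fact that for any $A$-module scheme $P$ over $S$ which is killed by $b$, evaluation at the closed point $S\hookrightarrow\ul{A/b}$ indexed by $1\in A/b$ gives a bijection $\Hom^A_S(\ul{A/b},P)\;\cong\;P(S)$ (with no further condition, since $bm=0$ holds automatically for every $m\in P(S)$). In one direction this is composition $S\hookrightarrow\ul{A/b}\to P$; in the other, given $m\in P(S)$, I choose for each $x\in A/b$ a lift $\tilde x\in A$ and set $\phi_x:=\tilde x\cdot m\in P(S)$ on the component $S$ of $\ul{A/b}=\coprod_{x\in A/b}S$ indexed by $x$. Independence of the lift is exactly where $b\cdot m=0$ is used, and the collection $(\phi_x)_x$ assembles into an $A$-linear sheaf homomorphism $\ul{A/b}\to P$ on $T$-points, where one uses the description of $\ul{A/b}(T)$ as the module of locally constant functions $T\to A/b$ with pointwise operations. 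These two constructions are mutually inverse. (Finiteness of $A/b$ is used only to ensure $\ul{A/b}$, $P$, etc. stay in the category of finite module schemes.)

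Then I would apply this with $P=M$, which is killed by $b$ by hypothesis, and with $m:=s\in M(S)$, obtaining an $A$-module scheme homomorphism $\sigma:\ul{A/b}\to M$ with $\sigma_1=s$. To see that $\sigma$ is a section of $pr$, it suffices to check $pr\circ\sigma=\id_{\ul{A/b}}$ componentwise: for $x\in A/b$ with lift $\tilde x$, $(pr\circ\sigma)_x=pr(\tilde x\cdot s)=\tilde x\cdot pr(s)=\tilde x\cdot 1=x$ in $A/b$, where the third equality is precisely the assumption that $pr\circ s$ is the closed embedding corresponding to $1\in A/b$. Hence $pr\circ\sigma=\id$, so the sequence $0\to N\to M\xrightarrow{pr}\ul{A/b}\to 0$ splits and $M\cong N\oplus\ul{A/b}$ as $A$-module schemes.

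The only genuinely delicate point is the bijection $\Hom^A_S(\ul{A/b},P)\cong P(S)$: one must verify $A$-linearity of the homomorphism built from $m$ on $T$-points for an arbitrary, possibly disconnected, $S$-scheme $T$ (this is exactly the passage through locally constant functions $T\to A/b$), and one must invoke $bm=0$ for well-definedness of $\phi_x$. Everything else — the identification of $\sigma_1$ with $s$, the componentwise computation of $pr\circ\sigma$, and the resulting direct-sum decomposition — is then a formal consequence.
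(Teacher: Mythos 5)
Your proof is correct and follows essentially the same route as the paper: both rest on the identification $\Hom^A_S(\ul{A/b},M)\cong M(S)$ (valid because $b$ kills $M$), take the homomorphism corresponding to $s$, and verify it is a section of $pr$ using that $pr\circ s$ is the embedding at $1$. The only difference is cosmetic — you make the bijection and the section property explicit componentwise, whereas the paper phrases them through $\Hom^A(A/b,M(S))\cong M(S)[b]$ and a commutative diagram of $\Hom$'s.
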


\begin{proof}
We have \[ \Hom_S^{A}(\ul{A/b},M)\cong \Hom^{A}(A/b,M(S))\cong M(S)[b]=M(S) \] where for the last equality, we used the fact that $ M $ is annihilated by $ b $. Let $ g:\ul{A/b}\to M $ be the $ A $-linear homomorphism mapping to the section $ s $ under this isomorphism. We have a commutative diagram \[ \xymatrix{\Hom_S^{A}(\ul{A/b},M)\ar[r]^{pr^*}\ar[d]_{\cong} & \Hom_S^{A}(\ul{A/b},\ul{A/b}) \ar[d]^{\cong}\\ M(S)\ar[r]_{pr_S} & \ul{A/b}(S)} \] where the horizontal morphisms are induced by the projection $ pr $, the left vertical isomorphism is the one given above and the right vertical isomorphism is given similarly. By assumption, $ pr_S(s) $ is the closed embedding $ S\into \ul{A/b} $ corresponding to the element $1\in \ul{A/b}$ and thus, under the isomorphism $ \ul{A/b}(S)\cong \Hom_S^{A}(\ul{A/b},\ul{A/b})  $, it is mapped to the identity morphism. It follows from the definition of $ g $ and the commutativity of the above diagram, that $ pr\circ g $ is the identity of $ \ul{A/b} $.
\end{proof}

\begin{lem}
\label{lemsplitting2}
Let $S$ be a scheme whose underlying set consists of one point. Let $A$ be a ring and $b_1,\dots,b_n$ elements of $A$ such that the quotient ring $A/b_i$ is a finite set (for every $i=1,\dots,n$). Let \[(\xi)\qquad 0\to N\to M\arrover{pr} \bigoplus_{i=1}^n\ul{A/b_i}\to 0\] be a short exact sequence of finite flat $A$-module schemes, such that for every $i=1,\dots,n$, $b_i$ annihilates $M$. Then there exists a finite and faithfully flat morphism $T\to S$ such that the above short exact sequence splits after extending the base to $T$.
\end{lem}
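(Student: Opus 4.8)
The strategy is to reduce the splitting of $(\xi)$ to a single application of Lemma \ref{lemsplitting1} applied coordinate by coordinate, by finding on a suitable finite faithfully flat cover $T\to S$ scheme-theoretic sections of the projection hitting each generator $1\in\ul{A/b_i}$. First I would observe that since $S$ is a one-point scheme, say $S=\Spec(R_0)$ with $R_0$ a local ring (or even Artinian after the earlier reductions, though we do not need that), the finite flat $A$-module scheme $M$ is $\Spec(B)$ with $B$ a finite free $R_0$-algebra, and $pr$ corresponds to a ring homomorphism $\CO(\bigoplus\ul{A/b_i})\to B$. The constant group scheme $\bigoplus_{i=1}^n\ul{A/b_i}$ is the disjoint union $\coprod_{x}S$ indexed by the finite set $G:=\bigoplus_i A/b_i$, so a section of $pr$ over a base $T$ hitting the component $x\in G$ is the same as a point of $M_T$ lying in the open-closed piece $pr^{-1}(x)$. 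Because $pr$ is an epimorphism of fppf sheaves and $M$ is finite flat (hence $pr$ is finite flat onto its image, which is all of $\bigoplus\ul{A/b_i}$), each $pr^{-1}(x)$ is a non-empty finite flat $S$-scheme.

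Next I would build the cover. For each generator $e_i:=(0,\dots,1,\dots,0)\in G$ (the class of $1$ in the $i$-th summand), the fibre $M_i':=pr^{-1}(e_i)$ is a non-empty finite flat $S$-scheme; set $T:=M_1'\times_S M_2'\times_S\cdots\times_S M_n'$. This is finite and flat over $S$, and it is faithfully flat because each $M_i'\to S$ is surjective (a finite flat morphism of non-zero rank over a local ring is surjective; non-emptiness over the one point of $S$ gives non-zero rank). Over $T$ the $i$-th projection composed with $M_i'\hookrightarrow M$ gives a tautological scheme-theoretic section $s_i:T\to M_T$ whose image lies in $pr^{-1}(e_i)_T$, i.e. $pr_T\circ s_i$ is the closed embedding $T\hookrightarrow \ul{G}_T\cong(\bigoplus_i\ul{A/b_i})_T$ corresponding to $e_i$.

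Now I would apply Lemma \ref{lemsplitting1} to each coordinate. For fixed $i$, consider the push-out/pull-back of $(\xi)_T$ along the inclusion $\ul{A/b_i}\hookrightarrow\bigoplus_j\ul{A/b_j}$ of the $i$-th summand; this yields a short exact sequence $0\to N'\to M_i''\to \ul{A/b_i}_T\to 0$ of finite flat $A$-module schemes over $T$ in which $b_i$ annihilates $M_i''$ (as a subscheme of $M_T$, which is annihilated by $b_i$), and $s_i$ restricts to a scheme-theoretic section of $M_i''\to\ul{A/b_i}_T$ over the component $1$. By Lemma \ref{lemsplitting1} this sequence splits, giving an $A$-linear section $g_i:\ul{A/b_i}_T\to M_T$ of $pr_T$ over the $i$-th summand. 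Taking the sum $g:=\sum_{i=1}^n g_i:\bigoplus_i\ul{A/b_i}_T\to M_T$ (using that the target is an $A$-module scheme and the source a coproduct of $A$-module schemes) produces an $A$-linear morphism with $pr_T\circ g=\operatorname{Id}$, because on each summand $pr_T\circ g_i$ is the identity by Lemma \ref{lemsplitting1}. Hence $(\xi)_T$ is split.

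\textbf{Main obstacle.} The routine points (finite flatness and faithful flatness of $T\to S$, stability of the annihilation condition under passing to subschemes and summands) are quick given that $S$ is a single point, so the ring-theoretic bookkeeping is mild. The only genuinely delicate step is verifying that each fibre $pr^{-1}(e_i)$ is non-empty and finite flat over $S$ of the expected rank, i.e. that $pr$ really is finite locally free with the constant group scheme $\bigoplus_i\ul{A/b_i}$ as image; this rests on $pr$ being an fppf epimorphism with finite flat source and on the structure of constant schemes over a one-point base, and it is where I would be most careful to get the flatness of the cover right.
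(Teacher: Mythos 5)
Your proof is correct, but it organizes the reduction to Lemma \ref{lemsplitting1} differently from the paper. The paper filters $M$ by the submodule schemes $N_j:=\kernel\big(M\to \bigoplus_{i=1}^j\ul{A/b_i}\big)$, so that each graded piece sits in an extension $0\to N_{j+1}\to N_j\to\ul{A/b_{j+1}}\to 0$; it splits each of these over a cover $T_j$ (taking for $T_j$ exactly the open-closed component of $N_j$ over $1\in A/b_{j+1}$, the same trick as your $M_i'=pr^{-1}(e_i)$) and then assembles the stepwise splittings over $T=T_0\times_S\cdots\times_S T_{n-1}$. You instead pull back $(\xi)$ along each summand inclusion $\ul{A/b_i}\into\bigoplus_j\ul{A/b_j}$ and produce all $n$ sections in parallel over the single cover $T=M_1'\times_S\cdots\times_S M_n'$, recovering the splitting directly as $g=\sum_i g_i$ with $pr_T\circ g=\Id$ because each $g_i$ factors through $pr_T^{-1}(\ul{A/b_i}_T)$. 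This buys you a cleaner final step: the paper's assembly of successive splittings of a filtration into a splitting of $(\xi)_T$ is left implicit and needs a small induction, whereas your sum of sections is verified summand by summand with no bookkeeping. The price is that you must justify that each fibre $pr^{-1}(e_i)$ is non-empty and finite flat; your appeal to $pr$ being a faithfully flat (finite) morphism, as the quotient map by $N$, is the right justification, and in fact the paper's choice of $T=Q_1$ quietly relies on the same non-emptiness. Both arguments use the one-point hypothesis on $S$ only to convert non-emptiness of these fibres into surjectivity over $S$.
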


\begin{proof}
For every $ j=1,\dots, n $, let us denote by $ N_j $ the kernel of the projection $$ M\ontoover{pr} \bigoplus_{i=1}^n\ul{A/b_i}\ontoover{pr_j} \bigoplus_{i=1}^j\ul{A/b_i}$$ and set $ N_0:=M $. Then, for every $ j=0,\dots, n-1 $, we have a short exact sequences \[ (\xi_j):\qquad 0\to N_{j+1}\to N_j\to \ul{A/b_{j+1}}\to 0.\] Assume that for every $ j=0,\dots, n-1 $, we can find a finite and faithfully flat morphism $ T_i\to S $ such that the short exact sequence $ (\xi_j) $ is split over $ T_j $ and set $ T=T_0\times_ST_1\times\dots\times_ST_{n-1} $. Then, since $ (\xi_j)_{T_j} $ is split, $(\xi_j)_T\cong ((\xi_j)_{T_j})_T $ splits too. These splittings, provide a splitting of $ (\xi)_T $. As each $ T_j $ is finite and faithfully flat over $S$, their product, $ T $, is finite and faithfully flat over $S$ as well. So, it is enough to show the existence of $ T_j $.\\

We show the following assertion: let \[ 0\to P\to Q\arrover{pr} \ul{A/b}\to 0 \] be a short exact sequence of finite flat $ A $-module schemes over $S$, where $ b\in A $ is an element annihilating $ Q $ and such that $ A/b $ is a finite set, then there exists a finite and faithfully flat morphism $ T\to S $ such that this short exact sequence splits over $T$.\\

We can write $ Q $ as a disjoint union $ \coprod_{x\in A/b}Q_x $ of closed submodule schemes, with $ Q_0=P $. Set $ T:=Q_1 $. Since $ Q $ is finite and flat over $S$, all $ Q_x $ and in particular $ T $ are also finite and flat  over $S$. By assumption, $ S $ has a single point and so, $ T\to S $ is surjective. This shows that $ T $ is faithfully flat and finite over $S$. The embedding $ T\into Q $ induces a morphism $ T\to Q_T $ over $T$, i.e., a section of the structural morphism $ Q_T\to T $. By definition, this section has the property that its composition with the projection $ Q_T\onto \ul{A/b}_T=\coprod_{x\in A/b}T $ is the embedding corresponding to the element $ 1\in A/b $. Now, we are in the situation of the previous lemma, and applying it, we deduce that the short exact sequence  \[ 0\to P_T\to Q_T\arrover{pr} \ul{A/b}_T\to 0 \] is split. This proves the above assertion.\\

By assumption, $ b_ {j+1}$ annihilates $ M $ and since $ N_j $ is a submodule scheme of $ M $, it annihilates $ N_j $ too. We apply the above assertion to the sequence $ (\xi_j) $ and obtain the desired $ T_j $.
\end{proof}

\begin{lem}
\label{lemsplitting3}
Let $\CM$ be a $\pi$-divisible $\CO$-module scheme over a local Artin ring $R$. Then, for every positive natural number $n$, there exists a finite and faithfully flat morphism $T\to\Spec(R)$ such that $\CM_{n,T}$ becomes isomorphic a the direct sum $(\CM_n^0)_T\oplus (\ul{\CO/\pi^n})^{h^{\text{\'et}}}$, where $\CM^0$ is the connected component of $\CM$ and $h^{\text{\'et}}$ denotes the height of the \'etale part of $\CM$.
\end{lem}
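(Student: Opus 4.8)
The statement is a standard splitting result for $\pi$-divisible modules over a local Artin ring, and the idea is to combine the connected--\'etale decomposition (Remark \ref{rem 12}) with the faithfully flat descent lemmas just proved. First I would invoke Remark \ref{rem 12}: since $R$ is a Henselian (in fact Artin) local ring, $\CM$ sits in a short exact sequence $0\to\CM^0\to\CM\to\CM^{\text{\'et}}\to 0$ of $\pi$-divisible $\CO$-module schemes, and passing to $\pi^n$-torsion gives, for each $n$, a short exact sequence of finite flat $\CO$-module schemes
\[
0\to \CM_n^0\to \CM_n\to \CM_n^{\text{\'et}}\to 0,
\]
with $(\CM^0)_n=(\CM_n)^0$ and $(\CM^{\text{\'et}})_n=(\CM_n)^{\text{\'et}}$. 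Here $\CM_n$ is annihilated by $\pi^n$, hence so are $\CM_n^0$ and $\CM_n^{\text{\'et}}$.

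Next I would handle the \'etale part. Since $R$ is local Artin, $\Spec(R)$ is connected with a single point, and $\CM^{\text{\'et}}$ is an \'etale $\pi$-divisible $\CO$-module scheme over it. By Proposition \ref{etalepidiv} there is a connected finite \'etale cover $R\to R'$ such that $\CM^{\text{\'et}}_{R'}$ is the constant $\pi$-divisible module with $\CM^{\text{\'et}}_{n,R'}\cong\ul{(\CO/\pi^n)}^{h^{\text{\'et}}}$ for all $n$, where $h^{\text{\'et}}$ is the height of $\CM^{\text{\'et}}$. After replacing $R$ by $R'$ (a finite, faithfully flat, indeed \'etale extension; note $R'$ is again local Artin since it is finite over the Henselian local $R$ and connected), we may thus assume $\CM_n^{\text{\'et}}\cong\ul{(\CO/\pi^n)}^{h^{\text{\'et}}}$ is the constant $\CO$-module scheme associated to $(\CO/\pi^n)^{h^{\text{\'et}}}$, which as an abelian group is $\bigoplus_{i=1}^{h^{\text{\'et}}}\ul{\CO/\pi^n}$.

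Now I would apply the splitting machinery. We have a short exact sequence of finite flat $\CO$-module schemes over $R$
\[
0\to \CM_n^0\to \CM_n\to \bigoplus_{i=1}^{h^{\text{\'et}}}\ul{\CO/\pi^n}\to 0,
\]
in which $\CM_n$ is annihilated by $b:=\pi^n$, and $\CO/\pi^n$ is a finite set. This is exactly the hypothesis of Lemma \ref{lemsplitting2} (with $A=\CO$, $b_1=\dots=b_{h^{\text{\'et}}}=\pi^n$): hence there is a finite and faithfully flat morphism $T\to\Spec(R)$ over which the sequence splits, giving $\CM_{n,T}\cong(\CM_n^0)_T\oplus(\ul{\CO/\pi^n})^{h^{\text{\'et}}}$. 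Composing the two base changes (the \'etale cover $\Spec(R')\to\Spec(R)$ and then $T\to\Spec(R')$) yields a single finite faithfully flat morphism over $\Spec(R)$ with the required property, and one checks $(\CM^0)_T$ is still the connected component of $\CM_T$ because the connected--\'etale decomposition commutes with the (local, hence connected-preserving) base changes involved, or simply because $(\CM^0)_T$ is connected and its quotient is \'etale.

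\textbf{Main obstacle.} The only genuine point requiring care is the splitting over the extended base, i.e.\ that a short exact sequence with constant \'etale quotient, annihilated by $\pi^n$, becomes split after a finite faithfully flat cover; but this is precisely Lemma \ref{lemsplitting2} (resting on Lemmas \ref{lemsplitting1} and the observation that one may take as cover the clopen piece of $\CM_n$ lying over $1\in\CO/\pi^n$). A minor subtlety is verifying that the \'etale cover produced by Proposition \ref{etalepidiv} and the finite flat cover from Lemma \ref{lemsplitting2} are both finite and faithfully flat over $\Spec(R)$ and that their composite is too, and that replacing $R$ by a finite connected extension keeps us in the local Artin setting — all routine.
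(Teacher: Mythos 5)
Your proposal is correct and follows essentially the same route as the paper: trivialize the \'etale quotient via Proposition \ref{etalepidiv} over a connected finite \'etale cover, observe that this cover is again a one-point (local Artin) scheme, apply Lemma \ref{lemsplitting2} to split the connected--\'etale sequence of $\CM_n$ after a further finite faithfully flat extension, and compose the two covers. The only difference is your closing remark about $(\CM^0)_T$ remaining the connected component, which the paper does not need since the statement only involves the base change of $\CM_n^0$.
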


\begin{proof}
Fix a positive natural number $n$ and set $ S:=\Spec(R) $. By Proposition \ref{etalepidiv}, there exists a connected finite \'etale cover $ S'\to S $ such that $ \CM_{n,S'}^{\text{\'et}} $ is the constant $ \CO $-module scheme $ \ul{(\CO/\pi^n)}^{h^{\text{\'et}}}  $ over $S'$. Consider the short exact sequence \[ 0\to (\CM_n^0)_{S'}\to \CM_{n,S'}\to \ul{(\CO/\pi^n)}^{h^{\text{\'et}}} \to 0 \] induced by the connected-\'etale sequence of $ \CM_n $ over $S$ (cf. Remark \ref{rem 12}). The $ \CO $-module $ \CM_{n,S'} $ is annihilated by $ \pi^n $. Since $S'$ is finite over $S$, it is the spectrum of a finite $R$-algebra $R'$. Thus, the Krull dimension of $R'$ is equal to the Krull dimension of $R$, which is zero. Since $R$ is Noetherian and $R'$ is finite over it, $R'$ is also Noetherian. This shows that $R'$ is an Artin ring. Further, $S'=\Spec(R')$ is connected, which means that $R'$ is local. Consequently, $ S' $ is a scheme with 1 point. We can now apply the previous lemma and find a finite faithfully flat morphism $ T\to S' $ that splits the above short exact sequence. Hence $ \CM_{n,T}\cong (\CM_n^0)_T\oplus (\ul{\CO/\pi^n})^{h^{\text{\'et}}} $. Since $ T\to S' $ is finite and faithfully flat and $ S'\to S $ is finite, surjective (it is a cover) and \'etale, the composition $ T\to S'\to S $ is finite and faithfully flat.
\end{proof}

From now on, we assume that $\CM$ is a $\pi$-divisible $\CO$-module scheme over a base scheme $S$ (defined over $\Spec(\CO)$) and of height $h$ and dimension at most 1.

\begin{lem}
\label{lemextArtin}
The $\CO$-module scheme $\widetilde{\innAlt}^{\CO}_{S}(\CM^r_n,\BG_m)$ is finite and flat over $S$, when $S$ is the spectrum of a local Artin $\CO$-algebra. Moreover, its order is the constant function $q^{n\binom{h}{r}}$.
\end{lem}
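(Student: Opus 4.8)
The plan is to reduce everything to the connected case, which is already settled by Corollary \ref{corfiniteflat}, by using the splitting provided by Lemma \ref{lemsplitting3} together with faithfully flat descent and the étale case of Proposition \ref{prop025}.

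First I would dispose of the case in which the residue characteristic of $R$ is different from $p$. Since $R$ is local, $p$ is then a unit, hence so is $\pi$, and therefore $\CM$ is étale over $S$ (by Theorem \ref{thm 2}, a zero‑dimensional formally smooth $\pi$‑divisible module is étale). After a connected finite étale cover $T\to S$ (which exists, $S$ being connected) we may assume $\CM_{n,T}\cong\ul{(\CO/\pi^n)}^{\,h}$ is constant, by Proposition \ref{etalepidiv}; then $\widetilde{\innAlt}^{\CO}_T(\CM_{n,T}^r,\BG_m)$ is the Cartier dual of $\epO^r\ul{(\CO/\pi^n)}^{\,h}\cong\ul{(\CO/\pi^n)^{\binom hr}}$ (cf. Theorem \ref{thm41} and the computation in the proof of Proposition \ref{prop 9}(a)), hence finite and flat of order $q^{n\binom hr}$. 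Descending along $T\to S$ (fppf descent of finiteness and of flatness) gives the statement over $S$, the order being a locally constant function invariant under base change and hence constant equal to $q^{n\binom hr}$ because $S$ is connected.

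Now assume the residue characteristic of $R$ is $p$. By Lemma \ref{lemsplitting3} there is a finite faithfully flat morphism $\phi\colon T\to S$ with $\CM_{n,T}\cong(\CM^0_n)_T\oplus E$, where $\CM^0$ is the connected component of $\CM$ (a $\pi$‑divisible $\CO$‑module scheme over $R$ by Remark \ref{rem 12}, of height $h^0:=h-h^{\text{\'et}}$), $E:=(\ul{\CO/\pi^n})^{\,h^{\text{\'et}}}$, and $h^{\text{\'et}}$ is the height of the étale part. Since $\widetilde{\innAlt}^{\CO}_S(\CM_n^r,\BG_m)$ is an affine $\CO$‑module scheme of finite type over $S$ (Remark \ref{rem 26}) whose formation commutes with base change, it suffices — by fppf descent of finiteness and flatness along $\phi$, and by base‑change invariance of the order of a finite locally free group scheme ($S$ connected) — to prove that $\widetilde{\innAlt}^{\CO}_T(\CM_{n,T}^r,\BG_m)$ is finite and flat over $T$ of constant order $q^{n\binom hr}$; and since $T=\Spec R'$ with $R'$ a finite, hence Artinian, $R$‑algebra, we may further decompose $R'$ into its local factors and assume $T$ is the spectrum of a local Artin $\CO$‑algebra with residue characteristic $p$. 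Over such a $T$, the decomposition of a pseudo‑$\CO$‑multilinear alternating form on a direct sum $M'\oplus M''$ into its bigraded components (no diagonal condition appears, as $M'\cap M''=0$) yields a canonical isomorphism of $\CO$‑module schemes
\[
\widetilde{\innAlt}^{\CO}_T(\CM_{n,T}^r,\BG_m)\ \cong\ \prod_{i+j=r}\CA_{i,j},
\]
where $\CA_{i,j}$ is the scheme of pseudo‑$\CO$‑multilinear morphisms $(\CM^0_n)^i\times E^j\to\BG_m$ alternating in each block. Peeling off the $\CM^0_n$‑block by the adjunctions of Chapter 2 (Propositions \ref{prop 4} and \ref{prop 3}) and then using the defining property of the exterior power of the étale factor (Proposition \ref{prop025}, with $\epO^jE\cong\ul{(\CO/\pi^n)^{\binom{h^{\text{\'et}}}{j}}}$), one identifies $\CA_{i,j}$ with $\Hom^{\CO}_T\!\bigl(\epO^jE,\widetilde{\innAlt}^{\CO}_T((\CM^0_n)^i,\BG_m)\bigr)$ (and with $\innHom_T(\epO^rE,\BG_m)$ when $i=0$). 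By Corollary \ref{corfiniteflat} applied to $\CM^0$, the group scheme $\widetilde{\innAlt}^{\CO}_T((\CM^0_n)^i,\BG_m)$ is finite and flat of order $q^{n\binom{h^0}{i}}$ and is annihilated by $\pi^n$; hence $\CA_{i,j}$ is its $\binom{h^{\text{\'et}}}{j}$‑fold product, finite and flat of order $q^{\,n\binom{h^0}{i}\binom{h^{\text{\'et}}}{j}}$. Multiplying over $i+j=r$ and using Vandermonde's identity $\sum_{i+j=r}\binom{h^0}{i}\binom{h^{\text{\'et}}}{j}=\binom{h^0+h^{\text{\'et}}}{r}=\binom hr$ gives the desired finiteness, flatness and order $q^{n\binom hr}$.

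I expect the main obstacle to be the first step of the last paragraph: setting up the bigraded direct‑sum decomposition of $\widetilde{\innAlt}^{\CO}$ at the level of the representing $\CO$‑module schemes in the pseudo‑$\CO$‑multilinear setting, and checking its compatibility with the adjunction isomorphisms of Chapter 2 and with Cartier duality, so that the known connected case (Corollary \ref{corfiniteflat}) and the étale case (Proposition \ref{prop025}, Theorem \ref{thm41}) can be inserted summand by summand. Everything else — the descent along $\phi$, passing to the local factors of $R'$, and the Vandermonde count of the order — is routine bookkeeping.
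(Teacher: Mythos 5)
Your proposal is correct and follows the same overall strategy as the paper's proof: dispose of the \'etale case, split $\CM_{n,T}$ into its connected and constant parts after a finite faithfully flat cover via Lemma \ref{lemsplitting3}, descend, and feed in Corollary \ref{corfiniteflat}. Where you genuinely diverge is the decomposition step, and your version is the more careful one. You decompose $\widetilde{\innAlt}^{\CO}_T(\CM_{n,T}^r,\BG_m)$ as the full bigraded product $\prod_{i+j=r}\CA_{i,j}$, the scheme-theoretic analogue of $\Lambda^r(M'\oplus M'')\cong\bigoplus_{i+j=r}\Lambda^iM'\otimes\Lambda^jM''$, which lets you read off finiteness, flatness \emph{and} the order $q^{n\binom hr}$ in one stroke via Vandermonde. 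The paper instead passes through the single term $\widetilde{\innAlt}^{\CO}_T(\Gamma^r,Y_T)$ with $Y_T=\widetilde{\innAlt}^{\CO}_T(((\CM_n^0)_T)^r,\BG_m)$; unwinding Proposition \ref{prop 4}, that object is the module of $2r$-argument morphisms alternating in each block, i.e.\ only your bidegree-$(r,r)$ piece, of order $q^{n\binom{h^0}{r}\binom{h^{\text{\'et}}}{r}}$ rather than $q^{n\binom{h}{r}}$ --- which is presumably why the paper has to compute the order by a separate reduction to an algebraically closed field. So your decomposition is not a stylistic variant: it is what makes the whole statement come out of a single computation, and it repairs the weak point of the paper's argument. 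The price, as you rightly flag, is that the bigraded decomposition must actually be established at the level of the representing $\CO$-module schemes (functorially on $T'$-points, with the shuffle signs, and with ``alternating'' rather than merely ``antisymmetric'' imposed blockwise); that is genuine work not present in the paper and should be written out. Two small points to record when you do: for $i\geq 1$ you need the special fiber of $\CM^0$ to be connected of dimension $1$ in order to invoke Corollary \ref{corfiniteflat}, so the case $\CM^0=0$ (residue characteristic $p$ but $\CM$ \'etale) should be noted separately, only the term $i=0$ surviving there; and your first case can be shortened by quoting Proposition \ref{prop025} directly, which already gives existence, base-change compatibility and the order of $\epO^r\CM_n$ for \'etale $\CM$ without passing to a trivializing cover.
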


\begin{proof}
If the dimension of the special fiber of $ \CM $ is zero, then it is an \'etale $ \pi $-divisible module and we know by Proposition \ref{prop025} that the exterior power $ \epO^r\CM_n $ exists and is a finite \'etale $ \CO $-module scheme of order $ q^{n\binom{h}{r}} $. We also know that the construction of this exterior power commutes with arbitrary base change. Thus  \[ \widetilde{\innAlt}^{\CO}_{S}(\CM^r_n,\BG_m)\cong \innHom_S(\epO^r\CM_n,\BG_m) \] The latter, being the Cartier dual of $ \epO^r\CM_n $, is a finite flat $ \CO $-module scheme of order $ q^{n\binom{h}{r}} $. So, we may assume that the dimension of $ \CM $ at the closed point of $S$ is one and so, the closed point of $S$ has characteristic $p$.\\

If we can find a finite and faithfully flat morphism $ T\to S $ such that the base change of $\widetilde{\innAlt}^{\CO}_{S}(\CM^r_n,\BG_m)$ to $T$ is finite and flat over $T$, then $\widetilde{\innAlt}^{\CO}_{S}(\CM^r_n,\BG_m)$ is finite and flat over $S$ (a module over a ring is finite and flat if and only if it is so after base change to a faithfully flat ring extension). So, it is enough to find such a $T$. Let $T\to S$ be a finite faithfully flat morphism such that $$ (\star)\qquad \CM_{n,T}\cong  (\CM_n^0)_T\oplus (\ul{\CO/\pi^n})^{h^{\text{\'et}}}$$ (provided by the previous lemma) and to simplify the notations, write $ \Gamma:= (\ul{\CO/\pi^n})^{h^{\text{\'et}}}$. Set $ Y:=\widetilde{\innAlt}^{\CO}_S((\CM_n^0)^r,\BG_m) $. Since $ \CM^0 $ is a connected $ \pi $-divisible module of dimension 1, by Corollary \ref{corfiniteflat}, $ Y $ is finite and flat over $S$. Thus $$ Y_T=\widetilde{\innAlt}^{\CO}_S((\CM_n^0)^r,\BG_m)_T\cong \widetilde{\innAlt}^{\CO}_T((\CM_n^0)_T^r,\BG_m)  $$ is finite and flat over $T$. By isomorphism $ (\star) $ and adjunction formulas given in Proposition \ref{prop 4}, we have \[ \widetilde{\innAlt}^{\CO}_{T}(\CM^r_{n,T},\BG_m) \cong  \widetilde{\innAlt}^{\CO}_{T}((\CM_n^0)_T^r\oplus \Gamma^r,\BG_m)\cong\]\[ \widetilde{\innAlt}^{\CO}_{T}\big(\Gamma^r, \widetilde{\innAlt}^{\CO}_{T}((\CM_n^0)_T^r,\BG_m)\big)\cong \widetilde{\innAlt}^{\CO}_{T}(\Gamma^r, Y_T).\] By Proposition \ref{prop025} (and its proof), the exterior power $ \epO^r\Gamma $ exists, and is isomorphic to the constant $ \CO $-module scheme $ (\ul{\CO/\pi^n})^{\binom{h^{\text{\'et}}}{r}} $. We therefore have $$ \widetilde{\innAlt}^{\CO}_{T}(\Gamma^r, Y_T)\cong \innHom_{T}^{\CO}(\epO^r\Gamma,Y_T)\cong \bigoplus\innHom_T^{\CO}(\ul{\CO/\pi^n},Y_T) .$$ Thus, it is sufficient to show that $ \innHom_T^{\CO}(\ul{\CO/\pi^n},Y_T) $ is finite and flat over $T$. We claim that this $ \CO $-module scheme is isomorphic to $ Y_T $. Indeed, since $ \CM_n $ is annihilated by $ \pi^n $, its connected component $ \CM_n^0 $ is annihilated by $ \pi^n $ too, and therefore $ Y_T $ is annihilated by $ \pi^n $ as well. It follows that for every $T$-scheme $ T' $, we have isomorphisms  $$ \innHom_T^{\CO}(\ul{\CO/\pi^n},Y_T)(T')\cong \Hom_{T'}^{\CO}(\ul{\CO/\pi^n},Y_{T'})\cong $$  $$\Hom^{\CO}(\CO/\pi^n,Y(T'))\cong \Hom^{\CO}(\CO,Y(T'))\cong Y(T')=Y_T(T').$$ This proves the claim. Hence the finiteness and flatness of $ \innHom_T^{\CO}(\ul{\CO/\pi^n},Y_T) $.\\

To prove the statement on the order of $ \widetilde{\innAlt}^{\CO}_{S}(\CM^r_n,\BG_m) $, it suffices to assume that $S$ is the spectrum of an algebraically closed field. The statement then follows from the fact that the exterior power $ \epO^r\CM_n $ has order $ q^{n\binom{h}{r}} $ (cf. Theorem \ref{thm 4}) and therefore its Cartier dual $ \innHom_S(\epO^r\CM_n,\BG_m)\cong \widetilde{\innAlt}^{\CO}_{S}(\CM^r_n,\BG_m) $ has order $ q^{n\binom{h}{r}} $.
\end{proof}

\begin{lem}
\label{propextclNr}
The $\CO$-module scheme $\widetilde{\innAlt}^{\CO}_{S}(\CM^r_n,\BG_m)$ is finite and flat over $S$, when $S$ is the spectrum of a complete local Noetherian $\CO$-algebra. 
\end{lem}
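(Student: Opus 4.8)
The strategy is the standard reduction from a complete local Noetherian base to the local Artin quotients, using Lemma \ref{lemhensel}. Write $A$ for the complete local Noetherian $\CO$-algebra with $S=\Spec(A)$, let $\Fm$ be its maximal ideal, and set $A_i:=A/\Fm^{i+1}$, a local Artin $\CO$-algebra. Abbreviate $Y:=\widetilde{\innAlt}^{\CO}_{S}(\CM^r_n,\BG_m)$; by Remark \ref{rem 26} this is an affine $\CO$-module scheme of finite type over $S$, say $Y=\Spec(B)$ for a finitely generated $A$-algebra $B$. The structural morphism $f\colon Y\to S$ is separated (it is affine). What I want to feed into Lemma \ref{lemhensel} is that for every local Artin ring $R$ and every morphism $\Spec(R)\to S$, the base change $f_R\colon Y_R\to\Spec(R)$ is finite and flat.

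First I would check that the formation of $Y$ commutes with arbitrary base change. This is because the functor $\widetilde{\innAlt}^{\CO}_{S}(\CM_n^r,\BG_m)$ is defined by $T\mapsto\widetilde{\Alt}^{\CO}_T(\CM_{n,T}^r,\BG_{m,T})$, so for any $S$-scheme $T$ we have a canonical isomorphism $Y_T\cong\widetilde{\innAlt}^{\CO}_{T}(\CM_{n,T}^r,\BG_{m,T})$; concretely this follows from the fact that Weil restriction and the representing schemes in Remark \ref{rem 26} are built by the usual tensor/equalizer constructions which commute with flat (indeed arbitrary) base change, and the closed conditions cutting out the pseudo-$\CO$-multilinear alternating locus are preserved. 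Granting this, for a morphism $g\colon\Spec(R)\to S$ with $R$ a local Artin $\CO$-algebra we get $Y_R\cong\widetilde{\innAlt}^{\CO}_{R}(\CM_{n,R}^r,\BG_m)$, where $\CM_R:=g^*\CM$ is a $\pi$-divisible $\CO$-module scheme over $R$ of height $h$ and dimension at most $1$ (height is invariant under base change, and the dimension of the fibre can only drop). Lemma \ref{lemextArtin} then says exactly that this is finite and flat over $R$.

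With that in hand, Lemma \ref{lemhensel} applies verbatim: $A$ is a complete local Noetherian ring, $f\colon Y\to S$ is separated, and its base change to every local Artin ring mapping to $S$ is finite and flat, hence $f$ itself is finite and flat. I should also record the refinements the proof of Lemma \ref{lemhensel} makes available: $f$ is quasi-finite and, being moreover finite over a Henselian local ring, affine, so $Y=\Spec(B)$ with $B$ finite over $A$; and the local flatness criterion, applied to the compatible system $B_i:=B\otimes_AA_i$ (which are finite flat $A_i$-algebras by Lemma \ref{lemextArtin} applied to $A_i$), gives flatness of $B$ over $A$. Thus $Y$ is finite and flat over $S$.

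\textbf{Main obstacle.} There is no deep obstacle here, since the Artin case (Lemma \ref{lemextArtin}) and the Henselian descent tool (Lemma \ref{lemhensel}) are both already in hand; the only point needing genuine care is the base-change compatibility $Y_T\cong\widetilde{\innAlt}^{\CO}_{T}(\CM_{n,T}^r,\BG_m)$, i.e.\ that the scheme representing pseudo-$\CO$-multilinear alternating morphisms into $\BG_m$ is functorial in the base. I would handle this by the inductive construction of Remark \ref{rem 26}: reduce via Lemma \ref{lem 20} to the case $r=1$, where $\widetilde{\innAlt}$ is a closed subscheme of $\innHom_S(\CM_n,\BG_m)$, and invoke the base-change behaviour of $\innHom$ for $\CM_n$ finite flat over $S$ and $\BG_m$ affine (Remark \ref{rem 2} together with the cited Theorem 1.3.5 of \cite{P}), noting that the alternating and pseudo-$\CO$-linearity conditions are defined by equations stable under base change. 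One could alternatively avoid even this by the computation already used in the proof of Lemma \ref{lemextArtin} (after a finite faithfully flat cover, split off the \'etale part and reduce to the connected case handled by Corollary \ref{corfiniteflat}), but the direct functoriality argument is cleaner and suffices. Finally, as a byproduct, the order of $Y$ over $S$ is the locally constant function $q^{n\binom{h}{r}}$, by Lemma \ref{lemextArtin} applied fibrewise.
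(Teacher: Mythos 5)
Your proof is correct and follows essentially the same route as the paper: establish separatedness via Remark \ref{rem 26}, identify the base change to any local Artin $\CO$-algebra with $\widetilde{\innAlt}^{\CO}_{R}(\CM^r_{R,n},\BG_m)$, invoke Lemma \ref{lemextArtin}, and conclude with Lemma \ref{lemhensel}. The only difference is that you spell out the base-change compatibility of the $\widetilde{\innAlt}$ functor, which the paper uses without comment; that is a reasonable point to make explicit but not a divergence in method.
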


\begin{proof}
By Remark \ref{rem 26}, we know that $\widetilde{\innAlt}^{\CO}_{S}(\CM^r_n,\BG_m)$ is an affine scheme of finite type over $S$, and so is separated over $S$. Let $R$ be a local Artin $ \CO $-algebra and $ f:\Spec(R)\to S $ an $ \CO $-scheme morphism. We have $$ \widetilde{\innAlt}^{\CO}_{S}(\CM^r_n,\BG_m)_R\cong \widetilde{\innAlt}^{\CO}_{R}(\CM^r_{R,n},\BG_m).$$ By previous lemma, $ \widetilde{\innAlt}^{\CO}_{R}(\CM^r_{R,n},\BG_m) $ is a finite flat $ \CO $-module scheme over $R$. We can now apply Lemma \ref{lemhensel} and conclude that $ \widetilde{\innAlt}^{\CO}_{S}(\CM^r_n,\BG_m) $ is finite and flat over $S$.
\end{proof}

\begin{lem}
\label{propextlNs}
The $\CO$-module scheme $\widetilde{\innAlt}^{\CO}_{S}(\CM^r_n,\BG_m)$ is finite and flat over $S$, when $S$ is a locally Noetherian $\CO$-scheme. Moreover, its order is the constant function $q^{n\binom{h}{r}}$.
\end{lem}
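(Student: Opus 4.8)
The plan is to reduce the statement to the complete local Noetherian case, which is Lemma~\ref{propextclNr}, by a standard limit and flatness argument. First I would note that the property of being finite and flat over $S$ is local on $S$, so we may assume $S=\Spec(R)$ with $R$ a Noetherian $\CO$-algebra. As in Remark~\ref{rem 26}, the $\CO$-module scheme $\widetilde{\innAlt}^{\CO}_{S}(\CM^r_n,\BG_m)$ is affine and of finite type over $S$; write it as $\Spec(B)$ with $B$ a finitely generated $R$-algebra. We want to show $B$ is a finite flat $R$-module. Since $R$ is Noetherian, flatness can be checked at each prime (or even each maximal ideal) of $R$, and after localizing we may assume $R$ is a Noetherian local ring. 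Let $\widehat{R}$ be its completion along the maximal ideal. By faithfully flat descent, $B$ is finite flat over $R$ if and only if $B\otimes_R\widehat{R}$ is finite flat over $\widehat{R}$; and because the construction $\widetilde{\innAlt}^{\CO}$ commutes with base change, $B\otimes_R\widehat{R}$ represents $\widetilde{\innAlt}^{\CO}_{\widehat{R}}(\CM^r_{\widehat{R},n},\BG_m)$. Now $\widehat{R}$ is a complete local Noetherian $\CO$-algebra, so Lemma~\ref{propextclNr} applies and gives that this scheme is finite and flat over $\widehat{R}$. Hence $B$ is finite and flat over $R$, and gluing over an affine cover of $S$ gives the claim for arbitrary locally Noetherian $S$.

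For the statement about the order, I would argue that the order of a finite locally free group scheme is a locally constant function on the base that is invariant under base change. Thus to compute it at a point $s\in S$ it suffices to pass to the fiber, and even to a geometric point $\bar{s}$ lying over $s$, i.e.\ to $\Spec(\Omega)$ with $\Omega$ an algebraically closed field. Again using that $\widetilde{\innAlt}^{\CO}$ commutes with base change, the fiber of $\widetilde{\innAlt}^{\CO}_{S}(\CM^r_n,\BG_m)$ over $\bar{s}$ is $\widetilde{\innAlt}^{\CO}_{\Omega}(\CM^r_{\bar{s},n},\BG_m)$. Over the algebraically closed field $\Omega$ the $\pi$-divisible $\CO$-module $\CM_{\bar s}$ has dimension at most $1$, so by Theorem~\ref{thm 4} (or Theorem~\ref{thmoveranyfield}) the exterior power $\epO^r\CM_{\bar{s},n}$ exists and has order $q^{n\binom{h}{r}}$; by Theorem~\ref{thm41} and Proposition~\ref{propbasechangeofBT}-type reasoning (over a field everything is representable and finite, with $\epO^r$ the Cartier dual of $\widetilde{\innAlt}^{\CO}(\CM^r_{\bar s,n},\BG_m)$), its Cartier dual $\widetilde{\innAlt}^{\CO}_{\Omega}(\CM^r_{\bar{s},n},\BG_m)$ has the same order $q^{n\binom{h}{r}}$. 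Since this value is independent of the chosen geometric point and the base scheme $S$ is covered by connected locally Noetherian pieces, the order function is the constant $q^{n\binom{h}{r}}$.

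The main obstacle, and the reason the proof is organized this way, is that there is no direct argument over a general locally Noetherian base: one really has to descend to a complete local base where the display-theoretic machinery of Section~9.1 and the ramified analogue of Proposition~\ref{prop014} (namely Proposition~\ref{prop014ram} and Corollary~\ref{corfiniteflat}) are available, and then bootstrap back up via Lemma~\ref{lemhensel} and faithfully flat descent along $R\to\widehat{R}$. The one subtle point to check carefully is that all the intermediate base changes ($R\to R_{\mathfrak m}$, $R_{\mathfrak m}\to\widehat{R_{\mathfrak m}}$, and passage to geometric fibers) are compatible with the representing scheme, i.e.\ that $\widetilde{\innAlt}^{\CO}$ genuinely commutes with arbitrary base change — this is where the "pseudo-$\CO$-multilinear" formulation and the functoriality built into Definition~\ref{def 4} and Remark~\ref{rem 2} are used. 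Everything else is a routine assembly of results already in the excerpt; I would keep the write-up short, citing Lemma~\ref{propextclNr}, Lemma~\ref{lemhensel}, Theorem~\ref{thm 4}, and the base-change statements, rather than redoing any computation.
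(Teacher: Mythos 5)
Your argument is correct, and for the flatness and order statements it coincides with the paper's proof (localize, pass to the completion, invoke Lemma \ref{propextclNr}, respectively reduce to a geometric point and use the field case). Where you genuinely diverge is in how finiteness is obtained. The paper does \emph{not} descend finiteness along $R_{\Fm}\to\widehat{R_{\Fm}}$: it first shows that $\widetilde{\innAlt}^{\CO}_{S}(\CM^r_n,\BG_m)$ is quasi-finite (fibers over fields are finite by Lemma \ref{lemextArtin}) and then proves properness by the valuative criterion, reducing to a complete valuation ring where Lemma \ref{propextclNr} applies, and concludes via ``proper $+$ quasi-finite $+$ separated $\Rightarrow$ finite''. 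Your route is shorter and treats finiteness and flatness uniformly, but it rests on two facts you should cite explicitly. First, finite generation of a module descends along a faithfully flat ring map: if $B\otimes_R\widehat{R}$ is a finite $\widehat{R}$-module then $B$ is a finite $R$-module (write $B$ as the filtered union of its finitely generated submodules, tensor with the flat $\widehat{R}$, and use faithful flatness to kill the quotient). Second, and this is the step you pass over silently, ``finite over $R_{\Fm}$ for every maximal ideal $\Fm$'' does not formally give ``finite over $R$'', because $\Spec(R_{\Fm})$ is not an open subscheme of $\Spec(R)$; you need a spreading-out argument (e.g. EGA IV, 8.10.5, using that $B$ is of finite presentation over the Noetherian ring $R$) to find, for each $\Fm$, an $f\notin\Fm$ with $B_f$ finite over $R_f$, and then conclude by Zariski-locality of finiteness. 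For flatness alone this issue does not arise, since flatness is genuinely checked on localizations, which is why the paper's flatness reduction looks identical to yours while its finiteness argument takes the properness detour. With those two standard facts supplied, your proof is complete and arguably cleaner than the one in the text.
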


\begin{proof}
Set $ X:= \widetilde{\innAlt}^{\CO}_{S}(\CM^r_n,\BG_m)$. We show at first that $ X $ is flat over $S$. We can assume that $S$ is the spectrum of a local ring $R$. By assumption, $R$ is a Noetherian ring and therefore, the completion $ R\to\widehat{R} $ is a faithfully flat morphism. Thus, $X$ is flat over $R$ if and only if $ X_{\widehat{R}} $ is flat over $ \widehat{R} $. But $$ X_{\widehat{R}}=  \widetilde{\innAlt}^{\CO}_{S}(\CM^r_n,\BG_m)_{\widehat{R}}\cong \widetilde{\innAlt}^{\CO}_{\widehat{R}}(\CM^r_{\widehat{R},n},\BG_m)$$ which is finite and flat over $ \widehat{R} $ by previous lemma. This shows the flatness of $X$ over $S$.\\

We now prove that $X$ is finite over $S$. We can assume that $S$ is affine, say $ S=\Spec(R) $ and then $X$ is affine too, say $ X=\Spec(A) $. Let $ L $ be a field and $ f:\Spec(L)\to S $ be a morphism. Again, we have $$ X_L\cong  \widetilde{\innAlt}^{\CO}_{S}(\CM^r_n,\BG_m)_L\cong \widetilde{\innAlt}^{\CO}_{L}(\CM^r_{L,n},\BG_m)$$ which is finite over $L$ by Lemma \ref{lemextArtin}. It follows that $ X $ is quasi-finite over $S$. If we show that $ X $ is proper over $S$, then it will follow that it is finite over $S$. We use the valuative criterion of properness. So, let $D$ be a valuation ring and $E$ its fraction field. Denote also by $ \widehat{D} $ and respectively $ \widehat{E} $ the completions of $ D $ and $E$. Assume that we have a  commutative ``solid" diagram 
\begin{myequation}
\label{properdiag}
\xymatrix{R\ar[d]\ar[r] & D\ar@{^{(}->}[d] \\ A\ar[r]_{g} \ar@{-->}[ur]^{\tilde{g}}& E}
\end{myequation}where the vertical ring homomorphisms are the obvious ones, and we want to lift $g$ to a homomorphism $ \tilde{g}:A\to D $ filling the diagram (note that $X$ is separated over $S$ and therefore if such a morphism exists, it is unique). We can complete this diagram to the following diagram \[  \xymatrix{R\ar[d]\ar[r] & D\ar@{^{(}->}[d] \ar@{^{(}->}[r]& \widehat{D}\ar@{^{(}->}[d]\\ A\ar[r]_{g} & E\ar@{^{(}->}[r] & \widehat{E}.}\] If we can find a homomorphism $ \tilde{g}:A\to \widehat{D} $ making the above diagram commutative, then since $ E\cap \widehat{D}=D $, the image of $ \tilde{g} $ will be inside $ D $ and we are done. So, we may replace $ D $ and respectively $ E $ by $ \widehat{D} $ and $ \widehat{E} $ and assume that $ D $ is a complete valuation ring. Then, by assumption $ A\otimes_RD $ is finite over $D$ and therefore $ X_D $ is proper over $D$. Consider the following diagram \[ \xymatrix{R\ar[rr] \ar[d]&& D\ar@{^{(}->}[dd]\\ A \ar@{-->}[urr]^{\tilde{g}}\ar[drr]_{g}\ar[d]&& \\ A\otimes_RD \ar@{-->}[uurr]_{\tilde{g}_D}\ar[rr]_{g_D}&& E} \] induced by base change. The valuative criterion of properness, applied to $ A\otimes_RD $, implies that there exists a unique $ \tilde{g}_D:A\otimes_RD\to D $ making the above diagram commutative. Let $ \tilde{g} $ be the composition $ A\to A\otimes_RD\arrover{\tilde{g}_D}D $, then $ \tilde{g} $ fills the diagram \ref{properdiag} and this proves that $ X $ is proper over $S$.\\

As the order of a finite flat group scheme is a locally constant function on the base, which is preserved under base change, we may assume that $S$ is the spectrum of a field. The statement on the order now follows from Lemma \ref{lemextArtin}.
\end{proof}

%\begin{lem}
%\label{propexgeneralcase}
%Let $S$ be an $ \CO $-scheme. The $\CO$-module scheme $\widetilde{\innAlt}^{\CO}_{S}(\CM^r_n,\BG_m)$ is finite and locally free over $S$. Moreover, its order is the constant function $q^{n\binom{h}{r}}$.
%\end{lem}

%\begin{proof}
%Since $ \CM_n $ is finite and locally free over $S$, it is locally of finite presentation over $S$ and so, by \cite{EGAIV3} 8.9.1, there exists a locally Noetherian $ \CO $-scheme $S_0$ with an $ \CO $-morphism $ S\to S_0 $ and finite locally free $ \CO $-module scheme $G$ over $ S_0 $ such that $ \CM_n\cong G_S $. 
%\end{proof}

\begin{prop}
\label{propextlNstBT}
Let $S$ be a locally Noetherian $\CO$-scheme. The exterior power $\epO^r\CM_n$ exists in the category of finite flat group schemes over $S$, and commutes with arbitrary base change. Moreover, its order is the constant function $q^{n\binom{h}{r}}$. Furthermore, for every $S$-scheme $T$, the canonical base change homomorphism $ \epO^r(\CM_{n,T})\to (\epO^r\CM_n)_T $ is an isomorphism.
\end{prop}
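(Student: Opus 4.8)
The strategy is the same as in the $p$-divisible case (the proof of Proposition \ref{prop026} and the arguments leading to Proposition \ref{prop014ram}): construct $\epO^r\CM_n$ as the Cartier dual of $\widetilde{\innAlt}^{\CO}_{S}(\CM_n^r,\BG_m)$, and then check that the dual pairing realizes the universal property and that it commutes with base change. First I would invoke Lemma \ref{propextlNs} to know that $X_n:=\widetilde{\innAlt}^{\CO}_{S}(\CM_n^r,\BG_m)$ is a finite flat $\CO$-module scheme over $S$ of order the constant function $q^{n\binom{h}{r}}$. Set $\Lambda^r_n:=X_n^* = \innHom_S(X_n,\BG_m)$, which is then finite flat over $S$ of the same order, with its natural $\CO$-action coming from that on $X_n$ (cf. Remark \ref{rem 1} 6)). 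Since $X_n$ is finite flat, biduality gives a canonical isomorphism $X_n\cong\innHom_S(\Lambda^r_n,\BG_m)$, hence a canonical isomorphism
\[
\alpha:\innHom_S(\Lambda^r_n,\BG_m)\arrover{\cong}\widetilde{\innAlt}^{\CO}_S(\CM_n^r,\BG_m),
\]
and I would unwind $\alpha$ to produce from the identity of $\Lambda^r_n$ a distinguished alternating $\CO$-multilinear morphism $\lambda_n:\CM_n^r\to\Lambda^r_n$ (this is exactly the construction Pink uses over a field, generalized to $R$-module schemes; the existence result is Theorem \ref{thm41}, and the base is now locally Noetherian rather than a field, but the Cartier-duality formalism is the same).

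Next I would verify the universal property. For any finite flat group scheme $Z$ over any $S$-scheme $T$, I want the induced map $\Hom_T(\Lambda^r_{n,T},Z)\to\widetilde{\Alt}^{\CO}_T(\CM_{n,T}^r,Z)$ to be an isomorphism. Using $Z\cong\innHom_T(Z^*,\BG_m)$ and the adjunction isomorphisms of Lemma \ref{lem 19} and Proposition \ref{prop 4}, this reduces, exactly as in Proposition \ref{prop028}, to $\innHom_T(Z^*,\alpha_T)$ being an isomorphism, i.e. to $\alpha_T$ being an isomorphism — which follows from the base-change statement we are about to prove. So the crux is the base-change assertion: for every $S$-scheme $T$, the canonical homomorphism $f_T:\epO^r(\CM_{n,T})\to(\epO^r\CM_n)_T$ is an isomorphism. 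Here I would argue as in Proposition \ref{propbasechangeofBT} and the Notations/Lemmas \ref{lemsplitting1}--\ref{propextlNstBT} block: both $\widetilde{\innAlt}^{\CO}$ and Cartier duality commute with arbitrary base change by construction, so $(\Lambda^r_n)_T\cong\widetilde{\innAlt}^{\CO}_T(\CM_{n,T}^r,\BG_m)^*$; on the other hand, since $\CM_{n,T}$ is again a $\pi$-divisible $\CO$-module scheme of height $h$ and dimension $\le1$ over the (locally Noetherian, after reduction — see below) scheme $T$, Lemma \ref{propextlNs} applies to it too, giving $\widetilde{\innAlt}^{\CO}_T(\CM_{n,T}^r,\BG_m)$ finite flat of order $q^{n\binom{h}{r}}$, and its dual is then canonically $\epO^r(\CM_{n,T})$. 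Comparing the two identifications with the universal alternating morphisms $\lambda_{n,T}$ and $(\lambda_n)_T$ shows $f_T$ is an isomorphism.

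The main obstacle I anticipate is the locally Noetherian hypothesis: $T$ need not be locally Noetherian, so Lemma \ref{propextlNs} cannot be applied to $\CM_{n,T}$ directly. I would handle this the standard way. It suffices to treat $T=\Spec A$ affine with a ring map $R\to A$, $S=\Spec R$; write $A$ as a filtered colimit of finitely generated $R$-subalgebras $A_i$, which are Noetherian since $R$ is (locally) Noetherian. The finite flat $\CO$-module scheme $\CM_n$ over $S$, the affine finite-type scheme $\widetilde{\innAlt}^{\CO}_S(\CM_n^r,\BG_m)$, and the morphism $\lambda_n$ are all of finite presentation over $R$, so they descend to some $A_i$; over each $A_i$ the result holds by Lemma \ref{propextlNs} and the argument above, and passing to the colimit (using that $\innHom$, $\widetilde{\Alt}^{\CO}$, Cartier duality and the relevant $\Hom$-groups commute with filtered colimits of the base for finitely presented objects) gives the statement over $A$. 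The order statement is then immediate since orders of finite flat group schemes are locally constant and stable under base change. A small point to check along the way is that the $\CO$-action, $\pi$-divisibility, and the dimension-$\le1$ condition are preserved under base change — the first by functoriality, the second and third because they are checked fiberwise and on $\CM[\pi]$, which is handled in Definition \ref{piBT} and Remark \ref{rem 11}.
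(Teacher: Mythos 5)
Your construction is the same as the paper's: take $\Lambda^r_n$ to be the Cartier dual of $\widetilde{\innAlt}^{\CO}_{S}(\CM_n^r,\BG_m)$ (finite flat of order $q^{n\binom{h}{r}}$ by Lemma \ref{propextlNs}), extract the universal alternating morphism $\lambda_n$ from the duality isomorphism $\alpha$ via the adjunction of Proposition \ref{prop 4}, and verify the universal property against an arbitrary finite flat $Z$ by writing $Z\cong\innHom(Z^*,\BG_m)$. That part is correct and essentially identical to the paper.

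The one place you diverge is the base-change step, and there your detour through Noetherian approximation is unnecessary. You do not need to apply Lemma \ref{propextlNs} to $\CM_{n,T}$ at all, so the failure of $T$ to be locally Noetherian is not an obstacle. The point — which the paper isolates explicitly — is that the first part of the argument proves a base-free statement: over \emph{any} $\CO$-scheme on which $\widetilde{\innAlt}^{\CO}(\CM_n^r,\BG_m)$ happens to be finite and flat, the exterior power exists and equals its Cartier dual. Since $\widetilde{\innAlt}^{\CO}$ commutes with arbitrary base change, $\widetilde{\innAlt}^{\CO}_{T}(\CM^r_{n,T},\BG_m)\cong\widetilde{\innAlt}^{\CO}_{S}(\CM^r_n,\BG_m)_T$ is the pullback of a finite flat $S$-scheme and is therefore finite flat over $T$ for free; the locally Noetherian hypothesis on $S$ has already done all its work in Lemma \ref{propextlNs}. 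Applying the base-free statement over $T$ and using that Cartier duality also commutes with base change gives $\epO^r(\CM_{n,T})\cong(\epO^r\CM_n)_T$ directly. Your filtered-colimit argument would also go through (everything in sight is finitely presented and the relevant $\Hom$- and $\Alt$-groups commute with filtered colimits), but it costs an extra page of verifications that the simpler observation makes superfluous.
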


\begin{proof}
Set $ \Lambda^r_n:=\innHom_S(\widetilde{\innAlt}^{\CO}_{S}(\CM^r_n,\BG_m),\BG_m) $. According to the previous lemma, being the Cartier dual of $ \widetilde{\innAlt}^{\CO}_{S}(\CM^r_n,\BG_m) $, this is a finite flat $ \CO $-module scheme over $S$. By Cartier duality, we have a canonical isomorphism \[ \alpha:\innHom_S(\Lambda^r_n,\BG_m) \arrover{\cong} \widetilde{\innAlt}^{\CO}_{S}(\CM^r_n,\BG_m)\] and by adjunction formulas (cf. Proposition \ref{prop 4}), we have an alternating morphism $ \lambda_n:\CM_n^r\to \Lambda^r_n $. More precisely, we have isomorphisms \[ \widetilde{\Alt}_{S}^{\CO}(\CM_n^r,\Lambda^r_n)\cong \widetilde{\Alt}_{S}^{\CO}\big(\CM_n^r,\innHom_S(\widetilde{\innAlt}^{\CO}_{S}(\CM^r_n,\BG_m),\BG_m)\big)\cong \] \[ \Hom_S^{\CO}\big(\widetilde{\innAlt}^{\CO}_{S}(\CM^r_n,\BG_m),\widetilde{\innAlt}^{\CO}_{S}(\CM^r_n,\BG_m)\big) \] and $ \lambda_n $ corresponds, via this isomorphism, to the identity morphism. Let $G$ be a finite flat group scheme over $S$. We have a canonical isomorphism $ G\cong\innHom_S(G^*,\BG_m) $, where as usual, $ G^* $ denotes the Cartier dual of $ G $. Now consider the following diagram \[ \xymatrix{\Hom_S(\Lambda^r_n,G)\ar[rrr]^{\lambda_n^*(G)}\ar[d]_{\cong}&&&\widetilde{\Alt}^{\CO}_S(\CM_n^r,G)\ar[d]^{\cong}\\\Hom_S(\Lambda^r_n,\innHom_S(G^*,\BG_m))\ar[rrr]^{\lambda_n^*(\innHom_S(G^*,\BG_m))}\ar[d]_{\cong}&&&\widetilde{\Alt}^{\CO}_S(\CM^r_n,\innHom_S(G^*,\BG_m))\ar[d]^{\cong}\\\Hom_S(G^*,\innHom_S(\Lambda^r_n,\BG_m))\ar[rrr]_{\Hom_S(G^*,\alpha)}&&&\Hom_S(G^*,\widetilde{\innAlt}^{\CO}_S(\CM_n,\BG_m)).} \]  Since $ \alpha $ is an isomorphism, $ \Hom_S(G^*,\alpha) $ is an isomorphism too, which implies that $ \lambda_n^*(G) $ is an isomorphism. Thus, $ \Lambda^r_n $ is the $ r^{\rm th} $-exterior power of $ \CM_n $ in the category of finite flat group schemes over $S$, and we can write $ \epO^r\CM_n\cong\Lambda^r_n $. As $ \epO^r\CM_n $ is the Cartier dual of $ \widetilde{\innAlt}^{\CO}_{S}(\CM^r_n,\BG_m) $ and by previous lemma, $ \widetilde{\innAlt}^{\CO}_{S}(\CM^r_n,\BG_m) $ has order equal to $q^{n\binom{h}{r}}$, we deduce that $ \epO^r\CM_n $ has order equal to the constant function $q^{n\binom{h}{r}}$.\\

What we have proved above is that if $ \widetilde{\innAlt}^{\CO}_{S}(\CM^r_n,\BG_m) $ is finite and flat over $S$ (for any base $ \CO $-scheme $S$), then $ \epO^r\CM_n $ exists and it is isomorphic to the Cartier dual of $  \widetilde{\innAlt}^{\CO}_{S}(\CM^r_n,\BG_m) $. Now, let $T$ be an $ S $-scheme. Since by assumption, $S$ is locally Noetherian, $ \widetilde{\innAlt}^{\CO}_{S}(\CM^r_n,\BG_m) $ is finite and flat over $S$ by previous lemma and therefore, the base change $$ \widetilde{\innAlt}^{\CO}_{S}(\CM^r_n,\BG_m) _T\cong\widetilde{\innAlt}^{\CO}_{T}(\CM^r_{n,T},\BG_m) $$ is finite and flat over $T$. It follows that $ \epO^r(\CM_{n,T}) $ exists and since the Cartier duality commutes with base change, the two $ \CO $-module schemes $ \epO^r(\CM_{n,T}) $ and $ (\epO^r\CM_{n})_T  $ are canonically isomorphic.
\end{proof}

\begin{prop}
\label{propextlNspiBT}
Let $S$ be a locally Notherian $\CO$-scheme. There exist natural monomorphisms $ i_n:\epO^r\CM_n\into \epO^r\CM_{n+1} $, which make the inductive system $ (\epO^r\CM_n)_{n\geq 1} $ a $\pi$-Barsotti-Tate group over $S$ of height $ \binom{h}{r} $ and dimension a locally constant function \[\dim:S\to \{0,\binom{h-1}{r-1}\},\qquad s\mapsto \begin{cases}0 & \text{if}\quad \CM_s\quad \text{is \'etale}\\ \binom{h-1}{r-1}& \text{otherwise.}\end{cases}\]
\end{prop}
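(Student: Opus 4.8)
The strategy mirrors exactly the construction carried out for $p$-divisible groups in Lemma \ref{lem029}, simply replacing ``$p$'' by ``$\pi$'' and ``truncated Barsotti-Tate group'' by ``finite flat $\CO$-module scheme annihilated by $\pi^n$''. By Proposition \ref{propextlNstBT} each $\epO^r\CM_n$ exists as a finite flat $\CO$-module scheme over $S$, of order $q^{n\binom{h}{r}}$, and its formation commutes with arbitrary base change. So the only remaining work is to (1) produce the transition maps $i_n$ and identify $\epO^r\CM_n$ with $(\epO^r\CM_{n+1})[\pi^n]$, (2) assemble the inductive system into a $\pi$-divisible module via Remark \ref{rem 10}, and (3) compute the dimension fibrewise.

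First I would construct $i_n$. For each $n$ we have the exact sequence $\CM_{n+1}\overset{\pi^n}{\to}\CM_{n+1}\overset{\varPi}{\to}\CM_n\to 0$ (Remark \ref{rem 9}), where $\varPi$ is multiplication by $\pi$; applying Proposition \ref{prop 7} (the statement for exterior powers over a ring applied with $R=\CO$ and $r=\pi^n$, which requires $p\ne 2$) gives an exact sequence
\[
\epO^r\CM_{n+1}\overset{\pi^n}{\longto}\epO^r\CM_{n+1}\overset{\epO^r\varPi}{\longto}\epO^r\CM_n\to 0 .
\]
Since $\epO^r\CM_{n+1}$ is annihilated by $\pi^{n+1}$ (Proposition \ref{propextlNstBT} says it is finite flat of order $q^{(n+1)\binom hr}$, and it is a quotient of $\CM_{n+1}^{\otimes r}$, hence killed by $\pi^{n+1}$), multiplication by $\pi$ on $\epO^r\CM_{n+1}$ factors through $\epO^r\varPi$, yielding the desired $i_n:\epO^r\CM_n\to\epO^r\CM_{n+1}$, fitting into the commutative square analogous to the one in the proof of Lemma \ref{lem029}. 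To see $i_n$ is a monomorphism identifying $\epO^r\CM_n$ with $(\epO^r\CM_{n+1})[\pi^n]$, I repeat verbatim the diagram-chase of Lemma \ref{lem029}: the map $\pi:\epO^r\CM_{n+1}\to\epO^r\CM_{n+1}$ has image equal to $(\epO^r\CM_{n+1})[\pi^n]$ because $\epO^r\CM_{n+1}$ is the $\pi^{n+1}$-torsion of the putative $\pi$-divisible module — wait, that is circular, so instead I argue by orders exactly as in Lemma \ref{lem029}: one gets an epimorphism $j_n:\epO^r\CM_n\onto(\epO^r\CM_{n+1})[\pi^n]$ between finite flat group schemes of the same order $q^{n\binom hr}$ (the order of the $\pi^n$-torsion of $\epO^r\CM_{n+1}$ is computed from the exact sequence $0\to(\epO^r\CM_{n+1})[\pi^n]\to\epO^r\CM_{n+1}\overset{\pi^n}{\to}\epO^r\CM_{n+1}$ together with the known orders), hence an isomorphism; then $i_n=\iota\circ j_n$ is a monomorphism. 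This gives precisely the hypotheses of Remark \ref{rem 10}: a system of finite locally free $\CO$-module schemes $\epO^r\CM_n$ of order $q^{n\binom hr}$ with exact sequences $0\to\epO^r\CM_n\overset{i_n}{\to}\epO^r\CM_{n+1}\overset{\pi^n}{\to}\epO^r\CM_{n+1}$, whence the direct limit $\Lambda^r:=\dirlim(\epO^r\CM_n,i_n)$ is a $\pi$-divisible $\CO$-module scheme over $S$ of height $\binom hr$ with $\Lambda^r[\pi^n]\cong\epO^r\CM_n$.

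It remains to compute the dimension. The dimension of a $\pi$-divisible module is a locally constant function on $S$ determined fibrewise, so I may pass to geometric fibres $\Spec(\Omega)$. Over a point of characteristic $\ne p$ (or where $\CM$ is étale), $\CM_\Omega$ is étale, and by base-change compatibility (Proposition \ref{propextlNstBT}) $(\Lambda^r)_\Omega\cong\epO^r(\CM_\Omega)$, which by Proposition \ref{prop025} is étale of height $\binom hr$, hence dimension $0$. Over a point of characteristic $p$ where $\CM$ has dimension one, again $(\Lambda^r)_\Omega\cong\epO^r(\CM_\Omega)$, and Theorem \ref{thm 4} (or Theorem \ref{thmoveranyfield}) gives that this has dimension $\binom{h-1}{r-1}$. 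Thus the dimension function is as claimed; it is locally constant because it is the dimension function of a $\pi$-divisible module and because the locus where the fibre of $\CM$ is étale is open and closed in the characteristic-$p$ part of $S$ (as in the proof of Lemma \ref{lem029}, using the analogous openness for $\pi$-divisible modules, which reduces to that for $p$-divisible groups via Remark \ref{rem 11}). The main obstacle, such as it is, is purely bookkeeping: making sure the exact-sequence-plus-order argument for $i_n$ being a monomorphism onto the $\pi^n$-torsion goes through with $\CO$-module schemes exactly as it did with truncated Barsotti-Tate groups, and checking that Proposition \ref{prop 7} applies in the $\CO$-linear setting — but this is already implicit in the framework of Chapter 4, so no new idea is required.
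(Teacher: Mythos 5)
Your proposal is correct and follows essentially the same route as the paper, which itself simply establishes the order and the $\pi^{n+1}$-annihilation of $\epO^r\CM_{n+1}$ (via the Cartier dual of $\widetilde{\innAlt}^{\CO}_S(\CM_{n+1}^r,\BG_m)$ rather than your "quotient of the tensor power" remark) and then defers to the diagram chase of Lemma \ref{lem029}; you have merely written out that chase in the $\pi$-linear setting, including the order count that replaces the truncated-Barsotti-Tate hypothesis used there. No gap.
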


\begin{proof}
By previous proposition, $ \epO^r\CM_n $ are finite flat $ \CO $-module schemes over $S$. Since $ \CM_n $ is annihilated by $ \pi^n $, the $ \CO $-module scheme $ \widetilde{\innAlt}_S^{\CO}(\CM_n^r,\BG_m) $ is also annihilated by $ \pi^n $. Thus, its Cartier dual, $ \epO^r\CM_n $ is annihilated by $ \pi^n $ as well. We also know by previous proposition that $ \epO^r\CM_n $ has order equal to the constant function $q^{n\binom{h}{r}}$. Now, the similar arguments as in the proof of Lemma \ref{lem029} provide monomorphisms $ i_n:\epO^r\CM_n\into \epO^r\CM_{n+1} $ and imply this proposition. We therefore omit the proof.
\end{proof}

\begin{rem}
\label{remonlambda}
Since $ \lambda_n $ are the universal alternating morphisms, they are compatible with respect to the projections $ \CM_{n+1}^r\to \CM_n^r $ and $ \epO^r\CM_{n+1}\to \epO^r\CM_n $ and therefore define an alternating morphisms $ \lambda:\CM^r\to\epO^r\CM $.
\end{rem}

\begin{thm}[The Main Theorem for $\pi$-divisible $\CO$-module schemes]
\label{thm07ram}
Let $S$ be a locally Notherian $\CO$-scheme and $\CM$ a $\pi$-divisible $\CO$-module scheme over $S$ of height $h$ and dimension at most $1$. Then, there exists a $\pi$-divisible $\CO$-module scheme $\epO^r\CM$ over $S$ of height $\binom{h}{r}$, and an alternating morphism $ \lambda:\CM^r\to \epO^r\CM $ such that for every morphism $ f:S'\to S $ and every $\pi$-divisible group $\CN$ over $S'$, the homomorphism \[\Hom^{\CO}_{S'}(f^*\epO^r\CM,\CN)\to\widetilde{\Alt}^{\CO}_{S'}((f^*\CM)^r,\CN) \] induced by $ f^*\lambda $ is as isomorphism. In other words, the $ r^{\text{th}} $-exterior power of $ \CM $ exists and commutes with arbitrary base change. Moreover, the dimension of $ \epO^r\CM$ is a locally constant function \[\dim:S\to \{0,\binom{h-1}{r-1}\},\qquad s\mapsto \begin{cases}0 & \text{if}\quad \CM_s\quad \text{is \'etale}\\ \binom{h-1}{r-1}& \text{otherwise.}\end{cases}\]
\end{thm}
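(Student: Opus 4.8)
The plan is to assemble the theorem from the two preceding propositions (Propositions \ref{propextlNstBT} and \ref{propextlNspiBT}) together with the \'etale case (Proposition \ref{prop025}), exactly as Theorem \ref{thm07} was assembled from its finite-level and inductive-system ingredients. First I would set $\epO^r\CM:=(\epO^r\CM_n)_{n\geq 1}$, which is a $\pi$-divisible $\CO$-module scheme over $S$ of height $\binom{h}{r}$ by Proposition \ref{propextlNspiBT}, and I would take $\lambda:\CM^r\to\epO^r\CM$ to be the alternating morphism assembled from the universal $\lambda_n:\CM_n^r\to\epO^r\CM_n$ (Remark \ref{remonlambda}); this is legitimate because the $\lambda_n$, being universal, are compatible with the transition maps $\CM_{n+1}^r\to\CM_n^r$ and $\epO^r\CM_{n+1}\to\epO^r\CM_n$.

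\textbf{Universal property.} The core step is to show $\lambda$ is universal. By definition, for $\pi$-divisible $\CO$-modules one has $\widetilde{\Alt}^{\CO}_{S'}((f^*\CM)^r,\CN)=\uset{n}{\invlim}\,\widetilde{\Alt}^{\CO}_{S'}((f^*\CM_n)^r,\CN_n)$ and $\Hom^{\CO}_{S'}(f^*\epO^r\CM,\CN)=\uset{n}{\invlim}\,\Hom^{\CO}_{S'}((f^*\epO^r\CM)_n,\CN_n)$. By Proposition \ref{propextlNstBT}, the canonical map $\epO^r(\CM_{n,S'})\to(\epO^r\CM_n)_{S'}$ is an isomorphism, and $(\epO^r\CM)_n\cong\epO^r\CM_n$; combining these identifies $(f^*\epO^r\CM)_n$ with $\epO^r(f^*\CM_n)$. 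Then the universal property of $\epO^r(f^*\CM_n)$ in the category of finite flat group schemes over $S'$ — which holds because $S'$ is again locally Noetherian, so Proposition \ref{propextlNstBT} applies to it — says $(f^*\lambda)_n^*$ is an isomorphism $\Hom^{\CO}_{S'}(\epO^r(f^*\CM_n),\CN_n)\isoto\widetilde{\Alt}^{\CO}_{S'}((f^*\CM_n)^r,\CN_n)$. Taking the inverse limit over $n$ of these isomorphisms, and checking (routinely) that the limit of $(f^*\lambda)_n^*$ is the map $(f^*\lambda)^*$ of the theorem, gives that $(f^*\lambda)^*$ is an isomorphism. Since this works for every $f:S'\to S$, the exterior power commutes with arbitrary base change.

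\textbf{Height and dimension.} The height $\binom{h}{r}$ and the dimension formula follow directly from Proposition \ref{propextlNspiBT}, which already records them for the inductive system $(\epO^r\CM_n)_n$; the dimension at a point $s\in S$ where $\CM_s$ is \'etale is $0$ because then $\CM_s$ and hence $\epO^r(\CM_s)$ is \'etale (using Proposition \ref{prop025} and the base-change compatibility just established), while at points where $\CM_s$ has dimension $1$ one invokes Theorem \ref{thm 4} applied to the fiber $\CM_s$, again via the fact that $\epO^r$ commutes with passage to fibers. If $S$ is not connected one applies the above to each connected component; the locally constant nature of the dimension and height functions then follows from their locally constant nature on fibers.

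\textbf{Main obstacle.} I do not expect a serious obstacle here, since the heavy lifting is done in the earlier sections — the finiteness and flatness of $\widetilde{\innAlt}^{\CO}_S(\CM_n^r,\BG_m)$ over a locally Noetherian base (Lemma \ref{propextlNs}), the resulting existence and base-change of $\epO^r\CM_n$ at finite level, and the exact-sequence argument turning the system into a $\pi$-divisible module. The only point requiring genuine care is the interchange of the inverse limit over $n$ with the formation of $\Hom$ and $\widetilde{\Alt}$, i.e.\ verifying that $\uset{n}{\invlim}\,(f^*\lambda)_n^*=(f^*\lambda)^*$ as maps of $\CO$-modules; this is a compatibility check unwinding the definitions of morphisms and alternating morphisms of $\pi$-divisible modules as compatible systems, and it is where one must make sure the transition maps on both sides match. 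Everything else is a direct appeal to the cited results.
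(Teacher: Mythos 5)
Your proposal follows essentially the same route as the paper's own (very short) proof: Proposition \ref{propextlNstBT} gives the finite-level universal property of $f^*\lambda_n$ after base change, one takes inverse limits over $n$, and the height and dimension statements are read off from Proposition \ref{propextlNspiBT}. One small correction: $S'$ need not be locally Noetherian (it is an arbitrary $S$-scheme), but this is harmless because Proposition \ref{propextlNstBT} already asserts that the base-change homomorphism $\epO^r(\CM_{n,T})\to(\epO^r\CM_n)_T$ is an isomorphism for \emph{every} $S$-scheme $T$, which is exactly what your argument needs.
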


\begin{proof}
By Proposition \ref{propextlNstBT}, $ f^*\epO^r\CM_{n} $ is the $ r^{\rm th} $-exterior power of $ f^*\CM_{n} $ in the category of finite and flat group schemes over $S'$. So, the homomorphism $$ \Hom_{S'}(f^*\epO^r\CM_n,\CN_n)\to \widetilde{\Alt}_{S'}^{\CO}((f^*\CM_{n})^r,\CN_n) $$ induced by $ f^*\lambda_n:(f^*\CM_n)^r\to f^*\epO^r\CM_n $ is an isomorphism. Taking the inverse limit of these isomorphisms, we conclude that $$\Hom_{S'}(f^*\epO^r\CM,\CN)\to \widetilde{\Alt}_{S'}^{\CO}((f^*\CM)^r,\CN) $$ is an isomorphism. The statement on height and dimension follows from the previous proposition.\\

\textbf{\emph{Quod Erat Demonstrandum.}}
\end{proof}

%SECTION 10
%\section{Examples}
\chapter{Examples}

In this chapter, $p$ is an odd prime number, $f$ is a positive natural number and $q=p^f$.

\begin{ex}
Let $\CO$ be the ring of integers of a non-Archimedean local field with residue field $\BF_q$ and $S$ an $\CO$-scheme. Let $F$ be a $\CO$-Lubin-Tate group of height $h$ and dimension 1 over $S$. When $S$ is the spectrum of a field or $\CO$ is an extension of $\BQ_p$, then by the results of chapters 5 and 9 (cf. Theorems \ref{thm 4} and \ref{thm07ram}), the exterior power $\epO^rF$ exists and is an $\CO$-Lubin-Tate group of height $\binom{h}{r}$ and dimension $\binom{h-1}{r-1}$. In particular, the highest exterior power $\epO^hF$ is an $\CO$-Luni-Tate group of height and dimension 1. Now, assume that $\CO$ is an extension of $\BQ_p$ and fix an $\CO$-Lubin-Tate group $F_0$ of height $h$ and dimension 1 over $\BF_q$. Let $ k/\BF_q $ be a perfect field. Define the \emph{deformation functor} that assigns to any local Artin $ \CO $-algebra $R$ whose residue field is an overfield of $ k $, the set \[ \Def\, (F_0)(R)=\{(F,\alpha)\}/\text{isomorphisms} \] where $F$ is a 1 dimensional formal $ \CO $-module over $R$ and \[ \alpha:F\times_RR/\Fm_R\arrover{\cong} F_0\times_{\BF_q}R/\Fm_R \] is an isomorphism of formal $ \CO $-modules. Then the functor $ \Def\, (F_0) $ is represented by $ \Spf(W_{\CO}(k)\lbb t_1,\dots,t_{h-1}\rbb) $, called the Lubin-Tate (moduli) space of $ F_0 $ (cf. \cite{{MR1387691}} or \cite{{MR1263712}}). Taking the highest exterior power induces in a natural way a morphism \[ \det: \Spf(W_{\CO}(k)\lbb t_1,\dots,t_{h-1}\rbb)\to \Spf(W_{\CO}(k)) \] of formal $ \CO $-schemes. Indeed, let $ R $ be as above and let $ (F,\alpha) $ be a deformation of $ F_0 $ over $R$. Then $ (\epO^hF,\epO^h\alpha) $ is a deformation of $ \epO^hF_0 $ over $R$, of dimension 1. This is so, because the construction of exterior powers commutes with base change: \[ (\epO^hF)\times_RR/\Fm_R\cong \epO^h(F\times_RR/\Fm_R)\cong \epO^h(F_0\times_{\BF_q}R/\Fm_R)\cong (\epO^hF)\times_{\BF_q}R/\Fm_R.\]
\end{ex}

\begin{ex}
Let $C$ be a smooth geometrically connected projective curve over $\BF_q$ and $\infty$ a closed point of $C$. Let $A$ be the ring of functions on $C$ which are regular outside $\infty$, i.e., $A=\Gamma(C\setminus\{\infty\},\CO_C)$. Let $L/\BF_q$ be a field with an $A$-structure, i.e., an $\BF_q$-algebra homomorphism $A\to L$. Finally, let $\rho:A\to\End_{\BF_q}(\BG_{a,L})$ be a Drinfeld module. This defines an $A$-module scheme structure on $\BG_{a,L}$. Take a prime ideal $\Fp$ of $A$. The closed subscheme $\rho[\Fp^n]$ of $\BG_a$ is a finite and flat $A/\Fp^n$-module scheme of order $p^{nf\rank (\rho)}=q^{n\rank(\rho)}$. If we fix a uniformizer $\pi$ of the completion $\widehat{A}_{\Fp}$ of $A$ with respect to $\Fp$, then for every $n>0$, we have short exact sequences \[0\to \rho[\Fp]\to \rho[\Fp^{n+1}]\arrover{\pi} \rho[\Fp^n]\to 0\] (cf. \cite{{MR1804937}} or \cite{{MR2020270}}). It follows that we have a $\pi$-Barsotti-Tate module or a $\pi$-divisible $\widehat{A}_{\Fp}$-module, that we denote by $\rho[\Fp^{\infty}]$. Its height is $\rank(\rho)$ and has dimension 1. Therefore, we can construct the exterior power $\uset{\widehat{A}_{\Fp}}{\ep^r}\rho[\Fp^{\infty}]$. Its height and dimension are respectively equal to $\binom{\rank(\rho)}{r}$ and $\binom{\rank(\rho)-1}{r-1}$.
\end{ex}

\begin{ex}
Let $L/\BQ$ be an imaginary quadratic field extension and denote by $\CO_L$ its ring of integers and let $p>2$ be a prime number that splits in $\CO_L$, say $p\CO_L=\Fq\cdot \Fp$, where $\Fp$ and $\Fq$ are different prime ideals of $\CO_L$. Let $S$ be an $\CO_L$-scheme defined over $ \Spf(\BZ_p) $ and $\CA$ an Abelian scheme over $S$ of relative dimension $g$. Assume that $\CO_L$ acts on $\CA$, i.e., we have a ring homomorphism $\CO_L\to \End_{S}(\CA)$ and the induced action on the relative Lie algebra of $\CA$ has signature $ (g-1,1) $. More precisely, the decomposition $ \CO_L\otimes_{\BZ}\CO_S \cong \CO_S\times \CO_S$, according to the splitting of $p$, induces a decomposition of the relative Lie algebra of $\CA$ into a product of two locally free $ \CO_S $-modules and the component corresponding to $ \Fp $ has rank $1$ and the other component, corresponding to $ \Fq $, has rank $g-1$. The $ p $-divisible group, $ \CA[p^{\infty}] $, associated to $ \CA $ has a natural structure of $ \CO_L\otimes_{\BZ}\BZ_p $-module. We can decompose $ \CO_L\otimes_{\BZ}\BZ_p $ into a product $ \CO_{L,\Fq}\times \CO_{L,\Fp} $, where $ \CO_{L,\Fq} $ and $ \CO_{L,\Fp} $ are respectively the completions of $ \CO_L $ with respect to $ \Fq $ and $ \Fp $. Since $ p $ is split in $ \CO_L $, they are both isomorphic to $ \BZ_p $. So, we have $ \CO_L\otimes_{\BZ}\BZ_p\cong\BZ_p\times\BZ_p $. This decomposition induces a decomposition $ \CA[p^{\infty}]\cong \CA[\Fq^{\infty}]\times \CA[\Fp^{\infty}] $ of $ \CA[p^{\infty}] $ into $ p $-divisible groups of height $g$ over $S$. By assumption on the action of $ \CO_L $ on $ \CA $, $ \CA[\Fp^{\infty}] $ has dimension $1$ and $ \CA[\Fq^{\infty}]$ has dimension $ g-1 $. So, for every $r>0$, the exterior power $ \ep^r\CA[\Fp^{\infty}] $ exists and its height and dimension are respectively $ \binom{g}{r} $ and $ \binom{g-1}{r-1} $. Note that there exists a natural number $m\leq n$ such that $ \CA[\Fp^{\infty}] $ has slopes zero and $ \frac{1}{n-m} $ with multiplicities respectively  $m$ and  $ n-m $, and $ \CA[\Fq^{\infty}] $ has slopes zero and $ \frac{n-1}{n-m} $ with multiplicities respectively $ m $ and $ n-m $.
\end{ex}

\begin{ex}
Let $ \CE $ be an elliptic scheme (i.e., Abelian scheme of relative dimension 1) over a base scheme $ S $. Then, the associated $ p $-divisible group $ \CE[p^{\infty}] $ has rank 2 and dimension 1 at points of characteristic $p$. Thus, the second exterior power $ \ep^2\CE[p^{\infty}] $ is a $p$-divisible group of height 1 and dimension 1 at points of characteristic $p$. This means that at these points, $ \ep^2\CE[p^{\infty}] $ is a multiplicative group of height and dimension 1 and so is isomorphic to $\mu_{p^{\infty}}$. At points of characteristic zero, $ \ep^2\CE[p^{\infty}] $ is an \'etale $p$-divisible group of height 1 and if we pass to an algebraic closure, we obtain the constant $p$-divisible group $\BQ_p/\BZ_p$, which is again isomorphic to $\mu_{p^{\infty}}$. Thus, at all geometric points,  $ \ep^2\CE[p^{\infty}] $ is isomorphic to $\mu_{p^{\infty}}$.\\

The Weil pairing $ \omega_n:\CE[p^n]\times\CE[p^n]\to \mu_{p^n} $ is a perfect pairing and induces an alternating morphism $ \omega:\CE[p^{\infty}] \times \CE[p^{\infty}] \to \mu_{p^{\infty}} $. We want to show that $\omega$ is in fact the universal alternating morphism and $ \ep^2\CE[p^{\infty}]  $ is canonically isomorphic to $ \mu_{p^{\infty}} $. Indeed, by the universal property of $ \ep^2\CE[p^{\infty}]  $, we have a homomorphism $ \tilde{\omega}:\ep^2\CE[p^{\infty}] \to \mu_{p^{\infty}}$ such that $\omega=\tilde{\omega}\circ \lambda $ (where $\lambda$ is the universal alternating morphism of $\ep^2\CE[p^{\infty}] $) and we have to show that $\tilde{\omega}$ is an isomorphism. This homomorphism is an isomorphism, if and only if it is so over every geometric point, and therefore, we may assume that $S$ is the spectrum of an algebraically closed field. Then, as we explained above, $\ep^2\CE[p^{\infty}]$ is isomorphic to $\mu_{p^{\infty}}$. So we have a homomorphism $\tilde{\omega}:\mu_{p^{\infty}}\to \mu_{p^{\infty}}$ and we want to show that it is an isomorphism. By Cartier duality, we can consider the dual of this homomorphism, namely $\tilde{\omega}^*:\BQ_p/\BZ_p\to \BQ_p/\BZ_p $. If this is not an isomorphism, it factors through multiplication by $p$. Thus, $\tilde{\omega}$ factors through multiplication by $p$. It follows that on $p$-torsion points, $\omega_1=\tilde{\omega}_1\circ \lambda_1$ is the zero morphism, which is in contradiction with the fact that the Weil pairing is a perfect pairing.
\end{ex}

\cleardoublepage
\phantomsection
\addcontentsline{toc}{chapter}{Bibliography}

\end{document}